\newtheorem{thm}{Theorem}[chapter]
\newtheorem{cor}[thm]{Corollary}
\newtheorem{lem}[thm]{Lemma}
\newtheorem{prop}[thm]{Proposition}
\newenvironment{introthm}[1]
  {\innercustomthm}
  {\endinnercustomthm}
\newenvironment{introcor}[1]
  {\innercustomcor}
  {\endinnercustomcor}
\newtheorem*{claim}{Claim}
\newtheorem{step}{Step}
\newtheoremstyle{named}{}{}{\itshape}{}{\bfseries}{.}{.5em}{#3}
\theoremstyle{named}
\newtheorem*{namedtheorem}{Theorem}
\theoremstyle{definition}
\newtheorem{defn}[thm]{Definition}
\newtheorem{ques}[thm]{Question}
\theoremstyle{remark}
\newtheorem{rem}[thm]{Remark}
\newtheorem{example}[thm]{Example}
\numberwithin{equation}{chapter}
\newcommand{\norm}[1]{\lVert #1 \rVert^2}
\newcommand{\spin}{\ifmmode{\rm Spin}\else{${\rm spin}$\ }\fi}
\newcommand{\spinc}{\ifmmode{{\rm Spin}^c}\else{${\rm spin}^c$}\fi}
\newcommand{\spinct}{\mathfrak t}
\newcommand{\spincs}{\mathfrak s}
\newcommand{\Char}{{\rm Char}}
\newcommand{\rk}{{\rm rk }}
\newcommand{\Z}{\mathbb{Z}}
\newcommand{\Q}{\mathbb{Q}}
\newcommand{\disc}{{\rm disc}}
\DeclareMathOperator*{\tors}{Tor}
\title{Alternating surgeries}		
\author{Duncan McCoy}
\begin{document}
\maketitle
\frontmatter
\chapter*{Abstract}
This thesis is concerned with the question of when the double branched cover of an alternating knot can arise by Dehn surgery on a knot in $S^3$. We approach this problem using a surgery obstruction, first developed by Greene, which combines Donaldson's Diagonalization Theorem with the $d$-invariants of Ozsv{\'a}th and Szab{\'o}'s Heegaard Floer homology. This obstruction shows that if the double branched cover of an alternating knot or link $L$ arises by surgery on $S^3$, then for any alternating diagram the lattice associated to the Goeritz matrix takes the form of a changemaker lattice. By analyzing the structure of changemaker lattices, we show that the double branched cover of $L$ arises by non-integer surgery on $S^3$ if and only if $L$ has an alternating diagram which can be obtained by rational tangle replacement on an almost-alternating diagram of the unknot. When one considers half-integer surgery the resulting tangle replacement is simply a crossing change. This allows us to show that an alternating knot has unknotting number one if and only if it has an unknotting crossing in every alternating diagram.

These techniques also produce several other interesting results: they have applications to characterizing slopes of torus knots; they produce a new proof for a theorem of Tsukamoto on the structure of almost-alternating diagrams of the unknot; and they provide several bounds on surgeries producing the double branched covers of alternating knots which are direct generalizations of results previously known for lens space surgeries. Here, a rational number $p/q$ is said to be characterizing slope for $K\subseteq S^3$ if the oriented homeomorphism type of the manifold obtained by $p/q$-surgery on $K$ determines $K$ uniquely.

The thesis begins with an exposition of the changemaker surgery obstruction, giving an amalgamation of results due to Gibbons, Greene and the author. It then gives background material on alternating knots and changemaker lattices. The latter part of the thesis is then taken up with the applications of this theory. 
\chapter*{Acknowledgements}
I am grateful to Brendan Owens for providing a wealth of patient guidance throughout the course of my PhD studies. This thesis has greatly benefitted from the working environment provided by the maths department of Glasgow and interactions with the many excellent people therein. Of these, I would particularly like to thank Liam Watson, with whom I have had many helpful conversations. I also wish to thank Joshua Greene and acknowledge the influential role of his work, which inspired and motivated many ideas in this thesis. I would also like to thank the examining committee, Andr\'{a}s Juh\'{a}sz and Liam Watson, for their suggestions. My thanks also go to my parents for many years of support and to \r{A}sa~Lind for putting up with me.

\chapter*{Declaration}
I declare that, except where explicit reference is made to the contribution of others, this
thesis is the result of my own work and has not been submitted for any other degree at the
University of Glasgow or any other institution.

\tableofcontents
\nocite{Paddington}

\mainmatter
\chapter{Introduction}
One of the simplest ways of constructing 3-manifolds is through Dehn surgery. Given a knot $K$ in $S^3$, we perform surgery on it by cutting out a tubular neighbourhood of $K$ and gluing back in another solid torus. These gluings are naturally indexed by rational numbers. The gluing corresponding to $p/q\in \Q$ attaches the solid torus so that a curve representing $p[\mu]+q[\lambda]\in H_1(\partial( S^3\setminus \mathring{\nu}K);\Z)$ bounds a disk, where $\mu$ is meridian of $K$ and $\lambda$ a null-homologous longitude of $K$. For each $p/q\in \Q$ this allows us to define $S_{p/q}^3(K)$, the manifold obtained by $p/q$-surgery on $K$. It is known that any closed orientable 3-manifold can be obtained by surgery on a {\em link} in $S^3$, where surgery on a link means to surger each component separately \cite{Lickorish62dehnsurgery, wallace60modifications}. However, it is not hard to provide examples where this link necessarily has more than one component. For example, $\mathbb{R}P^3 \# \mathbb{R}P^3$ cannot arise by surgery on any knot in $S^3$, as for any $K$ in $S^3$, the homology group $H_1(S_{p/q}^3(K);\Z)$ is cyclic of order $p$. Given a collection of 3-manifolds, $\mathcal{M}$, there are two general questions about Dehn surgery one can ask:
\begin{enumerate}[(1)]
\item which manifolds in $\mathcal{M}$ arise by surgery on a knot $K$ in $S^3$?
\item for which $K$ and $p/q$ do we have $S_{p/q}^3(K)\in \mathcal{M}$?
\end{enumerate}
Questions of both these forms have motivated a great deal of research in low-dimensional topology and, in general, are very challenging. For example, when $\mathcal{M}$ is taken to be the set of lens spaces, which are, in a certain sense, the simplest 3-manifolds, both these questions have attracted substantial attention. For non-integer surgeries giving lens spaces, the Cyclic Surgery Theorem of Culler, Gordon, Luecke and Shalen answers (2) by showing that $K$ must be a torus knot \cite{cglscyclic}. As the lens space surgeries on torus knots were classified by Moser \cite{Moser71elementary}, this also answers $(1)$ for non-integer lens space surgeries. More recently the lens spaces arising by integer surgeries on knots in $S^3$ were determined by Greene \cite{GreeneLRP}. However, for integer surgeries (2) remains as a major open problem, known as the Berge conjecture.

\paragraph{} Given a link $L$ in $S^3$ or, more generally, in any 3-manifold, we can construct the {\em double branched cover} $\Sigma(L)$ by taking a double cover of the link complement $S^3 \setminus \nu L$ so that the cover is non-trivial on the meridian of each component, and then gluing in solid tori so that each lifted meridian bounds a disk. This construction can frequently be used to reformulate knot-theoretic questions in terms of 3-manifolds.

\paragraph{}Recall that an alternating link is one that possesses an alternating diagram, where a diagram is alternating if, following any component of the diagram round, we meet the crossings in such a way that an under-crossing is always followed by an over-crossing and vice versa; that is, the under/over nature of the crossings alternate as we traverse each component. An example of an alternating diagram is given in Figure~\ref{intro:fig:810}. The majority of this thesis is spent addressing the above Dehn surgery questions when $\mathcal{M}$ is the set of 3-manifolds obtained by taking double-branched cover of alternating knots and links.
\begin{defn}Given a knot $K\subseteq S^3$, we say that $S_{p/q}^3(K)$ is an {\em alternating surgery} if it is the double branched cover of a non-split alternating knot or link.
\end{defn}
As the lens spaces arise as the double branched covers of 2-bridge links, alternating surgeries provide a generalization of lens space surgeries. We will give a criterion in terms of alternating diagrams which characterizes precisely when the double branched cover of an alternating knot or link can arise by non-integer surgery on a knot in $S^3$. This provides an answer to (1) for non-integer alternating surgeries. This also allows us to construct a class of knots which realize all possible non-integer alternating surgeries. This class of knots potentially answers (2) for alternating surgeries. Several of our results on alternating surgeries are direct generalizations of results previously known for lens space surgeries.

The study of alternating surgeries also has applications to knot theory. The most striking of these is the classification of alternating knots with unknotting number one. We are also able to obtain a new proof for a theorem of Tsukamoto on the structure of almost-alternating diagrams of the unknot \cite{tsukamoto2009almost}. The techniques in use also have applications to the problem of determining the characterizing slopes of torus knots.

\section{Unknotting number one}
Given a knot $K\subseteq S^3$, its {\em unknotting number}, $u(K)$, is a classical knot invariant going back to the work of Tait in the 19th century \cite{tait1876knots}. It is defined to be the minimal number of crossing changes required in any diagram of $K$ to obtain the unknot. Upper bounds for the unknotting number are easy to obtain, since one can take any diagram and find a sequence of crossing changes giving the unknot. It is far harder to establish effective lower bounds for the unknotting number, as it is not generally known which diagrams will exhibit the actual unknotting number \cite{bernhard1994unknotting, bleiler1984note, jablan1998unknotting}. The quintessential example which illustrates the difficulty of the unknotting number is the knot $8_{10}$, show in Figure~\ref{intro:fig:810}; its unknotting number was long-believed to be two, but this remained unproven until the work of Ozsv{\'a}th and Szab{\'o} in 2005 \cite{ozsvath2005knots}.
\begin{figure}[h]
\centering
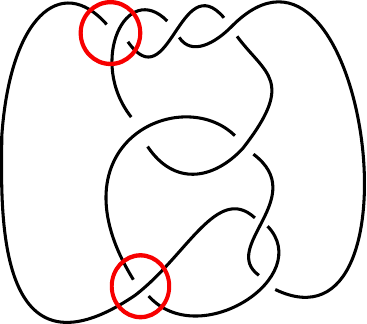
\label{intro:fig:810}
\caption{\label{intro:fig:810}The knot $8_{10}$. This diagram can be unknotted by changing the two circled crossings, but not by any single crossing. It follows from Theorem~\ref{Intro:thm:unknotting} that $u(8_{10})=2$.}
\end{figure}

One classical lower bound is the signature of a knot as defined by Trotter \cite{trotter1962homology}, which satisfies $|\sigma(K)|\leq 2 u(K)$ \cite{murasugi1965certain}. This is a particularly useful bound, since it may be computed in a variety of ways \cite{trotter1962homology, gordon1978signature}. Other bounds and obstructions have been constructed through the use of various knot-theoretic and topological invariants, including, among others, the Alexander module \cite[Theorem~7.10]{lickorish1997introduction} and the intersection form of 4-manifolds \cite{cochran1986unknotting, owens2008unknotting}.

The case of unknotting number one has been particularly well-studied. Recall that a {\em minimal diagram} for a knot is one containing the minimal possible number of crossings. Kohn made the following conjecture regarding unknotting number one knots and their minimal diagrams \cite[Conjecture~12]{kohn1991two}.

\begin{namedtheorem}[Kohn's Conjecture]
If $K$ is a knot with $u(K)=1$, then it has an unknotting crossing in a minimal diagram.
\end{namedtheorem}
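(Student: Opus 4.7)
The plan is to translate Kohn's Conjecture into a Dehn surgery problem via the double branched cover, and then to attack it using the surgery obstruction that forms the backbone of this thesis. The key observation is that a single crossing change on $K$ lifts to a $\pm 1/2$-surgery on a knot in the double branched cover $\Sigma(K)$: the crossing disk lifts to an unknotted curve in $\Sigma(K)$, and the two resolutions of the crossing correspond to the two possible half-integer fillings. Hence $u(K)=1$ is equivalent to the existence of a knot $K'\subseteq S^3$ with $\Sigma(K)\cong S^3_{\pm 1/2}(K')$, and an unknotting crossing in a given diagram corresponds exactly to an arc in that diagram whose lift realizes this surgery. Proving Kohn's Conjecture therefore reduces to showing that, for any $K$ with $u(K)=1$, some minimal diagram contains a crossing whose lifted arc realizes the required half-integer surgery.

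The first step is to apply the changemaker surgery obstruction to the description $\Sigma(K)=S^3_{\pm 1/2}(K')$. When $K$ is alternating, $\Sigma(K)$ is an L-space and the negative-definite 4-manifold built from a checkerboard colouring provides a sharp filling, so Greene's obstruction forces the Goeritz form of any alternating diagram to embed as a changemaker lattice. The combinatorics of changemaker lattices is rigid enough to locate the unknotting surgery inside the diagram: one identifies a distinguished vector of the lattice with a specific crossing, and verifies that changing that crossing produces the unknot. This yields Kohn's Conjecture in the strengthened form that an unknotting crossing appears in \emph{every} alternating diagram, which in particular implies the minimal-diagram statement originally conjectured by Kohn. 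I would carry this out by first setting up the correspondence between crossings of the diagram and hyperplanes in the Goeritz lattice, and then running the changemaker classification to pick out a hyperplane of the required form.

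The main obstacle to the conjecture in full generality is the non-alternating case. The changemaker obstruction requires $\Sigma(K)$ to be an L-space and requires a sharp negative-definite filling that is combinatorially pinned to a diagram; neither input is available for arbitrary $K$. The immediate extension I would pursue is to quasi-alternating knots, for which the L-space property and a canonical sharp filling still persist, so the lattice-embedding step carries over once one sets up the appropriate replacement for the Goeritz form. Beyond that class, genuinely new ingredients are needed: either a surgery obstruction that controls $\hfred$ as well as the $d$-invariants, or a non-Floer mechanism that tracks a distinguished crossing through the Reidemeister moves joining a diagram in which $K$ is unknotted by a crossing change to a minimal diagram. This last step is where I expect the real difficulty to lie, and it is the reason Kohn's Conjecture remains open outside the alternating and closely related regimes.
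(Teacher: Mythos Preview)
The statement you are attempting to prove is not a theorem of the paper at all: Kohn's Conjecture is stated there as an open conjecture, and the paper explicitly notes that it ``seems somewhat optimistic'' in general, citing Stoimenow's examples of 14-crossing knots with $u(K)=1$ possessing minimal diagrams with no unknotting crossing. What the paper actually proves is the alternating case (Theorem~\ref{Intro:thm:unknotting}), and your sketch of that case---Montesinos trick, sharp filling from the checkerboard surface, changemaker embedding of the Goeritz lattice, then combinatorial analysis to locate the crossing---is an accurate high-level summary of the paper's strategy, though the real work lies in the lattice combinatorics of Chapter~\ref{chap:uk=1} (marker vertices, flyping to standard form, the tight/slack dichotomy, and the inductive reduction via untongue/untwirl moves), none of which you indicate how to carry out.

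Your proposal also contains a genuine error that matters for the non-alternating discussion. You assert that $u(K)=1$ is \emph{equivalent} to $\Sigma(K)\cong S^3_{\pm 1/2}(K')$ for some $K'\subseteq S^3$. Only the forward implication holds in general; the paper's Example~\ref{intro:exam:montyconverse} shows that $\Sigma(T_{q,4q\mp 1})\cong S^3_{\pm 1/2}(T_{2,q})$ while $u(T_{q,4q\mp 1})>1$ for $q>3$. The equivalence is one of the nontrivial conclusions of the paper in the alternating case, not an input. Consequently, even if one had a sharp filling and a changemaker embedding for a non-alternating $K$, the argument you outline would at best produce a half-integer surgery description of $\Sigma(K)$, not an unknotting crossing. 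Your final paragraph effectively concedes this, so what you have written is a research outline rather than a proof, and there is no proof in the paper to compare it against.
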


This has been resolved in a number of cases. The two-bridge knots with unknotting number one were classified by Kanenobu and Murakami \cite{Unknottwobridge}, using the Cyclic Surgery Theorem \cite{cglscyclic}. For alternating large algebraic knots, the conjecture was settled by Gordon and Luecke \cite{gordon2006knots}. Most recently, the conjecture was proved for alternating 3-braid knots by Greene \cite{Greene3Braid}. The following theorem addresses Kohn's conjecture for all alternating knots \cite{mccoy2013alternating}.
\begin{introthm}{\ref{Intro:thm:unknotting}}
For an alternating knot $K$ the following are equivalent:
\begin{enumerate}[(i)]
\item $K$ has unknotting number one;
\item The branched double cover $\Sigma(K)$ can be obtained by half-integer surgery on a knot in $S^3$;
\item $K$ has an unknotting crossing in every alternating diagram.
\end{enumerate}
\end{introthm}
Recall that an alternating diagram is said to be {\em reduced} if it contains no nugatory crossings (see Figure~\ref{fig:nugatory}). Since the minimal diagrams of alternating knots are precisely the reduced alternating diagrams \cite{kauffman87state, murasugi86jones, thistlethwaite88alternating}, this resolves Kohn's conjecture for alternating knots.
\begin{figure}[h]
  \centering
  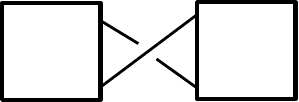
 \caption{A nugatory crossing.}
 \label{fig:nugatory}
\end{figure}

\begin{rem} In general, it is not known if the question of whether a knot has unknotting number one is decidable. Theorem~\ref{Intro:thm:unknotting} shows that for alternating knots this is decidable.
\end{rem}

In general, Kohn's Conjecture seems somewhat optimistic. For example, there are 14-crossing knots with unknotting number one and minimal diagrams not containing an unknotting crossing \cite{stoimenow2001examples}. However these examples are not sufficient to disprove the conjecture, since they still all possess at least one minimal diagram with an unknotting crossing.

The equivalence between the double branched cover arising by half-integer surgery and unknotting number one is not true in general.
\begin{example}\label{intro:exam:montyconverse}
Consider the torus knot $T_{r,s}$ for $r,s>0$. Its unknotting number is given by the formula \cite{kronheimer93gauge, rasmussen10khovanov}:
\[u(T_{r,s})=\frac{1}{2}(r-1)(s-1).\]
So the only torus knot with unknotting number one is $T_{3,2}$. However, for odd $q>1$, we have
$S^3_{\pm 1/2}(T_{2,q})\cong \Sigma (T_{q,4q\mp 1})$ \cite[Proposition 2]{Watson10Remark}.
\end{example}

\section{Almost-alternating diagrams of the unknot}
Theorem~\ref{Intro:thm:unknotting} can be interpreted as follows: showing that understanding alternating knots with unknotting number one is equivalent to understanding almost-alternating diagrams of the unknot, where an {\em almost-alternating diagram} is one which is obtained from an alternating diagram by a single crossing change. A result of Tsukamoto shows that any reduced almost-alternating diagram of the unknot can be built up using only certain types of isotopies: {\em flypes}, which are illustrated in Figure~\ref{fig:flypedef}; and {\em tongue} and {\em twirl} moves, which are the inverses of the {\em untongue} and {\em untwirl} moves depicted in Figure~\ref{fig:unswirluntongue}.

The methods in this paper also provide a new proof of this result \cite[Corollary~1.1]{tsukamoto2009almost}.
\begin{introthm}{\ref{intro:thm:tsukamoto}}[Tsukamoto]
Any reduced almost-alternating diagram of the unknot can be obtained from $\mathcal{C}_m$, for some non-zero integer $m$, by a sequence of flypes, tongue moves and twirl moves. For each $m$, $\mathcal{C}_m$ is the almost-alternating diagram shown in Figure~\ref{fig:claspdiagram}.
\end{introthm}

The proof given in this thesis is of very different flavour to the original which employed spanning surfaces and geometric arguments, rather than the machinery of Heegaard Floer homology and the topology of 4-manifolds. Together Theorem~\ref{Intro:thm:unknotting} and Theorem~\ref{intro:thm:tsukamoto} may be viewed as a complete description of alternating knots with unknotting number one.

\begin{figure}[p!]
  \centering
  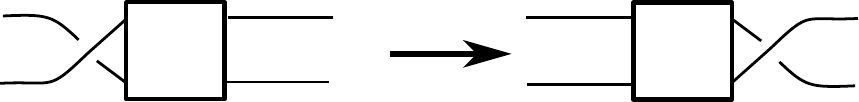
 \caption[A flype]{A flype. It is known that any two reduced alternating diagrams of a knot can be related by a sequence of flypes \cite{Menasco93classification}.}
 \label{fig:flypedef}
\end{figure}

\begin{figure}[p!]
  \centering
  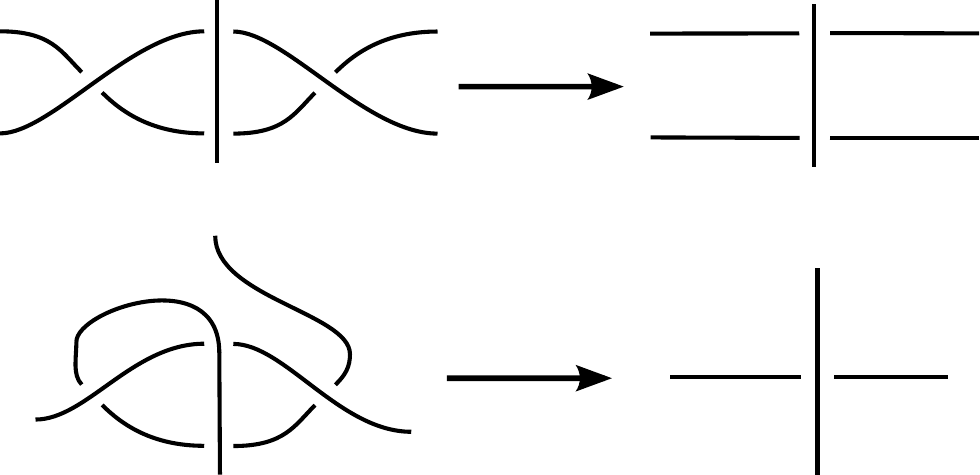
 \caption[An untongue and an untwirl move]{An untongue move (top) and an untwirl (bottom). Their inverses are known as tongue and twirl moves respectively.}
 \label{fig:unswirluntongue}
\end{figure}

\begin{figure}[p!]
  \centering
  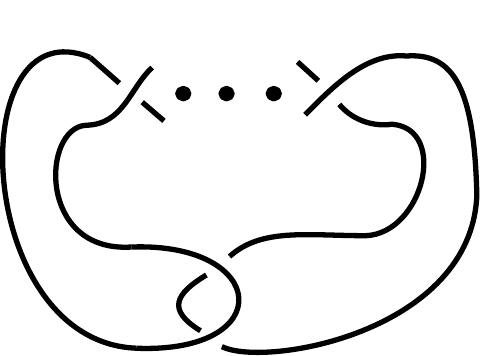
 \caption[The diagram $\mathcal{C}_m$]{The diagram $\mathcal{C}_m$, where the $|m|$ crossings are positive if $m>0$ and negative otherwise.}
 \label{fig:claspdiagram}
\end{figure}

\section{Alternating surgeries}
A {\em rational tangle} $T \subseteq B^3$ is a tangle consisting of two properly embedded arcs, such that $(B^3,T)$ is diffeomorphic as a pair to the standard unknotted tangle in $B^3$ with two arcs (this diffeomorphism is not required to fix the boundary).
Given a knot or link $L$ in $S^3$, one can obtain a new link by {\em rational tangle replacement}, that is by replacing one rational tangle in $L$ with some other rational tangle. For example, changing or resolving a crossing are both rational tangle replacements. As the double branched cover of a rational tangle is a solid torus, if $L'$ is obtained from $L$ by rational tangle replacement, then the branched double cover $\Sigma(L')$ can be obtained by surgery on some knot in $\Sigma(L)$ \cite{montesinos1973variedades}. This correspondence between tangle replacement and surgery is sometimes referred to as the {\em Montesinos trick}.
We get a criterion for when the double branched cover of an alternating link can arise by non-integer surgery in terms of rational tangle replacement in an alternating diagram \cite{mccoy2014noninteger}.

\begin{introthm}{\ref{intro:thm:nonint}}
Let $L$ be an alternating knot or link. For any $p/q$ with $|p|>q>1$, the double branched cover $\Sigma(L)$ arises as $p/q$-surgery on a knot in $S^3$ if and only if (1) $L$ possesses an alternating diagram obtained by rational tangle replacement from an almost-alternating diagram of the unknot and (2) the corresponding surgery slope is $p/q$.
\end{introthm}
\begin{rem}
It turns out that if $S_{p/q}^3(\kappa)$ is an alternating surgery for some $|p/q|<1$, then $\kappa$ is the unknot. Since surgeries on the unknot always satisfy $S_{p/q}^3(U)\cong S_{p/q'}^3(U)$ for some $p/q'\geq 1$, the assumption that $|p|>q$ in Theorem~\ref{intro:thm:nonint} is not a serious loss of generality.
\end{rem}

Not only does Theorem~\ref{intro:thm:nonint} show which double branched covers of alternating links arise by non-integer surgery on a knot in $S^3$, but the Montesinos trick also allows us to construct knots on which these alternating surgeries occur.

Let $D'$ be an almost-alternating diagram of the unknot. Consider a small ball $B$ containing the non-alternating crossing. If we take the double cover of $S^3$ branched over $D'$, then $B$ lifts to a solid torus. Since the double branched cover of the unknot is $S^3$, the core of this solid torus is a knot $\kappa_{D'}$ in $S^3$. Let $D$ be a diagram obtained by replacing the single crossing in the interior of $B$ with some other rational tangle. We see that $\Sigma(D)$ arises by surgery on $\kappa_{D'}$. As $D'$ is almost-alternating, there are tangle replacements for which the resulting diagram $D$ is alternating, showing that $\kappa_{D'}$ admits alternating surgeries.
\begin{defn}\label{intro:def:D}
Let $\mathcal{D}$ denote the collection of all knots arising in this way:
\[\mathcal{D}= \{ \kappa_{D'} \subseteq S^3 \,|\, \text{$D'$ is an almost-almost alternating diagram of the unknot} \}.\]
\end{defn}

Theorem~\ref{intro:thm:nonint} shows that $\mathcal{D}$ accounts for all possible non-integer alternating surgeries in the following sense.
\begin{introthm}{\ref{intro:thm:montyconverse}}
If $S_{p/q}^3(\kappa)$ is an alternating surgery for $q>1$, then there is $\kappa'\in \mathcal{D}$ with
\[S_{p/q}^3(\kappa)\cong S_{p/q}^3(\kappa')
\quad \text{and} \quad
\Delta_\kappa(t)=\Delta_{\kappa'}(t).\]
\end{introthm}
Here $\Delta_K(t)$ denotes the Alexander polynomial of $K$. In fact, the only known knots with alternating surgeries are those in $\mathcal{D}$. This raises the following question.
\begin{ques}
Suppose that $\kappa$ admits an alternating surgery, is $\kappa$ in $\mathcal{D}$?
\end{ques}

For a given knot, it is also possible to restrict the number of alternating surgeries that it admits \cite{mccoy2014bounds}.
\begin{introthm}{\ref{intro:thm:widthbound}}
Let $\kappa$ be a non-trivial knot admitting alternating surgeries. There is an integer $N$, which can be calculated from the Alexander polynomial of $\kappa$, such that if $S_{p/q}^3(\kappa)$ is an alternating surgery, then
\[N-1\leq p/q \leq N+1.\]
\end{introthm}
Since it can be shown that the $N$ in Theorem~\ref{intro:thm:widthbound} satisfies $|N|\leq 4g(\kappa)+2$, we obtain a generalization of a bound on lens space surgeries originally due to Rasmussen \cite{Rasmussen04Goda}.
\begin{introcor}{\ref{intro:cor:Rasmusbound}}
Let $\kappa$ be a non-trivial knot. If $S_{p/q}^3(\kappa)$ is an alternating surgery, then
\[|p/q| \leq 4g(\kappa)+3.\]
\end{introcor}

\section{Heegaard Floer homology and $d$-invariants}
Heegaard Floer homology forms a package of invariants, defined by Ozsv{\'a}th and Szab{\'o}, which can be associated with any closed oriented 3-manifold $Y$ \cite{Ozsvath04threemanifoldinvariants}. In its simplest form, Heegaard Floer homology (with coefficients in $\Z/2\Z$) defines a finite-dimensional $\Z/2\Z$-vector space $\widehat{HF}(Y)$. There are related Heegaard Floer homology invariants $HF^+(Y)$, $HF^-(Y)$ and $HF^\infty(Y)$ associated with $Y$, along with various exact sequences connecting these invariants.

The Heegaard Floer homology of any 3-manifold $Y$ splits as a direct sum over its \spinc-structures:
\[\widehat{HF}(Y)\cong \bigoplus_{\spinct \in \spinc(Y)}\widehat{HF}(Y,\spinct).\]
When $Y$ is a rational homology sphere each of these summands satisfies $\chi(\widehat{HF}(Y,\spinct))=1$ and, in particular, is non-trivial \cite{Ozsvath04applications}. Since $\spinc(Y)$ is in bijection with $H^2(Y;\Z)\cong H_1(Y;\Z)$, we have
\[\dim_{\Z/2\Z} \widehat{HF}(Y) \geq |H_1(Y; \Z)|,\]
for any rational homology sphere $Y$. When this minimum is realized we say that $Y$ is an {\em $L$-space}.

For rational homology spheres there are also associated numerical invariants, $d(Y,\spinct)\in \mathbb{Q}$, called the {\em $d$-invariants} \cite{Ozsvath03Absolutely}. These have the property that for all $\spinct \in \spinc(Y)$,
\[d(-Y,\spinct)=-d(Y,\spinct),\]
where $-Y$ denotes $Y$ with its orientation reversed, and
\[d(Y,\spinct)=d(Y,\overline\spinct),\]
where $\overline\spinct$ denotes the conjugate of $\spinct$.

\subsection{The knot Floer complex}\label{intro:sec:knotfloer}
Given a null-homologous knot $K$ in $Y$, one can also define knot Floer homology, as developed independently by Rasmussen \cite{Rasmussen03Thesis} and by Ozsv{\'a}th and Szab{\'o} \cite{Ozsvath04knotinvariants}. In particular, to any knot in $K \subseteq S^3$ one can associate a knot Floer complex, $CFK^\infty(K)$, which takes the form of a bifiltered chain complex
\[CFK^\infty(K)=\bigoplus_{i,j\in \Z}C\{(i,j)\},\]
whose chain homotopy type (as a bifiltered complex) is an invariant of $K$. This comes equipped with a chain complex isomorphism
\[U: CFK^\infty(K)\longrightarrow CFK^\infty(K),\]
which maps $C\{(i,j)\}$ isomorphically to $C\{(i-1,j-1)\}$.

The complex $CFK^\infty(K)$ has quotient complexes defined by
\[A_k^+=C\{i\geq 0 \text{ or } j\geq k\},\]
for each $k\in \Z$ and
\[B^+=C\{i\geq 0\}.\]
These complexes admit chain maps
\[v_k,h_k \colon A_k^+ \longrightarrow B^+,\]
where $v_k$ is the obvious vertical projection, and $h_k$ consists of the composition of a horizontal projection onto $C\{j\geq k\}$ and a chain homotopy equivalence. When restricted to sufficiently large gradings $v_k$ and $h_k$ induce $U$-equivariant isomorphisms on homology, meaning that they are modelled on multiplication by $U^{V_k}$ and $U^{H_k}$ respectively, for some non-negative integers $V_k$ and $H_k$ \cite{ni2010cosmetic}. These numbers are invariants of the chain homotopy type of $CFK^\infty(K)$ and hence are invariants of $K$.
The following proposition summarizes the properties of the $V_k$ and $H_k$ that we will use in this thesis.
\begin{prop}[Ni-Wu, \cite{ni2010cosmetic}]\label{intro:prop:Viproperties}
For any $K\subseteq S^3$ the $V_k$ and $H_k$ are a sequence of non-negative integers satisfying \[V_{k-1}\geq V_k\geq V_{k-1}-1 \quad\text{and}\quad H_k=V_{-k}\]
for all $k$.
\end{prop}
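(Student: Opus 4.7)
The plan is to derive all three assertions from the definitions of $V_k$ and $H_k$ together with naturality properties of the subquotients $A_k^+$ and $B^+$ of $CFK^\infty(K)$, and the $i\leftrightarrow j$ symmetry of $CFK^\infty(K)$ available for a knot in $S^3$. Non-negativity is essentially built into the definition: at sufficiently large Maslov gradings, $v_{k*}$ is a non-trivial $U$-equivariant, grading-preserving map from the tower summand of $H_*(A_k^+)$ into $H_*(B^+)=\tp$, and any such map must be multiplication by $U^n$ for some $n\geq 0$, since negative powers of $U$ do not define endomorphisms of $\tp$. The same reasoning handles $H_k$.

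For the monotonicity $V_{k-1}\geq V_k$, view $A_k^+$ as the quotient $CFK^\infty(K)/C\{i<0\text{ and }j<k\}$. Since $C\{i<0,\,j<k-1\}\subseteq C\{i<0,\,j<k\}$, the identity on $CFK^\infty(K)$ descends to a natural quotient chain map $\pi\colon A_{k-1}^+\to A_k^+$ satisfying $v_{k-1}=v_k\circ\pi$. Restricting to the tower summands, the relation $U^{V_{k-1}}=U^{V_k}\pi_*$ forces $\pi_*$ to act on the tower as $U^{V_{k-1}-V_k}$, and the non-negativity of this power yields $V_{k-1}\geq V_k$. For the step bound $V_k\geq V_{k-1}-1$, observe that multiplication by $U$ on $CFK^\infty(K)$ carries $C\{i<0,\,j<k\}$ into $C\{i<-1,\,j<k-1\}\subseteq C\{i<0,\,j<k-1\}$ and thus descends to a chain map $U\colon A_k^+\to A_{k-1}^+$; direct inspection on generators shows $v_{k-1}\circ U=U\circ v_k$. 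Reading this on towers yields $U^{V_{k-1}}U_*=U^{V_k+1}$, and since the induced action $U_*$ on towers is again a non-negative power of $U$, we obtain $V_{k-1}\leq V_k+1$.

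For the identity $H_k=V_{-k}$, invoke the filtered chain homotopy equivalence of $CFK^\infty(K)$ for $K\subseteq S^3$ that interchanges the $i$- and $j$-filtrations. Combined with an appropriate multiplication by $U^k$, this equivalence matches the quotient $A_k^+$ with the quotient $A_{-k}^+$ in a way that carries the horizontal projection $A_k^+\to C\{j\geq k\}$ appearing in the definition of $h_k$ to the vertical projection $A_{-k}^+\to B^+$ defining $v_{-k}$. Tracking the $U$-power through this identification shows that the resulting action of $h_k$ on the tower summand is governed by the same exponent as $v_{-k}$, so that $H_k=V_{-k}$.

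The main technical obstacle is the verification underlying $H_k=V_{-k}$: one must match the grading shifts produced by the $i\leftrightarrow j$ symmetry with those produced by the $U^k$-shift so that the induced map on towers has no spurious $U$-power. Both operations modify the Maslov grading, and these modifications must balance precisely; without this balance one only obtains $H_k=V_{-k}$ up to an additive constant depending on $k$, which would be too weak for later applications.
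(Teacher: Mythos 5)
The paper does not actually prove this proposition; it is attributed to Ni and Wu via citation, so there is no in-text proof to compare against. Your reconstruction is essentially correct and follows the standard route one finds in Rasmussen and Ni--Wu: the quotient map $\pi\colon A_{k-1}^+\to A_k^+$ and the $U$-multiplication $A_k^+\to A_{k-1}^+$ commute appropriately with the $v_\bullet$ maps, and the symmetry of $CFK^\infty$ supplies $H_k=V_{-k}$.

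One point worth spelling out, since you assert it without argument twice: why must $\pi_*$ (and likewise $U_*$) act on the towers as a \emph{non-negative} power of $U$? The reason is that a graded $U$-equivariant map between two copies of $\tp$ (shifted so their bottoms sit at gradings $\alpha$ and $\beta$) that is non-zero in all sufficiently large gradings can only exist in the appropriate direction: the bottom of the source is killed by $U$, so its image is killed by $U$, so its image is either $0$ or the bottom of the target; chasing this constraint up the tower shows a non-zero degree-$0$ map exists only when $\alpha\leq\beta$, and a non-zero degree-$(-2)$ map only when $\alpha-2\leq\beta$, in which case the map really is $U^n$ with $n\geq 0$. Without this observation your phrase ``and the non-negativity of this power'' is doing all the work and could look circular, since that non-negativity is exactly the inequality $V_{k-1}\geq V_k$ (resp.\ $V_k\geq V_{k-1}-1$) being proved. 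With that observation supplied, both monotonicity arguments go through as you describe. Your candid flag on the grading bookkeeping behind $H_k=V_{-k}$ is appropriate: the filtered equivalence swapping $i$ and $j$ together with the $U^k$ identification of $C\{j\geq k\}$ with $C\{j\geq 0\}$ do have compensating grading shifts, and verifying this cancellation is exactly the content one must check; once done, the identification carries $h_k$ precisely to $v_{-k}$ on towers, giving $H_k=V_{-k}$ with no extra $U$-power.
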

The sequence of $V_k$ is eventually zero and we will use $\nu^{+}=\nu^{+}(K)$ to denote the minimal integer such that $V_{\nu^{+}}=0$. This always satisfies $\nu^{+}\leq g_4(K)$, where $g_4(K)$ denotes the smooth slice genus of $K$ \cite[Theorem~2.3]{Rasmussen04Goda}.

\subsection{Dehn surgery and $d$-invariants}
We will primarily be interested in the $d$-invariants of $S^3_{p/q}(K)$, the manifold obtained by $p/q$-surgery on the knot $K\subseteq S^3$. These $d$-invariants can be calculated in terms of the integers $V_k$ and $H_k$ arising from $CFK^\infty(K)$. By using relative \spinc-structures on $S^3 \setminus \mathring{\nu} K$, Ozsv{\'a}th and Szab{\'o} establish identifications \cite{ozsvath2011rationalsurgery}
\begin{equation*}
\spinc(S_{p/q}^3(K)) \longleftrightarrow \Z/p\Z \longleftrightarrow \spinc(S_{p/q}^3(U)).
\end{equation*}
Using this identification, Ni and Wu derived a formula for the $d$-invariants of $S_{p/q}^3(K)$.
\begin{prop}[Proposition 1.6, \cite{ni2010cosmetic}]\label{intro:prop:NiWuformula}
If $p/q>0$, then for any $0\leq i \leq p-1$ the $d$-invariants of $S_{p/q}^3(K)$ satisfy
\[d(S_{p/q}^3(U),i)-d(S_{p/q}^3(K),i)=2\min \{ V_{\lfloor \frac{i}{q} \rfloor} , H_{\lfloor \frac{i-p}{q} \rfloor}\}.\]
\end{prop}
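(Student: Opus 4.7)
The plan is to invoke the Ozsv\'ath--Szab\'o rational surgery formula, which identifies $\hfp(S^3_{p/q}(K), i)$ with the homology of a mapping cone built from the complexes $A^+_k$ and $B^+$. Concretely, for each $i \in \{0, \ldots, p-1\}$ one has
\[
\hfp(S^3_{p/q}(K), i) \cong H_\ast\bigl(\mathrm{Cone}(D_i)\bigr),
\]
where the domain of $D_i$ is $\bigoplus_{s \in \Z} A^+_{\lfloor (i+sp)/q \rfloor}$, the codomain is $\bigoplus_{s\in\Z} B^+$, and $D_i$ sends the $s$-th $A$-summand to the $s$-th $B$-summand via $v_{\lfloor (i+sp)/q\rfloor}$ and to the $(s-1)$-st $B$-summand via $h_{\lfloor (i+sp)/q\rfloor}$. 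The $d$-invariant of $S^3_{p/q}(K)$ in \spinc-structure $i$ is then the minimal $\Q$-grading of a non-torsion class in this homology.

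First I would recall that both $A^+_k$ and $B^+$ contain a distinguished copy of the $U$-tower $\tp$, and that on these towers the maps $v_k$ and $h_k$ act as multiplication by $U^{V_k}$ and $U^{H_k}$ respectively; furthermore, by Proposition~\ref{intro:prop:Viproperties}, these exponents vanish for large $|k|$. Consequently the infinite mapping cone simplifies to a finite computation on the tower part, since only finitely many summands carry non-trivial exponents. Tracking which summands feed into a fixed bottom $B^+$ tower, one finds that the bottom of the surviving $\tp$ in the cone homology sees exactly two relevant exponents: $V_{\lfloor i/q \rfloor}$, coming from the vertical map out of the $A$-summand at $s = 0$; and $H_{\lfloor (i-p)/q \rfloor}$, coming from the horizontal map out of the $A$-summand at $s = -1$.

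Next I would compare with the unknot, for which all $V_k$ and $H_k$ equal zero. In that case the differential $D_i$ is surjective onto the tower parts, and the bottom of the surviving $\tp$ realises $d(S^3_{p/q}(U), i)$. For a general $K$, introducing the two exponents above truncates the image in the particular $B^+$ whose tower determines $d(S^3_{p/q}(K), i)$; a direct homological-algebra argument shows that a tower element is hit whenever it lies above \emph{either} of the two exponents. Hence the bottom of the surviving $\tp$ is pushed up by exactly $\min\{V_{\lfloor i/q \rfloor}, H_{\lfloor (i-p)/q \rfloor}\}$ steps in the tower filtration, and converting this integer shift into an absolute grading shift via the standard factor of two gives the claimed formula.

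The main obstacle will be the absolute grading bookkeeping. One must correctly identify \spinc-structure $i$ on $S^3_{p/q}(K)$ with its counterpart on $S^3_{p/q}(U)$, and pin down the grading shift between the two sides of the bijection $\spinc(S^3_{p/q}(K)) \leftrightarrow \Z/p\Z \leftrightarrow \spinc(S^3_{p/q}(U))$; this is handled by choosing a compatible cobordism and computing the induced grading shifts. Once this is in place, verifying that the exponent $\min\{V, H\}$ governs the shift, rather than, say, $V + H$ or $\max\{V, H\}$, is a careful but routine analysis of which tower elements lie in the image of $D_i$ versus which are genuinely annihilated.
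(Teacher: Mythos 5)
The paper does not prove this proposition; it quotes it from Ni--Wu, so there is no internal proof to compare against. I can still assess the proposal on its merits, and there I think you have the right framework but the wrong mechanism at the crucial step, and this matters because the mechanism you describe lands on a coefficient that is in fact incorrect.

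Your broad outline --- invoke the Ozsv\'ath--Szab\'o rational surgery mapping cone, pass to the $U$-tower parts of $A^+_k$ and $B^+$, use that $v_k,h_k$ act as $U^{V_k},U^{H_k}$, truncate using the eventual vanishing from Proposition~\ref{intro:prop:Viproperties}, and compare gradings with the unknot --- is exactly the Ni--Wu strategy. Two issues, one minor and one fatal:

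\paragraph{Index bookkeeping.} You declare $h$ to map the $s$-th $A$-summand to the $(s-1)$-st $B$-summand but then say the horizontal contribution to the bottom $B^+$ comes from the $A$-summand at $s=-1$. With your stated convention $h_{-1}$ lands in $B_{-2}$, not the $B$-factor you are examining. You land on the right pair $\bigl(V_{\lfloor i/q\rfloor},H_{\lfloor(i-p)/q\rfloor}\bigr)$ only because you combined an off-by-one in the map direction with an off-by-one in the summand choice. This needs to be straightened out before the argument can be trusted.

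\paragraph{Cokernel vs.\ kernel.} The real gap is the sentence ``a tower element is hit whenever it lies above \emph{either} of the two exponents,'' which you use to conclude the shift is $\min\{V,H\}$. This is computing the image of $D_i$ in a fixed $B^+$-tower, i.e.\ a cokernel. But the $d$-invariant of $S^3_{p/q}(K)$ is read off from the non-torsion $\tp$ surviving in $H_*(X^+_i)$, and in the truncated cone $\bigoplus_s A^+_s\to\bigoplus_s B^+_s$ the domain has one more tower summand than the codomain: the surviving $\tp$ is a \emph{kernel} tower, while the cokernel pieces are finite and feed $\hfred$, not $d$. Tracking the bottom grading of the kernel tower produces $\max\{V,H\}$, not $\min\{V,H\}$, as the amount of truncation. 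You flag at the end that ``verifying that $\min\{V,H\}$ governs the shift, rather than $V+H$ or $\max\{V,H\}$'' is a routine analysis of what lies in the image --- that is precisely the step where the argument goes astray, because what lies in the image of $D_i$ is not the thing that governs $d$.

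\paragraph{On the statement itself.} Relatedly, the formula as printed in the thesis (with $\min$) appears to contain a sign typo; it should be $2\max\{V_{\lfloor i/q\rfloor},H_{\lfloor(i-p)/q\rfloor}\}$. This is forced by the paper's own rewriting two lines later as $2V_{\min\{\lfloor i/q\rfloor,\lceil(p-i)/q\rceil\}}$: since $V$ is non-increasing on non-negative indices and $H_k=V_{-k}$, we have $V_{\min\{a,b\}}=\max\{V_a,V_b\}$, not $\min\{V_a,V_b\}$. The integer case ($q=1$) also forces this: the standard integer-surgery formula gives $2V_{\min\{i,n-i\}}=2\max\{V_i,V_{n-i}\}$, whereas taking the $\min$ of values would incorrectly predict $d(S^3_1(T_{2,3}))=d(S^3_1(U))$, contradicting the Poincar\'e sphere computation. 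So you have, in effect, reproduced the typo rather than the theorem. Fixing the cokernel-to-kernel confusion will also fix this.
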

Given the properties of the $H_k$ and $V_k$ in Proposition~\ref{intro:prop:Viproperties}, we see that the formula for the $d$-invariants in Proposition~\ref{intro:prop:NiWuformula} can be rewritten purely in terms of the $V_k$:
\begin{equation*}
d(S_{p/q}^3(U),i)-d(S_{p/q}^3(K),i)=2V_{\min \{\lfloor \frac{i}{q} \rfloor,\lceil \frac{p-i}{q} \rceil\}}.
\end{equation*}

\subsection{$L$-space knots}
A knot $K$ is said to be an {\em $L$-space knot} if $S^3_{p/q}(K)$ is an $L$-space for some $p/q > 0$. It turns out that if $K$ is an $L$-space knot, then $S^3_{p/q}(K)$ is an $L$-space if and only if $p/q \geq 2g(K)-1$. The knot Floer homology of an $L$-space knot is known to be determined by its Alexander polynomial, which can be written in the form
\[\Delta_K(t)=a_0 +\sum_{i=1}^g a_i(t^i+t^{-i}),\]
where $g=g(K)$ and the non-zero values of $a_i$ alternate in sign and assume values in $\{\pm 1\}$ with $a_g=1$ \cite{Ozsvath04genusbounds, Ozsvath05Lensspace}. Given an Alexander polynomial in this form, we define its {\em torsion coefficients} by the formula
\[t_i(K) = \sum_{j\geq 1}ja_{|i|+j}.\]

When $K$ is an $L$-space knot, the $V_i$ appearing in Section~\ref{intro:sec:knotfloer} satisfy $V_i=t_i(K)$ for $i\geq 0$ \cite{ozsvath2011rationalsurgery}. In particular, this shows that $\nu^{+}(K)=g_4(K)=g(K)$.
\begin{rem}\label{Ch1:rem:torsiondeterminespoly}
Observe that the torsion coefficients uniquely determine the Alexander polynomial. For $j\geq 1$, $a_j$ is determined by the relation
\[a_{j}=t_{j-1}(K) -2t_{j}(K)+t_{j+1}(K).\]
Since the Alexander polynomial is normalized so that $\Delta_K(1)=1$, this is also sufficient to determine the coefficient $a_0$.
\end{rem}

\subsection{4-manifolds}
Other properties of $d$-invariants arise from their interaction with 4-manifolds. A compact, oriented 4-manifold has an {\em intersection form}, which is a symmetric bilinear pairing
\[Q_X\colon H_2(X; \Z) \times H_2(X; \Z) \rightarrow \Z.\]
Note that $Q_X(x,y)=0$ whenever $x$ or $y$ is a torsion element in $H_2(X; \Z)$, so this pairing descends to a pairing on $H_2(X; \Z)/{\rm Tors}$, which we will also call $Q_X$. If $\partial X$ is a disjoint union of rational homology spheres, then this pairing is non-degenerate. In this case we can identify $H^2(X;\Z)/{\rm Tors}$ with the dual group ${\rm Hom}(H_2(X;\Z),\Z)\subseteq H_2(X)\otimes \Q$. This gives a $\Q$-valued pairing on $H^2(X;\Z)$ (and also on $H^2(X;\Z)\times H^2(X;\Z)$). We will denote the pairings on both $H_2(X;\Z)$ and $H^2(X;\Z)$ by $(x,y) \mapsto x \cdot y$.

\paragraph{}Let $X$ be a negative-definite smooth 4-manifold $X$ with boundary $Y$ a union of disjoint rational homology spheres. For any $\spincs \in \spinc(X)$ which restricts to $\spinct \in \spinc(Y)$ there is a bound on the $d$-invariant \cite[Theorem~9.6]{Ozsvath03Absolutely}:
\begin{equation}\label{Ch1:eq:dinvinequality}
\frac{c_1(\spincs)^2+b_2(X)}{4}\leq d(Y,\spinct).
\end{equation}
Recall that for any $\spincs \in \spinc(X)$, the corresponding first Chern class $c_1(\spincs)\in H^2(X;\Z)$ is a {\em characteristic covector}, meaning that it satisfies
\[c_1(\spincs)\cdot x \equiv x \cdot x \bmod{2},\]
for all $x\in H_2(X; \Z)$.
We will be interested in the special case where $X$ is sharp, meaning that its intersection form determines the $d$-invariants of its boundary.
\begin{defn}
The negative-definite manifold $X$ with $\partial X=Y$ is {\em sharp} if for every $\spinct \in \spinc(Y)$ there is some $\spincs \in \spinc(X)$ which restricts to $\spinct$ and attains equality in \eqref{Ch1:eq:dinvinequality}. That is, $X$ is sharp if and only if
\[4d(Y,\spinct)=\max_{ \substack{\spincs\in \spinc(X) \\ \spincs|_Y= \spinct}}c_1(\spincs)^2+b_2(X),\]
for every $\spinct \in \spinc(Y)$.
\end{defn}
In this thesis, an integer lattice is taken to mean a free abelian group with an integer-valued bilinear pairing. As a sharp manifold $X$ is negative-definite, the pairing $-Q_X$ is positive-definite, making the group $H_2(X; \Z)/{\rm Tors}$ into an integer lattice. We use $-Q_X$ to denote this lattice.
\begin{rem}
Note that if $X$ is sharp, then $-X$ is positive-definite, has boundary $-Y$, and satisfies
\[-4d(Y,\spinct)=\min_{ \substack{\spincs\in \spinc(-X) \\ \spincs|_{-Y}= \spinct}}c_1(\spincs)^2-b_2(X),\]
for every $\spinct \in \spinc(Y)$.
\end{rem}

\section{The Changemaker Theorem}
The results of thesis rest on an obstruction to Dehn surgery obtained by combining the $d$-invariants of Heegaard Floer homology with Donaldson's Diagonalization Theorem. This obstruction was first developed by Greene for integer surgeries in his work on the lens space realization problem \cite{GreeneLRP} and the cabling conjecture \cite{greene2010space}, and for half-integer surgeries in his work on 3-braid knots with unknotting number one \cite{Greene3Braid}. It was extended, with extra hypotheses, to general non-integer slopes by Gibbons \cite{gibbons2013deficiency}.

\begin{defn} We say that a tuple of increasing positive integers $(\sigma_1, \dots, \sigma_t)$ satisfies the {\em changemaker condition}, if for every
\[1\leq n \leq \sigma_1 + \dots + \sigma_t,\]
there is $A\subseteq \{ 1, \dots , t\}$ such that $n=\sum_{i\in A} \sigma_i$.
\end{defn}
The changemaker has the following equivalent formulation which will sometimes be useful.
\begin{prop}[Brown, \cite{brown1961note}]\label{Ch3:prop:CMcondition}
A tuple $(\sigma_1, \dots , \sigma_t)$ of increasing positive integers satisfy the changemaker condition if and only if
\[\sigma_1 = 1, \quad\text{and}\quad \sigma_i \leq \sigma_1 + \dots + \sigma_{i-1} +1,\text{ for } 1<i\leq t.\]
\end{prop}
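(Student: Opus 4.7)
The plan is to prove both implications directly, using a simple induction for the sufficiency direction and a contradiction argument for the necessity direction.

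For the forward direction (changemaker $\Rightarrow$ the stated inequalities), I would first observe that since $1$ lies in the range $1 \leq n \leq \sigma_1+\dots+\sigma_t$, there must be some subset $A$ with $\sum_{i\in A}\sigma_i = 1$; because the $\sigma_i$ are positive integers and listed in increasing order, this forces $\sigma_1 = 1$. For the inequality $\sigma_i \leq \sigma_1+\dots+\sigma_{i-1}+1$, I would argue by contradiction: if some index $i$ violated it, then $n = \sigma_1+\dots+\sigma_{i-1}+1$ would still be in the admissible range (since $\sigma_i \geq 2$ guarantees $n \leq \sigma_1+\dots+\sigma_t$), but any representing subset $A$ could contain no index $\geq i$ (as a single such term already exceeds $n$) and also could not be contained in $\{1,\dots,i-1\}$ (the full sum there is $n-1$). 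This contradiction yields the bound.

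For the converse direction (the stated inequalities $\Rightarrow$ changemaker), I would induct on $t$. The base case $t = 1$ is immediate from $\sigma_1 = 1$. For the inductive step, write $S_k = \sigma_1+\dots+\sigma_k$ and assume the result holds for $(\sigma_1,\dots,\sigma_{t-1})$, so that every integer in $[1, S_{t-1}]$ is expressible as a subset-sum of the first $t-1$ terms. Given any $n$ with $1 \leq n \leq S_t$, the inductive hypothesis handles the case $n \leq S_{t-1}$. For $S_{t-1} < n \leq S_t$, consider $m = n - \sigma_t$; the hypothesis $\sigma_t \leq S_{t-1}+1$ guarantees $m \geq 0$, and clearly $m \leq S_{t-1}$. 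If $m = 0$ take $A = \{t\}$, and otherwise apply the inductive hypothesis to $m$ and adjoin $t$ to the resulting subset.

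The main content is the inductive step: the inequality $\sigma_t \leq S_{t-1}+1$ is exactly what is needed to ensure there is no gap between the range handled by the first $t-1$ terms and the shifted range accessible by including $\sigma_t$. There is no real obstacle here; the proof is essentially bookkeeping once the correct inductive statement is identified.
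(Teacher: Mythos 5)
Your proof is correct. The paper does not prove this proposition itself—it simply cites Brown's 1961 note—so there is no in-paper argument to compare against, but your two directions (existence of $n=1$ forces $\sigma_1=1$; choosing $n = \sigma_1+\dots+\sigma_{i-1}+1$ for a contradiction; and the straightforward induction on $t$ using the no-gap inequality) are exactly the standard argument Brown gives for this characterization.
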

The key definition we will need is that of a changemaker lattice.
\begin{defn}\label{Intro:def:CMlattice}
Let $p/q>0$ be given by the continued fraction,
\[p/q=[a_0, \dots, a_l]^-=
a_0 -
    \cfrac{1}{a_1
        - \cfrac{1}{\ddots
            - \cfrac{1}{a_l} } },\]
where $a_0\geq 1$ and $a_i\geq 2$ for $i\geq 1$. Let $w_0, \dots, w_l\in \Z^N$ be vectors satisfying,
\[
w_i\cdot w_j =
\begin{cases}
a_i &\text{if $i=j$,}\\
-1 &\text{if $|i-j|=1$,}\\
0 &\text{if $|i-j|>1$.}
\end{cases}
\]
Suppose further that there is an orthonormal basis $\{e_1, \dots, e_N\}$ of $\Z^N$ for such that the following hold:
\begin{enumerate}[(I)]
\item there is a tuple $(\sigma_1, \dots, \sigma_t)$ satisfying the changemaker condition such that
\[
w_0=
\begin{cases}
\sigma_1 e_1 + \dots + \sigma_t e_t &\text{if $l=0$}\\
e_{t+1} + \sigma_1 e_1 + \dots + \sigma_t e_t &\text{if $l>0$;}
\end{cases}
\]
\item $|w_k\cdot e_i|\leq 1$ for all $1\leq k\leq l$ and all $e_i$;
\item for any $0\leq i < j\leq l$,
\[w_i\cdot w_j = -|I_j \cap I_i|=
\begin{cases}
-1 &\text{if $j=i+1$}\\
0 &\text{if $j>i+1$};
\end{cases}\]
where $I_k$ is the set $I_k=\{e_j\, | \, w_k\cdot e_j \ne 0\}$; and
\item for any $e_i$ there is $w_k$ such that $e_i\cdot w_k\ne 0$.
\end{enumerate}
Then we say that the orthogonal complement
\[L=\langle w_0, \dots, w_l \rangle^\bot \subseteq \Z^N\]
is a {\em $p/q$-changemaker lattice}.

Moreover, we say that the $\sigma_i$ are the {\em changemaker coefficients} of $L$ and that the $\sigma_i$ satisfying $\sigma_i>1$ are the {\em stable coefficients} of $L$.
\end{defn}
\begin{rem}
Note that a $p/q$-changemaker lattice is determined up to isomorphism by its stable coefficients. Given the stable coefficients, the remaining changemaker coefficients are determined by the requirement that $a_0=\norm{w_0}=\lceil p/q \rceil$. Up to automorphisms of $\Z^N$, the remaining $w_i$ are determined by Conditions II and III. Condition IV then determines $N$.
\end{rem}

This allows us to state the changemaker surgery obstruction.
\begin{introthm}{\ref{intro:thm:CM}}
Let $K\subseteq S^3$ such that $S_{p/q}^3(K)$ bounds a sharp manifold $X$ for $p/q>0$. Then the intersection form $Q_X$ satisfies
\[-Q_X\cong L \oplus \Z^S,\]
where $S\geq 0$ is an integer and
\[L=\langle w_0, \dots, w_l \rangle^\bot\subseteq \Z^N,\]
is a $p/q$-changemaker lattice.
\end{introthm}
\begin{rem} Condition IV in Definition~\ref{Intro:def:CMlattice} guarantees that a changemaker lattice does not include any vectors of norm 1. In almost all applications of Theorem~\ref{intro:thm:CM} we will know {\em a priori} that $-Q_X$ contains no elements of norm 1, and thus be able assume that $S=0$.
\end{rem} 
\chapter{Surgeries and sharp manifolds}\label{chap:sharpman}
The first versions of the changemaker theorem were established by Greene for integer and half-integer $L$-space surgeries in his work on the lens space realization problem \cite{GreeneLRP}, the cabling conjecture \cite{greene2010space} and alternating 3-braid knots with unknotting number one \cite{Greene3Braid}. These results were generalized by Gibbons who proved a changemaker theorem that held for surgeries of arbitrary rational slope, and, because of the developments in the theory of $d$-invariants due to Ni and Wu \cite{ni2010cosmetic}, no longer required the surgeries to be $L$-space surgeries \cite{gibbons2013deficiency}. Gibbons's result did, however, have additional conditions on the $d$-invariants of the manifold obtained by surgery. By adapting ideas from Greene's work \cite{greene2010space}, one can show that these conditions are redundant, giving a changemaker theorem valid for all positive rational surgeries bounding a sharp 4-manifold \cite{mccoy2014noninteger}.

The main purpose of this chapter is to establish the changemaker theorem in the greatest possible generality.
\begin{thm}\label{intro:thm:CM}
Let $K\subseteq S^3$ such that $S_{p/q}^3(K)$ bounds a sharp manifold $X$ for $p/q>0$. Then the intersection form $Q_X$ satisfies
\[-Q_X\cong L \oplus \Z^S,\]
where $S\geq 0$ is an integer and
\[L=\langle w_0, \dots, w_l \rangle^\bot\subseteq \Z^N,\]
is a $p/q$-changemaker lattice such that for all $0\leq i \leq n/2$,
\begin{equation}\label{Ch1:eq:w0formula2}
8V_{i} = \min_{\substack{ |c\cdot w_0|= n-2i \\ c \in \Char(\Z^{N})}} \norm{c} - N,
\end{equation}
where $n=\lceil p/q \rceil$.
\end{thm}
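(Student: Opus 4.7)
The plan is to glue $X$ to the standard 2-handle cobordism realizing $p/q$-surgery, apply Donaldson's diagonalization theorem to produce the vectors $w_0,\dots,w_l$, and then extract the changemaker condition and the formula for $V_i$ from sharpness combined with the Ni--Wu formula. First, let $P$ be the 2-handle cobordism from $S^3$ to $S^3_{p/q}(K)$ built from the continued fraction $p/q=[a_0,\dots,a_l]^-$, obtained by attaching 2-handles along $K$ with framing $a_0$ and along successive meridians with framings $a_1,\dots,a_l$. Its intersection form is the positive-definite tridiagonal matrix $T$ with $a_i$'s on the diagonal and $-1$'s on the off-diagonal. Glue $X$ to $-P$ along $S^3_{p/q}(K)$ and cap off the residual $S^3$-boundary with $D^4$ to obtain a closed negative-definite 4-manifold $Z$ with $-Q_Z\cong -Q_X\oplus T$. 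Donaldson's theorem provides an isometric embedding $-Q_Z\hookrightarrow \Z^M$; let $w_0,\dots,w_l$ be the images of the generators of $T$. They satisfy the pairing relations prescribed in Definition~\ref{Intro:def:CMlattice}. Absorbing any norm-one vectors of the $-Q_X$ summand into a $\Z^S$ factor and setting $N=M-S$ identifies $-Q_X$ with $\langle w_0,\dots,w_l\rangle^\bot\subseteq \Z^N$.

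Next, the lens space $L(p,q)=S^3_{p/q}(U)$ bounds the standard negative-definite plumbing sitting inside $-P$, which is well-known to be sharp, so its $d$-invariants are computed by maximizing $c_1^2$ on $T$. Combining this with the sharpness of $X$ and rewriting Proposition~\ref{intro:prop:NiWuformula} purely in terms of the $V_k$ yields, for $0\le i\le n/2$ with $n=\lceil p/q\rceil$, the identity
\[8V_i=\min_{\substack{c\in \Char(\Z^N)\\ |c\cdot w_0|=n-2i}}\norm{c}-N.\]
The pairing $|c\cdot w_0|=n-2i$ encodes the \spinc-structure label under the identification $\spinc(S^3_{p/q}(K))\longleftrightarrow \Z/p\Z$ coming from the 2-handle cobordism, while the pairings of $c$ with $w_1,\dots,w_l$ are normalized by automorphisms of $\Z^N$ (and dictate, together with the tridiagonal pattern coming from $T$, that the $w_k$ for $k\ge 1$ obey Conditions~II and~III of Definition~\ref{Intro:def:CMlattice}).

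Finally, extract the changemaker condition on $w_0$. After applying an automorphism of $\Z^N$, write $w_0=\sigma_1e_1+\dots+\sigma_te_t$ (with an extra $e_{t+1}$ when $l>0$, forced by $w_0\cdot w_1=-1$ together with $|w_1\cdot e_i|\le 1$), with positive $\sigma_1\le\dots\le \sigma_t$. The identity from the previous step, combined with the constraints $V_i\ge 0$ and $V_{i-1}-V_i\in\{0,1\}$ from Proposition~\ref{intro:prop:Viproperties}, forces the minimum of $\norm{c}$ over characteristic $c$ with $|c\cdot w_0|=n-2i$ to drop by at most $8$ as $i$ increments; since each coordinate of a characteristic $c\in\Char(\Z^N)$ is odd, a subset-sum analysis then translates this constraint into the requirement that every integer in $\{1,\dots,\sigma_1+\dots+\sigma_t\}$ arises as a subset sum of the $\sigma_j$, which by Proposition~\ref{Ch3:prop:CMcondition} is precisely the changemaker condition. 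Condition~IV is secured by absorbing into $\Z^S$ any basis vectors of $\Z^N$ that are not paired by some $w_k$. The main obstacle is this last step: extracting the changemaker property from the sequence of norm-minimization identities requires careful combinatorics on characteristic vectors and on the integrality and monotonicity of the $V_i$, whereas the first two steps are by now largely standard, streamlining ideas of Greene, Ni--Wu and Gibbons.
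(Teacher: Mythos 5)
Your proposal has the same high-level architecture as the paper — glue $X$ to the 2-handle cobordism $W$ built from the continued fraction, diagonalize via Donaldson, take $w_0,\dots,w_l$ to be the images of the handle classes, and extract the changemaker structure and the $V_i$-formula from sharpness together with the Ni--Wu formula. However, there are three genuine gaps.

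First, Conditions~II and~III in Definition~\ref{Intro:def:CMlattice} are not ``dictated by the tridiagonal pattern coming from $T$.'' They require the $d$-invariant input in an essential way. The paper proves them (Steps~\ref{Ch1:Step:wijvalues} and \ref{Ch1:Step:overlap} of Lemma~\ref{Ch1:lem:wigiveCMlattice}) by choosing specific $\spincs \in \mathcal{M}$ with $D^{p/q}_K([\spincs])=0$ and invoking the stronger statement of Lemma~\ref{Ch1:lem:spincZ}$(i)$: such vanishing happens exactly when there is $c\in\{\pm1\}^B$ with $\spincs = (c\cdot w_0,\dots,c\cdot w_l)$, controlling pairings with \emph{all} the $w_j$ simultaneously, not just with $w_0$ via \eqref{Ch1:eq:w0formula2}.

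Second, your subset-sum extraction of Condition~I is incomplete for non-integer $p/q$. Formula \eqref{Ch1:eq:w0formula2} plus the monotonicity $V_{i-1}-V_i\in\{0,1\}$ does force the full tuple of coordinates of $w_0$ to be a changemaker sequence; but Condition~I asks for the tuple $(\sigma_1,\dots,\sigma_t)$ \emph{after the coefficient $1$ on $e_{t+1}$ is separated out} to satisfy the changemaker condition, which is strictly stronger. (For example, $(1,2)$ is a changemaker tuple, but $(2)$ is not.) To control whether the subset in question uses the coordinate $e_{t+1}$, one needs existence of $c\in\{\pm1\}^B$ with \emph{both} a prescribed value of $c\cdot w_0$ and $c\cdot w_1 = -a_1$; the latter pins down $c\cdot e_{t+1}=1$. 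The paper achieves this by evaluating $D^{p/q}_K$ on the left-full covector $\spincs=(c_0,-a_1,\epsilon_2,\dots,\epsilon_l)$ in Step~\ref{Ch1:Step:sigmaCMconditions}, which is exactly the extra leverage missing from the approach you outline.

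Third, the identification $-Q_X \cong \langle w_0,\dots,w_l\rangle^\bot$ is not a matter of absorbing norm-one vectors. Mayer--Vietoris only gives an embedding $Q_{-X}\hookrightarrow \langle w_0,\dots,w_l\rangle^\bot \subseteq \Z^B$; to promote it to an isomorphism one must match discriminants, which the paper does in Lemma~\ref{Ch1:lem:QXisom} by a non-trivial argument with the long exact sequence of the pair $(X,Y)$ and a surjectivity count showing both sides have discriminant $p$. Separately, deriving \eqref{Ch1:eq:w0formula2} itself — and the key fact that $D^{p/q}_K([\spincs]) = 2V_{\lfloor(a_0-|c_0|)/2\rfloor}$ or $2V_{\lfloor(a_0-|c_0|-2)/2\rfloor}$ for $\spincs\in\mathcal{M}$ — needs the careful bookkeeping of short covectors, push-downs, and full tanks in Lemmas~\ref{Ch1:lem:countspincinC} through \ref{Ch1:lem:evalD}, which your proposal treats as a black box.
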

The $V_i$ appearing in \eqref{Ch1:eq:w0formula2} are the integers derived from the knot Floer complex of $K$ as described in Section~\ref{intro:sec:knotfloer}. The relationship in \eqref{Ch1:eq:w0formula2} was first established by Greene for integer surgeries \cite{greene2010space} and later observed to hold for rational surgeries as well \cite{mccoy2014sharp}.

\begin{rem}\label{Ch1:rem:CMgivesallVi}
As there are elements of norm $N$ in $\Char(\Z^N)$, \eqref{Ch1:eq:w0formula2} shows that $V_k=0$ for some $0\leq k \leq n/2$. As the $V_i$ are a non-negative decreasing sequence, this shows that $V_i=0$ for all $i\geq n/2$. This means that if $S_{p/q}^3(K)$ bounds a sharp manifold then knowing $w_0$ is sufficient to determine the $V_i$ for all $i\geq 0$.
\end{rem}
We will also show that \eqref{Ch1:eq:w0formula2} is sufficient to determine the vector $w_0$. It turns out that the stable coefficients of $L$ are determined by the $V_i$, making them invariants of $K$. In particular, the stable coefficients are independent of the surgery slope $p/q$. This is the sense in which the stable coefficients are stable.
\begin{thm}\label{Ch1:thm:stabledependence}
Let $K\subseteq S^3$ such that $S_{p/q}^3(K)$ bounds a sharp manifold $X$ for $p/q>0$. Then the intersection form $Q_X$ satisfies
\[-Q_X\cong L \oplus \Z^S,\]
where $L$ is a $p/q$-changemaker lattice and the stable coefficients are an invariant of $K$. Therefore, the intersection form of $X$ is determined by the knot $K$, the surgery slope $p/q$ and $b_2(X)$.
\end{thm}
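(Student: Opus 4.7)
The plan is to bootstrap off Theorem~\ref{intro:thm:CM} (already proved earlier in the chapter) and reduce the statement to a single intrinsic-ness assertion about the stable coefficients. Applying that theorem gives the decomposition $-Q_X\cong L\oplus \Z^S$ with $L=\langle w_0,\dots,w_l\rangle^\perp\subseteq \Z^N$ a $p/q$-changemaker lattice whose vector $w_0$ satisfies the relation \eqref{Ch1:eq:w0formula2}. Since the integers $V_i$ are invariants of $K$, everything will follow once I show that the stable coefficients of $L$ can be extracted from the sequence $\{V_i\}_{i\geq 0}$ alone, without reference to $p/q$.

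The main technical step is therefore to invert \eqref{Ch1:eq:w0formula2}. Write $w_0=\epsilon\, e_{t+1}+\sum_{i=1}^{t}\sigma_i e_i$ (with $\epsilon\in\{0,1\}$ depending on whether $l=0$). For any characteristic vector $c\in\Char(\Z^N)$ every coordinate $c_j$ is odd, so $c_j^2-1\in\{0,8,24,\dots\}$ and
\[
\|c\|^2-N=\sum_{j=1}^{N}(c_j^2-1)
\]
is a nonnegative multiple of $8$; the quantity $\|c\|^2-N$ measures the total ``cost'' of upgrading coordinates from $\pm 1$ to $\pm 3,\pm 5,\dots$. The changemaker condition (Proposition~\ref{Ch3:prop:CMcondition}) guarantees that every integer $|c\cdot w_0|=n-2i$ in the required range is realisable by some sign-choice vector $c$ with all $c_j=\pm 1$, so \eqref{Ch1:eq:w0formula2} encodes, for each admissible target value, the cheapest way to hit it using the multiset of coefficients of $w_0$. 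The plan is then to recover the stable coefficients inductively: the largest stable coefficient $\sigma_t$ should be read off from the position and size of the first ``jump'' in the sequence $V_i$ (using Proposition~\ref{intro:prop:Viproperties}, which forces each drop to have size $0$ or $1$), and subtracting off its contribution to the optimization reduces the problem to a changemaker tuple with one fewer stable coefficient. Iterating, one peels off the entire stable multiset. This inductive combinatorial recovery, and the verification that it really is independent of $n=\lceil p/q\rceil$, is the hard part of the proof.

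With the stable coefficients identified as invariants of $K$, the remainder is bookkeeping. Given $K$ and $p/q$, the continued fraction expansion $p/q=[a_0,\dots,a_l]^-$ is determined, in particular $a_0=\lceil p/q\rceil=\|w_0\|^2$. Together with the stable coefficients, the identity $\sum_i \sigma_i^2=a_0$ or $a_0-1$ (depending on whether $l=0$) forces the number of changemaker coefficients equal to $1$, and hence the entire tuple $(\sigma_1,\dots,\sigma_t)$. By the remark following Definition~\ref{Intro:def:CMlattice}, Conditions II, III and IV then determine the vectors $w_1,\dots,w_l$ and the ambient rank $N$ up to isomorphism, so the changemaker lattice $L$ is determined by $K$ and $p/q$. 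Finally, $S=b_2(X)-\rk L$, so the splitting $-Q_X\cong L\oplus \Z^S$ is pinned down by $K$, $p/q$ and $b_2(X)$, completing the theorem.
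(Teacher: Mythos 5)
You have the correct high-level reduction, matching the paper's: apply Theorem~\ref{intro:thm:CM} to get $-Q_X\cong L\oplus\Z^S$ with $w_0$ satisfying \eqref{Ch1:eq:w0formula2}, argue that the stable coefficients are recoverable from $(V_i)_{i\geq 0}$ alone, then close by noting that the stable coefficients together with $p/q$ determine $L$ up to isomorphism and that $S=b_2(X)-\rk L$. The concluding bookkeeping paragraph is fine.

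The gap is the recovery step, which you flag as ``the hard part'' and do not actually carry out, and your sketch of it is not correct. In the paper this is a standalone result (Theorem~\ref{Ch1:thm:calculaterho}), whose proof occupies all of Section~\ref{sec:CMinvariant}. Your proposed mechanism --- read the largest stable coefficient from ``the position and size of the first jump'' in $(V_i)$, subtract it off, recurse on a smaller changemaker tuple --- does not work as stated. What is actually true is that the largest coefficient equals $T_1=|\{i : V_i=1\}|$, the length of the \emph{final nonzero plateau} (Remark~\ref{Ch1:rem:rho0rho1}), not a jump position; more generally Lemma~\ref{Ch1:lem:calcTi} gives $T_m=\max_{\alpha\in S_m}\rho\cdot\alpha$ over nonnegative tuples with $\sum_i\alpha_i(\alpha_i+1)=2m$, and the naive iteration $\rho_m=T_m-T_{m-1}$ fails as soon as the maximizer can choose a coordinate $\alpha_i\geq 2$: for example $(2,0,\dots)\in S_3$, so $T_3=\max(\rho_1+\rho_2+\rho_3,\,2\rho_1)$, and the second branch can win. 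Handling this requires a genuine case analysis: the quantity $\mu=\min_{1\leq i<V_0}(T_i-T_{i-1})$ separates the case $\mu\leq 2$ from the rigid case $\mu\geq 3$ (Lemmas~\ref{Ch1:lem:mubound} and~\ref{Ch1:lem:mu=3}); the degenerate case $V_0\leq 1$ needs Lemma~\ref{Ch1:lem:V0=1}; the iterative extraction is Lemma~\ref{Ch1:lem:computerho}; and when that iteration stalls, Lemma~\ref{Ch1:lem:finalcoeffs} shows that only coefficients $\leq\mu\leq 2$ remain, whose number is then pinned down by $g=\tfrac12\sum_i\rho_i(\rho_i-1)$. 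None of this appears in your sketch, and the ``subtract and recurse'' reduction is also not obviously well founded: after deleting $\rho_1$ the original $(V_i)$ is not the sequence attached to the smaller changemaker lattice, so an inductive hypothesis would not apply directly. In short, you have correctly reduced the theorem to Theorem~\ref{Ch1:thm:calculaterho} but have not proved it.
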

If $X$ is a sharp manifold with boundary $Y$, then we can obtain another sharp manifold by taking a connect sum of $X$ with $\overline{\mathbb{C}P}^2$. It follows from Theorem~\ref{Ch1:thm:stabledependence} that if $Y\cong S^3_{p/q}(K)$, then at the level of intersection forms this is the only possibility.
\begin{cor}\label{Ch1:cor:interfromunique}
Let $K \subseteq S^3$ be a knot such that for some $p/q>0$, the 3-manifold $S^3_{p/q}(K)$ bounds negative-definite sharp manifolds $X$ and $X'$, with $b_2(X')= b_2(X)+k$ for $k\geq 0$, then
\[ Q_{X'} \cong Q_{X} \oplus (-\mathbb{Z}^{k})\cong Q_{X\#_{k}\overline{\mathbb{C}P}^2}.\]
\end{cor}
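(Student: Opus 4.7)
The plan is to deduce Corollary~\ref{Ch1:cor:interfromunique} directly from Theorem~\ref{Ch1:thm:stabledependence} together with the uniqueness remark following Definition~\ref{Intro:def:CMlattice}. First I would apply Theorem~\ref{Ch1:thm:stabledependence} to both $X$ and $X'$: since both are negative-definite sharp manifolds with boundary $S^3_{p/q}(K)$, there exist $p/q$-changemaker lattices $L$ and $L'$ and integers $S,S'\geq 0$ with
\[-Q_X\cong L\oplus \Z^S \quad\text{and}\quad -Q_{X'}\cong L'\oplus \Z^{S'}.\]
Theorem~\ref{Ch1:thm:stabledependence} asserts that the stable coefficients of $L$ and of $L'$ are invariants of $K$ (independent of which sharp filling we chose), so $L$ and $L'$ have identical stable coefficients.

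Next I would invoke the remark after Definition~\ref{Intro:def:CMlattice}, which states that a $p/q$-changemaker lattice is determined up to isomorphism by its stable coefficients (the remaining changemaker coefficients are forced by $a_0=\lceil p/q \rceil$, the vectors $w_1,\dots,w_l$ are then determined up to automorphism of $\Z^N$ by Conditions~II and III, and $N$ is pinned down by Condition~IV). Hence $L\cong L'$; in particular $\operatorname{rk} L=\operatorname{rk} L'$. Comparing second Betti numbers then gives $S'=b_2(X')-\operatorname{rk} L'=b_2(X)+k-\operatorname{rk} L=S+k$, so
\[-Q_{X'}\cong L\oplus \Z^{S+k}\cong (L\oplus \Z^{S})\oplus \Z^{k}\cong -Q_X\oplus \Z^{k},\]
which after negating signs is the desired isomorphism $Q_{X'}\cong Q_X\oplus(-\Z^k)$.

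Finally I would identify $Q_X\oplus(-\Z^k)$ with $Q_{X\#_k\overline{\mathbb{C}P}^2}$: taking a connect sum with $\overline{\mathbb{C}P}^2$ contributes a rank-one summand with intersection form $(-1)$ to the intersection lattice, so iterating $k$ times yields $Q_{X\#_k\overline{\mathbb{C}P}^2}\cong Q_X\oplus(-\Z^k)$.

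There is no real obstacle in this argument once Theorem~\ref{Ch1:thm:stabledependence} is in hand; the content of the corollary is entirely packaged into the two inputs (stable coefficients are $K$-invariants, and stable coefficients determine the changemaker lattice). The only point requiring a little care is checking that the orthogonal free summands $\Z^S$ and $\Z^{S'}$ split off unambiguously from the changemaker part, which is clear because a $p/q$-changemaker lattice contains no norm-$1$ vectors by Condition~IV, so the decomposition $-Q_X\cong L\oplus \Z^S$ is the maximal splitting off of a free factor and hence canonical. Thus the whole proof reduces to chasing ranks once $L\cong L'$ has been established.
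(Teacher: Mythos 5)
Your proof is correct and follows essentially the same route as the paper: apply Theorem~\ref{Ch1:thm:stabledependence} to both $X$ and $X'$, note that the changemaker summands agree because the stable coefficients are invariants of $K$ and determine the $p/q$-changemaker lattice up to isomorphism, and then chase ranks to identify the free summands. The closing remark about the splitting being canonical is not needed for the argument (you only require the existence of the decompositions provided by the theorem, not their uniqueness), but it is harmless.
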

We will also prove the following theorem, which provides further examples of sharp manifolds to which Theorem~\ref{intro:thm:CM} can be applied \cite{mccoy2014sharp}.
\begin{thm}\label{Ch1:thm:sharpextension}
If $S^3_{p'/q'}(K)$ bounds a sharp manifold for some $p'/q'>0$, then $S^3_{p/q}(K)$ bounds a sharp manifold for all $p/q\geq p'/q'$.
\end{thm}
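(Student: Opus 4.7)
The plan is to construct a sharp 4-manifold with boundary $S^3_{p/q}(K)$ by attaching a negative-definite cobordism to the given sharp manifold $X'$ for $S^3_{p'/q'}(K)$, and then to verify sharpness using the slope-independence of the stable coefficients.

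\textbf{Step 1 (Cobordism).} First I would produce a negative-definite cobordism $W$ from $S^3_{p'/q'}(K)$ to $S^3_{p/q}(K)$. Both manifolds are Dehn fillings of the knot exterior $S^3\setminus \nu(K)$, so $S^3_{p/q}(K)$ can be obtained from $S^3_{p'/q'}(K)$ by rational Dehn surgery on the dual knot $\kappa' \subseteq S^3_{p'/q'}(K)$ (the core of the $p'/q'$-filling). The required surgery slope on $\kappa'$ is determined by a change of basis on $\partial(S^3 \setminus \nu(K))$; the assumption $p/q \geq p'/q'$ should ensure this slope can be written as a continued fraction $[b_0, b_1, \dots, b_r]^-$ with $b_i \geq 2$ for $i \geq 1$, so that the trace of the surgery is a negative-definite plumbed cobordism.

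\textbf{Step 2 (Gluing).} Set $X := X'\cup_{S^3_{p'/q'}(K)} W$. Then $\partial X = S^3_{p/q}(K)$, and $X$ is negative-definite since both $X'$ and $W$ are. The intersection form $Q_X$ extends $Q_{X'}$ by the plumbing matrix of $W$, with an off-diagonal term recording how the first core circle of $W$ links the homology class of $\kappa'$ in $X'$.

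\textbf{Step 3 (Sharpness).} Applying Theorem~\ref{intro:thm:CM} to $X'$ gives $-Q_{X'} \cong L' \oplus \Z^{S'}$, where $L'$ is a $p'/q'$-changemaker lattice whose vector $w_0'$ satisfies~\eqref{Ch1:eq:w0formula2} for the $V_i$ of $K$. By Theorem~\ref{Ch1:thm:stabledependence} the stable coefficients of $L'$ are invariants of $K$ alone. The construction of $W$ should be arranged so that the plumbing vectors of $W$ combine with those of $L'$ to produce a $p/q$-changemaker lattice in an enlarged ambient $\Z^N$: the continued-fraction tail vectors $w_1,\dots,w_l$ come from $W$, while $w_0$ is obtained from $w_0'$ by adjoining additional unit-valued coefficients to accommodate the increase in $\lceil p/q\rceil$. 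Since the value of the right-hand side of~\eqref{Ch1:eq:w0formula2} is insensitive to padding $w_0$ by unit coefficients, the new lattice still realizes the $V_i$ of $K$, and by Remark~\ref{Ch1:rem:CMgivesallVi} this suffices to realize every $d$-invariant of $S^3_{p/q}(K)$. Hence $X$ is sharp.

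The main obstacle will be Step~3: keeping track of how characteristic covectors on $X'$ extend through $W$ to characteristic covectors on $X$ attaining equality in the $d$-invariant bound~\eqref{Ch1:eq:dinvinequality} for every $\spinct\in\spinc(\partial X)$. This requires matching \spinc-structures across the gluing via the Ozsv\'ath--Szab\'o identifications and invoking the Ni--Wu formula (Proposition~\ref{intro:prop:NiWuformula}) so that the $d$-invariants of $S^3_{p/q}(K)$ and $S^3_{p'/q'}(K)$ are compatibly expressed through the common $V_i$. Once these book-keeping issues are settled, the sharpness of $X$ reduces to the sharpness of $X'$.
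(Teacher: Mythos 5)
Your high-level plan --- glue a negative-definite cobordism from $S^3_{p'/q'}(K)$ to $S^3_{p/q}(K)$ onto the given sharp manifold $X'$ --- matches the paper's strategy (and is the Owens--Strle approach alluded to in the chapter introduction). But Step 3 is where the content of the theorem lives, and the route you sketch there does not close the gap.

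The specific logical problem is that Theorem~\ref{intro:thm:CM} runs in one direction only: if $X$ is sharp, then $-Q_X$ is a $p/q$-changemaker lattice whose $w_0$ satisfies~\eqref{Ch1:eq:w0formula2}. You are invoking it in the converse direction --- showing the glued manifold $X$ has the right changemaker-lattice intersection form (and the right $w_0$) and concluding it is sharp. Nothing in the paper licenses this. Sharpness is the existence, for \emph{every} $\spinct\in\spinc(\partial X)$, of a \spinc-structure on $X$ attaining equality in~\eqref{Ch1:eq:dinvinequality}; the weaker condition~\eqref{Ch1:eq:w0formula2} only controls minima of $\norm{c}$ over $c$ with prescribed value of $c\cdot w_0$, whereas the genuine sharpness condition (equation~\eqref{Ch1:eq:spincZ} in the proof of Lemma~\ref{Ch1:lem:spincZ}) fixes the minimum of $\norm{c}$ subject to prescribing \emph{all} the products $c\cdot w_0,\dots,c\cdot w_l$, one condition per $\spincs\in\mathcal{M}$. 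Remark~\ref{Ch1:rem:CMgivesallVi} is a statement about recovering the sequence $V_i$ from $w_0$; it has nothing to say about characteristic covectors on the glued manifold attaining the $d$-invariant bound.

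What you dismiss as ``book-keeping issues'' is exactly where the paper does the work. The paper introduces the notion of a \emph{sharp cobordism}, proves that sharp cobordisms compose (Lemma~\ref{Ch1:lem:sharpcomposition}), and then, in Lemma~\ref{Ch1:lem:sharpsurgcobord}, directly verifies that the cobordism $-Z$ between surgeries whose continued fractions are nested is sharp. That verification is not bureaucratic: it uses the explicit \spinc-representatives $\mathcal{C}\subseteq\mathcal{M}$ from Section~\ref{sec:representatives}, the evaluation of $D^{p/q}_K$ on left-full and non-left-full elements (Lemma~\ref{Ch1:lem:evalD}), and a case analysis that needs the hypothesis $p'/q'>2\nu^+(K)-1$ to rule out a specific failure mode (the $\spincs=(0,\dots,0)$, $b_k=1$ configuration). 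That hypothesis is in turn supplied by~\eqref{Ch1:eq:sharpnuplusbound}, which is a nontrivial consequence of $X'$ being sharp. None of this is recoverable from your Step 3 as written; you would need to either prove a converse to Theorem~\ref{intro:thm:CM} (which is false in general --- the changemaker condition is necessary but is not claimed to be sufficient for a negative-definite filling to be sharp) or carry out the explicit \spinc-matching across the cobordism, which is the paper's Lemma~\ref{Ch1:lem:sharpsurgcobord}.

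As a smaller point, in Step 1 the cobordism is easier to produce than you suggest: the paper simply takes nested continued-fraction expansions $p/q=[a_0,\dots,a_l]^-$ and $p'/q'=[a_0,\dots,a_l,b_1,\dots,b_k]^-$ and lets $Z$ be the difference of the two handle-attachment 4-manifolds; Lemma~\ref{Ch1:lem:generalsharpcobor} then interpolates between any $p'/q'<p/q$ by a finite chain of such cobordisms using the identity $[a_0,\dots,a_l]^-=[a_0,\dots,a_l+1,1]^-$. This is cleaner than reasoning about the dual knot and gives you the handle structure (and hence the characteristic covectors) for free, which is exactly what the sharpness verification needs.
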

Owens and Strle show that if $S^3_{p'/q'}(K)$ bounds a negative-definite manifold $X$, then $S^3_{p/q}(K)$ bounds a negative-definite manifold for any $p/q\geq p'/q'$ by taking a certain negative-definite cobordism from $S^3_{p'/q'}(K)$ to $S^3_{p/q}(K)$ and gluing it to $X$ \cite{Owens12negdef}. We prove Theorem~\ref{Ch1:thm:sharpextension} by showing that if $X$ is sharp, then this construction results in a sharp manifold.

\section{A manifold bounding $S^3_{p/q}(K)$}
\begin{figure}
  \centering
  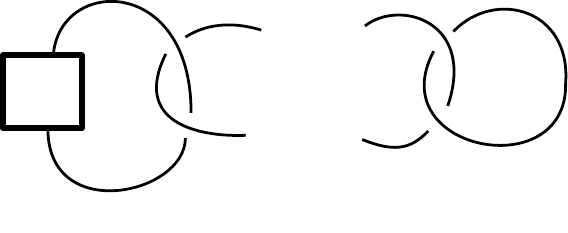
 \caption[An integer surgery diagram for $S^3_{p/q}(K)$]{A Kirby diagram for $W(K)$ and a surgery diagram for $Y \cong S^3_{p/q}(K)$.}
 \label{Ch1:fig:surgerytrace}
\end{figure}
Let $K \subseteq S^3$ be a knot. For fixed $p/q>0$, with a continued fraction $p/q= [a_0, \dots, a_l]^-$, where
\[
[a_0, \dots, a_l]^-
= a_0 -
    \cfrac{1}{a_1
        - \cfrac{1}{\ddots
            - \cfrac{1}{a_l} } },
\]
and the $a_i$ satisfy
\begin{equation}\label{Ch1:eq:aiconditions}
a_i\geq
\begin{cases}
1 & \text{for } i = 0 \text{ or }l\\
2 & \text{for } 0<i <l,
\end{cases}
\end{equation}
one can construct a 4-manifold $W=W(K)$ bounding $Y\cong S^3_{p/q}(K)$ by attaching 2-handles to $B^4$ according to the Kirby diagram given in Figure~\ref{Ch1:fig:surgerytrace}. Since $W$ is constructed by attaching 2-handles to a 0-handle, the first homology group $H_1(W)$ is trivial and $H_2(W)$ is free with a basis $\{h_0, \dots , h_l\}$ given by the 2-handles. With respect to this basis, the intersection form $Q_W\colon H_2(W) \times H_2(W) \rightarrow \mathbb{Z}$ is given by the matrix:
\[M =
  \begin{pmatrix}
   a_0  & -1   &        &       \\
   -1   & a_1  & -1     &       \\
        & -1   & \ddots & -1    \\
        &      & -1     & a_l
  \end{pmatrix}.\]
The conditions on the $a_i$ given by \eqref{Ch1:eq:aiconditions} imply that $M$ is positive-definite\footnote{For any $0\leq k \leq l$, the determinant of the top left $(k+1)\times(k+1)$-minor of $M$ is given by the numerator of the continued fraction $[a_0,\dots, a_k]^-$ and hence positive.} and hence that the intersection form on $W$ is positive-definite.
The corresponding $\mathbb{Q}$-valued pairing on $H^2(W) \cong {\rm Hom}(H_2(W),\mathbb{Z})$ is given by the matrix $M^{-1}$ with respect to the dual basis $\{h_0^*, \dots, h_l^*\}$. By considering the long exact sequence of the pair $(W, Y)$, we obtain the short exact sequence:
\[0 \rightarrow H^2(W, Y)\rightarrow H^2(W) \rightarrow H^2(Y) \rightarrow 0.\]
We may identify $H^2(W, Y)$ with $H_2(W)$ via Poincar\'{e} duality. With respect to the bases $\{h_i\}$ and $\{h_i^*\}$ the resulting map $H_2(W)\rightarrow H^2(W)$ is given by the matrix $M$. This gives an isomorphism
\begin{equation}\label{Ch1:eq:H2YcongM}
H^2(Y) \cong \frac{H^2(W)}{PD(H_2(W))} \cong {\rm coker}(M).
\end{equation}
Since $H^2(W)$ is torsion-free, the first Chern class defines an injective map
\[c_1:\spinc(W) \rightarrow H^2(W),\]
where the image is the set of characteristic covectors $\Char(W) \subseteq H^2(W)$. This allows us to identify $\spinc(W)$ with the set
\[\Char (W) = \{ (c_0, \dots , c_l)\in \mathbb{Z}^{l+1} \,|\, c_i \equiv a_i \bmod 2 \text{ for all } 0\leq i \leq l\}.\]
We will use this identification throughout this chapter.

\paragraph{}Using the map in \eqref{Ch1:eq:H2YcongM}, which arises from restriction, allows us to identify the set $\spinc(Y)$ with elements of the quotient
\[\frac{\Char(W)}{2PD(H_2(W))}.\]
Given $\spincs \in \Char(W)$ we will use $[\spincs]$ to denote its equivalence class modulo $2PD(H_2(W))$ and the corresponding \spinc-structure on $Y$.

\subsection{Representatives for ${{\rm Spin}^c}(S^3_{p/q}(K))$}\label{sec:representatives}
We will be interested in certain subsets of $\spinc(W)=\Char(W)$. In particular we wish to find a set of representatives for $\spinc(S^3_{p/q}(K))$ in $\spinc(W)$. Following Gibbons, we make the following definitions \cite{gibbons2013deficiency}.
\begin{defn}
Given $\spincs =(c_0, \dots, c_l)\in \Char(W)$, we say that it contains a {\em full tank} if there is $0\leq i <j\leq l$, such that $c_i=a_i$, $c_j=a_j$ and $c_k=a_k-2$ for all $i<k<j$.
We say that $\spincs$ is {\em left-full}, if there is $k>0$, such that $c_k=a_k$ and $c_j=a_j-2$ for all $0< j<k$.
\end{defn}
Observe that our definition of left-full does not impose any conditions on $c_0$, and that if $l=0$, then $\Char(W)$ contains no left-full elements.
\begin{defn} Let $\mathcal{M}$ denote the set of elements $\spincs =(c_0, \dots, c_l)\in \Char(W)$ satisfying $|c_i| \leq a_i,$ for all $0\leq i \leq l$, and such that neither $\spincs$ nor $-\spincs$ contain any full tanks. Let $\mathcal{C}\subseteq \mathcal{M}$ denote the set of elements $\spincs =(c_0, \dots, c_l)\in \mathcal{M}$ satisfying
\[2-a_i\leq c_i \leq a_i, \text{ for all } 0\leq i \leq l.\]
\end{defn}
The set $\mathcal{C}$ will turn out to form a complete set of representatives for $\spinc(Y)$.
\begin{lem}\label{Ch1:lem:countspincinC}
Write $p/q$ in the form $p/q=a_0-r/q$, where $q/r=[a_1,\dots, a_l]^-$. We have $|\mathcal{C}|=p$, and for each $c\equiv a_0 \bmod 2$, we have
\[|\{(c_0,\dots, c_l)\in \mathcal{C}\,|\, c_0=c\}|=
\begin{cases}
q   &\text{if $-a_0<c<a_0$}\\
q-r &\text{if $c=a_0$}
\end{cases}
\]
and
\[|\{\spincs=(c_0,\dots, c_l)\in \mathcal{C}\,|\, c_0=c \text{ and $\spincs$ is left full}\}|=
\begin{cases}
r   &\text{if $-a_0<c<a_0$}\\
0   &\text{if $c=a_0$.}
\end{cases}
\]
\end{lem}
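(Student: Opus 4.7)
The plan is to prove Lemma~\ref{Ch1:lem:countspincinC} by induction on $l$, the length of the continued fraction. For the base case $l=0$ we have $p=a_0$, $q=1$, and (by convention) $r=0$; the set $\mathcal{C}$ is just $\{c_0 \equiv a_0 \pmod 2 : 2-a_0 \leq c_0 \leq a_0\}$, which has $a_0=p$ elements, and since there are no indices $k>0$ no element is left-full. All three formulas are immediate.

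For the inductive step, let $\mathcal{C}^{\text{tail}}$ denote the analogous set for the tail continued fraction $q/r = [a_1,\dots,a_l]^-$, and write $r/s = [a_2,\dots,a_l]^-$ (with the convention $s=0$ if $l=1$). By induction $|\mathcal{C}^{\text{tail}}|=q$, and the counts for fixed $c_1$, together with the left-full counts, hold for $\mathcal{C}^{\text{tail}}$ with $r,s$ playing the role of $q,r$. Fix $c_0 \equiv a_0 \pmod 2$ with $2-a_0 \leq c_0 \leq a_0$. Because $c_0 \neq -a_0$, the covector $-\spincs$ has no full tank starting at position $0$, so the only extra constraint on the tail (beyond membership in $\mathcal{C}^{\text{tail}}$) is that if $c_0=a_0$ then no full tank of $\spincs$ starts at $0$. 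In particular when $c_0 < a_0$ every tail in $\mathcal{C}^{\text{tail}}$ extends to a valid element of $\mathcal{C}$, yielding $q$ elements per such $c_0$.

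When $c_0=a_0$ we must exclude those tails satisfying the condition
$P$: there exists $k\geq 1$ with $c_k=a_k$ and $c_j=a_j-2$ for $1\leq j<k$.
Split $P$ according to whether $k=1$ or $k>1$. The case $k=1$ asks exactly that $c_1=a_1$ (with no further constraint on $c_2,\dots,c_l$ beyond being in $\mathcal{C}^{\text{tail}}$). The case $k>1$ is equivalent to $c_1=a_1-2$ together with the tail being left-full in the tail's own sense, because spelling out both conditions gives $c_1=a_1-2,\ c_2=a_2-2,\dots,c_{k-1}=a_{k-1}-2,\ c_k=a_k$. These two cases are disjoint (they fix different values of $c_1$). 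By the inductive hypothesis the number of tails with $c_1=a_1$ is $r-s$, and the number of left-full tails with $c_1=a_1-2$ is $s$, so the total number of $P$-tails is $(r-s)+s=r$. Hence when $c_0=a_0$ we get $q-r$ valid tails. Summing over all admissible $c_0$ gives $|\mathcal{C}|=(a_0-1)q+(q-r)=a_0q-r=p$. For the left-full count, an element of $\mathcal{C}$ is left-full iff its tail satisfies $P$; when $c_0=a_0$ this is forbidden (giving $0$), and when $c_0<a_0$ it gives $r$ per such $c_0$, completing the induction.

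The main obstacle is the case analysis in the third paragraph: one has to check that the two disjoint pieces of $P$ (the $k=1$ piece and the $k>1$ piece) correspond precisely to the two inductive counts $r-s$ and $s$, and that the degenerate situations (e.g.\ $l=1$, or an internal $a_i$ equal to $1$ forcing $c_i$ into a single value) are handled by the convention $s=0$. Once this correspondence between the full-tank condition at position $0$ and the tail's left-full structure is set up cleanly, the rest of the argument is bookkeeping and the telescoping identity $p=a_0q-r$.
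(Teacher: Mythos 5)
Your proof is correct and follows essentially the same approach as the paper's: induction on the length $l$ of the continued fraction, handling the base case $l=0$ directly, and in the inductive step reducing to counts over the tail $\mathcal{C}^{\text{tail}}$ with the case split $c_0<a_0$ versus $c_0=a_0$, where the latter excludes the $r$ tails matching the left-full pattern (split into $c_1=a_1$ contributing $r-s$ and $c_1=a_1-2$ with left-full tail contributing $s$). Your $s$ is the paper's $r'$, and the telescoping identity $|\mathcal{C}|=(a_0-1)q+(q-r)=p$ is the same.
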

\begin{proof}
We prove this by induction on the length of the continued fraction $[a_0, \dots , a_l]$. When $l=0$, we have $p=a_0$, $q=1$ and $r=0$. In this case,
\[\mathcal{C}=\{-a_0<c\leq a_0 \,|\, c \equiv a_0 \bmod 2\},\]
which clearly has the required properties. Suppose that $l>0$, and let $\mathcal{C'}$ denote the set
\[\mathcal{C'}=\{(c_1, \dots , c_l)\,|\, c_i \equiv a_i \bmod 2, -a_i<c_i\leq a_i, \text{for all $i$ and no full tanks}\}.\]
As $q/r=[a_1,\dots, a_l]^-$, we can assume that we have $|\mathcal{C'}|=q$, and for each $c\equiv a_1 \bmod 2$, we have
\[|\{(c_1,\dots, c_l)\in \mathcal{C'}\,|\, c_1=c\}|=
\begin{cases}
r   &\text{if $-a_1<c<a_1$}\\
r-r' &\text{if $c=a_1$}
\end{cases}
\]
and
\[|\{\spincs'=(c_1,\dots, c_l)\in \mathcal{C'} \,|\, c_1=c \text{ and $\spincs'$ is left full}\}|=
\begin{cases}
r'   &\text{if $-a_1<c<a_1$}\\
0   &\text{if $c=a_1$,}
\end{cases}
\]
where $r/r'=[a_2,\dots, a_l]^-$.
For $c\equiv a_0 \bmod 2$ in the range $-a_0< c\leq a_0$, take $\spincs=(c, c_1,\dots, c_l)$. If $c<a_0$, then $\spincs \in \mathcal{C}$ if and only if $(c_1,\dots, c_l)\in \mathcal{C'}$, and $\spincs$ is left-full if and only if $c_1=a_1$ or $c_1=a_1-2$ and $(c_1,\dots, c_l)\in \mathcal{C'}$ is left full. Therefore,
\[|\{(c_0,\dots, c_l)\in \mathcal{C} \,|\, c_0=c\}|=|\mathcal{C'}|=q\]
and
\[|\{\spincs=(c_0,\dots, c_l)\in \mathcal{C} \,|\, c_0=c \text{ and $\spincs$ is left full}\}|= (r-r')+r'=r,\]
for $c<a_0$. If $c=a_0$, then $\spincs \in \mathcal{C}$ if and only if $(c_1,\dots, c_l)\in \mathcal{C'}$ and $\spincs$ contains no full tanks. Equivalently, $\spincs \in \mathcal{C}$ if and only if $(c_1,\dots, c_l)\in \mathcal{C'}$ and $\spincs$ is not left-full. As above, we see that there are $r-r'$ choices of $\spincs'=(c_1,\dots, c_l)\in \mathcal{C'}$ with $c_1=a_1$ and $r'$ choices with $c_1=a_1-2$ and $\spincs'$ is left-full. This shows that
\[|\{(c_0,\dots, c_l)\in \mathcal{C} \,|\, c_0=a_0\}|=q-r\]
and
\[|\{\spincs=(c_0,\dots, c_l)\in \mathcal{C} \,|\, c_0=a_0 \text{ and $\spincs$ is left full}\}|=0,\]
as required. It follows that $|\mathcal{C}|=(a_0-1)q+q-r=p$, as required.
\end{proof}
\begin{defn}
We say that $\spincs \in \Char(W)$ is {\em short} if it satisfies $\norm{\spincs}\leq \norm{\spincs'}$ for all $\spincs' \in \Char(W)$ with $[\spincs']=[\spincs]$. Here $\norm{\spincs}=\spincs M^{-1} \spincs^T$ denotes the norm with respect to the pairing given by $M^{-1}$.
\end{defn}
 We will show that the set of short elements of $\Char(W)$ is precisely $\mathcal{M}$.

The following calculation will be useful in what follows. Take $\spincs=(c_0, \dots, c_l)\in \mathcal{M}$. In terms of the basis given by $\{h_i^*\}$, the cohomology class $PD(h_i)$ is given by the $i$th row of $M$, and we have
\begin{align}\begin{split}\label{Ch1:eq:pushdownnorm}
\norm{\spincs \pm 2PD(h_i)}&=\norm{\spincs} \pm 4PD(h_i)\cdot \spincs + 4\norm{PD(h_i)}\\
                           &= \norm{\spincs} \pm 4c_i + 4a_i.
\end{split}\end{align}
\begin{defn}
If $c_i= a_i$, then we define $\spincs'=\spincs - 2PD(h_i)$ to be the {\em push-down} of $\spincs$ at $a_i$. Similarly, if $c_i= -a_i$, then we define $\spincs'=\spincs + 2PD(h_i)$ to be the {\em push-up} of $\spincs$ at $a_i$.
\end{defn}
By \eqref{Ch1:eq:pushdownnorm}, we see that if $\spincs$ and $\spincs'$ are related by a push-up or push-down, then $\norm{\spincs}=\norm{\spincs'}$. We also have $[\spincs]=[\spincs']$. Note also that pushing up or pushing down cannot create a full tank, so $\spincs\in \mathcal{M}$ implies $\spincs'\in \mathcal{M}$.

\begin{lem}\label{Ch1:lem:fromMtoC}
For any $\spincs=(c_0, \dots, c_l)\in \mathcal{M}$, there is $\spincs'=(c_0', \dots, c_l') \in \mathcal{C}$, with $\norm{\spincs}=\norm{\spincs'}$ and $[\spincs]=[\spincs']$. If $c_0>-a_0$ and $-\spincs$ is not left-full, then $c_0'=c_0$ and $\spincs'$ is left-full if and only if $\spincs$ is left-full.
\end{lem}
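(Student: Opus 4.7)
The plan is to apply norm-preserving push-up operations iteratively, transforming $\spincs \in \mathcal{M}$ into an element of $\mathcal{C}$ with the same equivalence class and the same norm. By \eqref{Ch1:eq:pushdownnorm}, the single move $\spincs \mapsto \spincs + 2\,PD(h_i)$ preserves $\norm{\spincs}$ precisely when $c_i = -a_i$, and it always preserves the equivalence class modulo $2\,PD(H_2(W))$. Suppose some $c_{i_0} = -a_{i_0}$; take $i_0$ to be the leftmost such index and let $m \geq 0$ be the largest integer with $c_{i_0 + k} = 2 - a_{i_0 + k}$ for $1 \leq k \leq m$. The hypothesis that $-\spincs$ contains no full tank rules out $c_{i_0 + m + 1} = -a_{i_0 + m + 1}$, since otherwise the run $c_{i_0},c_{i_0+1},\dots,c_{i_0+m+1}$ would be a full tank in $-\spincs$; hence either $i_0 + m = l$ or $c_{i_0 + m + 1} \geq 4 - a_{i_0 + m + 1}$.

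Consider the composite move
\[
\spincs'' = \spincs + 2 \sum_{k = 0}^{m} PD(h_{i_0 + k}).
\]
It factors as the ordered sequence of push-ups at $i_0, i_0 + 1, \dots, i_0 + m$; at each stage the coordinate in question equals $-a_{\,\cdot\,}$ immediately before being pushed, so each step, and hence the whole composite, preserves the norm. A direct computation gives $c_{i_0 + k}'' = a_{i_0 + k} - 2$ for $0 \leq k < m$, $c_{i_0 + m}'' = a_{i_0 + m}$, and $c_{i_0 - 1}'' = c_{i_0 - 1} - 2$, $c_{i_0 + m + 1}'' = c_{i_0 + m + 1} - 2$ where defined. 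Since interior $a_j \geq 2$, none of the updated coordinates in $[i_0, i_0 + m + 1]$ becomes bad, and the only possibly new bad index is $i_0 - 1$, occurring exactly when $c_{i_0 - 1} = 2 - a_{i_0 - 1}$. A case check confirms that any putative full tank in $\pm\spincs''$ would descend to one in $\pm\spincs$, so $\spincs'' \in \mathcal{M}$.

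Iterating this operation produces, via a suitable well-founded potential on the finite set $\mathcal{M}$ (for instance, the lexicographic pair $(\#\{\text{bad indices}\}, \text{leftmost bad index})$, augmented by a tie-breaker such as $l - (i_0 + m)$ in the corner case $i_0 = 0,\, a_0 = 1$ where bads at the boundary can persist), an element $\spincs' \in \mathcal{C}$ with $\norm{\spincs'} = \norm{\spincs}$ and $[\spincs'] = [\spincs]$.

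For the ``moreover'' statement, the hypothesis that $-\spincs$ is not left-full forbids the pattern $c_1 = 2 - a_1, \dots, c_{k-1} = 2 - a_{k-1}, c_k = -a_k$ for any $k \geq 1$; in particular the leftmost bad satisfies $i_0 \geq 2$, and the leftward chain of $2 - a_j$ values emanating from $i_0$ terminates at some $j^* \geq 1$ with $c_{j^*} \neq 2 - a_{j^*}$. This condition is preserved under the iteration above, so every composite move has leftmost affected index $\geq j^* \geq 1$ and never shifts $c_0$; hence $c_0' = c_0$. An analogous tracking argument establishes preservation of left-fullness of $\spincs$: each composite either leaves an existing left-full witness $c_{k_0} = a_{k_0}$ untouched (when the move lies entirely to its right) or slides the witness rightward to $i_0 + m$, installing the compatible intermediate pattern $c_j'' = a_j - 2$; conversely, a non-left-full $\spincs$ cannot acquire left-fullness from these moves, yielding the if-and-only-if.
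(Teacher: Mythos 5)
Your overall strategy --- iteratively applying norm-preserving push-ups to reach $\mathcal{C}$ --- is the same as the paper's, and the bundling into composite moves along a maximal run of $2-a_j$ values is a reasonable variant that also makes the ``moreover'' bookkeeping cleaner (since $i_0\geq 2$ under that hypothesis, the leftmost affected coordinate is $i_0-1\geq 1$, so $c_0$ is never touched). The verification that the composite is a chain of genuine push-ups and stays inside $\mathcal{M}$ is fine, as is the observation that $-\spincs$ having no full tank forbids $c_{i_0+m+1}=-a_{i_0+m+1}$.

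The genuine gap is the termination argument. The paper settles this with one line: write $\spincs_n - \spincs = \sum_i 2b_i\,PD(h_i)$ and set $f(\spincs_n)=\sum_i |b_i|$; since each step adds $2PD(h_k)$ and the $b_k$ only ever increase from $0$, we get $f(\spincs_n)=n$, so the $\spincs_n$ are pairwise distinct, and finiteness of $\mathcal{M}$ forces termination. You instead reach for a hand-crafted lexicographic potential, and it does not work as written. In the corner case $i_0=0$, $a_0=1$, $m\geq 1$, the pair $(\#\text{bad},\,\text{leftmost bad})$ is unchanged, and your tie-breaker $l-(i_0+m)$ \emph{increases} rather than decreases (it is $i_0+m$ that strictly decreases there, and even establishing that decrease requires ruling out the degenerate continued fraction $[1,2,\dots,2,1]^-$, which is where positivity of $p/q$ enters). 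Worse, once the corner case ends the first two components take over again and need a separate monotonicity check relative to the tie-breaker you just used, which you do not give. So the termination is not established; it is hedged (``a suitable well-founded potential \dots\ for instance \dots\ such as''), and the specific example you offer fails. Swapping this for the paper's $f$-argument would close the gap cleanly, and it also removes the need to track the corner case at all. The ``moreover'' part is right in outline, though the claimed preservation of ``$-\spincs$ not left-full'' and of left-fullness under the composite is asserted rather than verified; a short case analysis (which does go through) is needed there as well.
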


\begin{proof}
Take $\spincs=(c_0, \dots, c_l)\in \mathcal{M}$. Observe that $\spincs\in \mathcal{C}$ if and only if $c_i>-a_i$ for all $i$. We define a sequence of elements in $\mathcal{M}$ as follows. First, set $\spincs_0=\spincs$. If $\spincs_{n}\not \in \mathcal{C}$ we set $\spincs_{n+1}=\spincs_{n}+2PD(h_k)$, where $k$ satisfies $c_k=-a_k$. Since $\spincs_{n+1}$ is obtained by a push-up from $\spincs_n$, we have $[\spincs_n]=[\spincs]$ and $\norm{\spincs_n}=\norm{\spincs}$ for all $n$. We will show that this sequence eventually results in an element of $\mathcal{C}$.

For any $\spincs'\in \mathcal{M}$ with $[\spincs']=[\spincs]$ we have $\spincs'-\spincs= \sum_{i=0}^l 2b_i PD(h_i)$ for some integers $b_i$. Since the $PD(h_i)$ are linearly independent, it makes sense to define
\[f(\spincs'):= \sum_{i=0}^l |b_i|.\]
As $\spincs_{n+1}$ is always obtained by adding $2PD(h_k)$ to $\spincs_{n}$ for some $k$, we must have $f(\spincs_{n})=n$ for all $n$. In particular, the $\spincs_n$ form a sequence of distinct elements in $\mathcal{M}$. As $\mathcal{M}$ is finite, the sequence must terminate with some $\spincs'\in \mathcal{C}$ satisfying $\norm{\spincs}=\norm{\spincs'}$ and $[\spincs]=[\spincs']$.

Now we verify the other stated properties of $\spincs'$. Suppose that $\spincs_{n+1}=(d_0', \dots, d_l')$ is obtained from $\spincs_{n}=(d_0, \dots, d_l)$ by pushing-up at some $k$. If $d_0>-a_0$ and $-\spincs_n$ is not left-full, then we necessarily have $k>1$. Thus $\spincs_{n+1}$ takes the form
\[(d_0', \dots, d_l')=(d_0, \dots ,d_{k-2}, d_{k-1}-2,a_k,d_{k+1}-2, d_{k+2}, \dots, d_l').\]
In particular $d_0=d_0'$ and $-\spincs_{n+1}$ is not left-full. Since we are pushing up at $k\geq 2$, $\spincs_{n+1}$ is left-full if and only if $\spincs_{n}$ is left-full. Thus if $-\spincs$ is not left-full and $c_0>-a_0$, then we can prove inductively that $\spincs'$ has the required properties.
\end{proof}

This allows us to show that $\mathcal{C}$ forms a complete set of representatives for $\spinc(Y)$ and to characterize the short elements of $\Char(W)$.
\begin{lem}\label{Ch1:lem:minimisers}
For every $\spinct\in \spinc(Y)$, there is a unique $\spincs \in \mathcal{C}$ with $[\spincs]=\spinct$. Any $\spincs \in \Char(W)$ is short if and only if $\spincs \in \mathcal{M}$.
\end{lem}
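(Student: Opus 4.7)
The plan is to prove both statements together, first characterising the short elements of $\Char(W)$ directly and then combining with Lemma~\ref{Ch1:lem:fromMtoC} and the count of $|\mathcal{C}|$ from Lemma~\ref{Ch1:lem:countspincinC}. Short elements exist in each $\spinc$-class because $\Char(W)\cap[\spincs]$ is a coset of the lattice $2PD(H_2(W))$ on which the $M^{-1}$-norm is positive-definite, so the minimum is attained.

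The central step is to show that short implies $\mathcal{M}$. If $\spincs=(c_0,\dots,c_l)$ is short, then \eqref{Ch1:eq:pushdownnorm} forces $|c_i|\le a_i$, since otherwise a single push-down or push-up gives a strictly shorter vector in $[\spincs]$. Suppose next that $\spincs$ contains a full tank with $i<j$, so $c_i=a_i$, $c_j=a_j$, and $c_k=a_k-2$ for $i<k<j$. Setting $u=\sum_{k=i}^{j}PD(h_k)$, a direct computation in the $h_k^*$-basis gives $u\cdot\spincs=\sum_{k=i}^{j}c_k=\sum_{k=i}^{j}a_k-2(j-i-1)$ and $\norm{u}=\sum_{k=i}^{j}a_k-2(j-i)$; substituting into $\norm{\spincs-2u}=\norm{\spincs}-4\,u\cdot\spincs+4\norm{u}$ yields the uniform drop $\norm{\spincs-2u}=\norm{\spincs}-8$, contradicting shortness. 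Since $-\spincs$ is short whenever $\spincs$ is, applying the same bound to $-\spincs$ rules out full tanks in $-\spincs$ as well, giving $\spincs\in\mathcal{M}$.

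Combining this characterisation with Lemma~\ref{Ch1:lem:fromMtoC}, every class in $\spinc(Y)$ is represented by some element of $\mathcal{C}$, so the map $\mathcal{C}\to\spinc(Y)$ is surjective. Since $|\mathcal{C}|=p=|\spinc(Y)|$ by Lemma~\ref{Ch1:lem:countspincinC} together with \eqref{Ch1:eq:H2YcongM}, this map is a bijection, establishing the first assertion. For the converse direction in the second assertion, given $\spincs\in\mathcal{M}$ pick a short $\spincs^*\in[\spincs]$; the preceding step puts $\spincs^*$ in $\mathcal{M}$, and Lemma~\ref{Ch1:lem:fromMtoC} sends $\spincs$ and $\spincs^*$ to elements of $\mathcal{C}$ in the class $[\spincs]$ with the same norms respectively. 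By the bijection these two $\mathcal{C}$-representatives coincide, forcing $\norm{\spincs}=\norm{\spincs^*}$, so $\spincs$ is short.

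I expect the main obstacle to be the full tank computation. The essential insight is that the correct reduction is to subtract $2u=2\sum_{k=i}^{j}PD(h_k)$ all at once rather than attempting a chain of single push-downs; this single move produces the uniform decrease of $8$ in the $M^{-1}$-norm that feeds directly into the contradiction and completes the characterisation.
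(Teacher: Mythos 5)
Your proof is correct and follows essentially the same strategy as the paper: bound $|c_i|\le a_i$ via push-ups/push-downs, rule out full tanks by subtracting multiples of $PD(h_k)$ from $\spincs$ and $-\spincs$, conclude short implies membership in $\mathcal{M}$, and then deduce both claims from the count $|\mathcal{C}|=p$ together with Lemma~\ref{Ch1:lem:fromMtoC}. The only minor variation is in the full-tank step: you subtract $2\sum_{k=i}^{j}PD(h_k)$ and compute a direct drop of $8$ in the $M^{-1}$-norm, whereas the paper subtracts $2\sum_{k=i}^{j-1}PD(h_k)$ (a chain of norm-preserving push-downs) to reach an equivalent vector with $c_j'=a_j+2$ and then invokes the already-established bound $|c_j|\le a_j$; both are correct and equivalent in content.
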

\begin{proof}
Consider $\spincs=(c_0, \dots, c_l)\in \spinc(W)$. It follows from \eqref{Ch1:eq:pushdownnorm}, that if $\pm c_i> a_i$, then $\norm{\spincs \mp 2PD(h_i)}=\norm{\spincs}\mp c_i+a_i<\norm{\spincs}$. Thus any short element must satisfy $|c_i|\leq a_i$ for all $i$. Now suppose $\spincs$ contains a full tank, say $c_j=a_j$, $c_i=a_i$ and $c_k=a_k-2$ for all $i<k<j$. Consider $\spincs'=\spincs - 2\sum_{k=i}^{j-1}PD(h_k)$, which can be obtained from $\spincs$ by a sequence of push-downs. If we write $\spincs'=(c_0', \dots, c_l')$, then $c'_j=a_j+2$, showing that $\spincs'$ is not short. Since $\norm{\spincs}=\norm{\spincs'}$, this shows $\norm{\spincs}$ is not short. Since $\spincs$ is short if and only if $-\spincs$ is short, this shows that $\spincs$ is short only if $\spincs\in \mathcal{M}$.

We prove the converse by a counting argument. Since every $\spinct\in \spinc(S^3_{p/q}(K))$ has a short representative $\spincs$, which is necessarily in $\mathcal{M}$, Lemma~\ref{Ch1:lem:fromMtoC} shows that it also has a short representative $\spincs' \in \mathcal{C}$. However Lemma~\ref{Ch1:lem:countspincinC} shows $|\spinc(Y)|=|\mathcal{C}|=p$, so for every $\spinct\in\spinc(Y)$, there is a unique $\spincs'\in \mathcal{C}$ with $[\spincs']=\spinct$ and $\spincs'$ is short. It then follows from Lemma~\ref{Ch1:lem:fromMtoC} that every $\spincs\in\mathcal{M}$ is short.
\end{proof}

\section{Calculating $d$-invariants of $S^3_{p/q}(K)$}\label{sec:calcdinvariants}
Now we use the representatives for $\spinc(S^3_{p/q}(K))$ studied in the previous section to calculate the $d$-invariants of $S^3_{p/q}(K)$.
Since the intersection form on $H^2(W)$ is independent of the choice of the knot $K$, it gives a choice of correspondences,
\[\spinc(W(K)) \leftrightarrow \Char(W(K)) \leftrightarrow \Char(W(U)) \leftrightarrow \spinc(W(U)),\]
and hence also a choice of correspondence
\begin{equation}\label{Ch1:eq:surgerytracespinccorrespond}
\spinc(S_{p/q}^3(K))\leftrightarrow \spinc(S_{p/q}^3(U)).
\end{equation}
Using this we can define $D^{p/q}_K:\spinc(S_{p/q}^3(K)) \rightarrow \mathbb{Q}$ by
\begin{equation*}
D^{p/q}_K(\spinct)=d(S_{p/q}^3(U),\spinct)-d(S_{p/q}^3(K),\spinct).
\end{equation*}

On the other hand, recall that there are identifications \cite{ozsvath2011rationalsurgery}
\begin{equation}\label{Ch1:eq:OzSzspinccorrespond}
\spinc(S_{p/q}^3(K)) \leftrightarrow \mathbb{Z}/p\mathbb{Z} \leftrightarrow \spinc(S_{p/q}^3(U)),
\end{equation}
for which the $d$-invariants of $S_{p/q}^3(K)$ satisfy the formula \cite{ni2010cosmetic}
\begin{equation}\label{Ch1:eq:NiWuformula1}
d(S_{p/q}^3(U),i)-d(S_{p/q}^3(K),i)=2V_{\min \{\lfloor \frac{i}{q} \rfloor,\lceil \frac{p-i}{q} \rceil\}},
\end{equation}
for $0\leq i \leq p-1$.

When $p/q=n$ is an integer and $W$ is obtained by attaching a single $n$-framed 2-handle to $B^4$ the correspondence \eqref{Ch1:eq:surgerytracespinccorrespond} can be easily reconciled with the one in \eqref{Ch1:eq:OzSzspinccorrespond}. In this case, the \spinc-structure $c\in\Char(W)=\{(c)\,|\, c\equiv n \bmod 2\}$, is labeled by $i \bmod n$, when $n+c \equiv 2i \bmod{2n}$ \cite{Ozsvath08integersurgery}. It is clear that in this case the correspondences in \eqref{Ch1:eq:surgerytracespinccorrespond} and \eqref{Ch1:eq:OzSzspinccorrespond} are the same. Hence for $c\equiv n \bmod 2$ satisfying $-n\leq c \leq n$, we have
\begin{equation}\label{Ch1:eq:integersurgDformula}
D^n_K([c])=d(S_{n}^3(U),[c])-d(S_{n}^3(K),[c])=2V_{\min \{\frac{n+c}{2},\frac{n-c}{2}\}}
=2V_{\frac{n-|c|}{2}}.
\end{equation}
\begin{rem}
It turns out that the correspondences between $\spinc(S_{p/q}^3(K))$ and $\spinc(S_{p/q}^3(U))$ used in \eqref{Ch1:eq:surgerytracespinccorrespond} and \eqref{Ch1:eq:OzSzspinccorrespond} coincide in general. However, we will evaluate $D^{p/q}_K([\spincs])$ without using this.
\end{rem}
Ozsv{\'a}th and Szab{\'o} have shown that the manifold $-W(U)$ is sharp \cite{Ozsvath03Absolutely, Ozsvath03plumbed} (or alternatively \cite{ozsvath2005heegaard}).
\begin{thm}[Ozsv{\'a}th-Szab{\'o}]\label{Ch1:thm:WUissharp}
The manifold $-W(U)$ is a sharp manifold bounding $S^3_{-p/q}(U)$.  In particular, for any $\spincs \in \mathcal{M}$,
\begin{equation}\label{Ch1:eq:lensspaced}
d(S_{p/q}^3(U), [\spincs])= \frac{\norm{\spincs}-b_2(W)}{4}=\frac{\norm{\spincs}-l-1}{4}.
\end{equation}
\end{thm}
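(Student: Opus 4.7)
The plan is to reduce the statement to Ozsv{\'a}th and Szab{\'o}'s computation of Heegaard Floer invariants for negative-definite plumbed 3-manifolds, and then to unpack the definition of sharpness using the characterization of short elements from Lemma~\ref{Ch1:lem:minimisers}.

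First, I would observe that $-W(U)$ is the 4-manifold obtained from $B^4$ by attaching 2-handles along a chain of unknots with framings $-a_0, -a_1, \dots, -a_l$; this is a linear negative-definite plumbing whose weights satisfy the condition $-w(v) \geq \deg(v)$ at every vertex (since $a_i \geq 2$ for $0 < i < l$ while $a_0, a_l \geq 1$ on the two leaves). The boundary of $-W(U)$ is the lens space $S^3_{-p/q}(U)$. For plumbings in this class, Ozsv{\'a}th and Szab{\'o} have computed $HF^+$ explicitly and shown that the intersection form determines the $d$-invariants, i.e.\ that such plumbings are sharp \cite{Ozsvath03Absolutely, Ozsvath03plumbed, ozsvath2005heegaard}. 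I would take sharpness of $-W(U)$ as the central input, imported from their work.

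Next, I would translate the sharpness of $-W(U)$ into the stated formula. By definition, sharpness produces, for each $\spinct \in \spinc(S^3_{-p/q}(U))$, a $\spinc$-structure on $-W(U)$ restricting to $\spinct$ and attaining equality in the $d$-invariant inequality. Using the identification $\spinc(-W(U)) \leftrightarrow \Char(W)$, the conjugation symmetry $d(-Y,\spinct) = -d(Y,\spinct)$, and the fact that $c_1^2$ on $-W(U)$ corresponds to $-\norm{\spincs}$ with respect to the positive-definite pairing induced by $M^{-1}$ on $\Char(W)$, sharpness rewrites as
\[d(S_{p/q}^3(U), \spinct) = \frac{\min_{[\spincs] = \spinct} \norm{\spincs} - b_2(W)}{4}\]
for every $\spinct \in \spinc(S_{p/q}^3(U))$, where the minimum runs over $\spincs \in \Char(W)$.

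Finally, I would invoke Lemma~\ref{Ch1:lem:minimisers}, which identifies the short elements of $\Char(W)$ as precisely the members of $\mathcal{M}$. Every $\spincs \in \mathcal{M}$ therefore attains the minimum of $\norm{\cdot}$ on its equivalence class, and substituting $b_2(W) = l+1$ yields
\[d(S_{p/q}^3(U), [\spincs]) = \frac{\norm{\spincs} - l - 1}{4}\]
as required. The main obstacle is the sharpness of $-W(U)$, which is not a short elementary argument but is imported wholesale from \cite{Ozsvath03plumbed}; once that input is granted, the rest is straightforward bookkeeping to pass between $-W(U)$ and $W(U)$ and to recognize, via Lemma~\ref{Ch1:lem:minimisers}, that any $\spincs \in \mathcal{M}$ realizes the minimum norm in its class.
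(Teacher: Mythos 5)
Your proposal is correct and takes essentially the same route as the paper, which likewise imports sharpness of $-W(U)$ from the Ozsv\'ath--Szab\'o plumbing computations and then recognizes, via Lemma~\ref{Ch1:lem:minimisers}, that every $\spincs\in\mathcal{M}$ realizes the minimum norm in its class. One terminological slip: the identity $d(-Y,\spinct)=-d(Y,\spinct)$ is the orientation-reversal property of the $d$-invariant, not conjugation symmetry (which is $d(Y,\spinct)=d(Y,\overline{\spinct})$); the argument itself uses the former and is unaffected.
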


By generalizing the ideas in \cite[Proof of Theorem~1.1]{gibbons2013deficiency}, this allows us to calculate $D^{p/q}_K([\spincs])$ for $\spincs \in \mathcal{M}$.
\begin{lem}\label{Ch1:lem:evalD}
For any $\spincs = (c_0, \dots, c_l)\in \mathcal{M}$, we have
\[D^{p/q}_K([\spincs])=
\begin{cases}
2V_{\frac{a_0-|c_0|-2}{2}}   &\text{if $\pm\spincs$ is left full and $\pm c_0\geq 0$,}\\
2V_{\frac{a_0-|c_0|}{2}}     &\text{otherwise.}
\end{cases}
\]
\end{lem}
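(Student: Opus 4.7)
The plan is to induct on the length $l$ of the continued fraction $p/q = [a_0, \dots, a_l]^-$. When $l = 0$ one has $p/q = a_0$ and $W(K)$ is a single $a_0$-framed $2$-handle attached to $K$, so $\spincs = (c_0) \in \mathcal{M}$ is automatically not left-full and the formula is exactly \eqref{Ch1:eq:integersurgDformula}.

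For $l > 0$, I would decompose $W = W(K)$ as $W_0 \cup_{S^3_{a_0}(K)} Z$, where $W_0$ consists of $B^4$ together with the $a_0$-framed $2$-handle $h_0$ (so $\partial W_0 = S^3_{a_0}(K)$) and $Z$ is the cobordism built from $h_1, \dots, h_l$ joining $S^3_{a_0}(K)$ to $S^3_{p/q}(K)$. Crucially, $Z$ does not meet $K$ at all, so it is canonically isomorphic as a spin$^c$ cobordism to the corresponding piece of $W(U)$; its intersection form is the positive-definite submatrix $M'$ of $M$ obtained by deleting the first row and column.

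For $\spincs = (c_0, c_1, \dots, c_l)\in \mathcal{M}$ I plan to compare the $d$-invariants of $S^3_{p/q}(K)$ and $S^3_{p/q}(U)$ to those of $S^3_{a_0}(K)$ and $S^3_{a_0}(U)$ using $-Z$ (negative-definite, since $M'$ is positive-definite). Running the restricted spin$^c$ structure $\spincs|_Z$ through the bound \eqref{Ch1:eq:dinvinequality} yields an inequality
\[ d(S^3_{p/q}(\kappa), [\spincs]) - d(S^3_{a_0}(\kappa), [\overline{c}_0]) \leq \tfrac{1}{4}\bigl(\norm{\spincs} - \norm{\overline{c}_0}\bigr) \]
valid for both $\kappa = K$ and $\kappa = U$, where $\overline{c}_0$ is the first coordinate of a short representative in $W_0$ of the restriction $\spincs|_{S^3_{a_0}(\kappa)}$. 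For $\kappa = U$ the bound is attained by sharpness of $-W(U)$ (Theorem~\ref{Ch1:thm:WUissharp}), so subtracting the two relations gives $D^{p/q}_K([\spincs]) \geq D^{a_0}_K([\overline{c}_0])$. A counting argument using Lemma~\ref{Ch1:lem:countspincinC} and the fact $|\mathcal{C}| = p = |\spinc(Y)|$ then forces equality: if some spin$^c$ class had a strictly larger $D$-value, summing over $\mathcal{C}$ would exceed the total $d$-invariant difference between $S^3_{p/q}(K)$ and $S^3_{p/q}(U)$, which is determined by the integer values $V_k$ via the induction hypothesis on $S^3_{a_0}$. Reduction to \eqref{Ch1:eq:integersurgDformula} then gives $D^{p/q}_K([\spincs]) = 2V_{(a_0 - |\overline{c}_0|)/2}$.

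The decisive step — and the main obstacle — is identifying $\overline{c}_0$. Running the push-down algorithm of Lemma~\ref{Ch1:lem:fromMtoC} on the truncated tuple $(c_1, \dots, c_l)$ alone, one must track when the process propagates into and shifts the $h_0$-coordinate. A direct calculation (essentially the inductive calculation in the proof of Lemma~\ref{Ch1:lem:countspincinC}) should show that this shift occurs precisely when $\pm\spincs$ is left-full with $\pm c_0 \geq 0$, in which case the leading coordinate becomes $c_0 \pm 2$ and $\tfrac{a_0 - |\overline{c}_0|}{2} = \tfrac{a_0 - |c_0| - 2}{2}$; in every other case $\overline{c}_0 = c_0$. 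This is precisely the dichotomy in the statement. The hardest part will be checking the shift rigorously in the presence of the asymmetric ``no full tank'' constraints defining $\mathcal{M}$, and ensuring that the restriction of $\spincs$ to $S^3_{a_0}(K)$ is always represented by a genuinely short element of $\Char(W_0)$.
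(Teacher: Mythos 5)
Your overall architecture matches the paper's: decompose $W$ as $W' \cup_{S^3_{a_0}(K)} Z$ with $W'$ the $a_0$-framed $2$-handle, apply inequality \eqref{Ch1:eq:dinvinequality} to $Z$, use sharpness of $-W(U)$ via Theorem~\ref{Ch1:thm:WUissharp} to turn the bound into $D^{p/q}_K([\spincs])\geq D^{a_0}_K(\cdot)$, and close the gap by a counting argument over $\mathcal{C}$ against the Ni--Wu total. Up to this point your proposal and the paper coincide; the ``induction on $l$'' is really just a single reduction to the $l=0$ case, which is what the paper does without phrasing it as an induction.

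The genuine gap is in your identification of $\overline{c}_0$. You define $\overline{c}_0$ to be the first coordinate of a \emph{short representative on $W'$ of the restriction $\spincs|_{S^3_{a_0}(K)}$}, and hope that push-downs of the truncated tuple $(c_1,\dots,c_l)$ occasionally ``propagate into and shift the $h_0$-coordinate.'' But for $\spincs=(c_0,\dots,c_l)\in\mathcal{M}$ we already have $|c_0|\leq a_0$, so $c_0$ itself is short on $W'$, and your $\overline{c}_0$ equals $c_0$ unconditionally. Push-ups and push-downs on the truncated tuple cannot change the $h_0$-coordinate at all; only a push-down at position $1$ can, and that is not part of the truncated tuple. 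Consequently, your method yields only the bound $D^{p/q}_K([\spincs])\geq 2V_{(a_0-|c_0|)/2}$ in every case, which is strictly smaller than the stated answer in the left-full case (since $V_i$ is non-increasing). If you feed this weaker bound into the counting argument, the sum of your lower bounds over $\mathcal{C}$ comes out \emph{strictly below} the Ni--Wu total $2\sum_{i=0}^{p-1} V_{\min\{\lfloor i/q\rfloor, \lceil (p-i)/q\rceil\}}$ whenever the corrected terms $V_{i-1}$ exceed $V_i$, and the argument does not close.

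The fix --- which is what the paper does --- is to keep $\overline{c}_0=c_0$, but to apply the same decomposition bound to a \emph{different} element $\spincs'\in\mathcal{M}$ representing the same class $[\spincs]\in\spinc(S^3_{p/q}(K))$. Concretely, if $\spincs\in\mathcal{C}$ is left-full of the form $(c_0,a_1-2,\dots,a_{k-1}-2,a_k,c_{k+1},\dots,c_l)$, then $\spincs'=\spincs-2\sum_{i=1}^{k}PD(h_i)\in\mathcal{M}$ has $[\spincs']=[\spincs]$ and first coordinate $c_0+2$; the no-full-tank condition on $\mathcal{M}$ is exactly what ensures $\spincs'$ stays in $\mathcal{M}$ (in particular it forces $c_0\leq a_0-2$ and $c_{k+1}\leq a_{k+1}-2$). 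Applying the bound to $\spincs'$ gives $D^{p/q}_K([\spincs])\geq 2V_{(a_0-c_0-2)/2}$, and then the counting argument over $\mathcal{C}$ produces the exact total and forces equality. One further step you omit: having established the formula on $\mathcal{C}$, it must be transported to all of $\mathcal{M}$ using Lemma~\ref{Ch1:lem:fromMtoC} (the push-up map $\mathcal{M}\to\mathcal{C}$ preserving left-fullness and $c_0$ when $c_0>-a_0$ and $-\spincs$ is not left-full) together with conjugation symmetry $D^{p/q}_K([\spincs])=D^{p/q}_K([-\spincs])$ to handle the remaining cases.
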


\begin{proof}
Let $W'$ be the manifold obtained by attaching a single $a_0$ framed 2-handle to $B^4$ along $K$. Since $W$ is obtained by attaching 2-handles to $W'$, we can write $W=W' \cup Z$, where $\partial Z = -S_{a_0}^3(K) \cup S_{p/q}^3(K)$ and $b_2(Z)=l$. For any $\spincs \in \spinc(W)$, we have
\[c_1(\spincs)^2= c_1(\spincs|_{W'})^2+c_1(\spincs|_{Z})^2.\]
Thus for any $\spincs \in \mathcal{M}$, \eqref{Ch1:eq:lensspaced} shows that we have
\[
\frac{c_1(\spincs|_Z)^2 - l}{4}=
d(S_{p/q}^3(U),[\spincs])-d(S_{a_0}^3(U),[c_0]).
\]
However, from \eqref{Ch1:eq:dinvinequality} we have
\[
\frac{c_1(\spincs|_Z)^2 - l}{4}\geq d(S_{p/q}^3(K),[\spincs])-d(S_{a_0}^3(K),[c_0]).
\]
Combining these, we obtain
\begin{equation}\label{Ch1:eq:integralDbound}
D^{p/q}_K([\spincs])\geq D^{a_0}_K([c_0])=2V_{\frac{a_0-|c_0|}{2}},
\end{equation}
for any $\spincs \in \mathcal{M}$.

Now consider $\spincs \in \mathcal{C}$. If $\spincs$ is left-full, then it takes the form
\[\spincs=(c_0, a_1-2, \dots, a_{k-1}-2,a_k, c_{k+1},\dots,c_l),\]
for some $k$. Thus, by a sequence of push-downs performed at consecutively at $k, k-1, \dots, 2$ and $1$, we arrive at
\[\spincs'=(c_0', \dots, c_l')=\spincs-\sum_{i=1}^k PD(h_i)\in \mathcal{M},\]
which satisfies $[\spincs']=[\spincs]$ and $c_0'=c_0+2$. Thus by \eqref{Ch1:eq:integralDbound}, we have the bound
\begin{equation}\label{Ch1:eq:CintDbound}
D^{p/q}_K([\spincs])\geq
\begin{cases}
D^{a_0}_K([c_0+2])=2V_{\frac{a_0-|c_0|-2}{2}} &\text{if $c_0\geq 0$ and $\spincs$ is left-full}\\
D^{a_0}_K([c_0])=2V_{\frac{a_0-|c_0|}{2}} &\text{otherwise,}
\end{cases}
\end{equation}
for all $\spincs \in \mathcal{C}$. The following claim establishes the lemma for $\spincs \in \mathcal{C}$.
\begin{claim}
For all $\spincs \in \mathcal{C}$, we have equality in \eqref{Ch1:eq:CintDbound}.
\end{claim}
\begin{proof}[Proof of Claim]
Let $a_0=2k+\epsilon$ where $\epsilon\in \{0,1\}$.
Using the inequalities given in \eqref{Ch1:eq:CintDbound} and Lemma~\ref{Ch1:lem:countspincinC} to count the the elements of $\mathcal{C}$ which are left full or otherwise, we obtain
\begin{align}\begin{split}\label{Ch1:eq:sumofDs}
\sum_{\spinct \in \spinc(S_{p/q}^3(K))} D^{p/q}_K(\spinct)&\geq
\sum_{\substack{\spincs \in \mathcal{C},\\ c_0<0}} 2V_{\frac{a_0-|c_0|}{2}} + \sum_{\substack{\spincs \in \mathcal{C},\\ c_0\geq 0,\\ \text{$\spincs$ not left-full}}} 2V_{\frac{a_0-|c_0|}{2}} +\sum_{\substack{\spincs \in \mathcal{C},\\ c_0\geq 0,\\ \text{$\spincs$ left-full}}} 2V_{\frac{a_0-|c_0|-2}{2}}\\
&=2\epsilon q V_k + 2q\sum_{i=1}^{k-1}V_i + 2(q-r)\sum_{i=0}^{k}V_i + 2r\sum_{i=1}^{k}V_{i-1}\\
&=2(q+\epsilon q -r)V_k +2qV_0+4q\sum_{i=1}^{k-1}V_i.
\end{split}\end{align}
On the other hand, using \eqref{Ch1:eq:NiWuformula1}, we obtain
\begin{align*}
\sum_{\spinct \in \spinc(S_{p/q}^3(K))} D^{p/q}_K(\spinct)&=
\sum_{i=0}^{p-1}d(S_{p/q}^3(U),i)-d(S_{p/q}^3(K),i)\\
&=2\sum_{i=0}^{p-1}V_{\min \{\lfloor \frac{i}{q} \rfloor,\lceil \frac{p-i}{q} \rceil\}}\\
&=2(q+\epsilon q -r)V_k +2qV_0+4q\sum_{i=1}^{k-1}V_i.
\end{align*}
This shows that we have equality in \eqref{Ch1:eq:sumofDs}. Consequently we must also have equality in \eqref{Ch1:eq:CintDbound} for any $\spincs\in \mathcal{C}$.
\end{proof}
Now we deduce the lemma for arbitrary $\spincs=(c_0, \dots, c_l)\in \mathcal{M}$.

Suppose first that $|c_0|=a_0$. By \eqref{Ch1:eq:integralDbound}, we have $D_K^{p/q}([\spincs])\geq 2V_0$.
Since the $\mathcal{C}$ form a complete set of representatives for $\spinc(S_{p/q}^3(K))$, the above claim shows there is $i\geq 0$ such that
\[D_K^{p/q}([\spincs])=V_i.\]
Since $V_i \leq V_0$, it follows that $D_K^{p/q}([\spincs])= 2V_0$. As $\pm\spincs\in \mathcal{M}$ cannot be left-full if $c_0=\pm a_0$, this proves the lemma, when $a_0=|c_0|$.

We can now assume $|c_0|<a_0$. At most one of $\spincs$ or $-\spincs$ is left-full. If $-\spincs$ is not left-full, then Lemma~\ref{Ch1:lem:evalD} shows that there is $\spincs'=(c_0, c_1', \dots, c_l')\in \mathcal{C}$, with $[\spincs']=[\spincs]$ and $\spincs'$ is left-full if and only if $\spincs$ is left-full. This gives
\[D^{p/q}_K([\spincs])=D^{p/q}_K([\spincs'])=
\begin{cases}
2V_{\frac{a_0-|c_0|-2}{2}}   &\text{if $\spincs$ is left-full and $c_0\geq 0$,}\\
2V_{\frac{a_0-|c_0|}{2}}     &\text{otherwise,}
\end{cases}
\]
as required. Since $d$-invariants are invariant under conjugation and $-\spincs$ represents the conjugate of $\spincs$ under our identification $\spinc(W) \leftrightarrow \Char(W)$, we have
\begin{equation*}
D_{K}^{p/q}([\spincs])= D_{K}^{p/q}([-\spincs]).
\end{equation*}
Thus, if $-\spincs$ is left-full, then we get
\[D^{p/q}_K([\spincs])=D^{p/q}_K([-\spincs])=
\begin{cases}
2V_{\frac{a_0-|c_0|-2}{2}}   &\text{if $-c_0\geq 0$,}\\
2V_{\frac{a_0-|c_0|}{2}}     &\text{if $-c_0< 0$,}
\end{cases}
\]
completing the proof.
\end{proof}

\section{The Changemaker Theorem}
Let $W$ be the positive-definite manifold bounding $Y\cong S_{p/q}^3(K)$ obtained by attaching 2-handles $h_0$, \dots, $h_l$ to the 4-ball, as in Figure~\ref{Ch1:fig:surgerytrace}, according to the continued fraction expansion $p/q=[a_0, \dots, a_l]^-$, where $a_0\geq 1$ and $a_i\geq 2$ for $i\geq 1$ (i.e we are no longer allowing $a_l=1$). Note that a unique continued fraction of this form exists for all $p/q>0$. Suppose that $Y$ bounds a sharp negative-definite manifold $X$. Using this we can form the closed smooth manifold
\[Z:=W\cup_Y -X.\]

Since $W$ and $-X$ are positive-definite $Z$ is positive-definite with
\[b_2(Z)=B=b_2(X)+l+1.\]
By Donaldson's Diagonalization Theorem, the intersection form
\[Q_Z: H_2(Z)/{\rm Tors}\times H_2(Z)/{\rm Tors} \rightarrow \Z\]
is isomorphic to the diagonal form on $\Z^B$ \cite{donaldson87orientation}. From now on we will identify $H_2(Z)/{\rm Tors}$ with $\Z^B$ by choosing an orthonormal basis $\{e_1, \dots, e_B\}$. Under such an identification the first Chern class gives a surjective map
\[c_1\colon \spinc(Z)\rightarrow \Char(\Z^B).\]
Recall also that we have identified $\spinc(W)$ with the set
\[\Char (W) = \{(c_0, \dots , c_l)\in \mathbb{Z}^{l+1} \,|\, c_i \equiv a_i \bmod 2 \text{ for all } 0\leq i \leq l\}.\]
For each 2-handle $h_i$ of $W$, let $w_i\in \Z^B$ be the image of its homology class under the inclusion induced by the inclusion $W\subseteq Z$. Under these identifications, if $\spincs\in \spinc(Z)$ has $c_1(\spincs)=c\in \Char(\Z^B)$ then its restriction $\spincs|_W$ corresponds to
\[(c\cdot w_0, \dots, c\cdot w_l) \in \Char (W).\]
This following lemma is the natural generalization of \cite[Lemma~2.5]{greene2010space} to rational surgeries.
\begin{lem}\label{Ch1:lem:spincZ}
The following are true.
\begin{enumerate}[(i)]
\item For any $\spincs \in \mathcal{M}$, $D^{p/q}_K([\spincs])=0$ if and only if there is $c\in \{\pm 1\}^B$, such that \[\spincs=(c\cdot w_0, \dots, c\cdot w_l).\]
\item For all $i$ in the range $0\leq i \leq a_0/2$, the vector $w_0$ satisfies
\begin{equation}\label{Ch1:eq:w0formula}
8V_{i} = \min_{ \substack{|c\cdot w_0| =a_0-2i \\ c \in \Char(\mathbb{Z}^{B})}} \norm{c} - B.
\end{equation}
\end{enumerate}
\end{lem}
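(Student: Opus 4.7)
My plan is to form the closed positive-definite smooth 4-manifold $Z = W \cup_Y (-X)$, apply Donaldson's theorem to identify $(H_2(Z)/\tors, Q_Z)$ with $(\Z^B, \langle 1\rangle^B)$, and combine the resulting constraints with the sharpness of both $X$ and $-W(U)$ (Theorem~\ref{Ch1:thm:WUissharp}). The key identity comes from computing $\min c_1(\spincs_Z)^2$ over extensions of a fixed $\spincs \in \mathcal M$: additivity of $c_1^2$ across the gluing gives $c_1(\spincs_Z)^2 = \norm{\spincs}_W - c_1(\spincs_X)^2|_X$, sharpness of $X$ provides $\max c_1(\spincs_X)^2|_X = 4d(Y,[\spincs]) - b_2(X)$ over extensions of $[\spincs]$, and substituting $d(S^3_{p/q}(U),[\spincs]) = (\norm{\spincs}_W - l - 1)/4$ from Theorem~\ref{Ch1:thm:WUissharp} collapses everything to
\begin{equation}\label{eq:centralidentity}
\min_{\spincs_Z|_W = \spincs} c_1(\spincs_Z)^2 = B + 4 D^{p/q}_K([\spincs]).
\end{equation}

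Part (i) is immediate from \eqref{eq:centralidentity}. Donaldson bounds $\norm{c} \geq B$ for every $c \in \Char(\Z^B)$ with equality iff $c \in \{\pm 1\}^B$, so \eqref{eq:centralidentity} forces $D^{p/q}_K([\spincs]) \geq 0$ with equality iff some extension has $c_1(\spincs_Z) \in \{\pm 1\}^B$. Under the identification $c_1(\spincs_Z)|_W \leftrightarrow (c \cdot w_0, \dots, c \cdot w_l)$, this is exactly the stated equivalence.

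For the upper bound in part (ii), I would use Lemmas~\ref{Ch1:lem:countspincinC} and~\ref{Ch1:lem:evalD} to select $\spincs \in \mathcal M$ with first coordinate $\pm(a_0 - 2i)$ and $D^{p/q}_K([\spincs]) = 2V_i$: at the extreme $|c_0| = a_0$ an element of $\mathcal M$ cannot be $\pm$-left-full because this would create a full tank, and for $|c_0| < a_0$ the count in Lemma~\ref{Ch1:lem:countspincinC} ensures a non-left-full representative in $\mathcal C$. The central identity \eqref{eq:centralidentity} then produces $c \in \Char(\Z^B)$ with $\norm{c} = B + 8V_i$ and $|c \cdot w_0| = a_0 - 2i$.

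The lower bound is the technical heart. Given $c \in \Char(\Z^B)$ with $|c \cdot w_0| = a_0 - 2i$, let $c_W \in \Char(W)$ be its restriction and $\spincs'$ a short representative of $[c_W]$ with first coordinate $c'_0$. The same sharpness argument yields
\[\norm{c} \geq B + (\norm{c_W}_W - \norm{\spincs'}_W) + 4 D^{p/q}_K([c_W]),\]
and when $|c'_0| \geq |c_0|$ the monotonicity of $V_k$ together with Lemma~\ref{Ch1:lem:evalD} finishes the bound directly. When $|c'_0| < |c_0|$ the key step is a discrepancy inequality $\norm{c_W}_W - \norm{\spincs'}_W \geq 4(|c_0| - |c'_0|)$, to be obtained by tracking the sequence of norm-nonincreasing push operations relating $c_W$ to $\spincs'$; combined with the step-size bound $V_{k-1} - V_k \leq 1$ from Proposition~\ref{intro:prop:Viproperties}, this again yields $\norm{c} \geq B + 8V_i$. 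The main obstacle is proving the discrepancy inequality in full generality; once it is in hand, the two bounds meet and \eqref{Ch1:eq:w0formula} follows.
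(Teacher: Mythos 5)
Your central identity, part (i), and the upper bound in part (ii) are correct and follow the paper's own argument closely. The gap you flag in the lower bound is genuine: the discrepancy inequality $\norm{\spincs}_W - \norm{\spincs'}_W \geq 4\bigl(|c_0| - |c_0'|\bigr)$ is not proved, and trying to obtain it by tracking push-ups and push-downs is delicate, because a push at $k=1$ changes $c_0$ by $\pm 2$ while leaving $\norm{\cdot}_W$ unchanged, so the per-step accounting does not close in any obvious way. (For what it is worth, the paper's own proof of the lower bound is equally compressed at this point: the first equality in \eqref{Ch1:eq:w0lowerboundformula} asserts that the minimum is achieved among $c$ whose restriction lies in $\mathcal{M}$, which amounts to exactly the same unproved step.)

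You can sidestep the discrepancy inequality entirely by regrouping the decomposition of $Z$. Let $W'\subseteq W$ be the sub-4-manifold consisting of the $0$-handle and the single $a_0$-framed 2-handle along $K$, and set $Z'=W\setminus\operatorname{int}W'$. Then $X\cup_Y(-Z')$ is a negative-definite manifold with boundary $S^3_{a_0}(K)$ and $b_2 = l + b_2(X) = B-1$, and $Z = W'\cup\bigl(Z'\cup_Y(-X)\bigr)$. For any $c\in\Char(\Z^B)$ with $|c\cdot w_0|=a_0-2i$, the restriction of $c$ to $W'$ has $\norm{\cdot}_{W'} = c_0^2/a_0$, while \eqref{Ch1:eq:dinvinequality} applied to $X\cup(-Z')$ gives $\norm{c|_{Z'\cup(-X)}} \geq (B-1) - 4d(S^3_{a_0}(K),[c_0])$. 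Adding, and substituting $d(S^3_{a_0}(K),[c_0]) = d(S^3_{a_0}(U),[c_0]) - D^{a_0}_K([c_0])$ together with the sharpness of $-W'(U)$ (which gives $4d(S^3_{a_0}(U),[c_0]) = c_0^2/a_0 - 1$ since $|c_0|\leq a_0$), the $c_0^2/a_0$ terms cancel and you obtain
\[
\norm{c} - B \;\geq\; 4D^{a_0}_K\bigl([c_0]\bigr) \;=\; 8V_{(a_0-|c_0|)/2} \;=\; 8V_i,
\]
using \eqref{Ch1:eq:integersurgDformula}. This holds for every such $c$, with no restriction to $\mathcal{M}$ and no short-representative comparison, and combined with your upper bound it completes the proof of \eqref{Ch1:eq:w0formula}.
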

\begin{proof}
Observe that for any $\mathfrak{r} \in \spinc(Z)$, we have
$c_1(\mathfrak{r})^2=c_1(\mathfrak{r}|_W)^2 + c_1(\mathfrak{r}|_{-X})^2$. Thus if $\mathfrak{r}$ to restricts $\spincs=\mathfrak{r}|_{W}$ on $W$, then \eqref{Ch1:eq:dinvinequality} and Theorem~\ref{Ch1:thm:WUissharp} show
\begin{align}\label{Ch1:eq:ZDbound}
\begin{split}
c_1(\mathfrak{r})^2 -b_2(Z)&= c_1(\spincs)^2 - b_2(W) + c_1(\mathfrak{r}|_{-X})^2 - b_2(X)\\
    &\geq 4d(S_{p/q}^3(U), [\spincs]) - 4d(Y, [\spincs])\\
        &=4D^{p/q}_K([\spincs]).
\end{split}
\end{align}
By Lemma~\ref{Ch1:lem:minimisers}, equality can occur in \eqref{Ch1:eq:ZDbound} only if $\spincs \in \mathcal{M}$.

Now as $X$ is sharp, for any $\spincs\in \mathcal{M}$, there is $\spincs' \in \spinc(-X)$ with $\spincs'|_{-Y}=[\spincs]$ and
\[-4d(Y,[\spincs])=c_1(\spincs')^2 - b_2(X).\]
In particular, if we form $\mathfrak{r}\in \spinc(Z)$ by gluing $\spincs\in \mathcal{M}$ to $\spincs'$, then $\mathfrak{r}$ realizes the lower bound in \eqref{Ch1:eq:ZDbound}. This shows that for any $\spincs\in \mathcal{M}$, we have
\begin{equation*}
\min_{\substack{\mathfrak{r} \in \spinc(Z),\\ \mathfrak{r}|_W=\spincs}}c_1(\mathfrak{r})^2 -b_2(Z) = 4D^{p/q}_K([\spincs]).
\end{equation*}
Under our identification of $Q_Z$ with the diagonal form this give
\begin{equation}\label{Ch1:eq:spincZ}
\min_{\substack{c \in \Char(\Z^B),\\ \spincs=(c\cdot w_0, \dots, c\cdot w_l)}}\norm{c} - B = 4D^{p/q}_K([\spincs]),
\quad\text{for any $\spincs \in \mathcal{M}$.}
\end{equation}
Since any $c\in \Char(\Z^B)$ satisfies $\norm{c}\geq B$ with equality if and only if $c\in\{\pm 1\}^B$, statement $(i)$ follows from \eqref{Ch1:eq:spincZ}.

Now fix $i$ in the range $0\leq i \leq a_0/2$. From \eqref{Ch1:eq:ZDbound} we see that any $c\in \Char(\Z^B)$ with $(c\cdot w_0, \dots, c\cdot w_l)=\spincs\in \spinc(W)$ and $|c\cdot w_0| =a_0-2i$ satisfies
\[\norm{c}-B \geq 4D^{p/q}_K([\spincs]),\]
with equality only if $\spincs \in \mathcal{M}$.
This shows that
\begin{equation}\label{Ch1:eq:w0lowerboundformula}
\min_{ \substack{|c\cdot w_0| =a_0-2i \\ c \in \Char(\mathbb{Z}^{B})}} \norm{c} - B=
\min_{ \substack{|c\cdot w_0| =a_0-2i \\
 (c\cdot w_0, \dots, c\cdot w_l) \in \mathcal{M}\\
 c \in \Char(\mathbb{Z}^{B})}} \norm{c} - B
 \geq 8V_i,
\end{equation}
where the final inequality follows from Lemma~\ref{Ch1:lem:evalD} and \eqref{Ch1:eq:spincZ}.
 However, Lemma~\ref{Ch1:lem:evalD} also shows that $\spincs'=(a_0-2i, a_1-2, \dots, a_l-2)\in \mathcal{M}$, satisfies $D^{p/q}_K([\spincs'])=V_i$. This shows the lower bound in \eqref{Ch1:eq:w0lowerboundformula} is attained, proving statement $(ii)$.
\end{proof}
The following lemma is Gibbon's result \cite[Theorem~1.2]{gibbons2013deficiency} with extra hypotheses on the $d$-invariants of $Y$ removed. The proof given is a streamlined version of the one found by Gibbons.
\begin{lem}\label{Ch1:lem:wigiveCMlattice}
The vectors $w_0, \dots, w_l\in \Z^B$ satisfy Conditions I, II and III in the definition of a $p/q$-changemaker lattice and we have the bound
\begin{equation}\label{Ch1:eq:sharpnuplusbound}
\frac{p}{q}> 2\nu^{+}(K).
\end{equation}
\end{lem}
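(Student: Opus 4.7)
The inclusion $W\hookrightarrow Z$ sends each handle class $h_i$ to $w_i$, and $Q_Z$ restricts to $Q_W$ on these classes. Consequently $w_i\cdot w_j=M_{ij}$, which immediately supplies all of the numerical pairings $w_i\cdot w_i=a_i$, $w_i\cdot w_{i+1}=-1$, and $w_i\cdot w_j=0$ for $|i-j|>1$ required in Conditions~I and~III.

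The key ingredient is Lemma~\ref{Ch1:lem:spincZ}(i), and my plan is to exploit it with the family of boundary characteristic vectors
\[\spincs_{c_0}=(c_0,a_1-2,\dots,a_l-2),\qquad c_0\equiv a_0\bmod 2,\quad |c_0|\leq a_0.\]
Because $a_k\geq 2$ for $k\geq 1$, neither $\spincs_{c_0}$ nor $-\spincs_{c_0}$ is left-full or contains a full tank, so $\spincs_{c_0}\in\mathcal{M}$ and Lemma~\ref{Ch1:lem:evalD} gives $D_K^{p/q}([\spincs_{c_0}])=2V_{(a_0-|c_0|)/2}$; this vanishes precisely when $|c_0|\leq a_0-2\nu^+$. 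For each such $c_0$ one obtains $c=c(c_0)\in\{\pm1\}^B$ with $c\cdot w_0=c_0$ and $c\cdot w_k=a_k-2$ for $1\leq k\leq l$. After choosing the orthonormal basis so that the nonzero coordinates $\sigma_1\leq\dots\leq\sigma_t$ of $w_0$ are positive, the realized values $c\cdot w_0=\sum\pm\sigma_i$ exhaust every integer of the correct parity in $[-(a_0-2\nu^+),a_0-2\nu^+]$. Brown's criterion (Proposition~\ref{Ch3:prop:CMcondition}) then converts this achievability statement into the changemaker condition on $(\sigma_1,\dots,\sigma_t)$; the required presence of $\sigma_1=1$ forces $a_0-2\nu^+\geq 1$, which combined with $a_0=\lceil p/q\rceil$ yields the bound $p/q>2\nu^+(K)$. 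The case $l>0$ in Condition~I is recovered once Condition~II is in place: $w_0\cdot w_1=-1$ together with the $\pm 1,0$ entries of $w_1$ isolates a unique unit coordinate of $w_0$ outside the stable support, producing the extra $e_{t+1}$ summand.

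The main obstacle is Condition~II, that $|w_k\cdot e_j|\leq 1$ for $k\geq 1$. My plan is to exploit the large family of realizing characteristic vectors from the previous step: since each $c(c_0)\in\{\pm1\}^B$ satisfies $c(c_0)\cdot w_k=a_k-2$ for $k\geq 1$, pairwise differences $c(c_0)-c(c_0')$ lie in $\{-2,0,2\}^B$ and are orthogonal to every such $w_k$. A coordinate of some $w_k$ with $|w_k\cdot e_j|\geq 2$ would impose overly restrictive support-patterns on these differences, obstructing enough sign flips to realize the entire range of $c\cdot w_0$ values established above. This counting argument is combinatorially delicate and is the step I expect to demand the most care. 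With Condition~II in hand, Condition~III amounts to the identity $|I_i\cap I_j|=-w_i\cdot w_j$: the $\pm1,0$ structure of the $w_k$ combined with the vanishing $w_i\cdot w_j=0$ for $|i-j|>1$ rules out any shared index (a common index would contribute $\pm 1$ to the pairing with no counterpart to cancel against, using the relations $w_0\cdot w_k=0$ for $k\geq 2$ to pin down signs), while $w_i\cdot w_{i+1}=-1$ forces $|I_i\cap I_{i+1}|=1$.
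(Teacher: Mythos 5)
Your single family $\spincs_{c_0}=(c_0,a_1-2,\dots,a_l-2)$ is not rich enough to carry the proof, and this produces real gaps at two of the three Conditions. For Condition~II, the constraint $c\cdot w_k=a_k-2$ only forces $\sum_j w_{k,j}(w_{k,j}-c_j)=2$; a coordinate $w_{k,j}=2$ with $c_j=1$ exhausts this budget in a single term and is perfectly consistent, so the "differences lie in $\{-2,0,2\}^B$" idea does not by itself rule out large entries, and you correctly flag that you have no complete argument. The paper instead evaluates Lemma~\ref{Ch1:lem:spincZ}$(i)$ at $\spincs=(-S,\epsilon_1,\dots,\epsilon_{i-1},a_i,\epsilon_{i+1},\dots,\epsilon_l)$ with $a_i$ (not $a_i-2$) in slot $i$; then $c\cdot w_i=a_i=\norm{w_i}$ forces $w_{i,j}(w_{i,j}-c_j)=0$ termwise, which immediately gives $|w_{i,j}|\leq 1$ with no combinatorics. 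The same criticism applies to your sketch for Condition~III: from $w_i\cdot w_j=0$ alone you cannot rule out a shared index, because positive and negative overlaps could cancel; the paper closes this with $\spincs$ having $a_i$ in one slot and $-a_j$ in another, which pins down the signs of $c$ on $I_i$ and $I_j$ separately and makes the cancellation impossible.

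Your treatment of Condition~I when $l>0$ also has a hidden gap. From the existence of $c\in\{\pm1\}^B$ with $c\cdot w_0=c_0$ for each $c_0\in[-S,S]$ of the right parity you deduce a changemaker condition on the full support of $w_0$, which is the tuple $(\sigma_1,\dots,\sigma_t,1)$ including the $e_{t+1}$ coordinate. This is \emph{not} the same as the changemaker condition on $(\sigma_1,\dots,\sigma_t)$: e.g.\ $(1,1,3)$ is a changemaker tuple but removing one $1$ gives $(1,3)$, which is not. The paper resolves this by taking $\spincs=(c_0,-a_1,\epsilon_2,\dots,\epsilon_l)$ so that the constraint $c\cdot w_1=-a_1=-\norm{w_1}$, together with Condition~II and $w_{1,t+1}=-1$, \emph{forces} $c_{t+1}=+1$; then $c\cdot w_0=1+\sum\pm\sigma_i$, and achieving all $c_0\in[2-S,S]$ yields exactly the changemaker condition on $(\sigma_1,\dots,\sigma_t)$. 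Your family keeps $a_1-2$ in slot $1$, so $c_{t+1}$ is free to be $\pm1$ and the reduction to the stable coordinates does not go through. The overall strategy (feed Lemma~\ref{Ch1:lem:spincZ}$(i)$ carefully chosen short characteristic vectors) is the right one, but you need the paper's wider menu of test vectors with $\pm a_i$ placed in specific slots, not the one-parameter family.
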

\begin{proof}
Let $e_1, \dots, e_B$ be an orthonormal basis for $\Z^B$. For $0\leq i\leq l$ and $1\leq j \leq B$, set $w_{i,j}=w_i\cdot e_j$. Without loss of generality, we will assume that this basis has been chosen so that $w_{0,j}\geq 0$ for all $j$. It will be useful to consider the quantity
\[S=\sum_{i=1}^B w_{0,i}.\]
As in Definition~\ref{Intro:def:CMlattice}, we will consider the sets
\[I_i=\{k\,|\, w_{i,k}\ne 0\}.\]
We break the proof of Lemma~\ref{Ch1:lem:wigiveCMlattice} into a number of steps.
\begin{step}\label{Ch1:Step:Vivanishing}
We have $V_i=0$ if and only if $2i\geq a_0-S$. In particular, we have
\begin{equation*}
2\nu^{+}(K)=a_0-S
\end{equation*}
and \eqref{Ch1:eq:sharpnuplusbound} holds.
\end{step}
\begin{proof}
It follows from Lemma~\ref{Ch1:lem:spincZ} that for $i$ in the range $0\leq i \leq a_0/2$, we have
$V_{i}=0$ if and only if there is $c\in \{\pm1\}^B$, such that $|c\cdot w_0| =a_0-2i$. The maximal value that $|c\cdot w_0|$ can attain for any $c\in \{\pm 1\}^B$ is $|c\cdot w_0|=S$ -- obtained by choosing $\pm c=(1,\dots,1)$. This shows $V_i=0$ if and only if $2i\geq a_0-S$. By definition, $\nu^+(K)$ is the minimal integer such that $V_{\nu^+(K)}=0$, so $2\nu^{+}(K)=a_0-S$. As $a_0\geq 1$, we must have $S\geq 1$, so
\[\frac{p}{q}>a_0-1 =2\nu^+(K)+S-1\geq 2\nu^{+}(K),\]
proving \eqref{Ch1:eq:sharpnuplusbound}.
\end{proof}

For the remainder of the proof we set $\epsilon_j =2-a_j$.

\begin{step}\label{Ch1:Step:wijvalues}
For all $i\geq 1$ and all $j$, we have $|w_{i,j}|\leq 1$. That is, Condition II of Definition~\ref{Intro:def:CMlattice} is fulfilled.
\end{step}
\begin{proof}
Consider $\spincs\in\mathcal{M}$ given by
\[\spincs=(-S,\epsilon_1, \dots, \epsilon_{i-1},a_{i},\epsilon_{i+1}, \dots, \epsilon_l).\]
By Lemma~\ref{Ch1:lem:evalD} and Step~\ref{Ch1:Step:Vivanishing}, we have $D^{p/q}_K([\spincs])=0$. So by Lemma~\ref{Ch1:lem:spincZ}$(i)$ there is $c=(c_1, \dots, c_B)\in \{\pm 1\}^B$ such that
\[0=a_i-c\cdot w_i=\sum_{j=1}^B w_{i,j}(w_{i,j}- c_j).\]
As $|c_j|=1$, we have $w_{i,j}(w_{i,j}- c_j)\geq 0$ for each $j$. Consequently, we have \[w_{i,j}(w_{i,j}- c_j)= 0\]
for all $j$. However this can only be achieved if $w_{i,j}=c_j$ or $w_{i,j}=0$. In either case $|w_{i,j}|\leq 1$ as required.
\end{proof}
\begin{step}\label{Ch1:Step:overlap}
For $0\leq i < j\leq l$ we have
\[w_{i}\cdot w_{j}= -|I_i \cap I_j|=
\begin{cases}
-1 &\text{if $j=i+1$}\\
0 &\text{if $j>i+1$}.
 \end{cases}\]
That is, Condition III of Definition~\ref{Intro:def:CMlattice} is fulfilled.
\end{step}
\begin{proof}
For $k\geq 1$, it will be useful to decompose $I_i$ as the disjoint union $I_k=I_k^+ \cup I_k^-$, where
\[I_k^{\pm}=\{l\,|\, w_{k,l}=\pm 1\}.\]
We prove this step separately for $i=0$ and $i>0$.

First assume that $i=0$. Consider
\[\spincs=(-S,\epsilon_1, \dots, \epsilon_{j-1}, a_{j},\epsilon_{j+1}, \dots, \epsilon_l)\in \mathcal{M}.\]
By Lemma~\ref{Ch1:lem:evalD} and Step~\ref{Ch1:Step:Vivanishing}, we have $D^{p/q}_K([\spincs])=0$. So by Lemma~\ref{Ch1:lem:spincZ}$(i)$,  there is $c\in \{\pm 1\}^B$ such that $c\cdot w_0=-S$ and $c\cdot w_j=a_j$. If $k\in I_0$, then $c_k=-1$ and if $k\in I_j^{\pm}$, then $c_k=\pm 1$. Thus $I_0$ and $I_j^+$ are necessarily disjoint.
Thus we can compute
\[w_0 \cdot w_j = \sum_{k\in I_j^+\cap I_0}w_{0,k} - \sum_{k\in I_j^-\cap I_0}w_{0,k}= - \sum_{k\in I_j^-\cap I_0}w_{0,k}= - \sum_{k\in I_j\cap I_0}w_{0,k}.\]
Since we are assuming that $w_{0,k}\geq 1$ for all $k\in I_0$, it follows that $w_0\cdot w_j=0$ if and only if $|I_0\cap I_k|=0$, and that $w_0\cdot w_j=-1$ only if $|I_0\cap I_j|=1$.  Since $w_0\cdot w_j \in \{0,-1\}$, it follows that $w_0\cdot w_j=-|I_0 \cap I_j|$, as required.

Now assume that $i\geq 1$. Consider
\[\spincs=(-S,\epsilon_0, \dots, \epsilon_{i-1}, a_{i},\epsilon_{i+1}, \dots,\epsilon_{j-1}, -a_{j},\epsilon_{j+1},\dots, \epsilon_l)\in \mathcal{M}.\]
By Lemma~\ref{Ch1:lem:evalD} and Step~\ref{Ch1:Step:Vivanishing}, we have $D^{p/q}_K([\spincs])=0$.
So by Lemma~\ref{Ch1:lem:spincZ}$(i)$, there is $c\in \{\pm 1\}^B$ such that $c\cdot w_i=a_i$ and $c\cdot w_j=-a_j$.
Observe that for such a $c$, $k\in I_i^{\pm}$ implies $c_k=\pm 1$ and $k\in I_j^{\pm}$ implies $c_k=\mp 1$. Thus we have $|I_i^+ \cap I_j^+|= |I_i^- \cap I_j^-|=0$. Since
\[w_i\cdot w_j = |I_i^+ \cap I_j^+|+ |I_i^- \cap I_j^-|-|I_i^+ \cap I_j^-|- |I_i^- \cap I_j^+|,\]
this shows that
\[w_i\cdot w_j =-|I_i^+ \cap I_j^-|- |I_i^- \cap I_j^+|=-|I_i \cap I_j|,\]
as required.
\end{proof}
After possibly reordering the $e_i$, we may assume that $w_0$ takes the form
\begin{equation}\label{Ch1:eq:w0form}
w_0=
\begin{cases}
\sigma_1 e_1 + \dots + \sigma_t e_t &\text{if $l=0$}\\
e_{t+1} + \sigma_1 e_1 + \dots + \sigma_t e_t &\text{if $l\geq 1$,}
\end{cases}
\end{equation}
where $t+1$ is the unique element of $I_1 \cap I_0$ given by Step~\ref{Ch1:Step:overlap} when $l\geq 1$.

\begin{step}\label{Ch1:Step:sigmaCMconditions}
The tuple $(\sigma_1, \dots, \sigma_t)$ of elements appearing in \eqref{Ch1:eq:w0form} satisfies the changemaker condition. That is, Condition~I of Definition~\ref{Intro:def:CMlattice} is fulfilled.
\end{step}
\begin{proof}
We deal with the cases $l=0$ and $l\geq 1$ separately.

First we assume that $l=0$, i.e that $p/q=a_0\in \Z$. In this case we have $S=\sum_{i=1}^t \sigma_i$. It follows from Step~\ref{Ch1:Step:Vivanishing} and Lemma~\ref{Ch1:lem:evalD}, that for all $c_0\equiv a_0 \bmod{2}$ in the range $-S\leq c_0\leq S$, $D^{p/q}_K([c_0])=0$. Thus, there is $c\in \{\pm 1\}^B$ such that $c\cdot w_0=c_0$. If we write $c_0=-S+2k$, where $0\leq k \leq S=\sum_{i=1}^t \sigma_i$, and $c$ in the form
\[c=(2d_1-1,\dots, 2d_B-1),\]
where $d_j \in \{0,1\}$ for all $j$, then we have
\[c\cdot w_0 = \sum_{i=1}^t (2\sigma_i d_i - \sigma_i)= 2k-S.\]
and hence that
\[k=\sum_{i=1}^t d_i\sigma_i \quad \text{for some $d_i\in \{0,1\}$.}\]
Since $k$ was an arbitrary value in the range $0\leq k \leq \sum_{i=1}^t \sigma_i$ it follows that $(\sigma_1, \dots, \sigma_t)$ satisfies the changemaker condition.

Now assume that $l\geq 1$. In this case, we have $S=1+\sum_{i=1}^t  \sigma_i$. For $c_0\equiv a_0 \bmod{2}$ in the range $2-S \leq c_0 \leq S$, consider the \spinc-structure
\[\spincs=(c_0,-a_1, \epsilon_2,\dots, \epsilon_l)\in \mathcal{M}.\]
Noting that $-\spincs$ is left-full, Step~\ref{Ch1:Step:Vivanishing} and Lemma~\ref{Ch1:lem:evalD} shows that $D^{p/q}_K([\spincs])=0$. Therefore there is $c\in \{\pm 1\}^B$ such that $c \cdot w_0 = c_0$ and $c\cdot w_1=-a_1$. By Step~\ref{Ch1:Step:overlap} and the assumption that $w_0\cdot e_{t+1}=1$, we must have $w_1\cdot e_{t+1}=-1$. Thus by Step~\ref{Ch1:Step:wijvalues}, we must have $c\cdot e_{t+1}=1$. This means that if we write $c_0=2-S + 2k$, where $0\leq k \leq S-1 = \sum_{i=1}^t \sigma_i$, and $c$ in the form
\[c=(2d_1-1,\dots, 2d_B-1),\]
where $d_j \in \{0,1\}$ for all $j$, then we have
\[c\cdot w_0 = 1+\sum_{i=1}^t (2\sigma_i d_i -\sigma_i)= 2-S+2k=2k+1-\sum_{i=1}^t \sigma_i.\]
This shows that
\[k=\sum_{i=1}^t d_i \sigma_i, \quad \text{for some $d_i\in \{0,1\}$.}\]
Since $k$ was an arbitrary value in the range $0\leq k \leq \sum_{i=1}^t \sigma_i$ it follows that $(\sigma_1, \dots, \sigma_t)$ satisfies the changemaker condition.
\end{proof}
This completes the proof of the lemma.
\end{proof}

\begin{lem}\label{Ch1:lem:QXisom}
We have an isomorphism
\[Q_{-X}\cong \langle w_0, \dots, w_l \rangle^\bot \subseteq \Z^B.\]
\end{lem}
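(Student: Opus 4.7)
The plan is to read off the identification from the long exact sequence of the pair $(Z,-X)$, combined with excision and Poincaré–Lefschetz duality. Set $L=\langle w_0,\dots,w_l\rangle\subseteq \Z^B$.

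First I would set up the topological machinery. By excision, $H_*(Z,-X)\cong H_*(W,Y)$, and Poincaré–Lefschetz duality identifies $H_2(W,Y)\cong H^2(W)$, which is a free abelian group of rank $l+1$ with basis dual to $\{h_0,\dots,h_l\}$. The long exact sequence of $(Z,-X)$ then gives
\[
H_3(Z,-X)\longrightarrow H_2(-X)\stackrel{\iota_*}{\longrightarrow} H_2(Z)\stackrel{\phi}{\longrightarrow} H^2(W).
\]
Since $W$ is built from a $0$-handle and $2$-handles, $H^1(W)=0$, so $H_3(Z,-X)\cong H_3(W,Y)\cong H^1(W)=0$, and $\iota_*$ is injective.

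Next I would identify the map $\phi$ concretely. A class $v\in H_2(Z)$ goes to the functional on $H_2(W)$ given by intersection in $Z$; under our chosen basis, this sends $h_i\mapsto v\cdot w_i$. Consequently $\ker\phi$ consists of exactly those $v$ with $v\cdot w_i=0$ for all $i$. Since $\phi$ lands in the torsion-free group $H^2(W)$, it factors through the quotient $\pi\colon H_2(Z)\to H_2(Z)/\mathrm{Tors}=\Z^B$, and in $\Z^B$ the kernel of the induced map is precisely $L^{\perp}$. Exactness thus gives $\iota_*(H_2(-X)) = \pi^{-1}(L^{\perp})$.

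Applying $\pi$ to this equality yields $\pi(\iota_*(H_2(-X)))=L^{\perp}$. A quick check shows the composition $\pi\circ \iota_*$ has kernel equal to $\mathrm{Tors}(H_2(-X))$ (if $\iota_*(v)$ is torsion in $H_2(Z)$, some $nv$ is killed by the injection $\iota_*$, so $v$ is torsion), so $\pi\circ\iota_*$ descends to a lattice isomorphism
\[
H_2(-X)/\mathrm{Tors}\stackrel{\cong}{\longrightarrow} L^{\perp}\subseteq \Z^B.
\]
Finally, by naturality of the intersection form under the inclusion $-X\hookrightarrow Z$, this isomorphism intertwines $Q_{-X}$ with the form inherited from $\Z^B=H_2(Z)/\mathrm{Tors}$, giving $Q_{-X}\cong \langle w_0,\dots,w_l\rangle^{\perp}$ as claimed.

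The only genuinely subtle point is keeping the torsion bookkeeping straight: $H_2(Z)$ may have torsion (bounded by $H_1(Y)$ via Mayer–Vietoris), and $H_2(-X)$ may too, but both issues dissolve once one observes that the target $H^2(W)$ is torsion-free so $\phi$ necessarily factors through $\pi$, and that $\iota_*$ is injective so it reflects torsion.
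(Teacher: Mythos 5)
Your proof is correct, and it takes a genuinely different route from the paper. The paper first obtains only an \emph{embedding} $Q_{-X}\hookrightarrow\langle w_0,\dots,w_l\rangle^\bot$ from the Mayer--Vietoris sequence of the decomposition $Z=W\cup_Y(-X)$ (which gives injectivity of $H_2(W)\oplus H_2(-X)\to H_2(Z)$, with orthogonality of the two summands), and then promotes this to an isomorphism by computing that both lattices have discriminant $p$: one computation via the long exact sequence of $(X,Y)$ and Poincar\'e duality, the other via an auxiliary surjection $\Z^B\to\Z^{l+1}/M\Z^{l+1}$. Your argument instead uses the long exact sequence of the pair $(Z,-X)$ together with excision $H_*(Z,-X)\cong H_*(W,Y)$ and Lefschetz duality; exactness at $H_2(Z)$ then identifies the image of $H_2(-X)$ as exactly the kernel of the intersection pairing against $w_0,\dots,w_l$, so the isomorphism drops out directly with no rank-and-discriminant bookkeeping. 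The price you pay is slightly heavier homological machinery (the duality $H_k(W,Y)\cong H^{4-k}(W)$ and the identification of the connecting map as ``intersect with $w_i$''); the paper's discriminant route has the side benefit that it records, as a remark, the fact that every $p/q$-changemaker lattice has discriminant $p$, which your approach does not yield for free. Both are fully rigorous; yours is arguably the more economical proof of the lemma as stated.
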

\begin{proof}
By considering the Mayer-Vietoris sequence for $Z=W\cup_Y -X$, then we obtain the exact sequence
\begin{equation*}
H_2(Y)=0 \rightarrow H_2(W)\oplus H_2(-X)\rightarrow H_2(Z).
\end{equation*}
This shows that there is an embedding
\begin{equation}\label{Ch1:eq:inducedembedding}
Q_{-X}\hookrightarrow L'=\langle w_0, \dots, w_l \rangle^\bot \subseteq \Z^B.
\end{equation}
The following two claims show that $Q_{-X}$ and $L'$ have the same discriminant. Since $L'$ and $Q_{-X}$ have the same rank, this shows that map in \eqref{Ch1:eq:inducedembedding} is an isomorphism, as required.
\begin{claim}
$\disc(Q_X)=|H^2(Y)|=p$.
\end{claim}
\begin{proof}[Proof of Claim]
Since the restriction map $\spinc(X)\rightarrow \spinc(Y)$ is surjective, the map $H^2(X)\rightarrow H^2(Y)$ induced by inclusion is surjective. By considering the long exact sequence for cohomology of the pair $(X,Y)$, we obtain exact sequences
\begin{equation}\label{Ch1:eq:discriminantses}
0\rightarrow H^2(X,Y)\rightarrow H^2(X) \rightarrow H^2(Y) \rightarrow 0
\end{equation}
and
\[0\rightarrow H^3(X,Y)\rightarrow H^3(X).\]
By combining these two sequences with the universal coefficients theorem and Poincar\'{e} duality, we see that the map in \eqref{Ch1:eq:discriminantses} must give an isomorphism on the torsion subgroups $\tors(H^2(X,Y))$ and $\tors(H^2(X))$. Therefore \eqref{Ch1:eq:discriminantses} gives the the exact sequence
\[0\rightarrow \frac{H^2(X,Y)}{{\rm Tors}}\rightarrow \frac{H^2(X)}{{\rm Tors}} \rightarrow H^2(Y) \rightarrow 0.\]
If we identify $H^2(X,Y)$ with $H_2(X)$ via Poincar\'{e} duality, then this first map is precisely the inclusion of $\frac{H_2(X)}{{\rm Tors}}$ into its dual lattice, giving the desired value of the discriminant.
\end{proof}

\begin{claim}
We have $\disc(L')=p$
\end{claim}
\begin{proof}[Proof of Claim]
This argument is adapted from \cite[Lemma~3.10]{GreeneLRP}. Define the homomorphism $\phi\colon \Z^{B} \rightarrow \Z^{l+1} / M \Z^{l+1}$ given by
\[\phi(x) = (x\cdot w_0, \dots, x\cdot w_l) \bmod M,\]
where $M$ is the matrix
\[M =
  \begin{pmatrix}
   a_0  & -1   &        &       \\
   -1   & a_1  & -1     &       \\
        & -1   & \ddots & -1    \\
        &      & -1     & a_l
  \end{pmatrix}.\]
The group $\Z^{l+1} / M \Z^{l+1} \cong H_1(Y)$ is cyclic of order $p$.
Observe that $\phi$ is surjective. If $p/q=1$, then this is trivially true. If $q>1$, then Lemma~\ref{Ch1:lem:wigiveCMlattice} shows that there is unit vector $f\in \Z^B$ such that $w_l\cdot f=1$ and $w_i\cdot f=0$ for $i<l$. Such an $f$ satisfies $\phi(f)=(0,\dots,0,1)$, which is a generator. Similarly, if $a_0>1$, then the set of changemaker coefficients is non-trivial so there is $f$ with such that $\phi(f)=(1,0,\dots, 0)$. This shows that $\disc(\ker(\phi))=|\Z^{l+1} / M \Z^{l+1}|^2=p^2$. However, the kernel of $\phi$ splits as $\ker(\phi)=L' \oplus \langle w_0, \dots, w_l \rangle$. Since $\disc (\langle w_0, \dots, w_l \rangle)=\det(M)=p$, this shows that $\disc(L')= \disc(\ker(\phi))/p=p$, as required.
\end{proof}
\end{proof}

\begin{rem}
In fact, this argument shows that any $p/q$-changemaker lattice has discriminant $p$.
\end{rem}
We now piece together the results from this section to prove Theorem~\ref{intro:thm:CM}.
\begin{proof}[Proof of Theorem~\ref{intro:thm:CM}]
Suppose that $S_{p/q}^3(K)$ bounds a sharp manifold $X$. Together Lemma~\ref{Ch1:lem:wigiveCMlattice} and Lemma~\ref{Ch1:lem:QXisom} show that there are vectors $w_0, \dots, w_l\in \Z^B$, such that
\[-Q_{X}\cong Q_{-X} \cong \langle w_0, \dots, w_l\rangle^\bot \subseteq \Z^B,\]
$w_0$ satisfies \eqref{Ch1:eq:w0formula} and the $w_i$ satisfy conditions I, II and III in Definition~\ref{Intro:def:CMlattice}. If we take $e_1,\dots, e_B$ as an orthonormal basis for $\Z^B$, then without loss of generality, we can assume that
\[\{e_i \,|\, w_j \cdot e_i \ne 0 \text{ for some $j$}\}=\{e_1, \dots, e_{N}\},\]
for some $N\leq B$. Thus, we see that $w_0, \dots, w_l$ satisfy condition IV of Definition~\ref{Intro:def:CMlattice} when considered as vectors in $\langle e_1, \dots, e_N \rangle\cong \Z^N$. This means that
 \[L=\langle w_0, \dots, w_l \rangle^\bot \subseteq \langle e_{1}, \dots, e_{N} \rangle\]
is a changemaker lattice. Therefore, we have
\[-Q_{X}\cong L \oplus \langle e_{N+1}, \dots, e_B \rangle\cong L\oplus \Z^{S},\]
where $S=B-N$, as required.

Finally, we verify \eqref{Ch1:eq:w0formula2}. Observe that if $c=(c_1, \dots, c_B) \in \Char(\Z^B)$ satisfies
\[c\cdot w_0 = n-2i \quad \text{and} \quad V_i=\norm{c}-B,\]
for some $0\leq i\leq n/2$, then $c_j\in \{\pm 1\}$ for all $j>t$. Thus if we take $c'=(c_1, \dots, c_N)\in \Char(\Z^N)$, then
\[c'\cdot w_0 = n-2i \quad \text{and} \quad V_i=\norm{c'}-t.\]
Thus \eqref{Ch1:eq:w0formula} is sufficient to imply \eqref{Ch1:eq:w0formula2}. This completes the proof.
\end{proof}

\section{Sharp cobordisms}
We have previously discussed sharp manifolds. It will be useful to make the related definition of a sharp cobordism. Let $W\colon Y' \rightarrow Y$ be a negative-definite cobordism between oriented rational homology 3-spheres. That is $W$ is a manifold with boundary $-Y' \cup Y$. By \eqref{Ch1:eq:dinvinequality}, we have
\[c_1(\spincs)^2+b_2(W)\geq 4d(Y,\spincs|_Y)-4d(Y',\spincs|_{Y'})\]
for all $\spincs \in \spinc(W)$.
\begin{defn}
We say that the cobordism $W$ is {\em sharp} if for all $\spinct \in \spinc(Y)$, there is $\spincs\in \spinc(W)$ such that $\spincs|_Y = \spinct$ and
\[c_1(\spincs)^2+b_2(W)= 4d(Y,\spinct)-4d(Y',\spincs|_{Y'}).\]
\end{defn}
\begin{rem}
Since $d(S^3,\spinct)=0$, where $\spinct$ is the unique \spinc-structure on $S^3$, we see that a manifold $X$ is sharp if and only if the manifold obtained by removing an open ball from $X$ is a sharp cobordism from $S^3$ to $\partial X$.
\end{rem}
The composition of two sharp cobordism is again a sharp cobordism.
\begin{lem}\label{Ch1:lem:sharpcomposition}
Let $W\colon Y' \rightarrow Y$ and $W'\colon Y'' \rightarrow Y'$ be sharp cobordisms. Then the composition $W'\cup_{Y'} W\colon Y'' \rightarrow Y$ is also sharp. In particular, if $Y'$ bounds a sharp manifold $X$ and there is a sharp cobordism $W\colon Y' \rightarrow Y$, then $X\cup_{Y'} W$ is a sharp manifold with boundary $Y$.
\end{lem}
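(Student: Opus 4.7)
Set $V = W'\cup_{Y'} W$ and let me write $V$ as a cobordism from $Y''$ to $Y$. The plan is to construct, for any chosen $\spinct\in \spinc(Y)$, a \spinc-structure on $V$ that realises equality in \eqref{Ch1:eq:dinvinequality} by separately realising equality on each piece and then gluing. The two ingredients I would first establish are additivity under the gluing along $Y'$: namely, since $Y'$ is a rational homology sphere, the Mayer--Vietoris sequence
\[
H_1(Y';\Q) \to H_2(W;\Q)\oplus H_2(W';\Q) \to H_2(V;\Q) \to H_1(Y';\Q)
\]
collapses to an isomorphism, so $V$ is negative-definite with $b_2(V)=b_2(W)+b_2(W')$, and for any $\spincs\in \spinc(V)$ with restrictions $\spincs_1=\spincs|_W$ and $\spincs_2=\spincs|_{W'}$ we have
\[
c_1(\spincs)^2 = c_1(\spincs_1)^2 + c_1(\spincs_2)^2.
\]

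Given $\spinct\in \spinc(Y)$, the plan is then a two-step selection. First, apply sharpness of $W$ to choose $\spincs_1\in \spinc(W)$ with $\spincs_1|_Y=\spinct$ and
\[
c_1(\spincs_1)^2+b_2(W) = 4d(Y,\spinct)-4d(Y',\spincs_1|_{Y'}).
\]
Let $\spinct'=\spincs_1|_{Y'}\in \spinc(Y')$. Now apply sharpness of $W'$ to choose $\spincs_2\in \spinc(W')$ with $\spincs_2|_{Y'}=\spinct'$ and
\[
c_1(\spincs_2)^2+b_2(W') = 4d(Y',\spinct')-4d(Y'',\spincs_2|_{Y''}).
\]
Since $\spincs_1$ and $\spincs_2$ agree on $Y'$, they glue to produce a \spinc-structure $\spincs$ on $V$ restricting to $\spincs_1$ on $W$ and $\spincs_2$ on $W'$; combining the two displays with the additivity noted above gives
\[
c_1(\spincs)^2+b_2(V) = 4d(Y,\spinct)-4d(Y'',\spincs|_{Y''}),
\]
which is the sharpness equality for $V$ at $\spinct$. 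As $\spinct$ was arbitrary, $V$ is a sharp cobordism.

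The final sentence of the lemma follows at once: by the remark immediately preceding the statement, a sharp manifold $X$ with boundary $Y'$ is the same data as a sharp cobordism $X\setminus \mathring{B}^4\colon S^3 \to Y'$, so the composition with $W\colon Y'\to Y$ is a sharp cobordism from $S^3$ to $Y$, i.e.\ $X\cup_{Y'}W$ is sharp with boundary $Y$.

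The main technical point I expect to have to handle carefully is the gluing of \spinc-structures: while the intersection-form and $b_2$ additivity are straightforward once $Y'$ is a rational homology sphere, one must check that \spinc-structures with matching restriction to $Y'$ do extend across the gluing and that the first Chern class of the glued structure really is the class whose square decomposes as claimed. Everything else is a direct chain of equalities.
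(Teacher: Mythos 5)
Your proposal is correct and follows essentially the same two-step selection argument as the paper: use sharpness of $W$ to pick $\spincs_1$ realising equality over $W$, then feed $\spinct'=\spincs_1|_{Y'}$ into the sharpness of $W'$ to pick $\spincs_2$, and glue. The only difference is that you spell out the Mayer--Vietoris justification for $b_2$ and $c_1^2$ additivity, which the paper asserts without comment.
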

\begin{proof}
Observe that the composition $W'\cup_{Y'} W$ is also negative-definite with $b_2(W'\cup_{Y'} W)=b_2(W)+b_2(W')$. Since $W$ is sharp, for every $\spinct\in\spinc(Y)$ there is $\spincs \in \spinc(W)$ such that $\spincs|_Y=\spinct$ and $c_1(\spincs)^2+b_2(W)= 4d(Y,\spinct)-4d(Y',\spincs|_{Y'})$. Since $W'$ is sharp, we can find $\spincs' \in \spinc(W')$ such that $\spincs'|_{Y'}=\spincs|_{Y'}$ and
$c_1(\spincs')^2+b_2(W')= 4d(Y',\spincs|_{Y'})-4d(Y'',\spincs'|_{Y''})$. Now consider the \spinc-structure $\mathfrak{r}\in \spinc(W\cup_{Y'} W')$ obtained by gluing $\spincs$ and $\spincs'$. By construction $\mathfrak{r}|_Y=\spinct$ and
\begin{align*}
c_1(\mathfrak{r})^2 + b_2(W\cup_{Y'} W') &=c_1(\spincs)^2 + c_1(\spincs')^2+b_2(W)+b_2(W')\\
&=4d(Y,\spinct)-4d(Y',\spincs|_{Y'}) + 4d(Y',\spincs|_{Y'})-4d(Y'',\spincs'|_{Y''})\\
&=4d(Y,\spinct)-4d(Y'',\spincs'|_{Y''}).
\end{align*}
This shows that $W'\cup_{Y'} W$ is sharp.
\end{proof}

\subsection{Sharp cobordisms between surgeries}
Let $p/q=[a_0, \dots , a_l]^-$ and $p'/q'=[a_0, \dots, a_l, b_1, \dots, b_k]^-$, where $a_0,b_k\geq 1$ and $a_i,b_j \geq 2$ for all $1\leq i \leq l$ and $1\leq j <k$. Now let $W$ and $W'$ be the 4-manifolds with boundaries $Y\cong S_{p/q}^3(K)$ and $Y'\cong S_{p'/q'}^3(K)$ obtained by attaching 2-handles according to the two given continued fractions as in Figure~\ref{Ch1:fig:surgerytrace}. The manifold $W$ is naturally included as a submanifold in $W'$ and $Z=W' \setminus {\rm int \,}W$ is a positive-definite manifold with boundary $-S_{p/q}^3(K)\cup S_{p'/q'}^3(K)$. As before, we may take a basis for the homology groups $H_2(W)$ and $H_2(W')$ given by the 2-handles and in the same way we may identify $\spinc(W)$ and $\spinc(W')$ with $\Char(H_2(W))$ and $\Char(H_2(W'))$ respectively. We can also define subsets $\mathcal{C}\subseteq \mathcal{M} \subseteq \Char(H_2(W))$ and $\mathcal{C'} \subseteq \mathcal{M'}\subseteq \Char(H_2(W'))$, as in Section~\ref{sec:representatives}.

\begin{lem}\label{Ch1:lem:sharpsurgcobord}
If $p'/q'$ is not an odd integer or $p'/q'> 2\nu^{+}(K)-1$, then
\[-Z\colon S_{p'/q'}^3(K)\rightarrow S_{p/q}^3(K)\]
is a sharp cobordism. 
\end{lem}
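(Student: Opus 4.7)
The plan is to verify the definition of a sharp cobordism for $-Z$ directly: for each $\spinct \in \spinc(Y)$, produce $\spincs \in \spinc(Z)$ with $\spincs|_Y = \spinct$ attaining
\[-c_1(\spincs)^2 + b_2(Z) = 4d(Y, \spinct) - 4d(Y', \spincs|_{Y'}).\]
By Lemma~\ref{Ch1:lem:minimisers} I would pick a short representative $\spincs_W = (c_0, \ldots, c_l) \in \mathcal{M}$ of $\spinct$ and extend it to some $\spincs_{W'} = (c_0, \ldots, c_l, \tau_1, \ldots, \tau_k) \in \mathcal{M}'$, taking $\spincs$ to be the restriction of $\spincs_{W'}$ to $Z$. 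The whole task then becomes choosing $\tau$ so that the required identity holds.

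Using the splitting $c_1(\spincs_{W'})^2 = c_1(\spincs_W)^2 + c_1(\spincs)^2$ (valid since $Y$ is a rational homology sphere) together with Theorem~\ref{Ch1:thm:WUissharp} applied to the sharp manifolds $-W(U)$ and $-W'(U)$, the sharpness identity unwinds, after the $d$-invariant terms for $U$ cancel, to the single matching condition
\[D^{p/q}_K([\spincs_W]) = D^{p'/q'}_K([\spincs_{W'}]).\]
By Lemma~\ref{Ch1:lem:evalD}, each $D$-invariant depends only on $c_0$ (preserved by extension) and on the sign-matched left-fullness of the corresponding \spinc-structure. Left-fullness of $\spincs_W$ automatically descends to $\spincs_{W'}$ since the witness $r \leq l$ persists, so the only way a mismatch arises is if $\tau$ completes a brand-new left-full witness in $\spincs_{W'}$ or $-\spincs_{W'}$. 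This requires the prefix to be ``primed'': $c_j = a_j - 2$ (resp.\ $c_j = 2 - a_j$) for every $0 < j \leq l$.

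For $c_0 > 0$, I would choose $\tau_1 = -b_1$ (or more generally any admissible value outside $\{b_1 - 2, b_1\}$, which exists when $b_1 \geq 2$): this prevents $\spincs_{W'}$ from being left-full while only possibly making $-\spincs_{W'}$ left-full, which is harmless since $-c_0 < 0$ disqualifies the shifted formula via the $\pm c_0 \geq 0$ clause in Lemma~\ref{Ch1:lem:evalD}. The symmetric choice handles $c_0 < 0$. The genuinely tight situation is $c_0 = 0$ combined with the doubly-primed prefix ($a_j = 2$ and $c_j = 0$ for all $0 < j \leq l$) and $k = 1$, $b_1 = 1$: then $\tau_1 \in \{\pm 1\}$ and both choices trigger the shifted formula on one side, producing a formal mismatch of $D^{p/q}_K = 2V_{a_0/2}$ versus $D^{p'/q'}_K = 2V_{a_0/2 - 1}$.

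In this tight case $p'/q' = a_0 - 1$ is necessarily an odd integer (since $c_0 = 0$ forces $a_0$ even), and the hypothesis $p'/q' > 2\nu^{+}(K) - 1$ gives $a_0 \geq 2\nu^{+}(K) + 2$, forcing both $V_{a_0/2}$ and $V_{a_0/2 - 1}$ to vanish by Proposition~\ref{intro:prop:Viproperties}. Hence the mismatch disappears and a valid extension exists after all. The main obstacle will be the combinatorial case analysis of when admissible $\tau$'s are genuinely constrained: identifying the lone obstructive configuration as corresponding precisely to odd-integer $p'/q'$ of continued fraction type $[a_0, 2, \ldots, 2, 1]^-$ is the technical heart, while the intersection-form arithmetic linking Chern squares to $D$-invariants is routine.
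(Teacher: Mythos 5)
Your proposal follows essentially the same route as the paper: choose a short representative $\spincs_W$ for each $\spinct$, extend it to $\spincs_{W'}$ on $W'$, observe via the sharpness of $-W(U)$ and $-W'(U)$ that $-Z$ is sharp exactly when $D^{p/q}_K([\spincs_W])=D^{p'/q'}_K([\spincs_{W'}])$, and verify this matching by analysing left-fullness through Lemma~\ref{Ch1:lem:evalD}, identifying the only irreparable configuration as $p'/q'=a_0-1$ with $a_0$ even and killing it with $p'/q'>2\nu^+(K)-1$. The paper's proof runs the same reduction.

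There is one imprecision in your case analysis worth flagging. You describe the genuinely tight situation as ``$c_0=0$, doubly-primed prefix, $k=1$, $b_1=1$''. But the obstruction is really $(b_1,\dots,b_k)=(2,\dots,2,1)$ for \emph{any} $k\ge 1$ (with $a_j=2$ and $c_j=0$ for $0<j\le l$). To see why $k>1$ also traps you: with $b_j=2$ for $j<k$, the admissible values of $\tau_j$ that are $\equiv b_j\bmod 2$ and $\le b_j$ in absolute value are exactly $\{-2,0,2\}$, and $\tau_j=0$ equals both $b_j-2$ and $2-b_j$, so the ``primed'' chains for $\spincs_{W'}$ and $-\spincs_{W'}$ survive simultaneously past that slot; the first nonzero $\tau_{m}$ then necessarily equals $\pm b_{m}$, completing a new left-full witness on one side or the other, and since $c_0=0$ both clauses $\pm c_0\ge 0$ of Lemma~\ref{Ch1:lem:evalD} are armed. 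So your escape strategy (choosing $\tau_1$ outside $\{b_1-2,b_1\}$) fails for all of these, not just $k=1$. Fortunately all of these configurations yield the same value $p'/q'=[a_0,2,\dots,2,1]^-=a_0-1$, so the conclusion is unaffected; you do in fact name the right family at the end. A second small omission: you should confirm that your chosen extension stays in $\mathcal{M}'$ (no full tanks appear in $\spincs_{W'}$ or $-\spincs_{W'}$). The paper's $\epsilon_j$-extension is designed so that $\epsilon_j<b_j$ for $j<k$, which makes this automatic; your choice $\tau_1=-b_1$ hits the extreme value, so it requires a separate (though ultimately unproblematic) check. In contrast the paper sidesteps all of this by always offering the two fixed candidates $\spincs'_\pm=(c_0,\dots,c_l,\epsilon_1,\dots,\pm\epsilon_k)$ and running a short, uniform case analysis.
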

\begin{proof}
For any $\spinct \in \spinc(Y)$, we may choose $\spincs =(c_0,\dots, c_l)\in \mathcal{C}$ such that $[\spincs]=\spinct$. Let $\epsilon_j =
\begin{cases}
1 &\text{if $b_j$ is odd}\\
0  &\text{if $b_j$ is even,}
\end{cases}$
and consider
\[\spincs'_{\pm}=(c_0,\dots,c_l,\epsilon_1,\dots,\epsilon_{k-1},\pm \epsilon_k).\]
We will make use of the following claim.
\begin{claim}
If $p'/q'$ is not an odd integer or $p'/q'> 2\nu^{+}(K)-1$, then for at least one $\square \in \{+,-\}$, we have $\spincs'_{\square}\in \mathcal{M}'$ and
$D^{p/q}_K(\spinct)=D^{p'/q'}_K([\spincs'_{\square}])$.
\end{claim}

\begin{proof}[Proof of Claim]
Since $\spincs \in \mathcal{C}$, we always have $\spincs'_{-} \in \mathcal{M}'$ and $\spincs'_{-}$ is left-full if and only if $\spincs$ is left-full. Thus Lemma~\ref{Ch1:lem:evalD} shows $D^{p/q}_K(\spinct)= D^{p'/q'}_K([\spincs'_{-}])$ whenever $c_0>0$ or $-\spincs'_{-}$ is not left-full. So the claim is true unless $c_0\leq 0$ and $-\spincs'_{-}$ is left-full.

As $-\spincs'_{-}$ is left full only if $b_k=1$, $b_j=2$ for all $1\leq j <k$ and $c_i=2-a_i$ for all $1\leq i \leq l$, we now need to prove the claim under the assumptions that $(b_1, \dots, b_k)=(2,\dots, 2,1)$ and $\spincs=(c_0, 2-a_1, \dots, 2-a_l)$, where $c_0\leq 0$. In this case, we have $\spincs_{+}'\in \mathcal{M}'$ and $\spincs_{+}'$ is left full if and only if $a_i=2$ for all $1\leq i\leq l$. Thus Lemma~\ref{Ch1:lem:evalD} shows $D^{p/q}_K(\spinct)= D^{p'/q'}_K([\spincs'_{+}])$ except when $\spincs=(0, \dots,0)$.

It remains only to prove the claim when $\spincs=(0, \dots,0)$ and $p'/q'=[a_0, 2, \dots, 2,1]^-=a_0-1$. In this case, $a_0$ must be even and we have
\[D^{p/q}_K(\spinct)=2V_{\frac{a_0}{2}} \text{ and } D^{p'/q'}_K([\spincs'_{+}])= 2V_{\frac{a_0-2}{2}}.\]
By assumption, we have $p'/q'=a_0-1> 2\nu^{+}(K)-1$. As $a_0$ is even, this implies $a_0\geq 2\nu_+ +2$, and hence that $V_{\frac{a_0-2}{2}}=0$. This shows that $D^{p/q}_K(\spinct)=D^{p'/q'}_K([\spincs'_{+}])=0$, concluding the proof of the claim.
\end{proof}

It follows from the above claim, that there is $\spincs' \in\{\spincs'_+,\spincs'_-\} \cap \mathcal{M}' \subseteq \spinc(W')$ such that $D^{p/q}_K(\spinct)=D^{p'/q'}_K([\spincs'])$. Let $\mathfrak{r}$ denote the restriction of $\spincs'$ to $Z$. Since $c_1(\spincs')^2 = c_1(\spincs)^2 + c_1(\mathfrak{r})^2$,
it follows from \eqref{Ch1:eq:lensspaced}, that
\begin{align*}
\frac{c_1(\mathfrak{r})^2-b_2(Z)}{4}&=d(S_{p'/q'}^3(U),[\spincs'])-d(S_{p/q}^3(U),\spinct)\\
    &=(d(Y',[\spincs'])+D^{p'/q'}_K([\spincs']))-(d(Y,\spinct)+D^{p/q}_K(\spinct))\\
    &=d(Y',[\spincs'])-d(Y,\spinct).
\end{align*}
This implies that $-Z$ is a sharp cobordism.
\end{proof}
\begin{rem}
In fact, the proof of Lemma~\ref{Ch1:lem:sharpsurgcobord} shows that $-Z$ is sharp except when  the continued fraction for $p'/q'$ is $p'/q'=[a_0,2, \dots, 2, 1]^-=a_0-1$ where $a_0$ is an even integer and $V_{\frac{a_0-2}{2}}> V_{\frac{a_0}{2}}$.
\end{rem}

\subsection{Proof of Theorem~\ref{Ch1:thm:sharpextension}}
Recall that if $S_{p'/q'}(K)$ bounds a sharp manifold for some $p'/q'>0$, then \eqref{Ch1:eq:sharpnuplusbound} in Lemma~\ref{Ch1:lem:wigiveCMlattice} shows that $p'/q'>2\nu^+(K)$. Thus Theorem~\ref{Ch1:thm:sharpextension} follows from the following lemma.

\begin{lem}\label{Ch1:lem:generalsharpcobor}
If $p/q\geq p'/q'> 2\nu^{+}(K)-1$ are positive rational numbers, then there is a sharp cobordism from   $S_{p'/q'}^3(K)$ to $S_{p/q}^3(K)$. Consequently, if $S_{p'/q'}^3(K)$ bounds a sharp manifold, then so does $S_{p/q}^3(K)$.
\end{lem}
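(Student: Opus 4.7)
The plan is to construct the required cobordism iteratively as a composition of the extension-type cobordisms produced by Lemma~\ref{Ch1:lem:sharpsurgcobord}, closed under composition by Lemma~\ref{Ch1:lem:sharpcomposition}. For a positive rational $s$ with standard continued fraction $[a_0, \dots, a_l]^-$ (with $a_l \geq 2$ if $l \geq 1$), set $s_{\min} = [a_0, \dots, a_l, 1]^-$, which after cancellation equals $[a_0, \dots, a_{l-1}, a_l - 1]^-$ (with possible further simplification when $a_l = 2$). A direct check of continued fraction values shows that the rationals of the form $[a_0, \dots, a_l, b_1, \dots, b_k]^-$ satisfying the conditions of Lemma~\ref{Ch1:lem:sharpsurgcobord} are precisely those in $[s_{\min}, s) \cap \Q$.

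I would set $s_0 = p/q$ and recursively define $s_{i+1}$: if $p'/q' \geq (s_i)_{\min}$, set $s_{i+1} = p'/q'$ and stop; otherwise set $s_{i+1} = (s_i)_{\min}$ and continue. By construction, $s_{i+1}$ always lies in $[(s_i)_{\min}, s_i) \cap \Q$, so admits a continued fraction extending that of $s_i$ in the sense of Lemma~\ref{Ch1:lem:sharpsurgcobord}. Moreover, $s_{i+1} \geq p'/q' > 2\nu^{+}(K)-1$, so the hypothesis of Lemma~\ref{Ch1:lem:sharpsurgcobord} is satisfied at every step, yielding a sharp cobordism $S^3_{s_{i+1}}(K) \to S^3_{s_i}(K)$.

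To see that the recursion terminates, I would observe that the map $s \mapsto s_{\min}$ strictly decreases the quantity $a_0 + \cdots + a_l$ of the standard continued fraction by $1$ (with cascading simplification when the trailing quotient drops to $1$). Since this is a non-negative integer invariant, after finitely many iterations we must have $(s_i)_{\min} \leq p'/q'$, and the recursion terminates at $s_{i+1} = p'/q'$. Composing the resulting finite chain of sharp cobordisms via Lemma~\ref{Ch1:lem:sharpcomposition} produces the desired sharp cobordism from $S^3_{p'/q'}(K)$ to $S^3_{p/q}(K)$, and the consequent clause of Lemma~\ref{Ch1:lem:generalsharpcobor} is immediate from the final sentence of Lemma~\ref{Ch1:lem:sharpcomposition}: glue any sharp manifold bounded by $S^3_{p'/q'}(K)$ to this cobordism.

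The main obstacle is the continued-fraction bookkeeping: verifying the characterization of the extension interval as exactly $[s_{\min}, s) \cap \Q$ (in particular that every rational in this interval is realized by an extension with $b_k \geq 1$ and $b_j \geq 2$ for $j < k$) and handling the cascading simplification when a trailing partial quotient equals $2$ so that the monovariant argument for termination is clean. Once this is set up, the rest of the proof is a straightforward composition argument.
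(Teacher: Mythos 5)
Your proof is correct and takes essentially the same approach as the paper: both iterate Lemma~\ref{Ch1:lem:sharpsurgcobord} to obtain a finite chain of sharp cobordisms between surgeries on $K$ and then compose them via Lemma~\ref{Ch1:lem:sharpcomposition}. The paper builds its chain bottom-up from $p'/q'$ to $p/q$ by small increments of the continued fraction (first truncating, then raising the last coefficient one at a time), whereas you descend greedily from $p/q$ to $p'/q'$ via $s\mapsto s_{\min}$; this is the same continued-fraction bookkeeping in a different order.
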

\begin{proof}
Lemma~\ref{Ch1:lem:sharpsurgcobord} shows that if $[a_0, \dots, a_l, b_1, \dots, b_k]^- >2\nu^{+}(K)-1$, then there is a sharp cobordism from $S_{[a_0, \dots, a_l, b_1, \dots, b_k]^-}^3(K)$ to $S^3_{[a_0, \dots , a_l]^-}(K)$ for $a_i\geq 2$ for $1\leq i\leq l$, $a_0\geq 1$, $b_i\geq 2$ for $1\leq i<k$ and $b_k\geq 1$. In particular, the identity
\[[a_0, \dots, a_l]^-=[a_0, \dots, a_l+1,1]^-\]
shows that there is a sharp cobordism from $S_{[a_0, \dots, a_l]^-}^3(K)$ to $S_{[a_0, \dots, a_l+1]^-}^3(K)$. If $p'/q'< p/q$, then for some $m\geq0$, we can write their continued fractions in the forms
\[p/q=[a_1, \dots, a_m, a_{m+1}, \dots, a_{m+k}]^-\]
and
\[p'/q'=[a_1, \dots, a_m, a_{m+1}', \dots, a_{m+k'}']^-,\]
where $a_{m+1}'< a_{m+1}$. From the above, we see that
\begin{align*}
r_0         &=p'/q'=[a_1, \dots, a_m, a_{m+1}', \dots, a_{m+k'}']^-,\\
r_1         &=[a_1, \dots, a_m, a_{m+1}']^-,\\
r_2         &=[a_1, \dots, a_m, a_{m+1}'+1]^-,\\
            &\quad \vdots\\
r_\alpha    &=[a_1, \dots, a_m, a_{m+1}-1]^-=[a_1, \dots, a_m, a_{m+1},1]^-,\\
r_{\alpha+1}&=[a_1, \dots, a_m, a_{m+1},2]^-,\\
            &\quad\vdots\\
r_{M-1}     &=[a_1, \dots, a_m, a_{m+1}, \dots, a_{m+k}-1]^-,\\
r_{M}       &=p/q=[a_1, \dots, a_m, a_{m+1}, \dots, a_{m+k}]^-,
\end{align*}
forms an increasing sequence of rational numbers, with $r_0=p'/q'$ and $r_M=p/q$ such that there is a sharp cobordism from $S^3_{r_{i}}(K)$ to $S^3_{r_{i+1}}(K)$. Since Lemma~\ref{Ch1:lem:sharpcomposition} shows that the composition of sharp cobordisms is again sharp, there is a sharp cobordism from $S^3_{p'/q'}(K)$ to $S^3_{p/q}(K)$, as required.
\end{proof}

\section{Calculating stable coefficients}\label{sec:CMinvariant}
In this section, we deduce Theorem~\ref{Ch1:thm:stabledependence} from Theorem~\ref{intro:thm:CM} by using equation \eqref{Ch1:eq:w0formula2} to show that the stable coefficients of the changemaker lattice $L$ are an invariant of $K$. Thus, this section is devoted to solving the following problem.

\begin{ques}\label{Ch1:ques:solvingforw0}
Given a non-negative sequence, $(V_i)_{i\geq0}$, which satisfies $V_i-1 \leq V_{i+1}\leq V_i$ for all $i$, is there $\rho=(\rho_1, \dots, \rho_t) \in \mathbb{Z}^{t}$ with $\norm{\rho}=n$ such that
\begin{equation}\label{Ch1:eq:Vi1}
8V_{k} = \min_{\substack{|c\cdot \rho| =  n - 2k \\c \in \Char(\mathbb{Z}^{t})}} \norm{c} - t,
\end{equation}
for $0\leq k \leq n/2$?
\end{ques}
We may assume that $\rho_i\geq 0$ for all $i$ and that the $\rho_i$ form a decreasing sequence:
\[\rho_1\geq  \dots \geq \rho_t\geq 0.\]

Observe that \eqref{Ch1:eq:Vi1} has three pieces of input data, the sequence $(V_i)_{i\geq 0}$ and the two integers $n$ and $t$. Given some choice of $(V_i)_{i\geq 0}$, $n$ and $t$, there is no guarantee that there is $\rho$ satisfying \eqref{Ch1:eq:Vi1}. However, we will show that if such a $\rho$ exists, then it is unique and characterize the dependence of its coefficients on $(V_i)_{i\geq 0}$.
\begin{thm}\label{Ch1:thm:calculaterho}
If $\rho$ satisfies \eqref{Ch1:eq:Vi1}, then the coefficients satisfying $\rho_i\geq 2$ are determined by $(V_i)_{i\geq 0}$. In particular, they are independent of $n$ and $t$. 
\end{thm}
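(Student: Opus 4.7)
The plan is to give an explicit recursive procedure that extracts the stable coefficients of $\rho$ directly from $(V_i)$. First I would reformulate \eqref{Ch1:eq:Vi1} via the substitution $c_i=2e_i-1$: writing $S=\sum_i \rho_i$, this turns the identity into $V_k=g_\rho((S+n)/2-k)$, where
\[g_\rho(M)=\min\Bigl\{\sum_i\binom{e_i}{2}\ :\ e\in\Z^t,\ \sum_i e_i\rho_i=M\Bigr\}.\]
Because $\rho$ is a changemaker (cf.\ Step~\ref{Ch1:Step:sigmaCMconditions} of Lemma~\ref{Ch1:lem:wigiveCMlattice}), subset sums of $\{\rho_i\}$ cover $[0,S]$, so $g_\rho(M)=0$ on $[0,S]$. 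Consequently $\nu^+:=(n-S)/2=\min\{i:V_i=0\}$ is recovered from $(V_i)$, and so is $h(j):=V_{\nu^+-j}=g_\rho(S+j)$ for $1\le j\le\nu^+$.

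The first main step is to identify the largest stable coefficient $\sigma^\ast:=\max\{\rho_i:\rho_i\ge 2\}$ with $\max\{j:h(j)=1\}$. If $h(j)=1$, an optimizer $e$ has one coordinate $e_i\in\{-1,2\}$ (the only integers with $\binom{e_i}{2}=1$) and the rest $e_\ell\in\{0,1\}$. After applying the symmetry $e\mapsto 1-e$ if needed we take $e_i=2$, and then $S+j=2\rho_i+\sum_{e_\ell=1,\,\ell\ne i}\rho_\ell\le 2\rho_i+(S-\rho_i)$, forcing $j\le\rho_i\le\sigma^\ast$. Conversely, for each $j\in[1,\sigma^\ast]$ pick $i_0$ with $\rho_{i_0}=\sigma^\ast$. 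Since deleting the top coordinate of a changemaker leaves a changemaker, every integer in $[0,S-\sigma^\ast]$ is a subset sum of $\{\rho_\ell:\ell\ne i_0\}$; the changemaker bound $\sigma^\ast\le(S+1)/2$ gives $\sigma^\ast-j\in[0,\sigma^\ast-1]\subseteq[0,S-\sigma^\ast]$. Setting $e_{i_0}=2$, $e_\ell=1$ on a subset summing to $\sigma^\ast-j$, and $e_\ell=0$ elsewhere, produces a cost-$1$ configuration with $\sum_i e_i\rho_i=S+j$. Hence $\sigma^\ast$ is read off from $(V_i)$ alone.

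The second main step is the recursion. Let $\rho'$ denote $\rho$ with one coordinate equal to $\sigma^\ast$ deleted, and $(V'_i)$ its associated sequence. I would prove
\[V'_k=V_{k+\binom{\sigma^\ast}{2}}\qquad(k\ge 0).\]
Translating to $g$, this is the identity $g_\rho(M)=g_{\rho'}(M-\sigma^\ast)$ on the relevant range. The inequality $g_\rho(M)\le g_{\rho'}(M-\sigma^\ast)$ is immediate: insert the coordinate $e_{i_0}=1$ into an optimizer for $g_{\rho'}$ at zero extra cost. For the reverse, split any $e$ achieving $g_\rho(M)$ by $k=e_{i_0}$, giving
\[g_\rho(M)=\min_{k\in\Z}\Bigl(\binom{k}{2}+g_{\rho'}(M-k\sigma^\ast)\Bigr),\]
and argue that the minimum is attained at $k=1$. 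This reduces to Lipschitz-type estimates $g_{\rho'}(M-k\sigma^\ast)\ge g_{\rho'}(M-\sigma^\ast)-\binom{k}{2}$ for $k\ne 1$, provable by exchanging a $k$-size bump at $i_0$ for $|k-1|\sigma^\ast$ units' worth of $1$-bumps distributed across the $1$-coordinates of $\rho'$, using the changemaker subset-sum density.

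Combining these two steps yields a deterministic algorithm that reads off the full multiset of stable coefficients from $(V_i)$: extract $\sigma^\ast$, replace $V$ by its shift $k\mapsto V_{k+\binom{\sigma^\ast}{2}}$, and iterate, terminating when $V\equiv 0$ (the base case, where $\rho$ has no stable coefficients). Since each stage depends only on $(V_i)$, the multiset of stable coefficients is independent of $n$, $t$, and the $1$-coordinates, proving the theorem. The main obstacle is the $k=1$ optimality in Step~3; I would expect the proof to parallel the combinatorial manipulations of Step~\ref{Ch1:Step:wijvalues} and Step~\ref{Ch1:Step:overlap} of Lemma~\ref{Ch1:lem:wigiveCMlattice}.
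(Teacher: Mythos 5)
Your reformulation via $c_i=2e_i-1$ and the function $g_\rho$ is correct and is a natural way to recast \eqref{Ch1:eq:Vi1}, and your high-level plan (peel off the largest stable coefficient, shift $V$ by $\binom{\sigma^\ast}{2}$, recurse) is genuinely different from the paper's approach, which instead introduces the statistics $T_m=|\{i:1\le V_i\le m\}|$, computes them via $T_m=\max_{\alpha\in S_m}\rho\cdot\alpha$ (Lemma~\ref{Ch1:lem:calcTi}), and then runs a separate iterative extraction (Lemma~\ref{Ch1:lem:computerho}) together with a case analysis on $\mu=\min(T_i-T_{i-1})$ and the base case $V_0\le 1$ (Lemmas~\ref{Ch1:lem:V0=1}--\ref{Ch1:lem:mu=3}). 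However, your argument as written has two genuine gaps and one algebra error.

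\textbf{Gap in Step 1.} You read $\sigma^\ast$ off as $\max\{j:h(j)=1\}$ with $h(j)=V_{\nu^+-j}$ defined only for $1\le j\le\nu^+$. This fails when $\sigma^\ast>\nu^+$, which happens precisely when $g=\nu^+=1$, i.e.\ $\rho=(2,1,\dotsc,1)$: then $V=(1,0,0,\dotsc)$, $\nu^+=1$, and your formula returns $1$ while $\sigma^\ast=2$. The paper isolates exactly this degenerate regime in Lemma~\ref{Ch1:lem:V0=1} (the case $V_0\le 1$), where $T_1=\rho_1$ no longer holds and $\rho$ must instead be read off from $g$ directly. Your argument establishes $g_\rho(S+j)=1$ for $1\le j\le\sigma^\ast$, but for $j>\nu^+$ this value is not any $V_i$ with $i\ge 0$, so it is not in fact ``read off from $(V_i)$ alone.''

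\textbf{Gap in Step 2.} The recursion $V'_k=V_{k+\binom{\sigma^\ast}{2}}$, equivalently $g_\rho(M)=g_{\rho'}(M-\sigma^\ast)$, is the crux and is not proved; you acknowledge this, but the sketched route (a Lipschitz bound $g_{\rho'}(M-k\sigma^\ast)\ge g_{\rho'}(M-\sigma^\ast)-\binom{k}{2}$ for all $k$) is false unconditionally. For $\rho=(3,1,1)$, so $\sigma^\ast=3$, $\rho'=(1,1)$, one has $g_\rho(9)=2$ while $g_{\rho'}(6)=2$ but $\binom{2}{2}+g_{\rho'}(3)=1<2$; the minimum over $k$ is attained at $k=2$, not $k=1$. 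The identity does appear to hold on the range $M-\sigma^\ast\le (S'+n')/2$, which is exactly the range the shifted sequence $V'_k$ uses, but your proposal neither identifies this range nor explains why the exchange argument works there; the ``changemaker subset-sum density'' has no evident control over configurations with $e_{i_0}=2$ and some $e_\ell\ge 2$ at indices $\ell\ne i_0$, which is what produces the counterexample. Establishing the recursion on the correct restricted range is a nontrivial combinatorial claim and remains unproven.

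\textbf{Algebra error.} In the converse direction of Step 1 you set $e_{i_0}=2$ and take a subset of $\{\rho_\ell:\ell\ne i_0\}$ summing to $\sigma^\ast-j$, but this yields $\sum e_i\rho_i=2\sigma^\ast+(\sigma^\ast-j)=3\sigma^\ast-j$, which equals $S+j$ for only one value of $j$. The subset must instead sum to $S+j-2\sigma^\ast$; the changemaker bound $\sigma^\ast\le(S+1)/2$ is then used to ensure $S+j-2\sigma^\ast\ge 0$ for $j\ge 1$, not that $\sigma^\ast-j\le S-\sigma^\ast$.

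In summary, your approach is conceptually attractive and cleaner than the paper's case analysis, but the two gaps are real: the degenerate case $g=1$ is not handled, and the central recursion is asserted without proof and with a proposed justification that fails in general.
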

This is exactly what we need to deduce Theorem~\ref{Ch1:thm:stabledependence} from Theorem~\ref{intro:thm:CM}.
\begin{rem}\label{Ch1:rem:removezeroes}
If $\rho_t=0$, then any $c$ minimizing the right hand side of \eqref{Ch1:eq:Vi1} must have $c_t=\pm 1$. So we see that $\rho'=(\rho_1, \dots, \rho_{t-1})$ satisfies
\begin{equation*}
8V_{k} = \min_{\substack{|c\cdot \rho'| \equiv n - 2k\\c \in \Char(\mathbb{Z}^{t-1})}}\norm{c} - (t-1),
\end{equation*}
for all $0\leq k\leq n/2$. Thus we may assume that $\rho_i\geq 1$ for all $i$.
\end{rem}

For $m \geq 1$, it will be convenient to consider the quantities
\[T_m=|\{0\leq i \,|\, 1 \leq V_i\leq m\}|.\]
These are illustrated in Figure~\ref{Ch1:fig:tvdiagram}. Throughout this section, we will let $g$ denote
\[g= T_{V_0}=\min \{i\, | \, V_i=0\}.\]
We will show how to calculate the $T_m$ in terms of $\rho$. First we need to define the following collection of tuples for each $m\geq 0$:
\[S_m=\{\alpha \in \mathbb{Z}^{t}\,|\, \alpha_i \geq 0, 2m=\sum_{i=1}^t\alpha_i(\alpha_i + 1) \}.\]

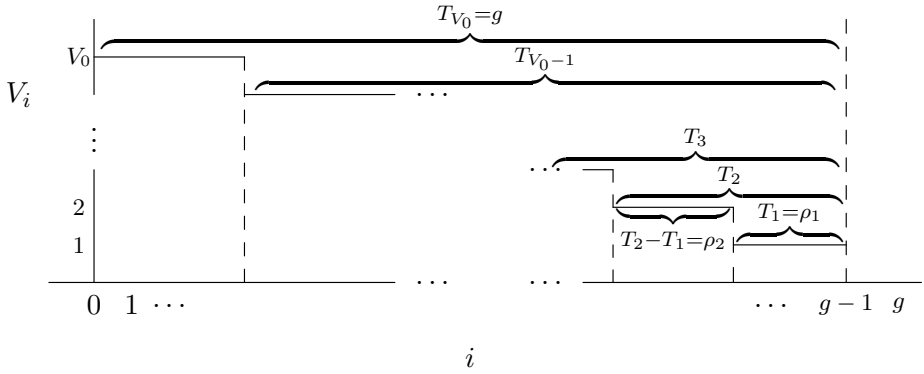
\begin{figure}[h]
\begin{xy}
<1cm,1cm>*{};<1cm,2.5cm>*{}**@{-},  
<1cm,3.5cm>*{};<1cm,4.5cm>*{}**@{-},  
<0.4cm,1cm>*{};<5cm,1cm>*{}**@{-},
<5.5cm,1cm>*{\dots},             
<7cm,1cm>*{\dots},             
<7.5cm,1cm>*{};<12cm,1cm>*{}**@{-},  
<1cm,3cm>*{\vdots},             
<0cm,3.5cm>*{V_i},                
<6cm,0cm>*{i},                    
<1cm,4cm>*{};<3cm,4cm>*{}**@{-},    
<7.5cm,2.5cm>*{};<7.9cm,2.5cm>*{}**@{-},    
<3cm,1cm>*{};<3cm,4cm>*{}**@{--},   
<3cm,3.5cm>*{};<5cm,3.5cm>*{}**@{-},
<5.5cm,3.5cm>*{\dots},             
<7cm,2.5cm>*{\dots},             
<7.9cm,2cm>*{};<9.5cm,2cm>*{}**@{-},  
<7.9cm,2.5cm>*{};<7.9cm,1cm>*{}**@{--},   
<9.5cm,2cm>*{};<9.5cm,1cm>*{}**@{--},   
<9.5cm,1.5cm>*{};<11cm,1.5cm>*{}**@{-},  
<11cm,4.5cm>*{};<11cm,1cm>*{}**@{--},   
<1cm,0.7cm>*{0},        
<2cm,0.7cm>*{\dots},   
<10cm,0.7cm>*{\dots}, 
<1.5cm,0.7cm>*{1},
<11cm,0.7cm>*{\text{{\footnotesize $g-1$}}}, 
<11.7cm,0.7cm>*{\text{{\footnotesize $g$}}}, 
<0.8cm,4cm>*{\text{{\footnotesize $V_0$}}},        
<0.8cm,2cm>*{\text{{\footnotesize $2$}}},        
<0.8cm,1.5cm>*{\text{{\footnotesize $1$}}},        
<10.25cm,1.8cm>*{\overbrace{\text{\makebox[1.4cm]{}}}^{T_1=\rho_1}}, 
<9.45cm,2.3cm>*{\overbrace{\text{\makebox[3cm]{}}}^{T_2}}, 
<9cm,2.8cm>*{\overbrace{\text{\makebox[3.8cm]{}}}^{T_3}}, 
<7cm,3.85cm>*{\overbrace{\text{\makebox[7.7cm]{}}}^{T_{V_0-1}}}, 
<6cm,4.4cm>*{\overbrace{\text{\makebox[9.8cm]{}}}^{T_{V_0}=g}}, 
<8.7cm,1.7cm>*{\underbrace{\text{\makebox[1.5cm]{}}}_{T_2-T_1=\rho_2}},
\end{xy}
\caption[The relationship between the $V_i$ and the $T_i$]{A graph to show the relationship between the $V_i$ and the $T_i$. We have also shown how $\rho_1$ and $\rho_2$ occur as the number of $V_i$ equal to one and two, respectively.}
\label{Ch1:fig:tvdiagram}
 \end{figure}

\begin{lem}\label{Ch1:lem:calcTi}
We can calculate $T_m$ by
\[T_m = \max_{\alpha \in S_m} \rho \cdot \alpha,\]
for $0\leq m<V_0$, and $T_{V_0}=g$ satisfies
\[T_{V_0}=\frac{1}{2}\sum_{i=1}^t \rho_i^2-\rho_i
\quad \text{and}\quad
T_{V_0}\leq \max_{\alpha \in S_{V_0}} \rho \cdot \alpha.\]
\end{lem}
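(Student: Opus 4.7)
The plan is to recast the minimum in \eqref{Ch1:eq:Vi1} using a parametrization of $\Char(\Z^t)$ by lattice vectors, identify $g$ with $K = \tfrac{1}{2}\sum \rho_i(\rho_i-1)$, and then extract $T_m$ by comparing two explicit constructions of characteristic vectors. Writing $c_0 = (1,\ldots,1)$, each $c\in\Char(\Z^t)$ is $c = c_0 + 2v$ for a unique $v\in\Z^t$. Direct computation gives $\tfrac{1}{8}(\norm{c} - t) = \sum_i v_i(v_i+1)/2$ and $c\cdot\rho = \hat n + 2\,v\cdot\rho$, where $\hat n = \sum_i \rho_i$. Defining $\alpha_i = \max(v_i, -v_i - 1) \geq 0$, we have $v_i(v_i+1) = \alpha_i(\alpha_i+1)$ and $v_i \leq \alpha_i$. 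Up to replacing $c$ by $-c$, the constraint $|c \cdot \rho| = n - 2k$ becomes $v\cdot\rho = K - k$, so \eqref{Ch1:eq:Vi1} asks for the minimum of $\sum_i \alpha_i(\alpha_i+1)/2$ over $v\in\Z^t$ with $v\cdot\rho = K-k$.

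Next I would prove $g = K$. From the parametrization, $V_k=0$ iff there exists $v$ with $v\cdot\rho = K - k$ and each $v_i\in\{0,-1\}$, equivalently iff $k - K$ is a subset sum of the $\rho_i$. Since $V$ is non-increasing and vanishes throughout $[g, n/2]$, this forces every integer in $[0, \hat n]$ to be a subset sum, i.e., $(\rho_1,\ldots,\rho_t)$ must satisfy the changemaker condition, and $g = K$. In particular, $T_{V_0} = g - g_{V_0+1} = g - 0 = K = \tfrac{1}{2}\sum(\rho_i^2 - \rho_i)$.

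For the inequality $T_m \leq \max_{\alpha\in S_m}\rho\cdot\alpha$ (valid for every $m \leq V_0$), choose $k = g_{m+1}$, with the convention $g_{V_0+1} = 0$. Then $V_k = m$, so an optimal $c^*$ achieves $\tfrac{1}{8}(\norm{c^*}-t) = m$ with $c^*\cdot\rho = n - 2k$ after adjusting signs. The corresponding $v^*$ satisfies $v^*\cdot\rho = T_m$, and $\alpha^* := \alpha(v^*)\in S_m$ has $\rho\cdot\alpha^* \geq v^*\cdot\rho = T_m$ because $v_i^* \leq \alpha_i^*$ and $\rho_i\geq 0$. Applied at $m = V_0$ this yields the second assertion of the lemma.

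The reverse bound $\rho\cdot\alpha \leq T_m$ for every $\alpha\in S_m$ when $m < V_0$ is the main content. Given such an $\alpha$, set $c = c_0 + 2\alpha \in \Char(\Z^t)$ and $\ell = \rho\cdot\alpha - K$. When $\ell \leq 0$, $c\cdot\rho = n + 2\ell \in [\hat n, n]$, which yields $V_{-\ell} \leq m$ and hence $\rho\cdot\alpha \leq T_m$ directly. When $\ell > 0$, I consider the reflected vector $c' = c - 2\rho$; a short computation gives $\tfrac{1}{8}(\norm{c'} - t) = m - \ell$ (which forces $\ell \leq m$) and $c'\cdot\rho = 2\ell - n$. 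A case split on the sign of $\ell - n/2$ identifies $k' = \ell$ (if $\ell \leq n/2$) or $k' = n - \ell$ (otherwise), each satisfying $V_{k'} \leq m - \ell$. Combined with the step-bound $V_{k'} \geq V_0 - k'$ from Proposition~\ref{intro:prop:Viproperties}, this forces either $V_0 \leq m$ or $V_0 \leq m + n - 2\ell < m$; both contradict $m < V_0$, so $\ell \leq 0$. The main obstacle is this final case split, where $c'$ may have some negative coordinates; the key point is that $|c'\cdot\rho|$ still determines a valid grading $k'\in[0,n/2]$ to which \eqref{Ch1:eq:Vi1} applies.
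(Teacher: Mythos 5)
Your proof is correct, and it follows the same overall strategy as the paper: parametrize $\Char(\mathbb{Z}^t)$ by lattice vectors, identify $g = K = \tfrac12\sum\rho_i(\rho_i-1)$, express $T_m$ as $g - \min\{k : V_k \le m\}$, and match minimizing covectors with non-negative $\alpha$'s. Your forward inequality ($T_m \le \max_{\alpha\in S_m}\rho\cdot\alpha$ for all $m\le V_0$) is exactly the paper's construction of an explicit $\alpha^*$ from an optimal characteristic covector, and correctly delivers the second assertion of the lemma at $m=V_0$.

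Where you go further is in the reverse inequality, and this is actually a genuine improvement on the paper's exposition. The paper asserts the identity
\[
\min\{k \mid V_k = m\} \;=\; \min_{\{\alpha\in\mathbb{Z}^t \mid \sum\alpha_i(\alpha_i+1)=2m\}}\bigl(g - \alpha\cdot\rho\bigr),
\]
restricts the right-hand minimum to $\alpha\in S_m$, and concludes $T_m = \max_{\alpha\in S_m}\rho\cdot\alpha$. But this step tacitly assumes that $\alpha\cdot\rho\le g$ for every $\alpha\in S_m$ when $m<V_0$; if some $\alpha$ had $\alpha\cdot\rho>g$, the RHS would be negative, the identity would literally fail, and the formula for $T_m$ would break. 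The paper never verifies this. Your $\ell>0$ branch supplies precisely the missing argument: passing to the reflected characteristic covector $c'=c-2\rho$ drops the norm by $8\ell$ while keeping $|c'\cdot\rho|$ in the admissible range, so $V_\ell \le m-\ell$; combined with the step-bound $V_\ell\ge V_0-\ell$ from Proposition~\ref{intro:prop:Viproperties}, this forces $V_0\le m$, a contradiction. This is a clean and necessary closure of the gap.

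Two small remarks. First, your case $\ell>n/2$ is vacuous: $\ell\le m<V_0$, and the step bound gives $V_0\le V_g+g=g=K<n/2$, so $\ell<n/2$ automatically; you can drop that branch. Second, the parenthetical claim that the changemaker condition is forced (every integer in $[0,\hat n]$ being a subset sum) isn't needed for the lemma, and as stated the deduction is slightly too quick — the range $[g,n/2]$ only directly produces subset sums in $[0,\hat n/2]$, and you need the complementary symmetry $s\mapsto\hat n-s$ to cover the rest. Since this observation is a tangent (it reproduces Step~\ref{Ch1:Step:sigmaCMconditions} of Lemma~\ref{Ch1:lem:wigiveCMlattice}), I would either tighten it or omit it.
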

\begin{proof}
Since the $V_k$ form a decreasing sequence with $V_k=0$ if and only if $k\geq g$, we necessarily have $T_{V_0}=g$. We see that $V_k=0$ if and only if there is $c\in \{\pm 1\}^{t}$ with $c\cdot \rho=n-2k$. The smallest of value $k$ for which this is true is $k=\frac{1}{2}(n-\sum_{i=1}^{t} \rho_i)$, which is obtained by taking $c=(1,\dots, 1)$. Thus we get $2g=\sum_{i=1}^{t} \rho_i^2-\rho_i$, as required.

Now observe that for $0\leq m<V_0$, we have
\[T_m= g-\min\{k \,|\, V_k=m\}.\]
If $V_k = m$ and $0\leq k<n/2$, then there is $c\in \Char(\mathbb{Z}^{t})$ such that $\norm{c}-t=8m$ and $c\cdot \rho = n-2k$. If we write the coefficients of $c$ in the form $c_i=(2\alpha_i +1)$, then these conditions become
\begin{equation}\label{Ch1:eq:calcTiformula}
\sum_{i=1}^{t}\alpha_i(\alpha_i+1)=2m
\quad\text{and}\quad
2k= n - \sum_{i=1}^t \rho_j - 2\alpha \cdot \rho=2g -2\alpha \cdot \rho.
\end{equation}
Therefore, we have
\[\min\{k \,|\, V_k=m\}=\min_{\{\alpha\in \Z^t \,|\, \sum \alpha_i(\alpha_i+1)=2m\}} g-\alpha\cdot \rho.\]
As $\rho_i\geq0$ for all $i$, any vector $\alpha$ minimizing the right-hand side of this expression  can be assumed to satisfy $\alpha_i\geq 0$ for all $i$. Thus we we see that
\[T_m = \max_{\alpha \in S_m} \rho \cdot \alpha,\]
for $0\leq m<V_0$.

The equation \eqref{Ch1:eq:calcTiformula} also shows that there must exist $\alpha$ satisfying $\sum_{i=1}^{t}\alpha_i(\alpha_i+1)=2V_0$ and $\alpha \cdot \rho = g$. This implies the inequality
\[T_{V_0}\leq \max_{\alpha \in S_{V_0}} \rho \cdot \alpha,\]
which completes the proof.
\end{proof}

\begin{rem}\label{Ch1:rem:rho0rho1}
It follows from this lemma that $T_1=\rho_1$ and $T_2=\rho_1+\rho_2$. In particular this implies that $\rho_2=T_2-T_1$. This is illustrated in Figure~\ref{Ch1:fig:tvdiagram}.
\end{rem}
We now begin the process of showing how the remaining $\rho_i$ can be recovered from the sequence $(V_i)_{i\geq0}$. We begin with the simplest case, which is when $V_0\leq 1$.
\begin{lem}\label{Ch1:lem:V0=1}
If $V_0\leq 1$, then $g\leq 3$ and $\rho$ takes the form
\[
\rho=
\begin{cases}
(1,1 , \dots, 1)   &\text{if }g=0\\
(2,1 , \dots, 1)   &\text{if }g=1\\
(2,2 ,1, \dots, 1) &\text{if }g=2\\
(3,1 , \dots, 1)   &\text{if }g=3.
\end{cases}\]
\end{lem}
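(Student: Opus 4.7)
The plan is to combine the two parts of Lemma~\ref{Ch1:lem:calcTi} applied to $m = V_0$: the explicit formula $2g = \sum_{i=1}^t \rho_i(\rho_i-1)$ and the inequality $g \leq \max_{\alpha \in S_{V_0}} \rho\cdot \alpha$. Since $\rho_i \geq 1$ by Remark~\ref{Ch1:rem:removezeroes}, the value of $g$ is essentially a weighted count of how many $\rho_i$ are large, which together with the inequality will force $\rho_1$ to be small.

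First I would dispose of the case $V_0 = 0$: then $S_0 = \{\mathbf{0}\}$, forcing $g \leq 0$, so $g = 0$. The formula then gives $\sum \rho_i(\rho_i - 1) = 0$, and combined with $\rho_i \geq 1$ we conclude $\rho = (1, \ldots, 1)$.

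Next I would handle $V_0 = 1$. The key observation is that $S_1 = \{\alpha \in \Z^t_{\geq 0} \mid \sum \alpha_i(\alpha_i+1) = 2\}$ consists precisely of the standard basis vectors $e_1, \ldots, e_t$, so
\[
g \;\leq\; \max_{\alpha \in S_1} \rho \cdot \alpha \;=\; \rho_1.
\]
Combining this with the inequality $\rho_1(\rho_1 - 1) \leq \sum \rho_i(\rho_i - 1) = 2g \leq 2\rho_1$ yields $\rho_1 - 1 \leq 2$, so $\rho_1 \in \{1,2,3\}$. A short case analysis based on $\rho_1$ and the formula $2g = \sum \rho_i(\rho_i-1)$ then pins down $\rho$: for $\rho_1 = 2$ we write $g = \#\{i : \rho_i = 2\}$, and the bound $g \leq 2$ leaves only the forms $(2,1,\ldots,1)$ and $(2,2,1,\ldots,1)$; for $\rho_1 = 3$, if $A$ and $B$ denote the number of $3$'s and $2$'s, then $g = 3A + B$ with $A \geq 1$ and $g \leq 3$, forcing $A = 1$, $B = 0$, and $\rho = (3,1,\ldots,1)$.

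I expect no serious obstacle, since both ingredients come directly from Lemma~\ref{Ch1:lem:calcTi}. The only subtlety is making the case analysis exhaustive: for example, the candidate $\rho = (2,2,2,1,\ldots,1)$ at $g = 3$ must be excluded, and it is precisely the bound $g \leq \rho_1$ (rather than the formula alone) that rules it out, since there $\rho_1 = 2 < 3 = g$.
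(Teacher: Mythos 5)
Your proof is correct and takes essentially the same route as the paper's: both apply the two halves of Lemma~\ref{Ch1:lem:calcTi} at $m=V_0$, using the identity $2g=\sum\rho_i(\rho_i-1)$ together with the bound $g\leq\max_{\alpha\in S_1}\rho\cdot\alpha=\rho_1$ to force $\rho_1\leq 3$, then running the same short case analysis. The only cosmetic gap is that you list $\rho_1\in\{1,2,3\}$ but then silently drop $\rho_1=1$; it would be worth a line noting that $\rho_1=1$ gives $g=0$, contradicting $V_0=1$.
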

\begin{proof}
If $V_0=0$, then $g=0$ and Lemma~\ref{Ch1:lem:calcTi} implies that $\sum_{i=1}^t \rho_i^2-\rho_i=0$. This shows that we have $\rho_i=1$ for all $1\leq i \leq t$.

Suppose now that $V_0=1$. By Lemma~\ref{Ch1:lem:calcTi}, we have
\[0<T_{1}=g=\frac{1}{2}\sum_{i=1}^t \rho_i^2-\rho_i\leq \max_{\alpha \in S_{1}} \rho \cdot \alpha=\rho_1.\]
Thus we must have $\rho_1^2-\rho_1\leq 2\rho_1,$ and hence $\rho_1\leq 3$. If $\rho_1=3$, then we have
\[g=3+\frac{1}{2}\sum_{i=2}^t\rho_i(\rho_i-1)\leq \rho_1 =3,\]
which implies the $\rho_i=1$ for $2\leq i \leq t$ and $g=3$. If $\rho_1=2$, then $g\leq 2$ and $\rho_2\in \{1,2\}$, giving the other two possibilities in the statement of the lemma.
\end{proof}

From now on we will suppose that $V_0>1$. This allows us to define the quantity \[\mu = \min_{1\leq i<V_0} \{ T_i-T_{i-1}\}.\]
Since $T_1=\rho_1$ and $T_0=0$, we must have $\mu\leq \rho_1$.
\begin{lem}\label{Ch1:lem:mubound}
If $\rho_1\geq 5$ or $\sum_{\rho_i \text{even}}\rho_i\geq 6$, then $\mu \leq 2$.
\end{lem}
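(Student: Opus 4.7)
My plan is to prove the contrapositive: assume $\mu \geq 3$ and derive that $\rho_1 \leq 4$ and $\sum_{\rho_i \text{ even}} \rho_i \leq 4$.

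First, I would unpack Lemma~\ref{Ch1:lem:calcTi} for small $m$. The sets $S_1, S_2, S_3, \dots$ are combinatorially explicit (decompositions of $2m$ into numbers of the form $a(a+1)$), yielding closed-form maxima such as $T_1 = \rho_1$, $T_2 = \rho_1 + \rho_2$, and $T_3 = \max(2\rho_1,\; \rho_1 + \rho_2 + \rho_3)$. From $T_2 - T_1 \geq 3$ and $T_3 - T_2 \geq 3$, one immediately reads off $\rho_2 \geq 3$ and either $\rho_3 \geq 3$ or $\rho_1 \geq \rho_2 + 3$ (assuming $V_0 \geq 3$; the case $V_0 \leq 2$ reduces to Lemma~\ref{Ch1:lem:V0=1} together with the fact that there we have $\rho_1 \leq 3$ and no two coordinates $\geq 2$).

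The workhorse is a pair of explicit ``reduction moves'' that take $\alpha \in S_m$ and produce $\alpha' \in S_{m-1}$ with small $\rho\cdot(\alpha-\alpha')$. The first move \emph{pops a $1$}: if $\alpha_j = 1$, set $\alpha_j = 0$ to get $\rho\cdot(\alpha-\alpha') = \rho_j$. The second move \emph{splits a $2$}: if $\alpha_j = 2$ and $\alpha_k = 0$, replace them by $\alpha_j = \alpha_k = 1$ to get $\rho\cdot(\alpha-\alpha') = \rho_j - \rho_k$. Applied to the maximizer in $S_m$, these produce upper bounds on $T_m - T_{m-1}$ by $\rho_j$ or $\rho_j - \rho_k$ respectively, so $\mu \geq 3$ is incompatible with any such bound being $\leq 2$ in an admissible range $m < V_0$.

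For $\rho_1 \geq 5$, I would consider maximizers of shapes $(2,1,\dots,1)$ and $(k,0,\dots)$ at successive values of $m$ and run the reduction moves to extract inequalities of the form $\rho_j - \rho_k \geq 3$ or $\rho_j \geq 3$. These constraints accumulate: either a large gap $\rho_j - \rho_k$ persists across many rounds, forcing $\rho$ to be spread out inconsistently with $\rho_1 \geq 5$, or the gaps collapse at some stage to yield an explicit swap of difference $\leq 2$. The parity case $\sum_{\rho_i \text{ even}} \rho_i \geq 6$ is handled similarly: each even $\rho_j$ permits a natural choice $\alpha_j = \rho_j/2$, and a pair of even coordinates summing to at least six lets one align two maximizers at adjacent $m$-values so that a split-a-2 move produces $\rho$-difference $\leq 2$ (for instance, equal even values yield $\rho_j - \rho_k = 0$).

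The main obstacle is the combinatorial bookkeeping, because three things must be tracked in every branch: which term in the maximum defining $T_m$ is actually attained, whether the chosen $m$ lies in $[1, V_0)$ (which needs a supplementary lower bound on $V_0$ in terms of $\rho$, obtained from the formula $2g = \sum \rho_i^2 - \rho_i$ together with $g \geq T_{V_0-1} + 3 \geq 3(V_0-1)$ under $\mu \geq 3$), and how the parity condition interacts with the size condition. Matching up these cases so that at least one always produces an admissible swap, especially in the borderline regime $\rho_1 = 5$ or $\sum_{\rho_i \text{ even}} \rho_i = 6$, is the labor-intensive step.
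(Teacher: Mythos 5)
Your proposal reaches for the right elementary tool — reducing a coordinate of a maximizer of $\rho\cdot\alpha$ over $S_m$ to bound $T_m - T_{m-1}$ from above — but the surrounding scaffolding is missing, and what you propose in its place does not close.

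The paper's argument is built from two pieces. First, the ``pop'' move is used in iterated form: if $\alpha \in S_m$ is a maximizer with $m < V_0$ and $\alpha_l > 0$, then decrementing $\alpha_l$ by one gives $\alpha' \in S_{m-\alpha_l}$, which yields $\rho_l \geq T_m - T_{m-\alpha_l} \geq \alpha_l \mu$, i.e.\ $\mu \leq \rho_l/\alpha_l$. So one wants a maximizer with some $\alpha_l$ close to $\rho_l/2$, which forces $\mu < 3$. Second — and this is what your proposal lacks — the paper finds a \emph{specific} admissible level $m$ at which such a maximizer must exist, by combining a Cauchy--Schwarz lower bound $V_0 \geq \tfrac{1}{8}\sum_i(\rho_i^2-1)$ with a parity count: if every $\alpha_i$ is bounded by roughly $(\rho_i-2)/2$, then $m = \tfrac{1}{2}\sum\alpha_i(\alpha_i+1)$ is bounded strictly below $N := \lfloor\tfrac{1}{8}\sum(\rho_i^2-1)\rfloor$ whenever $\rho_1\geq 5$ is odd or $\sum_{\rho_i\text{ even}}\rho_i \geq 6$, yet $N \leq V_0$, so the maximizer at level $N$ (or $N-1$) violates the bounds and forces $\mu\leq 2$.

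Your proposal replaces the second piece with open-ended casework on maximizer shapes at small $m$, and you flag yourself that you do not know how to make the cases close. Two concrete problems. \emph{(a)} You need a \emph{lower} bound on $V_0$ to certify that the $m$ you reduce at is admissible. The bound you sketch, $g \geq T_{V_0-1}+3 \geq 3(V_0-1)$, is actually an \emph{upper} bound on $V_0$, and its first step is not even valid: $T_{V_0}-T_{V_0-1}$ (the number of indices with $V_i = V_0$) is not controlled by $\mu$, since the defining minimum for $\mu$ ranges only over $1 \leq i < V_0$; it can perfectly well equal $1 < \mu$. So the Cauchy--Schwarz step of the paper is not a formalization detail but genuinely irreplaceable in this approach. \emph{(b)} Your disposal of the boundary case via Lemma~\ref{Ch1:lem:V0=1} does not work, because that lemma covers only $V_0 \leq 1$, while $\mu$ is defined whenever $V_0 \geq 2$; the case $V_0 = 2$ needs an argument. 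Finally, your ``split a 2'' move, while a valid inequality, is not used in the paper and it is unclear it helps: it introduces the extra requirement that the maximizer has both a $2$ and a $0$ coordinate, and the quantity $\rho_j-\rho_k$ it produces is uncontrolled unless you also know the relevant $\rho_i$ are nearly equal, which is another hypothesis to discharge.

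In short: you have the pop move, but you are missing the Cauchy--Schwarz lower bound on $V_0$ and the global parity count that identify \emph{which} level to pop at, and the substitute casework is not carried out and does not obviously terminate.
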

\begin{proof}
For $m<V_0$, Lemma~\ref{Ch1:lem:calcTi} shows that there is $\alpha \in S_m$ such that $\rho \cdot \alpha=T_m$. If $\alpha_l>0$, then consider $\alpha'$ defined by
\[\alpha'_i=
\begin{cases}
\alpha_i & i\ne l\\
\alpha_i-1 &i=l.
\end{cases}\]
By construction, we have $\alpha'\in S_{m-\alpha_l}$ and $\alpha'\cdot \rho = \alpha \cdot \rho - \rho_l=T_m-\rho_l.$
As $\alpha' \cdot \rho \leq T_{m-\alpha_l}$, we get
\begin{equation}\label{Ch1:eq:mubound}
\rho_l\geq T_m-T_{m-\alpha_l}\geq \alpha_l \mu.
\end{equation}
If we have a maximiser $\alpha \in S_m$ such that $\rho \cdot \alpha=T_m$ and $\alpha$ does not satisfy
\begin{equation}\label{Ch1:eq:rhobounds}
\alpha_i \leq
\begin{cases}
\frac{\rho_i-2}{2}    &\text{if $\rho_i$ even,}\\
\frac{\rho_i-3}{2}    &\text{if $\rho_i\geq 5$ odd,}\\
\frac{\rho_i-1}{2}    &\text{if $\rho_i\in\{1,3\}$,}
\end{cases}
\end{equation}
for all $i$, then there is $l$ such that $\frac{\rho_l}{\alpha_l}<3$. So by \eqref{Ch1:eq:mubound}, we see that $\mu\leq 2$. We will show that if $\rho$ satisfies the hypotheses of the lemma, then there must be at least one maximiser which does not satisfy \eqref{Ch1:eq:rhobounds} for all $i$.

Let $c\in \Char(\mathbb{Z}^{t})$ be such that $c\cdot\rho=n$ and $8V_0\geq \norm{c} -t$. The Cauchy-Schwarz inequality implies that
\[|c\cdot\rho|^2=n^2\leq \norm{\rho}\norm{c}=n\norm{c},\]
showing that $\norm{c}\geq n$ with equality if and only if $c=\rho$. Therefore,
\[V_0\geq \frac{\norm{\rho}-t}{8}=\frac{1}{8}\sum_{i=1}^t (\rho_i^2-1),\]
with equality only if $\rho\in \Char(\mathbb{Z}^{t})$.
We will let $N$ denote the quantity
\[N=\lfloor \frac{1}{8}\sum_{i=1}^t (\rho_i^2-1) \rfloor\leq V_0.\]

Now take $\alpha \in S_m$, which satisfies the conditions given by \eqref{Ch1:eq:rhobounds}.
It follows that
\begin{align}
\begin{split}\label{Ch1:eq:maximiserbound}
m&=\frac{1}{2}\sum_{i=1}^t \alpha_i (\alpha_i +1)\\
 &\leq \sum_{\rho_i \geq 5 \text{ odd}}\frac{(\rho_i-3)(\rho_i-1)}{8} + \sum_{\rho_i \text{ even}}\frac{\rho_i(\rho_i-2)}{8} + \sum_{\rho_i \in \{1,3\}} \frac{\rho_i^2-1}{8}\\
 &=\sum_{i=1}^t \frac{\rho_i^2-1}{8} + \sum_{\rho_i\geq 5 \text{ odd}}\frac{(1-\rho_i)}{2} +
\sum_{\rho_i \text{ even}}\frac{1-2\rho_i}{8}.
\end{split}
\end{align}

If $\rho_1$ is odd and $\rho_1\geq 5$, then \eqref{Ch1:eq:maximiserbound} shows that
\[m\leq \sum_{i=1}^t \frac{\rho_i^2-1}{8} -2 < N-1\]
In particular, there is no $\beta \in S_{N-1}$ satisfying \eqref{Ch1:eq:rhobounds}. Since $N-1< V_0$, there is $\beta \in S_{N-1}$ with $\beta \cdot \rho=T_{N-1}$ and so \eqref{Ch1:eq:mubound} implies that $\mu\leq 2$.

If $\sum_{\rho_i \text{ even}}\rho_i\geq 6$, then we must have $\sum_{\rho_i \text{ even}}(2\rho_i-1)\geq \frac{3}{2}\sum_{\rho_i \text{ even}}\rho_i \geq 9$. Therefore, \eqref{Ch1:eq:maximiserbound} shows that
\[m< \sum_{i=1}^t \frac{\rho_i^2-1}{8} -1 < N.\]
In particular, there is no $\beta \in S_{N}$ satisfying \eqref{Ch1:eq:mubound}. Since we are assuming there is an even $\rho_i$, we have $N<V_0$ and so there exists $\beta\in S_{N}$ such that $\beta \cdot \rho=T_{N}$ and so \eqref{Ch1:eq:mubound} implies that $\mu\leq 2$.
\end{proof}
If $\mu>2$, then $\rho$ must fall into one of a small number of cases.
\begin{lem}\label{Ch1:lem:mu=3}
If $\mu>2$, then $\mu=T_1=3$ and $\rho$ takes the form
\[
\rho=
\begin{cases}
(\underbrace{3, \dots , 3}_{d}, 1, \dots, 1)      &\text{if }g=3d\\
(\underbrace{3, \dots , 3}_{d},2, 1, \dots, 1)    &\text{if }g=3d+1\\
(\underbrace{3, \dots , 3}_{d},2,2, 1, \dots, 1)  &\text{if }g=3d+2.
\end{cases}
\]
\end{lem}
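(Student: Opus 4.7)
The plan is to first use Lemma~\ref{Ch1:lem:mubound} together with $T_1 = \rho_1 \geq \mu > 2$ to force $\rho_1 \in \{3,4\}$, then rule out $\rho_1 = 4$ by an explicit comparison of $T_m$-values, and finally derive the three listed forms when $\rho_1 = 3$ by a parity count on the entries.

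Suppose for contradiction that $\rho_1 = 4$. The bound $\sum_{\rho_i\text{ even}}\rho_i \leq 5$ from Lemma~\ref{Ch1:lem:mubound} forbids any further $\rho_i$ from being even, so $\rho_i \in \{1,3\}$ for $i \geq 2$ and we may write $\rho = (4, 3^d, 1^f)$ with $d \geq 0$. The idea is to enumerate the multisets of $\alpha_i(\alpha_i+1) \in \{2, 6, 12, 20, \dots\}$ summing to $2d+4$ and, by comparing the resulting values of $\rho \cdot \alpha$, verify that
\[\max_{\alpha \in S_{d+1}} \rho \cdot \alpha = 3d + 4, \quad \max_{\alpha \in S_{d+2}} \rho \cdot \alpha = 3d + 5,\]
the former realised by $\alpha = (1^{d+1})$, and the latter by $\alpha = (2, 1^{d-1})$ (or by $\alpha = (1, 1)$ when $d = 0$, or by $\alpha = (1^{d+2})$ when $f \geq 1$). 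Computing $g = \tfrac{1}{2}\sum \rho_i(\rho_i - 1) = 3d + 6$, the bound $T_{V_0} = g$ from Lemma~\ref{Ch1:lem:calcTi} together with the monotonicity of $T_m$ and the inequality $T_m \leq \max_{\alpha \in S_m} \rho \cdot \alpha$ forces $V_0 > d + 2$, since otherwise $g = T_{V_0} \leq T_{d+2} \leq 3d+5$. Applying Lemma~\ref{Ch1:lem:calcTi} again now gives the exact values $T_{d+1} = 3d+4$ and $T_{d+2} = 3d+5$, so $\ell_{d+2} = T_{d+2} - T_{d+1} = 1$. Since $d+2$ lies in the range $1 \leq m < V_0$ defining $\mu$, this yields $\mu \leq 1$, contradicting $\mu > 2$.

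Hence $\rho_1 = 3$, so $T_1 = 3$, and combining $\mu \leq T_1$ with $\mu > 2$ gives $\mu = T_1 = 3$ as claimed. All $\rho_i$ then lie in $\{1, 2, 3\}$, and Lemma~\ref{Ch1:lem:mubound} again limits the number of $\rho_i$ equal to $2$ to at most two. Writing $\rho = (3^d, 2^e, 1^f)$ with $d \geq 1$ and $e \in \{0, 1, 2\}$, one computes $g = \tfrac{1}{2}(6d + 2e) = 3d + e$, matching the three listed possibilities.

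The main obstacle is the enumeration needed to verify $T_{d+2} = 3d+5$: beyond the two ``greedy'' maximisers $(1^{d+2})$ and $(2, 1^{d-1})$, one must rule out alternatives involving some $\alpha_j = 2$ at a $3$-position or some $\alpha_j = 3$, each of which a short calculation shows yields a strictly smaller value of $\rho \cdot \alpha$.
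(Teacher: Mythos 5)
Your proof is correct and follows essentially the same route as the paper's: force $\rho_1\in\{3,4\}$ via Lemma~\ref{Ch1:lem:mubound} together with $\mu\leq T_1=\rho_1$, rule out $\rho_1=4$ by computing $T_{d+1}=3d+4$ and $T_{d+2}=3d+5$ to force $\mu\leq 1$, and for $\rho_1=3$ bound the number of $2$'s using $\sum_{\rho_i\text{ even}}\rho_i\leq 5$. You are more explicit than the paper in flagging that one must first establish $V_0>d+2$ before Lemma~\ref{Ch1:lem:calcTi} pins down $T_{d+1}$ and $T_{d+2}$, but as written your chain $g=T_{V_0}\leq T_{d+2}\leq 3d+5$ is circular in the sub-case $V_0<d+2$: there $T_{d+2}=T_{V_0}=g$ and the bound $T_{d+2}\leq 3d+5$ is precisely what you are trying to disprove. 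The clean patch is to compute $\max_{\alpha\in S_m}\rho\cdot\alpha=3m+1$ for $1\leq m\leq d+1$ and $\max_{\alpha\in S_{d+2}}\rho\cdot\alpha=3d+5$, all strictly less than $g=3d+6$, so $T_{V_0}\leq\max_{\alpha\in S_{V_0}}\rho\cdot\alpha<g$ whenever $V_0\leq d+2$, contradicting $T_{V_0}=g$; the paper implicitly skirts the same issue by simply asserting the values of $T_{d+1}$ and $T_{d+2}$.
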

\begin{proof}
If $\mu>2$, then Lemma~\ref{Ch1:lem:mubound} and the observation that $\mu \leq T_1=\rho_1$ show that $\rho_1 \in \{3,4\}$. We consider these two possibilities separately.

First, suppose that $\rho_1=4$. Lemma~\ref{Ch1:lem:mubound} shows that if $\rho_1=4$ and $\mu>2$, then $\rho_i$ is odd for all $i\geq 1$. Thus, $\rho$ must take the form
\[\rho=(4,\underbrace{3, \dots , 3}_{d}, 1, \dots , 1)   \quad\text{where }g=3d+6.\]
However, it is straightforward to verify that this $\rho$ gives $T_{d+1}=4+3d$ and $T_{d+2}=5+3d$. This shows that we must have $\mu=1$, in this case.

If $\rho_1=3$ and $\mu>2$, then Lemma~\ref{Ch1:lem:mubound} implies that we have $\rho_i=2$ for at most two values $i$. Thus if $\rho_1=3$, then $\rho$ must take the required form given in the lemma. Furthermore, it can be easily verified that $T_{k}=3k$ for $0\leq k \leq d$ and $g\leq \max_{\alpha \in S_{d+1}}\alpha\cdot \rho$. This shows that $\mu=3$ in these cases.
\end{proof}
Let $\rho^{(k)}$ denote $(\rho_1,\dots, \rho_k,0,\dots,0)$. The following lemma allows us to calculate the $\rho_i$ iteratively.
\begin{lem}\label{Ch1:lem:computerho}
If $1\leq t<V_0$ is minimal such that $\max_{\alpha \in S_t} \rho^{(l)}\cdot \alpha < T_t$, then
\[\rho_{l+1}=T_t-T_{t-1}.\]
\end{lem}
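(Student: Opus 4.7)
The strategy is to prove the two inequalities $T_t - T_{t-1} \geq \rho_{l+1}$ and $T_t - T_{t-1} \leq \rho_{l+1}$ separately, exploiting the minimality of $t$ in two different ways.

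For the lower bound, I would use the minimality of $t$ to produce, at the smaller cost $t-1$, a maximizer of $\rho^{(l)} \cdot \alpha$ over $S_{t-1}$ which is supported entirely on the first $l$ coordinates and whose value is exactly $T_{t-1}$. Call this $\beta$. Since $\beta_{l+1} = 0$, I can form $\beta' \in S_t$ by setting $\beta'_{l+1} = 1$ and leaving the remaining entries unchanged; the cost increases by $1$ and the value increases by exactly $\rho_{l+1}$. This exhibits an element of $S_t$ with value $T_{t-1} + \rho_{l+1}$, giving the bound.

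For the upper bound, I would select a maximizer $\alpha^* \in S_t$ with $\rho \cdot \alpha^* = T_t$. By the hypothesis of the lemma, $\alpha^*$ must have $\alpha^*_j > 0$ for some $j > l$; otherwise its restriction would already achieve $T_t$ using $\rho^{(l)}$. The key claim I want to establish is that, among all maximizers of $T_t$, one can be chosen in which exactly one index $j_0 > l$ has $\alpha^*_{j_0} = 1$ and every other coordinate beyond $l$ vanishes. Once such a maximizer is in hand, removing the singleton at $j_0$ produces $\gamma \in S_{t-1}$ whose support lies entirely in the first $l$ coordinates. By the minimality of $t$ applied at cost $t - 1$, we have $\rho^{(l)} \cdot \gamma \leq T_{t-1}$, which combined with $\rho_{j_0} \leq \rho_{l+1}$ (since $\rho$ is non-increasing) yields $T_t = \rho \cdot \gamma + \rho_{j_0} \leq T_{t-1} + \rho_{l+1}$.

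The key claim is established by a rearrangement-and-exchange argument. First, since $\rho_{l+1} \geq \rho_{l+2} \geq \cdots$, permuting the beyond-$l$ entries to be non-increasing does not decrease $\rho \cdot \alpha^*$, so one may assume the nonzero beyond-$l$ entries occupy positions $l+1, l+2, \ldots$ in decreasing order. If the beyond-$l$ portion of $\alpha^*$ still has total weight $s = \sum_{j>l} \alpha^*_j(\alpha^*_j+1)/2 \geq 2$, then one removes a cell beyond $l$ (reducing cost by some amount $k$ and value by $\rho_j \leq \rho_{l+1}$) and uses the minimality of $t$ at cost $t - s$ (or an intermediate cost) to re-insert cells among the first $l$ coordinates which recover a value of at least $\rho_{l+1}$. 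The resulting vector has strictly smaller beyond-$l$ weight $s$ while achieving value at least $T_t$, so it is again a maximizer; iterating reduces $s$ to $1$.

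The main obstacle is verifying this exchange rigorously across the various shapes that $\alpha^*$ can take beyond $l$ (for instance when $\alpha^*_{l+1} \geq 2$ versus when $\alpha^*$ has multiple singletons beyond $l$), and ensuring that at each step the compensating cells can be inserted among the first $l$ coordinates without over- or under-shooting the cost $t$. The minimality of $t$, applied at the intermediate cost values produced by the exchange, is precisely what guarantees enough room to perform the compensation without losing value, so the argument terminates with a maximizer of the desired form.
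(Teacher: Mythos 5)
Your lower bound is exactly the paper's: extend a maximizer $\beta\in S_{t-1}$ supported on the first $l$ coordinates by a unit at position $l+1$, giving $T_{t-1}+\rho_{l+1}\leq T_t$.

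Your upper bound, however, has a genuine gap: the \emph{key claim} that a maximizer in $S_t$ can be arranged so that exactly one index beyond $l$ carries a $1$ and all others vanish is left unproved, and the proposed ``rearrangement-and-exchange'' argument is only a sketch. You acknowledge this yourself. More importantly, this claim is not needed, because you have implicitly restricted yourself to landing at cost \emph{exactly} $t-1$ after deletion, which is what forces you into proving the structural claim.

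The paper's argument sidesteps this entirely. Take any maximizer $\beta\in S_t$; since $\max_{\alpha\in S_t}\rho^{(l)}\cdot\alpha<T_t$, some $\beta_j>0$ with $j>l$, and after sorting the tail in decreasing order (which cannot decrease $\rho\cdot\beta$ since $\rho$ is non-increasing, hence preserves the maximum) you may assume $\beta_{l+1}>0$. Decrement $\beta_{l+1}$ by $1$. This drops the cost by $\beta_{l+1}$, landing you in $S_{t-\beta_{l+1}}$, which need not be $S_{t-1}$. But the $T_m$ are non-decreasing in $m$ and $\beta_{l+1}\geq 1$, so $\rho\cdot\beta'\leq T_{t-\beta_{l+1}}\leq T_{t-1}$, which yields $T_t-\rho_{l+1}\leq T_{t-1}$ directly. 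No control over the shape of the maximizer beyond $l$ is required. If you replace your ``key claim'' step by this observation, the proof closes; as written, it does not.
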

\begin{proof}
Let $\alpha\in S_{t-1}$ be such that $\rho^{(l)}\cdot \alpha=T_{t-1}$. Such an $\alpha$ must also satisfy $\rho \cdot \alpha=T_{t-1}$. In particular $\alpha_i=0$ for $i>l$.

Now we consider $\alpha'\in S_t$ defined by
\[\alpha'_i=
\begin{cases}
\alpha_i & i\ne l+1\\
1 & i=l+1.
\end{cases}\]
We have $\alpha'\cdot \rho = T_{t-1}+\rho_{l+1}\leq T_t$. This implies that
\begin{equation}\label{Ch1:eq:algproof1}
\rho_{l+1}\leq T_t-T_{t-1}.
\end{equation}

Let $\beta \in S_{t}$, be such that $\rho \cdot \beta = T_t$. Since $\max_{\alpha \in S_t} \rho^{(l)}<T_t$, we may assume $\beta_{l+1}>0$. Thus we can define $\beta'$ by
\[\beta'_i=
\begin{cases}
\beta_i & i\ne l\\
\beta_i-1 &i=l+1.
\end{cases}\]
We have $\beta'\in S_{t-\beta_l}$. Therefore we obtain
\begin{equation}\label{Ch1:eq:algproof2}
T_{t-1}\geq T_{t-\beta_l} \geq \rho \cdot \beta' =  T_{t}-\rho_{l+1}.
\end{equation}
Combining \eqref{Ch1:eq:algproof1} and \eqref{Ch1:eq:algproof2} shows that $\rho_{l+1}=T_{t}-T_{t-1}$, as claimed.
\end{proof}
The following lemma shows that this iterative process will calculate most of the $\rho_i$.
\begin{lem}\label{Ch1:lem:finalcoeffs}
If $\rho^{(N)}$ satisfies $\max_{\alpha \in S_t} \rho^{(N)}\cdot \alpha = T_t$ for all $t<V_0$, then $\rho_i\leq \mu$ for all $i>N$.
\end{lem}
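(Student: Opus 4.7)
The plan is to reduce everything to the single inequality $\rho_{N+1} \leq \mu$; since the coefficients satisfy $\rho_1 \geq \rho_2 \geq \cdots$ by our ordering convention, this immediately gives $\rho_i \leq \mu$ for all $i > N$ (the case $N \geq t$ being vacuous). The key move is to take a maximiser of $S_{t^*-1}$ supported on the first $N$ coordinates and bump up its $(N+1)$-th entry by one, producing an element of $S_{t^*}$ whose dot product with $\rho$ exceeds $T_{t^*-1}$ by exactly $\rho_{N+1}$.

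In detail, I would fix $t^* \in \{1, \dots, V_0-1\}$ at which the minimum defining $\mu$ is attained, so that $T_{t^*}-T_{t^*-1}=\mu$. Applying the hypothesis at $t=t^*-1 < V_0$ (with the convention that $S_0=\{0\}$, which handles the edge case $t^*=1$ trivially), there exists $\beta \in S_{t^*-1}$ with $\beta_i=0$ for all $i>N$ and $\rho \cdot \beta = T_{t^*-1}$. I would then define $\beta' \in \mathbb{Z}^t$ by setting $\beta'_{N+1}=1$ and $\beta'_i=\beta_i$ for $i \neq N+1$. Since $\beta_{N+1}=0$, the modification adds exactly $2$ to $\sum_i \alpha_i(\alpha_i+1)$, so $\beta' \in S_{t^*}$. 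Because $t^* < V_0$, Lemma~\ref{Ch1:lem:calcTi} gives $T_{t^*}=\max_{\alpha \in S_{t^*}} \rho \cdot \alpha$, and consequently
\[ T_{t^*} \;\geq\; \rho \cdot \beta' \;=\; T_{t^*-1} + \rho_{N+1}. \]
Rearranging yields $\rho_{N+1} \leq T_{t^*}-T_{t^*-1}=\mu$, as required.

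I do not expect any serious obstacle. The one subtlety to keep in mind is that Lemma~\ref{Ch1:lem:calcTi} only identifies $T_m$ with $\max_{\alpha \in S_m} \rho \cdot \alpha$ in the range $0 \leq m < V_0$; however, both $t^*$ and $t^*-1$ lie in this range, so the chain of inequalities above is valid. The construction is essentially a mirror of the one used in Lemma~\ref{Ch1:lem:computerho}, where an analogous perturbation produced the bound $\rho_{l+1} \leq T_t - T_{t-1}$; the difference is simply that here the perturbation is applied at the first ``unused'' index $N+1$, allowed by the blanket assumption that $\rho^{(N)}$ already attains every $T_t$.
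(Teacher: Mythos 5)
Your proof is correct and takes essentially the same approach as the paper. The paper picks $\tau$ with $T_{\tau+1}-T_\tau=\mu$, takes a maximiser $\alpha\in S_\tau$ with $\alpha\cdot\rho^{(N)}=\alpha\cdot\rho=T_\tau$ (so $\alpha_l=0$ for $l>N$), and bumps $\alpha_l$ from $0$ to $1$ for an arbitrary $l>N$ to get $\rho_l\le\mu$ directly; you do the identical perturbation at the single index $N+1$ and then invoke the decreasing ordering of the $\rho_i$, which is a cosmetic variation.
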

\begin{proof}
Let $\tau<V_0-1$ be such that $T_{\tau+1}-T_{\tau}=\mu$. There is $\alpha \in S_{\tau}$ such that $\alpha \cdot \rho = \alpha \cdot \rho^{(N)} = T_{\tau}$. Such an $\alpha$ must satisfy $\alpha_l=0$ for $l>N$. Let $\alpha'\in S_{\tau +1}$ be defined by
\[\alpha'_i=
\begin{cases}
\alpha_i & i\ne l\\
1 &i=l.
\end{cases}\]
We have
\[\rho_l=\alpha'\cdot \rho-T_{\tau}\leq T_{\tau+1}-T_{\tau}=\mu,\]
as required.
\end{proof}

Now we prove Theorem~\ref{Ch1:thm:calculaterho}. Note that this proof can easily be converted into an algorithm for calculating $\rho$.

\begin{proof}[Proof of Theorem~\ref{Ch1:thm:calculaterho}]
If $V_0\leq 1$, then Lemma~\ref{Ch1:lem:V0=1} shows that the stable coefficients of $\rho$ are determined by $g$. Thus we may assume $V_0>1$ and hence we may define $\mu = \min_{1\leq i<V_0} \{ T_i-T_{i-1}\}$. If $\mu>2$, then Lemma~\ref{Ch1:lem:mu=3} shows that the stable coefficients of $\rho$ are determined by $g$. Thus we may assume that $\mu\leq 2$. Now Lemma~\ref{Ch1:lem:computerho} shows that if $\rho^{(l)}$ is determined by the $V_i$, then either $\rho^{(l+1)}$ is determined by the $V_i$ or $\rho^{(l)}$ satisfies $\max_{\alpha \in S_t} \rho^{(l)}\cdot \alpha = T_t$ for all $t<V_0$. However, in this latter case Lemma~\ref{Ch1:lem:finalcoeffs} shows that $\rho_{i}\leq \mu =2$ for all $i>l$. Thus the $V_i$ determine all values $\rho_i>2$. Since
\[g=\sum_{\rho_i >2}\frac{\rho_i(\rho_i-1)}{2} + |\{i \,|\, \rho_i =2\}|,\]
this is sufficient show that all the stable coefficients of $\rho$ are determined by the $V_i$.
\end{proof}
\begin{proof}[Proof of Theorem~\ref{Ch1:thm:stabledependence} and Corollary~\ref{Ch1:cor:interfromunique}]
By Theorem~\ref{intro:thm:CM}, if $S_{p/q}^3(K)$ bounds a sharp manifold $X$ for some $p/q>0$, then the intersection form $Q_X$ satisfies
\[-Q_X \cong L \oplus \Z^S,\]
for some $p/q$-changemaker lattice $L=\langle w_0, \dots, w_l \rangle^\bot\subseteq \Z^N$ where $w_0$ satisfies \eqref{Ch1:eq:w0formula2}. Theorem~\ref{Ch1:thm:calculaterho} shows that the stable coefficients of $L$ are determined by the values of the $V_i$. As the $V_i$ are invariants of $K$, this shows that the stable coefficients are an invariant of the knot $K$.

Since a $p/q$-changemaker lattice is determined up to isomorphism by its stable coefficients and the value $p/q$, this shows that $L$ is determined by $K$ and $p/q$. As we have $b_2(X)=\rk(Q_X) =S+ \rk(L)$, this shows that $Q_X$ is determined by the $K$ the slope $p/q$ and $b_2(X)$, proving Theorem~\ref{Ch1:thm:sharpextension}. Moreover, if $X'$ is a sharp manifold with boundary $S_{p/q}^3(K)$ and $b_2(X')=b_2(X)+k$, we have
\[-Q_{X'}\cong L\oplus \Z^{S+k}\cong -Q_X \oplus \Z^k.\]
Since $Q_{X\#_{k}\overline{\mathbb{C}P}^2}\cong Q_X  \oplus (-\Z)^k$, this proves Corollary~\ref{Ch1:cor:interfromunique}.
\end{proof} 
\chapter{Characterizing slopes for torus knots}\label{chap:charslopes}
In this chapter, we take a brief diversion to address the problem of determining characterizing slopes of torus knots.
\begin{defn}
We say that $p/q$ is a characterizing slope for $K\subseteq S^3$ if the existence of an orientation-preserving homeomorphism from $S^3_{p/q}(K)$ to $S^3_{p/q}(K')$ implies that $K=K'$.
\end{defn}
Using a combination of Heegaard Floer homology and geometric techniques, Ni and Zhang were able to prove the following theorem.
\begin{thm}[Ni and Zhang, \cite{Ni14characterizing}]\label{Ch2:thm:NiZhang}
For the torus knot $T_{r,s}$ with $r>s>1$ any non-trivial slope $p/q$ satisfying
\[\frac{p}{q} \geq \frac{30(r^2-1)(s^2-1)}{67}\]
is a characterizing slope.
\end{thm}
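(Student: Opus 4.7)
The plan is to combine the changemaker obstruction from Chapter~\ref{chap:sharpman} with the geometric rigidity of Seifert fibered surgeries on torus knots. Fix $K = T_{r,s}$ and set $Y = S^3_{p/q}(K)$. Since $T_{r,s}$ is an $L$-space knot of genus $g = \tfrac{(r-1)(s-1)}{2}$, the slope $p/q \geq 2g-1$ ensures that $Y$ is a small Seifert fibered $L$-space, and its standard negative-definite linear plumbing is a sharp 4-manifold bounding $Y$ (well-known for torus knot surgeries). Suppose throughout that $K' \subseteq S^3$ is another knot with $S^3_{p/q}(K') \cong Y$.

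Applying Theorem~\ref{intro:thm:CM} to the pair $(K', Y)$ gives $-Q_X \cong L \oplus \Z^S$ for a $p/q$-changemaker lattice $L$ whose vector $w_0$ records the integers $V_k(K')$ via \eqref{Ch1:eq:w0formula2} (cf.\ Remark~\ref{Ch1:rem:CMgivesallVi}). Running the same argument with $T_{r,s}$ in place of $K'$ produces an analogous lattice, and Corollary~\ref{Ch1:cor:interfromunique} forces these two lattices to coincide up to an orthogonal $\Z$-summand. Hence $V_k(K') = V_k(T_{r,s})$ for all $k \geq 0$. For $p/q$ sufficiently large one may additionally arrange that $S^3_{p/q}(K')$ is an $L$-space, so $K'$ is itself an $L$-space knot with $V_k(K') = t_k(K')$. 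Combining these yields matching torsion coefficients, and Remark~\ref{Ch1:rem:torsiondeterminespoly} then gives $\Delta_{K'} = \Delta_{T_{r,s}}$ and $g(K') = g$; this in turn pins down the knot Floer homology of $K'$ to that of $T_{r,s}$.

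The final step is geometric. By the Ghiggini--Ni theorem for $L$-space knots, $K'$ is fibered of genus $g$, and the task is to upgrade this to $K' = T_{r,s}$. I would analyze the JSJ decomposition of $Y$: the dual knot $K'^{*} \subseteq Y$ of the surgery must sit compatibly with the Seifert structure of $Y$, and for $p/q$ above the stated threshold one can invoke estimates (involving, for instance, the Casson--Walker invariant, which is determined by $\Delta_{T_{r,s}}$, together with a bound on the minimal genus of an essential surface in the exterior of $K'$) to conclude that $K'^{*}$ is isotopic to a regular Seifert fiber of $Y$. This implies that $S^3 \setminus \nu K'$ is Seifert fibered over a disk with two singular fibers, so $K'$ is a torus knot, and matching Alexander polynomials forces $K' = T_{r,s}$.

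The main obstacle is precisely this final geometric step. Once the Heegaard Floer invariants are matched, extracting the knot from its surgery still requires identifying the dual knot as a Seifert fiber, and it is here that the explicit constant $\tfrac{30(r^2-1)(s^2-1)}{67}$ enters: it emerges from optimizing a quantitative balance between a Heegaard Floer or Casson--Walker quantity (computable from $\Delta_{T_{r,s}}$) and a topological complexity measure of a potential non-Seifert surgery dual. Making this optimization sharp is where I would expect the bulk of the technical work to lie; the changemaker step provides the Heegaard Floer input cleanly, but the passage from algebraic to geometric identification of $K'$ is the genuine difficulty.
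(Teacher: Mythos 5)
This statement is quoted from Ni and Zhang's paper \cite{Ni14characterizing}; the thesis contains no proof of it, only a citation, so there is no ``paper's own proof'' to compare against. What you have instead sketched is closer to the thesis's proof of the \emph{improved} bound, Theorem~\ref{Ch2:thm:charslopes}, than to Ni--Zhang's original argument. Your first step (apply Theorem~\ref{intro:thm:CM} to both $K'$ and $T_{r,s}$, use Corollary~\ref{Ch1:cor:interfromunique} to match the changemaker lattices, and conclude $\Delta_{K'} = \Delta_{T_{r,s}}$ and $g(K') = g(T_{r,s})$) is essentially Theorem~\ref{Ch2:thm:Alexuniqueness} and Lemma~\ref{Ch2:lem:torusgenus}. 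But note this already gives the genus/Alexander-polynomial matching for $p/q \geq 4g(K)+6$, which is \emph{linear} in $rs$; Ni--Zhang's quadratic constant $30(r^2-1)(s^2-1)/67$ arises precisely because their genus comparison does not use the changemaker machinery and is weaker. So if your plan succeeded it would prove the stronger Theorem~\ref{Ch2:thm:charslopes}, which is fine logically, but it is not a reconstruction of the cited result's proof.

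The genuine gap is in your ``final geometric step.'' You propose analyzing the JSJ decomposition of $Y$, invoking the Casson--Walker invariant, and bounding genera of essential surfaces in the exterior of the surgery dual to identify it with a regular Seifert fiber. None of these appear in the actual argument, and it is not clear how the plan as stated would close. What Ni--Zhang (and the thesis, in the proof of Theorem~\ref{Ch2:thm:charslopes}) actually do is run the geometrization trichotomy on the knot $K'$ itself: (a) if $K'$ is hyperbolic, then Proposition~\ref{Ch2:prop:hyperbolicbound} (Agol, Lackenby, Cao--Meyerhoff) forces $S^3_{p/q}(K')$ to be hyperbolic once $p \geq 10.75(2g(K')-1)$, contradicting the Seifert-fibered structure of $Y$; (b) if $K'$ is a satellite, Lemma~\ref{Ch2:lem:satellitebound} uses Gabai's theorem on surgeries in solid tori to reduce to an innermost companion, rules out the hyperbolic case by (a), and derives a contradiction from the Seifert invariants and Alexander polynomial in the torus-knot case; (c) therefore $K'$ is a torus knot, and the Alexander polynomial pins it down up to mirror, with $L$-space positivity fixing the sign. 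Also, your parenthetical ``for $p/q$ sufficiently large one may additionally arrange that $S^3_{p/q}(K')$ is an $L$-space'' is backwards: $Y \cong S^3_{p/q}(K')$ is an $L$-space by hypothesis, so $K'$ is an $L$-space knot for free. If you replace the JSJ/Casson--Walker heuristic with the trichotomy plus the quantitative exceptional-slope bound and Gabai's theorem, the argument closes, but as written the geometric step does not.
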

Their argument requires an upper bound on the genus of any knot $K$ satisfying $S^3_{p/q}(K)\cong S^3_{p/q}(T_{r,s})$. Since $S^3_{p/q}(T_{r,s})$ is an $L$-space bounding a sharp manifold for $p/q\geq rs-1$, we can apply the following theorem to obtain $g(K)=g(T_{r,s})$, whenever $p/q\geq 4g(K)+6$ \cite{mccoy2014sharp}.
\begin{thm}\label{Ch2:thm:Alexuniqueness}
Suppose that for some $p/q>0$, there are knots $K,K'\subseteq S^3$ such that $S^3_{p/q}(K)\cong S^3_{p/q}(K')$ is an $L$-space bounding a sharp manifold. If $p/q\geq 4g(K)+6$, then
\[\Delta_K(t)=\Delta_{K'}(t) \text{ and } g(K)=g(K').\]
\end{thm}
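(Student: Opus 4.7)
The plan is to invoke the changemaker machinery of Chapter~1 to show that $K$ and $K'$ give rise to the same sequence of integers $(V_i)_{i\geq 0}$, after which both conclusions will follow immediately from the structure of the knot Floer complex of an $L$-space knot.

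Since $Y := S^3_{p/q}(K)\cong S^3_{p/q}(K')$ is an $L$-space, both $K$ and $K'$ are $L$-space knots by definition. The sharp manifold $X$ bounding $Y$ is simultaneously a sharp manifold for each of $S^3_{p/q}(K)$ and $S^3_{p/q}(K')$, so applying Theorem~\ref{intro:thm:CM} to each knot yields two decompositions
\[-Q_X \;\cong\; L_K \oplus \Z^{S} \;\cong\; L_{K'} \oplus \Z^{S'},\]
where $L_K$ and $L_{K'}$ are $p/q$-changemaker lattices. The crucial observation is that condition~(IV) of Definition~\ref{Intro:def:CMlattice} forbids norm-one vectors in a changemaker lattice. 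Hence, in either decomposition the $\Z$-summand is canonically identified with the sublattice of $-Q_X$ generated by its vectors of norm one, and the changemaker part is its orthogonal complement. It follows that $L_K\cong L_{K'}$, and in particular they share the same stable coefficients; together with $a_0=\lceil p/q\rceil$, the remark following Definition~\ref{Intro:def:CMlattice} implies that the defining vector $w_0$ is also the same, up to automorphism of the ambient $\Z^N$.

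Because formula~\eqref{Ch1:eq:w0formula2} of Theorem~\ref{intro:thm:CM} is invariant under such automorphisms, it yields $V_i(K)=V_i(K')$ for every $0\leq i\leq n/2$, where $n=\lceil p/q\rceil$. The hypothesis $p/q\geq 4g(K)+6$ gives $n/2\geq 2g(K)+3 > g(K)$, so every potentially nonzero $V_i(K)$ is captured in this range; on the $K'$-side, inequality~\eqref{Ch1:eq:sharpnuplusbound} applied to $K'$ (via the sharp manifold $X$) gives $\nu^{+}(K')< n/2$ as well, so $V_i(K')=0$ for all $i\geq n/2$. Therefore the full sequences $(V_i(K))_{i\geq 0}$ and $(V_i(K'))_{i\geq 0}$ coincide.

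Finally, since $K$ and $K'$ are $L$-space knots, $V_i$ coincides with the torsion coefficient $t_i$ (Section~\ref{intro:sec:knotfloer}), and Remark~\ref{Ch1:rem:torsiondeterminespoly} asserts that the torsion coefficients determine the Alexander polynomial. This gives $\Delta_K(t)=\Delta_{K'}(t)$. Likewise, for an $L$-space knot one has $g(K)=\nu^{+}(K)=\min\{i:V_i=0\}$, so the agreement of the $(V_i)$-sequences forces $g(K)=g(K')$. I expect the main technical point to verify carefully is the canonicity of the splitting $-Q_X\cong L\oplus \Z^S$, which allows $L_K$ and $L_{K'}$ to be identified intrinsically inside $-Q_X$; once this is in hand, the remainder of the argument is a direct extraction of the $V_i$ from the lattice data and an appeal to the $L$-space knot structure theorem already recalled in the introduction.
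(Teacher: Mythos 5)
Your argument contains a genuine gap at the step ``It follows that $L_K\cong L_{K'}$, and in particular they share the same stable coefficients.'' The passage from an abstract isomorphism of lattices to equality of stable coefficients is exactly what fails in general: the stable coefficients are \emph{not} an invariant of a changemaker lattice. The paper records this explicitly in Example~\ref{Ch2:exam:differentcoef}, where the $21$-changemaker lattices with stable coefficients $(4)$ and $(2,2,2,2,2)$ are shown to be isomorphic (reflecting the surgery coincidence $S^3_{21}(T_{5,4})\cong S^3_{21}(T_{11,2})$), and again in the remark following Example~\ref{Ch2:exam:differentcoef} in Chapter~\ref{chap:lattices}. You are implicitly appealing to the converse of the remark after Definition~\ref{Intro:def:CMlattice}: that remark says the stable coefficients determine the lattice up to isomorphism, not that the isomorphism class determines the stable coefficients.

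Filling this gap is where the hypothesis $p/q\geq 4g(K)+6$ actually does its work, and it is the whole point of Lemma~\ref{Ch2:lem:CMuniqueness}: under the bound $p/q\geq 2+2\rho_t+\sum_i\rho_i^2$, any two realizations of $L$ (and hence of $-Q_X$ after stripping the canonical $\Z^S$, which you identify correctly) as a $p/q$-changemaker lattice must have the same changemaker coefficients. The inequality \eqref{Ch2:eq:genusbound} in the proof of Theorem~\ref{Ch2:thm:Alexuniqueness} is precisely the estimate showing that $p/q\geq 4g(K)+6$ implies this bound. Your use of the genus hypothesis — only to guarantee the range $0\leq i\leq n/2$ captures all nonzero $V_i$ — is a subsidiary point (and, as you note, falls out of \eqref{Ch1:eq:sharpnuplusbound} anyway); the essential role of the hypothesis is the rigidity Lemma~\ref{Ch2:lem:CMuniqueness}. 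Once that lemma is invoked, the remainder of your argument (extracting the $V_i$ from $w_0$ via \eqref{Ch1:eq:w0formula2}, matching torsion coefficients, and reading off the Alexander polynomial and genus using Remark~\ref{Ch1:rem:torsiondeterminespoly} and the $L$-space knot structure) agrees with the paper.
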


This allows us to lower Ni and Zhang's quadratic bound to one which is linear in $rs$.
\begin{thm}\label{Ch2:thm:charslopes}
For the torus knot $T_{r,s}$ with $r>s>1$ any non-trivial slope $p/q$ satisfying
\[\frac{p}{q} \geq \frac{43}{4}(2g(T_{r,s})-1)=\frac{43}{4}(rs-r-s).\]
is a characterizing slope.
\end{thm}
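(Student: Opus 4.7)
The plan is to sharpen Ni and Zhang's Theorem~\ref{Ch2:thm:NiZhang} by using Theorem~\ref{Ch2:thm:Alexuniqueness} to eliminate their preliminary genus bound for $K$, which is the source of the quadratic factor $(r^2-1)(s^2-1)$ in their estimate; once replaced with the exact equality $g(K)=g(T_{r,s})$, the remainder of their argument produces a threshold linear in $g(T_{r,s})$ with constant $\tfrac{43}{4}$.

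So let $K\subseteq S^3$ satisfy $S^3_{p/q}(K)\cong S^3_{p/q}(T_{r,s})$ with $p/q\geq \tfrac{43}{4}(2g(T_{r,s})-1)$. First I would set up the Heegaard Floer framework: since $p/q>2g(T_{r,s})-1=rs-r-s$, the surgery $S^3_{p/q}(T_{r,s})$ is an L-space, and Moser's identification of $S^3_{rs-1}(T_{r,s})$ as a lens space realises it as the boundary of a sharp linear plumbing (via Theorem~\ref{Ch1:thm:WUissharp} applied to the corresponding $W(U)$); Theorem~\ref{Ch1:thm:sharpextension} then propagates sharpness to every $p/q\geq rs-1$. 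Hence $S^3_{p/q}(K)$ is an L-space bounding a sharp manifold, and $K$ is an L-space knot.

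Next I would establish $g(K)=g(T_{r,s})$ prior to invoking Theorem~\ref{Ch2:thm:Alexuniqueness}. Being L-space knots, both $K$ and $T_{r,s}$ have $V_i$ equal to the torsion coefficients, weakly decreasing to $0$ exactly at the genus. The Ni--Wu formula (Proposition~\ref{intro:prop:NiWuformula}) expresses the $d$-invariants of the common surgery as $d(S^3_{p/q}(U),i)-2V_{\min\{\lfloor i/q\rfloor,\lceil(p-i)/q\rceil\}}$, and the multiplicity with which each $V_k$ appears depends only on $p$, $q$, and $k$. Matching the multisets of $d$-invariants therefore forces the multisets $\{V_k(K)\}$ and $\{V_k(T_{r,s})\}$ to coincide, and monotonicity promotes this to equality of the sequences for $0\leq k\leq \lfloor p/(2q)\rfloor$. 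The L-space inequality $p/q\geq 2g-1$ gives $\lfloor p/(2q)\rfloor\geq g-1$ for each knot, so the top non-vanishing entry is encoded on both sides and $g(K)=g(T_{r,s})$. The arithmetic check $\tfrac{43}{4}(2g-1)\geq 4g+6$, valid for all $g\geq 1$, then certifies the hypothesis $p/q\geq 4g(K)+6$ of Theorem~\ref{Ch2:thm:Alexuniqueness}, which yields $\Delta_K(t)=\Delta_{T_{r,s}}(t)$ as well.

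Finally I would insert these genus and Alexander-polynomial equalities into Ni and Zhang's proof of Theorem~\ref{Ch2:thm:NiZhang}. Their quadratic bound is entirely due to a crude a priori estimate on $g(K)$; with the exact value of $g(K)$ replacing it, the hyperbolic/Seifert-fibred rigidity step of their argument goes through whenever $p/q$ exceeds a bounded linear multiple of $g(T_{r,s})$, and tracking constants delivers exactly the threshold $\tfrac{43}{4}(2g(T_{r,s})-1)$ and the conclusion $K=T_{r,s}$. The main obstacle is the genus-matching step: one must extract $g(K)=g(T_{r,s})$ from the surgery $d$-invariants without already knowing $\Delta_K=\Delta_{T_{r,s}}$, which demands care with the \spinc-structure labelling under the ambient homeomorphism and exploitation of the monotonicity of $V_i$ for L-space knots. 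Everything else in the bootstrap is either a numerical verification or a direct appeal to existing machinery.
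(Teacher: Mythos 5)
Your overall strategy matches the paper's: use a sharpness argument plus Theorem~\ref{Ch2:thm:Alexuniqueness} to recover $g(K)=g(T_{r,s})$ and $\Delta_K=\Delta_{T_{r,s}}$, then feed these exact equalities into the Ni--Zhang hyperbolic/satellite dichotomy (Proposition~\ref{Ch2:prop:hyperbolicbound} and Lemma~\ref{Ch2:lem:satellitebound}). However, the middle section of your argument rests on a misreading of the hypotheses of Theorem~\ref{Ch2:thm:Alexuniqueness}, and the workaround you propose for that misread hypothesis has a genuine unresolved gap.

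You assert that before invoking Theorem~\ref{Ch2:thm:Alexuniqueness} one must first ``establish $g(K)=g(T_{r,s})$'' because the hypothesis $p/q\ge 4g(K)+6$ appears to concern the \emph{unknown} knot. But the hypotheses of Theorem~\ref{Ch2:thm:Alexuniqueness} are symmetric in the two knots except for this one bound, so it suffices to verify it for whichever knot's genus you already know. This is exactly what Lemma~\ref{Ch2:lem:torusgenus} does: the hypothesis is stated as $p/q\ge 4g(T_{r,s})+6$, the genus of the known torus knot, and the conclusion $g(K)=g(T_{r,s})$ and $\Delta_K=\Delta_{T_{r,s}}$ drops out directly. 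Your arithmetic verification $\tfrac{43}{4}(2g-1)\ge 4g+6$ is correct and is all that is needed, applied to $g=g(T_{r,s})$; the preliminary genus-matching stage is unnecessary.

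Moreover, the route you propose for that preliminary stage --- matching multisets of $d$-invariants of the surgered manifold via the Ni--Wu formula and deducing $\{V_k(K)\}=\{V_k(T_{r,s})\}$ --- does not go through as stated. A homeomorphism $S^3_{p/q}(K)\cong S^3_{p/q}(T_{r,s})$ identifies the $d$-invariant \emph{functions} up to some bijection of $\spinc$-structures, but that bijection need not respect the Ozsv\'ath--Szab\'o labelling by $\Z/p\Z$ used in Proposition~\ref{intro:prop:NiWuformula}. Knowing only that the multisets $\{d_U(i)-2V_{g(i)}(K)\}_i$ and $\{d_U(i)-2V_{g(i)}(T_{r,s})\}_i$ agree does not let you cancel the $d_U(i)$ pointwise, so it does not immediately yield multiset equality of the $V_k$'s, let alone equality of the sequences. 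You flag this as the ``main obstacle'' but do not resolve it; the paper avoids it entirely by exploiting the symmetry noted above. Dropping your preliminary step and applying Theorem~\ref{Ch2:thm:Alexuniqueness} directly, as in Lemma~\ref{Ch2:lem:torusgenus}, gives a correct proof with the same structure you intended.
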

\begin{rem}
Theorem~\ref{Ch2:thm:Alexuniqueness} is not the best possible result. For example, the genus bound can be improved to $p/q\geq 4g(K)+4$ \cite{mccoy2014sharp}. We have opted for the weaker statement here because it allows the proof to be simplified but leaves the conclusions of the main application, Theorem~\ref{Ch2:thm:charslopes}, unaffected.
\end{rem}

\section{Recovering the Alexander polynomial}
The proof of Theorem~\ref{Ch2:thm:Alexuniqueness} occupies this section. The key ingredient is the following lemma which provides conditions under which the changemaker structure of a lattice is unique.
\begin{lem}\label{Ch2:lem:CMuniqueness}
Let $L=\langle w_0, \dots , w_l \rangle^{\bot}\subseteq \mathbb{Z}^{N}$ be a $p/q$-changemaker lattice with stable coefficients $(\rho_1, \dots, \rho_t)$, where $2\leq \rho_1\leq \dots \leq \rho_t$.  Let $\phi:L\rightarrow \mathbb{Z}^N$ be an embedding such that
\[\phi(L)=\langle w_0', \dots , w_l' \rangle^{\bot} \subseteq \mathbb{Z}^N\]
is a $p/q$-changemaker lattice. If $p/q \geq 2 + 2\rho_t+ \sum_{i=1}^t \rho_i^2$, then $w_0$ and $w_0'$ have the same changemaker coefficients.
\end{lem}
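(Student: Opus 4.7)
The plan is to apply Theorem~\ref{Ch1:thm:calculaterho} to both $w_0$ and $w_0'$, which reduces the lemma to showing that the two sequences $(V_k)_{0\leq k\leq n/2}$ and $(V_k')_{0\leq k\leq n/2}$ defined via Equation~\eqref{Ch1:eq:w0formula2} coincide. Once this is done, Theorem~\ref{Ch1:thm:calculaterho} forces the stable coefficients of $w_0$ and $w_0'$ to agree, and then the non-stable (i.e., equal-to-$1$) changemaker coefficients agree automatically, since by the remark following Definition~\ref{Intro:def:CMlattice} they are determined by $p/q$ and the stable coefficients.

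To show $V_k=V_k'$, I would extract $(V_k)$ as an intrinsic invariant of the pair $(L\subseteq \mathbb{Z}^N)$. Writing $c\in \Char(\mathbb{Z}^N)$ rationally as $c=c_L+c_{L^\perp}$ via the orthogonal splitting $\mathbb{Z}^N\otimes \mathbb{Q}=(L\otimes \mathbb{Q})\oplus (L^\perp\otimes \mathbb{Q})$, one has $\|c\|^2=\|c_L\|^2+\|c_{L^\perp}\|^2$ and $c\cdot w_0=c_{L^\perp}\cdot w_0$. The minimization in \eqref{Ch1:eq:w0formula2} then decouples into a constraint $c_{L^\perp}\cdot w_0=n-2k$ on the $L^\perp$-component and a minimization of the $L$-component, coupled only through the finite gluing group $\mathbb{Z}^N/(L\oplus L^\perp)$. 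Next I would use the hypothesis $p/q\geq 2+2\rho_t+\sum \rho_i^2$ to pin down $w_0$ intrinsically in $L^\perp$: since $\|w_0\|^2=a_0=\lceil p/q\rceil$ is then much larger than the other diagonal entries $a_j$ (which are bounded independently of $p/q$ given the continued-fraction structure), a direct expansion in the basis $w_0,\dots,w_l$ of the tridiagonal lattice $L^\perp$ shows that the only primitive vectors in $L^\perp$ of norm $a_0$ are $\pm w_0$, and the analogous statement holds for $w_0'$ in $\phi(L)^\perp$.

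Finally, because $\phi$ is an isometry between primitive sublattices of the unimodular ambient $\mathbb{Z}^N$, it induces an isomorphism of discriminant forms $\mathbb{Z}^N/L\cong \mathbb{Z}^N/\phi(L)$ and, through this, an isometry $\psi:L^\perp\to\phi(L)^\perp$ matching the gluings to $\mathbb{Z}^N$. The intrinsic characterization of the previous paragraph then forces $\psi(w_0)=\pm w_0'$, so $\psi$ carries the decoupled minimization problem defining $V_k$ to the one defining $V_k'$, giving $V_k=V_k'$ for all $0\leq k\leq n/2$ and completing the reduction. The hard part will be the intrinsic characterization of $w_0$: one needs a careful norm estimate in $L^\perp$ to rule out exceptional primitive norm-$a_0$ vectors distinct from $\pm w_0$, and the threshold $p/q\geq 2+2\rho_t+\sum \rho_i^2$ should translate, via expansion in the basis $(w_0,\dots,w_l)$, into a strict inequality forcing any such candidate to have $w_0$-coefficient $\pm 1$ and all other coefficients zero.
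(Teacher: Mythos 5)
Your opening reduction via Theorem~\ref{Ch1:thm:calculaterho} is reasonable, but the plan breaks down at the two subsequent steps, and neither can be repaired. First, the intrinsic characterization you hope for is false: $\pm w_0$ need not be the only primitive norm-$a_0$ vectors in $L^\perp=\langle w_0,\dots,w_l\rangle$, because the entries $a_1,\dots,a_l$ come from the continued fraction of $q/r$ and are not controlled by the size of $p/q$. Take $p/q=9999/100$ (so $a_0=a_1=100$, $l=1$) and a changemaker lattice with a single stable coefficient $\rho_1=2$; the hypothesis $p/q\geq 2+2\rho_t+\sum\rho_i^2=10$ holds easily, yet $w_0$ and $w_1$ are both primitive of norm $a_0$ in $L^\perp$. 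No ``careful norm estimate'' can rule out such coincidences. Second, the step where $\phi$ ``induces, through the discriminant form, an isometry $\psi\colon L^\perp\to\phi(L)^\perp$ matching the gluings'' is the real gap: $\phi$ does induce an isomorphism of discriminant forms, and $L^\perp$ and $\phi(L)^\perp$ are abstractly isometric since both have Gram matrix $M$, but an isomorphism of discriminant forms need not lift to a lattice isometry, and even if you choose some isometry $L^\perp\cong\phi(L)^\perp$ there is no reason it respects the gluing to $\mathbb{Z}^N$ induced by $\phi$ or sends $w_0$ to $\pm w_0'$. A $\psi$ with those properties would extend, together with $\phi$, to an isometry of $\mathbb{Z}^N$ carrying $L$ to $\phi(L)$---which is essentially the conclusion you are trying to prove, not a tool for proving it. Your $V_k$ genuinely depend on the embedding $L\hookrightarrow\mathbb{Z}^N$, and $\phi$ only preserves $L$.

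The paper avoids all of this by arguing directly inside $\mathbb{Z}^N$. It constructs a spanning set of low-norm vectors $v_2,\dots,v_{m+t}\in L$ (the differences $-e_k+e_{k-1}$ and the vectors $-e_k+e_1+\dots+e_{\rho_{k-m}}$), pushes them through $\phi$, and uses their pairings together with condition IV in the definition of a changemaker lattice for $\phi(L)$ to force the images $u_k=\phi(v_k)$, and then $w_0'$, into the expected shape with respect to a new orthonormal basis. The hypothesis $p/q\geq 2+2\rho_t+\sum_i\rho_i^2$ has a concrete job there: it guarantees $w_0$ has at least $m\geq 2+2\rho_t$ trailing coefficients equal to $1$, which is exactly what makes the inequality $\rho_k+1<m-\rho_k$ hold and pins down $u_{m+k}\cdot f_1=1$. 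Your outline gives this bound no role at all, which is a further indication that the key mechanism has been lost; I would redo this as a direct chase of $\phi$ on an explicit spanning set.
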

\begin{proof}
First note that if the continued fraction expansion for $p/q$ is $p/q=[a_0,\dots, a_l]^-$, then $N\leq -l+ \sum_{i=0}^l a_i$, with equality if and only if the set of stable coefficients for $L$ is empty. Thus we see that the lemma is true when $L$ has no stable coefficients simply by looking at the value of $N$.

From now on we will assume $L$ has non-trivial stable coefficients. There is a choice of orthonormal basis for $\Z^N$ such that $w_0$ takes the form
\[w_0=
\begin{cases}
\rho_t e_{m+t} + \dots + \rho_1 e_{m+1}+ e_m + \dotsb + e_1      &\text{if $q=1$} \\
\rho_t e_{m+t} + \dots + \rho_1 e_{m+1}+ e_m + \dotsb + e_1 +e_0 &\text{if } q>1,
\end{cases}
\]
where $m\geq 2+2\rho_t\geq 6$ and $\norm{w_0}=n=\lceil p/q \rceil$. It follows that $L$ contains vectors $v_2,\dots, v_{m+t}$ defined by
\[v_k=
\begin{cases}
-e_k + e_{k-1}      &\text{for } 2\leq k\leq m, \\
-e_k + e_1+ \dots + e_{\rho_{k}} &\text{for } m+1 \leq k \leq m+t.
\end{cases}
\]

We will consider the image of these vectors under $\phi$. For $k$ in the range $2\leq k \leq m+t$, let $u_k$ denote the vector $u_k= \phi(v_k)$. For $j$ and $k$ satisfying $2\leq k<j \leq m$, we have $\norm{v_k}=\norm{v_j}=2$ and
\[
v_k\cdot v_j=
\begin{cases}
-1 &\text{if } j=k+1\\
0  &\text{otherwise.}
\end{cases}
\]

It is clear that we may choose orthogonal unit vectors $f_1,f_2,f_3$ such that $u_2=-f_2+f_1$ and $u_3=-f_3+f_2$.

There are two possibilities for $u_4$. We can either have $u_4=-f_2-f_1$ or there is a unit vector $f_4\notin \{f_1,f_2,f_3\}$ such that $u_4=-f_4+f_3$. However, if we have $u_4=-f_2-f_1$, then there is no vector $x \in \mathbb{Z}^N$ with $x\cdot f_1 \ne 0$ and $x\cdot u_4=x \cdot u_2=0$, contradicting the existence of $w_k'$ with $w_k'\cdot f_1 \ne 0$. Thus $u_4$ must take the form $u_4=-f_4+f_3$. Continuing in this way, it follows that there is a choice of distinct orthogonal unit vectors $f_1, \dotsc, f_m$ in $\Z^N$, such that $u_k=-f_k+f_{k-1}$ for each $k$ in the range  $2\leq k \leq m$.

Now we determine the form that $u_{m+k}$ must take. Consider the quantity $\lambda_k=u_{m+k}\cdot f_1$. For $2\leq i \leq m$ we have $v_i\cdot v_{m+k}=0$ for $i\ne  \rho_{k}+1$ and $v_k\cdot v_{\rho_{k+1}}=1$. This shows that we have
\[u_{m+k}\cdot f_i =
\begin{cases}
\lambda_k   &\text{for } 1\leq i\leq \rho_{1}\\
\lambda_k-1 &\text{for } i> \rho_{1}.
\end{cases}
\]
Thus by computing the norm of $u_{m+k}$, we obtain
\[\norm{u_{m+k}}=\norm{v_{m+k}}= \rho_k+1 \geq (\lambda_k-1)^2 (m-\rho_k) + \lambda_k^2 \rho_k.\]
Since $\rho_k+1<m-\rho_k$, it follows that $\lambda_k=1$.

Thus we see that $u_{m+k}$ takes the form
\[
u_{m+k}=-f_{m+k}+ f_1 + \dots + f_{\rho_{k}}
\]
for some choice of unit vector $f_{m+k} \notin \{\pm f_1, \dots, \pm f_m\}$. Since $u_{m+k}\cdot u_{m+l}= \min\{\rho_k,\rho_l\}$, it follows that $f_{m+k}\cdot f_{m+l}=0$ for $k\ne l$.

Let $x$ be a vector in the orthogonal complement of $\phi(L)$. Since $x$ must satisfy $u_k\cdot x=0$ for $2\leq k\leq m+t$, we have
\[
x\cdot f_k =
\begin{cases}
x\cdot f_1               &\text{for } 1\leq k\leq m \text{ and} \\
\rho_{k-m}(x\cdot f_1)   &\text{for } m+1 \leq k \leq m+t.
\end{cases}
\]
In particular, if $x\cdot f_1\ne 0$, then $|x\cdot f_{m+t}|>1$. This shows that if
\[\phi(L)=\langle w_0', \dots , w_l' \rangle^{\bot} \subseteq \mathbb{Z}^N\]
is a $p/q$-changemaker lattice, then $w_i'\cdot f_1= 0$ for $i\geq 1$ and $w_0'\cdot f_1\ne 0$.
As
\[m+\sum_{i=1}^t\rho_i^2+1 \geq \norm{w_0'}=n\geq (w_0'\cdot f_1)^2(m+\sum_{i=1}^t\rho_i^2),\]
it follows that $w_0'$ takes the form,
\[\pm w'_0=
\begin{cases}
\rho_t f_{m+t} + \dots + \rho_1 f_{m+1}+ f_m + \dots + f_1      &\text{if } q=1, \\
\rho_t f_{m+t} + \dots + \rho_1 f_{m+1}+ f_m + \dots + f_1 +f_0 &\text{if } q>1.
\end{cases}
\]
This shows that $w_0$ and $w_0'$ have the same changemaker coefficients, as required.
\end{proof}

There are examples of lattices admitting embeddings into $\mathbb{Z}^N$ as changemaker lattices in more than one way, we cannot prove unconditionally that the changemaker structure of a lattice is unique. \begin{example}\label{Ch2:exam:differentcoef}
There is an isomorphism of changemaker lattices:
\[\langle 4e_0+e_1+e_2+e_3+e_4+e_5\rangle^\bot\cong \langle 2e_0 + 2e_1+ 2e_2+2e_3+2e_4+e_5 \rangle^\bot \subseteq \mathbb{Z}^6.\]
This isomorphism can be seen by observing that both lattices admit a basis for which the bilinear form is given by the matrix
\[
  \begin{pmatrix}
   5   & -1  &      &    &     \\
   -1  & 2   & -1   &    &     \\
       & -1  & 2    & -1 &     \\
       &     & -1   & 2  &-1   \\
       &     &      & -1 & 2   \\
  \end{pmatrix}.
\]
This example is a consequence of the fact that $S^3_{21}(T_{5,4})\cong S^3_{21}(T_{11,2})\cong L(21,4)$.
\end{example}

\begin{proof}[Proof of Theorem~\ref{Ch2:thm:Alexuniqueness}]
Suppose that $Y \cong S^3_{p/q}(K)$ is an $L$-space bounding a sharp 4-manifold $X$ with intersection form $Q_X$. Let $n=\lceil p/q \rceil$. Theorem~\ref{intro:thm:CM} shows that there is an isomorphism
\[-Q_X \cong L \oplus \Z^S,\]
where
\[L=\langle w_0, \dots , w_l \rangle^\bot \subseteq \mathbb{Z}^N\]
is a $p/q$-changemaker lattice and the torsion coefficients of $\Delta_K(t)$ can be computed by the formula
\begin{equation}\label{Ch2:eq:tiK}
8t_i(K) = \min_{ \substack{ |c\cdot w_0| \equiv n - 2i \\ c \in \Char(\mathbb{Z}^{N})}} \norm{c} - N.
\end{equation}
If we write $w_0$ in the form $w_0=\rho_t e_{t+m} + \dots + \rho_1 e_{m+1}+e_m + \dots + e_1$, then Lemma~\ref{Ch1:lem:calcTi} shows that $g(K)$ can be computed by
\[2g(K)= \sum_{i=1}^t \rho_i(\rho_i-1).\]
Since $\rho_i \geq 2$ for all $i$, we have $\rho_i^2 \leq 2\rho_i(\rho_i -1)$. Thus we have
\begin{align}\begin{split}\label{Ch2:eq:genusbound}
\sum_{i=1}^t \rho_i^2 + 2\rho_t &\leq 2\sum_{i=1}^t \rho_i(\rho_i-1) - \rho_t^2 +4\rho_t\\
&=4g(K) -(\rho_t-2)^2 + 4 \leq 4g(K)+4.
\end{split}\end{align}
If $K'\subseteq S^3$ is another knot such that $Y\cong S^3_{p/q}(K')$, then there is also an isomorphism
\[-Q_X \cong L' \oplus \Z^{S'},\]
where
\[L'=\langle w'_0, \dots , w'_l \rangle^\bot\subseteq \mathbb{Z}^{N'},\]
is a $p/q$-changemaker lattice and the torsion coefficients of $\Delta_{K'}(t)$ satisfy the formula
\begin{equation}\label{Ch2:eq:tiK'}
8t_i(K') = \min_{ \substack{ c\cdot w'_0 = n- 2i \\ c \in \Char(\mathbb{Z}^{N'})}} \norm{c} - N'.
\end{equation}
As a changemaker lattice does not contain any vectors of norm one, we must have $S=S'$. It follows that we also have $N=N'$. This shows that $L$ and $L'$ are isomorphic changemaker lattices in $\Z^N$. Combining the inequality \eqref{Ch2:eq:genusbound} with the assumption $p/q\geq 4g(K)+6$ allows us to apply Lemma~\ref{Ch2:lem:CMuniqueness}. This shows $w_0$ and $w_0'$ have the same changemaker coefficients. Therefore, \eqref{Ch2:eq:tiK} and \eqref{Ch2:eq:tiK'} show that the torsion coefficients satisfy $t_i(K)=t_i(K')$ for all $i\geq 0$. As shown in Remark~\ref{Ch1:rem:torsiondeterminespoly}, the torsion coefficients of $K$ and $K'$ determine their Alexander polynomials, so we have $\Delta_{K'}(t)=\Delta_{K}(t)$ and $g(K)=g(K')$, as required.
\end{proof}

\section{Characterizing slopes}
In this section, we prove Theorem~\ref{Ch2:thm:charslopes}. Our proof follows the one given by Ni and Zhang. We obtain our improvement through the following lemma.
\begin{lem}\label{Ch2:lem:torusgenus}
For the torus knot $T_{r,s}$ with $r>s>1$, any knot $K\subseteq S^3$ satisfying
\[S^3_{p/q}(K)\cong S^3_{p/q}(T_{r,s}),\]
for some $p/q\geq 4g(T_{r,s})+6$, has genus $g(K)=g(T_{r,s})$ and Alexander polynomial $\Delta_{K}(t)=\Delta_{T_{r,s}}(t)$.
\end{lem}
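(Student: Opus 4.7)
The plan is to apply Theorem~\ref{Ch2:thm:Alexuniqueness} with the labels of the two knots exchanged: take the ``first'' knot in its hypothesis to be $T_{r,s}$ and the ``second'' to be $K$.

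First, I would verify that $Y:=S^3_{p/q}(T_{r,s})\cong S^3_{p/q}(K)$ is an $L$-space bounding a sharp manifold. As noted in the excerpt, this holds for $T_{r,s}$ whenever $p/q\geq rs-1$; since the hypothesis gives $p/q\geq 4g(T_{r,s})+6 = 2(r-1)(s-1)+6$, the required inequality $2(r-1)(s-1)+6\geq rs-1$ rearranges to $(r-2)(s-2)\geq -5$, which is trivial for $r,s\geq 2$.

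Next, I would re-examine the proof of Theorem~\ref{Ch2:thm:Alexuniqueness}: the hypothesis $p/q\geq 4g(K)+6$ enters only through the chain \eqref{Ch2:eq:genusbound}, which is used to feed $p/q\geq \sum \rho_i^2+2\rho_t+2$ into Lemma~\ref{Ch2:lem:CMuniqueness}. But \eqref{Ch2:eq:genusbound} holds for the changemaker data of either knot producing $Y$, so the same deduction goes through if we instead bound the stable coefficients of $T_{r,s}$ using its (known) genus $g(T_{r,s})$. Under this relabelling the only nontrivial hypothesis is $p/q\geq 4g(T_{r,s})+6$, which is precisely what we assume. The conclusion then reads $\Delta_K(t)=\Delta_{T_{r,s}}(t)$ and $g(K)=g(T_{r,s})$.

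The principal subtlety is that there is no a priori bound on $g(K)$ strong enough to apply Theorem~\ref{Ch2:thm:Alexuniqueness} with $K$ in the role of the ``first'' knot: the only estimate available is the changemaker inequality $p/q>2\nu^{+}(K)=2g(K)$ (using that $K$ is an $L$-space knot, since $S^3_{p/q}(K)$ is an $L$-space with $p/q>0$), which gives $g(K)<p/(2q)$ and is insufficient to imply $p/q\geq 4g(K)+6$. Exchanging labels exploits the implicit symmetry of the proof of Theorem~\ref{Ch2:thm:Alexuniqueness} and sidesteps this obstacle entirely.
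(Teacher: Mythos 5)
Your proof is correct and is essentially the same as the paper's: verify that $S^3_{p/q}(T_{r,s})$ is an $L$-space bounding a sharp manifold for $p/q\geq rs-1$ (via Theorem~\ref{Ch1:thm:sharpextension}), note that $4g(T_{r,s})+6 \geq rs-1$ always holds for $r>s>1$, and then apply Theorem~\ref{Ch2:thm:Alexuniqueness} with $T_{r,s}$ in the distinguished role so that its genus controls the hypothesis. One small remark: there is no need to re-open the proof of Theorem~\ref{Ch2:thm:Alexuniqueness}, since its statement already permits putting $T_{r,s}$ in the role whose genus appears in the hypothesis $p/q\geq 4g(K)+6$, and the conclusion is symmetric in the two knots; your "relabelling" is just a direct application of the theorem as written.
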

\begin{proof}
Since $S^3_{rs-1}(T_{r,s})$ is a lens space \cite{Moser71elementary}, the work of Ozsv{\'a}th and Szab{\'o} have shows that it is an $L$-space bounding a sharp manifold \cite{Ozsvath03plumbed, ozsvath2005heegaard}. Using Theorem~\ref{Ch1:thm:sharpextension}, this shows that $S^3_{p/q}(T_{r,s})$ is an $L$-space bounding a sharp manifold for all $p/q\geq rs-1$. As $4g(T_{r,s})+6>rs-1$, we may apply Theorem~\ref{Ch2:thm:Alexuniqueness} to get the desired conclusion.
\end{proof}
\begin{rem}
It is possible to exhibit a sharp manifold bounding $S^3_{p/q}(T_{r,s})$ explicitly. Since the manifold $S^3_{p/q}(T_{r,s})$ is a Seifert fibred space with base orbifold $S^2$ with at most 3 exceptional fibres \cite{Moser71elementary}, it bounds a plumbed 4-manifold. For $p/q\geq rs-1$, one can find such a plumbing which is negative-definite and hence sharp \cite{Ozsvath03plumbed}.
\end{rem}

Using results of Agol, Lackenby, Cao and Meyerhoff, Ni and Zhang obtain a restriction on exceptional slopes of a hyperbolic knot \cite{Agol00BoundsI, Lackenby03Exceptional, Cao01cusped}.
\begin{prop}[Lemma~2.2, \cite{Ni14characterizing}]\label{Ch2:prop:hyperbolicbound}
Let $K\subseteq S^3$ be a hyperbolic knot. If
\[|p|\geq 10.75(2g(K)-1),\]
then $S^3_{p/q}(K)$ is hyperbolic.
\end{prop}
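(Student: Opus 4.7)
The plan is to invoke the $6$-theorem of Agol and Lackenby after producing a lower bound on the length of the $p/q$ slope on the maximal horocyclic cross-section of the cusp of $S^3\setminus\nu K$; the ingredients from Cao and Meyerhoff enter as a lower bound on the area of that cross-section, while the Seifert genus enters through an upper bound on the length of the longitude. Write $M=S^3\setminus\nu K$, let $C\subset M$ be the maximal embedded horoball cusp neighbourhood, and write $T=\partial C$ for its Euclidean torus boundary, with area $A$. Let $\mu,\lambda\subset T$ denote the meridian and the null-homologous longitude of $K$, and let $\ell(\cdot)$ be Euclidean length on $T$. Recall that the $6$-theorem asserts that Dehn filling $M$ along any slope $\alpha\subset T$ with $\ell(\alpha)>6$ produces a hyperbolic $3$-manifold, so it is enough to prove $\ell(p\mu+q\lambda)>6$ whenever $|p|\geq 10.75(2g(K)-1)$.

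Two inputs handle most of this in a formal way. First, the Cao--Meyerhoff bound on minimal-volume orientable cusped hyperbolic $3$-manifolds yields an explicit lower bound $A\geq A_0$ with $A_0\approx 3.35$. Second, a purely Euclidean calculation on $T$, viewed as the quotient of $\mathbb{R}^2$ by the lattice spanned by $\mu$ and $\lambda$, gives
\[
\ell(p\mu+q\lambda)\ \geq\ \frac{|p|\,A}{\ell(\lambda)},
\]
obtained by minimising $\ell(p\mu+q\lambda)^2$ over real $q$ with $p$ fixed; the minimum equals $p^2A^2/\ell(\lambda)^2$, since the minimising configuration is the component of $p\mu$ perpendicular to $\lambda$. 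Combining the two gives $\ell(p\mu+q\lambda)\geq |p|A_0/\ell(\lambda)$, and reduces the proposition to an upper bound on $\ell(\lambda)$ linear in $g(K)$.

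The hard part, which I expect to carry all of the genuine geometry, is establishing the bound
\[
\ell(\lambda)\ \leq\ 6(2g(K)-1).
\]
My approach would be to promote a minimal-genus Seifert surface $\Sigma$ for $K$ to a simplicial-hyperbolic (or pleated) map $f\colon\Sigma\to M$ whose boundary wraps once around $\lambda$ on $T$; Gauss--Bonnet then bounds the hyperbolic area of $f(\Sigma)$ by $-2\pi\chi(\Sigma)=2\pi(2g(K)-1)$, and a standard thick/thin decomposition comparing the intrinsic geometry of $f(\Sigma)$ near $\partial\Sigma$ with the cusp geometry of $C$ translates this area bound into the claimed length bound on $\lambda$, with the factor of $6$ arising from the same cusp-packing constants that underlie the $6$-theorem itself. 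Granting this step, the computation
\[
\ell(p\mu+q\lambda)\ \geq\ \frac{|p|\,A_0}{6(2g(K)-1)}\ \geq\ \frac{10.75\cdot 3.35}{6}\ >\ 6
\]
closes the argument via the $6$-theorem, so $S^3_{p/q}(K)$ is hyperbolic. Everything outside the longitude-length estimate is routine once one lines up the Cao--Meyerhoff cusp bound with the Agol--Lackenby filling theorem; extracting the sharp constant $10.75$ amounts to verifying that the various universal constants assemble to the claimed inequality, which is what fixes the precise numerical coefficient in the statement.
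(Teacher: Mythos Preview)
The thesis does not supply its own proof of this proposition; it is quoted directly from Ni--Zhang (their Lemma~2.2), with only a pointer to the underlying inputs of Agol, Lackenby, and Cao--Meyerhoff. Your sketch correctly reconstructs the argument those authors use: the $6$-theorem reduces the problem to a lower bound on slope length, the elementary Euclidean inequality $\ell(p\mu+q\lambda)\geq |p|A/\ell(\lambda)$ splits this into a cusp-area bound and a longitude-length bound, Cao--Meyerhoff supply $A\geq 3.35$, and the pleated Seifert surface argument yields $\ell(\lambda)\leq 6(2g(K)-1)$. The numerics then assemble exactly as you indicate, with $10.75\cdot 3.35/6>6$.

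One remark on the step you flag as the hard part: the bound $\ell(\lambda)\leq 6|\chi(\Sigma)|$ is indeed the substantive geometric input, and your outline (pleated representative, Gauss--Bonnet, comparison of surface cusps with the ambient horocusp) points in the right direction, though the precise extraction of the constant $6$ requires a packing argument for horoball preimages in the surface rather than a direct area-versus-length comparison. This is carried out in Agol's and Lackenby's papers, and you are right that the same circle of ideas underlies both this estimate and the $6$-theorem itself. So your proposal is essentially correct and aligned with the cited proof; there is nothing further in the thesis to compare against.
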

Combining this with work of Gabai, they show that it is not possible for surgery of sufficiently large slope on a satellite knot and a torus knot to yield the same manifold.
\begin{lem}\label{Ch2:lem:satellitebound}
If $K$ is a knot such that $S^3_{p/q}(K)\cong S^3_{p/q}(T_{r,s})$ for $r>s>1$ and
\[p/q\geq 10.75(2g(T_{r,s})-1),\] then $K$ is not a satellite.
\end{lem}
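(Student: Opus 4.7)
The plan is to argue by contradiction, following the structure of Ni and Zhang's original proof. Suppose $K$ is a satellite knot, so that there is a non-trivial companion essential torus $T\subseteq S^3\setminus\nu(K)$ bounding a solid torus $V$ with $K$ contained non-trivially in its interior; let $J$ denote the companion knot (the core of the complementary solid torus) and let $w\geq 0$ be the winding number of $K$ in $V$.

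The first step is to apply Lemma~\ref{Ch2:lem:torusgenus}. For $r>s>1$ we have $g(T_{r,s})=(r-1)(s-1)/2\geq 1$, and an easy estimate gives $10.75(2g(T_{r,s})-1)\geq 4g(T_{r,s})+6$ for every such $g(T_{r,s})$ (with the gap growing linearly). Hence the genus hypothesis in Lemma~\ref{Ch2:lem:torusgenus} is satisfied and we conclude $g(K)=g(T_{r,s})$ and $\Delta_K(t)=\Delta_{T_{r,s}}(t)$.

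Next I would exploit the Seifert fibered structure of $S^3_{p/q}(T_{r,s})$. By Moser's theorem it is a small Seifert fibered space with base orbifold $S^2(r,s,|p-qrs|)$. Any Seifert fibered space with base $S^2$ and at most three exceptional fibres is atoroidal, so $S^3_{p/q}(T_{r,s})$ contains no essential torus. If the companion torus $T$ remained essential in $S^3_{p/q}(K)$, this would immediately contradict $S^3_{p/q}(K)\cong S^3_{p/q}(T_{r,s})$. So $T$ must compress after the Dehn filling; by a standard result (going back to Gabai and Scharlemann on Dehn filling of knot manifolds with an essential torus), this forces the pattern of $K$ in $V$ to be a torus-knot pattern, i.e.\ $K$ is the $(a,b)$-cable of $J$ for some coprime integers with $b\geq 2$.

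In the cable case I would invoke the standard decomposition of $S^3_{p/q}(K)$ as a cable space glued to $S^3_{p/(qb^2)}(J)$ (for $p/q\neq ab$, which is the reducible case and can be excluded separately). Combining Schubert's genus formula
\[
g(K)=b\,g(J)+\tfrac{(a-1)(b-1)}{2}=g(T_{r,s})
\]
with the rigidity imposed by requiring the glued manifold to be atoroidal and Seifert fibered with base $S^2(r,s,|p-qrs|)$ forces $J$ to be a torus knot and pins down the slopes $(a,b)$ and $p/(qb^2)$ via Moser's classification. The final arithmetic check, using the large-slope hypothesis $p/q\geq \tfrac{43}{4}(2g(T_{r,s})-1)$ together with the explicit genus and slope constraints, will rule out every remaining possibility, contradicting the assumption that $K$ is a satellite knot.

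The main obstacle will be the bookkeeping in the last step: eliminating the cable-of-torus-knot configurations by showing that no combination of cabling parameters $(a,b)$ and companion torus knot $J=T_{r',s'}$ can simultaneously match the Seifert invariants of $S^3_{p/q}(T_{r,s})$ and the genus constraint $g(K)=g(T_{r,s})$ for slopes as large as in the hypothesis. The genus bound from Lemma~\ref{Ch2:lem:torusgenus} does most of the work, but several sub-cases (in particular, $J$ being the unknot, in which case $K$ is itself a torus knot and must be compared directly to $T_{r,s}$) will need to be handled individually.
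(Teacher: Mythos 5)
Your broad outline is close, but there is a genuine gap at the step where you conclude that $K$ must be a \emph{cable} of $J$. Once one knows the companion torus $T$ is compressed by the filling, the applicable result (Gabai's theorem on surgery on knots in solid tori) yields only that $K$ is a $0$-bridge or $1$-bridge braid in $V$; cables are the $0$-bridge case, but there are $1$-bridge braids that are not cables, and your argument cannot see them. The paper invokes exactly this braid conclusion, and the key consequence it actually uses is the surgery identity $S^3_{p/q}(K)\cong S^3_{p/(qw^2)}(K')$, which holds for any braid pattern of winding number $w>1$, not only for cables. Because of this gap, Schubert's genus formula for cables does not apply in general, and your proposed route through it breaks down in the $1$-bridge case. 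The paper replaces it with the satellite multiplicativity of the Alexander polynomial, $\Delta_K(t)=\Delta_P(t)\,\Delta_{K'}(t^w)$, which together with $\Delta_K=\Delta_{T_{r,s}}$ (from Lemma~\ref{Ch2:lem:torusgenus}) gives $g(K')<g(T_{r,s})$ without any assumption on the pattern beyond winding number $w>1$.

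Two smaller but real issues: you should take the companion torus to be innermost so that $K'$ is not itself a satellite (the paper does this explicitly, then invokes Proposition~\ref{Ch2:prop:hyperbolicbound} to rule out $K'$ hyperbolic, leaving $K'$ a torus knot); and your ``final arithmetic check'' is left entirely unspecified, whereas it carries real content in the paper. Concretely, after normalizing so that $K'=T_{r,n}$ (matching one Seifert multiplicity), the bound $g(K')<g(T_{r,s})$ forces $n<s$, hence $n\neq s$, so comparing the remaining exceptional-fiber multiplicities of $S^3_{p/q}(T_{r,s})$ and $S^3_{p/(qw^2)}(T_{r,n})$ forces $n=p-qrs$; the large-slope hypothesis then gives $n\geq\max\{r,s\}$, a contradiction. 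This is the part your sketch needs to supply, and it will not reduce to a check on cabling parameters alone.
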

\begin{proof}
If $K$ is a satellite knot, then let $R\subseteq S^3 \setminus K$ be an incompressible torus. This bounds a solid torus $V\subseteq S^3$ which contains $K$. Let $K'$ be the core of the solid torus $V$. By choosing $R$ to be ``innermost'', we may assume that $K'$ is not a satellite. This means that $K'$ is either a torus knot or it is hyperbolic \cite{Thurston823DKleinanGroups}. Since $S^3_{p/q}(T_{r,s})$ contains no incompressible tori and is irreducible, it follows from the work of Gabai that $V_{p/q}(K)$ is again a solid torus and $K$ is either a 1-bridge knot or a torus knot in $V$ \cite{gabai89solidtori}. In either case, this is a braid in $V$ and we have
$S_{p/q}^3(K)\cong S_{p/q'}^3(K')$ where $q'=qw^2$ and $w>1$ is the winding number of $K$ in $V$.

Since $p\geq 10.75(2g(K)-1)$, Proposition~\ref{Ch2:prop:hyperbolicbound} shows that $K'$ cannot be hyperbolic. Thus we may assume that $K'$ is a torus knot, say $K'=T_{m,n}$. Since $S^3_{p/q}(T_{r,s})$ is an $L$-space and $p/q'>0$ we have $m,n>1$. The manifold $S_{p/q}^3(T_{r,s})\cong S_{p/q'}^3(T_{m,n})$ is Seifert fibred over $S^2$ with exceptional fibres of order $\{r,s,p-qrs\}= \{m,n, |p-q'mn|\}$. Hence we can assume $m=r$. By Lemma~\ref{Ch2:lem:torusgenus}, we have $\Delta_{T_{r,s}}(t)=\Delta_K(t)$. However, since $K$ is a satellite, its Alexander polynomial takes the form $\Delta_K(t)=\Delta_C(t)\Delta_{K'}(t^w)$, where $C$ is the companion knot of $K$. In particular, we have $g(K')<g(T_{r,s})$ and, consequently, $n<s$. Comparing the orders of the exceptional fibres again, this implies that $n=p-qrs$. However, we have
\begin{align*}
n=p-rsq&\geq 9.75q(rs-r-s)-q(r+s)\\
     &\geq 9.75(\max\{r,s\}-2) -(2\max\{r,s\}-1)\\
     &= 7.75\max\{r,s\} -18.5\\
     &\geq \max\{r,s\},
\end{align*}
where the last inequality holds because we have $\max\{r,s\}\geq 3$. This is a contradiction and shows that $K'$ cannot be a torus knot. Thus we see that $K$ cannot be a satellite knot.
\end{proof}

\begin{proof}[Proof of Theorem~\ref{Ch2:thm:charslopes}]
Suppose that $K$ is a knot in $S^3$ with $Y\cong S^3_{p/q}(K) \cong S^3_{p/q}(T_{r,s})$ for $p/q\geq 10.75(rs-r-s)$. Lemma~\ref{Ch2:lem:torusgenus} shows that $g(K)=g(T_{r,s})$ and $\Delta_K(t)=\Delta_{T_{r,s}}(t)$. Since $Y$ is not hyperbolic, Proposition~\ref{Ch2:prop:hyperbolicbound} shows that $K$ is not a hyperbolic knot. Lemma~\ref{Ch2:lem:satellitebound} shows that $K$ is not a satellite knot. Therefore, it follows that $K$ is a torus knot. Since two distinct torus knots have the same Alexander polynomial only if they are mirrors of one another, $K$ is either $T_{r,s}$ or $T_{-r,s}$. As $K$ admits positive $L$-space surgeries, it follows that $K=T_{r,s}$, as required.
\end{proof} 
\chapter{Lattices}\label{chap:lattices}
We develop the lattice theoretic results on which the proofs of the main results rely. The two types of lattice we will require are changemaker lattices, as defined in Definition~\ref{Intro:def:CMlattice}, and graph lattices, which are discussed in Sections~\ref{Ch3:sec:CMlattices} and \ref{Ch3:sec:graphlattices} respectively.

\paragraph{}For the purposes of this thesis, a {\em positive-definite integer lattice} $L$ is a finitely-generated free abelian group with a positive-definite, symmetric form $L\times L \rightarrow \Z$, given by ${(x,y)\mapsto x\cdot y}$. All lattices occurring in this paper will be of this form. We say that $L$ is {\em indecomposable} if it cannot be written as an orthogonal direct sum $L= L_1 \oplus L_2$ with $L_1,L_2$ non-zero sublattices.

For $v \in L$, its {\em norm} is the quantity $\norm{v}=v\cdot v$. We say that $v$ is {\em irreducible} if we cannot find $x,y \in L \setminus \{0\}$ such that $v=x+y$ and $x\cdot y \geq 0$.

\section{Changemaker lattices}\label{Ch3:sec:CMlattices}
In this section, we will explore the properties of changemaker lattices. The definition of a $p/q$-changemaker lattice was given in Definition~\ref{Intro:def:CMlattice}. For $p/q=[a_0, \dots, a_l]^-$, a $p/q$-changemaker lattice $L$ is an orthogonal complement
\[L=\langle w_0, \dots, w_l \rangle^\bot \subseteq \Z^N,\]
where the $w_i$ satisfy, among other properties,
\[
w_i\cdot w_j =
  \begin{cases}
   a_i      & \text{if } i=j \\
   -1       & \text{if } |i-j|=1\\
   0        & \text{if } |i-j|>1.
  \end{cases}
\]
In order to work effectively with changemaker lattices we will choose an orthonormal basis for $\Z^N$ as follows.

If $p/q \in \Z$, then we will choose an orthonormal basis $e_1, \dots, e_t$ such that
\[w_0= \sigma_1 e_1 + \dots + \sigma_t e_t,\]
where $1=\sigma_1 \leq \dots \leq \sigma_t$ are the changemaker coefficients of $L$ and $t=N$.

If $p/q \not\in \Z$, then we will choose an orthonormal basis $e_1, \dots, e_t, f_0, \dots, f_s$ such that
\[w_0=f_0 + \sigma_1 e_1 + \dots + \sigma_t e_t,\]
where $1=\sigma_1 \leq \dots \leq \sigma_t$ are the changemaker coefficients and for $k\geq 1$,
\[w_k=-e_{\alpha_{k-1}}+e_{\alpha_{k-1}+1}+ \dots + e_{\alpha_{k}},\]
where $\alpha_0=0$ and $\alpha_k=\sum_{i=1}^k(a_i-1)$ for $1\leq k\leq l$. In this case $N=s+t+1$ and $s=\alpha_l$.

\begin{defn}
We say that the changemaker coefficient $\sigma_k$ is {\em tight} if
\[\sigma_k=1+\sigma_1+ \dots + \sigma_{k-1}.\]
We say that $L$ is {\em tight} if $\sigma_k$ is tight for some $k>1$, otherwise we say that $L$ is {\em slack}.
\end{defn}
\begin{rem} Although the stable coefficients determine a $p/q$-changemaker lattice up to isomorphism, they are not invariants of the lattice. In Example~\ref{Ch2:exam:differentcoef}, we saw that the two 21-changemaker lattices with stable coefficients $(4)$ and $(2,2,2,2,2)$ are isomorphic.
\end{rem}

\subsection{Fractional and integral parts}\label{Ch3:sec:fracparts}
In this section, we study the properties of non-integer changemaker lattices. Throughout this section $L$ will be the $p/q$-changemaker lattice,
\[L=\langle w_0=e_0+\sigma_1 f_1 + \dots + \sigma_t f_t, w_1, \dots , w_l\rangle^\bot \subseteq \mathbb{Z}^{t+s+1}= \langle e_1, \dots, e_t, f_0, \dots, f_s \rangle,\]
where $p/q=n-r/q= [a_0,\dots, a_l]^-$ and $q/r= [a_1,\dots, a_l]^-$. We will also assume from now on that $p>q>1$. The case $p<q$ is a degenerate case, and, as the following remark justifies, ignoring it does not result in any serious loss of generality in what follows.

\begin{rem}\label{Ch3:rem:p<q}
If $p/q<1$, then $\norm{w_0}=1$ and $w_0=e_0$. Thus any $x \in L$ satisfies $x\cdot e_0=0$. In particular, we see that
\[L=\langle w_1+e_0, \dots, w_l\rangle^\bot \subseteq \Z^{s}=\langle e_1, \dots, e_s \rangle.\]
Since $[a_1-1,\dots, a_l]^-=(q-r)/r=p/(q-p)$, this shows that such an $L$ is isomorphic to the $\frac{p}{q-p}$-changemaker with trivial stable coefficients. By applying this observation several times if necessary, we see that $L$ is isomorphic to the $p/q'$-changemaker lattice with trivial stable coefficients, where $q'$ satisfies $1\leq q' <p$ and $q'\equiv q \bmod p$.
\end{rem}

It will be convenient to decompose $L$ into its fractional and integral parts.
\begin{defn}
The {\em fractional part} $L_F$ is defined to be the lattice
\[L_F=\langle w_1,\dots, w_l \rangle^\bot \cap \langle e_0, \dots , e_s \rangle.\]
The {\em integral part} $L_I$ is defined to be
\[L_I=\langle w_0 \rangle^\bot \cap \langle e_0, f_1, \dots , f_t \rangle.\]
\end{defn}

\begin{rem}Note that the following are true.
\begin{enumerate}
\item In general, neither $L_F$ nor $L_I$ is a sublattice of $L$.
\item The fractional part $L_F$ is isomorphic to the $q/r$-changemaker with trivial stable coefficients.
\item The integral part $L_I$ is isomorphic to the $n$-changemaker lattice with the same stable coefficients as $L$.
\item For any $x \in L$ there are unique vectors $x_I\in L_I$ and $x_F \in L_F$, such that $x=x_I + x_F - (x\cdot e_0)e_0$. It is a straightforward calculation that for any $x,y \in L$, we can compute their pairing via the formula:
\begin{equation}\label{Ch3:eq:splittingproduct}
x\cdot y = x_I \cdot y_I + x_F \cdot y_F- (x\cdot e_0) (y\cdot e_0).
\end{equation}
\end{enumerate}
\end{rem}

Next we will construct a basis for the fractional part of $L$. Recall the integers $\alpha_k=\sum_{i=1}^k(a_i-1)$ occurring in our choice of orthonormal basis. Consider the set $M=\{0, \dots, s \} \setminus \{\alpha_0, \dots , \alpha_{l}\}$. We can write $M=\{\beta_1,\dots, \beta_{m}\}$, where
\[\min M=\beta_1 < \dots < \beta_{m}=\max M\]
and $|M|=m$. Define also $\beta_{m+1}=s$. For $0\leq i \leq  m$ set
\[\mu_i=\begin{cases}
e_0+\dots + e_{\beta_1} &\text{if $i=0$}\\
-e_{\beta_k}+e_{\beta_k+1}+ \dots + e_{\beta_{k+1}} &\text{if $1\leq i \leq m$.}
\end{cases}
\]

First observe that the $\alpha_k$ and $\beta_k$ in the definition of the $w_i$ and $\mu_j$ are such that for all $i\geq 1$ and all $j\geq 0$,
\[
\{k\,|\, w_i\cdot e_k\ne 0\}  \cap \{k\,|\, \mu_j\cdot e_k\ne 0\}=
\begin{cases}
\emptyset &\text{if $\beta_{j+1}<\alpha_{i-1}$ or $\beta_{j-1}>\alpha_i$,}\\
\{\alpha_{i-1},\alpha_{i-1}+1\} &\text{if $\beta_{j}<\alpha_{i-1}<\beta_{j+1}$,}\\
\{\beta_{j},\beta_{j}+1\} &\text{if $\alpha_{i-1}<\beta_{j}<\alpha_{i}$}.\\
\end{cases}
\]
It follows that $w_i\cdot \mu_j=0$ for all $i\geq 1$ and all $j$, and hence that $\mu_i \in L_F$ for all $i$. Observe further that for all $j\geq 1$, we have $\mu_j \cdot w_0=0$. Thus it follows that $\mu_i\in L$ for all $i\geq 1$.

By construction, the $\mu_i$ satisfy
\[
\mu_i\cdot \mu_j=
\begin{cases}
\norm{\mu_j} &\text{if $i=j$}\\
-1 &\text{if $|i-j|=1$}\\
0 &\text{if $|i-j|>1$.}
\end{cases}
\]
\begin{example}\label{Ch3:example:3over5}
Since $107/5$ has continued fraction $107/5=[22,2,3]^-$ and $(1,2,4)$ satisfies the changemaker condition, one example of a $107/5$-changemaker lattice is
\[L=\langle 4f_3 + 2f_2+ f_1 + e_0, -e_0+e_1,-e_1+e_2+e_3 \rangle^\bot \subseteq \mathbb{Z}^{7}=\langle e_0, e_1, e_2, e_3, f_1, f_2,f_3 \rangle.\]
In this case, the fractional part is
\[L_F= \langle -e_0+e_1,-e_1+e_2+e_3\rangle^\bot \subseteq \langle e_0, e_1, e_2, e_3\rangle,\]
and the $\mu_i$ arising by the above construction are $\mu_0=e_0+e_1+e_2$ and $\mu_1=-e_2+e_3$.
The integral part of $L$ is the lattice
\[ L_I = \langle 4f_3 + 2f_2+ f_1 + e_0 \rangle^\bot \subseteq \langle e_0, f_1, f_2, f_3 \rangle.\]
\end{example}

Now we check that the $\mu_i$ form a basis for $L_F$.
\begin{lem}\label{Ch3:lem:LFbasis}
The set $\{\mu_0,\dots, \mu_m\}$ forms a basis for $L_F$.
\end{lem}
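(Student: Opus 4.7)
The plan is to show that $\{\mu_0, \dots, \mu_m\}$ is $\Z$-linearly independent and that the resulting sublattice has the same rank and the same discriminant as $L_F$, which will force equality.

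For linear independence, $\mu_0$ is the only $\mu_i$ with nonzero $e_0$-coefficient, and for each $j\geq 1$ the basis vector $e_{\beta_j}$ appears only in $\mu_{j-1}$ (with coefficient $+1$) and in $\mu_j$ (with coefficient $-1$). Hence any relation $\sum_{i=0}^m c_i\mu_i=0$ forces $c_0 = 0$ by reading off the $e_0$-coefficient, and then $c_i = c_{i-1}$ for $i=1,\dots,m$ by reading off the $e_{\beta_i}$-coefficient, so all $c_i$ vanish. This gives $\rk L' = m+1$, where $L' := \langle \mu_0, \dots, \mu_m\rangle_\Z$. On the other hand, $L_F$ is cut out from the rank-$(s+1)$ lattice $\langle e_0, \dots, e_s\rangle$ by the $l$ linearly independent conditions $w_i\cdot x = 0$, so $\rk L_F = s + 1 - l = m + 1$ as well (using $|M| = s - l$). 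Thus $L' \subseteq L_F$ is a finite-index inclusion, and equality reduces to the equality of discriminants.

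The discriminant of $L_F$ equals $q$ by the remark following Lemma~\ref{Ch1:lem:QXisom}, since $L_F$ is a $q/r$-changemaker lattice. The discriminant of $L'$ is $\det G$, where $G$ is the tridiagonal Gram matrix of the $\mu_i$ with diagonal $\norm{\mu_0}, \dots, \norm{\mu_m}$ and $-1$'s on the sub- and super-diagonals; cofactor expansion along the last row yields the recursion $D_k = \norm{\mu_k}\,D_{k-1} - D_{k-2}$, identifying $\det G$ with the numerator of the Hirzebruch--Jung continued fraction $[\norm{\mu_0}, \dots, \norm{\mu_m}]^-$. The main obstacle is therefore the combinatorial identity that this numerator equals $q$, the numerator of $[a_1, \dots, a_l]^-$. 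This is precisely Riemenschneider's point-rule duality for negative continued fractions, which preserves numerators; a self-contained proof proceeds by induction on $l$, peeling off $w_l$ (the cases $a_l=2$ and $a_l\geq 3$ each reducing the problem to a configuration of smaller complexity, with the two numerator recursions tracking one another at each step). With $\disc(L') = q = \disc(L_F)$ in hand, we conclude $L' = L_F$.
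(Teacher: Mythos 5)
Your approach is correct but genuinely different from the paper's. For the spanning step the paper argues directly: given nonzero $x\in L_F$, it inducts on $k=\max\{i \mid x\cdot e_i\ne 0\}$, noting that $k$ cannot be any $\alpha_j$ (else $x\cdot w_{j+1}\ne 0$), hence $k=\beta_{j+1}$ for some $j$, and subtracting $(x\cdot e_k)\mu_j$ strictly decreases the maximal support index. You instead show the $\mu_i$ span a finite-index sublattice of $L_F$ via the rank count $m+1=s+1-l$ and then force equality by matching discriminants. This is sound, but it imports two further ingredients the direct argument avoids: that $\disc(L_F)=q$ (either via the remark after Lemma~\ref{Ch1:lem:QXisom} applied to the $q/r$-changemaker presentation of $L_F$, or directly from primitivity of $\langle w_1,\dots,w_l\rangle$ in $\langle e_0,\dots,e_s\rangle$), and that the tridiagonal determinant of the $\norm{\mu_i}$ equals $q$. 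The latter is exactly the content of Lemma~\ref{Ch3:lem:contfractransform}, which the paper proves immediately \emph{after} this lemma by an induction on $q$ that peels off $a_1$ from the front, not $a_l$ as you suggest; its proof does not invoke Lemma~\ref{Ch3:lem:LFbasis}, so there is no circularity, but your route does place a load-bearing continued-fraction identity ahead of where the paper establishes it, and you only gesture at its proof. The payoff of your argument is that it makes the covolume coincidence between $L_F$ and $\langle\mu_0,\dots,\mu_m\rangle$ explicit, which the paper's induction obtains implicitly but never records.
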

\begin{proof}
For any integer combination $x=\sum a_i \mu_i$, with at least one $a_i$ non-zero, we can consider $m'=\max\{i\,|\, a_i\ne 0\}$. Since this satisfies $x\cdot e_{\beta_{m'+1}}=a_{m'}\ne 0$ showing that $x\ne 0$. Therefore, the $\mu_i$ are linearly independent.

It remains to check that the $\mu_i$ span $L_F$. Suppose that we have non-zero $x \in L_F$. We will prove by induction on $k=\max\{i \,|\, x\cdot e_i \ne 0\}$ that $x$ is in the span of the $\mu_i$. Since $x \cdot w_i=0$ for all $i\geq 1$, it follows that $k\ne \alpha_j$ for any $0\leq j<l$, otherwise we would have $x\cdot w_{j+1}=-x\cdot e_k\ne 0$. Thus $k=\beta_{j+1}$ for some $0\leq j\leq m$. If we consider $x'=x-(x\cdot e_k)\mu_j$, then this satisfies $x'=0$ or $\max\{i \,|\, x'\cdot e_i \ne 0\}<k$. In either case, we can assume that $x'$ is in the span of the $\mu_i$. It follows that $x$ is also in the span of the $\mu_i$.
\end{proof}

The following continued fraction calculation will be useful later.
\begin{lem}\label{Ch3:lem:contfractransform}
The $\mu_i$ satisfy the continued fraction
\[[\norm{\mu_0}, \dots , \norm{\mu_{m}}]^-=\frac{q}{q-r}.\]
\end{lem}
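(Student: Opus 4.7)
The plan is to express the continued fraction $[\norm{\mu_0}, \dots, \norm{\mu_m}]^-$ as a ratio of two determinants and identify each factor with a lattice discriminant. Let $M$ denote the Gram matrix of the basis $\{\mu_0, \dots, \mu_m\}$ of $L_F$, which by the preceding calculation is tridiagonal with $\norm{\mu_i}$ on the diagonal and $-1$ immediately off-diagonal. Standard continued-fraction/determinant arithmetic gives
\[[\norm{\mu_0}, \dots, \norm{\mu_m}]^- = \frac{\det M}{\det M^{00}},\]
where $M^{00}$ denotes the $(0,0)$-minor, so it will suffice to show $\det M = q$ and $\det M^{00} = q-r$.

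The first equality follows from Lemma~\ref{Ch3:lem:LFbasis}: since the $\mu_i$ form a basis, $\det M = \disc(L_F)$, and the remark after Lemma~\ref{Ch1:lem:QXisom} applied to the $q/r$-changemaker lattice $L_F$ gives $\disc(L_F) = q$. For the second equality I would compute $\|\bar e_0\|^2$ in two different ways, where $\bar e_0 \in L_F \otimes \Q$ denotes the orthogonal projection of $e_0$. On the one hand, the relations $\bar e_0 \cdot \mu_j = e_0 \cdot \mu_j = \delta_{j,0}$ identify $\bar e_0$ as the $\mu$-basis dual of $\mu_0$, which yields
\[\|\bar e_0\|^2 = (M^{-1})_{00} = \frac{\det M^{00}}{q}.\]
On the other hand, $e_0 - \bar e_0$ lies in the rational span of $w_1, \dots, w_l$; writing $e_0 - \bar e_0 = \sum d_i w_i$ and pairing with each $w_j$ gives $Nd = -(1, 0, \dots, 0)^T$, where $N$ is the tridiagonal Gram matrix of the $w_i$ (diagonal $a_1, \dots, a_l$, off-diagonal $-1$). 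Then
\[\|e_0 - \bar e_0\|^2 = -d_1 = (N^{-1})_{1,1} = \frac{\det N^{1,1}}{\det N} = \frac{r}{q},\]
where the final equality uses that $\det N$ and $\det N^{1,1}$ are the numerators of $[a_1, \dots, a_l]^- = q/r$ and $[a_2, \dots, a_l]^-$ respectively, which are $q$ and $r$ by standard continued-fraction arithmetic. The Pythagorean theorem then gives $\|\bar e_0\|^2 = 1 - r/q = (q-r)/q$, and comparing with the first expression yields $\det M^{00} = q - r$, as required.

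The main obstacle is the continued-fraction bookkeeping on the $w$-side, in particular the identification $\det N^{1,1} = r$. This uses $q/r = a_1 - 1/[a_2, \dots, a_l]^-$ together with the standard correspondence between negative-sign continued fractions and tridiagonal determinants; once these are in hand, the rest of the argument is a clean double computation of a single rational number via bilinear-form manipulations.
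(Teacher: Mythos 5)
Your proof is correct and takes a genuinely different route from the paper's. The paper argues by induction on $q$, splitting into the cases $a_1 = 2$ and $a_1 > 2$: in the first, passing from $[a_1,\dots,a_l]^-$ to $[a_2,\dots,a_l]^-$ decreases $\norm{\mu_0}$ by one while leaving the remaining norms unchanged; in the second, passing to $[a_1-1,\dots,a_l]^-$ removes $\mu_0$ (which has norm $2$) from the basis; each case is closed by a short continued-fraction identity. You instead compute the continued fraction in one shot as the determinant ratio $\det M/\det M^{00}$, identify $\det M = \disc(L_F) = q$ via the remark following Lemma~\ref{Ch1:lem:QXisom} together with the observation that $L_F$ is a $q/r$-changemaker lattice, and obtain $\det M^{00} = q-r$ from a Pythagorean decomposition of the unit vector $e_0$ into its orthogonal projections onto $L_F\otimes\Q$ and onto the span of $w_1,\dots,w_l$, with the latter norm $r/q$ pinned down by the same tridiagonal-determinant arithmetic applied to the Gram matrix of $w_1,\dots,w_l$. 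The determinant-ratio approach avoids the case analysis entirely and makes the underlying lattice structure visible, at the cost of importing the discriminant computation from an earlier chapter; the paper's induction is more elementary and self-contained, using only local continued-fraction manipulations. Both are valid.
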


\begin{proof}
We can prove this by induction on the size of $q$. If $q=2$, then $\mu_1=e_0+e_1$, so the continued fraction in question is simply $[2]^-=2$.

If $q/r=[a_1, \dots, a_l]^-$ with $a_1=2$, then consider
$q'/r'=[a_2, \dots, a_l]^-$. In this case, we have $\norm{\mu_0}>2$ and if we consider the fractional part of a $(n-\frac{r'}{q'})$-changemaker lattice, then it has a basis $\{\mu_0', \dots, \mu_{m'}'\}$, where $\norm{\mu_0'}=\norm{\mu_0}-1$ and $\norm{\mu_i'}=\norm{\mu_i}$ for $i>0$.
That is
\[\frac{q}{r}=[2, a_2, \dots, a_l]^-=\frac{2q'-r'}{q'}\]
and, using the inductive hypothesis,
\begin{align*}
[\norm{\mu_0}, \dots , \norm{\mu_{m}}]^-&=[1+\norm{\mu'_0}, \dots , \norm{\mu'_{m}}]^-\\
&=1+\frac{q'}{q'-r'}=\frac{2q'-r'}{q'-r'}\\
&=\frac{q}{q-r},
\end{align*}
as required.

If $q/r=[a_1, \dots, a_l]^-$ with $a_1>2$, then consider $q'/r'=[a_1-1, \dots, a_l]^-$. In this case, we have $\norm{\mu_0}=2$ and if we consider the fractional part of a $(n-\frac{r'}{q'})$-changemaker lattice, then it has a basis $\{\mu_0', \dots, \mu_{m-1}'\}$, where $\norm{\mu_i'}=\norm{\mu_{i+1}}$ for all $i$.
That is
\[\frac{q}{r}=1+\frac{q'}{r'}=\frac{r'+q'}{r'},\]
and, using the inductive hypothesis,
\begin{align*}
[\norm{\mu_0}, \dots , \norm{\mu_{m}}]^-&=[2,\norm{\mu'_0}, \dots , \norm{\mu'_{m-1}}]^-\\
&=2-\frac{q'-r'}{q'}=\frac{q'+r'}{q'}\\
&=\frac{q}{q-r},
\end{align*}
as required.
\end{proof}

\subsection{Standard bases}\label{Ch3:sec:standardbases}
In this section, we construct a basis for a $p/q$-changemaker lattice $L$. The changemaker conditions show that if $k\geq 2$ and $\sigma_k$ is not tight then there is a subset $A_k \subseteq\{1,\dots, k-2\}$ such that
$\sum_{i\in A_k}\sigma_i = \sigma_{k}-\sigma_{k-1}$. Choosing such an $A_k$ whenever $\sigma_k$ is not tight we can define $\nu_k\in L$ by
\[\nu_k=
\begin{cases}
-f_k + f_{k-1} + \sum_{i \in A_k}f_i &\text{if $\sigma_k$ is not tight}\\
-f_k + f_{k-1}+\dots + f_1 +\mu_0 &\text{if $\sigma_k$ is tight,}
\end{cases}
\]
when $p/q$ is not an integer, and
\[\nu_k=
\begin{cases}
-f_k + f_{k-1} + \sum_{i \in A_k}f_i &\text{if $\sigma_k$ is not tight}\\
-f_k + f_{k-1}+ \dots + 2f_1 &\text{if $\sigma_k$ is tight,}
\end{cases}
\]
when $p/q$ is an integer. Given such a collection of $\nu_k$, we will call the set
\[
S=\begin{cases}
\{\nu_1, \dots, \nu_t,\mu_1, \dots, \mu_m\} &\text{if $p/q\not\in \Z$}\\
\{\nu_2, \dots, \nu_t\} &\text{if $p/q\in \Z$}\\
\end{cases}
\]
a {\em standard basis} for $L$.

\begin{prop}\label{Ch3:prop:stdbasis}
A standard basis $S$ is a basis for $L$.
\end{prop}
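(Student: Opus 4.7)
The plan is to establish three things in turn: that every element of $S$ lies in $L$; that $S$ is linearly independent; and that $S$ spans $L$. For the first, each $\nu_k$ is supported on the $f_j$'s together with possibly a single $\mu_0$-term in the tight case, so $\nu_k \cdot w_j = 0$ for $j \geq 1$ is immediate from the fact that the $w_j$ with $j \geq 1$ involve only the $e_i$'s and that $\mu_0 \in L_F$. The pairing $\nu_k \cdot w_0$ vanishes by the defining property $\sum_{i \in A_k} \sigma_i = \sigma_k - \sigma_{k-1}$ in the non-tight case, and by the tightness identity $\sigma_k = 1 + \sigma_1 + \dots + \sigma_{k-1}$ combined with $\mu_0 \cdot w_0 = 1$ in the tight case. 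That each $\mu_i$ with $i \geq 1$ lies in $L$ was already observed in the setup preceding Lemma~\ref{Ch3:lem:LFbasis}, since such a $\mu_i$ has no $e_0$-component and $w_0$ is orthogonal to $e_1, \dots, e_s$.

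Second, I would read off the $f$-coordinates of a hypothetical relation $\sum_k a_k \nu_k + \sum_{i \geq 1} b_i \mu_i = 0$. The $\mu_i$ with $i \geq 1$ contribute nothing in the $f$-directions, and $\nu_k$ contributes $-1$ to $f_k$ and $0$ to $f_j$ for $j > k$. Reading the coefficients of $f_t, f_{t-1}, \dots, f_1$ in that order successively forces $a_t = a_{t-1} = \dots = a_1 = 0$, after which Lemma~\ref{Ch3:lem:LFbasis} forces every $b_i = 0$.

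Third, to prove spanning, I would run a Gaussian-elimination-like reduction on the $f$-coordinates of an arbitrary $x \in L$. Working down from $k = t$, at step $k$ I subtract $(x \cdot f_k)\,\nu_k$; since $\nu_k$ has zero pairing with $f_j$ for $j > k$, this kills the $f_k$-coordinate without reviving previously eliminated ones. After $t$ steps I obtain $x' \in L$ with no $f$-components, so $x'$ lies in $\langle e_0, \dots, e_s \rangle$. Being in $L$, $x'$ satisfies $x' \cdot w_0 = 0$, which collapses to $x' \cdot e_0 = 0$; hence $x' \in L_F$ with zero $\mu_0$-coefficient, and Lemma~\ref{Ch3:lem:LFbasis} then expresses $x'$ as an integer combination of $\mu_1, \dots, \mu_m$. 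The integer case is strictly simpler: the analogous reduction produces $x' = c\,e_1$, and $x' \cdot w_0 = c = 0$ finishes. The only real subtlety is the bookkeeping around the tight case of $\nu_k$, where the $\mu_0$-term in $\nu_k$ (with $\mu_0 \notin L$ itself) is precisely what allows the $e_0$-component of $x'$ to be absorbed in the reduction step rather than persist as an obstruction to lying in the span of $S$.
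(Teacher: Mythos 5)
Your proposal is correct and follows essentially the same route as the paper's proof: linear independence by reading off leading $f$-coordinates (the triangular structure of the $\nu_k$'s, then Lemma~\ref{Ch3:lem:LFbasis} for the $\mu_i$'s), and spanning by downward elimination against the $\nu_k$'s so that the residue lands in $L_F$, where it is handled by Lemma~\ref{Ch3:lem:LFbasis} after observing that $x'\cdot w_0 = 0$ kills the $\mu_0$-coefficient. Your preliminary verification that $S\subseteq L$ is something the paper simply takes for granted, since the $\nu_k$ and $\mu_i$ are constructed as elements of $L$ in the first place, but it does no harm. The one slip is a sign: since $\nu_k\cdot f_k = -1$, the reduction step must \emph{add} $(x\cdot f_k)\nu_k$ rather than subtract it in order to annihilate the $f_k$-coordinate — the paper's inductive step is $x' = x + (x\cdot f_k)\nu_k$ — though this is clearly what you intend.
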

\begin{proof}
We give a proof when $p/q$ is not an integer. This can easily be modified to the integer case.

First observe that the standard basis elements are linearly independent. Suppose that we we have $x=\sum c_i \nu_i + \sum b_j \mu_j$. If there is some $c_i\ne 0$, then we can consider $M=\max \{i \,|\, c_i\ne 0\}$. This satisfies $x\cdot f_M=-c_M\ne 0$. Similarly if there is some $b_j\ne 0$, then we can consider $M=\max \{j \,|\, b_j\ne 0\}$. This satisfies $x\cdot e_{\beta_{M+1}}=b_M\ne 0$. In either case if there is a non-zero $c_i$ or $b_j$, then $x$ is non-zero, showing the linear independence of the standard basis elements.

It remains to check that the standard basis spans $L$. As $L$ is a non-integer changemaker lattice, we may write any $x\in L$ as $x=x_I +x_F - (x\cdot e_0)e_0$, where $x_I \in L_I$ and $x_F\in L_F$. We have $x\in L\cap L_F$ if and only if $x_I=0$. Hence if $x_I=0$, then $x$ lies in the span of the $\mu_1, \dots, \mu_m$, by Lemma~\ref{Ch3:lem:LFbasis}. Consider $x\in L$, with $x_I$ non-zero. In this case, we will proceed by induction on the quantity $k=\max \{i\,|\, x\cdot f_i \ne 0\}$. Consider the vector $x'=x+(x\cdot f_k)\nu_k$. Either $x_I'=0$ or $\max \{i\,|\, x'\cdot f_i \ne 0\}<k$. In either case, we can assume that $x'$ lies in the span of the standard basis and hence so does $x$. Thus the standard basis spans $L$.
\end{proof}

The following lemma shows that if a lattice has a basis which ``looks like'' a standard basis for a half-integer changemaker lattice, then the lattice is a half-integer changemaker lattice.

\begin{lem}\label{Ch3:lem:halfintidentification}
Suppose we have a collection of vectors
\[\{v_1, \dots , v_t\} \subseteq \mathbb{Z}^{t+2}=\langle f_1, \dots, f_t, e_0,e_1 \rangle\]
such that $v_1=-f_1+e_0+e_1$ and for $k>1$,
$v_k=-f_k+f_{k-1}+\sum_{i\in A_k} f_i +\varepsilon_k(e_0+e_1)$ with $A_k \subseteq \{1,\dots , k-2\}$ and $\varepsilon_k \in \{0,1\}$, then the lattice spanned by $\{v_1, \dots , v_t\}$ is a half-integer changemaker lattice.
\end{lem}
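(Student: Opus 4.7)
The plan is to reverse-engineer the changemaker coefficients $(\sigma_1, \dotsc, \sigma_t)$ and defining vectors $w_0, w_1$ of a candidate $p/2$-changemaker lattice $L'$ in $\mathbb{Z}^{t+2}$, and then show that the given $v_k$ form a $\mathbb{Z}$-basis of $L'$. Concretely I would set $\sigma_1 = 1$ and $\sigma_k = \sigma_{k-1} + \sum_{i \in A_k} \sigma_i + \varepsilon_k$ for $k \ge 2$; this is precisely the relation forced by requiring $v_k \cdot (e_0 + \sum_i \sigma_i f_i) = 0$. Since $A_k \subseteq \{1,\dotsc,k-2\}$ and $\varepsilon_k \in \{0,1\}$, the recursion immediately yields $\sigma_{k-1} \le \sigma_k \le 1 + \sigma_1 + \dotsb + \sigma_{k-1}$, so by Proposition~\ref{Ch3:prop:CMcondition} the tuple $(\sigma_1, \dotsc, \sigma_t)$ satisfies the changemaker condition.

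I would then define $w_0 = e_0 + \sigma_1 f_1 + \dotsb + \sigma_t f_t$ and $w_1 = -e_0 + e_1$ in $\mathbb{Z}^{t+2}$, set $a_0 := \|w_0\|^2 = 1 + \sum \sigma_i^2 \ge 2$ and $a_1 := \|w_1\|^2 = 2$, and verify the continued fraction identity $[a_0, 2]^- = a_0 - \tfrac12$ together with Conditions~I--IV of Definition~\ref{Intro:def:CMlattice}. These checks identify $L' := \langle w_0, w_1 \rangle^\bot$ as a $(2a_0-1)/2$-changemaker lattice. By construction $v_k \cdot w_0 = 0$ for every $k$, and one reads off $v_k \cdot w_1 = -\varepsilon_k + \varepsilon_k = 0$ for $k \ge 2$ and $v_1 \cdot w_1 = -1 + 1 = 0$, so each $v_k$ lies in $L'$.

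It then remains to show that the $v_k$ span all of $L'$, not merely a sublattice of finite index. For linear independence, if $\sum c_k v_k = 0$ with $M = \max\{k : c_k \ne 0\}$, then the $f_M$-coefficient of the left-hand side equals $-c_M$ (every $f_i$ appearing in $v_j$ for $j < M$ has $i < M$, and the terms with $j > M$ vanish), forcing $c_M = 0$. For spanning, I would apply the peeling argument used in Proposition~\ref{Ch3:prop:stdbasis}: given $x \in L'$, let $M$ be the largest index with $x \cdot f_M \ne 0$; then $x + (x \cdot f_M) v_M$ has vanishing $f_j$-component for all $j \ge M$, and descending induction on $M$ reduces to the case $x = \alpha e_0 + \beta e_1$, where membership in $L'$ forces $\alpha = x \cdot w_0 = 0$ and then $\beta = x \cdot w_1 + \alpha = 0$.

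The argument is mostly bookkeeping; the subtle point is that the $A_k$ and $\varepsilon_k$ supplied by the hypothesis need not match those produced by the standard-basis construction of Section~\ref{Ch3:sec:standardbases}, so one cannot directly invoke Proposition~\ref{Ch3:prop:stdbasis} as a black box. One really must rerun the $\sigma_k$-recursion and the spanning argument with this alternative choice of basis, which is why this lemma deserves a separate proof.
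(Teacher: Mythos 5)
Your proof follows the same strategy as the paper: define $\sigma_k$ by the recursion $\sigma_k = \sigma_{k-1} + \sum_{i \in A_k} \sigma_i + \varepsilon_k$, verify the changemaker condition via Proposition~\ref{Ch3:prop:CMcondition}, take $w_0 = e_0 + \sum_i \sigma_i f_i$ and $w_1 = e_1 - e_0$, check $v_k \in L' = \langle w_0, w_1 \rangle^\bot$, and then show the $v_k$ span $L'$ by the triangular peeling argument. Your caution at the end is well-founded: when $\varepsilon_k = 1$ but $A_k \subsetneq \{1, \dots, k-2\}$ the recursion gives $\sigma_k < 1 + \sigma_1 + \dots + \sigma_{k-1}$, so $\sigma_k$ is not tight and $v_k$ does not literally match either alternative in the standard-basis construction of Section~\ref{Ch3:sec:standardbases}; the paper's own proof glosses over this by asserting that the $v_k$ form a standard basis and citing Proposition~\ref{Ch3:prop:stdbasis} directly, which works only because the spanning step of that proposition uses nothing beyond the triangularity of the basis in the $f_i$-coordinates — exactly the argument you rerun explicitly.
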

\begin{proof}
Define $\sigma_i$ inductively by $\sigma_1=1$ and $\sigma_k=\sigma_{k-1}+\varepsilon_k +\sum_{i\in A_k} \sigma_i$ for $k\geq 2$. For each $i\geq 2$, we have
\[\sigma_{i-1} \leq \sigma_i \leq 1+\sigma_1 + \dots + \sigma_{i-1}.\]
By Proposition~\ref{Ch3:prop:CMcondition}, the $\sigma_i$ satisfy the changemaker conditions. If we take $w_1=e_1-e_0$ and $w_0=e_0 + \sigma_1 f_1 + \dots + \sigma_t f_t$, then
\[L=\langle w_0,w_1 \rangle^\bot\subseteq \Z^{t+2}\]
is a half-integer changemaker lattice. By construction, $v_k$ satisfies $w_0\cdot v_k=w_1\cdot v_k=0$ for all $k$. Therefore $v_k\in L$ for all $k$. In fact, we see that the $v_k$ provides an example of a standard basis for $L$. Thus by Proposition~\ref{Ch3:prop:stdbasis}, $L$ is the lattice spanned by the $v_k$.
\end{proof}

\subsection{Irreducibility}
Now we wish to study the irreducibility of certain vectors in $L$ and $L_F$. Recall that $z \in L$ is irreducible if it cannot be written in the form $z=x+y$ for non-zero $x$ and $y$ in $L$ with $x\cdot y\geq 0$.

The following observation on irreducible vectors of sublattice of diagonal lattices will be useful in what follows.
\begin{prop}\label{Ch3:prop:genirredcondition}
Consider $\Z^n$ with orthonormal basis $e_1,\dots, e_n$. Let $\Lambda \subseteq \Z^n$ be a sublattice. Consider a vector $z\in \Lambda$ which can be written in the form $z=\sum_{i\in A} e_i - \sum_{j\in B}e_j$ for disjoint $A,B \subseteq \{1,\dots, n\}$. If $z=x+y$ for some $x,y\in \Lambda$ with $x\cdot y\geq 0$, then there are subsets $A'\subseteq A$ and $B'\subseteq B$, such that
\[
x=\sum_{i\in A'} e_i - \sum_{j\in B'}e_j
\quad\text{and}\quad
y=\sum_{i\in A\setminus A'} e_i - \sum_{j\in B\setminus B'}e_j.
\]
\end{prop}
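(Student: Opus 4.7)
The plan is to work coordinate-by-coordinate. Write $x = \sum_{i=1}^n x_i e_i$ and $y = \sum_{i=1}^n y_i e_i$, so that $x_i + y_i = z_i$ for every $i$, where $z_i \in \{-1,0,1\}$. The pairing $x \cdot y$ then decomposes as $\sum_{i=1}^n x_i y_i$, and the hypothesis is that this sum is non-negative.

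The first step will be to show that each individual product $x_i y_i$ is in fact non-positive. For this I would substitute $y_i = z_i - x_i$ to obtain $x_i y_i = x_i z_i - x_i^2$. Splitting on the three possible values of $z_i$, this becomes $-x_i^2$ when $z_i = 0$, $x_i(1-x_i)$ when $z_i = 1$, and $-x_i(1+x_i)$ when $z_i = -1$. A quick check shows each of these is $\leq 0$ for every integer $x_i$, with equality only in a restricted set of values: $x_i = 0$ in the first case, $x_i \in \{0,1\}$ in the second, and $x_i \in \{-1,0\}$ in the third.

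The second step combines these pointwise bounds with the global inequality: since $\sum_i x_i y_i \geq 0$ and each summand is $\leq 0$, every summand must vanish. Therefore each coordinate lies in its respective equality case: $x_i = y_i = 0$ whenever $z_i = 0$; $\{x_i,y_i\} = \{0,1\}$ whenever $i \in A$; and $\{x_i,y_i\} = \{0,-1\}$ whenever $j \in B$. The conclusion is then obtained by setting $A' = \{i \in A \mid x_i = 1\}$ and $B' = \{j \in B \mid x_j = -1\}$, which gives the claimed decomposition of $x$ and forces $y$ to have the complementary form.

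This argument is essentially a one-step verification once the pointwise inequality $x_i y_i \leq 0$ is noticed, so I do not anticipate a serious obstacle; the only subtlety is recognizing that the three cases for $z_i \in \{-1,0,1\}$ can all be handled uniformly by the factorization $x_i z_i - x_i^2$, and that the sign of $x \cdot y$ is then strong enough to pin down every coordinate simultaneously. No use will be made of any structure of $\Lambda$ beyond $x, y \in \Z^n$.
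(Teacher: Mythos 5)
Your argument is correct and is essentially identical to the paper's: both write $x\cdot y$ as a sum over coordinates, split into the three cases $i\in A$, $i\in B$, $i\notin A\cup B$, observe that each summand is $\leq 0$, and conclude from the global inequality that every summand vanishes, pinning down each $x_i$.
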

\begin{proof}
If such an $x$ and $y$ exist, then
\begin{equation}\label{Ch3:eq:genirred}
x\cdot y = \sum_{i\in A}^n (x\cdot e_i)(1-x \cdot e_i)+ \sum_{i\in B}^n -(x\cdot e_i)(1+x \cdot e_i) +\sum_{i\not\in A \cup B}^n -(x\cdot e_i)^2\geq 0.
\end{equation}
However every summand is \eqref{Ch3:eq:genirred} is at most zero, so it must be exactly zero. In particular $x\cdot e_i \in \{0,1\}$ for all $i\in A$, $x\cdot e_i \in \{0,-1\}$ for all $i\in B$ and $x\cdot e_i=0$ for all $i\not\in A\cup B$. Thus we see that $x=\sum_{i\in A'} e_i - \sum_{j\in B'}e_j$ for $A'=\{i \,|\, x\cdot e_i =1\}\subseteq A$ and $B'=\{i \,|\, x\cdot e_i =-1\}\subseteq B$. Since $y=z-x$, this also gives the statement about $y$.
\end{proof}
\begin{lem}\label{Ch3:lem:suppfacts}
Let $L$ be a $p/q$-changemaker lattice. If $x\in L$ satisfies $x\cdot f_i \geq 0$ and $x\cdot e_j \geq 0$ for all $i$ and $j$, then $x=0$.
\end{lem}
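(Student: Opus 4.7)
The plan is to exploit the defining relations $x \cdot w_k = 0$ (for $0 \leq k \leq l$) together with the sign hypotheses, in two stages.

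\textbf{Stage 1 (pinning the integral part).} The vector $w_0$ has the form $w_0 = e_0 + \sigma_1 f_1 + \dots + \sigma_t f_t$ (or simply $w_0 = \sigma_1 e_1 + \dots + \sigma_t e_t$ in the integer case), and every changemaker coefficient $\sigma_i$ is strictly positive. Expanding the relation $x \cdot w_0 = 0$ in coordinates gives a sum of non-negative terms (by the hypothesis $x\cdot e_0 \geq 0$ and $x \cdot f_i \geq 0$) equal to zero. Hence every summand vanishes, and I can conclude $x \cdot e_0 = 0$ and $x \cdot f_i = 0$ for every $i$. In the integer case, this already finishes the proof since the $e_j$ exhaust the basis.

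\textbf{Stage 2 (sweeping across the fractional part).} For the non-integer case, recall that for $k \geq 1$,
\[w_k = -e_{\alpha_{k-1}} + e_{\alpha_{k-1}+1} + \dots + e_{\alpha_k},\]
with $\alpha_0 = 0$. I will show by induction on $k$ that $x \cdot e_j = 0$ for all $0 \leq j \leq \alpha_k$. Stage 1 gives the base case $k = 0$. For the inductive step, assume $x \cdot e_{\alpha_{k-1}} = 0$. Then the relation $x \cdot w_k = 0$ reduces to
\[\sum_{j=\alpha_{k-1}+1}^{\alpha_k} x \cdot e_j = 0,\]
a sum of non-negative terms by hypothesis, so each vanishes and in particular $x \cdot e_{\alpha_k} = 0$, closing the induction.

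\textbf{Conclusion.} After running the induction up to $k = l$, every coordinate $x \cdot e_j$ with $0 \leq j \leq \alpha_l = s$ is zero, and Stage 1 handled the $f_i$ coordinates, so $x = 0$. There is no real obstacle here: once one notices that $w_0$ has strictly positive coordinates on the $f_i$ (and on $e_0$), and that the $w_k$ for $k \geq 1$ have a single negative coordinate, the sign hypothesis forces every term in each orthogonality relation to vanish separately, and a straightforward induction propagates the vanishing through the fractional block.
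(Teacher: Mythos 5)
Your proof is correct. Stage 1 matches the paper's treatment exactly, but for the fractional block the paper takes a different route: rather than inducting along the $w_k$'s, it fixes a single index $j$, takes the unique $k$ with $w_k\cdot e_j=1$, and forms the telescoping sum $W=w_0+w_1+\dots+w_k$. The negative coordinate of each $w_m$ cancels against the previous handle, so $W\cdot e_i\in\{0,1\}$ for every $i$, and the single inequality $0=W\cdot x\geq x\cdot e_j\geq0$ finishes in one step. Your induction on $k$ is in effect an unrolled version of that telescoping: the inductive hypothesis $x\cdot e_{\alpha_{k-1}}=0$ does the work that the cancellation $w_k\cdot e_{\alpha_{k-1}}+w_{k-1}\cdot e_{\alpha_{k-1}}=0$ does in the paper's $W$. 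Both arguments are sound; yours is slightly more elementary in that you never need to verify the $\{0,1\}$-coefficient property of an auxiliary vector, while the paper's is more compact by compressing the sweep into a single inner-product computation.
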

\begin{proof}
Suppose $x\in L$ satisfies $x\cdot f_i \geq 0$ and $x\cdot e_j \geq 0$ for all $i$ and $j$.

If $L$ is an integer changemaker lattice, then
\[0=x\cdot w_0 =\sum_{i=1}^t \sigma_i (x\cdot f_i)\geq 0.\]
This shows that $x\cdot f_i=0$ for all $i$ and hence that $x=0$.

If $L$ is a non-integer changemaker lattice, then it follows from $0= x\cdot w_0$ that
\[0\leq x\cdot e_0 =- \sum_{i=1}^t \sigma_i (x\cdot f_i)\leq 0.\]
This shows that $x\cdot e_0=0$ and $x\cdot f_i=0$ for all $i$. For any $1\leq j\leq s$, there is a unique $k$ such that $e_j\cdot w_k=1$. Observe that $W=w_0+\dots + w_k$ satisfies $W\cdot e_i\in\{0,1\}$ for all $i$ and $W\cdot e_j=1$. Thus we see that
 \[0=W\cdot x\geq x\cdot e_j \geq 0.\]
 This shows that $x\cdot e_j=0$ for all $j$. Thus $x=0$, as required.
\end{proof}

\begin{lem}\label{Ch3:lem:generalirred}
Let $L$ be a $p/q$-changemaker lattice. If $z\in L$ takes the form
\begin{enumerate}[(i)]
\item $z=-f_k + \sum_{i \in A} f_i + \sum_{j\in B} e_j$ or
\item $z=-e_k +\sum_{j\in C} e_j$,
\end{enumerate}
for some subset $A\subseteq \{1, \dots, t\}$, $B \subseteq \{0, \dots, s\}$
or $C\subseteq \{k+1,\dots, s\}$, then $z$ is irreducible.
\end{lem}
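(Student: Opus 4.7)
The plan is to deduce the irreducibility of $z$ by combining the two tools provided just above the statement: Proposition~\ref{Ch3:prop:genirredcondition} constrains the shape of any decomposition $z=x+y$ with $x\cdot y\geq 0$, and then Lemma~\ref{Ch3:lem:suppfacts} forces one of the two pieces to vanish.

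First I would apply Proposition~\ref{Ch3:prop:genirredcondition} with $\Lambda=L$ and the ambient lattice $\Z^N=\langle e_0,\dots,e_s,f_1,\dots,f_t\rangle$ (or $\langle f_1,\dots,f_t\rangle$ in the integer case). In case (i), the vector $z=-f_k+\sum_{i\in A}f_i+\sum_{j\in B}e_j$ has coefficients in $\{0,\pm 1\}$ with the $+1$'s and $-1$'s supported on disjoint subsets of the orthonormal basis. Hence any decomposition $z=x+y$ with $x,y\in L$ and $x\cdot y\geq 0$ must satisfy $x\cdot f_k,y\cdot f_k\in\{-1,0\}$ (summing to $-1$), $x\cdot f_i,y\cdot f_i\in\{0,1\}$ for $i\in A$, $x\cdot e_j,y\cdot e_j\in\{0,1\}$ for $j\in B$, and both $x$ and $y$ vanish on every other basis vector. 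Case (ii) is analogous with the role of $f_k$ played by $e_k$.

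Next, exactly one of $x,y$ contains the negative coordinate $-f_k$ (respectively $-e_k$); say $x\cdot f_k=-1$ and $y\cdot f_k=0$ (the other assignment is symmetric). Then by the previous paragraph, $y$ pairs non-negatively with every $f_i$ and every $e_j$. Lemma~\ref{Ch3:lem:suppfacts} immediately yields $y=0$. For case (ii), the same argument applies: if $y\cdot e_k=0$, then $y$ has non-negative entries in every $e_j$-coordinate and zero entries in every $f_i$-coordinate, so again $y=0$ by Lemma~\ref{Ch3:lem:suppfacts}. This contradicts the requirement that both pieces of a reducing decomposition be non-zero, so $z$ is irreducible.

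The only subtle point is bookkeeping the ambient basis: in the non-integer case one must include the $e_0$-coordinate in the list of coordinates on which $x$ and $y$ are forced to vanish (since $e_0$ lies outside the support of $z$ in cases where $0\notin B$, and is one of the allowed $+1$-coordinates otherwise), but Proposition~\ref{Ch3:prop:genirredcondition} handles this uniformly. There is no real obstacle: once the shape constraint from Proposition~\ref{Ch3:prop:genirredcondition} is in hand, Lemma~\ref{Ch3:lem:suppfacts} closes the argument in one step.
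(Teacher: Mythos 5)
Your proposal is correct and takes essentially the same route as the paper: the paper's proof also invokes Proposition~\ref{Ch3:prop:genirredcondition} to constrain the shape of any decomposition $z = x + y$ with $x \cdot y \geq 0$, observes that the unique negative coordinate must lie entirely in one summand (say $x$), and then concludes $y = 0$ from Lemma~\ref{Ch3:lem:suppfacts}. Your added bookkeeping about $e_0$ when $0 \notin B$ is already absorbed by Proposition~\ref{Ch3:prop:genirredcondition}, as you note, so the arguments match in substance as well as structure.
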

\begin{proof}
Suppose we can write such a $z$ as $z=x+y$ for some $x,y\in L$ with $x\cdot y\geq 0$. There is a single $f\in \{e_0, \dots, e_s, f_1, \dots, f_t\}$ such that $z\cdot f=-1$. Without loss of generality we may assume that $x\cdot f<0$. However, it then follows from Proposition~\ref{Ch3:prop:genirredcondition} that $y\cdot f'\geq 0$ for all $f' \in \{e_0, \dots, e_s, f_1, \dots, f_t\}$. Thus by Lemma~\ref{Ch3:lem:suppfacts}, we have $y=0$, showing that $z$ is irreducible.
\end{proof}
The other irreducible vectors we will be interested in are the following.
\begin{lem}\label{Ch3:lem:tightstdvecirred}
If $L$ is a $p/q$-changemaker lattice with $q\leq 2$ and some changemaker coefficient $\sigma_k$ is tight, then
\[z=-f_{k}+f_{k-1}+\dots+ f_2+2f_1\in L\]
is irreducible.
\end{lem}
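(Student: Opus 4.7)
First I would verify that $z\in L$. Using the tightness relation $\sigma_k=1+\sigma_1+\dots+\sigma_{k-1}$ together with $\sigma_1=1$, one gets $w_0\cdot z=-\sigma_k+\sigma_{k-1}+\dots+\sigma_2+2\sigma_1=0$, and $w_i\cdot z=0$ for $i\geq 1$ is automatic because $z$ has no $e_j$-component. Suppose now, for contradiction, that $z=x+y$ with $x,y\in L\setminus\{0\}$ and $x\cdot y\geq 0$.

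Expanding coordinate by coordinate gives
\[x\cdot y=\sum_g (x\cdot g)(z\cdot g-x\cdot g),\]
summed over the orthonormal basis $\{e_0,\dots,e_s,f_1,\dots,f_t\}$. I would then tabulate the possible values of each summand: the $f_1$-term $x_g(2-x_g)$ equals $+1$ exactly when $x\cdot f_1=1$ and is otherwise $\leq 0$ with nonzero values $\leq -3$; the $f_j$-terms for $2\leq j\leq k$ lie in $\{0\}\cup\mathbb{Z}_{\leq -2}$; and every remaining term has the form $-x_g^2\in\{0,-1,-4,\dots\}$. The point is that, apart from the single exceptional $+1$ available at $f_1$, no term is positive, so the hypothesis $x\cdot y\geq 0$ is extremely restrictive.

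The argument then splits into two cases. In Case A, where $x\cdot f_1=1$, for the remaining terms to sum to $\geq -1$ they must all vanish or exactly one must equal $-1$; the latter requires $x\cdot g=\pm 1$ for a single $g\in\{e_0,\dots,e_s,f_{k+1},\dots,f_t\}$. I would then write
\[x=f_1+\sum_{j\in J}f_j+\beta f_k+\gamma g,\qquad J\subseteq\{2,\dots,k-1\},\ \beta\in\{0,-1\},\ \gamma\in\{-1,0,1\},\]
expand the constraint $x\cdot w_0=0$, and use the tightness bound $\sigma_1+\sigma_2+\dots+\sigma_{k-1}=\sigma_k-1$ together with monotonicity $\sigma_m\geq\sigma_k$ (for $m>k$) to exclude each sign combination $(\beta,\gamma)$. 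The sub-case where $g=e_i$ can only occur when $q=2$, and it is eliminated immediately by the single extra relation $x\cdot w_1=-x\cdot e_0+x\cdot e_1=0$, which prohibits $x$ from having exactly one nonzero $e$-coordinate. In Case B, where $x\cdot f_1\neq 1$, every coordinate term is $\leq 0$, so all must vanish; after swapping $x$ and $y$ if necessary we may assume $x\cdot f_1=0$, in which case $x=\sum_{j\in J'}f_j+\beta f_k$ with $\beta\in\{0,-1\}$, and $x\cdot w_0=0$ combined with tightness forces $\beta=0$ and $J'=\emptyset$, i.e.\ $x=0$.

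The main obstacle is Case A: because $z\cdot f_1=2$ rather than $\pm 1$, Proposition~\ref{Ch3:prop:genirredcondition} does not apply directly, and the coordinate-wise argument must carefully balance the unique possible $+1$ at $f_1$ against at most one $-1$ elsewhere. The hypothesis $q\leq 2$ enters precisely here: it ensures that the only non-tautological orthogonality relation beyond $x\cdot w_0=0$ is $x\cdot w_1=0$ with $w_1=-e_0+e_1$, which is exactly the relation needed to kill the troublesome $e_i$-subcase.
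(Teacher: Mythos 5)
Your proof is correct and follows essentially the same approach as the paper: expand $x\cdot y$ coordinate by coordinate, note that the $f_1$-term is the only one that can be positive (and is bounded by $1$), and then play the orthogonality $x\cdot w_0=0$ against the tightness relation $\sigma_1+\dots+\sigma_{k-1}=\sigma_k-1$ and monotonicity to force a contradiction. The only difference is organizational: the paper normalizes so that $x\cdot f_k=-1$ and then applies Lemma~\ref{Ch3:lem:suppfacts} together with $y\cdot w_0=0$ to finish, whereas you run an explicit case split on $x\cdot f_1$ and exhaust the $(\beta,\gamma)$ sign combinations, which amounts to the same computation.
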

\begin{proof}
We give a proof when $q=2$. The same argument works when $q=1$.

If $L$ is a half-integer changemaker lattice, then it takes the form
\[L=\langle w_0, e_1-e_0 \rangle^\bot \subseteq \Z^{t+2}\]
Suppose we can write $z$ as $z=x+y$ with $x,y\in L$ and $x\cdot y \geq 0$. This gives
\begin{align}\begin{split}\label{Ch3:eq:irredsum2}
x \cdot y &= \sum_{i=1}^t (x\cdot f_i)(y\cdot f_i) + 2(x\cdot e_0)(y\cdot e_0)\\
&= -\sum_{i=k+1}^t (x\cdot f_i)^2 +\sum_{i=2}^{k-1} (x\cdot f_i)(1-x\cdot f_i) \\
&\quad +(x\cdot f_1)(2-x\cdot f_1) -(x\cdot f_k)(1+x\cdot f_k)  -2(x\cdot e_0)^2\geq 0.
\end{split}\end{align}
Now the only term in \eqref{Ch3:eq:irredsum2} which can be strictly positive is then term $(x\cdot f_1)(2-x\cdot f_1)$ which is at most one. It follows that we must have $x\cdot e_0=y\cdot e_0=0$ and $x\cdot f_i\in \{0,1\}$ for $2\leq i <k$ and $x\cdot f_k\in \{0,-1\}$. We may assume that
$x\cdot f_k=-1$.

If $y$ is non-zero then we must have $y\cdot f_g<0$ for some $g$. Such a $g$ is necessarily unique with $y\cdot f_g=-1$ and $g>k$. If such a $g$ exists, then $(y\cdot f_g)(x\cdot f_g)=-1$ shows that we have $y\cdot f_1=x\cdot f_1=1$. Altogether this shows that there is $A\subseteq\{1,\dots, k-1\}$
such that
\[y=-f_g+ \sum_{i\in A}f_i.\]
However as $y\in L$, this shows
\[\sigma_g =  \sum_{i\in A} \sigma_i <\sigma_k,\]
which contradicts the requirement that $g>k$. Therefore $y=0$ and $z$ is irreducible, as required.
\end{proof}
\begin{rem}\label{Ch3:rem:stdbasisirred}
Lemma~\ref{Ch3:lem:generalirred} and Lemma~\ref{Ch3:lem:tightstdvecirred} shows that every element of a standard basis is irreducible.
\end{rem}

Next we classify the irreducible vectors in the fractional part of a changemaker lattice.
\begin{lem}\label{Ch3:lem:irredfracpart}
Let $L$ be a non-integer changemaker lattice. The vector $x_F \in L_F$ is irreducible if and only if $x_F$ is in the form
\[x_F=\pm (\mu_a + \dots + \mu_b),\]
for some $0\leq a \leq b \leq m$.
\end{lem}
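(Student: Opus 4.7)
My plan is to handle both directions using the basis $\{\mu_0,\dots,\mu_m\}$ for $L_F$ from Lemma~\ref{Ch3:lem:LFbasis} and to exploit the tridiagonal intersection pattern ($\mu_i\cdot\mu_j=0$ for $|i-j|>1$, $\mu_i\cdot\mu_{i+1}=-1$) together with the bound $\norm{\mu_i}\geq 2$, which follows from $\beta_1\geq 1$ (since $\alpha_0=0\notin M$) and $\beta_{i+1}>\beta_i$. For the ``if'' direction, since irreducibility is invariant under negation it suffices to prove $x_F=\mu_a+\dots+\mu_b$ is irreducible. When $a\geq 1$, telescoping yields
\[x_F=-e_{\beta_a}+\sum_{j\in C}e_j,\qquad C=\{\beta_a+1,\dots,\beta_{b+1}\}\setminus\{\beta_{a+1},\dots,\beta_b\}\subseteq\{\beta_a+1,\dots,s\},\]
which is irreducible by Lemma~\ref{Ch3:lem:generalirred}(ii). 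When $a=0$ we have $x_F=\sum_{j\in D}e_j$ with $D=\{0,\dots,\beta_{b+1}\}\setminus\{\beta_1,\dots,\beta_b\}$; Proposition~\ref{Ch3:prop:genirredcondition} then forces any splitting $x_F=y+z$ with $y\cdot z\geq 0$ and $y,z\in L_F$ to satisfy $y=\sum_{j\in D'}e_j$ for some $D'\subseteq D$, and the constraints $y\cdot e_{\beta_i}=0$ for $i\neq b+1$ (together with $y\cdot e_j=0$ for $j>\beta_{b+1}$) cascade through the expansion $y=\sum c_i\mu_i$ to force $c_0=\dots=c_b\in\{0,1\}$ and $c_{b+1}=\dots=c_m=0$, so $y\in\{0,x_F\}$.

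For the ``only if'' direction, I assume $x_F\neq 0$ is irreducible and write $x_F=\sum c_i\mu_i$; I will show in three reductions that the support is an interval $[a,b]$ on which $c_i\equiv +1$ or $c_i\equiv -1$. Step~1 (interval support): if there is a gap $c_i\neq 0,\,c_{i+1}=0$ with some $c_j\neq 0$ for $j\geq i+2$, then $y=\sum_{k\leq i}c_k\mu_k$ and $z=\sum_{k\geq i+2}c_k\mu_k$ satisfy $y\cdot z=0$ (since every cross-term $\mu_k\cdot\mu_l$ has $|k-l|\geq 2$), contradicting irreducibility. Step~2 (constant sign): if adjacent indices $k,k+1$ inside the support $[a,b]$ have $c_k c_{k+1}<0$, the split $y=\sum_{i=a}^k c_i\mu_i$, $z=\sum_{i=k+1}^b c_i\mu_i$ satisfies $y\cdot z=-c_k c_{k+1}>0$. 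Step~3 (unit coefficients): after reducing to $c_i\geq 1$ on $[a,b]$, set $y=\mu_a+\dots+\mu_b$ and $z=x_F-y=\sum_{i=a}^b(c_i-1)\mu_i$; the inequality $y\cdot\mu_i=\norm{\mu_i}-|\{i-1,i+1\}\cap[a,b]|\geq\norm{\mu_i}-2\geq 0$ yields $y\cdot z\geq 0$, so irreducibility forces $z=0$, i.e., $c_i=1$ throughout $[a,b]$.

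The main obstacle is Step~3, where the bound $\norm{\mu_i}\geq 2$ is essential: a norm-one $\mu_i$ would itself be a unit vector, and the estimate $y\cdot\mu_i\geq 0$ at interior indices would fail. A secondary care point is the ``if'' direction when $a=0$, where Lemma~\ref{Ch3:lem:generalirred} does not directly apply (there is no $-e_k$ term), forcing the coordinate-cascade argument via Proposition~\ref{Ch3:prop:genirredcondition}.
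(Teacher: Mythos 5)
Your proof is correct in spirit but has a small logical gap in the $a\geq 1$ branch of the ``if'' direction. Lemma~\ref{Ch3:lem:generalirred}(ii) asserts irreducibility in $L$, whereas you need irreducibility in $L_F$; since $L_F\not\subseteq L$, a splitting $x_F=y+z$ with $y,z\in L_F$ and $y\cdot z\geq 0$ need not a priori live in $L$, so the cited lemma does not immediately apply. The bridge is short and uses the same tool you already invoke at $a=0$: because $x_F$ has $\{-1,0,1\}$-coefficients with $x_F\cdot e_0=0$, Proposition~\ref{Ch3:prop:genirredcondition} forces $y\cdot e_0=z\cdot e_0=0$, hence $y,z\in L$, and then Lemma~\ref{Ch3:lem:generalirred}(ii) applies. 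You should add this remark. In fact, the paper treats $a=0$ and $a\geq 1$ uniformly along exactly these lines: Proposition~\ref{Ch3:prop:genirredcondition} singles out one summand, say $y$, with $y\cdot e_0=0$ and $y\cdot e_i\geq 0$ for all $i$; such a $y$ is automatically in $L$ and orthogonal to every $f_j$, so Lemma~\ref{Ch3:lem:suppfacts} kills it. This replaces your coordinate-cascade argument at $a=0$ with one clean line.

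For the ``only if'' direction your argument is correct but differently organised. The paper takes $a$ minimal with $c_a\neq 0$ (WLOG $c_a>0$), $b\geq a$ minimal with $c_{b+1}\leq 0$, sets $z=\mu_a+\dots+\mu_b$, and verifies $(x_F-z)\cdot z\geq -c_{b+1}\geq 0$ in a single estimate using $\norm{\mu_i}\geq 2$, at which point irreducibility forces $x_F=z$. You instead peel off the structural properties one at a time --- interval support, constant sign, unit coefficients --- each by its own splitting argument, with your Step~3 being the paper's estimate restricted to the case where all active $c_i$ already equal at least $1$. Both are valid; your version makes explicit which feature of the coefficient sequence each splitting exploits, while the paper's is more compact and avoids the case analysis by choosing $z$ carefully from the start.
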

\begin{proof}
Suppose that $x_F=\sum_{i=1}^{m}c_iv_i$ is irreducible. For convenience, we set $c_{m+1}=0$. Let $a$ be minimal such that $c_a\ne 0$. Since $x_F$ is irreducible if and only if $-x_F$ is irreducible, we may assume that $c_a>0$. Let $b\geq a$ be minimal such that $c_{b+1}\leq 0$. Let $z=\mu_a + \dots + \mu_b$. By direct computation, we obtain
\[(x_F - z) \cdot z = (c_a-1)\norm{\mu_a} + \sum_{i=a}^b (\norm{\mu_i}-2)(c_i-1) + (c_b-1)\norm{\mu_b} - c_{b+1}.\]
By the choice of $a$ and $b$, this shows that
\[(x_F - z) \cdot z \geq -c_{b+1}\geq 0.\]
As $x_F$ is irreducible, this implies $x_F=z$. Thus we see that any irreducible vector is of the stated form.

To prove the converse, consider $z=\mu_a + \dots + \mu_b$. If $a>0$, then $z\cdot e_{\beta_a}=-1$, $z\cdot e_i\in \{0,1\}$ for all $i\ne \beta_a$, and $z\cdot e_0=0$. If $a=0$, then $z\cdot e_i\in \{0,1\}$ for all $i$. In either case, if $z=x+y$ with $x,y\in L_F$ and $x\cdot y\geq0$ , then Proposition~\ref{Ch3:prop:genirredcondition} shows that either $x\cdot e_i\geq 0$ for all $i>0$ and $x\cdot e_0=0$, or $y\cdot e_i\geq 0$ for all $i>0$ and $y\cdot e_0=0$. However for any $v\in L_F$, $v\cdot e_0=0$ implies that $v\in L$. Thus Lemma~\ref{Ch3:lem:suppfacts} implies either $x=0$ or $y=0$. Therefore $z$ is irreducible, as required.
\end{proof}

\begin{lem}\label{Ch3:lem:fracpartirred}
Let $L$ be a non-integer changemaker lattice. If $x \in L$ is irreducible, then the fractional part $x_F\in L_F$ is irreducible.
\end{lem}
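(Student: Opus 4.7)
The plan is to prove the contrapositive: assuming $x_F$ is reducible in $L_F$, I will construct a decomposition $x = y + (x-y)$ in $L$ with $y, x-y$ both nonzero and $y \cdot (x-y) \geq 0$.

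First I invoke the construction from the proof of Lemma~\ref{Ch3:lem:irredfracpart}. After possibly replacing $x$ by $-x$, this yields integers $0 \leq a \leq b \leq m$ and a vector $z := \mu_a + \dots + \mu_b$ satisfying $(x_F - z) \cdot z \geq 0$ together with $x_F \neq z$. The easy case is $a \geq 1$: then $z \cdot e_0 = 0$, so $z \in L$, and setting $y = z$ the pairing formula \eqref{Ch3:eq:splittingproduct} gives
\[y \cdot (x - y) = z \cdot x_F - \norm{z} = z \cdot (x_F - z) \geq 0,\]
since $z$ has no $f_j$-components and vanishing $e_0$-component. Both $y$ and $x-y$ are nonzero (the latter because $x_F \neq z$ forces $x \neq z$ after accounting for the $L_I$-part of $x$), so this certifies reducibility of $x$.

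The harder case is $a = 0$, in which $z = \mu_0 + \mu_1 + \dots + \mu_b$ has $z \cdot e_0 = 1$ and hence $z \notin L$. Setting $c_0 := x_F \cdot e_0 \geq 1$, I first dispose of the sub-case $c_0 = 1$: the complementary vector $v := x_F - z$ satisfies $v \cdot e_0 = 0$, so $v \in L$, and taking $y = v$ gives $y \cdot (x - y) = v \cdot z \geq 0$ by the symmetric computation. The main technical sub-case is $c_0 \geq 2$: here I plan to expand $x$ in the standard basis provided by Proposition~\ref{Ch3:prop:stdbasis} and exploit the identity $c_0 = \sum_{\sigma_i\text{ tight}} \alpha_i \geq 2$ to construct $y$ as a suitable standard basis element --- either $\pm\mu_j$ for some $j \geq 1$ with $c_j \neq 0$ positioned so that its neighbouring coefficients have the appropriate sign, or a tight $\nu_k$ (e.g.\ $\nu_1$, possibly with $L_I$-corrections) when $x_F = c_0 \mu_0$ is supported only on $\mu_0$.

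The principal obstacle will be the verification of $y \cdot (x - y) \geq 0$ in this final sub-case $c_0 \geq 2$, particularly when $x_F = c_0 \mu_0$. Here the sign of the pairing depends on the internal structure of $x_I \in L_I$, in particular on the distribution of the coefficients $\alpha_i$ in the standard basis expansion of $x$, and closing out the argument requires careful bookkeeping of both the $L_I$- and $L_F$-contributions in the pairing formula.
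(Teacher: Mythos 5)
Your contrapositive strategy is coherent, and the cases $c_0 = 0$ (i.e.\ $a\geq 1$) and $c_0 = 1$ are handled correctly. The first is just the observation that $z\in L$ with $z\cdot(x-z) = (x_F - z)\cdot z \geq 0$; the second uses $y = x_F - z$, which coincides with the decomposition the paper obtains from its auxiliary vector $z' = x_I + z_F - e_0$. So far so good — but the sub-case $c_0\geq 2$, which you flag as ``the principal obstacle,'' contains a genuine gap, and the candidate witnesses you propose will not close it.

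Taking $y=\pm\mu_j$ for $j\geq 1$ gives $y\cdot(x-y) = -c_{j-1}-c_{j+1}+(c_j-1)\norm{\mu_j}$ (resp.\ the analogue for $-\mu_j$), which is negative whenever a neighbour coefficient dominates; for instance $c_{j-1}=c_j=c_{j+1}$ with $\norm{\mu_j}=2$ gives $-2$. Taking $y = \nu_1 = -f_1+\mu_0$ gives $y\cdot(x-y) = -(x\cdot f_1) + (c_0-1)\norm{\mu_0}-c_1-1$, which fails whenever $x\cdot f_1$ is large. The phrase ``possibly with $L_I$-corrections'' is exactly where the proof needs to become precise, and the needed correction depends on the $f$-coordinates of $x$ via the changemaker condition, which your fixed standard-basis candidates cannot see. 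The paper builds $z_F = \mu_0+\dots+\mu_{g_F}$ with $g_F$ minimal such that $c_{g_F+1}\leq 0$, and $z_I = -f_{g_I} + e_0 + \sum_{i\in A}f_i$ with $g_I$ minimal such that $x\cdot f_{g_I}\leq 0$ (such $g_I$ exists because $x\cdot w_0 = 0$ forces $\sum\sigma_i(x\cdot f_i) = -c_0<0$) and $A\subseteq\{1,\dots,g_I-1\}$ chosen so $\sigma_{g_I}-1=\sum_{i\in A}\sigma_i$. Then $z = z_I + z_F - e_0 \in L$, and by \eqref{Ch3:eq:splittingproduct} together with minimality of $g_I,g_F$ one gets $(x-z)\cdot z \geq (x_0-2)-(x_0-1)+(x_0-1) = x_0-2 \geq 0$, with $z\neq 0$ and $x\neq z$ since $z_F\cdot e_0 = 1 < c_0$. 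This is exactly the witness you need, and without a choice of $y$ whose $f$-part adapts to $x_I$ in this way, I do not see how to finish the $c_0\geq 2$ case.
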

\begin{proof}
Let $x_0=x\cdot e_0$. We may write $x=x_F+x_I - x_0e_0$, with $x_I\in L_I$ and $x_F \in L_F$. If $x_0=0$, then $x_I,x_F \in L$ and $x_I \cdot x_F=0$. Since $x=x_I + x_F$, irreducibility of $x$ implies that $x_F=x$ or $x_F=0$.

Now we suppose $x_0\ne 0$. We may assume that $x_0>0$ and that $x_F=\sum_{i=1}^{m}c_iv_i$, where $c_0=x_0$. Let $m\geq g_F\geq 0$ be minimal such that $c_{g_F+1}\leq 0$. For convenience, we are using the convention that $c_{m+1}=0$. Now consider
\[z_F=\mu_0+ \dots + \mu_{g_F} \in L_F.\]
By Lemma~\ref{Ch3:lem:irredfracpart}, this is irreducible. We shall prove the lemma by showing $x_F=z_F$. First we need to bound the quantity $(x_F - z_F)\cdot z_F$. We have
\begin{align*}
(x_F - z_F)\cdot z_F &= \sum_{i=0}^{g_F}\mu_i \cdot (x_F-z_F) \\
    &= (\norm{\mu_0}-1)(c_0-1) + \sum_{i=1}^{g_F-1}(\norm{\mu_i}-2)(c_i -1) + (\norm{\mu_{g_F}}-1)(c_{g_F}-1) - c_{g_F+1}\\
    &\geq (\norm{\mu_0}-1)(c_0-1)\\
    &=(\norm{\mu_0}-1)(x_0-1).
\end{align*}
In particular, this yields the inequality
\begin{equation}\label{Ch3:eq:bound1}
(x_F - z_F)\cdot z_F \geq x_0-1.
\end{equation}
Now let $g_I$ be minimal such that $x\cdot f_{g_I}\leq 0$. There is $A \subseteq \{1, \dots, g_I - 1\}$ such that $\sigma_{g_I} - 1 =\sum_{i \in A} \sigma_i$. Hence, if we define
\[z_I=-f_{g_I} + e_0 + \sum_{i \in A}f_i,\]
then $z_I \in L_I$. Since
\begin{equation*}
(x_I-z_I)\cdot z_I = -x\cdot f_{g_I} - 1 + \sum_{i\in A}(x\cdot f_i -1) + (x_0-1),
\end{equation*}
and $x\cdot f_i\geq 1$ for all $i\in A$, we have the bound
\begin{equation}\label{Ch3:eq:bound2}
(x_I-z_I)\cdot z_I\geq x_0 -2.
\end{equation}
Now consider
\[z=z_I+z_F-e_0\in L.\]
Using \eqref{Ch3:eq:splittingproduct} along with the inequalities \eqref{Ch3:eq:bound1} and \eqref{Ch3:eq:bound2}, we have
\begin{align*}
(x-z)\cdot z &= (x_I-z_I)\cdot z_I - (x_0-1) + (x_F - z_F)\cdot z_F \\
    &\geq x_0-2 \geq -1.
\end{align*}
If $(x-z)\cdot z\geq 0$, then the irreducibility of $x$ implies that $x=z$. Otherwise the above inequality shows that $(x-z)\cdot z=-1$ and in particular that $x_0=1$. Thus we may consider $z'=x_I+z_F-e_0 \in L$. Since
\[(x-z')\cdot z'=(x_F-z_f)\cdot z_F \geq x_0 -1 =0,\]
it follows that $x=z'$. In either case, we have $x_F=z_F$, which implies irreducibility, as required.
\end{proof}

\subsection{Indecomposability}
Now we study the indecomposability of $p/q$-changemaker lattices. Recall that the lattice $L$ is indecomposable if it cannot be written as an orthogonal direct sum $L=L_1 \oplus L_2$ with $L_1,L_2 \ne 0$.
\begin{lem}\label{Ch3:lem:fracindecomp}
If $L$ is a non-integer changemaker lattice, then $L$ is indecomposable.
\end{lem}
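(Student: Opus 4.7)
The plan is to prove indecomposability by exhibiting a connected collection of irreducible vectors that spans $L$. The guiding observation is that in any orthogonal decomposition $L = L_1 \oplus L_2$, every irreducible vector lies entirely in $L_1$ or $L_2$ (otherwise its two components would decompose it non-trivially as an orthogonal sum), and two irreducibles that pair non-trivially must lie in the same summand. It will therefore suffice to link every element of the standard basis to $\nu_1$ by a chain of non-zero pairings through irreducible vectors in $L$.

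I will use the standard basis $S = \{\nu_1, \dots, \nu_t, \mu_1, \dots, \mu_m\}$ supplied by Proposition~\ref{Ch3:prop:stdbasis}, whose elements are all irreducible by Remark~\ref{Ch3:rem:stdbasisirred}. First, $\mu_j \cdot \mu_{j+1} = -1$ because these consecutive basis vectors share the coordinate $e_{\beta_{j+1}}$ with opposite signs, so $\mu_1, \dots, \mu_m$ form a chain. Next, since $\sigma_1 = 1$ is automatically tight, $\nu_1 = -f_1 + \mu_0$, and when $m \geq 1$ we get $\nu_1 \cdot \mu_1 = \mu_0 \cdot \mu_1 = -1$, which links $\nu_1$ to the $\mu$-chain. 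For $k \geq 2$ with $\sigma_k$ tight, the expansion $\nu_k = -f_k + f_{k-1} + \dots + f_1 + \mu_0$ gives $\nu_k \cdot \nu_1 = -1 + \norm{\mu_0} \geq 1$, using the elementary bound $\norm{\mu_0} \geq 2$.

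The one remaining case, and the main obstacle, is $\nu_k$ when $\sigma_k$ is not tight: here $\nu_k$ has no $\mu_0$-component, so it is orthogonal to every $\mu_j$ and will often be orthogonal to $\nu_1$ and its neighbours in $S$. I will handle this by manufacturing an auxiliary linker. Using the changemaker condition, choose $B \subseteq \{1, \dots, k-1\}$ with $\sum_{i \in B}\sigma_i = \sigma_k - 1$ and set
\[z = -f_k + \mu_0 + \sum_{i \in B} f_i.\]
A direct check gives $z \cdot w_0 = -\sigma_k + 1 + (\sigma_k - 1) = 0$, while $z \cdot w_j = 0$ for $j \geq 1$ because $\mu_0 \in L_F$ and the $w_j$ with $j \geq 1$ involve only $e$-coordinates; hence $z \in L$. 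Lemma~\ref{Ch3:lem:generalirred}(i) applies to conclude that $z$ is irreducible. The pairings $z \cdot \nu_k = 1 + [k-1 \in B] + |A_k \cap B| \geq 1$ and $z \cdot \nu_1 = \norm{\mu_0} - [1 \in B] \geq 1$ then connect $\nu_k$ to $\nu_1$ via the two-edge path $\nu_k$--$z$--$\nu_1$.

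Assembling these links places the entire standard basis in a single connected component of the irreducibility graph, hence in a single summand of any orthogonal decomposition, forcing the other summand to be trivial. The heart of the argument is the construction of the linker $z$ in the non-tight case; once its membership in $L$, irreducibility, and non-zero pairings with $\nu_k$ and $\nu_1$ are established, everything else is bookkeeping powered by $\norm{\mu_0} \geq 2$.
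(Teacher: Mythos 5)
Your proof is correct and shares the paper's overall strategy: argue that the standard basis $S = \{\nu_1,\dots,\nu_t,\mu_1,\dots,\mu_m\}$, whose elements are all irreducible, must lie in a single summand of any orthogonal decomposition, using the fact that irreducibles cannot split across summands and that non-orthogonal irreducibles must lie in the same summand. The only substantive deviation is how you handle $\nu_k$ when $\sigma_k$ is not tight. The paper observes that $\nu_k \cdot \nu_d = -1$ where $d = \min\bigl(A_k \cup \{k-1\}\bigr) < k$ and runs a one-line induction on $k$, chaining each non-tight $\nu_k$ back to $\nu_1$ through lower-indexed standard basis vectors. You instead manufacture, for each non-tight $k$, an auxiliary irreducible vector $z = -f_k + \mu_0 + \sum_{i \in B} f_i \in L$ that pairs positively with both $\nu_k$ and $\nu_1$, joining them in a two-hop path. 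Your verification that $z \in L$, its irreducibility via Lemma~\ref{Ch3:lem:generalirred}(i), and the pairings $z\cdot\nu_k \geq 1$ and $z\cdot\nu_1 = \norm{\mu_0} - [1\in B] \geq 1$ all check out. Both routes are sound; the paper's induction is leaner because it introduces no auxiliary vector and so has nothing extra to check, while your uniform linker $z$ trades the induction for those verifications. The core mechanics—irreducibility of the standard basis and the bound $\norm{\mu_0}\geq 2$—are identical in both.
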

\begin{proof}
Let $S=\{\nu_1, \dots, \nu_t,\mu_1, \dots, \mu_m\}$ be a standard basis for $L$. Suppose that $L=L_1 \oplus L_2$. Since every $x\in S$ is irreducible, we must have $x\in L_1$ or $x\in L_2$. We may assume that $\nu_1\in L_1$. Since $\nu_1 \cdot \mu_1 =-1$ and $\mu_{k}\cdot \mu_{k+1}=-1$ for all $1\leq k<m$, it follows that we must have $\{\nu_1,\mu_1, \dots, \mu_m\}\subseteq L_1$. Now consider $\nu_k$ for $k>1$. If $\sigma_k$ is tight, then $\nu_k=-f_k+f_{k-1}+\dots +f_1 + \mu_0$. In this case, $\nu_k\cdot \nu_1=\norm{\mu_0}-1\geq 1$ so we have $\nu_k \in L_1$. If $\sigma_k$ is not tight then $\nu_k=-f_k + \sum_{i\in A}f_i$ for some $A\subseteq \{1, \dots , k-1\}$. If $d=\min A$, then $\nu_k \cdot \nu_d =-1$. Since $d<k$, this allows us to prove by induction that $\nu_k\in L_1$. Thus we have $S\subseteq L_1$ and we can conclude that $L_1=L$. This shows that $L$ is indecomposable.
\end{proof}

\begin{example}\label{Ch3:exa:integerdecomp}
The requirement that $L$ is a non-integer changemaker lattice is necessary for Lemma~\ref{Ch3:lem:fracindecomp} to hold. One can easily construct examples of decomposable integer changemaker lattices. For example, we have
\[\langle f_1 + f_2 + 2f_3 \rangle^\bot = \langle f_1-f_2 \rangle \oplus \langle f_1+f_2-f_3 \rangle\subseteq \Z^3.\]
\end{example}

However in the case of integer lattices, we have the following.
\begin{lem}\label{Ch3:lem:reduciblebound}
If $L$ is a decomposable integer changemaker lattice with changemaker coefficients $1=\sigma_1 \leq \dots \leq \sigma_t$ and stable coefficients $2\leq \sigma_m \leq \dots \leq \sigma_t$, then $\sigma_m= m-1$.
\end{lem}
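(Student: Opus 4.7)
I will argue the contrapositive: assume $L=L_1\oplus L_2$ is a non-trivial orthogonal decomposition and deduce $\sigma_m=m-1$.

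Because $\sigma_1=\dots=\sigma_{m-1}=1$, the vectors $v_j := f_j-f_{j+1}$ for $1\leq j\leq m-2$ lie in $L$ and are irreducible by Lemma~\ref{Ch3:lem:generalirred}; moreover $v_j\cdot v_{j+1}=-1$, so they form a connected chain. An orthogonal decomposition respects irreducible vectors, so the whole chain lies in a single summand, which I label $L_1$. Any $u\in L_2$ then satisfies $u\cdot v_j=0$ for all $j$, forcing $u\cdot f_1=\dots=u\cdot f_{m-1}=c$ for some integer $c=c(u)$. My goal is to show that if $\sigma_m\neq m-1$ then every such $u$ must be zero, contradicting $L_2\neq 0$.

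Assuming $\sigma_m\neq m-1$ (so $\sigma_m\leq m-2$ or $\sigma_m=m$), I claim $u\cdot f_k=\sigma_k c$ for every $k\geq m$, which I prove by strong induction on $k$. Once established, plugging into $u\cdot w_0=0$ gives $c\sum_i\sigma_i^2=c\norm{w_0}=0$, so $c=0$ and hence $u=0$, the desired contradiction. The key observation fuelling the induction is that for any $\gamma\in L$ of the shape $-f_k+\sum_{i<k}a_if_i$ the identity $\gamma\cdot w_0=0$ forces $\sum_{i<k}a_i\sigma_i=\sigma_k$; combined with the inductive hypothesis $u\cdot f_i=\sigma_ic$ (for $i\leq m-1$ this uses $\sigma_i=1$), the condition $u\cdot\gamma=0$ automatically yields $u\cdot f_k=\sigma_kc$. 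So I need only produce, for each $k$, some irreducible vector of this shape lying in~$L_1$.

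For the base $k=m$: when $\sigma_m\leq m-2$, any $A\subseteq\{1,\dots,m-1\}$ with $|A|=\sigma_m$ gives $\alpha_A=-f_m+\sum_{i\in A}f_i\in L$, which is irreducible by Lemma~\ref{Ch3:lem:generalirred}; since $A$ is neither empty nor all of $\{1,\dots,m-1\}$, some pairing $\alpha_A\cdot v_j$ is nonzero, placing $\alpha_A\in L_1$. In the tight case $\sigma_m=m$, I use $\beta_m:=-f_m+2f_1+f_2+\dots+f_{m-1}$, which is irreducible by the argument of Lemma~\ref{Ch3:lem:tightstdvecirred} and satisfies $\beta_m\cdot v_1=1$, so $\beta_m\in L_1$. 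For the inductive step $k>m$, I construct $\gamma_k$ case by case: when $\sigma_k\leq m-2$, I use $-f_k+\sum_{i\in B}f_i$ with $B\subseteq\{1,\dots,m-1\}$, $|B|=\sigma_k$, connected to $L_1$ through $v_j$; when $\sigma_k=m-1$, I use $-f_k+f_1+\dots+f_{m-1}$, which pairs with $\alpha_A$ (or $\beta_m$) to give $\sigma_m\geq 2$ (respectively $m$), hence lies in $L_1$; and when $\sigma_k\geq m$, the changemaker condition lets me write $\sigma_k$ as a subset sum whose restriction to $\{1,\dots,m-1\}$ is neither empty nor full, reducing to the first case, or else I connect to $L_1$ through the already-verified $\gamma_{k-1}$ via a direct computation of standard-basis pairings.

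The main obstacle is the last bullet of the induction: for certain values of $\sigma_k$ with $k>m$, the obvious subset $B\subseteq\{1,\dots,m-1\}$ realising $\sigma_k$ may be unavailable, and one must chase connections through previously placed vectors to guarantee $\gamma_k\in L_1$. Because the bridging pairings always have magnitude controlled by $\sigma_m$ and the chain of $\nu_j$'s from the standard basis (Remark~\ref{Ch3:rem:stdbasisirred}), the assumption $\sigma_m\geq 2$ is exactly what keeps these pairings nonzero, so the induction closes. The case $m=2$ is separate and trivial: there the chain is empty and $\sigma_m\geq 2$ forces indecomposability by an elementary discriminant/norm computation, so the conclusion $\sigma_m=m-1$ is only required vacuously.
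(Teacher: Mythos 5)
Your overall framework is sound and genuinely different from the paper's: rather than showing the whole standard basis lands in one summand (the paper's route), you pin down any $u\in L_2$ coordinate by coordinate, using the identity $\gamma\cdot w_0 = 0$ (for $\gamma=-f_k+\sum_{i<k}a_if_i\in L$) to convert $u\cdot\gamma=0$ into $u\cdot f_k=\sigma_k c$, and then extract $c=0$ from $u\cdot w_0=c\norm{w_0}$. This is a clean idea and the algebra is correct.

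The gap is in the inductive step when $\sigma_k\ge m$, which is precisely the hard case and, when $\sigma_m$ is tight, the \emph{only} case for $k>m$. Your first proposed fix --- realize $\sigma_k$ by a subset sum whose restriction to $\{1,\dots,m-1\}$ is neither empty nor full --- is simply false in general. Take $m\ge 3$, $\sigma_m=m$ tight, and $\sigma_{m+1}=m$. Since $\sum_{i=1}^{m-1}\sigma_i=m-1<m$, any $A\subseteq\{1,\dots,m\}$ summing to $m$ must contain the index $m$, forcing $A=\{m\}$, so $A\cap\{1,\dots,m-1\}=\emptyset$. Your second fix --- pair with $\gamma_{k-1}$ --- also fails in general: continue the example with $\sigma_{m+2}=2m$ (valid by the changemaker bound). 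The only realizing subset is $A_{m+2}=\{m,m+1\}$, giving $\gamma_{m+2}=-f_{m+2}+f_{m+1}+f_m$; this pairs to zero with $\gamma_{m+1}=-f_{m+1}+f_m$, though it does pair with $\gamma_m=\beta_m$. So the link to $L_1$ is not through the immediately preceding vector, and you have given no argument that \emph{some} earlier vector always works. Also, the remark that ``$\sigma_m\geq 2$ is exactly what keeps these pairings nonzero'' misattributes what is doing the work: $\sigma_m\ge 2$ is just the definition of stable; the actual hypothesis $\sigma_m\neq m-1$ only enters at $k=m$, and the pairings for $k>m$ are nonzero for structural reasons about the standard basis, not because of $\sigma_m\ge 2$.

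What closes the gap is precisely the paper's case analysis on the standard basis vector $\nu_k=-f_k+f_{k-1}+\sum_{i\in A_k}f_i$: if $\sigma_k$ is tight then $\nu_k\cdot\nu_2>0$; if $\sigma_k=\sum_{i<k}\sigma_i$ then (using $\sigma_m\ne m-1$ to force $k>m$) $\nu_k\cdot\nu_m>0$; and if $\sigma_k<\sum_{i<k}\sigma_i$ then $\nu_k=-f_k+\sum_{i\in A}f_i$ for a proper nonempty $A$, and one of $\min A>1$ or $\min A^c>1$ gives a nonzero pairing with $\nu_{\min A}$ or $\nu_{\min A^c}$ respectively. You would need this same analysis, so your alternative auxiliary vectors save nothing. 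Finally, the $m=2$ case is not ``required vacuously'': there the conclusion $\sigma_2=m-1=1$ contradicts $\sigma_2\ge 2$, so the content is that such lattices are indecomposable, and this needs a proof; ``an elementary discriminant/norm computation'' is not one, while the paper's chain argument handles $m=2$ uniformly.
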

\begin{proof}
We will show that $L$ is indecomposable if $\sigma_m \ne m-1$. Let $S=\{\nu_2, \dots, \nu_t\}$ be a standard basis for $L$. By Remark~\ref{Ch3:rem:stdbasisirred}, every element of $S$ is irreducible. Suppose that $L=L_1 \oplus L_2$. As $x\in S$ is irreducible, we must have $x\in L_1$ or $x\in L_2$. We may assume that $\nu_2\in L_1$. Now we will show that if $\sigma_m\ne m-1$, then for every $k>2$, there is $l<k$ with $\nu_k \cdot \nu_l\ne 0$. Since $\nu_l\in L_1$ and $\nu_k \cdot \nu_l\ne 0$ implies $\nu_k\in L_1$. This allows us to deduce $S \subseteq L_1$ and hence that $L=L_1$, proving indecomposability.

Consider any $k>2$. We consider 3 possibilities separately. If $\sigma_k$ is tight, then $\nu_k=\nu_k=-f_k+f_{k-1}+\dots + f_2 +2f_1$ and we have $\nu_k \cdot \nu_2 \geq 1$. If $\sigma_k=\sum_{i=1}^{k-1} \sigma_i$, then $\nu_k=\nu_k=-f_k+f_{k-1}+\dots + f_2 +f_1$. Note that in this case we must have $k>m$, as we are assuming $\sigma_m\ne m-1$. Thus we have $\nu_k \cdot \nu_m\geq \norm{\nu_m}-2\geq 1$. It remains only to consider the case $\sigma_k<\sum_{i=1}^{k-1} \sigma_i$. In this case $\nu_k= -f_k + \sum_{i\in A} f_i$ for some $A \subsetneq \{1, \dots , k-1\}$. In particular, we must have either $\min A>1$ or $\min \{1, \dots , k-1\} \setminus A >1$. If $c=\min A>1$, then $\nu_c \cdot \nu_k=-1$. If $d=\min\{1, \dots , k-1\} \setminus A>1$, then since $\nu_k \cdot f_i=1$ for all $i<d$, we have $\nu_d \cdot \nu_k = \norm{\nu_d}-1>0$. This completes the proof.
\end{proof}

\section{Graph lattices}\label{Ch3:sec:graphlattices}
In this section, we collect the necessary results on graph lattices. The material in this section is largely based on that of Greene \cite[Section 3.2]{GreeneLRP}. There has, however, been some reworking since we find it convenient to avoid using the concept of a root vertex.

Let $G=(V,E)$ be a finite, connected, undirected graph with no self-loops. For a pair of disjoint subsets $R,S \subseteq V$, let $E(R,S)$ be the set of edges between $R$ and $S$. Define $e(R,S)=|E(R,S)|$. The {\em degree} of a vertex $v\in V$ is $d(v)=e(v,V\setminus \{v\})$.

\paragraph{} Let $\overline{\Lambda}(G)$ be the free abelian group generated by $v\in V$. Define a symmetric bilinear form on $\overline{\Lambda}(G)$ by
\[
v\cdot w =
  \begin{cases}
   d(v)          & \text{if } v=w \\
   -e(v,w)       & \text{if } v\ne w.
  \end{cases}
\]
We will use the notation $[R]=\sum_{v\in R}v$, for $R\subseteq V$. It follows from the above definition that
\[
v\cdot [R] =
  \begin{cases}
   -e(v,R)            & \text{if } v\notin R \\
   e(v,V\setminus R)  & \text{if } v\in R,
  \end{cases}
\]
for any $v\in V$ and any $R\subseteq V$. It follows that $[V]\cdot x= 0$ for all $x \in \overline{\Lambda}(G)$. If we define the {\em graph lattice} of $G$ to be
\[\Lambda(G):= \frac{\overline{\Lambda}(G)}{\mathbb{Z}[V]},\]
then the bilinear form on $\overline{\Lambda}(G)$ descends to $\Lambda(G)$. It follows from the assumption that $G$ is connected that the pairing on $\Lambda(G)$ is positive-definite\footnote{Taking $y=x=\sum_{v\in V}b_v v$ in \eqref{Ch3:eq:prod1} shows that $x\cdot x \leq 0$ if and only if $b_v$ is constant on the connected components of $G$. As $G$ is connected, this shows $x\cdot x\leq 0$ only if $x=0\in \Lambda(G)$.}. This makes $\Lambda(G)$ into an integer lattice. Henceforth, we will abuse notation by using $v$ to denote both the vertex $v \in V$ and its image in $\Lambda(G)$.

\paragraph{}We compute the product of arbitrary $x,y \in \Lambda(G)$. Let $x=\sum_{v\in V} b_v v$ and $y=\sum_{v\in V} c_v v$ be elements of $\Lambda(G)$. For any $v\in V$,
\[v\cdot y=c_v d(v) - \sum_{u\ne v} e(v,u)c_u = \sum_{u \in V} (c_v-c_u)e(v,u).\]
Therefore,
\[x \cdot y = \sum_{v \in V}b_v \sum_{u\in V}(c_v-c_u)e(v,u)=\sum_{u,v \in V}b_v(c_v-c_u)e(v,u).\]
Since we also have
\[x \cdot y =\sum_{u,v \in V}c_u(b_u-b_v)e(v,u),\]
we can express the pairing $x\cdot y$ as
\begin{equation}\label{Ch3:eq:prod1}
x \cdot y = \frac{1}{2}\sum_{u,v \in V}(c_v-c_u)(b_v-b_u)e(v,u).
\end{equation}
If $y=[R]-x$ for some $R\subseteq V$, then
\[
c_v =
  \begin{cases}
   1-b_v            & \text{if } v\in R \\
   -b_v       & \text{if } v\notin R.
  \end{cases}
\]
Therefore \eqref{Ch3:eq:prod1} shows that

\begin{equation}\label{Ch3:eq:usefulformula}
([R]-x)\cdot x =
\sum_{u\in R, v\in V\setminus R}b_{v,u} (1-b_{v,u})e(v,u)
-\frac{1}{2}\sum_{u,v\in R}b_{v,u}^2 e(v,u)
-\frac{1}{2}\sum_{u,v \in V\setminus R}b_{v,u}^2 e(v,u),
\end{equation}
where $b_{v,u}=b_v-b_u$. Examining each term in \eqref{Ch3:eq:usefulformula} individually, we see that the right hand side is at most zero. This inequality will be used so often that we will record it as the following lemma.
\begin{lem}\label{Ch3:lem:usefulbound}
Let $x=[R]$ be a sum of vertices, then for any $z\in \Lambda(G)$, we have
\[(x-z)\cdot z\leq 0.\]
\end{lem}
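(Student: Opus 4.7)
The plan is to read the conclusion straight off formula \eqref{Ch3:eq:usefulformula}, which was derived in the paragraph immediately preceding the lemma. Choose any lift of $z$ to $\overline{\Lambda}(G)$ and write it as $z=\sum_{v\in V}b_v v$. Substituting this $z$ in place of $x$ in \eqref{Ch3:eq:usefulformula} gives an explicit expression for $([R]-z)\cdot z$ as a sum of three pieces: a cross-term indexed by edges in $E(R,V\setminus R)$, and two self-terms indexed by edges inside $R$ and inside $V\setminus R$ respectively.

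The key step is then a term-by-term sign inspection. The two self-terms come with an overall factor of $-\tfrac{1}{2}$ multiplying $b_{v,u}^2 e(v,u)\geq 0$, so they are manifestly at most $0$. For the cross-term, each summand is $b_{v,u}(1-b_{v,u})e(v,u)$ with $b_{v,u}=b_v-b_u\in\Z$; the elementary fact that $k(1-k)\leq 0$ for every integer $k$ (with equality iff $k\in\{0,1\}$) together with $e(v,u)\geq 0$ shows that each of these summands is also nonpositive. Summing the three nonpositive contributions gives the required inequality.

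There is no real obstacle beyond this sign check. The only minor subtlety is that the formula is written in terms of a representative in $\overline{\Lambda}(G)$ while $z$ lives in the quotient $\Lambda(G)$; however, replacing the chosen lift by itself plus any multiple of $[V]$ does not alter the differences $b_{v,u}=b_v-b_u$, so the right-hand side is independent of the lift and the argument is well-defined on $\Lambda(G)$.
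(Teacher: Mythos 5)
Your proof is correct and takes exactly the paper's approach: the paper records this lemma precisely by deriving formula \eqref{Ch3:eq:usefulformula} and then observing that each of its three terms is individually nonpositive, which is the same term-by-term sign inspection you carry out. Your remark about independence of the lift of $z$ is a small extra detail that the paper leaves implicit, but the argument is otherwise identical.
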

The irreducible vectors in $\Lambda(G)$ will be of particular interest.
\begin{lem}\label{Ch3:lem:irreducible}
The vector $x \in \Lambda(G)\setminus \{0\}$ is irreducible if and only if $x=[R]$ for some $R\subseteq V$ such that $R$ and $V\setminus R$ induce connected subgraphs of $G$.
\end{lem}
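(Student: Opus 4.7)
The plan is to prove both implications by exploiting the inequality in Lemma~\ref{Ch3:lem:usefulbound} and the sign decomposition of the pairing formula \eqref{Ch3:eq:usefulformula}. Throughout, I will identify elements of $\Lambda(G)$ with tuples of integer coefficients $(b_v)_{v\in V}$, remembering that we can add multiples of $[V]$.

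For the ``if'' direction, suppose $x=[R]$ with both $R$ and $V\setminus R$ inducing connected subgraphs, and suppose for contradiction that $x=y+z$ with $y,z\ne 0$ and $y\cdot z\ge 0$. Writing $z=[R]-y$ and applying Lemma~\ref{Ch3:lem:usefulbound} gives $y\cdot z\le 0$, hence $y\cdot z=0$. Expanding this using \eqref{Ch3:eq:usefulformula} with $x$ replaced by $y$: since every summand on the right-hand side is non-positive (for integer-valued $b_v$), each must vanish. This forces (i) $b_v=b_u$ for every edge with both endpoints in $R$, (ii) the analogous condition on $V\setminus R$, and (iii) $b_v-b_u\in\{0,1\}$ for every edge crossing from $V\setminus R$ to $R$. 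By connectivity of the induced subgraphs, $b$ is constant on $R$ (value $a$) and constant on $V\setminus R$ (value $a+\varepsilon$ with $\varepsilon\in\{0,1\}$). Subtracting the multiple $a[V]=0$, we get $y=0$ or $y=[V\setminus R]=-x$; the latter forces $z=2x$ and $y\cdot z=-2\|x\|^2=0$, contradicting $x\ne 0$. So $y=0$, contradicting irreducibility of the decomposition.

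For the ``only if'' direction, suppose $x\in\Lambda(G)\setminus\{0\}$ is irreducible. Choose a representative $x=\sum b_v v$ with $\min_v b_v=0$, and let $M=\max_v b_v\ge 1$. The first sub-step is to show $M=1$. Assume $M\ge 2$ and set $R=\{v:b_v\ge 1\}$, $y=[R]$, $z=x-y$. A direct calculation using \eqref{Ch3:eq:prod1} gives
\[y\cdot z \;=\; \sum_{v\in R,\,u\notin R}(b_v-b_u-1)\,e(v,u),\]
and every summand is $\ge 0$ since $b_v\ge 1>0\ge b_u$. So $y\cdot z\ge 0$; moreover $y\ne 0$ (since $\emptyset\subsetneq R\subsetneq V$) and $z\ne 0$ (its coefficients are $b_v-1\ge 1$ on some $v\in R$ and $0$ on $V\setminus R$, hence non-constant), contradicting irreducibility. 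Therefore $M=1$ and $x=[R]$ for $R=\{v:b_v=1\}$.

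Finally, I need to see that both $R$ and $V\setminus R$ induce connected subgraphs. If $R$ were disconnected, write $R=R_1\sqcup R_2$ as a union of unions of components of the induced subgraph, so that $e(R_1,R_2)=0$. Then $[R_1]\cdot[R_2]=-e(R_1,R_2)=0$, and both $[R_1],[R_2]$ are non-zero in $\Lambda(G)$, again contradicting irreducibility. The same argument applied to $-x=[V\setminus R]$ (which is irreducible whenever $x$ is) handles $V\setminus R$. The main obstacle is really the height-reduction step $M\le 1$; once the explicit sign of $y\cdot z$ is understood, the rest is bookkeeping.
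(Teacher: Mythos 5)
Your proof follows the same overall strategy as the paper's (both directions via Lemma~\ref{Ch3:lem:usefulbound} and the sign decomposition), and the conclusion is correct, but there are two points worth noting.

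In the ``only if'' direction you take $R=\{v: b_v\geq 1\}$ and show $M=\max b_v=1$, whereas the paper takes $R$ to be the top level set $\{v : b_v=M\}$ and shows $(x-[R])\cdot [R]\geq 0$ directly. Both are valid; your version is arguably slightly cleaner because the $z\ne 0$ check is immediate when $M\geq 2$, and the computation $y\cdot z=\sum_{v\in R,u\notin R}(b_v-b_u-1)e(v,u)$ is correct (since $b_u=0$ for $u\notin R$, each summand is $b_v-1\geq 0$).

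In the ``if'' direction your argument is built on \eqref{Ch3:eq:usefulformula} exactly as printed, and there you run into a subtle problem: the first sum in \eqref{Ch3:eq:usefulformula} should read $b_{u,v}(1-b_{u,v})$ with $u\in R,\ v\in V\setminus R$, not $b_{v,u}(1-b_{v,u})$; as a sanity check, take $x=[R]$ itself, so $([R]-x)\cdot x=0$, whereas the printed formula gives $-2\norm{[R]}$. (The paper's own proof body silently switches to $c_{u,v}$.) Following the printed formula you deduce the reversed cross-edge constraint $b_v-b_u\in\{0,1\}$ for $v\in V\setminus R,\ u\in R$, which leads you to the candidate set $y\in\{0,-[R]\}$ rather than the correct $y\in\{0,[R]\}$. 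You then rule out $y=-[R]$ by a direct computation; this step is right, and since the case you miss ($y=[R]$, i.e.\ $z=0$) also contradicts $z\ne 0$, your final conclusion stands. But the intermediate implication ``$y\cdot z=0\Rightarrow$ each summand of the printed formula vanishes'' is not sound, because the printed expression is not equal to $y\cdot z$. If you correct the index to $b_{u,v}$, the constraint becomes $b_u-b_v\in\{0,1\}$, the two candidates become $y=0$ and $y=[R]$, and both are dismissed immediately by the hypotheses $y\ne 0$, $z\ne 0$ with no auxiliary computation needed.
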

\begin{proof}
Write $x=\sum_{v\in V} a_v v$. Since $[V]=0$, we can assume the $a_v$ are chosen so that $\min_{v\in V} a_v = 0$.

Suppose $x\ne 0$ is irreducible. Let $a=\max_{v \in V}a_v\geq 1$ and $R=\{v|a_v=a\}$.
By direct computation, we get
\begin{align*}
(x-[R])\cdot [R] &= \sum_{v\in R} \bigg((a-1)d(v) -\sum_{u\in R\setminus \{v\}}(a-1) e(u,v)-\sum_{u\in V\setminus R}a_u e(u,v)\bigg)\\
&\geq (a-1)\sum_{v\in R} \bigg(d(v) -\sum_{u\in V\setminus \{v\}}e(u,v)\bigg)\\
&= 0.
\end{align*}
Combining this with the irreducibility of $x$, it follows that $x=[R]$. If the subgraph induced by $R$ is not connected there would be $S,T$ disjoint with $R=S\cup T$ and $e(S,T)=0$, but this would give $x=[S]+[T]$ and $[S]\cdot [T]=0$. This also shows the subgraph induced by $V\setminus R$ must be connected, since $x$ irreducible implies that $-x=[V]-x$ is irreducible.
\paragraph{} To show the converse, we apply \eqref{Ch3:eq:usefulformula} in the case where $R$ and $V\setminus R$ induce connected subgraphs. Let $y= \sum_{v\in V} c_v v$ be such that $([R]-y)\cdot y\geq 0$. By Lemma~\ref{Ch3:lem:usefulbound}, this means $([R]-y)\cdot y = 0$. In particular, using \eqref{Ch3:eq:usefulformula}, we have
\begin{equation*}
([R]-y)\cdot y=\sum_{u\in R, v \in V\setminus R} c_{u,v}(1-c_{u,v})e(u,v)-\frac{1}{2}\sum_{u,v \in V\setminus R}c_{u,v}^2e(u,v)-\sum_{u,v \in R}c_{u,v}^2 e(u,v)=0,
\end{equation*}
where $c_{u,v}=c_u-c_v$. This means every summand in the above equation must be 0. Thus if $u$ and $v$ are both in $R$ or $V\setminus R$ and $e(u,v)>0$, then $c_u=c_v$. Thus since $R$ and $V\setminus R$ induce connected subgraphs, $c_v$ is constant on $R$ and $V\setminus R$. If $u\in R$ and $v\in V \setminus R$, then $c_u=c_v$ or $c_u=1+c_v$. Thus if $c=c_v$ for some $v \in V\setminus R$, then $y=c[V]=0$ or $y=c[V\setminus R] + (c+1)[R]=[R]$. Thus $[R]$ is irreducible.
\end{proof}
Recall that a connected graph is {\em 2-connected} if it can not be disconnected by deleting a vertex. This property is characterised nicely in the corresponding graph lattice.
\begin{lem}\label{Ch3:lem:2connectgraphlat}
The following are equivalent:
\begin{enumerate}[(i)]
\item The graph $G$ is 2-connected;
\item Every vertex $v\in V$ is irreducible;
\item The lattice $\Lambda(G)$ is indecomposable.
\end{enumerate}
\end{lem}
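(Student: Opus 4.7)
The plan is to prove the three implications $(i)\Rightarrow(ii)\Rightarrow(iii)\Rightarrow(i)$, making full use of Lemma~\ref{Ch3:lem:irreducible} for the first and the structural information that a cut vertex provides for the last.

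For $(i)\Leftrightarrow(ii)$, I would simply invoke Lemma~\ref{Ch3:lem:irreducible} applied to $R=\{v\}$. The singleton $\{v\}$ always induces a connected subgraph, so the lemma tells us that $v$ is irreducible in $\Lambda(G)$ if and only if $V\setminus\{v\}$ induces a connected subgraph of $G$. Since $G$ is connected, being $2$-connected is exactly the condition that $G\setminus v$ is connected for every vertex $v$, giving the equivalence immediately.

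For $(ii)\Rightarrow(iii)$, I would argue by contrapositive. Suppose $\Lambda(G)=L_1\oplus L_2$ is a non-trivial orthogonal splitting. For any irreducible $x\in\Lambda(G)$, write $x=x_1+x_2$ with $x_i\in L_i$; orthogonality gives $x_1\cdot x_2=0$, so irreducibility forces one of them to vanish, i.e.\ $x\in L_1$ or $x\in L_2$. Applying this to every vertex, $V$ partitions as $V_1\sqcup V_2$ with $V_i\subseteq L_i$. If $u\in V_1$ and $w\in V_2$ are adjacent then $u\cdot w=-e(u,w)<0$, contradicting $L_1\perp L_2$. Hence no edges cross the partition, so connectivity of $G$ puts every vertex in the same $L_i$; since the vertices span $\Lambda(G)$, the other summand is zero, a contradiction.

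For $(iii)\Rightarrow(i)$ I would again argue by contrapositive, and this is where the genuine work sits: given a cut vertex $v$, I need to write down an explicit orthogonal splitting of $\Lambda(G)$. Let $V\setminus\{v\}=A\sqcup B$ with $A,B\neq\emptyset$ and $e(A,B)=0$, and define
\[
L_A=\langle a:a\in A\rangle\subseteq\Lambda(G),\qquad L_B=\langle b:b\in B\rangle\subseteq\Lambda(G).
\]
Orthogonality is immediate from $a\cdot b=-e(a,b)=0$. The main thing to verify is that the sum is direct and exhausts $\Lambda(G)$. For this I would lift to $\overline{\Lambda}(G)$ and exploit the fact that $[V]=0$ is the only relation: any linear relation among $A\cup B$ in $\Lambda(G)$ lifts to $\sum_A c_a a+\sum_B d_b b=n[V]$ in $\overline{\Lambda}(G)$, and the coefficient of $v$ forces $n=0$, so all $c_a,d_b=0$; this simultaneously proves $L_A\cap L_B=0$, $\operatorname{rk}L_A=|A|$, and $\operatorname{rk}L_B=|B|$. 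Since $v=-[A]-[B]\in L_A+L_B$ and the vertices generate $\Lambda(G)$, a rank count gives $\Lambda(G)=L_A\oplus L_B$ with both factors non-trivial, so $\Lambda(G)$ is decomposable. The only mild obstacle is bookkeeping the relation $[V]=0$ carefully when passing between $\overline{\Lambda}(G)$ and $\Lambda(G)$; everything else is routine.
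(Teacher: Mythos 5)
Your proof is correct and follows essentially the same route as the paper's: $(i)\Leftrightarrow(ii)$ via Lemma~\ref{Ch3:lem:irreducible}, then $(ii)\Rightarrow(iii)$ and $(iii)\Rightarrow(i)$ by contrapositive, with the cut vertex producing the orthogonal splitting spanned by the two sides. The only cosmetic difference is in closing $(ii)\Rightarrow(iii)$: you note that adjacent vertices across the partition would have negative pairing and then invoke connectivity of $G$, whereas the paper derives $[R_1]=[R_2]=0$ from $[R_1]+[R_2]=[V]=0$ and positive-definiteness; both are equally short and valid.
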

\begin{proof} The equivalence $(i)\Leftrightarrow (ii)$ follows from Lemma \ref{Ch3:lem:irreducible}.

To show $(ii) \Rightarrow (iii)$, suppose that we may write $\Lambda(G)$ as the orthogonal direct sum $\Lambda(G)= L_1 \oplus L_2$. For any irreducible $x\in \Lambda(G)$, we must have $x\in L_1$ or $x\in L_2$. Therefore if every $v\in V$ is irreducible, then the sets $R_i=\{v \in V | v\in L_i\}$ partition $V$. Therefore $[R_1]+[R_2]=[V]=0$ and $[R_1]\cdot [R_2]=0$. This implies that $[R_1]=[R_2]=0$, and hence that the partition is trivial, i.e. we have either $R_1=V$ or $R_2=V$. Therefore $L_1=\Lambda(G)$ or $L_2=\Lambda(G)$ and $\Lambda(G)$ is indecomposable.

Now we prove the implication $(iii)\Rightarrow (i)$. Suppose $G$ is not 2-connected, so there is a vertex $v$ such that $G\setminus\{v\}$ is disconnected. Suppose that $G\setminus\{v\}$ has non-empty components $G_1$ and $G_2$. Then we have $\Lambda(G)=L_1\oplus L_2$, where $L_1$ and $L_2$ are the sublattices spanned by the vertices of $G_1$ and $G_2$ respectively. This shows that if $G$ is not 2-connected, then $\Lambda(G)$ is decomposable.
\end{proof}

Recall that an edge, $e$, is a {\em cut-edge} if $G\setminus \{e\}$ is disconnected.

\begin{lem}\label{Ch3:lem:norm1vectors}
The lattice $\Lambda(G)$ contains a vector of norm one if and only if $G$ contains a cut edge.
\end{lem}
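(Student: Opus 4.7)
The plan is to establish both directions separately, using the irreducibility characterisation from the preceding lemma together with the explicit norm formula $\norm{[R]} = e(R,V\setminus R)$ for $R\subseteq V$. This formula follows immediately from the computation
\[ [R]\cdot [R] = \sum_{v\in R}\left(d(v) - \sum_{u\in R\setminus\{v\}} e(u,v)\right) = \sum_{v\in R} e(v, V\setminus R) = e(R,V\setminus R), \]
which I would record at the outset.

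For the forward direction, suppose $G$ has a cut edge $e$, and let $R\subseteq V$ be the vertex set of one component of $G\setminus\{e\}$. Then $\emptyset\subsetneq R\subsetneq V$, so $[R]\ne 0$ in $\Lambda(G) = \overline{\Lambda}(G)/\Z[V]$, and the only edge between $R$ and $V\setminus R$ is $e$ itself, giving $\norm{[R]} = e(R,V\setminus R) = 1$.

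For the reverse direction, suppose $x\in\Lambda(G)$ has $\norm{x}=1$. I would first observe that $x$ must be irreducible: if $x=y+z$ with $y,z\ne 0$ and $y\cdot z\ge 0$, then since $\Lambda(G)$ is positive-definite and integer-valued we would have $1 = \norm{x} = \norm{y}+\norm{z}+2y\cdot z \ge 2$, a contradiction. By Lemma~\ref{Ch3:lem:irreducible}, $x = \pm[R]$ for some $R\subseteq V$ such that both $R$ and $V\setminus R$ induce connected subgraphs of $G$. Then $1 = \norm{x} = e(R,V\setminus R)$, so there is exactly one edge $e$ with one endpoint in $R$ and the other in $V\setminus R$. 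Deleting $e$ separates the connected subgraphs on $R$ and $V\setminus R$, so $e$ is a cut edge.

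The main (minor) obstacle is just keeping track of the quotient by $\Z[V]$: one must verify both that the candidate $[R]$ produced in the forward direction is nonzero in $\Lambda(G)$ (which is automatic from $R\ne\emptyset, V$), and that the appeal to Lemma~\ref{Ch3:lem:irreducible} in the reverse direction is valid (which requires $x\ne 0$, guaranteed by $\norm{x}=1\ne 0$). Everything else reduces to the norm formula above together with the irreducibility characterisation already established.
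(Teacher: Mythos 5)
Your proof is correct and follows essentially the same approach as the paper: both directions go through the norm formula $\norm{[R]} = e(R,V\setminus R)$ together with the irreducibility characterisation of Lemma~\ref{Ch3:lem:irreducible}. You are slightly more explicit than the paper in two places — you spell out why a norm-one vector must be irreducible (the positive-definiteness argument, which the paper asserts without justification), and you track that $R\ne\emptyset, V$ so that the edge you find really does disconnect $G$ — but these are the same steps, just made fully rigorous.
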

\begin{proof}
Suppose $\Lambda(G)$ contains a vector $z$ with $\norm{z}=1$. Since any vector of norm one is irreducible, Lemma~\ref{Ch3:lem:irreducible} shows that there is a subgraph $G_1$ such that $z=[G_1]$. Since $\norm{z}=e(G_1,G\setminus G_1)=1$, we see there is a single edge between $G_1$ and $G\setminus G_1$. This edge is necessarily a cut-edge. Conversely, suppose there is a cut-edge in $G$. This means there is a subgraph $G_1$ such that $e(G_1,G\setminus G_1)=1$. If $z=[G_1]$, then
\[\norm{z}=e(G_1,G\setminus G_1)=1,\]
giving a vector of norm 1 in $\Lambda(G)$.
\end{proof}

\begin{lem}\label{Ch3:lem:cutedge}
Suppose that $G$ is contains no cut-edges and there is an irreducible vertex $v$ such that we can find $x,y\in \Lambda(G)$, with $v=x+y$ and $x\cdot y=-1$. Then there is a cut edge $e$ in $G\setminus \{v\}$ and if $R,S$ are the vertices of the two components of $(G\setminus \{v\})\setminus\{e\}$ then $\{x,y\}=\{[R]+v,[S]+v\}$. Furthermore, there are unique vertices $u_1,u_2\ne v$, with $x\cdot u_1=y\cdot u_2=1$, and any vertex $w \notin \{v,u_1,u_2\}$ satisfies $w\cdot x,w\cdot y\leq 0$.
\end{lem}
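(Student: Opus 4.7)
The plan is to expand $x \cdot y = -1$ in coefficient coordinates and exploit the rigidity of achieving that value. Since $v$ is irreducible, Lemma~\ref{Ch3:lem:irreducible} gives that $V \setminus \{v\}$ induces a connected subgraph of $G$. Pick a representative $x = \sum_{w \in V} b_w w \in \overline{\Lambda}(G)$ normalised so that $b_v = 0$, so that the coefficients of $y = v - x$ are $c_v = 1$ and $c_w = -b_w$ for $w \neq v$. Substituting into \eqref{Ch3:eq:prod1} and grouping summands according to whether $v$ appears among the two indices, one obtains
\[
x \cdot y \;=\; -\sum_{w \neq v} b_w(1 + b_w)\, e(v,w) \;-\; \sum_{\{p,q\} \subseteq V \setminus \{v\}} (b_p - b_q)^2\, e(p,q),
\]
where the second sum is over unordered pairs. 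Every summand is a non-negative integer, since $b_w(1+b_w) \geq 0$ for every integer $b_w$, with equality exactly when $b_w \in \{0,-1\}$.

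The key observation is that the non-zero values of $b_w(1+b_w)$ are all at least $2$, so no single term in the first sum can contribute $-1$: the first sum must therefore vanish, forcing $b_w \in \{0,-1\}$ for every neighbour $w$ of $v$, and the second sum must equal $-1$. The latter condition singles out a unique edge $\{u_0,w_0\}$ of $V \setminus \{v\}$ with $e(u_0,w_0)=1$ and $|b_{u_0} - b_{w_0}|=1$, while every other edge of $V \setminus \{v\}$ joins vertices with equal $b$-values. Because $V \setminus \{v\}$ is connected, if $\{u_0,w_0\}$ were not a cut edge of $G \setminus \{v\}$ one could find a path from $u_0$ to $w_0$ avoiding it along which $b$ is constant, contradicting $b_{u_0}\neq b_{w_0}$. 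So $\{u_0,w_0\}$ is such a cut edge, and writing $R$ and $S$ for the two components of $(G\setminus\{v\})\setminus\{u_0,w_0\}$ with $u_0 \in R$ and $w_0 \in S$, the $b$-values are constant on each component.

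The no-cut-edge hypothesis on $G$ is used precisely at this point: if all neighbours of $v$ lay in $R$, removing $\{u_0,w_0\}$ would disconnect $S$ from $R\cup\{v\}$ in $G$ as well, making $\{u_0,w_0\}$ a cut edge of $G$. Thus $v$ has neighbours on both sides, and combined with the constraint $b_w \in \{0,-1\}$ for such $w$ this forces $\{b_{u_0},b_{w_0}\} = \{0,-1\}$. After interchanging the labels of $R$ and $S$ if necessary, we may take $b\equiv 0$ on $R\cup\{v\}$ and $b\equiv -1$ on $S$, giving $x = -[S]$ in $\overline{\Lambda}(G)$ and hence $x = v+[R]$ in $\Lambda(G)$ via $[V] = v+[R]+[S]=0$; consequently $y = v-x = v+[S]$.

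The remaining assertions fall out of a direct calculation. For $w \in R$ one computes $w \cdot x = e(w,S)$, which equals $1$ precisely when $w = u_0$ (as $\{u_0,w_0\}$ is the only edge between $R$ and $S$) and $0$ for every other $w \in R$; for $w \in S$ one gets $w \cdot x = -e(w,v) - e(w,R) \leq 0$. Analogous formulas for $y$ give the unique $u_2 = w_0$ with $w_0 \cdot y = 1$ and $w \cdot y \leq 0$ for all other $w \neq v$. The main obstacle is the rigidity step: one must show that among the non-positive integer summands, the only way to sum to exactly $-1$ is to have the first sum vanish and the second sum pick out a single edge of multiplicity one, after which the no-cut-edge hypothesis on $G$ is exactly the input needed to force the explicit form of $x$ and $y$.
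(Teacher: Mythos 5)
Your proof is correct but takes a genuinely different route from the paper's. The paper first proves that $x$ and $y$ are themselves irreducible: writing $x=z+w$ with $z\cdot w\geq 0$, it uses the irreducibility of $v$ to reduce to the case $w=v$, $z=-y$, and then invokes the absence of cut-edges via Lemma~\ref{Ch3:lem:norm1vectors} (no norm-one vectors in $\Lambda(G)$) to force $z=0$. It then applies the classification of irreducible vectors (Lemma~\ref{Ch3:lem:irreducible}) to both $x$ and $y$, writes $x=[R']$, $y=[S']$, deduces $R'\cup S'=V$ and $R'\cap S'=\{v\}$ from $x+y=v$, and computes $x\cdot y=-e(R,S)$. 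You instead expand $x\cdot y$ directly via \eqref{Ch3:eq:prod1} in coordinates normalized by $b_v=0$ and run a rigidity argument: each summand is non-positive (a minor slip — you wrote ``non-negative'', but it is $b_w(1+b_w)$ and $(b_p-b_q)^2$ that are non-negative; after the minus signs the summands are non-positive), and since nonzero terms of the first sum are at most $-2$, the total $-1$ forces the first sum to vanish and the second to equal $-1$, pinning down the coefficient pattern. You use the no-cut-edge hypothesis in a different place — to guarantee $v$ has neighbours in both components of $(G\setminus\{v\})\setminus\{u_0,w_0\}$ — rather than to preclude norm-one vectors. Your route needs Lemma~\ref{Ch3:lem:irreducible} only for $v$ (to know $V\setminus\{v\}$ is connected), while the paper's also needs it for $x$ and $y$; yours is more hands-on with coefficients, the paper's more conceptual, but both are valid.
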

\begin{proof}
We will use the irreducibility of $v$ to show that $x$ and $y$ are irreducible. Suppose that we can write $x=z+w$ with $z\cdot w\geq 0$. Since $x\cdot y =-1$, it follows that $z\cdot y\geq 0$ or $w\cdot y\geq 0$. Without loss of generality, we may assume that $w\cdot y\geq 0$ and hence that $w\cdot (y+z)\geq 0$. Using the irreducibility of $v$, this implies either $w=0$ or $y+z=0$. If we assume $w\ne 0$, then it follows that $w=v$ and $y=-z$. Thus from $x\cdot y=-1$ and $w.z\geq 0$, we obtain $z\cdot (v+z)=1$ and  $v.z\geq 0$. Combining these shows that $\norm{z}\leq 1$. As $G$ contains no cut-edges, Lemma~\ref{Ch3:lem:norm1vectors} shows that $\Lambda(G)$ does not contain any vectors of norm 1, so we can conclude that $z=0$. Therefore we have shown that $z=0$ or $w=0$, so we have shown that $x$ is irreducible. An identical argument shows that $y$ is irreducible.

\paragraph{} As $x$ and $y$ are irreducible, Lemma \ref{Ch3:lem:irreducible} shows that they take the form $x=[R']$ and $y=[S']$, for some $R',S' \subseteq V$ satisfying
\[v=[R']+[S']=[R' \cup S'] + [R' \cap S'].\]
Since $x,y \ne 0$, we must have $R' \cup S'=V$ and $R' \cap S'=\{v\}$. Thus, $x=v+[R]$ and $y=v+[S]$, where $R$ and $S$ are disjoint and $R\cup S=V\setminus\{v\}$. Moreover, using irreducibility and Lemma \ref{Ch3:lem:irreducible} again, it follows that  subgraphs induced by $R,S, R\cup\{v\}$ and $S\cup\{v\}$ must all be connected. Since $[R]+[S]=[V]-v=-v$, it follows that
\begin{equation*}
x\cdot y=\norm{v}+v\cdot ([R]+[S]) + [R]\cdot[S] =[R]\cdot[S]=-e(R,S)=-1.
\end{equation*}
This gives a unique edge from $R$ to $S$. The rest of the lemma follows by taking $u_1\in R$ and $u_2\in S$ to be the end points of this edge.
\end{proof} 
\chapter{Alternating diagrams and tangles}\label{chap:altdiags}
This chapter is spent developing the background material on alternating diagrams and tangles that we will require in subsequent chapters.
\section{The Goeritz form}\label{sec:altdiagrams}
Given a diagram $D$ of a knot or link $L$, we get a division of the plane into connected regions. These regions can be coloured black and white in a chessboard manner. There are two possible choices of colouring and both of these give an incidence number, $\mu(c)\in \{\pm 1\}$, for each crossing $c$ of $D$ according to the convention in Figure~\ref{Ch4:fig:incidencenumber}.
 \begin{figure}[ht]
  \centering
  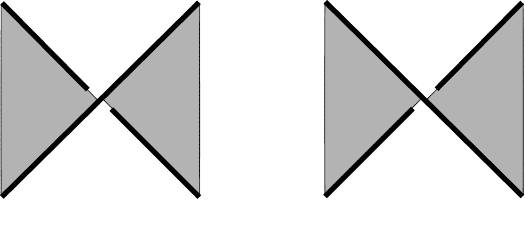
 \caption{The incidence number of a crossing.}
 \label{Ch4:fig:incidencenumber}
\end{figure}
We construct a planar graph, $\Gamma_D$, by drawing a vertex in each white region and an edge $e$ for every crossing $c$ between the two white regions it joins and we define $\mu(e):=\mu(c)$. We call this the {\em white graph} corresponding to $D$. This gives rise to a {\em Goeritz matrix} $G_D=(G_{ij})$ defined by labeling the vertices of $\Gamma_D$ by $v_1,\dots , v_{r+1}$ and, for $1\leq i,j \leq r$, setting \cite[Chapter~9]{lickorish1997introduction}
\[g_{ij}=
\begin{cases}
\sum_{e \in E(v_i,v_j)}\mu(e) & i\ne j\\
- \sum_{e \in E(v_i, \Gamma_D\setminus v_i)}\mu(e) & i=j.
\end{cases}
\]
Although the Goeritz matrix depends on the choice of diagram, its determinant does not. Hence we can define the {\em determinant} of $L$ by
 \[\det L= |\det G_D|.\]
 The other invariant we wish to compute from the Goeritz matrix is the signature.
 \begin{thm}[Gordon and Litherland \cite{gordon1978signature}]\label{Ch4:thm:sigformula}
 Let $D$ be a diagram for a link $L$ that is shaded and oriented so that there are $n_+$ positive crossings of incidence -1 and $n_-$ negative crossings of incidence +1. Let $\operatorname{sig}(G_D)$ be the signature of the Goeritz matrix $G_D$. Then the signature can be computed by the formula,
 \begin{equation}\label{Ch4:eq:fullsigformula}
\sigma(L)=\operatorname{sig}(G_D) + n_- -n_+.
\end{equation}
\end{thm}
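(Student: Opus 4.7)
The plan is to realize $G_D$ as the matrix of the Gordon--Litherland bilinear form on the first homology of the white checkerboard surface of $D$, and then apply the main theorem of \cite{gordon1978signature}. First I would construct the white checkerboard surface $F \subseteq S^3$: its plaques are disks spanning the bounded white regions of $D$, joined by twisted bands at the crossings which record the over/under information. This is a spanning surface for $L$ (in general non-orientable), and the bounded white regions, which index the vertices $v_1, \dots, v_r$ of $\Gamma_D$, yield a basis of $H_1(F) \cong \mathbb{Z}^r$ after a deformation retraction that collapses the unbounded region.

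Next I would show that with respect to this basis the Goeritz matrix $G_D$ equals the matrix of the Gordon--Litherland symmetric bilinear form
\begin{equation*}
\mathcal{G}_F(a,b) = \mathrm{lk}(\tau a, b),
\end{equation*}
where $\tau$ denotes the transfer induced by the orientation double cover of a tubular neighbourhood of $F$. This is a local calculation: an off-diagonal entry $g_{ij}$ picks up a contribution $\mu(c)$ from each crossing $c$ joining the regions corresponding to $v_i$ and $v_j$, while the diagonal entries are forced by the fact that $\tau[F]=0$ and so the row-sums of $\mathcal{G}_F$ in the presence of the removed vertex $v_{r+1}$ vanish.

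With this identification in place, the main input is the Gordon--Litherland signature theorem: for any spanning surface $F$ of a link $L$,
\begin{equation*}
\sigma(L) = \operatorname{sig}(\mathcal{G}_F) - \tfrac{1}{2}e(F),
\end{equation*}
where $e(F)$ is the Euler number of the normal bundle of $F$ once pushed into $B^4$. The standard route to this identity pushes $F$ into $B^4$, passes to the double cover $W \to B^4$ branched over the pushed surface, and computes $\sigma(W)$ in two ways: the $G$-signature theorem relates $\sigma(W)$ to $\sigma(L)$ and $e(F)$, while a direct analysis of the intersection form of $W$ recovers $\operatorname{sig}(\mathcal{G}_F)$. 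This is the genuinely hard step of the proof and I would take it from \cite{gordon1978signature} rather than reprove it.

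Finally, it remains to identify $e(F)$ with $2(n_+-n_-)$ for the checkerboard surface. The contribution of a crossing to $e(F)$ depends on both its sign (from the orientation) and its incidence (from the shading), and a direct inspection of the four local models of a crossing shows that, with the sign conventions of Figure~\ref{Ch4:fig:incidencenumber}, precisely the positive crossings of incidence $-1$ and the negative crossings of incidence $+1$ make a non-cancelling contribution, yielding $e(F) = 2(n_+ - n_-)$. Substituting this into the Gordon--Litherland formula produces
\begin{equation*}
\sigma(L) = \operatorname{sig}(G_D) - (n_+ - n_-) = \operatorname{sig}(G_D) + n_- - n_+,
\end{equation*}
as claimed. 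The only genuine obstacle in the argument is the underlying signature theorem; the remaining steps are bookkeeping that amount to inspecting a positive and a negative crossing in each shading.
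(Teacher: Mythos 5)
The paper does not prove this theorem---it is stated as a cited result of Gordon and Litherland \cite{gordon1978signature}---and your proposal is a faithful reconstruction of that cited argument: realize $G_D$ as the Gordon--Litherland linking pairing on the white checkerboard surface, invoke the surface signature formula $\sigma(L)=\operatorname{sig}(\mathcal{G}_F)-\tfrac{1}{2}e(F)$ (taking the branched-cover/$G$-signature step from the reference, as you indicate), and reduce the Euler number correction to a local crossing-by-crossing count. Since the paper itself offers no proof, there is nothing to compare against beyond the reference; your outline is the standard one and the arithmetic is internally consistent, the only point needing care being the sign bookkeeping in the last step against the conventions of Figure~\ref{Ch4:fig:incidencenumber}.
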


\paragraph{}Now suppose that $L$ is an alternating, non-split link, with a reduced alternating diagram $D$. We may fix the colouring so that $\mu(c)=-1$ for all crossings. For such a diagram, the white graph $\Gamma_D$ is connected and contains no self-loops or cut-edges. In this case, the pairing given by $G_D$ defines a lattice $\Lambda_D$ which is isomorphic to the graph lattice $\Lambda(\Gamma_D)$. Under these assumptions \eqref{Ch4:eq:fullsigformula} becomes
\begin{equation}\label{Ch4:eq:altsigformula}
\sigma(L)= \rk (\Lambda_D) - n_+,
\end{equation}
where $n_+$ is equal to the number of positive crossings.

\paragraph{} In proving Theorem~\ref{Ch4:thm:sigformula}, Gordon and Litherland showed that for any diagram $D$ of a knot $K$, there is a 4-manifold with boundary $\Sigma(K)$ and intersection form given by the Goeritz matrix $G_D$ \cite{gordon1978signature}. Ozsv{\'a}th and Szab{\'o} showed that if $D$ is a non-split alternating link, then this manifold is sharp.

\begin{thm}[Ozsv{\'a}th-Szab{\'o}, \cite{ozsvath2005heegaard}]\label{Ch4:thm:altboundssharp}
Let $L$ be a non-split alternating link with a reduced alternating diagram $D$. The double branched cover $\Sigma(\overline{L})\cong-\Sigma(L)$ is an $L$-space and it bounds a sharp 4-manifold $X_D$ with intersection form isomorphic to $-\Lambda_D$.
\end{thm}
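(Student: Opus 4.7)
The plan is to follow the approach of Ozsv\'ath--Szab\'o, combining an explicit Gordon--Litherland construction of $X_D$ with an induction on the crossing number based on the unoriented skein exact triangle. The argument splits into three parts.

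First I would construct $X_D$ as the double cover of $B^4$ branched over a push-in of the white spanning surface of $D$. Gordon--Litherland's calculation identifies $\partial X_D \cong \Sigma(\overline{L})$ and realizes the intersection form as the Goeritz form of $\Gamma_D$ signed by the incidence numbers. Because $D$ is reduced, alternating and non-split, the white graph $\Gamma_D$ is connected and has no cut-edges; with the colouring chosen so that $\mu(c)=-1$ at every crossing, the resulting form is $-\Lambda_D$, which is negative-definite because $\Lambda_D$ is a positive-definite graph lattice.

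Next I would establish the $L$-space property by induction on the crossing number of $D$, the base case being the unknot, with $\Sigma(U) = S^3$. For the inductive step, pick a crossing $c$ in $D$ and form the two resolutions $L_0$ and $L_\infty$; these inherit (after removing any nugatory crossings) reduced alternating diagrams with strictly fewer crossings, and the alternating assumption gives the additive identity $\det L = \det L_0 + \det L_\infty$. The unoriented skein exact triangle
\[
\cdots \to \hfhat(\Sigma(\overline{L})) \to \hfhat(\Sigma(\overline{L_0})) \to \hfhat(\Sigma(\overline{L_\infty})) \to \cdots,
\]
together with the inductive equalities $\dim \hfhat(\Sigma(\overline{L_i})) = \det L_i$, gives $\dim \hfhat(\Sigma(\overline{L})) \leq \det L$; combining with the universal lower bound $\dim \hfhat(Y) \geq |H_1(Y)|$ forces equality.

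Finally I would establish sharpness by a parallel induction. The key observation is that the construction $D \mapsto X_D$ is compatible with crossing resolutions: the manifolds $X_{D_0}$ and $X_{D_\infty}$ embed in $X_D$ with complement a single 2-handle attached along a curve corresponding to the resolved crossing, so the transition from $X_{D_i}$ to $X_D$ is a negative-definite cobordism whose intersection form is determined by one additional vertex of $\Gamma_D$. Using the graded refinement of the skein exact triangle, one tracks $\spinc$-structures across this cobordism and verifies that for each $\spinct \in \spinc(\Sigma(\overline{L}))$ there exists an extension $\spincs \in \spinc(X_D)$ attaining equality in the bound \eqref{Ch1:eq:dinvinequality}. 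The hard part is precisely this last bookkeeping of $\spinc$-structures and absolute gradings through the exact triangle; it is made tractable by the graph-lattice description of $\Lambda_D$ developed in Section~\ref{Ch3:sec:graphlattices}, since the vertex generators of $\Gamma_D$ provide an explicit model for characteristic covectors on $X_D$ whose behaviour under resolution can be read off from the combinatorics of $D$.
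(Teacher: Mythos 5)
The paper does not prove this theorem; it is stated with an attribution and a citation to Ozsv\'ath--Szab\'o and then used as a black box, so there is no in-text argument against which to compare your proposal. Your outline does follow the broad strategy of the cited proof: Gordon--Litherland branched double cover of $B^4$ for the filling $X_D$, and an induction over a skein exact triangle for both the $L$-space property and the $d$-invariant computation. The $L$-space paragraph is a reasonable sketch, with the caveat that the induction needs to be set up so that both resolutions are non-split (so that $\det L_0, \det L_\infty > 0$); this is exactly what the no-cut-edge/no-self-loop property of $\Gamma_D$ for a reduced alternating diagram buys you, and the additivity $\det L = \det L_0 + \det L_\infty$ is the deletion--contraction formula for spanning trees of $\Gamma_D$.

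The sharpness paragraph contains a concrete error and a genuine gap. The error: it is not the case that both $X_{D_0}$ and $X_{D_\infty}$ embed in $X_D$ with complement a single $2$-handle. If $\Gamma_D$ has $V$ vertices, then $b_2(X_D)=V-1$; the resolution that deletes the edge keeps $V$ vertices, so its Goeritz filling also has $b_2=V-1$, while only the resolution that contracts the edge drops $b_2$ to $V-2$. So only one of the two fillings can sit in $X_D$ as the complement of a $2$-handle; the other has the same second Betti number and a strictly different intersection form, and no such embedding exists. The gap: even with the correct geometric relationship between the Gordon--Litherland fillings and the surgery cobordisms $W_i$ of the triangle (which is what Ozsv\'ath--Szab\'o actually establish), the step ``verifies that for each $\spinct$ there is an extension attaining equality in \eqref{Ch1:eq:dinvinequality}'' is the whole content of sharpness, and your only argument for why the $\spinc$/grading bookkeeping succeeds is an appeal to the graph-lattice formalism of Section~\ref{Ch3:sec:graphlattices}. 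That formalism is due to Greene and postdates the Ozsv\'ath--Szab\'o proof; it describes the combinatorics of $\Lambda_D$ but does not by itself supply the graded refinement of the skein exact triangle, which is the genuinely hard input (compatibility of the absolute $\Q$-grading with the maps induced by the $W_i$, with their $\spinc$-decompositions). As written, the proposal identifies where the difficulty lies but does not give an argument for it.
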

Combining this with the results of the first chapter, we obtain the following corollary.
\begin{cor}\label{Ch4:cor:CMcondit}
Let $\kappa\subseteq S^3$ be a knot such that $S_{-p/q}^3(\kappa)\cong \Sigma(L)$ for some $p/q>0$ and some alternating link $L$. If $D$ is a reduced alternating diagram for $L$ and $n=\lceil p/q \rceil$, then $\Lambda_D$ is isomorphic to the $p/q$-changemaker lattice
\[L=\langle w_0, \dots, w_l \rangle^\bot \subseteq \Z^N,\]
such that the torsion coefficients of $\Delta_\kappa(t)$ satisfy
\begin{equation}\label{Ch4:eq:torsionformula}
8t_{i}(\kappa) = \min_{ \substack{ |c'\cdot w_0|= n-2i \\ c' \in \Char(\Z^{N})}} \norm{c'} - N
\end{equation}
for $0\leq i \leq n/2$ and $t_{i}(\kappa)=0$ for $i\geq n/2$.
Furthermore, if $2\leq \rho_1 \leq \dots \leq \rho_m$ are the stable coefficients of $L$, then these depend only on $\kappa$ and the genus can be computed by
\[2g(\kappa)=\sum_{i=1}^m \rho_i(\rho_i-1).\]
 \end{cor}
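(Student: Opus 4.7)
The plan is a direct application of Theorem~\ref{intro:thm:CM} with the sharp $4$-manifold supplied by Theorem~\ref{Ch4:thm:altboundssharp}. By the hypothesis $S^3_{-p/q}(\kappa) \cong \Sigma(L)$ and orientation reversal, $S^3_{p/q}(\kappa) \cong -\Sigma(L) \cong \Sigma(\overline{L})$ (replacing $\kappa$ by its mirror if necessary, which affects neither the genus nor the torsion coefficients). Since $D$ mirrors to a reduced alternating diagram of $\overline{L}$, Theorem~\ref{Ch4:thm:altboundssharp} produces a sharp negative-definite $4$-manifold $X_D$ bounding $S^3_{p/q}(\kappa)$ whose intersection form is isomorphic to $-\Lambda_D$. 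Applying Theorem~\ref{intro:thm:CM} immediately yields $\Lambda_D \cong L' \oplus \mathbb{Z}^S$ for some $p/q$-changemaker lattice $L' = \langle w_0, \dots, w_l\rangle^\bot \subseteq \mathbb{Z}^N$, with $w_0$ controlling the $V_i(\kappa)$ via equation \eqref{Ch1:eq:w0formula2}.

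To eliminate the $\mathbb{Z}^S$ summand I use that $D$ is reduced. The white graph $\Gamma_D$ then has no cut edges (a cut edge corresponds to a nugatory crossing), so Lemma~\ref{Ch3:lem:norm1vectors} asserts that $\Lambda_D \cong \Lambda(\Gamma_D)$ contains no vector of norm~$1$. A non-trivial $\mathbb{Z}^S$ summand would contribute such vectors, so $S = 0$ and $\Lambda_D$ is itself the $p/q$-changemaker lattice $L$.

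Next, since $\Sigma(\overline{L})$ is an $L$-space, $\kappa$ is an $L$-space knot, and hence $V_i(\kappa) = t_i(\kappa)$ for all $i \geq 0$. Combined with Remark~\ref{Ch1:rem:CMgivesallVi}, which forces $V_i = 0$ for $i \geq n/2$, this converts \eqref{Ch1:eq:w0formula2} into \eqref{Ch4:eq:torsionformula}, together with the vanishing $t_i(\kappa) = 0$ in the remaining range. The invariance of the stable coefficients under these hypotheses is Theorem~\ref{Ch1:thm:stabledependence}. Finally, writing $w_0 = e_{t+1} + \sum_i \sigma_i e_i$ (or $w_0 = \sum_i \sigma_i e_i$ when $q=1$) and applying Step~\ref{Ch1:Step:Vivanishing} of Lemma~\ref{Ch1:lem:wigiveCMlattice} gives $2\nu^+(\kappa) = \sum_i \sigma_i(\sigma_i - 1)$; only the stable coefficients $\rho_j \geq 2$ contribute to this sum, and the identity $\nu^+(\kappa) = g(\kappa)$ for $L$-space knots then yields $2g(\kappa) = \sum_j \rho_j(\rho_j - 1)$. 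The only real friction is the routine orientation/mirror bookkeeping; the substance of the corollary is entirely packaged into Theorem~\ref{intro:thm:CM} and Theorem~\ref{Ch4:thm:altboundssharp}.
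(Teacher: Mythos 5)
Your argument is essentially the paper's own: apply Theorem~\ref{intro:thm:CM} to the sharp manifold $X_D$ from Theorem~\ref{Ch4:thm:altboundssharp} with the surgery $S^3_{p/q}(\overline{\kappa})\cong -\Sigma(L)$, kill the $\Z^S$ summand via Lemma~\ref{Ch3:lem:norm1vectors} and reducedness, identify $V_i = t_i(\kappa)$ from the $L$-space knot condition, and invoke Theorem~\ref{Ch1:thm:stabledependence}; your derivation of the genus formula from Step~\ref{Ch1:Step:Vivanishing} is just a slight repackaging of the paper's citation to Lemma~\ref{Ch1:lem:calcTi}. One small point you skip is verifying that $L$ is non-split (so that Theorem~\ref{Ch4:thm:altboundssharp} applies), which the paper gets from $|H_1(\Sigma(L);\Z)| = \det L = p > 0$.
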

\begin{proof}
As $|H_1(\Sigma(L);\Z)|=\det(L)=p>0$, $L$ is not a split link. So, given a reduced alternating diagram $D$, we may apply Theorem~\ref{intro:thm:CM} to the sharp manifold $X_D$ from Theorem~\ref{Ch4:thm:altboundssharp} and the surgery $S_{p/q}^3(\overline{\kappa})\cong -\Sigma(L)$. This shows that for some $S\geq 0$, $\Lambda_D\cong L \oplus \Z^S$, where $L$ is the $p/q$-changemaker lattice
\[L=\langle w_0, \dots, w_l \rangle^\bot \subseteq \Z^N\]
such that $w_0$ satisfies
\begin{equation*}
8V_i = \min_{ \substack{ |c'\cdot w_0|= n-2i \\ c' \in \Char(\Z^{N})}} \norm{c'} - N,
\end{equation*}
for $0\leq i \leq n/2$. By Remark~\ref{Ch1:rem:CMgivesallVi}, $V_i=0$ for $i\geq n/2$. As $\Sigma(L)$ is an $L$-space, the $V_i$ are given by the torsion coefficients of $\Delta_{\overline{\kappa}}(t)=\Delta_\kappa(t)$, proving \eqref{Ch4:eq:torsionformula}. As $D$ is reduced, $\Gamma_D$ contains no cut-edges. By Lemma~\ref{Ch3:lem:norm1vectors}, this shows that $\Lambda_D$ contains no vectors of norm one. This shows that $S=0$, as required. Theorem~\ref{Ch1:thm:stabledependence} shows that the stable coefficients do not depend on the surgery slope. The formula for the genus is a consequence of Lemma~\ref{Ch1:lem:calcTi}.
\end{proof}

\subsection{Flypes}
If $D$ and $D'$ are any two reduced diagrams for $L$, then one can be obtained from another by a sequence of flypes \cite{Menasco93classification}. The following lemma, which is an application of Lemma~\ref{Ch3:lem:cutedge}, allows us to detect certain flypes algebraically and provides an explicit isomorphism by relating the vertices of $\Gamma_D$ and $\Gamma_{D'}$. The flypes in question are depicted in Figure~\ref{Ch4:fig:flype1}.
\begin{lem}\label{Ch4:lem:flype1}
Let $D$ be a reduced alternating diagram. Suppose $\Gamma_D$ is 2-connected and has a vertex $v$, which can be written as $v=x+y$, for some $x,y \in \Lambda_D$ with $x\cdot y=-1$. Then there are unique vertices, $u_1,u_2\ne v$, satisfying $u_1\cdot x, u_2\cdot y>0$, and there is a flype to a diagram $D'$ and an isomorphism $\Lambda_{D'}\cong \Lambda_{D}$, such that the vertices of $\Gamma_{D'}$ are obtained from those of $\Gamma_D$ by replacing $v,u_1$ and $u_2$, with $x,y$ and $u_1+u_2$.
\end{lem}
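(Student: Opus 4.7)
The plan is to apply Lemma~\ref{Ch3:lem:cutedge}, translate the resulting graph-theoretic cut into a flype configuration in $D$, perform the flype to obtain $D'$, and verify the claimed Goeritz isomorphism by comparing pairings.

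First, the hypotheses of Lemma~\ref{Ch3:lem:cutedge} are satisfied: the vertex $v$ is irreducible because $\Gamma_D$ is 2-connected (by Lemma~\ref{Ch3:lem:2connectgraphlat}), and $\Gamma_D$ contains no cut-edge because $D$ is reduced alternating, so by Lemma~\ref{Ch3:lem:norm1vectors} the lattice $\Lambda_D$ has no vector of norm one. Applied to the decomposition $v = x + y$ with $x \cdot y = -1$, Lemma~\ref{Ch3:lem:cutedge} produces a cut-edge $e = u_1 u_2$ in $\Gamma_D \setminus \{v\}$ with $u_1 \in R$, $u_2 \in S$, where $R$ and $S$ are the vertex sets of the two components of $(\Gamma_D \setminus \{v\}) \setminus \{e\}$, and (after relabelling) $x = [R] + v$, $y = [S] + v$. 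One computes directly that $x \cdot u_1 = y \cdot u_2 = 1$ and $x \cdot w, y \cdot w \leq 0$ for every other vertex $w$, pinning down $u_1, u_2$ uniquely.

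Next, I would interpret this in $D$. In the planar embedding of $\Gamma_D$ inherited from the diagram, the $\{v, e\}$-cut corresponds to a simple closed curve $\gamma$ in $S^2$ that enters and leaves the white region $v$ and passes through the two black regions at the crossing $e$, separating the white vertices into $R$ and $S$. Meeting the link in four points, $\gamma$ exhibits $D$ as the union of two 2-tangles $T_R$ and $T_S$ joined through the single crossing $e$ and the region $v$ --- precisely the flype configuration of Figure~\ref{fig:flypedef}. Let $D'$ be the diagram obtained by flyping $T_R$ past $e$. The local effect on white regions is that $v$ splits into two regions $v_R$ and $v_S$ as $T_R$ moves across, while $u_1$ and $u_2$, formerly separated only by $e$, merge into a single region $u_{12}$; every other white region of $D$ persists as a white region of $D'$ with its adjacencies to vertices outside $\{v, u_1, u_2\}$ unchanged.

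Finally, I would verify that the map $\phi \colon \Lambda_{D'} \to \Lambda_D$ defined on vertices by $v_S \mapsto x$, $v_R \mapsto y$, $u_{12} \mapsto u_1 + u_2$, and $w \mapsto w$ for $w \notin \{v, u_1, u_2\}$, is a lattice isomorphism. Each pairing reduces to a short computation using $e(R \setminus \{u_1\}, S) = e(R, S \setminus \{u_2\}) = 0$ and $e(u_1, u_2) = 1$: for example, for $w \in S \setminus \{u_2\}$, one has $w \cdot x = w \cdot [R] + w \cdot v = 0 - e(w, v) = -e(w, v_S)$, which matches the pairing in $\Lambda_{D'}$; the remaining pairings (including the diagonal entries $\norm{x}$, $\norm{y}$, $\norm{u_1+u_2}$) are analogous. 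Consistency of $\phi$ on the quotient $\Lambda_D = \overline{\Lambda}(\Gamma_D)/\Z[V]$ follows from $x + y + (u_1 + u_2) + \sum_{w \notin \{v, u_1, u_2\}} w = v + u_1 + u_2 + \sum_{w \notin \{v, u_1, u_2\}} w = [V] = 0$. The main obstacle is the second step: one must carefully unpack the planar-graph combinatorics to show that a purely lattice-theoretic $\{v, e\}$-cut translates into a genuine flype with the described local effect on white regions; the algebraic verification in the last step is then routine edge-bookkeeping.
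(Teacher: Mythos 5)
Your proof takes essentially the same route as the paper: you invoke Lemma~\ref{Ch3:lem:cutedge} to produce the cut edge $e$ between $u_1$ and $u_2$ in $\Gamma_D\setminus\{v\}$ with $x=[R]+v$, $y=[S]+v$, then interpret the $\{v,e\}$-cut as a flype configuration (splitting $v$ and contracting $e$) and verify the pairings match. The paper's proof is terser — it phrases the outcome as a graph operation on $\Gamma_D$ and defers the planar/tangle verification to Figure~\ref{Ch4:fig:flype1} — but the substance is identical, and your explicit edge-bookkeeping just spells out what the paper leaves to the reader.
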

\begin{figure}[p!]
  \centering
  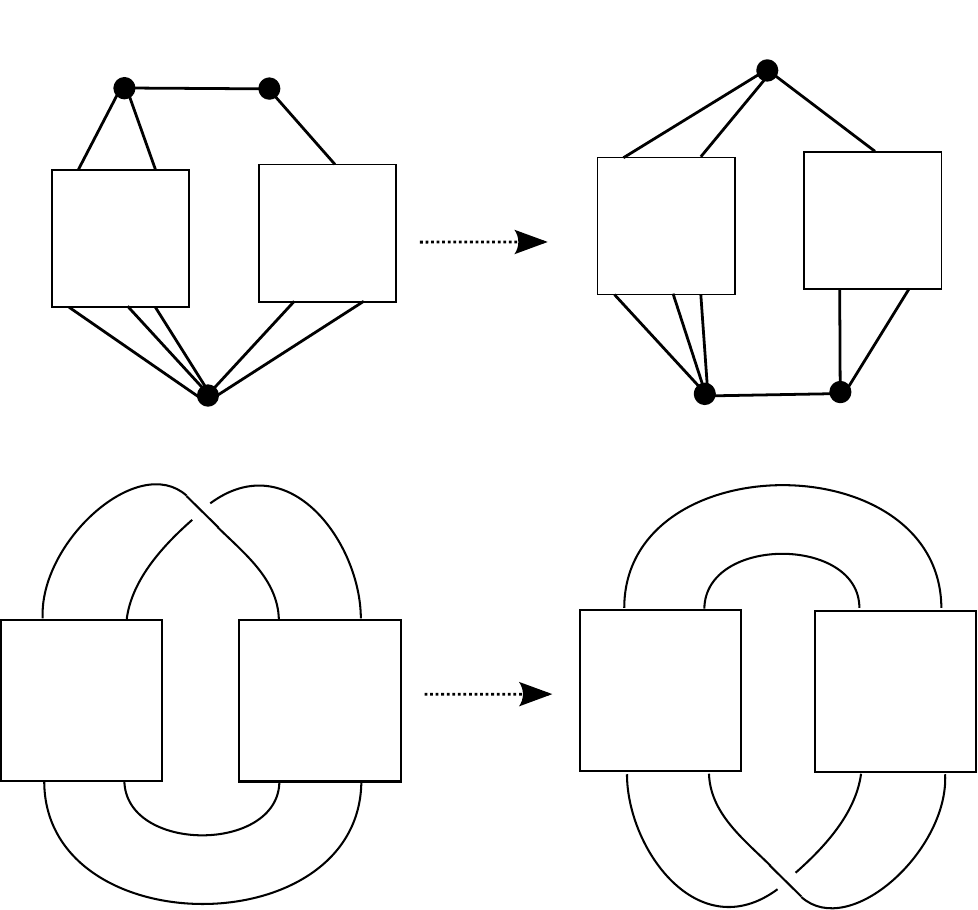
 \caption[The flype given by Lemma \ref{Ch4:lem:flype1}]{The flype and new embeddings given by Lemma \ref{Ch4:lem:flype1}, where $\overline{G}_1$, is the reflection of $G_1$ about a vertical line and $T_1^r$ is the corresponding tangle (which can also be obtained by rotating $T_1$ by an angle of $\pi$ about a vertical line).}
 \label{Ch4:fig:flype1}
\end{figure}
\begin{proof}
 By Lemma \ref{Ch3:lem:cutedge}, there is a cut edge, $e$, in $\Gamma_D\setminus\{v\}$ between $u_1$ and $u_2$ satisfying $u_1\cdot x=u_2\cdot y=1$. Thus there are subgraphs $G_1$ and $G_2$ such that $x=v+u_1+\sum_{z\in G_1}z$ and $y=v+u_2+\sum_{z\in G_2}z$. For $z_1\in G_1$, we have $z_1\cdot x=0$ and $z_1\cdot y= z_1\cdot v$. For $z_2\in G_2$, we have $z_2\cdot y=0$ and $z_2\cdot x=z_2\cdot v$. Thus, it follows that replacing $v,u_1$ and $u_2$ by $x,y$ and $u_1+u_2$ gives the set of vertices for the graph $\widetilde{G}$, obtained by replacing $v$ by two vertices with an edge $e'$ between them, in such a way that $\{e',e\}$ is a cut set, and then contracting $e$. This graph is planar, and can be drawn in the plane so that $\widetilde{G}=\Gamma_{D'}$ for $D'$, obtained by a flype about the crossing corresponding to the edge $e$ and the region corresponding to $v$. This is illustrated in Figure~\ref{Ch4:fig:flype1}.
\end{proof}
The other type of flype we will wish to consider is of the type shown in Figure~\ref{Ch4:fig:flype2}.
Suppose that we have an alternating diagram $D$ with white regions $v$ and $w$ which form a cut set in $\Gamma_D$ and have a crossing between them. If $G_1$ is a component of $\Gamma_D \setminus \{v,w\}$, which is adjacent to an edge between $v$ and $w$ in the plane, then Figure~\ref{Ch4:fig:flype2} shows that there is a flype in $D$ which rotates the tangle corresponding to $G_1$ by $\pi$. Let $D'$ be the diagram resulting from this flype. Since
\[(v+w)\cdot z = - z\cdot[G_1],\]
for all $z \in G_1$, we get an explicit isomorphism from $\Lambda_D$ to $\Lambda_{D'}$ by replacing each vertex $z \in G_1$ by $-z$ and by replacing $v$ and $w$ by $v+[G_1]$ and $w+[G_1]$ respectively.

\begin{figure}[p!]
  \centering
  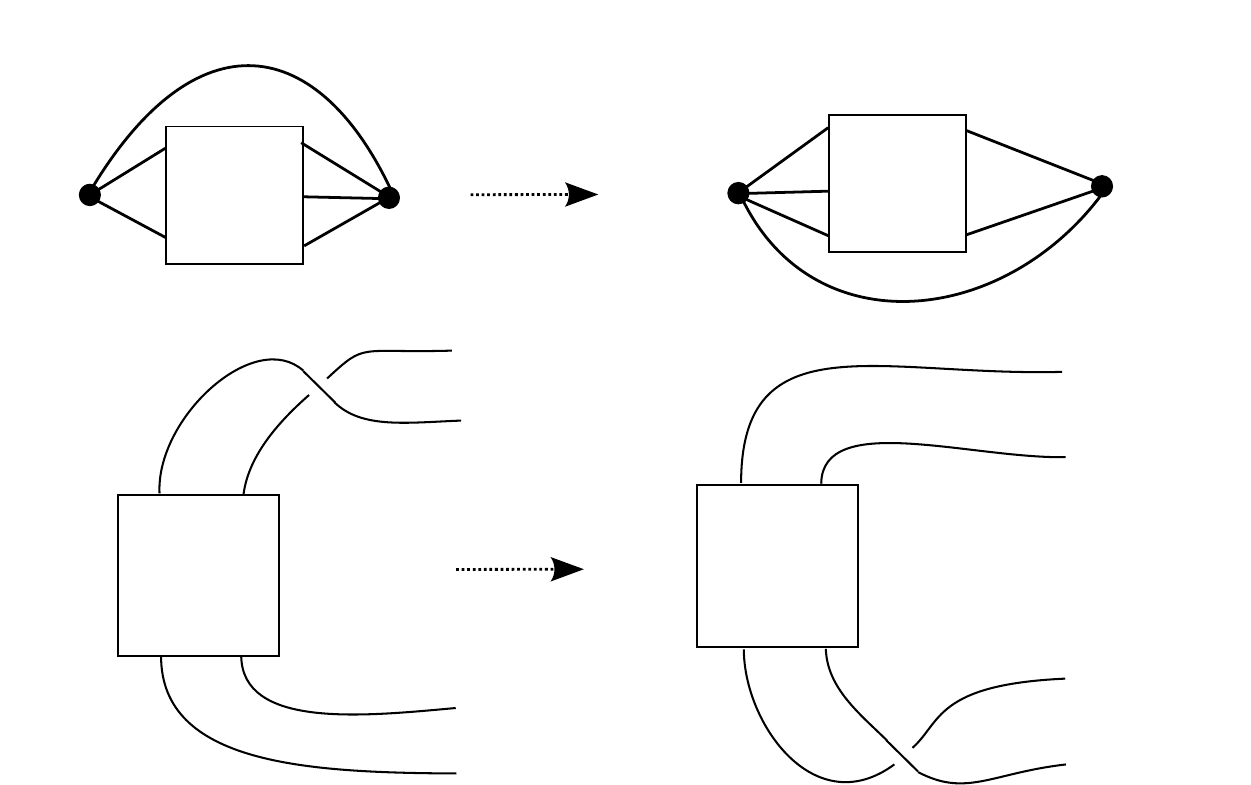
 \caption[Another type of flype]{Flypes arising when $v$ and $w$ form a cutset with an edge between them. Here $\overline{G}_1$ is the reflection of $G_1$ about a vertical line and $T_1^r$ is the corresponding tangle (which can also be obtained by rotating $T_1$ by an angle of $\pi$ about a vertical line).}
 \label{Ch4:fig:flype2}
\end{figure}

\section{Rational tangles}\label{sec:tangles}
We will give a summary of the necessary background material on rational tangles and their slopes. A more comprehensive account can be found in \cite{BurdeZieschang} or \cite{Gordon09dehnsurgery}.
A {\em tangle} in $B^3$ is a pair $(B^3,A)$ where $A$ is a properly embedded 1-manifold. We say $(B^3,A)$ is a {\em marked tangle}, if $\partial B^3 \cap A$ consists of 4 points and we have fixed an identification of the pairs $(\partial B^3, \partial B^3 \cap A)$ and $(S^2, \{NE,NW,SE,SW\})$, where the marking on $S^2$ is as illustrated in Figure~\ref{Ch4:fig:handv}. Two marked tangles $(B^3,A)$ and $(B^3,A')$ are considered equivalent if there is an isotopy of $B$, fixing the boundary $\partial B$, which takes $A$ to $A'$.
 \begin{figure}[ht]
  \centering
  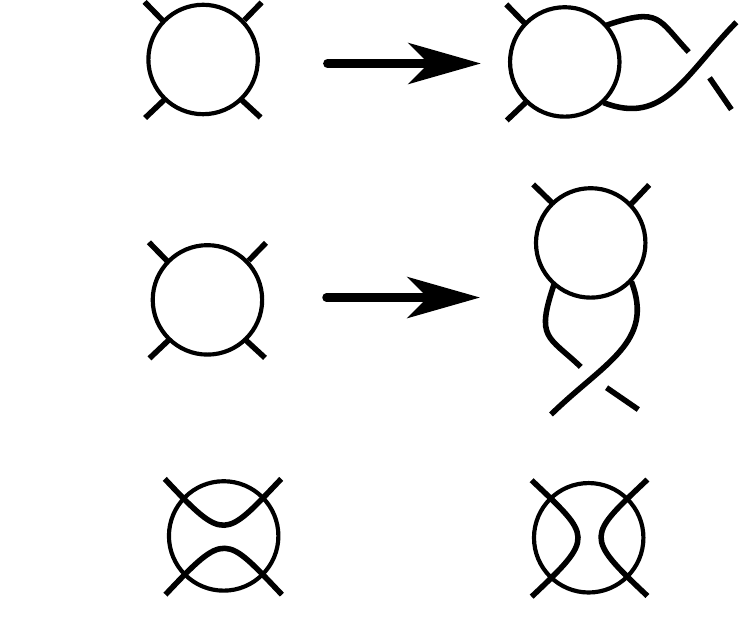
 \caption[The tangle-building operations $h$ and $v$]{The rational tangles $R(0/1)$, $R(1/0)$ and the tangle-building operations $h$ and $v$.}
 \label{Ch4:fig:handv}
\end{figure}
For integers $a_1, \dots, a_k$, we get a rational number $p/q$ via the continued fraction
\[p/q = [a_1 , \dots, a_k]^+=
       a_1 + \cfrac{1}{a_2
           + \cfrac{1}{\ddots
           + \cfrac{1}{a_k} } }.\]
Consider the marked tangles $R(0/1)$ and $R(0/1)$ and the tangle operations $h$ and $v$, as shown in Figure~\ref{Ch4:fig:handv}. Using these operations and the above continued fraction, we can construct a marked tangle $R(p/q)$ defined by
\begin{equation}\label{Ch4:eq:standardtangle}
R(p/q) =
    \begin{cases}
        h^{a_1}v^{a_2} \dots h^{a_{k-1}}v^{a_k}R(1/0)   & \text{if $k$ even}\\
        h^{a_1}v^{a_2} \dots v^{a_{k-1}}h^{a_k}R(0/1)   & \text{if $k$ odd.}
    \end{cases}
\end{equation}
It turns out that this marked tangle $R(p/q)$ depends only on the fraction $p/q$ and not on the choice of continued fraction.
\begin{defn}
We say that a marked tangle $T$ is a {\em rational tangle of slope $p/q$} if it is equivalent as a marked tangle to $R(p/q)$.
\end{defn}
\begin{rem}
By definition, applying $v$ or $h$ to the rational tangle $R(p/q)$ gives a new rational tangle. Using \eqref{Ch4:eq:standardtangle}, we can calculate the slopes of the tangles obtained this way:
\begin{equation}\label{Ch4:eq:vhslopemodification}
vR(\frac{p}{q})=R((1+\frac{q}{p})^{-1})=R(\frac{p}{p+q})\quad \text{and} \quad hR(\frac{p}{q})=R(1+\frac{p}{q})=R(\frac{p+q}{q}).
\end{equation}
\end{rem}
Observe that as an unmarked tangle any rational tangle is homeomorphic to the trivial tangle, $R(1/0)$. Conversely, it is a theorem of Conway that a marked tangle which is homeomorphic to $R(1/0)$ is a rational tangle and that its isotopy class as a marked tangle is determined by its slope \cite{conway69algebraic}.

\begin{figure}[ht]
  \centering
  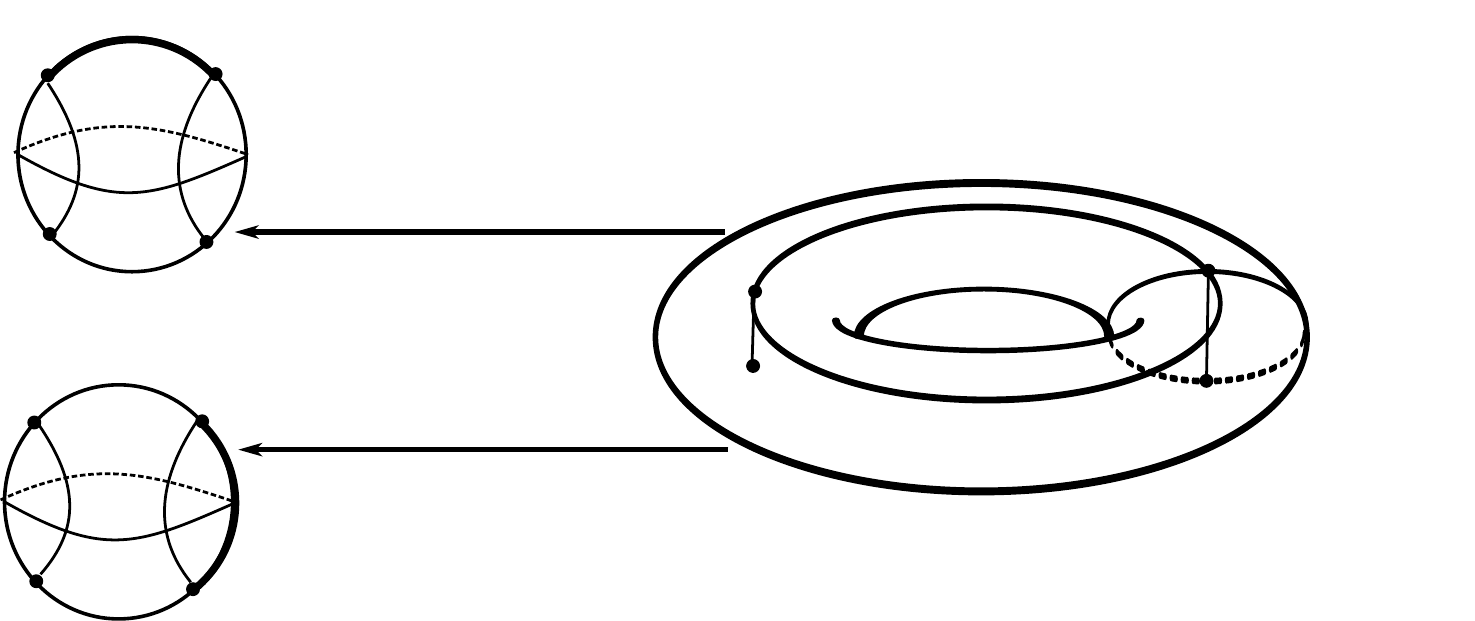
 \caption[Branched cover of $R(1/0)$]{The branched double cover of $R(1/0)$. The curves $\tilde{\mu}$ and $\tilde{\lambda}$ are the lifts of $\mu$ and $\lambda$ respectively.}
 \label{Ch4:fig:coveringmap}
\end{figure}

Since any rational tangle is homeomorphic to the tangle $R(1/0)$, its branched double cover is a solid torus, $V$. Consider the branched double cover of $R(1/0)$ given in Figure~\ref{Ch4:fig:coveringmap}. If we orient the curves $\tilde{\mu}$ and $\tilde{\lambda}$ so that $\tilde{\mu}\cdot\tilde{\lambda}=-1$, then classes $[\tilde{\mu}],[\tilde{\lambda}]$ form a basis for $H_1(\partial V)$ and with respect to this basis, the class which represents a meridian in the branched double cover of $R(p/q)$ is $p[\tilde{\mu}]+q[\tilde{\lambda}]$. This could be used to give an alternative definition of the slope of a rational tangle.

\section{Tangles in alternating diagrams}\label{Ch4:sec:tanglesindiagrams}
Observe that the above identification of rational tangles with the rational numbers depends on the marking on the boundary of the tangle. When a rational tangle occurs in an alternating diagram, we will describe a choice of marking which will allow us to define its slope without ambiguity. Suppose the rational tangle $T$ is contained in an alternating diagram $D$. As in Section~\ref{sec:altdiagrams}, we can colour $D$ so that every crossing has incidence number $-1$. We then choose a marking on the boundary so that the arc $\lambda$ lies in a shaded region and $\mu$ does not. This allows four choices of markings on the sphere. However, in each case the lifts of $\mu$ and $\lambda$ give the same basis for $H_1(\partial V)$, so the slope of $T$ is independent of this choice. Throughout this thesis, we will use this convention for the slope whenever we have a rational tangle in an alternating diagram.
 \begin{figure}[ht]
  \centering
  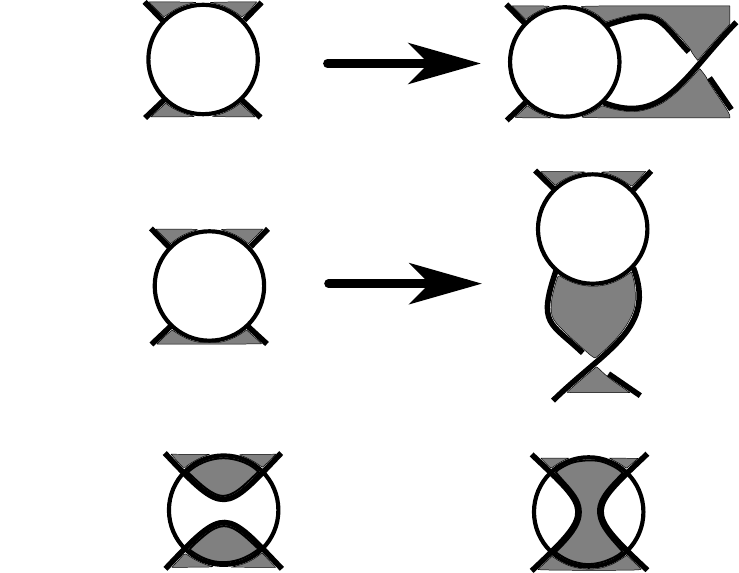
 \caption[Colouring the tangle-building operations]{How the shading of a rational tangle in an alternating diagram behaves on the tangles $R(0/1)$, $R(1/0)$ and how it is altered by the tangle-building operations $h$ and $v$.}
 \label{Ch4:fig:shadedhandv}
\end{figure}

The following proposition allows us to find rational tangles in an alternating diagram and to calculate their slope using the degrees of vertices in the corresponding white graph.
\begin{prop}\label{Ch4:prop:tangledetection}
Let $D$ be a reduced alternating diagram. Suppose that there is a disk in the plane whose boundary intersects two distinct white regions of $D$.  Let $T$ be the tangle contained in the disk and let $\Gamma_T$ be the subgraph of $\Gamma_D$ induced by $T$. Suppose that $\Gamma_T$ consists of vertices $v_0, \dots, v_{l+1}$ for $l\geq 0$, where $v_0$ and $v_{l+1}$ are the white regions which intersect the disk boundary. If there is precisely one edge between $v_i$ and $v_{i+1}$ for $0\leq i< l$ and every remaining edge in $\Gamma_T$ is incident to $v_{l+1}$, then $T$ is a rational tangle. Moreover, if $b_i$ denotes the number of edges incident to $v_i$ in $\Gamma_T$ for $0\leq i \leq l$, then the slope of $T$ is $\frac{p}{q}$, which can be calculated by the continued fraction
\begin{equation}\label{Ch4:eq:tangleslope}
\frac{q}{p} = [b_0, \dots, b_l]^-
= b_0 - \cfrac{1}{b_1
           - \cfrac{1}{\ddots
           - \cfrac{1}{b_l}}}.
\end{equation}
\end{prop}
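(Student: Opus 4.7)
The plan is to proceed by induction on $l$. For the base case $l=0$, the white graph $\Gamma_T$ consists of two vertices $v_0, v_1$ joined by $b_0$ parallel edges, so $T$ is a twist region of $b_0$ crossings of the same handedness between the two boundary regions. Using the shading/marking convention established in Section~\ref{Ch4:sec:tanglesindiagrams}, one checks directly that $T$ is the rational tangle of slope $1/b_0$, giving $q/p = b_0 = [b_0]^-$, as required.

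For the inductive step, I would peel off the vertex $v_0$ from one end of the chain. The $b_0$ edges incident to $v_0$ (one to $v_1$, the remaining $b_0-1$ to $v_{l+1}$) can be arranged, via a planar isotopy within the disk, as a clean stack of twist crossings with a single ``turnaround'' crossing connecting to $v_1$. Drawing a smaller disk just inside the original, chosen to separate $v_0$ and its incident crossings from the rest of $T$, we obtain a sub-tangle $T'$ inside this smaller disk whose white graph is the subgraph of $\Gamma_T$ induced by $\{v_1, \ldots, v_{l+1}\}$. This $T'$ again satisfies the hypotheses of the proposition, with chain length $l-1$ and edge counts $b_1' = b_1 - 1$, $b_i' = b_i$ for $2\leq i \leq l$ (one edge has been removed from $v_1$). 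By induction, $T'$ is rational of slope $p'/q'$ with $q'/p' = [b_1 - 1, b_2, \ldots, b_l]^-$.

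It remains to show that $T$ itself is rational and has the claimed slope. The ``cap'' between the boundaries of the two disks is a standard tangle consisting of the $b_0$ crossings incident to $v_0$, and gluing it to $T'$ to recover $T$ amounts to applying a specific composition of the operations $h$ and $v$ from Figure~\ref{Ch4:fig:handv}; this both shows $T$ is rational and lets one compute its slope. Using the slope transformations in \eqref{Ch4:eq:vhslopemodification} together with the continued fraction identity $[b_1, \ldots, b_l]^- = 1 + [b_1 - 1, b_2, \ldots, b_l]^-$, one obtains $q/p = b_0 - 1/[b_1, \ldots, b_l]^- = [b_0, b_1, \ldots, b_l]^-$ as required.

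The main obstacle will be the careful bookkeeping of the shading and boundary marking through the induction. One must verify that the boundary marking on the smaller disk (determined by the convention of Section~\ref{Ch4:sec:tanglesindiagrams}) is consistent with the marking used when applying the inductive hypothesis to $T'$, and that the specific word in $h$ and $v$ expressing $T$ in terms of $T'$ correctly accounts for the alternation in orientation reflected by the minus sign in $[\cdot]^-$. In particular, at each recursive level one must alternate between peeling off an ``$h$-type'' and a ``$v$-type'' twist region, which is the geometric origin of the alternating signs in the continued fraction. Once these conventions are tracked, the slope computation follows mechanically from \eqref{Ch4:eq:vhslopemodification}.
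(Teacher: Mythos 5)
Your proof is correct and takes essentially the same route as the paper's. Both arguments peel off the tangle from the $v_0$ end and use the slope transformations in \eqref{Ch4:eq:vhslopemodification} to track what happens; the only real difference is the grain of the induction. The paper inducts on the number of crossings in $T$, removing a single crossing per step (a $v^{-1}$ if $b_0>1$, which strips one $v_0$--$v_{l+1}$ edge, or an $h^{-1}$ if $b_0=1$, which deletes $v_0$ along with its lone edge to $v_1$). You induct on $l$, removing the entire vertex $v_0$ with all $b_0$ of its incident crossings in one go. These are mechanically the same: your ``cap'' tangle is exactly $v^{b_0-1}h$, so peeling $v_0$ is just $b_0$ of the paper's single-crossing steps done in a batch, and the continued-fraction bookkeeping (using $[b_1,\dots,b_l]^- = 1 + [b_1-1,b_2,\dots,b_l]^-$) agrees with iterating the paper's two cases. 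Your base case $l=0$ (a twist region, slope $1/b_0$) is itself the result of $b_0$ applications of the paper's $v$-step to the paper's base case $b_0=0$. The one place you are slightly vaguer than the paper is in asserting the cap can be cleanly separated by a smaller disk; naming the word $v^{b_0-1}h$ explicitly, as the paper effectively does crossing by crossing, makes this immediate.
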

\begin{proof}
We proceed by induction on the number of crossings. If $T$ does not contain any crossings, then it is equivalent as a marked tangle to $R(1/0)$. It is not $R(0/1)$ as we are assuming $\Gamma_T$ has at least two white regions. In this case, $\Gamma_T$ consists of two vertices and no edges between them. Therefore $b_0=0$ and \eqref{Ch4:eq:tangleslope} is satisfied. The conditions in the proposition are sufficient to guarantee that $T$ is obtained from a smaller tangle $T'$ by applying one of the operations $h$ or $v$. We consider the cases $b_0>1$ and $b_0=1$ separately.

If $b_0>1$, then there is a crossing between $v_0$ and $v_{l+1}$. So we see that $T$ is obtained by applying the operation $v$ to a tangle $T'$ where the white graph $\Gamma_{T'}$ is obtained by deleting an edge between $v_0$ and $v_{l+1}$. By the inductive hypothesis, we can assume that $T'$ is a rational tangle of slope $\frac{p'}{q'}$, where
\[\frac{q'}{p'} = [b_0-1, \dots, b_l]^-.\]
By \eqref{Ch4:eq:vhslopemodification}, we have $vR(\frac{p'}{q'})=R(\frac{p'}{p'+q'})$. Therefore $T$ is a rational tangle of slope $\frac{p}{q}=\frac{p'}{p'+q'}$. Hence, we have
\[ [b_0, \dots, b_l]^-=1+\frac{q'}{p'}=\frac{p'+q'}{p'}=\frac{q}{p},\]
as required.

If $b_0=1$, then there is no crossing between $v_0$ and $v_{l+1}$ and the sole crossing incident to $v_0$ is between $v_0$ and $v_1$. Thus we see that $T$ is obtained by applying the operation $h$ to a tangle $T'$ where the white graph $\Gamma_{T'}$ is obtained by deleting $v_0$. By the inductive hypothesis, we can assume that $T'$ is a rational tangle of slope $p'/q'$, where
\[\frac{q'}{p'} = [b_1-1, \dots, b_l]^-.\]
By \eqref{Ch4:eq:vhslopemodification}, we have $hR(\frac{p'}{q'})=R(\frac{p'+q'}{q'})$. Thus $T$ is a rational tangle of slope $\frac{p}{q}=\frac{p'+q'}{q'}$. Hence, we have
\[ [b_0, \dots, b_l]^-=1-\frac{1}{\frac{q'}{p'}+1}=\frac{q'}{p'+q'}=\frac{q}{p},\]
as required.
\end{proof}

\section{The Montesinos trick and alternating surgeries}
Now we suppose that we have a knot or link $L$ with a diagram $D$ obtained by replacing a $1/0$-tangle in a diagram of the unknot $D'$ by a rational tangle of slope $p/q$. The double cover of $S^3$ branched along $D'$ is again $S^3$ and the $1/0$-tangle lifts to give a solid torus $V \subseteq S^3$. Let $\kappa$ be the knot given by the core of $V$. Let $\lambda_0$ be a null-homologous longitude of $\kappa$ lying in $\partial V$ and $m\in \Z$ be such that
\[ [\tilde\lambda]=m [\tilde\mu] + [\lambda_0]\in H_1(\partial V).\]
If we consider the branched double cover of $D$, we see that it is obtained by cutting out the interior of $V$ and gluing in a solid torus in such a way that a curve representing the homology class $p[\tilde{\mu}]+q[\tilde{\lambda}]$ bounds a disk. Therefore, we see that $\Sigma(L)$ is obtained by surgery on $\kappa$. In particular, we have
\begin{equation}\label{Ch4:eq:surgerycoef}
\Sigma(L)\cong S^3_{-(m +\frac{p}{q})}(\kappa).
\end{equation}
\begin{rem}
There are different conventions for labeling the slope of a rational tangle and the slope of Dehn surgery (c.f the remark after Corollary~4.4 in \cite{Gordon09dehnsurgery}). We have chosen to orient $\tilde{\mu}$ and $\tilde{\lambda}$, so that $\tilde{\mu}\cdot\tilde{\lambda}=-1$. In order to match the usual conventions for Dehn surgery, we would need to reverse the orientation on $\tilde{\lambda}$.
This explains the minus sign appearing in the surgery coefficient of \eqref{Ch4:eq:surgerycoef}.
\end{rem}

In order to determine the sign of the integer $m$, we will quote a special case for which it is known.
\begin{prop}[Proof of Theorem~8.1, \cite{ozsvath2005knots}]\label{Ch4:prop:Montesinos}
Suppose that a knot $K\subseteq S^3$ can be unknotted by changing a negative crossing to a positive one. Then the knot $\kappa \subseteq S^3$ arising through the Montesinos trick satisfies $\Sigma(K)\cong S_{-\delta/2}^3(\kappa)$ where $\delta=(-1)^{\sigma(K)/2}\det(K)$.
\end{prop}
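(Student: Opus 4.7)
The plan is to apply the Montesinos trick as set up in Section~\ref{Ch4:sec:tanglesindiagrams}. Let $D$ be a diagram for $K$ containing the negative crossing $c$, and let $D'$ be the unknot diagram obtained by changing $c$ to a positive crossing. Inside a small ball $B$ around $c$, the tangle in $D$ is a single negative crossing while the tangle in $D'$ is a single positive crossing. First I would choose a marking of $\partial B$ so that the positive crossing in $D'$ is identified with the trivial tangle $R(1/0)$; with this marking, a direct continued-fraction computation via Proposition~\ref{Ch4:prop:tangledetection} identifies the negative crossing in $D$ as $R(-1/2)$. Applying \eqref{Ch4:eq:surgerycoef} then gives
\[\Sigma(K)\cong S^3_{-(m-1/2)}(\kappa) = S^3_{(1-2m)/2}(\kappa)\]
for some integer $m$ determined by the framing relation $[\tilde{\lambda}] = m[\tilde{\mu}] + [\lambda_0]$.

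Next, I would pin down the absolute value of the slope via first homology: $|H_1(\Sigma(K);\Z)| = \det(K)$ while $|H_1(S^3_{(1-2m)/2}(\kappa);\Z)| = |1-2m|$, so $|1-2m| = \det(K)$ and the surgery slope must take the form $-\delta/2$ with $|\delta| = \det(K)$. In particular, $\delta$ is odd, consistent with $\det(K)$ being odd for a knot.

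The hard part will be fixing the sign of $\delta$, since this amounts to comparing the orientation on $\Sigma(K)$ arising from the surgery description with its canonical orientation as a double branched cover. My plan is a signature comparison. The surgery description of $\Sigma(K)$ as $(-\delta/2)$-surgery on $\kappa$ naturally presents it as the boundary of the 4-manifold $W$ built via the continued-fraction procedure of Section~\ref{sec:calcdinvariants}, whose intersection form is definite of a sign governed by $\operatorname{sgn}(\delta)$. On the other hand, the Gordon--Litherland 4-manifold $X_D$ from Theorem~\ref{Ch4:thm:altboundssharp} (or more generally, the Goeritz 4-manifold for any diagram of $K$) bounds $-\Sigma(K)$ with signature controlled by $\sigma(K)$ via Theorem~\ref{Ch4:thm:sigformula}. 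Gluing $W$ and $-X_D$ along their common boundary yields a closed 4-manifold whose signature admits two computations; equating them modulo $4$ would give $\operatorname{sgn}(\delta) = (-1)^{\sigma(K)/2}$, hence $\delta = (-1)^{\sigma(K)/2}\det(K)$, completing the proof.
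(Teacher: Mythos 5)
The paper does not prove this proposition; it is imported verbatim from Ozsv\'ath and Szab\'o's paper (\cite{ozsvath2005knots}, proof of Theorem~8.1), so there is no internal proof to compare against. I will therefore assess your proposal on its own terms.

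Your first two steps are sound in outline: applying \eqref{Ch4:eq:surgerycoef} to a crossing change, normalised so that the unknotted configuration is $R(1/0)$, does produce a slope of the form $\pm(\text{odd})/2$, and the first-homology count $|H_1(\Sigma(K))|=\det K$ pins down the absolute value. (The specific slope $R(-1/2)$ you assert is convention-dependent and should be checked against the paper's own Figure~\ref{Ch4:fig:crossingreplace}, where the crossing-change tangle replacement is worked out as $R(1/0)\leftrightarrow R(1/2)$; but either convention gives a half-integer with odd numerator, which is all you use.)

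The genuine gap is in the sign determination. You propose to glue the surgery trace $W$ to the Gordon--Litherland manifold along $\Sigma(K)$ and ``equate signatures modulo $4$'' to extract $\operatorname{sgn}(\delta)$. As written this is not an argument: you assert that the signature of the resulting closed $4$-manifold ``admits two computations'' but only supply one, namely Novikov additivity; you never say what the second computation is or why it yields a value independent of the unknown sign. Moreover there is no general theorem constraining the signature of a closed oriented $4$-manifold modulo~$4$, so the phrase ``modulo $4$'' has no content here. The thing that would make a signature comparison work is an \emph{independent} evaluation of $\sigma(Z)$ for the closed manifold $Z$ --- for instance, recognising both $W$ and the Gordon--Litherland manifold as double branched covers of $B^4$ and invoking the $G$-signature theorem for the double branched cover of $S^4$ over the resulting closed branch surface, which computes $\sigma(Z)$ in terms of the self-intersection of the branch locus. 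Without some such ingredient, your argument does not distinguish $+\delta/2$ from $-\delta/2$, which is the whole point of the sign formula $\delta=(-1)^{\sigma(K)/2}\det K$. You should either supply the missing computation of $\sigma(Z)$ explicitly or switch to a different mechanism (e.g.\ a $d$-invariant or linking-form calculation, or a direct Kirby-calculus analysis of the $2$-handle framing induced by the crossing-change band in the branched cover).
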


\paragraph{} Recall that we defined $\mathcal{D}$ to be the set of knots arising by applying the Montesinos trick to almost-alternating diagrams of the unknot. We can now calculate the relationship between the surgery slope and tangle replacement in this case.
\begin{prop}\label{Ch4:prop:3implies1}
Let $L$ be a knot or link with an alternating diagram $D$ containing a rational tangle of slope $r/s\geq 0$. Let $D'$ be the alternating diagram obtained by replacing this tangle with a single crossing $c$. If $\sigma(D')=-2$ and $c$ is a positive unknotting crossing or $\sigma(D')=0$ and $c$ is a negative unknotting crossing, then $\Sigma(K)\cong S_{-p/q}^3(\kappa)$, where $q=r+s$; $\det K=p=qm+r>0$ for some integer $m$; and $\kappa \in \mathcal{D}$ is the knot corresponding to the almost-alternating diagram of the unknot obtained by changing $c$.
\end{prop}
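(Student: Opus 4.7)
The plan is to apply the Montesinos trick to both $D'$ and $D$ and compare the resulting surgery descriptions on a common knot. Since $c$ is an unknotting crossing of the alternating diagram $D'$, the diagram $D''$ obtained by changing $c$ in $D'$ is almost-alternating --- with $c$ as its unique non-alternating crossing --- and represents the unknot. By Definition~\ref{intro:def:D}, the core $\kappa := \kappa_{D''}$ of the lift of the crossing ball $B \ni c$ in $\Sigma(D'') = S^3$ lies in $\mathcal{D}$, giving the final clause of the proposition. Moreover, both $D$ and $D'$ arise from $D''$ by rational tangle replacement inside $B$: $D'$ by inserting the opposite single crossing, and $D$ by inserting the given tangle of slope $r/s$. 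Writing $K$ for the knot represented by $D'$, both $\Sigma(L)$ and $\Sigma(K)$ are then Dehn surgeries on $\kappa$.

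Next I would compute the surgery slope for $\Sigma(K)$ via Proposition~\ref{Ch4:prop:Montesinos}. When $\sigma(K) = 0$ with $c$ a negative unknotting crossing, the proposition applies directly with $\delta = \det(K)$. When $\sigma(K) = -2$ with $c$ positive, apply it instead to the mirror $\bar K$ --- for which $\sigma(\bar K) = 2$, the image of $c$ is a negative unknotting crossing, and $\delta = (-1)^{1}\det(\bar K) = -\det(K)$ --- and then mirror the resulting identification. Either case yields the half-integer surgery
\[\Sigma(K) \cong S^3_{-\det(K)/2}(\kappa).\]

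To conclude, I would invoke the tangle-replacement form of the Montesinos trick of equation~\eqref{Ch4:eq:surgerycoef}, generalised to allow a non-trivial reference tangle in $B$. Namely, there exist an integer $m$ (determined by the framing of $V \subset S^3 = \Sigma(D'')$) and a M\"obius transformation $\phi \in \mathrm{PSL}(2,\mathbb{Z})$ such that every rational tangle replacement inside $B$ at Section~\ref{Ch4:sec:tanglesindiagrams}-slope $\alpha$ produces $S^3_{-(m+\phi(\alpha))}(\kappa)$; the transformation $\phi$ records the change of marking on $\partial B$ between the alternating shading convention of Section~\ref{Ch4:sec:tanglesindiagrams} and the trivial-tangle marking of equation~\eqref{Ch4:eq:surgerycoef}. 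Since the $D''$-tangle (a single crossing of sign opposite to its $D'$ counterpart) has slope $-1$ in the alternating convention while representing no surgery ($\Sigma(D'') = S^3$), one must have $\phi(-1) = \infty$; together with the normalisation $\phi(0) = 0$ (the $R(0)$-resolution reads the same in either convention), the $\mathrm{PSL}(2,\mathbb{Z})$ constraint, and the orientation fixed by the previous paragraph, this forces $\phi(\alpha) = \alpha/(1+\alpha)$. Specialising to $\alpha = 1$ matches $\Sigma(K) = S^3_{-(m+1/2)}(\kappa)$ against Paragraph 2 and fixes $m = (\det(K)-1)/2$; specialising to $\alpha = r/s$ then yields
\[\Sigma(L) \cong S^3_{-(m + r/(r+s))}(\kappa) = S^3_{-p/q}(\kappa), \qquad q = r+s,\ p = qm + r,\]
and $p = |H_1(\Sigma(L);\mathbb{Z})| = \det(L) > 0$ since $m \geq 0$ and $r \geq 0$ with at least one strict (away from degenerate cases).

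The chief obstacle is the determination of $\phi(\alpha) = \alpha/(1+\alpha)$ in the third paragraph; it is exactly this that produces the denominator $r + s$ rather than $s$ in the surgery slope. The argument requires reading off the framing of $V$ simultaneously from $D''$ (where the non-alternating crossing at $c$ serves as the reference tangle defining the $\kappa$-surgery) and from $D$ (where the slope of the $r/s$-tangle is measured by the shading convention of Section~\ref{Ch4:sec:tanglesindiagrams}); the two framings differ by precisely the Dehn twist of $V$ dual to the non-alternating crossing at $c$, and unwinding this Dehn twist on the slope coordinate is exactly the M\"obius transformation $\alpha \mapsto \alpha/(1+\alpha)$.
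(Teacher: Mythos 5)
Your first two paragraphs track the paper's proof closely: change $c$ in $D'$ to get the almost-alternating unknot diagram, obtain $\kappa\in\mathcal{D}$, and use Proposition~\ref{Ch4:prop:Montesinos} (via mirroring when necessary) to get $\Sigma(D')\cong S^3_{-\det(D')/2}(\kappa)$. Your third paragraph departs from the paper: you try to determine the change of marking $\phi$ between the alternating-shading slope convention of Section~\ref{Ch4:sec:tanglesindiagrams} and the branched-cover convention of~\eqref{Ch4:eq:surgerycoef} by abstract Möbius-transformation constraints, rather than by the explicit tangle isotopy of Figure~\ref{Ch4:fig:crossingreplace} that the paper uses. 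Your target $\phi(\alpha)=\alpha/(1+\alpha)$ is indeed the map produced by that isotopy, so the conclusion is right.

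However, the derivation of $\phi$ has a genuine gap. The constraint $\phi(-1)=\infty$ is sound, since inserting the non-alternating crossing reproduces $\Sigma(D'')=S^3=S^3_\infty(\kappa)$. But $\phi(-1)=\infty$ together with $\phi(1)=1/2$ (what paragraph two actually pins down) and the $\mathrm{PGL}(2,\mathbb{Z})$ constraint leaves \emph{two} candidates, $\alpha\mapsto\alpha/(1+\alpha)$ and $\alpha\mapsto 1/(1+\alpha)$; these give different numerators ($p=qm+r$ versus $p=qm+s$), so the distinction matters for the statement being proved. You resolve this by asserting $\phi(0)=0$ ``because the $R(0)$-resolution reads the same in either convention,'' but you give no argument for why the alternating-shading marking and the branched-cover marking agree on the zero-tangle --- this is exactly the content one must check, and it is precisely what Figure~\ref{Ch4:fig:crossingreplace}'s isotopy establishes. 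Without this, the Möbius argument does not close, and the appeal to ``orientation fixed by the previous paragraph'' is too vague to discriminate between the two candidates. The paper sidesteps all of this by simply exhibiting the isotopy carrying the tangle replacement $R(-1)\to R(r/s)$ to the replacement $R(1/0)\to R\bigl(r/(r+s)\bigr)$, then reading off the slope from~\eqref{Ch4:eq:surgerycoef} and concluding $p=\det L$ from $|H_1(\Sigma(L))|=p$ --- a shorter and self-contained route.
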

\begin{proof}
As $c$ is assumed to be an unknotting crossing, let $\widetilde{D}$ be the almost-alternating diagram of the unknot obtained by changing it. Let $\kappa\in \mathcal{D}$ be the corresponding knot. We see that $D$ is obtained by replacing the non-alternating crossing in $\widetilde{D}$, which is a $-1$-tangle, by a $\frac{r}{s}$-tangle. As shown in Figure~\ref{Ch4:fig:crossingreplace}, this is equivalent to replacing a $1/0$-tangle by a $\frac{r}{r+s}$-tangle.
\begin{figure}
  \centering
  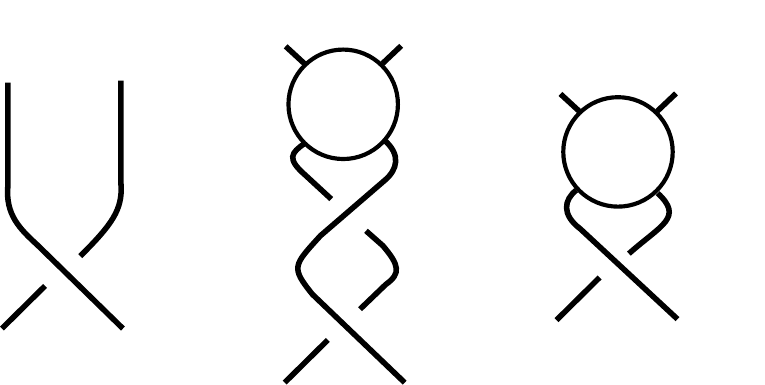
 \caption[Tangle replacement and crossing changes]{Replacing the tangle $R(-1/1)$ with $R(r/s)$ is equivalent to replacing the tangle $(a)$ with $(b)$. Since $(b)$ is isotopic to $(c)$, this shows that the tangle replacement is equivalent to replacing $R(1/0)$ with $R(\frac{r}{r+s})$.}
 \label{Ch4:fig:crossingreplace}
\end{figure}
Applying \eqref{Ch4:eq:surgerycoef} and the discussion preceding it, we see that there is an integer $m$ such that
\[\Sigma(D')\cong S_{-(m + \frac{1}{2})}^3(\kappa)\quad \text{and} \quad \Sigma(K)\cong S_{-(m +\frac{r}{q})}^3(\kappa),\]
where $q=r+s$. The sign conditions on the unknotting crossing $c$ combined with Proposition~\ref{Ch4:prop:Montesinos} imply that $m\geq 0$. Taking  $p=qm +r$, we see that
\[\Sigma(L)=S_{-p/q}^3(\kappa).\]
To complete the proof, observe that
\[|H_1(\Sigma(L))|=\det L = |H_1(S_{-p/q}^3(\kappa))|=p.\]
\end{proof} 
\chapter{Unknotting number one}\label{chap:uk=1}
In this chapter, we use the machinery of changemaker lattices and sharp 4-manifolds to prove Kohn's conjecture for alternating knots.
\begin{thm}\label{Intro:thm:unknotting}
For an alternating knot $K$ the following are equivalent:
\begin{enumerate}[(i)]
\item $K$ has unknotting number one;
\item $\Sigma(K)$ can be obtained by half-integer surgery on a knot in $S^3$;
\item $K$ has an unknotting crossing in every alternating diagram.
\end{enumerate}
\end{thm}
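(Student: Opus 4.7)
The plan is to prove the cycle of implications $(iii)\Rightarrow(i)\Rightarrow(ii)\Rightarrow(iii)$. The first is immediate from the definition of unknotting number. For $(i)\Rightarrow(ii)$, if $K$ has unknotting number one, take an arbitrary diagram of $K$ containing an unknotting crossing $c$; applying the Montesinos trick as in Proposition~\ref{Ch4:prop:Montesinos} to the crossing change at $c$ produces a knot $\kappa\subseteq S^3$ with $\Sigma(K)\cong S^3_{-\delta/2}(\kappa)$, where $\delta=\pm\det(K)$, so $\Sigma(K)$ is half-integer surgery on $\kappa$.

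The substance of the theorem is $(ii)\Rightarrow(iii)$. Assume $\Sigma(K)\cong S^3_{-p/2}(\kappa)$ for some knot $\kappa$, and let $D$ be an \emph{arbitrary} reduced alternating diagram of $K$. By Corollary~\ref{Ch4:cor:CMcondit} applied with $p/q=p/2$, the Goeritz lattice $\Lambda_D$ is isomorphic to a half-integer changemaker lattice
\[
L=\langle w_0,w_1\rangle^\bot\subseteq\Z^{t+2},
\]
with $w_0=e_0+\sum_{i=1}^t\sigma_i f_i$ and $w_1=-e_0+e_1$. My plan is to extract from this lattice data a specific crossing of $D$ whose change produces an almost-alternating diagram of the unknot; by Proposition~\ref{Ch4:prop:3implies1}, such a crossing is automatically an unknotting crossing realising the surgery slope $-p/2$.

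To carry this out I would pull back the standard basis $\{\nu_1,\dots,\nu_t\}$ of $L$ (from Section~\ref{Ch3:sec:standardbases}) to $\Lambda_D\cong\Lambda(\Gamma_D)$. Each $\nu_k$ is irreducible by Lemmas~\ref{Ch3:lem:generalirred} and~\ref{Ch3:lem:tightstdvecirred}, and since $D$ is reduced alternating, $\Gamma_D$ is $2$-connected with no cut-edges (Lemma~\ref{Ch3:lem:2connectgraphlat}, Lemma~\ref{Ch3:lem:norm1vectors}), so Lemma~\ref{Ch3:lem:irreducible} forces $\nu_k=[R_k]$ for a subgraph $R_k$ whose induced subgraph and complement are both connected. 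I would then focus on the norm-$3$ vector $\nu_1=-f_1+e_0+e_1$: its preimage $[R_1]$ picks out a subgraph bounded by exactly three crossings of $D$, and the pairings $\nu_1\cdot\nu_k$ and $\nu_1\cdot w_1=0$ impose strong adjacency constraints on these three crossings in the planar embedding. Combining these with Proposition~\ref{Ch4:prop:tangledetection}, the three edges should assemble into a rational $2$-tangle of slope $\tfrac11$ sitting inside $D$; replacing this tangle by a single crossing of the opposite sign produces a diagram $D'$ which I would verify represents the unknot by matching determinants and checking, via Proposition~\ref{Ch4:prop:3implies1}, that the surgery slope coming from the Montesinos trick on $D'$ is indeed $-p/2$. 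The identified crossing of $D$ is then the desired unknotting crossing.

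The main obstacle will be the combinatorial step of translating the lattice data into a crossing of the \emph{given} diagram $D$, rather than some other reduced alternating diagram of $K$. Since any two reduced alternating diagrams are related by flypes (Menasco--Thistlethwaite), one might first locate an unknotting crossing in some flype-equivalent diagram $D''$ and then attempt to transport it back to $D$; executing this requires careful bookkeeping of flypes using Lemmas~\ref{Ch4:lem:flype1} and~\ref{Ch4:lem:flype2}, and systematic use of the indecomposability and rigidity of half-integer changemaker lattices (Lemma~\ref{Ch3:lem:fracindecomp}, Remark~\ref{Ch3:rem:stdbasisirred}) to control the allowed embeddings $\Lambda_D\hookrightarrow\Z^{t+2}$. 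A case split on whether the changemaker coefficients $\sigma_k$ are tight or slack is likely to be unavoidable, each case being handled by the appropriate form of $\nu_k$ recorded in Section~\ref{Ch3:sec:standardbases}.
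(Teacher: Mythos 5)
Your outer architecture is correct and matches the paper: $(iii)\Rightarrow(i)$ is trivial, $(i)\Rightarrow(ii)$ is the Montesinos trick (though the paper first reduces to a sign-refined statement, Theorem~\ref{Ch5:thm:technical}, before invoking Proposition~\ref{Ch4:prop:Montesinos}, since that proposition requires a specific sign condition on the unknotting crossing), and the heart is $(ii)\Rightarrow(iii)$ via Corollary~\ref{Ch4:cor:CMcondit} and the structure of half-integer changemaker lattices. You have also correctly identified that the norm-$3$ vector $\nu_1=-f_1+e_0+e_1$ is the decisive object: the paper's ``marker vertices'' $v,w$ are exactly the two vertices pairing with $e_0$, and (after flyping, Lemma~\ref{Ch5:lem:markedcexist}) one can take $v=\nu_1$.

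However, there is a serious gap in how you close the argument, and your sketch of the crucial step does not reflect what actually happens. First, the picture of ``three crossings bounding $[R_1]$ assembling into a rational $2$-tangle of slope $1/1$'' is wrong in the generic case. The vertex $v$ has norm $3$, so degree $3$ in $\Gamma_D$, but in general only \emph{one} of those three edges goes to $w$ (the ``marked crossing''); the other two go to adjacent vertices $u_1,u_2$, or there is a double edge to a single $u$ (the paper's Situations A and B, Definition~\ref{Ch5:def:sitAsitB}). The three crossings around $v$ do not in general form a rational subtangle that you can replace; only when $v\cdot w\in\{-2,-3\}$ do multiple edges go to $w$, and that is the \emph{easy} case (Lemma~\ref{Ch5:lem:manymarkedcrossings}). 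Second, ``verify $D'$ represents the unknot by matching determinants'' is far too weak: a determinant-one knot need not be trivial, and Proposition~\ref{Ch4:prop:3implies1} cannot be used as a check because it \emph{assumes} $c$ is an unknotting crossing as input, so invoking it here is circular.

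What your outline is missing is the inductive engine that the paper uses to prove the marked crossing unknots. Theorem~\ref{Ch5:thm:markedmeansunknotting} proceeds by induction on $\mathrm{rk}\,\Lambda_D$: after flyping to a ``standard form'' (Lemmas~\ref{Ch5:lem:flypetostd}, \ref{Ch5:lem:tightflyping}, \ref{Ch5:lem:slackflyping} and Proposition~\ref{Ch5:prop:singlecrossingsummary}), changing the single marked crossing $c$ produces an almost-alternating diagram $D'$ admitting an untongue or untwirl move in the neighbourhood of $c$; Lemma~\ref{Ch5:lem:inductstep} then shows that the resulting smaller diagram $\widetilde{D}$ is again the marked-crossing-change of an alternating diagram whose Goeritz lattice is a half-integer changemaker lattice of strictly smaller rank. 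Iterating terminates at the trefoil, giving the unknot. This mechanism — peeling off one crossing at a time while maintaining the changemaker structure, via the explicit re-embeddings in Figures~\ref{Ch5:fig:sitA1embeddings}--\ref{Ch5:fig:sitBembeddings} — is the genuinely new content, and without it your plan does not establish that the identified crossing unknots. Your tight/slack case split is the right instinct (it drives Lemmas~\ref{Ch5:lem:tightedgedetect} and \ref{Ch5:lem:slackedgedetect}), but it needs to feed into the induction rather than a one-shot tangle replacement. A minor point: you cite a Lemma~\texttt{Ch4:lem:flype2} which does not exist; the second type of flype is illustrated in Figure~\ref{Ch4:fig:flype2} and used directly, not stated as a lemma.
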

A more precise statement is the following theorem, from which Theorem~\ref{Intro:thm:unknotting} easily follows.
\begin{thm}\label{Ch5:thm:technical}
Let $K$ be a knot with $\det K =d$ and a reduced alternating diagram $D$. The following are equivalent:
\begin{enumerate}[(i)]
\item $\sigma(K)=0$ and $K$ can be unknotted by changing a negative crossing, or $\sigma(K)=-2$ and $K$ can be unknotted by changing a positive crossing;
\item there is a knot $\kappa \subseteq S^3$ such that $\Sigma(K)\cong S^3_{-\frac{d}{2}}(\kappa)$;
\item the Goeritz lattice $\Lambda_D$ is isomorphic to a half-integer changemaker lattice;
\item $\sigma(K)=0$ and $D$ contains a negative unknotting crossing, or $\sigma(K)=-2$ and $D$ contains a positive unknotting crossing;
\end{enumerate}
\end{thm}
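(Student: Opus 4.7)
The plan is to establish the cyclic chain $(\mathrm{i})\Rightarrow(\mathrm{ii})\Rightarrow(\mathrm{iii})\Rightarrow(\mathrm{iv})\Rightarrow(\mathrm{i})$, with three of the four implications short and the whole weight of the theorem resting on $(\mathrm{iii})\Rightarrow(\mathrm{iv})$. For $(\mathrm{i})\Rightarrow(\mathrm{ii})$ I would apply Proposition~\ref{Ch4:prop:Montesinos} directly in the case $\sigma(K)=0$ (a negative unknotting crossing gives $\delta=(-1)^{0}\det K=d$, hence $\Sigma(K)\cong S^{3}_{-d/2}(\kappa)$); in the case $\sigma(K)=-2$ I pass to the mirror $\overline K$, whose signature is $+2$ and which now admits a negative unknotting crossing, apply the same proposition to get $\Sigma(\overline K)\cong S^{3}_{d/2}(\kappa')$, and then use $\Sigma(\overline K)\cong -\Sigma(K)$ to produce the required slope $-d/2$ surgery on a knot in $S^{3}$. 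The implication $(\mathrm{ii})\Rightarrow(\mathrm{iii})$ is an immediate instance of Corollary~\ref{Ch4:cor:CMcondit} with $p/q=d/2$: since $d=\det K$ is odd for a knot, $d/2$ has denominator exactly two, so the changemaker lattice produced is a half-integer changemaker lattice. Finally $(\mathrm{iv})\Rightarrow(\mathrm{i})$ is tautological, as (iv) simply specifies that the unknotting crossing promised by (i) can be located in the given diagram $D$.

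The content of the theorem is the implication $(\mathrm{iii})\Rightarrow(\mathrm{iv})$, which converts an abstract half-integer changemaker structure on $\Lambda_D$ into an actual crossing change in $D$. Writing $L=\langle w_0,w_1\rangle^\perp\subseteq\Z^{t+2}$ with $w_1=-e_0+e_1$, transport the distinguished lattice vectors $\mu_0=e_0+e_1$ (of norm~$2$) and $\nu_1=-f_1+\mu_0$ (of norm~$3$) across a chosen isomorphism $\Lambda_D\cong L$. Lemma~\ref{Ch3:lem:irreducible} then forces them to take the form $[R]$ for $2$-connected induced subgraphs of $\Gamma_D$ whose complements are also $2$-connected. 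The norm-$2$ relation $\|\mu_0\|^{2}=2$ translates to the statement that the corresponding pair of vertices is joined to its complement by exactly two edges, and these two edges play the role of the ``clasp'' in an almost-alternating picture. I would then use the flype moves of Lemma~\ref{Ch4:lem:flype1} and the cut-set flype discussed in Section~\ref{sec:altdiagrams} to reduce $D$ (preserving its alternating structure and the isomorphism class of $\Lambda_D$) until this cut pair of edges sits in a canonical configuration, at which point Proposition~\ref{Ch4:prop:tangledetection} together with the continued fraction $d/2=[a_0,2]^-$ identifies one edge of the pair as a candidate unknotting crossing $c$.

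It remains to verify that changing $c$ really does produce a diagram of the unknot, and to recover the signature constraint. The signature statement is the easier half: the Gordon--Litherland formula \eqref{Ch4:eq:altsigformula} expresses $\sigma(K)$ in terms of $\mathrm{rk}(\Lambda_D)$ and the number of positive crossings, so the sign of $c$ and the value of $\sigma(K)\in\{0,-2\}$ are forced simultaneously by the rank data encoded in $L$. For the unknotting itself I would argue that after changing $c$ the new diagram is almost-alternating, and that the presentation of $\Lambda_D$ obtained from a standard basis of $L$ (Proposition~\ref{Ch3:prop:stdbasis}), after deleting the $w_1$-slot corresponding to the modified crossing, satisfies the hypotheses of Lemma~\ref{Ch3:lem:halfintidentification} only trivially; equivalently, the core $\kappa$ of the Montesinos solid torus becomes the unknot. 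The main obstacle is precisely this last step: identifying a \emph{bona fide} unknotting crossing rather than a crossing whose change merely preserves a half-integer changemaker structure on the Goeritz lattice. I would expect this to require a careful case analysis exploiting the tight/slack dichotomy for the changemaker coefficients of $w_0$, combined with an induction on the number of stable coefficients that mirrors the flype/tongue/twirl reduction appearing in Tsukamoto's classification (Theorem~\ref{intro:thm:tsukamoto}).
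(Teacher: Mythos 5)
Your cyclic chain and the three short implications match the paper exactly: $(\mathrm{i})\Rightarrow(\mathrm{ii})$ by Proposition~\ref{Ch4:prop:Montesinos}, $(\mathrm{ii})\Rightarrow(\mathrm{iii})$ by Corollary~\ref{Ch4:cor:CMcondit}, and $(\mathrm{iv})\Rightarrow(\mathrm{i})$ trivially. Your instinct that the substance lies in $(\mathrm{iii})\Rightarrow(\mathrm{iv})$, and that an induction coupled with a tight/slack dichotomy and untongue/untwirl reduction does the work, is also the right global picture.

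However, your sketch of $(\mathrm{iii})\Rightarrow(\mathrm{iv})$ contains genuine errors. The vector $\mu_0 = e_0 + e_1$ is not an element of the half-integer changemaker lattice $L = \langle w_0, e_1 - e_0\rangle^\perp$, since $\mu_0 \cdot w_0 = 1 \ne 0$; it lives only in the fractional part $L_F$, which is not a sublattice of $L$ in the non-integer case. So $\mu_0$ cannot be transported across the isomorphism $\Lambda_D \cong L$, and your ``clasp'' identification built on $\norm{\mu_0}=2$ does not get started. What the paper actually uses is the pair of \emph{marker vertices}: Lemma~\ref{Ch5:lem:coefbound} bounds $|x\cdot e_0|$ for sums of vertices, from which one deduces there is a unique vertex $v$ with $v\cdot e_0 = 1$ and a unique $w$ with $w\cdot e_0 = -1$ (Definition~\ref{Ch5:def:markedcrossing}); the marked crossing is a crossing between these two regions. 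Lemma~\ref{Ch5:lem:markedcexist} then shows, after flypes, that $v = -f_1 + e_0 + e_1$ and that there are between one and three marked crossings. Your signature argument is also too quick: \eqref{Ch4:eq:altsigformula} alone does not force $\sigma(K) \in \{0,-2\}$ from rank data; the paper's Lemma~\ref{Ch5:lem:manymarkedcrossings} instead computes $\sigma(K')$ for the knot $K'$ obtained by changing the marked crossing via Theorem~\ref{Ch4:thm:sigformula} and only then uses $\sigma(\mathrm{unknot})=0$. Finally, the crucial step you wave at — that after changing the marked crossing one can perform an untongue or untwirl to land back in the inductive hypothesis — requires establishing the local structure of Proposition~\ref{Ch5:prop:singlecrossingsummary} (Situations A1, A2, B; Lemmas~\ref{Ch5:lem:flypetostd}--\ref{Ch5:lem:slackedgedetect}), and then Lemma~\ref{Ch5:lem:inductstep}, which uses Lemma~\ref{Ch3:lem:halfintidentification} affirmatively (not ``only trivially'') to verify the reduced diagram again carries a half-integer changemaker structure of strictly smaller rank. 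The paper's induction in Theorem~\ref{Ch5:thm:markedmeansunknotting} is on the rank of $\Lambda_D$, not on the number of stable coefficients, and Proposition~\ref{Ch4:prop:tangledetection} plays no role in the half-integer argument. In short: right shape, but the mechanism you propose for locating the crossing is not available, and the heavy lifting in the induction step is not addressed.
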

The key step accomplished in this chapter is to prove the implication $(iii) \Rightarrow (iv)$. Throughout this chapter, we will consider a half-integer changemaker lattice
\[L=\langle e_0+ \sigma_1 f_1 + \dots + \sigma_r f_r, e_0-e_1 \rangle^\bot \subseteq \Z^{r+2},\]
where the $\sigma_i$ form an increasing sequence with $\sigma_1=1$,
and suppose that we have a reduced alternating diagram $D$ with an isomorphism
\[\iota_D : \Lambda_D \longrightarrow L.\]
This isomorphism gives us a distinguished collection of vectors in $L$ given by the image of the $r+1$ vertices of $\Gamma_D$. We call this collection $V_D$, and, in an abuse of notation, we will sometimes fail to distinguish between a vertex of $\Gamma_D$ and the corresponding element in $V_D$. We will use $\sigma$ to denote the vector
\[\sigma=e_0+ \sigma_1 f_1 + \dots + \sigma_r f_r.\]

\begin{rem} Lemma~\ref{Ch3:lem:fracindecomp} shows that the lattice $L$ is indecomposable. By Lemma~\ref{Ch3:lem:2connectgraphlat}, this means that $\Gamma_D$ is 2-connected and any $v\in V_D$ is irreducible.
\end{rem}

\paragraph{}It will be necessary for us to flype $D$ to obtain a new reduced alternating diagrams. In all cases, this flype will be an application of Lemma~\ref{Ch4:lem:flype1} or a flype as appearing in Figure~\ref{Ch4:fig:flype2}. In either case, if $D'$ is the diagram we obtain from such a flype, then we get a natural choice of $V_{D'}\subseteq L$ and hence an isomorphism
\[\iota_{D'}: \Lambda_{D'} \longrightarrow L.\]
Whenever we flype, we will implicitly use these choices of isomorphism to speak of $V_{D'}$ without ambiguity.

\begin{figure}[ht]
  \centering
  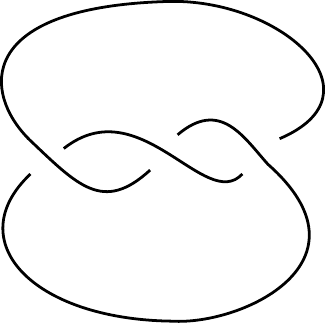
 \caption{The right-handed trefoil}
 \label{Ch5:fig:trefoil}
\end{figure}

\begin{example}\label{Ch5:exam:trefoil}
If $D$ is a reduced alternating diagram for the right-handed trefoil, then $\Gamma_D$ is a graph with two vertices and three edges and we have
\[\Lambda_D\cong\langle f_1+e_0, e_{1}-e_0 \rangle ^\bot=\langle f_1-e_0-e_{1} \rangle\subseteq \mathbb{Z}^3.\]
Thus $\Lambda_D$ is isomorphic to the unique half-integer changemaker lattice of rank one and it has vertices $V_D=\{\pm (f_1-e_0-e_{1})\}$, as shown in Figure~\ref{Ch5:fig:trefoil}. In fact, as $\langle f_1-e_0-e_{1} \rangle$ has only two non-zero irreducible vectors, this is the only reduced alternating diagram with $\Lambda_D$ isomorphic to a changemaker lattice of rank one.
\end{example}

\paragraph{} Recall that $\sigma_s$ is tight if
\[\sigma_s=1+\sigma_1 + \dots + \sigma_{s-1};\]
that $\sigma$ is tight if there is $s>1$ with $\sigma_s$ tight; and that $\sigma$ is {\em slack} otherwise. The following variant on Proposition~\ref{Ch3:prop:CMcondition} will be useful.

\begin{lem}\label{Ch5:lem:slacksum}
If $\sigma_s$ is not tight, then we can write $\sigma_s=\sum_{i \in A} \sigma_i$ for some $A\subseteq \{1, \dots , s-1 \}$. Moreover, if $\sigma$ is slack we may assume that $1\in A$.
\end{lem}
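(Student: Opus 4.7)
The first assertion is essentially immediate from the characterization in Proposition~\ref{Ch3:prop:CMcondition}. Since $\sigma_s$ is not tight we have
\[\sigma_s \leq \sigma_1 + \sigma_2 + \dots + \sigma_{s-1},\]
and the sub-tuple $(\sigma_1, \dots, \sigma_{s-1})$ inherits the changemaker condition from $(\sigma_1, \dots, \sigma_t)$, so by definition of the changemaker property the integer $\sigma_s$ can be written as $\sum_{i \in A} \sigma_i$ for some $A \subseteq \{1, \dots, s-1\}$.

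For the ``moreover'' statement, assume $\sigma$ is slack. The plan is to show that the shifted tuple $(\sigma_2, \sigma_3, \dots, \sigma_{s-1})$ is itself a changemaker tuple, and then apply the first part to the value $\sigma_s - 1$. First, since $\sigma_2$ is not tight we have $\sigma_2 \leq \sigma_1 = 1$; combined with $\sigma_2 \geq \sigma_1 = 1$ this forces $\sigma_2 = 1$. Next, define $\tau_i = \sigma_{i+1}$ for $1 \leq i \leq s-2$; then $\tau_1 = 1$, and for each $i \geq 2$ the slackness hypothesis gives $\sigma_{i+1} \leq \sigma_1 + \dots + \sigma_i$, which rewrites as
\[\tau_i = \sigma_{i+1} \leq 1 + \sigma_2 + \dots + \sigma_i = \tau_1 + \tau_2 + \dots + \tau_{i-1} + 1.\]
By Proposition~\ref{Ch3:prop:CMcondition}, $(\tau_1, \dots, \tau_{s-2})$ therefore satisfies the changemaker condition.

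Now consider the integer $\sigma_s - 1$. We have $\sigma_s - 1 \geq 0$, and using $\sigma_s \leq \sigma_1 + \dots + \sigma_{s-1}$ together with $\sigma_1 = 1$,
\[\sigma_s - 1 \leq \sigma_2 + \dots + \sigma_{s-1} = \tau_1 + \dots + \tau_{s-2}.\]
Hence, by the changemaker property of the $\tau$-tuple, we can express $\sigma_s - 1 = \sum_{i \in A'} \tau_i$ for some $A' \subseteq \{1, \dots, s-2\}$; translating back via $\tau_i = \sigma_{i+1}$, this reads $\sigma_s - 1 = \sum_{j \in B} \sigma_j$ with $B = \{\, i+1 : i \in A'\,\} \subseteq \{2, \dots, s-1\}$. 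Setting $A = B \cup \{1\}$ gives $\sigma_s = \sum_{j \in A} \sigma_j$ with $1 \in A$, as required. The only real content beyond bookkeeping is verifying that slackness upgrades the changemaker inequality by exactly the amount needed to make the shifted tuple again satisfy the condition; after that, the result drops out of a single application of the defining property.
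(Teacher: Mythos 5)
Your proof is correct but takes a genuinely different route from the paper's on the ``moreover'' clause. The paper runs an induction on $s$: starting from any representing set $A$ with $\sigma_s = \sum_{i\in A}\sigma_i$, if $m = \min A > 1$ then the inductive hypothesis (applied to $\sigma_m$, which is again not tight since $\sigma$ is slack) produces $A'\subseteq\{1,\dots,m-1\}$ containing $1$ with $\sigma_m = \sum_{i\in A'}\sigma_i$, and the disjoint union $(A\setminus\{m\})\cup A'$ then does the job. You instead show that slackness forces $\sigma_2 = 1$ and that the shifted tuple $\tau_i = \sigma_{i+1}$ satisfies Brown's characterization in Proposition~\ref{Ch3:prop:CMcondition}, hence is itself a changemaker tuple; representing $\sigma_s - 1$ in this shifted tuple and adjoining the index $1$ gives the result in a single pass, trading the recursion of the paper (which may be invoked several times as $\min A$ decreases) for the upfront check that the index shift preserves the changemaker property. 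Both arguments are elementary and of comparable length. One small point: your construction of $\tau$ implicitly assumes $s\geq 3$, but the degenerate case $s=2$ still falls out cleanly with $\tau$ the empty tuple, $A'=\emptyset$, and $A=\{1\}$.
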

\begin{proof}
If $\sigma_s$ is not tight, then $\sigma_s \leq \sigma_1 + \dots + \sigma_{s-1}$. Thus, Proposition~\ref{Ch3:prop:CMcondition} implies the existence of $A\subseteq \{1,\dots, s-1\}$ such that $\sigma_s=\sum_{i \in A} \sigma_i$.

If $\sigma$ is slack, then we we will prove that there is a choice of $A$ containing 1 by induction. If $s=2$, then $\sigma_s=1$ and $A=\{1\}$. Thus suppose $s>2$ and let $m=\min A$. If $m>1$, then by induction we may assume that $\sigma_m=\sum_{i \in A'}\sigma_i$ for $A'\subseteq \{1, \dots, m-1\}$ with $1\in A'$. The set $A\setminus \{m\} \cup A'$ then has the desired properties.
\end{proof}

\paragraph{}We use the fact that $L$ is a changemaker lattice to deduce information about the elements of $V_D$. The following lemma serves as a useful sample calculation as well as being helpful in its own right.

\begin{lem}\label{Ch5:lem:01splitting}
Let $x=\sum_{v \in R}v$ for some $R\subseteq V_D$ be a vertex or a sum of vertices. If there is $z=-f_g + \sum_{f \in A} f \in L$ with $A\subseteq \{e_0,e_1,f_1, \dots, f_{g-1}\}$ such that $x\cdot f>0$ for all $f \in A$ and $x\cdot f_g \leq 0$, then $(x-z)\cdot z\in \{-1,0 \}$.
\end{lem}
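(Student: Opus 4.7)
The plan is a short two-inequality sandwich, so I do not expect serious difficulty. The upper bound is immediate: since $x=\sum_{v\in R}v$ is a sum of vertices of $\Gamma_D$, after transporting across the isometry $\iota_D\colon\Lambda_D\to L$, Lemma~\ref{Ch3:lem:usefulbound} gives $(x-z)\cdot z\leq 0$. All the actual content is in establishing the matching lower bound $(x-z)\cdot z\geq -1$.

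For the lower bound I would compute directly, exploiting that $z=-f_g+\sum_{f\in A} f$ is a signed sum of \emph{distinct} orthonormal basis vectors of $\Z^{r+2}$ (note that $f_g\notin A$ since $A\subseteq\{e_0,e_1,f_1,\dots,f_{g-1}\}$). This reads off $\norm{z}=1+|A|$. For the cross term I would write
\[
x\cdot z \;=\; -x\cdot f_g \;+\; \sum_{f\in A} x\cdot f.
\]
Here $x\in L\subseteq\Z^{r+2}$ has integer coordinates in the $e_0,e_1,f_1,\dots,f_r$ basis, so each inner product with a standard basis vector is an integer. The hypothesis $x\cdot f>0$ therefore upgrades to $x\cdot f\geq 1$ for every $f\in A$, while $x\cdot f_g\leq 0$ gives $-x\cdot f_g\geq 0$. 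Adding these contributions, $x\cdot z\geq |A|$, so
\[
(x-z)\cdot z \;=\; x\cdot z - \norm{z} \;\geq\; |A|-(1+|A|) \;=\; -1.
\]

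Combining the two bounds gives $(x-z)\cdot z\in\{-1,0\}$, as claimed. The only conceptual subtlety is the tacit transport between $\Lambda_D$ and $L$ so that Lemma~\ref{Ch3:lem:usefulbound} (phrased for graph lattices) can be applied on the $\Lambda_D$ side while the explicit norm and pairing computations are carried out on the $L\subseteq\Z^{r+2}$ side; since $\iota_D$ is an isometry this causes no issue. No step looks like it will be the main obstacle.
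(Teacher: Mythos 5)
Your proof is correct and follows essentially the same route as the paper: Lemma~\ref{Ch3:lem:usefulbound} gives the upper bound $(x-z)\cdot z\leq 0$, and the lower bound comes from the same algebraic expansion (the paper writes $(x-z)\cdot z=-x\cdot f_g-1+\sum_{f\in A}(x\cdot f-1)$, which is exactly your $x\cdot z-\norm{z}$ rearranged) together with the integrality upgrade $x\cdot f>0\Rightarrow x\cdot f\geq 1$. The only difference is presentational.
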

\begin{proof}
We apply Lemma~\ref{Ch3:lem:usefulbound} to $(x-z)\cdot z$. This gives
\begin{equation*}
0\geq (x-z)\cdot z = -x\cdot f_g-1 + \sum_{f\in A} (x\cdot f -1).
\end{equation*}
However by hypothesis, $x\cdot f_g\leq 0$ and $x\cdot f\geq 1$ for all $f \in A$. So we also have \[(x-z)\cdot z\geq -1,\]
as required.
\end{proof}
It is worth noting that further information can be gleaned from the proof of the preceding Lemma. For example, we see that $x\cdot f=1$, for all but possibly one $f\in A$. Such arguments will be used frequently later in this paper. In these arguments, the vector $z$ will usually be constructed by applying Lemma~\ref{Ch5:lem:slacksum}.

\section{Marked crossings}\label{sec:markedcrossings}
In this section, we will define marked crossings and prove their existence. We will also show that if there is more than one marked crossing, then any one of them is an unknotting crossing.

\begin{lem}\label{Ch5:lem:coefbound}
If $x = \sum_{v\in R}v \in L$ for $R\subseteq V_D$ is a sum of vertices, then
\begin{enumerate}[(i)]
\item $|x\cdot e_0|\leq 1$;
\item $|x\cdot f_1|\leq 2$; and
\item if $x$ is irreducible with $x\cdot e_0\ne 0$, then $|x\cdot f_1|\leq 1$.
\end{enumerate}
\end{lem}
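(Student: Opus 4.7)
The plan is to exploit that both $x=[R]$ and $-x=[V_D\setminus R]$ are sums of vertices in the graph lattice $\Lambda_D\cong L$, so Lemma~\ref{Ch3:lem:usefulbound} gives $(x-z)\cdot z\leq 0$ and, symmetrically, $(-x-z)\cdot z\leq 0$ for every $z\in L$; hence $|x\cdot z|\leq\norm{z}$ for every $z\in L$. Write $b=x\cdot e_0=x\cdot e_1$ (the equality coming from $x\perp e_0-e_1$) and $\beta_i=x\cdot f_i$; then the orthogonality $x\perp\sigma$ reads $b+\sum_i\sigma_i\beta_i=0$. The workhorse is the vector $\nu_1=-f_1+e_0+e_1\in L$ (it lies in $L$ because $\sigma_1=1$), which via the two-sided bound pairs with $x$ to give $|2b-\beta_1|\leq 3$.

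For (i), I would refine this by case-analysis on the signs of $(b,\beta_1)$. When $b>0$ and $\beta_1\leq 0$, Lemma~\ref{Ch5:lem:01splitting} applies directly with $z=\nu_1$, $g=1$, $A=\{e_0,e_1\}$, forcing $(x-\nu_1)\cdot\nu_1\in\{-1,0\}$; combined with the sign hypotheses this pins down $b=1$. The symmetric case $b<0$, $\beta_1\geq 0$ follows by running the same argument with $-x$. In the remaining case, where $b$ and $\beta_1$ have the same strict sign, the relation $b+\sum_i\sigma_i\beta_i=0$ forces some $\beta_k$ of the opposite sign with $k\geq 2$; at the smallest such $k$, a suitable standard-basis vector $\nu_k$ (built from Lemma~\ref{Ch5:lem:slacksum}, with $1\in A_k$ in the slack case) still satisfies the hypotheses of Lemma~\ref{Ch5:lem:01splitting}, and the resulting constraint $(x-\nu_k)\cdot\nu_k\in\{-1,0\}$, combined with the previously established positivity of $\beta_1,\dots,\beta_{k-1}$, rules out $|b|\geq 2$.

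For (ii), combining $|2b-\beta_1|\leq 3$ with (i) gives $|\beta_1|\leq 5$. To tighten to $|\beta_1|\leq 2$ I would apply $|x\cdot z|\leq\norm{z}$ to a further vector in $L$ that isolates $\beta_1$, most naturally $z=\nu_j$ arising from the smallest slack coefficient (or $z=\nu_1-\nu_2$ when every $\sigma_k$ is tight), sharpen the resulting inequality via Lemma~\ref{Ch5:lem:01splitting} under the appropriate sign hypotheses on $x$, and eliminate the intermediate values $|\beta_1|\in\{3,4,5\}$ using the relation $b+\sum_i\sigma_i\beta_i=0$ and the bound on $b$. For (iii), assume $x$ is irreducible with $b\neq 0$. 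By Lemma~\ref{Ch3:lem:fracpartirred} the fractional part $x_F\in L_F$ is irreducible; since for a half-integer changemaker $L_F$ has rank one, spanned by $\mu_0=e_0+e_1$, Lemma~\ref{Ch3:lem:irredfracpart} forces $x_F=\pm\mu_0$ and hence $b=\pm 1$. Replacing $x$ with $-x$ if necessary, take $b=1$. Decomposing $x=\nu_1+(x-\nu_1)$ and invoking irreducibility, either $x=\nu_1$ (so $\beta_1=-1$) or $(x-\nu_1)\cdot\nu_1<0$, giving $\beta_1\geq 0$; the symmetric decomposition using $-\nu_1$, together with (ii), confines $\beta_1\in\{-1,0,1\}$.

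The main obstacle is that the natural projection vector $\mu_0=e_0+e_1$ does \emph{not} itself lie in $L$ (it pairs with $\sigma$ to $1$), so one cannot bound $b$ by applying Lemma~\ref{Ch3:lem:usefulbound} with $z=\mu_0$; everything has to be extracted through $\nu_1$ and its higher analogues, which couple the $e$- and $f$-coordinates. Sharpening the resulting two-sided bound from $\nu_1$ into the tight inequality $|b|\leq 1$ is precisely where Lemma~\ref{Ch5:lem:01splitting} and the changemaker relation do the real work, and where the case analysis in (i) is most delicate.
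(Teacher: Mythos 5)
Your overall strategy---construct a vector $z\in L$ and apply Lemma~\ref{Ch3:lem:usefulbound} (or its refinement Lemma~\ref{Ch5:lem:01splitting}) to $(x-z)\cdot z$---is the right one, and $\nu_1=-f_1+e_0+e_1$ is the correct starting vector. But the ``same strict sign'' case in (i) contains a genuine gap. There you propose to use a standard-basis vector $\nu_k$ built from Lemma~\ref{Ch5:lem:slacksum}, which for slack $\sigma_k$ has the shape $\nu_k=-f_k+\sum_{i\in A_k}f_i$ with $\sigma_k=\sum_{i\in A_k}\sigma_i$. Such a vector has no $e_0,e_1$ component, so $(x-\nu_k)\cdot\nu_k$ is completely insensitive to $b=x\cdot e_0$ and cannot rule out $|b|\geq 2$. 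Concretely, take $\sigma=(1,1,2)$, so $L=\langle e_0+f_1+f_2+2f_3,\;e_0-e_1\rangle^\perp\subseteq\Z^5$, and $x=2e_0+2e_1+f_1-f_2-f_3\in L$. Here $b=2$, $\beta_1=1>0$, the smallest $k$ with $\beta_k<0$ is $k=2$, and $\nu_2=f_1-f_2$ gives $(x-\nu_2)\cdot\nu_2=0\in\{-1,0\}$, so your constraint is silent and $b=2$ survives.

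What the paper does is build $z=-f_g+e_0+e_1+\sum_{i\in A}f_i$ where $\sigma_g-1=\sum_{i\in A}\sigma_i$ (the $-1$ is exactly what pays for the $e_0+e_1$ summand while keeping $z\cdot w_0=0$) and $g$ is minimal with $x\cdot f_g\leq 0$. This $z$ always carries $e_0+e_1$, so $(x-z)\cdot z$ acquires a $2b-2$ contribution, and Lemma~\ref{Ch3:lem:usefulbound} immediately forces $b\leq 3/2$. In the example, $z=-f_2+e_0+e_1$ gives $(x-z)\cdot z=-\beta_2+2b-3=2>0$, the contradiction you need. Taking $g$ minimal with $x\cdot f_g\leq 0$ (rather than strictly negative) also keeps all of $x\cdot f_i$ for $i<g$ strictly positive, which you need for Lemma~\ref{Ch5:lem:01splitting} but cannot guarantee under your choice of $k$, and it dissolves the sign case analysis entirely. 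A parallel problem shows up in (iii): your two decompositions of $x$ around $\pm\nu_1$ yield $\beta_1\geq 0$ (or $x=\nu_1$) and $\beta_1<5$, which combined with (ii) leaves $\beta_1\in\{-1,0,1,2\}$, not $\{-1,0,1\}$; the paper closes this by constructing $z$ at the minimal $g>1$ so that $z\cdot f_1=1$, whence irreducibility gives $-1\geq(x-z)\cdot z\geq x\cdot f_1-2$ and hence $x\cdot f_1=1$ exactly.
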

\begin{proof} We use similar arguments to establish all three parts of the lemma.

(i) Since $-x =\sum_{v\in V_D \setminus R}v$ is also a sum of vertices, we may assume that $x\cdot e_0\geq 0$. Let $g>0$ be minimal such that $x\cdot f_g\leq 0$. We may write $\sigma_g -1 = \sum_{i \in A} \sigma_i$ for some $A \subseteq \{1, \dots , g-1\}$. Let $z=-f_g + e_0 + e_{1} + \sum_{i \in A} f_i$. By construction, $z \in L$. By Lemma~\ref{Ch3:lem:usefulbound}, $(x-z)\cdot z\leq 0$. This gives
\[0\geq(x-z)\cdot z \geq -1-x\cdot f_g + (x\cdot e_0 - 1) + (x\cdot e_{1} - 1) \geq -1 + 2x\cdot e_0 -2.\]
Thus $x\cdot e_0\leq \frac{3}{2}$.

(ii) We may assume $x\cdot f_1\geq 0$. Let $g>1$ be minimal such that $x\cdot f_g\leq 0$. We may write $\sigma_g -1 = \sum_{i \in A} \sigma_i$ for some $A \subseteq \{1, \dots , g-1\}$. Let $z=-f_g + f_1 + \sum_{i \in A} f_i$. We have $z\cdot f_1=\varepsilon \in \{1,2\}$. By Lemma~\ref{Ch3:lem:usefulbound}, $(x-z)\cdot z\leq 0$. This gives
\[0\geq (x-z)\cdot z\geq -1 + \varepsilon(x\cdot f_1-\varepsilon).\]
Thus $x\cdot f_1\leq \varepsilon + \frac{1}{\varepsilon}\leq \frac{5}{2}$.

(iii) Suppose now that $x$ is irreducible. We may assume that $x\cdot e_0 = 1$. Let $g>0$ be minimal such that $x\cdot f_g\leq 0$. If $g=1$, then let $z=-f_1 + e_0 + e_{1}$. By irreducibility, it follows that either $x=z$ or
\[(x-z)\cdot z= -(x\cdot f_1+1)\leq -1.\]
This shows that $0\geq x\cdot f_1\geq -1$ in this case.
Suppose now that $g>1$. We may write $\sigma_g -1 = \sum_{i \in A} \sigma_i$ for some $A \subseteq \{1, \dots , g-1\}$. If $1\in A$, set $z=-f_g + e_0 + e_{1} + \sum_{i \in A} f_i$. If $1\notin A$, set $z=-f_g + f_{1} + \sum_{i \in A} f_i$. In either case $z\cdot f_1=1$. By irreducibility, it follows that either $x=z$ or
\[ x\cdot f_1 - 1-(x\cdot f_g+1) \leq (x-z)\cdot z \leq -1.\]
This shows that $x \cdot f_1 = 1$ in this case.
\end{proof}

Since the $V_D$ span $L$ and we have $-f_1 + e_0 +e_1\in L$, there is $v\in V_D$ with $v\cdot e_0>0$. Lemma~\ref{Ch5:lem:coefbound}$(i)$ shows that $v$ must be unique and that $v\cdot e_0=1$. Similarly, there is a single vertex $w \in V_D$ with $w\cdot e_0<0$ and it satisfies $w\cdot e_0=-1$.
\begin{defn}\label{Ch5:def:markedcrossing}
We say that $v$ and $w$ are the {\em marker vertices} of $\Gamma_D$ and that any crossing between the corresponding regions of $D$ is a {\em marked crossing}.
\end{defn}

\paragraph{} Returning to Example~\ref{Ch5:exam:trefoil}, we see that every crossing of the trefoil in Figure~\ref{Ch5:fig:trefoil} is a marked crossing.

\paragraph{}Note that replacing $\iota_D$ by $-\iota_D$ gives another isomorphism with the same marked crossings. For the purposes of notation, it will be convenient to fix this choice of sign in the next lemma.

\begin{rem}\label{Ch5:rem:preservesmarked}
Suppose $v$ is a vertex that can be written $v=x+y$, with $x\cdot y=-1$. By Lemma~\ref{Ch4:lem:flype1}, this gives a cut edge $e$ in $\Gamma_D\setminus\{v\}$ and a flype to a diagram with $x$ and $y$ as vertices. Observe that if $x\cdot e_0=0$ or $y\cdot e_0=0$, then the edge $e$ does not correspond to a marked crossing. This means that the flype can be chosen to fix the marked crossings which will again be marked crossings of the new embedding. In particular, such a flype commutes with the act of changing a marked crossing.
\end{rem}

With this in mind, we make our first flypes and prove the existence of marked crossings.

\begin{lem}\label{Ch5:lem:markedcexist}
Let $v$ and $w$ be the marker vertices with $v\cdot e_0=1$ and $w\cdot e_0=-1$. Up to choices of sign for $\iota_D$, we either have $v=-f_1+e_0+e_1$ , or there is a flype, preserving marked crossings, which gives a diagram $D'$ with a marker vertex $v'=-f_1+e_0+e_{1}$. Furthermore, $D'$, and hence $D$, contains between one and three marked crossings.
\end{lem}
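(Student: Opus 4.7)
The plan is to control $v$ by comparing it against the distinguished irreducible vector $z_0 := -f_1 + e_0 + e_1 \in L$; this vector is irreducible by Lemma~\ref{Ch3:lem:generalirred} and coincides with the first standard basis vector $\nu_1$. Since $L \subseteq \langle e_0-e_1\rangle^\perp$, every $x\in L$ satisfies $x\cdot e_0 = x\cdot e_1$, so in particular $v\cdot e_1=1$. Applying Lemma~\ref{Ch5:lem:coefbound}(iii) to the irreducible $v$ (with $v\cdot e_0\ne 0$) yields $v\cdot f_1\in\{-1,0,1\}$. A direct computation then gives
\[
(v-z_0)\cdot z_0 \;=\; v\cdot z_0 - \norm{z_0} \;=\; (2-v\cdot f_1) - 3 \;=\; -v\cdot f_1 - 1.
\]

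I would split into three cases. If $v\cdot f_1=-1$, then $(v-z_0)\cdot z_0=0$, and the irreducibility of $v$ (together with $z_0\ne 0$) forces $v=z_0=-f_1+e_0+e_1$. If $v\cdot f_1=0$, then $(v-z_0)\cdot z_0=-1$, so Lemma~\ref{Ch4:lem:flype1} applied to the decomposition $v=(v-z_0)+z_0$ produces a flype to a reduced alternating diagram $D'$ whose vertex set contains $z_0$ and $v-z_0$; since $z_0\cdot e_0=1$ and $(v-z_0)\cdot e_0=0$, the vertex $z_0$ is the marker of $D'$, and by Remark~\ref{Ch5:rem:preservesmarked} the flype preserves marked crossings. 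If $v\cdot f_1=1$, I would apply the same analysis to $-w$, which is irreducible with $(-w)\cdot e_0=1$; Lemma~\ref{Ch5:lem:coefbound}(iii) gives $-w\cdot f_1\in\{-1,0,1\}$, and if $-w\cdot f_1\in\{-1,0\}$ one replaces $\iota_D$ by $-\iota_D$ so that $-w$ becomes the marker vertex, reducing to one of the previous two cases.

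The main obstacle is the residual configuration $v\cdot f_1=1$ and $w\cdot f_1=-1$, in which neither marker equals $\pm z_0$ and neither $(v-z_0)$ nor $(w+z_0)$ has $z_0$-product $-1$, so Lemma~\ref{Ch4:lem:flype1} does not apply directly to $v$ or $w$ via $z_0$. Here I would exploit the global structure of $\Gamma_D$: using the indecomposability of $L$ (Lemma~\ref{Ch3:lem:fracindecomp}) to deduce that $\Gamma_D$ is 2-connected (Lemma~\ref{Ch3:lem:2connectgraphlat}) and so every vertex is irreducible, and using the standard-basis description of $L$ together with $\sum_{u\in V_D}u=0$ to track the contributions to $f_1$ from the non-marker vertices, one locates either an auxiliary vertex $u\in V_D\setminus\{v,w\}$ admitting a decomposition $u=x+y$ with $x\cdot y=-1$ and $x\cdot e_0=0$, or a cutset containing an edge between $v$ and $w$ that supports a flype of the Figure~\ref{Ch4:fig:flype2} type. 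In either case, after this auxiliary flype the new diagram falls into one of the earlier cases and the induction concludes with a marker vertex equal to $-f_1+e_0+e_1$.

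Finally, once the marker vertex is $v'=-f_1+e_0+e_1$ in $D'$ (or in $D$ itself), the number of marked crossings equals
\[
-v'\cdot w' \;=\; w'\cdot f_1 - w'\cdot e_0 - w'\cdot e_1 \;=\; w'\cdot f_1 + 2,
\]
which lies in $\{1,2,3\}$ by Lemma~\ref{Ch5:lem:coefbound}(iii) applied to the irreducible vertex $-w'$ (yielding $|w'\cdot f_1|\le 1$). The lower bound is ensured by the fact that $v'\ne w'$ and $\Gamma_{D'}$ is connected, so there is at least one edge joining $v'$ and $w'$.
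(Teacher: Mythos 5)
Your case split is the right shape and the first two cases ($v\cdot f_1\in\{-1,0\}$) plus the final count match the paper's argument exactly. The genuine gap is your treatment of the ``residual configuration'' $v\cdot f_1=1$, $w\cdot f_1=-1$: you treat it as a live case to be resolved by a vague appeal to 2-connectedness and auxiliary flypes, but in fact that configuration is \emph{impossible}, and the paper's whole proof hinges on seeing this. Here is the missing step. Assume $v\cdot f_1>0$, so $v\cdot f_1=1$ by Lemma~\ref{Ch5:lem:coefbound}(iii). The vector $z_0=-f_1+e_0+e_1$ is irreducible (Lemma~\ref{Ch3:lem:generalirred}), so by Lemma~\ref{Ch3:lem:irreducible} there is $R\subseteq V_D$ with $z_0=\sum_{u\in R}u$. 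Since $z_0\cdot e_0=1$ and the only vertices pairing nontrivially with $e_0$ are $v$ (value $+1$) and $w$ (value $-1$), we must have $v\in R$ and $w\notin R$. Then $z_0-v+w=\sum_{u\in R\setminus\{v\}}u+w$ is again a sum of vertices, and Lemma~\ref{Ch5:lem:coefbound}(ii) gives $|(z_0-v+w)\cdot f_1|\leq 2$; computing $(z_0-v+w)\cdot f_1 = -1-1+w\cdot f_1 = w\cdot f_1-2$, the bound forces $w\cdot f_1\geq 0$. So at least one of $v\cdot f_1\leq 0$ or $w\cdot f_1\geq 0$ always holds, and after possibly replacing $\iota_D$ by $-\iota_D$ you land in one of your two resolved cases with no residual branch. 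Without this argument your proposal is not a proof: the paragraph beginning ``The main obstacle\ldots'' gestures at a strategy but does not supply a construction, and it is addressing a case that in fact never arises.

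One smaller point: in the count you should apply Lemma~\ref{Ch5:lem:coefbound}(iii) to the marker vertex $w'$ of the flyped diagram $D'$ (which still has $w'\cdot e_0=-1$ by Remark~\ref{Ch5:rem:preservesmarked}), not to the original $w$, since the flype of Lemma~\ref{Ch4:lem:flype1} may replace $w$ by $w+u_2$ for some auxiliary vertex $u_2$. The bound $|w'\cdot f_1|\leq 1$ then gives $-v'\cdot w'=w'\cdot f_1+2\in\{1,2,3\}$ as you stated.
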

\begin{proof}
First, we will show one of the inequalities $v\cdot f_1\leq0$ or $w\cdot f_1\geq 0$ holds. Suppose that we have $v\cdot f_1 >0$. By Lemma~\ref{Ch5:lem:coefbound}$(iii)$, this gives $v\cdot f_1 = 1$. Lemma~\ref{Ch3:lem:generalirred} shows that the vector $v'=-f_1+e_0+e_{1}$ is irreducible. Thus Lemma~\ref{Ch3:lem:irreducible} implies there is $R \subseteq V_D$ such that $v'=\sum_{u\in R}u$. From $v'\cdot e_0=1$, it follows that $v\in R$ and $w\notin R$. Thus $v'-v+w$ is also a sum of vertices. Using Lemma~\ref{Ch5:lem:coefbound}(ii), we get
\[(v'-v+w)\cdot f_1=-2+w\cdot f_1\geq-2,\]
which implies $w\cdot f_1\geq 0$. Therefore at least one of $v\cdot f_1 \leq 0$ or $w\cdot f_1 \geq 0$ must hold.

\paragraph{} Thus up to replacing $\iota_D$ by $-\iota_D$, we may assume that $v\cdot f_1\leq 0$. As before, let $v'=-f_1+e_0+e_{1}$. If $v\cdot f_1=-1$, then $v=v'$ as $v$ is irreducible. Assume that $v\cdot f_1=0$. In this case, $(v-v')\cdot v'=-1$. Applying Lemma~\ref{Ch4:lem:flype1} gives a flype to $D'$ with $(v-v')$ and $v'$ as vertices in $V_{D'}$. Since $(v-v')\cdot e_0=0$, this flype preserves marked crossings, as in Remark~\ref{Ch5:rem:preservesmarked}.

\paragraph{} Since Lemma~\ref{Ch5:lem:coefbound}$(iii)$ shows $|w\cdot f_1|\leq 1$, we have
\[-3\leq v'\cdot w=-2-w\cdot f_1\leq -1.\]
This shows that there are between one and three marked crossings.
\end{proof}

\paragraph{} It follows from Lemma~\ref{Ch5:lem:markedcexist} that we may assume that the diagram $D$ has a marker vertex in the form
\[v=-f_1+e_0+e_{1}.\]
Now we will study some of the effects of changing a marked crossing. In particular, we will show that if there are multiple marked crossings, then they are unknotting crossings.
\begin{lem}\label{Ch5:lem:manymarkedcrossings}
Let $K'$ be the knot obtained by changing a marked crossing $c$ in $D$. Then $K'$ is almost-alternating with $\det(K')=1$ and
\[\sigma(K')=
\begin{cases}
\sigma(K)+2 &\text{if $c$ is positive,}\\
\sigma(K)   &\text{if $c$ is negative.}
\end{cases}
\]
Moreover, if $D$ has more than one marked crossing, then $K'$ is the unknot and the diagram $D'$ obtained by changing $c$ is $\mathcal{C}_m$ or $\overline{\mathcal{C}_m}$ for some $m$, where $\mathcal{C}_m$ is a diagram as in Figure~\ref{fig:claspdiagram}.
\end{lem}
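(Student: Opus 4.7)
The plan is to establish the three conclusions in turn. That $K'$ is almost-alternating is immediate, since $D'$ is obtained from the reduced alternating diagram $D$ by a single crossing change. For the determinant and signature I would analyze the Goeritz matrix as a rank-one perturbation: choosing the deleted vertex of $\Gamma_D$ to be distinct from $v$ and $w$, flipping $\mu(c)$ changes $G_D$ by $-2\delta\delta^T$, where $\delta=e_v-e_w$, and the matrix determinant lemma yields
\[
\det G_{D'} = (\det G_D)\bigl(1 - 2\,\delta^T G_D^{-1}\delta\bigr).
\]
To evaluate $\delta^T G_D^{-1}\delta$ I would use the embedding $\iota_D\colon\Lambda_D\hookrightarrow\Z^{r+2}$: since $v=-f_1+e_0+e_1$ and $w$ is pinned down by $w\cdot e_0 = w\cdot e_1 = -1$ together with $|w\cdot f_1|\le 1$ from Lemma~\ref{Ch5:lem:coefbound}, the vector $v-w$ is explicit up to these choices, and the orthogonal decomposition $\Z^{r+2}\otimes\Q=(L\otimes\Q)\oplus\langle\sigma,e_0-e_1\rangle_{\Q}$ reduces the computation to a finite calculation that should give $\det G_{D'}=\pm 1$, i.e., $\det K'=1$.

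For the signature formula I would combine this with the Gordon--Litherland formula of Theorem~\ref{Ch4:thm:sigformula}. The crossing change swaps $c$'s contribution between $n_+$ and $n_-$, shifting $n_--n_+$ by $+2$ if $c$ was positive and $-2$ if $c$ was negative, while the rank-one negative perturbation of the positive-definite matrix $G_D$ changes $\mathrm{sig}(G_D)$ by either $0$ or $-2$, with the sign of $\det G_{D'}=\pm 1$ distinguishing the two cases. Matching these contributions gives $\sigma(K')=\sigma(K)+2$ when $c$ is positive and $\sigma(K')=\sigma(K)$ when $c$ is negative.

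For the final part, the hypothesis of multiple marked crossings translates into $|v\cdot w|\in\{2,3\}$, so $v$ and $w$ are joined in $\Gamma_D$ by two or three parallel edges, forming a twist region in $D$. Since $D$ is alternating, all these crossings share sign, so changing one of them creates adjacent opposite crossings that collapse via Reidemeister~II, reducing the twist region to length $0$ or $1$. The rest of the diagram is tightly constrained: by Lemma~\ref{Ch5:lem:coefbound}(i) and the relation $\sum_{u\in V_D}u=0$ in $\Lambda_D$, every non-marker vertex satisfies $u\cdot e_0 = u\cdot e_1 = 0$ and thus lies in $\langle f_1,\ldots,f_r\rangle\cap\sigma^\perp$. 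Combining this with planarity of $\Gamma_D$ and the changemaker structure of $L$ should force $D'$ itself to be $\mathcal{C}_m$ or $\overline{\mathcal{C}_m}$ for an appropriate $m$, which are by definition almost-alternating unknot diagrams, so $K'$ is the unknot.

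The principal obstacle will be the explicit computation producing $\det G_{D'}=\pm 1$; the orthogonal-decomposition framework is conceptually clean, but carrying out the case analysis that distinguishes $+1$ from $-1$ and correlates it with the sign of $c$ requires careful bookkeeping, and the identification of $D'$ with $\mathcal{C}_m$ in the multiple-marked-crossings case then rests on these details being in hand.
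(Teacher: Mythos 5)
The overall plan, via Gordon--Litherland and a rank-one perturbation $G_{D'} = G_D - 2\delta\delta^T$, is a legitimate starting point and matches the shape of the paper's argument, but there are three concrete gaps.

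\emph{Determinant.} You set up the matrix determinant lemma and then say the orthogonal decomposition ``should give'' $\det G_{D'} = \pm 1$; you never compute $\delta^T G_D^{-1}\delta$, and it is not obvious how to. The paper sidesteps this entirely: using $v = -f_1+e_0+e_1$, $v\cdot e_0 = 1 = -w\cdot e_0$ and $v\cdot e_1 = 1 = -w\cdot e_1$, one checks that replacing $v$ by $v - e_0 - e_1$ and $w$ by $w + e_0 + e_1$ (all other vertices already orthogonal to $e_0, e_1$) increases the $v$-$w$ pairing by exactly $2$, so $G_{D'} = CC^T$ where $C$ is the $r\times r$ submatrix of the embedding matrix carrying only the $f_i$ coordinates. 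Then $\det C = \pm 1$ is read off by changing to a standard basis of $L$ (Section~\ref{Ch3:sec:standardbases}), whose coordinate matrix in the $f_i$ is triangular with diagonal $-1$. This also gives $G_{D'}$ positive definite for free.

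\emph{Signature.} Your bookkeeping is wrong: changing a negative crossing $c$ of incidence $-1$ produces a positive crossing of incidence $+1$, which contributes to \emph{neither} $n_+$ (positive crossings of incidence $-1$) nor $n_-$ (negative crossings of incidence $+1$), so $n_- - n_+$ shifts by $0$, not $-2$. Moreover $\det G_{D'}$ is always $+1$ (since $G_{D'} = CC^T$ is positive definite), so $\operatorname{sig}(G_{D'}) = \operatorname{sig}(G_D) = r$ and there is nothing for a ``$\pm 1$ sign distinguishing two cases'' to correlate with. Applied literally, your formula yields $\sigma(K') = \sigma(K) - 2$ or $\sigma(K) - 4$ when $c$ is negative, which is incorrect. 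The correct computation is: $\sigma(K) = r - n_+$, and after the change, $\sigma(K') = r + 1 - (n_+ - 1) = \sigma(K) + 2$ if $c$ is positive, and $\sigma(K') = r - n_+ = \sigma(K)$ if $c$ is negative.

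\emph{Unknot.} Again you say planarity and the changemaker structure ``should force'' $D'$ to be $\mathcal{C}_m$ without giving a mechanism. The decisive ingredient you are missing is that $\det K' = 1$, combined with the identification of the determinant of an alternating link with the number of spanning trees of its white graph, forces the white graph $\Gamma_{D''}$ of the alternating diagram $D''$ (obtained from $D'$ by the Reidemeister~II cancellation) to be a \emph{tree}. Since $\Gamma_D$ has no cut-edges, every vertex other than the two marker vertices still has degree $\geq 2$ in $\Gamma_{D''}$, so the tree is a path with $v$ and $w$ as leaves; hence $D$ is a clasp diagram and $D'$ is $\mathcal{C}_m$ or $\overline{\mathcal{C}_m}$. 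Your observation that non-marker vertices satisfy $u\cdot e_0 = u\cdot e_1 = 0$ is true but does not close the argument on its own.
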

\begin{proof}
As in the statement of the lemma, let $D'$ be the diagram for $K'$ obtained by changing $c$ in $D$. The diagram $D'$ is almost-alternating by definition.

\paragraph{}The embedding of $\Lambda_D$ into $\mathbb{Z}^{r+2}$ gives a factorization of the Goeritz matrix, $G_D=AA^T$, where $A$ is the $r\times (r+2)$-matrix
 \[A =
  \begin{pmatrix}
 v_1\cdot e_1 & v_1\cdot e_0 & v_1\cdot f_1 & \dots  & v_1\cdot f_r \\
   \vdots     & \vdots       & \vdots       & \ddots & \vdots       \\
 v_r\cdot e_1 & v_r\cdot e_0 & v_r\cdot f_1 & \dots  & v_r\cdot f_r \\
  \end{pmatrix}
 \]
 for some choice of $r$ vectors $\{v_1,\dots , v_r\}\subseteq V_D$. Let $C$ be the right-most $r\times r$ submatrix of $A$. Recall from Section~\ref{Ch3:sec:standardbases} that the changemaker lattice $L$ admits a standard basis. Let $\{\nu_1, \dots , \nu_r \}$ be such a standard basis, where $\nu_i$ is the basis element with $\nu_i \cdot f_i = -1$. Let $C'$ be the $r \times r$-matrix $C'=(\nu_i \cdot f_j)_{1 \leq i,j \leq r}$. Since the lattices spanned by the rows of $C$ and $C'$ are isomorphic, we have $|\det(C)|=|\det(C')|$. As $\{\nu_1, \dots , \nu_r \}$ is a standard basis, $C'$ is triangular and all diagonal entries take the value $-1$. Therefore we have $|\det(C)|=|\det(C')|=1$.

\paragraph{}Let $w$ and $v$ be the marker vertices with $w\cdot e_0=-1$ and $v\cdot e_0=1$. Since $(w+e_0+e_{1})\cdot(v-e_0-e_{1})=w\cdot v+2$, we see that $G_{D'}=CC^T$ is a Goeritz matrix for $D'$. Therefore,
\[\det (K')=\det(C)^2 =1,\]
as required.

\paragraph{} Now we compute the change in the knot signature. Observe that $G_{D'}$ is a positive-definite matrix of rank $r$. Since $D$ is alternating, the Gordon-Litherland formula \eqref{Ch4:eq:altsigformula} shows that the signature is calculated by
\[\sigma(K)=r-n_+,\]
where $n_+$ is the number of positive crossings in $D$. If $c$ is positive, then $D'$ has $n_+ -1$ positive crossings of incidence -1 and 1 negative crossing of incidence +1. If $c$ is negative, then $D'$ has $n_+$ positive crossings of incidence -1 and no negative crossings of incidence +1. Thus, Theorem~\ref{Ch4:thm:sigformula} gives
\[\sigma(K')=
\begin{cases}
r-n_+ +2=\sigma(K)+2 &\text{if $c$ is positive,}\\
r-n_+ =\sigma(K)   &\text{if $c$ is negative,}
\end{cases}
\]
as required.

\paragraph{} Suppose that $D$ has more than one marked crossing. We may assume that $D$ is a diagram with marker vertex $v=-f_1+e_0+e_{1}$. Since $\norm{v}=3$, the marked crossings are adjacent in $D$. This allows us to perform a Reidemeister II move on $D'$ to obtain an alternating diagram, $D''$, for $K'$.
The white graph $\Gamma_{D''}$ is obtained by deleting two edges between the marker vertices of $\Gamma_D$. The determinant of an alternating knot is equal to the number of maximal spanning subtrees of the white graph of any alternating diagram \cite{crowell1959nonalternating}. So as $\Gamma_D$ has no self-loops and $\det(K')=1$, it follows that $\Gamma_{D''}$ is a tree. Furthermore, as $\Gamma_D$ has no cut-edges, $\Gamma_{D''}$ must be a path whose endpoints were the marker vertices in $\Gamma_D$. Therefore, $D$ is a diagram of a clasp knot and $D'$ is $\mathcal{C}_m$ or its reflection $\overline{\mathcal{C}_m}$, depending on which marked crossing was changed, for some non-zero $m$.
\end{proof}

\begin{figure}
  \centering
  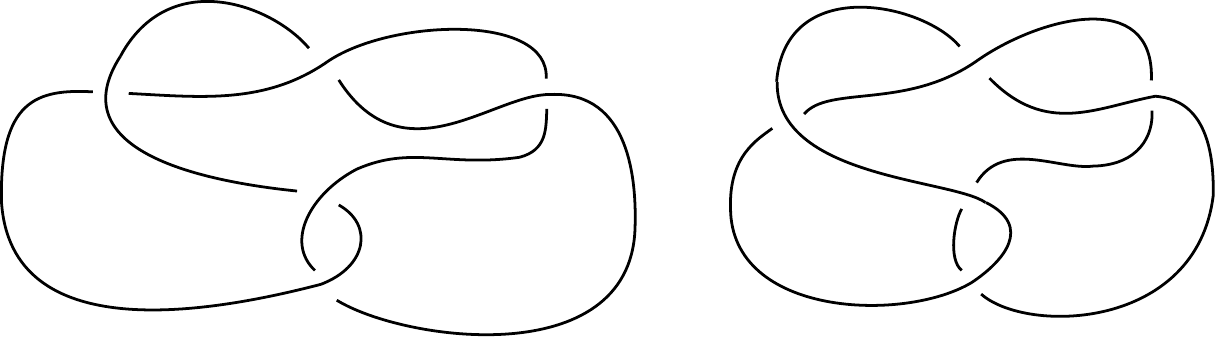
 \caption{Changing a marked crossing in the 5-crossing clasp knot.}
 \label{Ch5:fig:clasp}
\end{figure}

\section{A single marked crossing}\label{sec:singlemarkedc}
Lemma~\ref{Ch5:lem:manymarkedcrossings} shows that if $D$ has multiple marked crossings, then any one of these marked crossings is an unknotting crossing. Now we wish to deal with the case where there is a single marked crossing. We will perform further flypes to find diagrams with extra structure. The aim is to show we can flype so that after changing the marked crossing there is an obvious untongue or untwirl move that can be performed.

\paragraph{}By Lemma~\ref{Ch5:lem:markedcexist}, we may assume that $D$ is such that one of the marker vertices takes the form
\[v= -f_1+e_0+e_{1}.\]
We will write $w$ for the other marker vertex. In this section, we are assuming $w\cdot v=-1$ and hence that $w\cdot f_1=-1$. Since $v$ is a vertex of degree three, there are other vertices pairing non-trivially with $v$.
\begin{defn}\label{Ch5:def:sitAsitB}
There are two cases to consider.
\begin{itemize}
 \item We say we are in {\em Situation~A} if there are two vertices $u_1$ and $u_2$ with
 \[u_1\cdot v=u_2\cdot v=-1.\]
 We necessarily have $u_1\cdot e_0=u_2\cdot e_0=0$ and $u_1\cdot f_1=u_2\cdot f_1=1$.
 \item We say we are in {\em Situation~B} if there is a single vertex $u$ with $u\cdot v=2$. Such a $u$ satisfies $u\cdot e_0=0$ and $u\cdot f_1=2$.
\end{itemize}
In Situations~A and B, we will call $u_1$ and $u_2$, and $u$, the {\em adjacent vertices} respectively.
\end{defn}

\paragraph{} Let $k$ be maximal such that $\sigma_k=1$. If $k>1$, then for $2\leq i\leq k$ let $v_i$ be $v_i=-f_i+f_{i-1}\in L$.

\begin{defn}If $D$ is such that $v_2,\dots , v_k$ and $v$ are vertices, then we say that $D$ is in {\em standard form}.
\end{defn}
The following lemma shows that after further flypes we may assume $D$ is in standard form.
\begin{lem}\label{Ch5:lem:flypetostd}
There is a sequence of flypes to a diagram $D'$ which contains $v_2,\dots , v_k$ and $v$ as vertices. Furthermore, these flypes preserve the marked crossing.
\end{lem}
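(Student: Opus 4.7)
The plan is to establish the conclusion by induction on $j$ with $2\leq j\leq k$, constructing at each stage a flype that adds $v_j$ to the list of vertices while preserving $v$ and the previously arranged $v_2,\dots,v_{j-1}$. Crucially, every $v_i$ with $i\geq 2$ satisfies $v_i\cdot e_0 = 0$, so by Remark~\ref{Ch5:rem:preservesmarked} each flype used in the induction automatically fixes the marked crossing. A preliminary observation is that $v_j = -f_j + f_{j-1}$ is irreducible by Lemma~\ref{Ch3:lem:generalirred}, so by Lemma~\ref{Ch3:lem:irreducible} we may write $v_j = [R_j]$ for some $R_j\subseteq V_D$ whose induced subgraph and complement are both connected; the task is to flype until $|R_j|=1$.

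For the base case ($j=2$) I would analyse the adjacent vertices of $v$. In Situation~A there are two vertices $u_1,u_2$ with $u_i\cdot f_1 = 1$ and $u_i\cdot e_0 = 0$, while in Situation~B there is one vertex $u$ with $u\cdot f_1 = 2$. Taking $u_1$ (respectively $u$) and computing
\[
(u_1 - v_2)\cdot v_2 = -u_1\cdot f_2 - 1,
\]
Lemma~\ref{Ch3:lem:usefulbound} combined with the irreducibility of $u_1$ forces $u_1\cdot f_2\in\{-1,0\}$: the value $-1$ gives $u_1 = v_2$ outright, while $0$ gives the decomposition $u_1 = v_2 + (u_1 - v_2)$ with $v_2\cdot(u_1-v_2) = -1$, and Lemma~\ref{Ch4:lem:flype1} produces the required flype. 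The residual case $u_1\cdot f_2\geq 1$ has to be ruled out using the changemaker identities (in particular $\sigma_1 = \sigma_2 = 1$) together with the coefficient bounds of Lemma~\ref{Ch5:lem:coefbound} applied to the sum of vertices adjacent to $v$.

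For the inductive step I would use the fact that, once $v_{j-1}$ has been arranged as a vertex, the identity $v_{j-1}\cdot v_j = -1$ sharply restricts how $R_j$ meets $v_{j-1}$: either $v_{j-1}\in R_j$, or there is a unique vertex $u\in R_j$ with $u\cdot v_{j-1} = -1$. An analysis parallel to the base case, now applied to $u$ and $f_j$ in place of $u_1$ and $f_2$, produces a splitting $u = v_j + (u - v_j)$ with $(u-v_j)\cdot v_j = -1$; Lemma~\ref{Ch4:lem:flype1} then gives a flype introducing $v_j$ as a vertex. Since this flype takes place ``away from'' the previously arranged vertices $v,v_2,\dots,v_{j-1}$ (which are preserved because the pieces $v_j$ and $u-v_j$ pair to $0$ with $e_0$ and meet the earlier structure only through $v_{j-1}$), the induction closes.

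The main obstacle will be controlling the pairing $u\cdot f_j$ to obtain the exact value $-1$ required by Lemma~\ref{Ch4:lem:flype1}. A priori Lemma~\ref{Ch5:lem:coefbound} bounds pairings only against $e_0$ and $f_1$, so for $j\geq 2$ one must leverage the irreducibility of vertices together with the very restricted form of $\sigma$ (the first $k$ coefficients all equal to $1$, which gives many short vectors $-f_i+f_{i'}\in L$) to argue that the only possible values of $u\cdot f_j$ for the relevant vertex $u$ produce the needed splitting. Once this coefficient control is in place the induction runs cleanly and yields a diagram $D'$ in standard form whose marked crossing agrees with that of $D$.
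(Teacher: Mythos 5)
Your overall strategy --- write $v_j=[R_j]$ by irreducibility, find the vertex to split using Lemma~\ref{Ch4:lem:flype1}, and preserve the marked crossing via Remark~\ref{Ch5:rem:preservesmarked} --- is the same as the paper's, which proceeds iteratively by taking the minimal $m$ with $v_m$ not yet a vertex. However, the key inequality in your base case is not established, and the attempted repair points in the wrong direction.

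You assert that Lemma~\ref{Ch3:lem:usefulbound} combined with irreducibility of $u_1$ forces $u_1\cdot f_2\in\{-1,0\}$. Check the signs: for $u_1\neq v_2$, irreducibility gives $(u_1-v_2)\cdot v_2=-u_1\cdot f_2-1<0$, i.e.\ $u_1\cdot f_2\geq 0$, while Lemma~\ref{Ch3:lem:usefulbound} only gives the weaker $u_1\cdot f_2\geq -1$. Both are \emph{lower} bounds; neither controls $u_1\cdot f_2$ from above. The ``residual case $u_1\cdot f_2\geq 1$'' you flag cannot in general be ruled out, because you picked $u_1$ arbitrarily rather than the correct vertex: if $u_1\notin R_2$ there is no reason for $u_1$ to split off $v_2$ (it may well be $u_2$, or the adjacent vertex $u$ in Situation~B, that does).

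The missing ingredient is a purely graph-lattice observation, and it supplies the upper bound without any changemaker input. Since $v_{m-1}\cdot v_m=-1<0$, one has $v_{m-1}\notin R_m$ (otherwise $v_{m-1}\cdot[R_m]=e(v_{m-1},V\setminus R_m)\geq 0$), and there is a unique $x\in R_m$ joined by an edge to $v_{m-1}$. For that specific $x$, $x\cdot v_m=e(x,V\setminus R_m)\geq e(x,v_{m-1})=1>0$; since $v_m$ is not yet a vertex, $x\neq v_m$, and irreducibility of $x$ forces $(x-v_m)\cdot v_m<0$, i.e.\ $x\cdot v_m<2$. Hence $(x-v_m)\cdot v_m=-1$ and Lemma~\ref{Ch4:lem:flype1} applies. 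This is exactly the paper's argument, and it requires no appeal to ``the very restricted form of $\sigma$'' or to short vectors $-f_i+f_{i'}$: the entire coefficient control lives in the containment $x\in R_m$. Your inductive step does pick the right vertex ($u\in R_j$ with $u\cdot v_{j-1}=-1$), so the fix is to import that same choice into the base case, with $v$ playing the role of $v_1$, and to add the $e(x,V\setminus R_m)\geq 1$ observation.
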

\begin{proof}
By Lemma \ref{Ch5:lem:markedcexist} we are already assuming that $v$ is a vertex. Suppose there is $1<l\leq k$, for which $v_l$ is not a vertex. Take $m>1$ to be minimal such that $v_m$ is not a vertex. By Lemma~\ref{Ch3:lem:generalirred}, $v_m$ is irreducible, thus Lemma~\ref{Ch3:lem:irreducible} shows that we may write $v_m=\sum_{v\in G_1}v$ for some subgraph $G_1\subseteq \Gamma_D$. As $v_{m-1}$ is a vertex and $v_m\cdot v_{m-1}=-1$, there is a vertex $x\in G_1$ with $x\cdot v_{m-1}=-1$. Since $x\in G_1$, this must satisfy $x\cdot v_m>0$. As $x\ne v_m$ and $x$ is irreducible, it follows that
\[-2<(x-v_m)\cdot v_m=x\cdot v_m -2 <0.\]
In particular, $(x-v_m)\cdot v_m=-1$. By Lemma~\ref{Ch4:lem:flype1}, we can flype to obtain a diagram $D'$ with $v_m$ and $x-v_m$ as vertices. Since $(x-v_m)\cdot v_i\leq 0$ and $v_m\cdot v_i\leq0$  for $i\leq m-1$, this flype leaves $v_2, \dots , v_{m-1}$ and $v$ as vertices and, by Remark~\ref{Ch5:rem:preservesmarked}, it can be chosen to preserve the marked crossing. Thus, we may flype so that all of the $v_2,\dots , v_k$ and $v$ are vertices.
\end{proof}

\paragraph{}It follow that if $k>1$, then we must be in Situation~A and we can take $u_2=v_2$. Since $v_i$ can pair non-trivially with at most two vertices for $1<i\leq k$, it follows that
\[w\cdot f_1= \dots = w\cdot f_{k-1}=-1.\]
Since $u_1\cdot v_i\leq 0$ for all $1<i\leq k$, $u_1$ satisfies
\[u_1\cdot f_1= \dots = u_1\cdot f_k = 1.\]

Now we consider whether $\sigma$ is tight or slack.
\begin{lem}\label{Ch5:lem:slackvtight}
Assume $D$ is in standard form and we are in Situation~A. Then the changemaker vector $\sigma$ is slack if and only if $\Gamma_D\setminus \{v,w\}$ is disconnected. Moreover, if $\sigma$ is slack, then $u_1$ and $u_2$ lie in separate components of $\Gamma_D\setminus \{v,w\}$.
\end{lem}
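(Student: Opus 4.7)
The plan is to first reduce the biconditional to a cleaner statement about $u_1$ and $u_2$. By Lemma~\ref{Ch3:lem:fracindecomp}, the lattice $L$ is indecomposable, so by Lemma~\ref{Ch3:lem:2connectgraphlat} the graph $\Gamma_D$ is $2$-connected. Consequently every component of $\Gamma_D \setminus \{v, w\}$ must be adjacent to $v$ in $\Gamma_D$, since otherwise removing $w$ alone would disconnect $\Gamma_D$; and since $v$'s only neighbours outside $\{v, w\}$ are $u_1$ and $u_2$ (we are in Situation A), each component contains at least one of these two vertices. Hence $\Gamma_D \setminus \{v, w\}$ has at most two components and is disconnected precisely when $u_1$ and $u_2$ lie in distinct components, so both claims of the lemma reduce to showing that $\sigma$ is slack if and only if $u_1$ and $u_2$ lie in distinct components of $\Gamma_D \setminus \{v, w\}$.

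For the forward direction, I would first establish the following auxiliary claim: if $\sigma$ is slack then every irreducible $x \in L$ with $x \cdot e_0 = 0$ satisfies $|x \cdot f_1| \leq 1$. Assume for contradiction that $x \cdot f_1 \geq 2$, and let $g \geq 2$ be the least index with $x \cdot f_g \leq 0$; such $g$ exists because $\sigma \cdot x = 0$ together with $x \cdot e_0 = 0$ forces $\sum_{i \geq 2} \sigma_i (x \cdot f_i) = -(x \cdot f_1) \leq -2$. By slackness, Lemma~\ref{Ch5:lem:slacksum} provides $A \subseteq \{1, \dots, g-1\}$ with $1 \in A$ and $\sigma_g = \sum_{i \in A} \sigma_i$. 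Setting $z = -f_g + \sum_{i \in A} f_i$, we have $z \in L$ and $\|z\|^2 = 1 + |A|$; using $x \cdot f_1 \geq 2$, $x \cdot f_i \geq 1$ for $2 \leq i < g$, and $x \cdot f_g \leq 0$, one computes $x \cdot z \geq |A| + 1$, whence $(x - z) \cdot z \geq 0$. Since $(x - z) \cdot f_1 \geq 1$ ensures $x - z \neq 0$, this contradicts irreducibility of $x$. With the claim in hand, if $u_1$ and $u_2$ lay in the same component $C$ of $\Gamma_D \setminus \{v, w\}$, then every other component would be adjacent to both $v$ and $w$, so $V_D \setminus C$ would induce a connected subgraph of $\Gamma_D$ and hence $[C]$ would be irreducible in $L$ by Lemma~\ref{Ch3:lem:irreducible}; then $v$'s two edges into $C$ (to $u_1$ and $u_2$) would give $[C] \cdot v = -2$, and combined with $[C] \cdot e_0 = [C] \cdot e_1 = 0$ (the latter from $(e_1 - e_0) \in L^\bot$), this would force $[C] \cdot f_1 = 2$, contradicting the auxiliary claim.

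For the backward direction, suppose $\sigma$ is tight with $\sigma_s$ tight for some $s > 1$. By Lemma~\ref{Ch3:lem:tightstdvecirred}, the vector $y = -f_s + f_{s-1} + \dots + f_2 + 2 f_1 \in L$ is irreducible, so by Lemma~\ref{Ch3:lem:irreducible} we have $y = [R]$ for some $R \subseteq V_D$ with both $R$ and $V_D \setminus R$ inducing connected subgraphs. Since $y \cdot e_0 = 0$, the markers $v, w$ lie both in $R$ or both outside; the first possibility would make $[R] \cdot v \geq 0$ (as $v$ has degree $3$, with its three edges going to $w, u_1, u_2$), contradicting $y \cdot v = -2$, so $v, w \notin R$. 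The equation $y \cdot v = -2$ then forces $v$ to have two edges into $R$, placing both $u_1$ and $u_2$ in $R$; since $R$ induces a connected subgraph disjoint from $\{v, w\}$, this puts $u_1$ and $u_2$ in the same component of $\Gamma_D \setminus \{v, w\}$, as required. The main obstacle is verifying the auxiliary irreducibility bound in the forward direction: the inequality $(x - z) \cdot z \geq 0$ must be verified term-by-term, including the edge case $A = \{1\}$ (where $\sigma_g = 1$, so $g \leq k$) in which the sum over $A \setminus \{1\}$ is empty.
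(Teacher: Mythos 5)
Your proof is correct, and the backward direction is essentially the same as the paper's (both use the irreducible vector $y = -f_s + f_{s-1} + \dots + f_2 + 2f_1$ from Lemma~\ref{Ch3:lem:tightstdvecirred}, write it as a sum of vertices of a connected subgraph, and deduce from $y\cdot f_1 = 2$ that $u_1, u_2$ both lie there). The forward direction, however, takes a genuinely different route: the paper goes directly for reducibility of $v+w$ by constructing $x = f_g - \sum_{i\in A}f_i$ with $g$ chosen minimal so that $w\cdot f_g \geq 0$, then checking $(v+w-x)\cdot x \geq 0$ against the upper bound of Lemma~\ref{Ch3:lem:usefulbound}; you instead extract a clean general-purpose bound (slack $\sigma$ forces $|x\cdot f_1|\leq 1$ for any irreducible $x\in L$ with $x\cdot e_0=0$), prove it by exactly the same algebra applied internally to $x$ rather than to $w$, and then apply it to $[C] = -(v+w)$ together with a graph-theoretic reduction via $2$-connectedness that the paper relegates to its closing sentence. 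Your version surfaces a reusable lemma and makes the reduction to \(u_1, u_2\) separation explicit up front, at the cost of being slightly longer; the paper's is more direct but buries the cut-set logic. Both arguments are sound, and both ultimately rest on the same slackness fact (Lemma~\ref{Ch5:lem:slacksum}: $\sigma_g = \sum_{i\in A}\sigma_i$ with $1\in A$) and the same inequality from Lemma~\ref{Ch3:lem:usefulbound}.
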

\begin{proof}
Suppose $\sigma$ is slack. This implies $k>1$. Let $g>1$ be minimal such that $w\cdot f_g\geq 0$. By Lemma~\ref{Ch5:lem:slacksum}, we may find $A\subseteq\{1,\dots , g-1\}$ with $1\in A$, such that $\sigma_g = \sum_{i\in A} \sigma_i$.
Consider $x = f_g-\sum_{i\in A}f_i\in L$. By Lemma~\ref{Ch3:lem:generalirred} this is irreducible. So by Lemma~\ref{Ch3:lem:irreducible},  it can be written as $x=\sum_{v\in G_1}v$ for some connected subgraph $G_1$ of $\Gamma_D$. Using Lemma~\ref{Ch3:lem:usefulbound} combined with $x\cdot v=1$ and $x\cdot w\geq|A|=\norm{x}-1$, it follows that $x\cdot (v+w-x)=0$. Since $x\ne v+w$, this implies $v+w$ is reducible. From Lemma~\ref{Ch3:lem:irreducible}, it follows that $\{v,w\}$ is a cut set. Observe that $u_1$ and $u_2$ must lie in separate components of $\Gamma_D\setminus \{v,w\}$ since $w$ is not a cut-vertex.

\paragraph{} For the converse, suppose there is $s>1$ with $\sigma_s=\sigma_{s-1}+\dots + \sigma_1 +1$.
By Lemma~\ref{Ch3:lem:tightstdvecirred}, the  vector $y=-f_s + f_{s-1} + \dots +f_2 +2f_1$ is irreducible. Thus, Lemma~\ref{Ch3:lem:irreducible} shows that there is a connected subgraph $G_2$, such that $y=\sum_{v\in G_2}v$. Since $y\cdot f_1=2$, it follows that $v,w\notin G_2$ and $u_1,u_2 \in G_2$. Since $G_2$ is connected, it follows that $u_1$ and $u_2$ must lie in the same component of $\Gamma_D \setminus \{v,w\}$. Thus if $\{v,w\}$ were a cut-set, then $\Gamma_D\setminus \{w\}$ would also be disconnected. As $\Gamma_D$ is 2-connected, this shows that $\{v,w\}$ is not a cut-set.
\end{proof}

Armed with this information, we perform another sequence of flypes. As ever, these will come from Lemma~\ref{Ch4:lem:flype1} and, by Remark~\ref{Ch5:rem:preservesmarked}, can be chosen to commute with changing the marked crossing.

\begin{lem}\label{Ch5:lem:tightflyping}
If $\sigma$ is tight, then there is a sequence of flypes to a diagram in standard form such that we have marker vertex $w\ne v$ in the form
\[w=f_g-f_{g-1} - \dots -f_1-e_0- e_{1}.\]
\end{lem}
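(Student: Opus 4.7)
My plan is to adapt the technique of Lemma~\ref{Ch5:lem:flypetostd} to produce, via a sequence of flypes preserving standard form and the marked crossing, a diagram in which $w_g := f_g - f_{g-1} - \cdots - f_1 - e_0 - e_1$ appears as a vertex.

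First I take $g$ to be the least integer greater than $1$ for which $\sigma_g$ is tight; this exists by hypothesis. Using the tightness identity $\sigma_g = 1 + \sigma_1 + \cdots + \sigma_{g-1}$, a direct computation gives $w_g \cdot \sigma = 0$ and $w_g \cdot (e_0-e_1) = 0$, so $w_g \in L$. Applying Lemma~\ref{Ch3:lem:generalirred}(i) to $-w_g$, which has the form $-f_g + \sum_{i<g} f_i + e_0 + e_1$, shows that $w_g$ is irreducible. Since $w_g \cdot e_0 = -1$ and the marker vertex is the unique vertex of $\Gamma_D$ with this property, once $w_g$ appears as a vertex in a diagram $D'$ obtained from $D$ by flypes preserving the marked crossing, it must be the marker vertex of $D'$, and the lemma is proved.

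Next, by Lemma~\ref{Ch3:lem:irreducible} I write $w_g = [R']$ where $R' \subseteq V_D$ induces a connected subgraph with connected complement. The marker vertex $w$ lies in $R'$. If $R' = \{w\}$, we are already done; otherwise I argue that some vertex $x \in V_D$ may be split via Lemma~\ref{Ch4:lem:flype1} into $x = w_g + (x - w_g)$ with $w_g \cdot (x - w_g) = -1$, obtaining a new diagram in which $w_g$ is a vertex. The natural candidate for $x$ is a vertex in $R'$ whose pairing with $w_g$ equals $\norm{w_g} - 1 = g + 1$; such a vertex exists because $\sum_{x \in R'} x \cdot w_g = \norm{w_g}$ together with a cut-edge analysis of $\Gamma_D$ via Lemma~\ref{Ch3:lem:cutedge} forces a single vertex to carry almost all of this mass whenever the irreducible pair $(w_g, x-w_g)$ arises from a decomposition.

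Two conditions must be checked at each flype. Standard form is preserved provided $v_i \cdot w_g \leq 0$ and $v_i \cdot (x - w_g) \leq 0$ for each relevant $i$: the former is a direct computation from the explicit form of $w_g$ (noting $v_j \cdot w_g = 0$ for $2 \leq j \leq k$), and the latter reduces to $v_i \cdot x \leq 0$, which holds because $v_i$ was a vertex prior to the flype. Preservation of the marked crossing follows from Remark~\ref{Ch5:rem:preservesmarked}, since $(x - w_g) \cdot e_0 = 0$, placing the flype edge away from the marked crossing.

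The principal obstacle is establishing existence of the vertex $x$ with $(x - w_g) \cdot w_g = -1$ in full generality: this is the heart of the argument and rests on a careful analysis exploiting irreducibility of $w_g$, the connectivity of $R'$ and its complement, and the cut-edge structure afforded by Lemma~\ref{Ch3:lem:cutedge}. Once secured, iteration strictly decreases $|R'|$, and the procedure terminates in finitely many steps with $R' = \{w\}$ and $w = w_g$, completing the proof.
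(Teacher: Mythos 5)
Your approach differs substantially from the paper's, and the central step is not established. The paper iterates rather than fixing a target: at each stage it considers $g'$ minimal with $w\cdot f_{g'}\geq 0$, builds a candidate $w' = f_{g'} - e_0 - e_1 - \sum_{i\in A}f_i$, and applies Lemma~\ref{Ch5:lem:01splitting} to conclude $(w-w')\cdot w' \in\{-1,0\}$, so that either $w = w'$ (done) or a flype reduces $\max\{i : w\cdot f_i\neq 0\}$. The termination index $g$ is \emph{discovered} by this descent; it is not fixed in advance. You instead choose $g$ to be the least tight index and aim directly for $w_g$.

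There are two concrete problems. First, by the uniqueness of the marker vertex (Lemma~\ref{Ch5:lem:coefbound}(i) applied to sums of vertices), the only vertex you can split to create a new vertex with $e_0$-coefficient $-1$ is $w$ itself: if you split some other $x$ with $x\cdot e_0 = 0$ into $w_g + (x - w_g)$, the new diagram would contain both $v$ and $x - w_g$ with $e_0$-coefficient $+1$, contradicting uniqueness. So your ``natural candidate $x$'' is forced to be $w$, and the argument reduces to showing $w\cdot w_g = \norm{w_g} - 1 = g+1$, i.e.\ $w\cdot f_g - \sum_{i<g} w\cdot f_i = g-1$. This is precisely what Lemma~\ref{Ch5:lem:01splitting} delivers for the paper's $w'$ (constructed from the sign-change index $g'$), but there is no reason it should hold for your fixed $w_g$; the coefficients $w\cdot f_i$ for $k \leq i < g$ are not controlled a priori.

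Second, the ``iteration strictly decreases $|R'|$'' claim does not cohere. As written, a successful split makes $w_g$ a vertex immediately, which terminates; there is no intermediate state whose $|R'|$ could decrease. If instead the split fails, you have described no partial improvement. The paper solves exactly this by allowing intermediate target vectors $w'$ supported on an arbitrary subset $A$ and tracking progress via the support of $w$, together with the Claim that $\{1,\dots,k\}\subseteq A$ (established using irreducibility of $w + v$ from Lemma~\ref{Ch5:lem:slackvtight}) to preserve standard form. You would need an analogous relaxation of the target and an analogous monovariant; as written the proof has a genuine gap at the step you flag as the ``principal obstacle.''
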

\begin{proof}
If $\sigma$ is tight and the marker vertex $w\ne v$ in $D$ is not in the form
\[w=f_g-f_{g-1} - \dots -f_1-e_0- e_{1},\]
then we will show that there is a flype to diagram $D'$ in standard form for which the marker vertex either satisfies
\[w'=f_{g'}-f_{g'-1} - \dots -f_1-e_0- e_{1},\]
for some $g'$ or
\[\max \{i \,|\, w'\cdot f_i \ne 0\}<\max \{i \,|\, w\cdot f_i \ne 0\}.\]
This is sufficient to prove the lemma, since a sequence of such flypes cannot continue indefinitely.
\paragraph{}Let $g'$ be minimal such that $w\cdot f_{g'}\geq 0$. There is $A\subseteq \{1,\dots, g'-1\}$ such that $\sigma_{g'}-1=\sum_{i\in A} \sigma_i$.
\begin{claim}
We have $\{1,\dots, k\}\subseteq A$.
\end{claim}
\begin{proof}[Proof of Claim]
Suppose that the claim does not hold. We can take $l=\min \{1,\dots, k\}\setminus A$. Since $\sigma_l=1$, we can take
\[z=f_{g'}- f_l - \sum_{i\in A}f_i \in L.\]
Since $l$ was taken to be minimal, we have either $l=1$ or $1\in A$. Thus we have $z\cdot f_1=-1$. Observe that \[(w+v-z)\cdot z= -1 + w\cdot f_{g'}-\sum_{i\in A \cup \{l\} \setminus \{1\}}(w\cdot f_i + 1) + 1 \geq 0.\]
However, Lemma~\ref{Ch5:lem:slackvtight} and Lemma~\ref{Ch3:lem:irreducible} show that $w+v$ is irreducible when $\sigma$ is tight. This provides a contradiction.
\end{proof}
Consider $w'=f_{g'}- e_0-e_1 - \sum_{i\in A}f_i \in L$. By Lemma~\ref{Ch5:lem:01splitting}, we have $(w-w')\cdot w'\in\{-1,0\}$, with $(w-w')\cdot w'=-1$ only if $w\cdot f_{g'}=0$. If $(w-w')\cdot w'=0$, then $w=w'$ by irreducibility of $w$. This implies that
\[w=f_{g'}-f_{g'-1}-\dots-f_1-e_0 - e_{1},\]
is already in the required form. If $(w-w')\cdot w'=-1$, then, by Lemma~\ref{Ch4:lem:flype1}, we may flype to get diagram a $D'$ containing $w'$ as a vertex. The above claim shows that $w'\cdot v_i=(w-w')\cdot v_i=0$ for $2\leq i \leq k$, $w'\cdot v=-1$ and $(w-w')\cdot v=0$. Hence the diagram $D'$ is in standard form. As $w\cdot f_{g'}=0$, we must have
\[g'=\max \{i \,|\, w'\cdot f_i \ne 0\}<\max \{i \,|\, w\cdot f_i \ne 0\},\]
as required.
\end{proof}

\begin{lem}\label{Ch5:lem:slackflyping}
If $\sigma$ is slack, then there is a sequence of flypes to a diagram in standard form such that we have adjacent vertex $u_1\ne v_2$ in the form
\[u_1= -f_h + f_{h-1} + \dots + f_1.\]
\end{lem}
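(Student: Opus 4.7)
The plan is to mimic the proof of Lemma~\ref{Ch5:lem:tightflyping} step by step, substituting the slack analogue of each ingredient. As in that proof, the strategy is iterative: I will show that if $u_1$ is not already of the stated form, then there is a single flype producing a diagram $D'$ in standard form, with the marked crossing preserved, in which the new adjacent vertex $u_1'$ either has the required form or satisfies
\[
\max\{\,i : u_1' \cdot f_i \ne 0\,\} < \max\{\,i : u_1 \cdot f_i \ne 0\,\}.
\]
Since this maximum cannot decrease indefinitely, the iteration terminates in a diagram of the desired type.

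To set up the iteration, let $h$ be minimal with $u_1\cdot f_h\leq 0$. The discussion after Lemma~\ref{Ch5:lem:flypetostd} gives $u_1\cdot f_i=1$ for $1\le i\le k$, so $h>k$. Using that $\sigma$ is slack together with Lemma~\ref{Ch5:lem:slacksum}, choose $A\subseteq\{1,\dots,h-1\}$ with $1\in A$ and $\sigma_h=\sum_{i\in A}\sigma_i$, and set
\[
u_1' \;=\; -f_h + \sum_{i\in A} f_i \;\in\; L .
\]
By Lemma~\ref{Ch3:lem:generalirred}, $u_1'$ is irreducible.

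The main obstacle is the analogue of the Claim in Lemma~\ref{Ch5:lem:tightflyping}, namely the inclusion $\{1,\dots,k\}\subseteq A$. In the tight case, the contradiction came from the irreducibility of $v+w$; here, by Lemma~\ref{Ch5:lem:slackvtight}, $v+w$ is reducible, so a different irreducible vector must play its role. I will use the sum $[G_1]$ over the component of $\Gamma_D\setminus\{v,w\}$ containing $u_1$: the proof of Lemma~\ref{Ch5:lem:slackvtight} identifies $[G_1]$ as a vector of the form $f_g-\sum_{i\in A'}f_i$ (with $1\in A'$), which is irreducible by Lemma~\ref{Ch3:lem:generalirred}. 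Assuming some $l\in\{1,\dots,k\}\setminus A$ exists, I would construct (as in Lemma~\ref{Ch5:lem:tightflyping}) an auxiliary vector $z$ built from $u_1'$ and $f_l$, and then apply Lemma~\ref{Ch3:lem:usefulbound} to $([G_1]-z)\cdot z$ to produce the splitting $[G_1]=z+([G_1]-z)$ with $z\cdot([G_1]-z)\ge 0$, contradicting the irreducibility of $[G_1]$. This is the step I expect to be the most delicate, since it requires fitting the $f_l$ correction cleanly into $[G_1]$.

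With the inclusion in hand, I would apply Lemma~\ref{Ch3:lem:usefulbound} (or repeat the computation of Lemma~\ref{Ch5:lem:01splitting}) to get $(u_1-u_1')\cdot u_1'\in\{-1,0\}$. If the pairing is $0$, irreducibility of $u_1$ gives $u_1=u_1'$; combining $\{1,\dots,k\}\subseteq A$ with the constraint $u_1\cdot\sigma=0$ then forces $A=\{1,\dots,h-1\}$, so $u_1=-f_h+f_{h-1}+\dots+f_1$ is in the desired form. If the pairing is $-1$, Lemma~\ref{Ch4:lem:flype1} produces a flype to a diagram $D'$ whose vertex set contains both $u_1'$ and $u_1-u_1'$. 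Using $\{1,\dots,k\}\subseteq A$, the vectors $u_1'$ and $u_1-u_1'$ pair trivially with $v_2,\dots,v_k$ and $u_1'\cdot v=-1$, $(u_1-u_1')\cdot v=0$, so $D'$ is still in standard form; Remark~\ref{Ch5:rem:preservesmarked} ensures that the flype can be chosen to preserve the marked crossing. The strict inequality $\max\{i:u_1'\cdot f_i\ne 0\}\le h-1<\max\{i:u_1\cdot f_i\ne 0\}$ then guarantees termination, completing the proof.
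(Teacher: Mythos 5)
Your iterative strategy, the choice of $h'$ minimal with $u_1 \cdot f_{h'} \leq 0$, and the construction of $u_1' = -f_{h'} + \sum_{i\in A} f_i$ with $1\in A$ via Lemma~\ref{Ch5:lem:slacksum} all match the paper. Lemma~\ref{Ch5:lem:01splitting} then gives $(u_1-u_1')\cdot u_1'\in\{0,-1\}$ just as you say, and in the zero case irreducibility of $u_1$ gives $u_1 = u_1'$, after which minimality of $h'$ alone (all coefficients $u_1\cdot f_i$ with $i<h'$ are strictly positive, hence equal to $1$) forces $A = \{1,\dots, h'-1\}$; no inclusion hypothesis is needed there.

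The gap is in the remaining case. You identify the ``main obstacle'' as proving $\{1,\dots,k\}\subseteq A$ analogously to the claim in Lemma~\ref{Ch5:lem:tightflyping}, and propose to do so by contradiction using an irreducible vector $[G_1]$, asserting that the proof of Lemma~\ref{Ch5:lem:slackvtight} ``identifies $[G_1]$ as a vector of the form $f_g - \sum_{i\in A'}f_i$.'' This is a misreading: the connected subgraph $G_1$ appearing in that proof is just some set $R$ with $x = [R] = f_g - \sum_{i\in A}f_i$, and since $x\cdot v = 1 > 0$ and $x\cdot w \geq 1 > 0$ this $R$ must contain both $v$ and $w$; it has nothing to do with the component of $\Gamma_D\setminus\{v,w\}$ containing $u_1$. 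So the auxiliary irreducible vector you want to contradict is not available in the form you need, and the argument does not transplant. More to the point, the inclusion $\{1,\dots,k\}\subseteq A$ is not something you get to choose in advance via Lemma~\ref{Ch5:lem:slacksum} --- that lemma only guarantees $1\in A$ --- and it is not used or proved anywhere in the paper's argument.

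What the paper actually does in the $(u_1-u_1')\cdot u_1' = -1$ case is entirely graph-theoretic and sidesteps $A$. After flyping via Lemma~\ref{Ch4:lem:flype1}, the issue is whether $v_2,\dots,v_k$ survive as vertices; by Lemma~\ref{Ch3:lem:cutedge} they survive unless one of them is an endpoint of the cut-edge $e$ in $\Gamma_D\setminus\{u_1\}$. Since $\sigma$ is slack, Lemma~\ref{Ch5:lem:slackvtight} makes $\Gamma_D\setminus\{v,w\}$ disconnect into $G_1\ni u_1$ and $G_2\ni u_2=v_2,v_3,\dots,v_k$. The paper then proves a short claim: because $\Gamma_D$ is 2-connected and $v,w$ are joined by an edge, every edge incident to a vertex of $G_2$ lies on a cycle avoiding $u_1$, hence is not a cut-edge of $\Gamma_D\setminus\{u_1\}$; so $e$ misses $G_2$ and $v_2,\dots,v_k$ persist. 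This is the step your proposal replaces with an unjustified algebraic inclusion, and it is exactly where your approach breaks down.
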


\begin{proof}
Suppose $\sigma$ is slack, so we are necessarily in Situation~A. Suppose also that the adjacent vertex $u_1\ne v_2$ is not in the form
\[u_1= -f_h + f_{h-1} + \dots + f_1.\]
We will show that there is a flype to diagram a $D'$ in standard form for which the adjacent vertex satisfies
\[u_1'=-f_{h'} + f_{h'-1} + \dots + f_1\]
for some $h'$ or
\[\max \{i \,|\, u_1'\cdot f_i \ne 0\}<\max \{i \,|\, u_1\cdot f_i \ne 0\}.\]
Observe that this is sufficient to prove the lemma, since a sequence of such flypes cannot continue indefinitely.

\paragraph{} Let $h'$ be minimal such that $u_1\cdot f_{h'}\leq 0$. Observe that $h'>k$. By Lemma~\ref{Ch5:lem:slacksum}, there is $A\subseteq \{1, \dots , h'-1\}$ with $1\in A$ and $u_1'=-f_{h'} + \sum_{i\in A} f_i \in L$.

\paragraph{}By Lemma~\ref{Ch5:lem:01splitting}, $(u_1-u_1')\cdot u_1'\in\{0,-1\}$, with $(u_1-u_1')\cdot u_1'=-1$ only if $u_1\cdot f_{h'}=0$. We consider the two cases separately. If $(u_1-u_1')\cdot u_1'=0$, then $u_1=u_1'$ as $u_1$ is irreducible. However, by the minimality of $h'$, this implies that
\[u_1=-f_{h'} + f_{h'-1} + \dots + f_1.\]

\paragraph{} Suppose $(u_1-u_1')\cdot u_1'=-1$. By Lemma~\ref{Ch4:lem:flype1}, we may flype to a diagram $D'$ with $u_1'$ as a vertex. Note that as $u_1\cdot f_{h'}=0$ in this case, we have
\[h'=\max \{i \,|\, u_1'\cdot f_i \ne 0\}<\max \{i \,|\, u_1\cdot f_i \ne 0\}.\]
However, we need to show that $D'$ is in standard form. Observe first that $v\cdot u_1'=-1$ and $(u_1-u_1')\cdot v=0$, so $v$ remains as a vertex in $D'$. Since we are assuming that $\sigma$ is slack, $\Gamma_D\setminus \{v,w\}=G_1 \cup G_2$ is disconnected with $u_1$ and $u_2$ lying in separate components. Say $u_1\in G_1$ and $u_2\in G_2$. Since the vertices $u_2,v_3, \dots, v_k$ form a path, we have $v_3, \dots, v_k \in G_2$. Now let $e$ be a cut-edge in $\Gamma_D \setminus \{u_1\}$. We claim $e$ is not in $G_2$.
\begin{claim}
The edge $e$ is not incident to any vertex in $G_2$.
\end{claim}
\begin{proof}[Proof of Claim.]
Since $e$ is a cut-edge in $\Gamma_D \setminus \{u_1\}$, any cycle containing $e$ must also contain $u_1$. Let $e'$ be an edge incident to some vertex in $G_2$. We will show $e'$ is contained in a cycle not containing $u_1$. Since $\Gamma_D$ is 2-connected, $e'$ is contained in some cycle $C$. If $C$ contains $u_1$ (or any vertex in $G_1$), then it must also contain $v$ and $w$. However as there is an edge between $v$ and $w$, $C\setminus G_1$ form the vertices of a cycle $C'\subseteq G_2\cup \{v,w\}$ containing $e'$. Thus $e'$ is not a cut-edge in $\Gamma_D \setminus \{u_1\}$.
\end{proof}
As $e$ is not incident to any vertex in $G_2$ it is not incident to any of $v_2, \dots, v_k$. It follows from Lemma~\ref{Ch4:lem:flype1} that $u_2=v_2, \dots, v_k$ are vertices in $D'$. In particular, this shows that $D'$ is in standard form.
\end{proof}

\paragraph{}From the preceding lemmata, it follows that there is a sequence of flypes to a diagram $D$ in standard form with
\[w=f_g-f_{g-1} - \dots -f_1-e_0 - e_{1}
\quad \text{or} \quad
u_1= -f_h + f_{h-1} + \dots + f_1,\]
depending on whether $\sigma$ is tight or slack. As the following two lemmata show, we can now deduce the existence of a crossing between the regions corresponding to the marker vertex $w$ and one of the adjacent vertices. Both proofs run along very similar lines, and like much of what has gone before, they make heavy use of Lemma~\ref{Ch3:lem:usefulbound} applied to carefully chosen combinations of vectors.

\begin{lem}\label{Ch5:lem:tightedgedetect}
Suppose $\sigma$ is tight and $D$ is in standard form with
\[w=f_g-f_{g-1} - \dots -f_1 -e_0- e_{1}.\]
Let $U=u_1+u_2$ if we are in Situation~A, and $U=u$ if we are in Situation~B. In either case, $U$ satisfies the inequality $U\cdot w<0$.
\end{lem}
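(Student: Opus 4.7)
The plan is to argue by contradiction. Suppose $U\cdot w\geq 0$. Since $u_1,u_2$ (resp.\ $u$) are distinct from $w$ as vertices of $\Gamma_D$, the graph-lattice interpretation gives $u_i\cdot w=-e(u_i,w)\leq 0$ individually, so the assumption forces $u_i\cdot w=0$ in Situation~A (and $u\cdot w=0$ in Situation~B). In other words, there are no crossings between the region $w$ and any of the adjacent regions to $v$, and we must derive a contradiction from this.

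First I would extract structural information from $v+w$. Since $w\in L$, the orthogonality $w\cdot\sigma=0$ together with the stated form of $w$ forces $\sigma_g=1+\sigma_1+\dots+\sigma_{g-1}$, so $\sigma_g$ is tight. By Lemma~\ref{Ch3:lem:tightstdvecirred}, the vector
\[v+w=f_g-f_{g-1}-\dots-f_2-2f_1\]
is irreducible, hence by Lemma~\ref{Ch3:lem:irreducible} it equals $[R]$ for some $R\subseteq V_D$ with $R$ and $V_D\setminus R$ both inducing connected subgraphs of $\Gamma_D$. Pairing with $v$ gives $v\cdot[R]=v\cdot(v+w)=2$, which rules out $v\notin R$; likewise $w\in R$. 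Since $v+w=[\{v,w\}]$ in $\Lambda_D$, the only possibilities are $R=\{v,w\}$ or $R=V_D\setminus\{v,w\}$, and the latter would give $v\cdot[R]=-2$. Therefore $R=\{v,w\}$ and, in particular, $\Gamma_D\setminus\{v,w\}$ is connected.

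Next I would exploit the saturation of Lemma~\ref{Ch5:lem:coefbound}. The sum $U$ has $U\cdot f_1=2$, achieving the bound in part (ii); adding any further vertex $x\notin\{u_1,u_2\}$ (resp.\ $x\neq u$) with $x\cdot f_1>0$ would give a sum of vertices with $f_1$-coordinate $\geq 3$, a contradiction. Hence every vertex other than $u_1,u_2,v,w$ (resp.\ $u,v,w$) has $x\cdot f_1\leq 0$, and summing over $V_D$ (using $\sum_{x\in V_D}x=0$) forces $x\cdot f_1=0$ for all such $x$. An analogous argument with $e_0$ (and hence $e_1$, since every $x\in L$ satisfies $x\cdot e_0=x\cdot e_1$) shows that $v$ and $w$ are the only vertices with nonzero $e_0$ and $e_1$ components.

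The final step is to derive the contradiction: under the hypothesis $u_i\cdot w=0$, apply Lemma~\ref{Ch3:lem:usefulbound} to the sum of vertices $w+u_1+u_2$ (resp.\ $w+u$), paired with a carefully chosen $z\in L$ built from a vector of the form $-f_g+\sum_{i\in A}f_i$ with $A\subseteq\{1,\dots,g-1\}$ compensated by $e_0$ and $e_1$ (so that $z\in L$, using tightness of $\sigma_g$ and Lemma~\ref{Ch5:lem:slacksum}); the goal is to make $z\cdot f_1=0$ and $z\cdot f_g=-1$ while forcing $(w+U-z)\cdot z$ to have an inconsistent sign given the coefficient constraints just established. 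The main obstacle is pinpointing the correct auxiliary vector $z$: the construction must simultaneously match the saturated $f_1$-coordinate of $U$, the zero $e_0,e_1$-coordinates of $U$, and extract a contribution from the $f_g$-coordinate where $w$ places its single positive entry. Situation~B proceeds identically, with $u$ playing the role of $U$ and the single adjacency $u\cdot v=-2$ replacing the pair $u_1\cdot v=u_2\cdot v=-1$.
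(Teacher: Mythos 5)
Your opening moves are correct and one of them (the observation that $\sigma_g$ must be tight and hence $v+w$ is irreducible, so $\Gamma_D\setminus\{v,w\}$ is connected) tracks the paper's Lemma~\ref{Ch5:lem:slackvtight}. But the proof stops precisely where the real work begins, and I don't think the final step can be carried out as you've sketched it.

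Two concrete problems. First, the constraints you impose on $z$ are not simultaneously realisable in the form you describe. You want $z = -f_g + \sum_{i\in A}f_i + \delta(e_0+e_1)$ with $z\cdot f_1 = 0$, i.e.\ $1\notin A$, and $z\in L$. Orthogonality to $\sigma$ forces $\sigma_g = \sum_{i\in A}\sigma_i + \delta$, and since $\sigma_g = 1 + \sum_{i=1}^{g-1}\sigma_i$ is tight while $\sum_{i\in A}\sigma_i\leq\sum_{i=2}^{g-1}\sigma_i = \sigma_g - 2$, this requires $\delta\geq 2$. Even allowing $\delta=2$ (so $z = -f_g+f_{g-1}+\dots+f_2+2e_0+2e_1$), a direct computation under your contradiction hypothesis $U\cdot w=0$ gives $(w+U-z)\cdot z = -2g-12 < 0$, which is consistent with Lemma~\ref{Ch3:lem:usefulbound} and yields no contradiction. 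Second, and more fundamentally, you haven't anticipated that the argument must branch on where $U$'s $f$-coordinates first go negative. The paper fixes $m$ minimal with $U\cdot f_m<0$ and treats $m\leq g$ and $m>g$ by entirely different constructions: in the first case it uses an auxiliary $z$ built from $\sigma_m-1$ and satisfying $z\cdot f_1=-1$, not $0$; in the second it introduces a further index $l>g$ minimal with $U\cdot f_l\leq 0$ and works with the vector $x+w$ rather than a single $z$, ultimately obtaining a quantitative upper bound on $U\cdot f_g$. The hypothesis $U\cdot w\geq 0$ gives you $u_i\cdot w=0$ and the vanishing of $f_1$-coordinates on the other vertices, but this information does not locate those sign changes, so there is no single universally-valid $z$. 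Your proposal is thus an incomplete sketch: the structural observations are sound but insufficient, and the ``carefully chosen $z$'' you defer to does not exist in the shape you prescribe.
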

\begin{proof}
Since the vector $U$ is non-zero, we may take $m$ minimal such that $U \cdot f_m<0$. We will prove the lemma for the cases $m\leq g$ and $m>g$ separately.
\paragraph{Case $m\leq g$:} First we suppose that $m\leq g$ holds. There is $A\subseteq\{1,\dots, m-1\}$ such that $\sigma_m-1=\sum_{i\in A} \sigma_i$. If we define $z$ by
\begin{equation}
z=
\begin{cases}
f_m-e_0-e_1-\sum_{i\in A} f_i &\text{if $1\in A$}\\
f_m-f_1-\sum_{i\in A} f_i  &\text{if $1\notin A$,}
\end{cases}
\end{equation}
then we obtain $z\in L$ with $z\cdot f_1=-1$. By direct computation, we obtain the inequalities
\begin{equation}\label{Ch5:eq:tight1}
(w-z)\cdot z= w\cdot f_m -1 \geq -2
\end{equation}
and
\begin{equation}\label{Ch5:eq:tight2}
U\cdot z\leq -U\cdot f_1 + U\cdot f_m = -2 +U\cdot f_m \leq -3.
\end{equation}
By subtracting \eqref{Ch5:eq:tight1} from \eqref{Ch5:eq:tight2}, we see that the inequality
\begin{equation}\label{Ch5:eq:tight3}
(w-z-U)\cdot z>0
\end{equation}
holds. Applying Lemma~\ref{Ch3:lem:usefulbound} to the sum of vertices $w+U$ yields
\begin{equation}\label{Ch5:eq:tight4}
(w-z-U)\cdot z+w\cdot U=(w+U-(U+z))\cdot(z+U)\leq 0.
\end{equation}
Therefore, by combining \eqref{Ch5:eq:tight3} and \eqref{Ch5:eq:tight4} we obtain $w\cdot U\leq -(w-z-U)\cdot z<0$. This completes the proof in the case $m\leq g$.

\paragraph{Case $m>g$:} Now we suppose that $m>g$ holds. In this case, we have
\begin{equation*}
w\cdot U=U\cdot f_g-\sum_{i=1}^{g-1}U\cdot f_i
 \end{equation*}
and $U\cdot f_i\geq 0$ for $1\leq i \leq g$, so we wish to establish an upper bound for $U\cdot f_g$. Let $l>g$ be minimal such that $U\cdot f_l\leq 0$. Observe that such an $l$ exists and is at most $m$. By Lemma~\ref{Ch5:lem:slacksum}, we may pick $A\subseteq\{g+1,\dots , l-1\}$, $B\subseteq \{1, \dots , g-1\}$ with $1\in B$, and $\varepsilon, \delta \in \{0,1\}$, such that the vector
\[x=-f_l + \sum_{i \in B} f_i + \varepsilon f_g + \sum_{j\in A} f_j +\delta(e_0+e_1)\]
is in $L$. Let $C$ be the set $C=\{0, \dots , g-1\}\setminus B$. Note that we have $1\notin C$ and that
\begin{equation}\label{Ch5:eq:tight5}
x+w=-e_l + \sum_{j\in A}e_j + (1+\varepsilon)e_g - \sum_{i\in C} e_i+(\delta-1)(e_0+e_1).
\end{equation}
Since $w+U$ is a sum of vertices, we can apply Lemma~\ref{Ch3:lem:usefulbound} to get
\begin{equation}\label{Ch5:eq:tight6}
0\geq(w+U - (x+w))\cdot(x+w)=(U-x)\cdot(x+w).
\end{equation}
Therefore combining \eqref{Ch5:eq:tight5} and \eqref{Ch5:eq:tight6} gives
\begin{align}
\begin{split}\label{Ch5:eq:tight7}
0 &\geq -(U\cdot f_l+1) + \sum_{j\in A}(U\cdot f_j-1) + (1+\varepsilon)(U\cdot f_g-\varepsilon) - \sum_{i\in C} U\cdot f_i+ 2\delta(\delta-1)
\\ &\geq -1 + (1+\varepsilon)(U\cdot f_g-\varepsilon) - \sum_{i\in C} U\cdot f_i,
\end{split}
\end{align}
where we are also using that $U\cdot f_j\geq1$ for $j\in A$. As $1\notin C$, we also get the inequality,
\begin{equation}\label{Ch5:eq:tight8}
\sum_{i\in C} U\cdot f_i \leq \sum_{i=2}^{g-1}U\cdot f_i =\sum_{i=1}^{g-1}U\cdot f_i -2.
\end{equation}
Using the inequalities in \eqref{Ch5:eq:tight7} and \eqref{Ch5:eq:tight8} we get the upper bound
\begin{equation*}
U\cdot f_g \leq \frac{1}{1+\varepsilon}(\sum_{i=1}^{g-1}U\cdot f_i -1) + \varepsilon
\end{equation*}
for $U\cdot f_g$. This allows us to compute
\begin{align*}
w\cdot U & =U\cdot f_g-\sum_{i=1}^{g-1}U\cdot f_i
\\&\leq \varepsilon - \frac{1}{1+\varepsilon}(\varepsilon \sum_{i=1}^{g-1}U\cdot f_i + 1)
\\&\leq \varepsilon - \frac{\varepsilon U\cdot f_1 + 1}{1+\varepsilon}
\\&= \varepsilon - \frac{2\varepsilon + 1}{1+\varepsilon}.
\end{align*}
Since $\varepsilon \in \{0,1\}$, this implies $w\cdot U<0$ which is the required inequality.
\end{proof}

\begin{lem}\label{Ch5:lem:slackedgedetect}
Suppose $\sigma$ is slack and $D$ is in standard form with
\[u_1=-f_h+f_{h-1} + \dots + f_1.\]
If $w\cdot f_2=0$, then $w\cdot u_2=-1$. If $w\cdot f_2 =-1$, then $u_1\cdot w<0$.
\end{lem}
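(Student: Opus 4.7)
The plan is to handle the two cases separately, with Case 1 being essentially immediate from the structural constraints already in place, and Case 2 requiring a variant of the method used to prove Lemma~\ref{Ch5:lem:tightedgedetect}.

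For Case 1, the key observation is that slackness of $\sigma$ forces $k \geq 2$. Indeed, if $k = 1$, then $\sigma_2 \geq 2$, but Proposition~\ref{Ch3:prop:CMcondition} gives $\sigma_2 \leq 1 + \sigma_1 = 2$, forcing $\sigma_2 = 2 = 1 + \sigma_1$, which is tight. So $k \geq 2$, and by standard form we have $v_2 = -f_2 + f_1 \in V_D$. Since $u_2$ is an adjacent vertex distinct from $u_1$ (and Situation~A occurs whenever $k > 1$, as noted after Lemma~\ref{Ch5:lem:flypetostd}), we may take $u_2 = v_2$. A direct computation then yields
\[ w \cdot u_2 \;=\; -w \cdot f_2 + w \cdot f_1 \;=\; 0 + (-1) \;=\; -1, \]
using the standing identity $w \cdot f_1 = -1$ recorded at the start of this section.

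For Case 2, observe first that since $u_1 \neq u_2 = v_2 = -f_2 + f_1$, we necessarily have $h \geq 3$. When $h = 3$ the conclusion is immediate: $u_1 \cdot w = -w\cdot f_3 + w \cdot f_2 + w \cdot f_1 = -w\cdot f_3 - 2$, and the bound $|w\cdot f_3|\leq 1$ coming from Lemma~\ref{Ch5:lem:coefbound}$(iii)$ applied to $w$ (together with similar bounds derived via Lemma~\ref{Ch3:lem:usefulbound} applied to $w$ with $z = \pm u_2$) gives $u_1\cdot w \leq -1<0$.

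For $h \geq 4$, I would mirror the strategy of the tight case (Lemma~\ref{Ch5:lem:tightedgedetect}). The idea is to apply Lemma~\ref{Ch3:lem:usefulbound} to the sum of vertices $w + u_1$ paired with a suitably chosen element of the form $z + u_1$ where $z \in L$. Expanding $(w+u_1 - (z+u_1))\cdot(z+u_1) \leq 0$ gives the inequality $w\cdot u_1 \leq z\cdot u_1 -(w-z)\cdot z$, so the task reduces to constructing $z \in L$ with $z\cdot u_1 < (w-z)\cdot z$. Taking $m$ minimal such that $(u_1 + w)\cdot f_m \leq 0$ (which exists and satisfies $m \leq h$ because $u_1\cdot f_h = -1$), one writes $\sigma_m - 1 = \sum_{i\in A}\sigma_i$ via Lemma~\ref{Ch5:lem:slacksum} (choosing $A$ so that $1\in A$, exploiting slackness) and sets $z = f_m - e_0 - e_1 - \sum_{i\in A}f_i$, which lies in $L$. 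The assumption $w\cdot f_2 = -1$ then enters exactly as $w\cdot f_1 = -1$ did in the tight proof, providing the extra $-1$ needed in the estimate $(w - z - u_1)\cdot z \geq 1$ that closes the argument.

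The main obstacle, as in the tight case, will be the careful bookkeeping of signs and the selection of $A$ to exploit $w \cdot f_2 = -1$; a secondary case split according to whether $m \leq h$ or $m > h$ (analogous to the split on $m\leq g$ versus $m>g$ in Lemma~\ref{Ch5:lem:tightedgedetect}) may be required, with the $m > h$ branch using the specific form of $u_1$ rather than that of $w$ to express $u_1 - x$ in the standard basis for an application of Lemma~\ref{Ch3:lem:usefulbound} to the sum of vertices $w + u_1$.
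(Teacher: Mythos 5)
Your treatment of the case $w\cdot f_2=0$ is correct and matches the paper's: slackness forces $k\geq 2$, so $u_2=v_2=-f_2+f_1$ is a vertex and $w\cdot u_2 = -w\cdot f_2 + w\cdot f_1 = -1$ follows immediately.

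Your Case~2 sketch, however, has several gaps that would prevent the argument from going through. First, the special treatment of $h=3$ relies on a bound $|w\cdot f_3|\leq 1$, but Lemma~\ref{Ch5:lem:coefbound}$(iii)$ only controls $w\cdot f_1$ (the argument there uses that $\sigma_1 = 1$ and doesn't transfer to $f_3$, where $\sigma_3 = 2$). Pairing $w$ against $\pm u_2$ via Lemma~\ref{Ch3:lem:usefulbound} as you suggest gives nothing useful either, since $w\cdot u_2 = 0$ in this case. In fact ruling out $w\cdot f_3 \leq -2$ (which would make $u_1\cdot w\geq 0$) requires exactly the ``$m>h$'' branch of the argument; the $h=3$ case is not special and should not be split off. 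Second, your choice of $m$ minimal with $(u_1+w)\cdot f_m\leq 0$ always yields $m=1$, since $(u_1+w)\cdot f_1 = 1+(-1)=0$; the resulting $z$ would be $f_1 - e_0 - e_1 = -v$, which collapses the estimate. Third, your $z$ is a direct transcription of the tight-case vector with $z\cdot e_0=-1$ and $\sigma_m-1=\sum_{i\in A}\sigma_i$. The slack case is not the same picture with the roles swapped: here the correct pivot is $m$ minimal with $w\cdot f_m>0$, and the correct test vector is $z=-f_m+\sum_{i\in A}f_i$ with $\sigma_m=\sum_{i\in A}\sigma_i$ and $1\in A$ (Lemma~\ref{Ch5:lem:slacksum}), which has $z\cdot e_0=0$. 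One then needs a further case split on whether $2\in A$ to decide whether to pair $w+z$ against $u_1$ or against $u_1+u_2$; this case split is what makes the assumption $w\cdot f_2=-1$ bite. Finally, in the $m>h$ branch, Lemma~\ref{Ch3:lem:usefulbound} must be applied to the sum of vertices $u_2 + w + u_1$, not to $w+u_1$; your sketch never brings $u_2$ into that computation, and without it the inequality does not close. So while the overall strategy of importing the tight-case machinery is sound, the specific construction you propose would need to be reworked from the pivot index onward.
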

\begin{proof} As $\sigma$ is slack, we have $u_2=-f_2+f_1$. If $w\cdot f_2=0$, then $w\cdot u_2=-1$. So from now on we assume that $w\cdot f_2=-1$. The proof that $u_1\cdot w<0$ mirrors closely the proof of Lemma~\ref{Ch5:lem:tightedgedetect}.
\paragraph{}Since $w$ is non-zero, we may take $m$ minimal such that $w\cdot f_m>0$. We will prove the lemma for the cases $m\leq h$ and $m>h$ separately.

\paragraph{Case $m\leq h$:} First we suppose that $m\leq h$ holds. By Lemma~\ref{Ch5:lem:slacksum}, we may choose $A\subseteq\{1, \dots, m-1\}$ such that $1\in A$ and the vector
\[z=-f_m+\sum_{i\in A} f_i\]
is in $L$. By a direct computation we obtain the following four inequalities. If $2\in A$, then we have
\begin{equation}\label{Ch5:eq:slack1}
(u_1-z)\cdot z=-u_1\cdot f_m -1\geq -2,
\end{equation}
and
\begin{equation}\label{Ch5:eq:slack2}
w\cdot z\leq w\cdot f_1 + w\cdot f_2 - w\cdot f_m \leq -3.
\end{equation}
On the other hand, if $2\notin A$, then we have
\begin{equation}\label{Ch5:eq:slack3}
(u_1+u_2-z)\cdot z=(u_1-z)\cdot z+u_2\cdot z= -u_1\cdot f_m \geq -1,
\end{equation}
and
\begin{equation}\label{Ch5:eq:slack4}
w\cdot z\leq -w\cdot f_1 +w\cdot f_m \leq -2.
\end{equation}
By subtracting \eqref{Ch5:eq:slack1} from \eqref{Ch5:eq:slack2} or \eqref{Ch5:eq:slack3} from \eqref{Ch5:eq:slack4}, we see that there is a choice, $U=u_1$ or $U=u_1+u_2$ such that the inequality
\begin{equation}\label{Ch5:eq:slack5}
(U-z-w)\cdot z>0
\end{equation}
holds. Applying Lemma~\ref{Ch3:lem:usefulbound} to the sum of vertices $U+w$ yields the bound
\begin{equation}\label{Ch5:eq:slack6}
(U-z-w)\cdot z+w\cdot U=(U+w-(w+z))\cdot(w+z)\leq 0.
\end{equation}
By combining \eqref{Ch5:eq:slack5} and \eqref{Ch5:eq:slack6}, we obtain the bound $w\cdot U<0$. Since $u_2\cdot w=0$, this proves the lemma when $m\leq h$.

\paragraph{Case $m>h$:} Now suppose that $m>h$ holds. In this case, we have
\begin{equation*}
u_1\cdot w= - w\cdot f_h + \sum_{i=1}^{h-1} w\cdot f_i
\end{equation*}
and $w\cdot f_i\leq 0$ for $1\leq i\leq h$, so we need to bound the quantity $-w\cdot f_h$ above. Let $l>h$ be minimal such that $w\cdot f_l\geq 0$. Observe that such an $l$ exists and is at most $m$. By Lemma~\ref{Ch5:lem:slacksum}, we may pick $A\subseteq\{h+1, \dots , l-1\}$, $B\subseteq \{1, \dots , h-1\}$ and $\varepsilon \in \{0,1\}$ such that the vector
\begin{equation*}
x= f_l - \sum_{j\in A} f_j - \varepsilon f_h - \sum_{i\in B}f_i
\end{equation*}
is in $L$. Since $\sigma_2=1$, we may assume that we have $2\in B$.
Let $C$ be the set $C=\{1, \dots , h-1\}\setminus B$. By choice of $B$, we have then $2\notin C$ and
\begin{equation}\label{Ch5:eq:slack7}
x+u_1 = f_l - \sum_{j\in A} f_j - (\varepsilon +1) f_h + \sum_{i\in C}f_i \in L.
\end{equation}
Since $u_2+w+u_1$ is a sum of vertices, we can apply Lemma~\ref{Ch3:lem:usefulbound} to get
\begin{equation}\label{Ch5:eq:slack8}
0\geq (u_2+ w + u_1 -(x+u_1))\cdot(x+u_1)=(u_2 + w - x)\cdot(x+u_1).
\end{equation}
By combining \eqref{Ch5:eq:slack7} and \eqref{Ch5:eq:slack8}, we arrive at the inequality
\begin{align}
\begin{split}\label{Ch5:eq:slack9}
0 &\geq w\cdot f_l-1 - \sum_{j\in A}(w\cdot f_i+1) - (1+\varepsilon)(w\cdot f_h-\varepsilon) + \sum_{i\in C} (w\cdot f_i + u_2\cdot f_i)\\
&\geq -1 - (1+\varepsilon)(w\cdot f_h+\varepsilon) + \sum_{i\in C} (w\cdot f_i + u_2\cdot e_i),
\end{split}
\end{align}
where the second line follows from the fact that $w\cdot f_l\geq 0$ and that $w\cdot f_j \leq -1$ for all $j\in A$. Since $(w+u_2)\cdot f_1=0$, $(w+u_2)\cdot f_2=-2$ and $2\notin C$ all hold, we also get the bound
\begin{equation}\label{Ch5:eq:slack10}
\sum_{i\in C} (w\cdot f_i + u_2\cdot f_i) \geq \sum_{i=3}^{h-1} (w\cdot f_i + u_2\cdot f_i)=\sum_{i=1}^{h-1}w\cdot f_i+2.
\end{equation}
The inequalities \eqref{Ch5:eq:slack9} and \eqref{Ch5:eq:slack10} combine to give
\begin{equation*}
-w\cdot f_h\leq \frac{-1}{1+\varepsilon}(\sum_{i=1}^{h-1}w\cdot f_i +1)+\varepsilon.
\end{equation*}
This then allows us to compute
\begin{align*}
w\cdot u_1&=- w\cdot f_h + \sum_{i=1}^{h-1} w\cdot f_i
\\ &\leq \varepsilon + \frac{1}{1+\varepsilon}(\varepsilon \sum_{i=1}^{h-1}w\cdot f_i -1)
\\ &\leq \varepsilon + \frac{\varepsilon w\cdot (f_1+f_2) -1}{1+\varepsilon}
\\ &= \varepsilon - \frac{2\varepsilon + 1}{1+\varepsilon}.
\end{align*}
Since $\varepsilon \in \{0,1\}$, this gives $w\cdot u_1<0$, which is the required bound.
\end{proof}

It will be useful to subdivide Situation~A into two cases.
\begin{defn}
If $w\cdot f_2=0$, then we say we are in {\em Situation~A2}. Otherwise, we say that we are in {\em Situation~A1}.
\end{defn}
Note that in Situation~A2, $\sigma$ is necessarily slack and we must have $k=2$. Finally we are ready to accomplish the main aim of this section and prove that we can obtain a diagram with the required structure near the marked crossing.

\begin{prop}\label{Ch5:prop:singlecrossingsummary}
If $K$ has an alternating diagram $D$ with a single marked crossing $c$, then there is a sequence of flypes to a minimal diagram, $D'$, which takes the form shown in Figure~\ref{Ch5:fig:localsubgraphs} in the neighbourhood of $c$. Each of the flypes can be chosen to fix $c$ and hence commutes with changing $c$.
\end{prop}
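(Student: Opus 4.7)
The plan is to assemble Proposition~\ref{Ch5:prop:singlecrossingsummary} as the culmination of the preceding work in this section, since each of the required structural features has already been produced by a separate flyping lemma, and the content of the proposition is essentially that these can all be arranged simultaneously. First I would invoke Lemma~\ref{Ch5:lem:markedcexist} to flype $D$ (preserving the marked crossing $c$, by Remark~\ref{Ch5:rem:preservesmarked}) so that one marker vertex takes the canonical form $v = -f_1 + e_0 + e_1$, and then apply Lemma~\ref{Ch5:lem:flypetostd} to further flype to a diagram in standard form containing $v_2, \dots, v_k$ and $v$ as vertices. Since $D$ is assumed to have a single marked crossing, the second marker vertex $w$ satisfies $w \cdot v = -1$, so we are in Situation~A or Situation~B of Definition~\ref{Ch5:def:sitAsitB}.

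Next I would split into cases according to whether $\sigma$ is tight or slack (noting that Situation~B forces $\sigma$ to be tight, by the proof of Lemma~\ref{Ch5:lem:slackvtight}). In the tight case, Lemma~\ref{Ch5:lem:tightflyping} produces a further sequence of flypes, again preserving $c$, yielding a diagram in standard form with
\[w = f_g - f_{g-1} - \dots - f_1 - e_0 - e_1,\]
and then Lemma~\ref{Ch5:lem:tightedgedetect} guarantees $U \cdot w < 0$, where $U$ is the sum of adjacent vertices; this forces at least one edge between $w$ and an adjacent vertex, which is exactly the local structure needed in Figure~\ref{Ch5:fig:localsubgraphs}. In the slack case, I would instead apply Lemma~\ref{Ch5:lem:slackflyping} to arrange that
\[u_1 = -f_h + f_{h-1} + \dots + f_1,\]
and then split on whether $w \cdot f_2 = 0$ (Situation~A2) or $w \cdot f_2 = -1$ (Situation~A1). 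In either sub-case, Lemma~\ref{Ch5:lem:slackedgedetect} supplies the required edge between $w$ and $u_2$ or $u_1$ respectively.

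The resulting diagram $D'$ is alternating and, since its Goeritz lattice is indecomposable and $D'$ is reduced (flypes preserve reducedness and the alternating property), $D'$ is minimal by the Tait flyping theorem together with \cite{kauffman87state, murasugi86jones, thistlethwaite88alternating}. Throughout, every flype invoked is an application of Lemma~\ref{Ch4:lem:flype1} (or of the type in Figure~\ref{Ch4:fig:flype2}) in which the transverse cut-edge is not incident to a marker vertex via an $e_0$-coordinate, so by Remark~\ref{Ch5:rem:preservesmarked} each flype fixes the marked crossing $c$ and therefore commutes with the crossing change at $c$.

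The main obstacle I anticipate is purely bookkeeping: verifying that the local subgraphs obtained match all the cases depicted in Figure~\ref{Ch5:fig:localsubgraphs}, i.e.\ that the three disjoint cases (tight with Situation~A or B, slack with Situation~A1, slack with Situation~A2) exhaust the possibilities and that in each one the combinatorial data --- the degrees of $v$ and $w$, the identities of the adjacent vertices, and the specific edges detected by Lemmata~\ref{Ch5:lem:tightedgedetect} and \ref{Ch5:lem:slackedgedetect} --- agree with the picture. No further lattice-theoretic input is required; the proof is essentially a careful case analysis assembling the preceding lemmas.
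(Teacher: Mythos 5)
Your proposal correctly identifies all the preparatory lemmas and matches the paper's architecture up to a point: flype to standard form via Lemmata~\ref{Ch5:lem:markedcexist} and~\ref{Ch5:lem:flypetostd}, split on tight/slack, and invoke Lemmata~\ref{Ch5:lem:tightflyping}, \ref{Ch5:lem:slackflyping}, \ref{Ch5:lem:tightedgedetect} and~\ref{Ch5:lem:slackedgedetect}. However, there is a genuine gap after the edge-detection step, which you treat as ``purely bookkeeping'' when it is in fact a further topological step.

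Knowing that $\Gamma_D$ contains an edge between $w$ and $u_1$ (or $u$) does \emph{not} yet give the local picture of Figure~\ref{Ch5:fig:localsubgraphs}. The triangle (or bigon) with vertices $v$, $w$ and the adjacent vertex is an abstract cycle in the plane graph $\Gamma_D$ that bounds two complementary regions, and the region you want to be empty may instead contain entire connected components of $\Gamma_D \setminus \{w, u_1\}$ (equivalently, a whole subtangle of the diagram). The paper's proof handles this explicitly: it identifies the subgraph $G_1$ lying inside the wrong region, observes (using $v \cdot x = 0$ for $x$ in the interior) that $G_1$ is a union of components of $\Gamma_D$ minus a cut pair, and then performs a \emph{new} flype of the type in Figure~\ref{Ch4:fig:flype2} to rotate that subtangle to the outside. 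This extra flype changes the marker vertex $\tilde w$ to $w = \tilde w + [G_1]$ and $\tilde u_1$ to $u_1 = \tilde u_1 + [G_1]$, which is a nontrivial re-identification of $V_{D'}$ with $L$ that must be checked to still produce the structures in Figure~\ref{Ch5:fig:localsubgraphs}. Your ``(or of the type in Figure~\ref{Ch4:fig:flype2})'' parenthetical hints at awareness of such flypes, but you never say when they arise, why they are needed, or why they preserve the marked crossing --- Remark~\ref{Ch5:rem:preservesmarked} only covers flypes produced by Lemma~\ref{Ch4:lem:flype1}, not Figure~\ref{Ch4:fig:flype2}-type twists, so that last point also requires a separate (if short) argument. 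Without this final clearing step the conclusion that $D'$ ``takes the form shown in Figure~\ref{Ch5:fig:localsubgraphs} in the neighbourhood of $c$'' does not follow from the edge-detection lemmas alone.
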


\begin{figure}[ht]
  \centering
  \def\svgwidth{\columnwidth}
  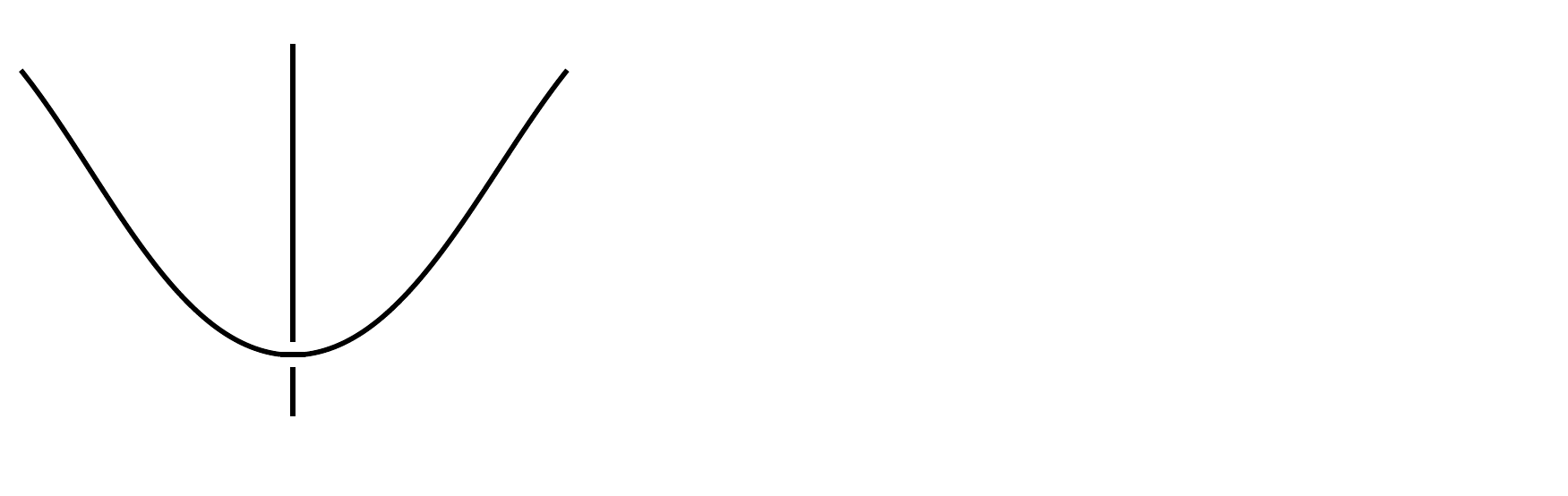
  \caption[The neighbourhood of a marked crossing]{In each of the Situations we may isotope to a diagram appearing as pictured in the neighbourhood of the marked crossing.}
  \label{Ch5:fig:localsubgraphs}
\end{figure}

\begin{proof}
By Lemma~\ref{Ch5:lem:tightflyping} and Lemma~\ref{Ch5:lem:slackflyping}, there is a sequence of flypes fixing $c$ to get $D''$ in standard form, with an adjacent vector of the form
\[\tilde{u}_1=-f_h+f_{h-1} + \dots + f_1,\]
if $\sigma$ is slack or a marker vertex of the form
\[\tilde{w}=f_g-f_{g-1} - \dots-f_1 -e_0 - e_{1},\]
if $\sigma$ is tight.
We prove the proposition for each of the three situations~A1, A2 and B separately, although the argument is similar in all cases.

\paragraph{Situation~A1} In this case we can assume $D''$ has marker vectors $\tilde{w}$ and $v=-f_1+e_0+e_{1}$ and adjacent vectors $\tilde{u}_1$ and $u_2$. Lemma~\ref{Ch5:lem:slackedgedetect} and Lemma~\ref{Ch5:lem:tightedgedetect} show that we may assume $\tilde{u}_1 \cdot \tilde{w}<0$. In particular, this means that $\Gamma_{D''}$ contains a triangle with vertices $v$, $\tilde{w}$ and $\tilde{u}_1$. Choosing such a triangle separates the plane into two regions. Let $R$ be the region not containing $u_2$. We may assume that we have chosen the triangle so that the interior of $R$ does not contain any edges between $u_1$ and $w$. If $R$ does not contain any of the vertices of $\Gamma_{D''}$, then $D''$ has form required by the proposition. So suppose that $R$ does contain a vertex $x$ of $\Gamma_{D''}$. Any such vertex $x$ must satisfy $v \cdot x=0$, so if we take $G_1$ to be the subgraph induced by the set of all vertices contained in $R$, then $G_1$ must be a union of connected components of $\Gamma_{D''} \setminus\{w,u_2\}$. Thus $\Gamma_{D''}$ and $D''$ must appear as in Figure~\ref{Ch5:fig:otherflypes}. Therefore, as in Figure~\ref{Ch4:fig:flype2}, we can perform a flype by twisting the tangle corresponding to $G_1$. Let $D'$ be the diagram obtained by performing this flype. If we let $[G_1]$ denote $[G_1]=\sum_{x\in G_1}x$, then the set of vertices $V_{D'}$ is obtained from $V_{D''}$ by replacing $\tilde{w}$ with $w=\tilde{w}+[G_1]$, $\tilde{u}_1$ with $u_1=\tilde{u}_1+[G_1]$ and $x$ with $-x$ for all $x\in G_1\subseteq V_{D''}$. Thus, the vertices $v$ and $w$ are the marker vertices for $D'$ and $u_1$ and $u_2$ are the adjacent vertices. Altogether this gives us $D'$ in the required form and proves the proposition in Situation~A1.

\begin{figure}[ht]
  \centering
  \def\svgwidth{\columnwidth}
  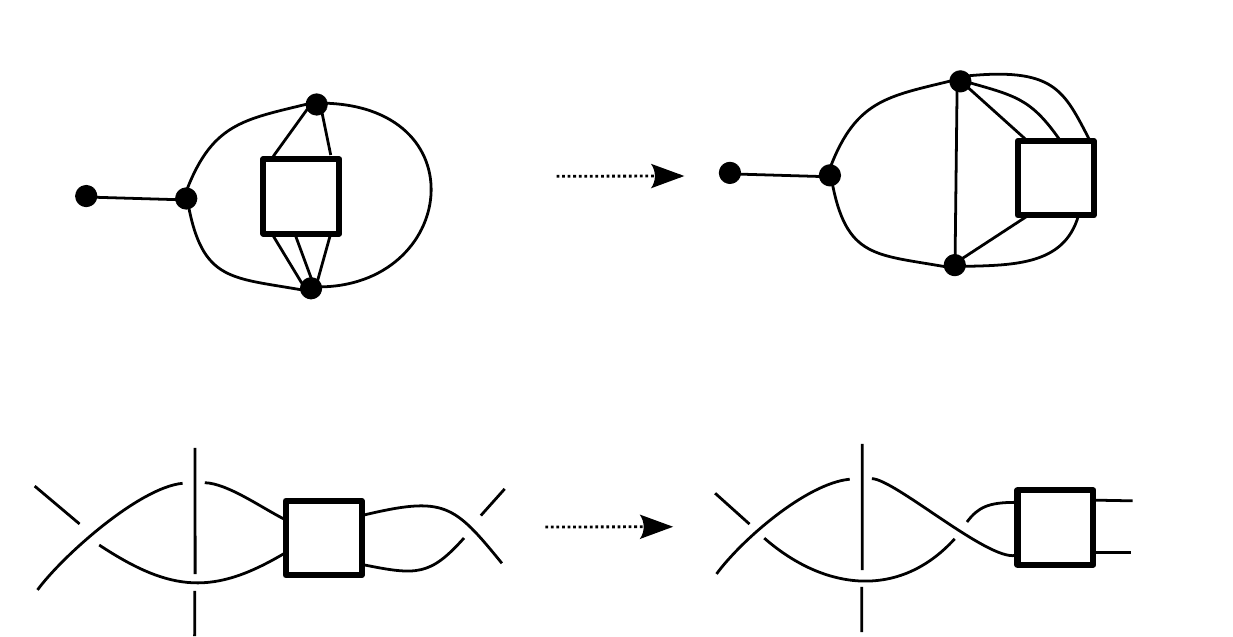
  \caption{The flype from $D''$ to $D'$ in Situation~A1.}
  \label{Ch5:fig:otherflypes}
\end{figure}

\paragraph{Situation~A2}In this case, we can assume $D''$ has marker vertices $v$ and $w$ and adjacent vertices $u_1$ and $u_2$, where $\norm{u_2}=2$ and $u_2\cdot w=u_2\cdot v=-1$. This means $v, \tilde{w}$ and $u_2$ form a triangle in $\Gamma_{D''}$ and there are no further edges from $u_2$. This triangle separates the plane into two regions. Let $R$ be the region not containing $u_1$. Since  $\Gamma_{D''}$ is 2-connected, and none of the vertices adjacent to $v$ and $u_2$ are contained in $R$, we see that the $R$ does not contain any vertex of $\Gamma_{D''}$ in its interior. Thus we may take $D'=D''$ and no further flypes are required to prove the proposition in this case.

\paragraph{Situation~B} In this case, $\sigma$ is necessarily tight and we can assume $D''$ has marker vectors
\[\tilde{w}=f_g-f_{g-1} - \dots -f_1 -e_0 - e_{1}\]
and $v=-f_1+e_0+e_{1}$ and adjacent vector $\tilde{u}$. Lemma~\ref{Ch5:lem:tightedgedetect} shows that $\tilde{u}\cdot \tilde{w}<0$. Thus $v, \tilde{w}$ and $\tilde{u}$ form a triangle in $\Gamma_{D''}$ with two edges between $u$ and $v$. Now let $R$ be a region bounded by this triangle. Any vertex $x$ which is contained in the interior of $R$ must satisfy $x \cdot v=0$, so we see that $x$ and $v$ must be contained in different connected components of $\Gamma_{D''} \setminus\{\tilde{w},\tilde{u}\}$. If we take $G_1$ to be the subgraph induced by the set of all vertices contained in $R$, then we see that $\Gamma_{D''}$ and $D''$ must appear as in Figure~\ref{Ch5:fig:otherflypessitB}. From Figure~\ref{Ch5:fig:otherflypessitB}, we see that, as in Figure~\ref{Ch4:fig:flype2}, there is a flype obtained by twisting the tangle corresponding to $G_1$. Let $D'$ be the the diagram obtained by performing this flype. If we let $[G_1]$ denote $[G_1]=\sum_{x\in G_1}x$, then the set of vertices $V_{D'}$ is obtained from $V_{D''}$ by replacing $\tilde{w}$ with $w=\tilde{w}+[G_1]$, $\tilde{u}$ with $u=\tilde{u}+[G_1]$ and $x$ with $-x$ for all $x\in G_1\subseteq V_{D''}$. Thus $v$ and $w$ are the marker vertices for $D'$ and $u$ is the unique adjacent vertex. Altogether this gives us $D'$ in the required form and proves the proposition in Situation~B.

\begin{figure}[ht]
  \centering
  \def\svgwidth{\columnwidth}
  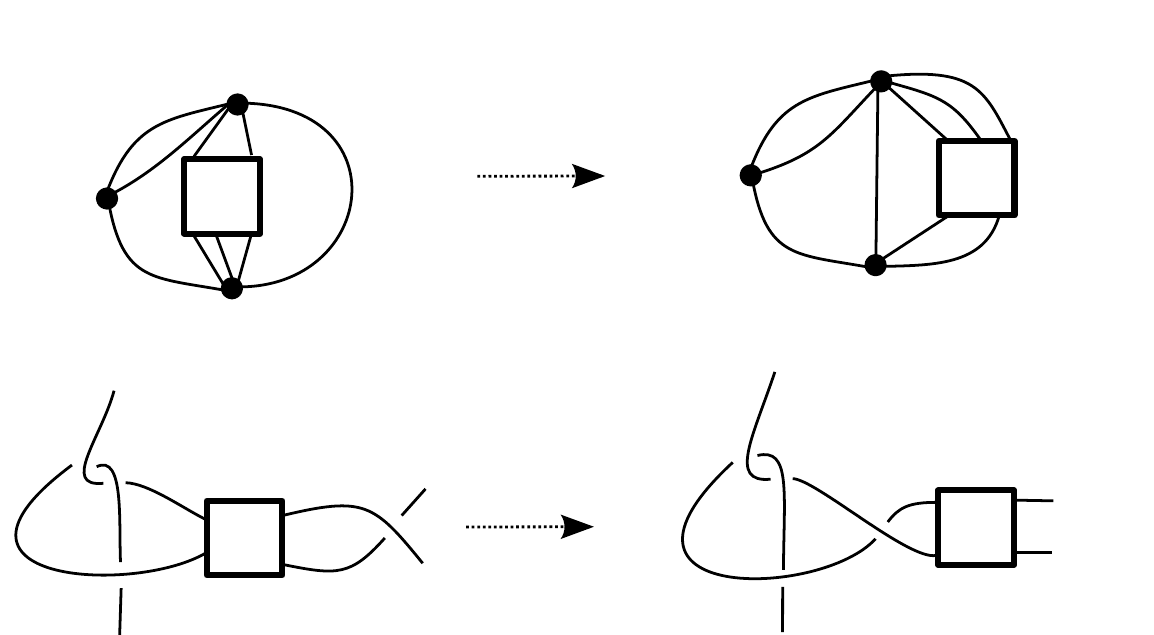
  \caption{The flype from $D''$ to $D'$ in Situation~B.}
  \label{Ch5:fig:otherflypessitB}
\end{figure}

\end{proof}

\section{Changing marked crossings}\label{sec:smallerdiagram}
Now we study the effects of changing a marked crossing. We have already considered the case of multiple marked crossings in Lemma~\ref{Ch5:lem:manymarkedcrossings}, so we suppose there is a single marked crossing, $c$. By Proposition~\ref{Ch5:prop:singlecrossingsummary}, we may assume that we have flyped so that the neighbourhood of the marked crossing is as shown in Figure \ref{Ch5:fig:localsubgraphs}. Let $K'$ be the knot obtained by changing the marked crossing. This crossing change gives an almost-alternating diagram $D'$. In Situation~A1, there is an obvious untongue move and in Situations~A2 and B there are obvious untwirl moves. Let $\widetilde{D}'$ be the new almost-alternating diagram obtained by performing these moves. There is a crossing, $\tilde{c}$, in $\widetilde{D}'$, such that changing $\tilde{c}$ gives an alternating diagram $\widetilde{D}$. In each case, the embedding of $\Lambda_{D}$ can be modified in a natural way to obtain an embedding of $\Lambda_{\widetilde{D}}$ into $\mathbb{Z}^{r+2}$. These operations are illustrated in Figure~\ref{Ch5:fig:sitA1embeddings}, Figure~\ref{Ch5:fig:sitA2embeddings} and Figure~\ref{Ch5:fig:sitBembeddings}. These new embeddings will be used to provide the induction step in a proof that $K'$ is the unknot.

\begin{lem}\label{Ch5:lem:inductstep}
The image of $\Lambda_{\widetilde{D}}$ under the given embedding is a changemaker lattice and $\tilde{c}$ is a marked crossing for this embedding.
\end{lem}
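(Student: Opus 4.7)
The plan is to establish the lemma case by case in the three Situations A1, A2, and B. In each case, using the explicit local form around $\tilde{c}$ provided by Proposition~\ref{Ch5:prop:singlecrossingsummary} together with the prescription for the embedding $\Lambda_{\widetilde{D}} \hookrightarrow \mathbb{Z}^{r+2}$, I would write out the images of all vertices of $\Gamma_{\widetilde{D}}$ in terms of the basis $\{e_0, e_1, f_1, \dots, f_r\}$. Outside a small neighbourhood of $c$ these images coincide with the corresponding vertices of $D$; inside the neighbourhood they are obtained from $v$, $w$ and the adjacent vertex or vertices by the explicit local modification dictated by the untongue/untwirl move.

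Having written these out, I would identify a pair of vectors $\tilde{\sigma}, \tilde{\omega} \in \mathbb{Z}^{r+2}$ whose orthogonal complement contains the image of $\Lambda_{\widetilde{D}}$ as a sublattice of the same rank. The vector $\tilde{\omega}$ will still equal $e_0 - e_1$ (possibly after a trivial change of orthonormal basis), and $\tilde{\sigma}$ will be obtained from $\sigma = e_0 + \sigma_1 f_1 + \dots + \sigma_r f_r$ by a small local modification: in Situation~A1 by deleting a coefficient and reindexing, in Situation~A2 by removing a pair of coefficients of value $1$ associated with the twirl, and in Situation~B by replacing the coefficient $2$ which pairs with the adjacent vertex $u$ (where $u\cdot f_1 = 2$) by $1$. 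In each case I would verify that the resulting coefficient sequence satisfies the inequalities of Proposition~\ref{Ch3:prop:CMcondition}, and that condition~(IV) of Definition~\ref{Intro:def:CMlattice} continues to hold. The latter follows because $\widetilde{D}$ inherits reducedness from $D$, so $\Gamma_{\widetilde{D}}$ has no cut-edge and Lemma~\ref{Ch3:lem:norm1vectors} rules out vectors of norm $1$.

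Finally, to see that $\tilde{c}$ is a marked crossing for the new embedding, I would exhibit explicit marker vertices in $V_{\widetilde{D}}$: one vertex pairing to $+1$ with (the relabelled) $e_0$ and of the form $-\tilde{f}_1 + \tilde{e}_0 + \tilde{e}_1$, and another pairing to $-1$ with $\tilde{e}_0$, with an edge between them corresponding precisely to $\tilde{c}$. The main obstacle will be verifying the changemaker condition for $\tilde{\sigma}$, since deletion or halving of a coefficient can in principle violate the inequality $\tilde{\sigma}_k \leq 1 + \tilde{\sigma}_1 + \dots + \tilde{\sigma}_{k-1}$. This is controlled by the tight/slack dichotomy for $\sigma$ and the very specific forms of the adjacent vertices and the marker $w$ established in Lemmata~\ref{Ch5:lem:slackvtight}--\ref{Ch5:lem:slackedgedetect}, which constrain the local picture just enough to ensure that the modification preserves the changemaker inequalities in every case.
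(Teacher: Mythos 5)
Your general plan is structurally parallel to the paper's proof (case analysis over Situations~A1, A2, B; write out the new vertex images; exhibit the new changemaker structure; exhibit the new marker vertices). But there is a concrete gap in the middle step, and it is precisely the step you flag as ``the main obstacle.''

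The claim that $\tilde{\sigma}$ is obtained from $\sigma$ by ``deleting a coefficient and reindexing'' (A1), ``removing a pair of coefficients'' (A2), or ``replacing the coefficient $2$ by $1$'' (B) is simply false, not merely hard to verify. Take Situation~A1: if $\tilde{\sigma}=\sigma-\sigma_1 f_1=\sigma-f_1$ and $\tilde{w}=w+f_1$ is the new marker vertex, then since $w\cdot f_1=-1$ and $f_1\cdot\sigma=\sigma_1=1$ one computes
\[
\tilde{w}\cdot\tilde{\sigma}=(w+f_1)\cdot(\sigma-f_1)=w\cdot\sigma-w\cdot f_1+f_1\cdot\sigma-1=0-(-1)+1-1=1\neq 0,
\]
so the proposed $\tilde{\sigma}$ is not even orthogonal to the new lattice. (Likewise $\tilde{u}_1\cdot\tilde{\sigma}=-1$.) The new changemaker coefficients are not a truncation of the old ones; they are determined by the recursion $\tilde{\sigma}_k=\tilde{\sigma}_{k-1}+\varepsilon_k+\sum_{i\in A_k}\tilde{\sigma}_i$ coming from the structure of a basis, and the quantities $\varepsilon_k$ and $A_k$ depend on the combinatorics of the sum-of-vertices decomposition of the standard basis elements, not just on $\sigma$.

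The ingredient you are missing is Lemma~\ref{Ch3:lem:halfintidentification}, which says that any collection of vectors with the ``staircase'' shape $v_1=-f_1+e_0+e_1$, $v_k=-f_k+f_{k-1}+\sum_{i\in A_k}f_i+\varepsilon_k(e_0+e_1)$ automatically spans a half-integer changemaker lattice; the changemaker condition is packaged inside that lemma via Proposition~\ref{Ch3:prop:CMcondition}, so it never needs to be re-verified by hand. The paper's proof builds such a basis for $\Lambda_{\widetilde{D}}$ by taking the change-of-basis matrix $A$ expressing the standard basis $\{\nu_1,\dots,\nu_r\}$ in terms of $V_D\setminus\{w\}$, observing that $A$ has entries in $\{0,\pm 1\}$ (because standard basis vectors are irreducible, hence sums of vertices), isolating a block-triangular structure coming from $\nu_1=v$ (and $\nu_2=u_2$ in A2), and pushing forward the complementary minor to get $\{\tilde{\nu}_i\}$. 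The checks that remain are that $\tilde{\nu}_i\cdot f_j=\nu_i\cdot f_j$ for $i,j\geq 2$ and that $A_{i3}\in\{0,1\}$, which are routine given the $\{0,\pm 1\}$ entries — whereas directly verifying the changemaker inequalities for an explicitly-computed $\tilde{\sigma}$ would require exactly this information and more.
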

\begin{proof}
Let $S = \{\nu_1, \dots, \nu_r\} \subseteq \mathbb{Z}^{r+2}$ be a standard basis for $L$ as in Section~\ref{Ch3:sec:standardbases}. Since $V_D\setminus\{w\}=\{x_1, \dots, x_{r}\}$ is also basis for $L$, we can consider the change of basis matrix $(A_{ij})$ satisfying
\[\nu_i = \sum_{j} A_{ij}x_j.\]
In fact, since each element of $S$ is irreducible, Lemma~\ref{Ch3:lem:irreducible} implies that it can be written as a sum of vertices and hence that for each $i$ we have either $0\leq A_{ij}\leq 1$ for all $j$ or $-1\leq A_{ij}\leq 0$ for all $j$ .

\paragraph{} We consider each of the Situations~A1, A2, and B separately. Although in each case the proofs are virtually identical -- all that differs is the new embedding. In each case we use a minor of the matrix $A$ to construct a basis for $\Lambda_{\widetilde{D}}$ satisfying the hypotheses of Lemma~\ref{Ch3:lem:halfintidentification}.

\paragraph{Situation~A1} If we write the vertices $u_1,u_2$ and $w$ in the form $u_1=f_1 + \tilde{u}_1$, $u_2=f_1 + \tilde{u}_2$ and $w=-f_1 + \tilde{w}$, then $\Lambda_{\widetilde{D}}$ has an embedding into $\mathbb{Z}^{r+1} = \langle e_0, e_1, f_2, \dots, f_r \rangle$ with vertices $V_{\widetilde{D}}$ obtained by replacing $\{v,w,u_1,u_2\}$ in $V_D$ by $\{\tilde{u}_2+e_0+e_1, \tilde{w}, \tilde{u}_1\}$. This is shown in Figure~\ref{Ch5:fig:sitA1embeddings}.
\begin{figure}[ht]
  \centering
  \def\svgwidth{300pt}
  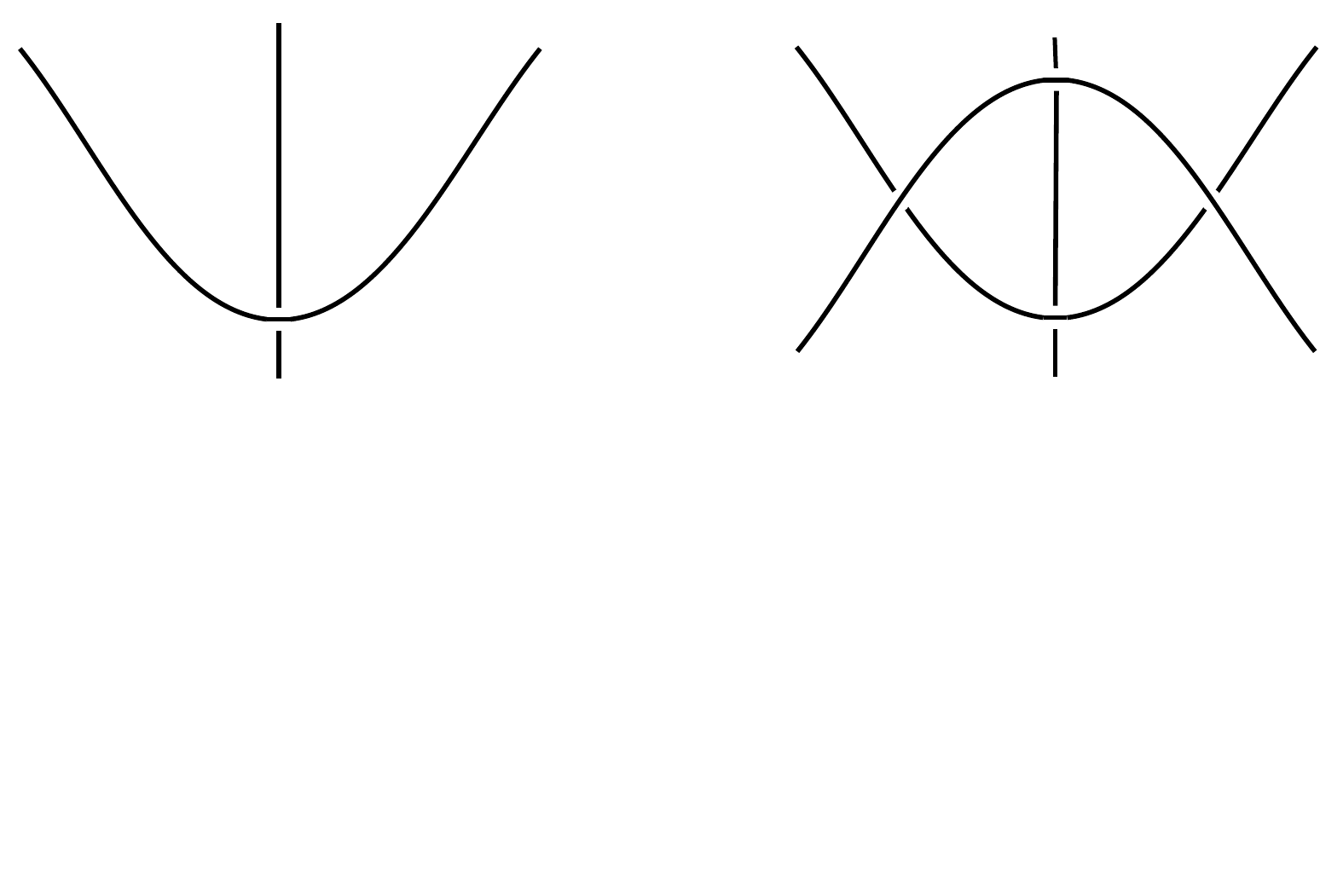
  \caption[The embedding for $\widetilde{D}$ in Situation~A1.]{The embedding for $\widetilde{D}$ in Situation~A1. The diagram $D'$ is obtained from $D$ by changing the marked crossing. The diagrams $D'$ and $\widetilde{D}'$ are almost-alternating and related by an tongue move.}
  \label{Ch5:fig:sitA1embeddings}
\end{figure}
In this case, we enumerate the elements of $V_D\setminus\{w\}$ as
\[V_D\setminus\{w\}= \{x_1, \dots, x_{r}\},\]
where $x_1=v, u_1=x_2$ and $u_2=x_3$. Since $\nu_1=v=x_1$, the matrix $A$ takes the form
\[A=\begin{pmatrix}
   1 & 0 & \dotsb & 0\\
   *  &  &\tilde{A} &
  \end{pmatrix}.\]
It follows that the bottom-right $(r-1)\times(r-1)$-minor satisfies $\det(\widetilde{A})=\det(A)= \pm1$. Thus if we set
\[\tilde{\nu}_i=A_{i2}\tilde{u}_1+ A_{i3}(\tilde{u}_2 + e_0+e_1) +\sum_{j=4}^r A_{ij}x_j\]
for $2\leq i \leq r$, then $\{\tilde{\nu}_2, \dots , \tilde{\nu}_r\}$ forms a basis for $\Lambda_{\widetilde{D}}$. We will show that this basis satisfies the hypotheses of Lemma~\ref{Ch3:lem:halfintidentification}. It is clear that $\tilde{\nu}_i \cdot f_j= \nu_i\cdot f_j$ for all $i,j\geq 2$. In particular $\tilde{\nu}_k$ takes the form
\[\tilde{\nu}_k=-f_k + f_{k-1} + A_{k3}(e_0+e_1) +\sum_{i\in A_k} f_i,\]
for some $A_k\subseteq\{2,\dots, k-2\}$ if $k\geq 3$ and
\[\tilde{\nu}_2=-f_2+A_{23}(e_0+e_1).\]
Observe that $\nu_i \cdot f_1= A_{i2}+A_{i3}-A_{i1} \in \{0,1\}$ and $\nu_i\cdot e_0 = A_{i1}\in \{0,1\}$. It follows that $A_{i3}\in\{0,1\}$, and hence that $\tilde{\nu}_i \cdot e_0=\tilde{\nu}_i \cdot e_1=A_{i3}\in \{0,1\}$. It remains to check that $\tilde{\nu}_2=-f_2+e_0+e_1$. If $\sigma_2=1$, then $u_2=\nu_2=f_1-f_2$, which implies that $\tilde{\nu}_2=-f_2+e_0+e_1$, as required. If $\sigma_2=2$, then $\nu_2=-f_2+f_1+e_0+e_1$, which implies that $A_{21}=A_{22}=A_{23}=1$ and hence that $\tilde{\nu}_2=-f_2+e_0+e_1$ as required.

Thus $\{\tilde{\nu}_2, \dots , \tilde{\nu}_r\}\subseteq \langle e_0, e_1, f_2, \dots, f_r \rangle$ satisfies the hypotheses of Lemma~\ref{Ch3:lem:halfintidentification} and hence that the image of $\Lambda_{\widetilde{D}}$ is a changemaker lattice in $\Z^{r+1}$. It is clear that that $\tilde{c}$ is a marked crossing for this embedding.

\paragraph{Situation~A2} If we write the vertices $u_1$ and $w$ in the form $u_1=f_1 + f_2 + \tilde{u}$ and $w=-f_1 + \tilde{w}$, then $\Lambda_{\widetilde{D}}$ has an embedding into $\mathbb{Z}^{r} = \langle e_0, e_1, f_3, \dots, f_r \rangle$ with vertices $V_{\widetilde{D}}$ obtained by replacing $\{v,w,u_1,u_2\}$ in $V_D$ by $\{\tilde{u}+e_0+e_1, \tilde{w}\}$. This is shown in Figure~\ref{Ch5:fig:sitA2embeddings}.
\begin{figure}[ht]
  \centering
  \def\svgwidth{350pt}
  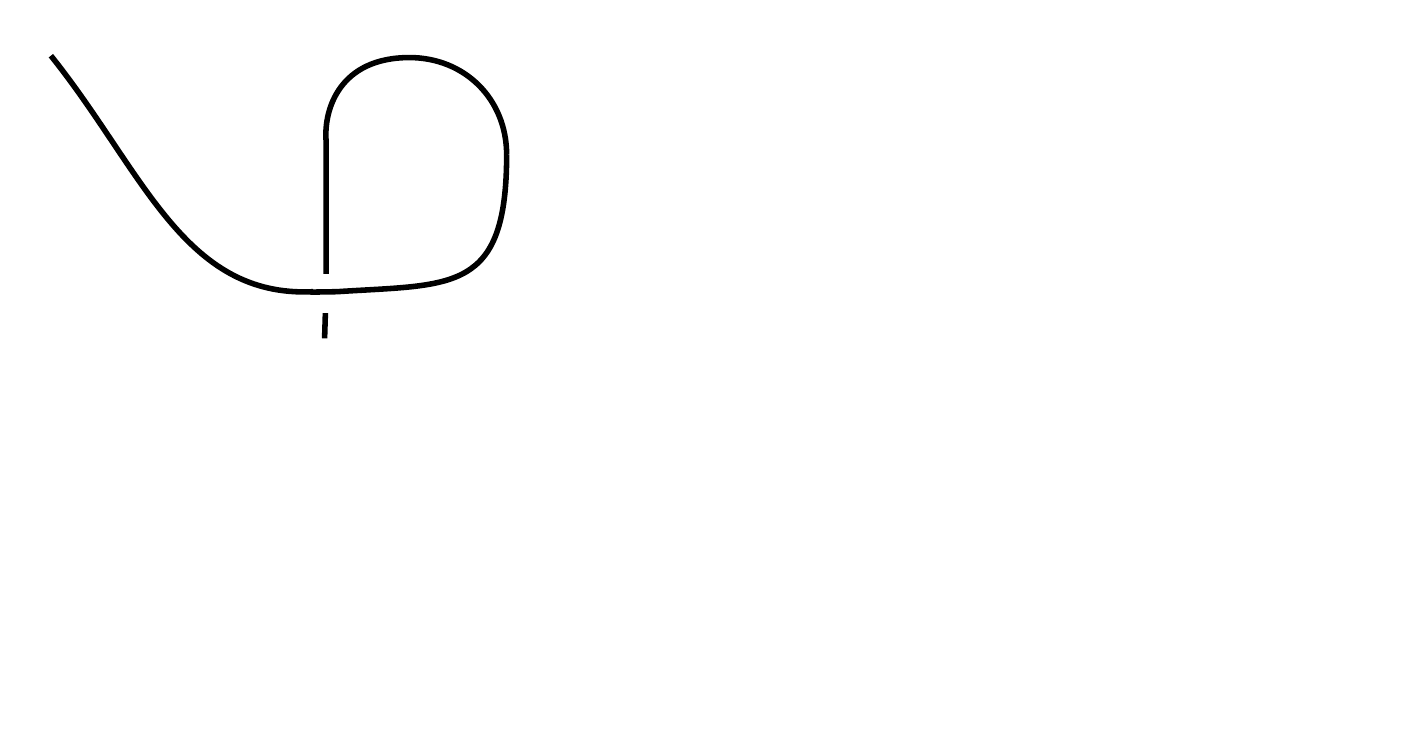
  \caption[The embeddings for $\widetilde{D}$ in Situation~A2]{The embedding for $\widetilde{D}$ in Situation~A2.}
  \label{Ch5:fig:sitA2embeddings}
\end{figure}

In this case enumerate the elements of $V_D\setminus\{w\}$ as
\[V_D\setminus\{w\}= \{x_1, \dots, x_{r}\},\]
where $x_1=v, u_2=x_2$ and $u_1=x_3$. Since $\nu_1=v=x_1$ and $\nu_2=u_2=x_2$, $A$ takes the form
\[A=\begin{pmatrix}
   1 & 0 & 0 & \dotsb & 0\\
   0 & 1 & 0 & \dotsb & 0 \\
   * & * &   &\tilde{A} &
  \end{pmatrix}.\]
It follows that the bottom-right $(r-2)\times(r-2)$-minor satisfies $\det(\widetilde{A})=\det(A)= \pm1$. Thus if we set
\[\tilde{\nu}_i=A_{i3}(\tilde{u}+e_0+e_1) +\sum_{j=4}^r A_{ij}x_j\]
for $3\leq i \leq r$, then $\{\tilde{\nu}_3, \dots , \tilde{\nu}_r\}$ forms a basis for $\Lambda_{\widetilde{D}}$. It is clear that $\tilde{\nu}_i \cdot f_j= \nu_i\cdot f_j$ for all $i,j\geq 3$. In particular $\tilde{\nu}_k$ takes the form
\[\tilde{\nu}_k=-f_k + f_{k-1} + A_{k3}(e_0+e_1) +\sum_{i\in A_k} f_i,\]
for some $A_k\subseteq\{3,\dots, k-2\}$ if $k\geq 3$ and
\[\tilde{\nu}_3=-f_3+A_{33}(e_0+e_1).\]
By proceeding as in Situation~A1, one can show that we always have $A_{k3}\in \{0,1\}$, and, furthermore that $\tilde{\nu}_3=-f_3+e_0+e_1$.

Thus $\{\tilde{\nu}_3, \dots , \tilde{\nu}_r\}\subseteq \langle e_0, e_1, f_3, \dots, f_r \rangle$ satisfies the hypotheses of Lemma~\ref{Ch3:lem:halfintidentification} and hence that the image of $\Lambda_{\widetilde{D}}$ is a changemaker lattice in $\Z^{r}$. It is clear that that $\tilde{c}$ is a marked crossing for this embedding.

\paragraph{Situation~B} If we write the vertices $u$ and $w$ in the form $u=2f_1 + \tilde{u}$ and $w=-f_1 + \tilde{w}$, then $\Lambda_{\widetilde{D}}$ has an embedding into $\mathbb{Z}^{r+1} = \langle e_0, e_1, f_2, \dots, f_r \rangle$ with vertices $V_{\widetilde{D}}$ obtained by replacing $\{v,w,u\}$ in $V_D$ by $\{\tilde{u}+e_0+e_1, \tilde{w}\}$. This is shown in Figure~\ref{Ch5:fig:sitBembeddings}.

\begin{figure}[ht]
  \centering
  \def\svgwidth{300pt}
  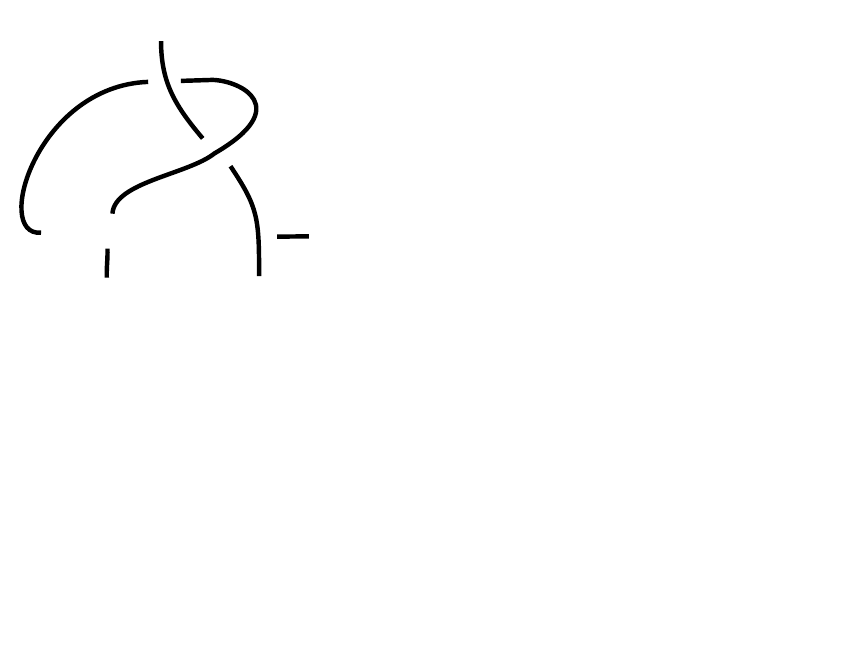
  \caption[The embeddings for $\widetilde{D}$ in Situation~B]{The embedding for $\widetilde{D}$ in Situation~B}
  \label{Ch5:fig:sitBembeddings}
\end{figure}
In this case, we enumerate the elements of $V_D\setminus\{w\}$ as
\[V_D\setminus\{w\}= \{x_1, \dots, x_{r}\},\]
where $x_1=v$ and $u=x_2$. Since $\nu_1=v=x_1$, the matrix $A$ takes the form
\[A=\begin{pmatrix}
   1 & 0 & \dotsb & 0\\
   *  &  &\tilde{A} &
  \end{pmatrix}.\]
It follows that the bottom-right $(r-1)\times(r-1)$-minor satisfies $\det(\widetilde{A})=\det(A)= \pm1$. Thus if we set
\[\tilde{\nu}_i=A_{i2}(\tilde{u}+ e_0+e_1) +\sum_{j=3}^r A_{ij}x_j\]
for $2\leq i \leq r$, then $\{\tilde{\nu}_2, \dots , \tilde{\nu}_r\}$ forms a basis for $\Lambda_{\widetilde{D}}$. Proceeding as in the preceding two situations, we find that $\tilde{\nu}_k$ takes the form
\[\tilde{\nu}_k=-f_k + f_{k-1} + A_{k2}(e_0+e_1) +\sum_{i\in A_k} f_i,\]
for some $A_k\subseteq\{2,\dots, k-2\}$ if $k\geq 3$ and
\[\tilde{\nu}_2=-f_2+A_{22}(e_0+e_1).\]
Furthermore, we find that we always have $A_{k2}\in \{0,1\}$ and that $\tilde{\nu}_2=-f_2+e_0+e_1$.

Thus $\{\tilde{\nu}_2, \dots , \tilde{\nu}_r\}\subseteq \langle e_0, e_1, f_2, \dots, f_r \rangle$ satisfies the hypotheses of Lemma~\ref{Ch3:lem:halfintidentification} and hence that the image of $\Lambda_{\widetilde{D}}$ is a changemaker lattice in $\Z^{r+1}$. It is clear that that $\tilde{c}$ is a marked crossing for this embedding.
\end{proof}

\section{The main results}\label{sec:mainresults}
Putting together the results of this chapter we obtain the following theorem.
\begin{thm}\label{Ch5:thm:markedmeansunknotting}
Suppose that $D$ is a reduced alternating diagram with $\Lambda_D$ is isomorphic to a half-integer changemaker lattice. Any marked crossing is an unknotting crossing which is negative if $\sigma(K)=0$ and positive if $\sigma(K)=-2$. Furthermore, if $D'$ is the almost-alternating diagram obtained by changing a marked crossing, then $D'$ can be reduced to one of the clasp diagrams $\mathcal{C}_m$ by a sequence of flypes, untongue and untwirl moves.
\end{thm}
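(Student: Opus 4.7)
The plan is to prove Theorem~\ref{Ch5:thm:markedmeansunknotting} by strong induction on the rank $r$ of the half-integer changemaker lattice $L \cong \Lambda_D$ (so that $L \subseteq \Z^{r+2}$). The base case $r=1$ is handled directly by Example~\ref{Ch5:exam:trefoil}: up to mirroring, the only reduced alternating diagram whose Goeritz lattice embeds as a half-integer changemaker lattice of rank one is a diagram of the trefoil, where every crossing is marked and the statement is immediate (the crossing change yields the unknot and the resulting diagram is already of the form $\mathcal{C}_{\pm 2}$).

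For the inductive step, fix a marked crossing $c$ of $D$ and split on the number of marked crossings. If $D$ has more than one marked crossing, Lemma~\ref{Ch5:lem:manymarkedcrossings} immediately supplies all the conclusions: $K'$ is the unknot and the almost-alternating diagram $D'$ obtained by changing $c$ is $\mathcal{C}_m$ or $\overline{\mathcal{C}_m}$, requiring no further moves. If instead $D$ has a unique marked crossing $c$, I would first apply Proposition~\ref{Ch5:prop:singlecrossingsummary} to perform a sequence of flypes which, by Remark~\ref{Ch5:rem:preservesmarked}, commute with changing $c$ and bring the neighbourhood of $c$ into one of the three standard local forms A1, A2, B depicted in Figure~\ref{Ch5:fig:localsubgraphs}.

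In each of those three configurations, the almost-alternating diagram $D'$ produced by changing $c$ admits an obvious untongue move (Situation A1) or untwirl move (Situations A2 and B) that collapses the local picture to a smaller almost-alternating diagram $\widetilde{D}'$, in which a distinguished crossing $\tilde{c}$ may be changed to produce a reduced alternating diagram $\widetilde{D}$ of the same knot $K'$. The essential input is now Lemma~\ref{Ch5:lem:inductstep}, which asserts that the naturally induced embedding realises $\Lambda_{\widetilde{D}}$ as a half-integer changemaker lattice of strictly smaller rank, and that $\tilde{c}$ is a marked crossing for this new embedding. The inductive hypothesis then guarantees that $\tilde{c}$ is an unknotting crossing of $\widetilde{D}$ and that the almost-alternating diagram obtained by changing $\tilde{c}$ reduces via flypes, untongue and untwirl moves to some $\mathcal{C}_{m'}$. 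Prepending the single untongue/untwirl taking $D'$ to $\widetilde{D}'$ to this sequence yields the required reduction of $D'$, and in particular proves that $K'$ is the unknot.

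The sign assertion is then an immediate consequence of the signature formula in Lemma~\ref{Ch5:lem:manymarkedcrossings}: since $K'$ is the unknot, $\sigma(K')=0$, and the two formulae $\sigma(K')=\sigma(K)+2$ (when $c$ is positive) and $\sigma(K')=\sigma(K)$ (when $c$ is negative) translate into the claimed dichotomy. The main substantive work is already contained in Sections~\ref{sec:markedcrossings}--\ref{sec:smallerdiagram}; the content of this theorem is essentially the assembly of those pieces into a single induction. The main obstacle, which has been addressed by Lemma~\ref{Ch5:lem:inductstep} together with the careful standard-form reductions of Proposition~\ref{Ch5:prop:singlecrossingsummary}, is to ensure that the untongue or untwirl move performed on $D'$ descends to an honest reduction in the lattice so that the rank actually drops and the induction may proceed.
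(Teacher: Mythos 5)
Your proposal is correct and follows essentially the same route as the paper's own proof: induction on the rank of the changemaker lattice, with the base case from Example~\ref{Ch5:exam:trefoil}, the multi-marked-crossing case handled by Lemma~\ref{Ch5:lem:manymarkedcrossings}, the single-marked-crossing case reduced via Proposition~\ref{Ch5:prop:singlecrossingsummary} and Lemma~\ref{Ch5:lem:inductstep}, and the sign statement from the signature formula. The only tiny omission is that the sequence of moves reducing $D'$ begins with the flypes supplied by Proposition~\ref{Ch5:prop:singlecrossingsummary} before the untongue/untwirl, not just the single untongue/untwirl, but this is cosmetic.
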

\begin{proof}
We show by induction on the rank of $\Lambda_D$ that the diagram obtained by changing a marked crossing in $D$ can be reduced to one of the clasp diagrams $\mathcal{C}_m$ by a sequence of flypes, untongue and untwirl moves.
First suppose that $\Lambda_D$ has rank one. As shown in Example~\ref{Ch5:exam:trefoil}, this means $D$ is a minimal diagram of the right-handed trefoil. In particular, when $r=1$ every crossing in $D$ is both marked and an unknotting crossing. The diagram obtained by changing one of these crossings is clearly a clasp diagram.

Suppose now that $\Lambda_D$ has rank $r>1$. If there is more than one marked crossing, then Lemma~\ref{Ch5:lem:manymarkedcrossings} shows that any one of them is an unknotting crossing and that the diagram obtained by changing a marked crossing is a clasp diagram. Thus we need only consider the case that there is a single marked crossing, $c$. Let $D'$ be the knot obtained by changing $c$. By Proposition~\ref{Ch5:prop:singlecrossingsummary} we can flype $D'$ to an almost-alternating diagram which admits an untongue or an untwirl move. Let $\widetilde{D}'$ be the diagram obtained by performing this move. By Lemma~\ref{Ch5:lem:inductstep} we can assume $\widetilde{D}'$ is obtained by changing a marked crossing in a reduced alternating diagram $\widetilde{D}$ with the rank of $\Lambda_{\widetilde{D}}$ strictly less than $r$. Therefore, by the inductive hypothesis, $D'$ is an almost-alternating diagram of the unknot which can be reduced to a clasp diagram by a sequence of flypes, untongue and untwirl moves.

The statement about the sign of the marked crossing follows from Lemma~\ref{Ch5:lem:manymarkedcrossings} and the fact that the unknot has signature 0.
\end{proof}

\begin{proof}[Proof of Theorem~\ref{Ch5:thm:technical}]
The Montesinos trick, as stated in Proposition~\ref{Ch4:prop:Montesinos}, gives $(i) \Rightarrow (ii)$. Corollary~\ref{Ch4:cor:CMcondit} gives $(ii)\Rightarrow(iii)$. The implication $(iv)\Rightarrow (i)$ is trivial.
\paragraph{} The implication $(iii) \Rightarrow (iv)$ follows from Lemma~\ref{Ch5:lem:markedcexist}, which shows that the diagram $D$ contains a marked crossing and Theorem~\ref{Ch5:thm:markedmeansunknotting}, which shows that this marked crossing is an unknotting crossing of the required sign.
\end{proof}

\begin{proof}[Proof of Theorem~\ref{Intro:thm:unknotting}]
The implication $(iii)\Rightarrow (i)$ is trivial. The rest of the proof follows more or less immediately from Theorem~\ref{Ch5:thm:technical}. Since the unknotting number is preserved under reflection, and for any knot $K$ we $\Sigma(K)=S^3_{-d/2}(c)$ for some $c\subseteq S^3$ if and only if $-\Sigma(K)=\Sigma(\overline{K})=S^3_{d/2}(\overline{c})$,
Theorem~\ref{Ch5:thm:technical} gives the equivalence $(i)\Leftrightarrow (ii)$.

Similarly, Theorem~\ref{Ch5:thm:technical} shows that if an alternating knot $K$ has unknotting number one, then any reduced alternating diagram for $K$ contains an unknotting crossing. Since introducing nugatory crossings does not affect the property of having an unknotting crossing, this shows that every alternating diagram for $K$ must contain an unknotting crossing. This proves $(i)\Rightarrow (iii)$ and completes the proof.
\end{proof}

\chapter{Non-integer surgeries}\label{chap:nonint}
After using half-integer changemaker lattices to study the alternating knots whose double-branched covers arise as half-integer surgery in the preceding chapter, we now extend this analysis to arbitrary non-integer changemaker lattices to obtain our diagrammatic criterion for when the double branched cover of an alternating knot or link arises by non-integer surgery.

\begin{thm}\label{intro:thm:nonint}
Let $L$ be an alternating knot or link. For any $p/q$ with $|p|>q>1$, the double branched cover $\Sigma(L)$ arises as $p/q$-surgery on a knot in $S^3$ if and only if (1) $L$ possesses an alternating diagram obtained by rational tangle replacement from an almost-alternating diagram of the unknot and (2) the corresponding surgery slope $p/q$.
\end{thm}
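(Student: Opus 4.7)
The plan is to prove both directions via the changemaker machinery combined with the Montesinos trick.

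\textbf{Sufficiency} follows from the Montesinos trick as developed in Section~4.4: if $D$ for $L$ arises from an almost-alternating diagram $D'$ of the unknot by rational tangle replacement, then $\Sigma(L) \cong S^3_{\alpha}(\kappa_{D'})$ with $\kappa_{D'}\in \mathcal{D}$, and a slope computation along the lines of Proposition~\ref{Ch4:prop:3implies1}---stripped of the signature-specific constraints imposed there by the unknotting-number hypothesis---identifies $\alpha$ with $p/q$ directly from \eqref{Ch4:eq:surgerycoef}.

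\textbf{Necessity} is the substantive direction. Fix a reduced alternating diagram $D$ for $L$, and after possibly replacing $L$ by its mirror assume $\Sigma(L)\cong S^3_{-p/q}(\kappa)$ for some $\kappa\subseteq S^3$. Since $|H_1(\Sigma(L))|=p$, Corollary~\ref{Ch4:cor:CMcondit} yields an isomorphism of $\Lambda_D$ with a $p/q$-changemaker lattice
\[ L = \langle w_0, w_1, \dots, w_l\rangle^\bot \subseteq \Z^N, \]
and the hypothesis $q>1$ forces $l\geq 1$. Writing $p/q = n - r/q$ with $q/r = [a_1,\dots,a_l]^-$, the fractional part $L_F$ has basis $\mu_0,\dots,\mu_m$ satisfying $[\norm{\mu_0},\dots,\norm{\mu_m}]^- = q/(q-r)$ by Lemma~\ref{Ch3:lem:contfractransform}. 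The aim is to show that, after a sequence of flypes, the vectors $\mu_0,\dots,\mu_m$ are realized as actual vertices of the white graph of the flyped diagram, arranged in the configuration of Proposition~\ref{Ch4:prop:tangledetection} so as to identify a rational tangle $T$ in the new alternating diagram $D^\ast$ for $L$.

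Each $\mu_i$ is irreducible in $L$ by Lemma~\ref{Ch3:lem:irredfracpart}, and hence by Lemma~\ref{Ch3:lem:irreducible} corresponds to a sum of vertices of $\Gamma_D$. One promotes the $\mu_i$ to single vertices one at a time via an inductive flyping argument modelled on Sections~5.1--5.3: each decomposition $\mu_i = x+y$ with $x\cdot y=-1$ triggers a Lemma~\ref{Ch4:lem:flype1} flype, and decompositions exhibiting a $2$-vertex cut-set trigger a flype of the type shown in Figure~\ref{Ch4:fig:flype2}. Lattice-theoretic estimates analogous to those of Lemma~\ref{Ch5:lem:coefbound} and Lemmata~\ref{Ch5:lem:tightedgedetect}--\ref{Ch5:lem:slackedgedetect} ensure these flypes can be chosen to fix previously-realized $\mu_j$. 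Once every $\mu_i$ is a vertex, their pairing data match the hypotheses of Proposition~\ref{Ch4:prop:tangledetection}, certifying a rational tangle $T\subseteq D^\ast$ whose slope is determined by $q/(q-r)$. Replacing $T$ by a single crossing produces a diagram $D''$ whose Goeritz lattice is the integral part $L_I$, an $n$-changemaker lattice; a determinant computation in the spirit of Lemma~\ref{Ch5:lem:manymarkedcrossings}, applied to a standard basis of $L$ restricted to the $L_I$-factor, shows $\det D''=1$, so $D''$ is an almost-alternating diagram of the unknot. The Montesinos trick then identifies $\kappa$ (up to the same Alexander-polynomial data) with $\kappa_{D''}\in\mathcal{D}$, and the slope formula \eqref{Ch4:eq:surgerycoef} matches the surgery coefficient to $p/q$.

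\textbf{The main obstacle} is the flyping argument promoting the abstract fractional-part basis $\mu_0,\dots,\mu_m$ to an honest collection of vertices of $\Gamma_D$ realizing a rational tangle. Chapter~5 handled only the half-integer case, where $L_F$ is generated by the single vector $\mu_0 = e_0+e_1$ and a single ``marked crossing'' suffices; here one must control a whole continued fraction's worth of marker data simultaneously and guarantee that no previously-promoted $\mu_j$ is destroyed by a subsequent flype, which requires a careful generalization of the Situation~A/B case analysis of Section~5.2.
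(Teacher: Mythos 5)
Your overall route---flype to realize the fractional-part generators $\mu_0,\dots,\mu_m$ as genuine vertices of the white graph, apply Proposition~\ref{Ch4:prop:tangledetection} to certify a rational tangle, and close with the Montesinos trick---is the paper's route. However, two points require correction.

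First, a technical misstep in the final step of your necessity argument. Replacing the rational tangle $T$ with a single crossing $c$ yields an \emph{alternating} diagram $\widetilde{D}'$ whose Goeritz lattice is not the integral part $L_I$ but rather the \emph{half-integer} changemaker lattice $\langle w_0, e_1-e_0 \rangle^\bot \subseteq \Z^{t+2}$, with determinant $2n-1$, not $1$: the single remaining crossing $c$ still contributes the fractional piece $\mu_0 = e_0+e_1$, so you do not land on $L_I$, whose discriminant would in any case be $n$ rather than $1$. A determinant computation "in the spirit of Lemma~\ref{Ch5:lem:manymarkedcrossings}" only yields determinant $1$ \emph{after changing} $c$, which is what produces the almost-alternating diagram of the unknot. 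The diagram obtained by the tangle replacement itself is neither almost-alternating nor unknotted.

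Second, and more important structurally, the difficulty you flag as the main obstacle---a "careful generalization of the Situation~A/B case analysis" to a whole continued fraction's worth of marker data---does not arise, because the paper \emph{reduces} the non-integer case to the half-integer case rather than generalizing Chapter~5. That is the content of Proposition~\ref{Ch6:prop:CMidentifiestangle}: the comparatively elementary flypes of Lemmas~\ref{Ch6:lem:flypetorational}--\ref{Ch6:lem:markerbounds} (promote each $\mu_i$ to a vertex by a short induction, observe that there are unique vertices $v$ and $w$ with nonzero $e_0$-component carrying fractional parts $\mu_0$ and $-(\mu_0+\dots+\mu_m)$ respectively, and bound the edge count between them via Lemma~\ref{Ch3:lem:cutedge}) isolate the fractional tangle; replacing it by the crossing $c$ then puts $\Lambda_{\widetilde{D}'}$ in half-integer changemaker form with $c$ as its marked crossing, and Theorem~\ref{Ch5:thm:markedmeansunknotting} is invoked as a black box to conclude $c$ is an unknotting crossing. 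No re-run of the Situation~A/B/flyping/untongue machinery is needed at the level of the fractional tangle, and the worry that a later flype might destroy an already-promoted $\mu_j$ is settled in one line in Lemma~\ref{Ch6:lem:flypetorational} by the observation $\mu_c\cdot\mu_k\le 0$ for $k\ne c$. This reduction-to-half-integer is the key structural idea missing from your plan; with it in place, the heavy lifting is exactly the Chapter~5 analysis you already have.
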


This tangle replacement is described more precisely in the following statement.
\begin{thm} \label{Ch6:thm:rationalsurgery}
Let $L$ be an alternating link. Let $p>q>1$ be coprime integers and suppose that $p/q=n-r/q$, where $0<r<q$. The following are equivalent:
\begin{enumerate}[(i)]
\item there is $\kappa \subseteq S^3$ with $\Sigma(L)=S^3_{-p/q}(\kappa)$;
\item for any reduced alternating diagram $D$ of $L$ the Goeritz form $\Lambda_D$ of $D$ is isomorphic to a $p/q$-changemaker lattice;
\item $L$ has a reduced alternating diagram $D$ containing a rational tangle $T$ of slope $\frac{q-r}{r}$. such that if $D'$ is the alternating diagram obtained by replacing $T$ with a single crossing $c$, then either ${\sigma(D')=-2}$ and $c$ is a positive unknotting crossing or ${\sigma(D')=0}$ and $c$ is negative unknotting crossing.
\end{enumerate}
\end{thm}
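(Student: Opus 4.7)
The plan is to prove the cycle $(i) \Rightarrow (ii) \Rightarrow (iii) \Rightarrow (i)$. The implication $(i) \Rightarrow (ii)$ is immediate from Corollary~\ref{Ch4:cor:CMcondit}, since $\Sigma(L)$ being $-p/q$-surgery on $\kappa$ forces $\Lambda_D$ to be a $p/q$-changemaker lattice for any reduced alternating diagram $D$. For $(iii) \Rightarrow (i)$, I would apply Proposition~\ref{Ch4:prop:3implies1} directly: with tangle slope $r/(q-r)$ (rewritten in the conventions of Section~\ref{sec:tangles}) the denominator $q-r+r = q$ matches, and the Montesinos trick computation in Section~\ref{Ch4:sec:tanglesindiagrams} gives surgery coefficient $-p/q$ with $p = qm + r$; the signature/crossing hypotheses pin down the sign of $m$ so that $p/q > 1$.

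The main content is the implication $(ii) \Rightarrow (iii)$, which generalises the analysis of Chapter~\ref{chap:uk=1}. Fix an isomorphism $\iota_D : \Lambda_D \to L$ with $L = \langle w_0, w_1, \dots, w_l\rangle^{\bot}\subseteq \Z^N$ a $p/q$-changemaker lattice, $w_0 = e_0 + \sum \sigma_i f_i$, and fractional basis $\mu_0,\dots,\mu_m$ as in Section~\ref{Ch3:sec:fracparts}. First I would use Lemma~\ref{Ch3:lem:fracpartirred} together with Lemma~\ref{Ch3:lem:irredfracpart} to see that the irreducible vectors $\mu_a + \dots + \mu_b$ lift to unions of vertices of $\Gamma_D$. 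Next, mimicking Lemma~\ref{Ch5:lem:coefbound}, I would bound the pairings $v\cdot e_0$ and $v\cdot f_i$ for sums of vertices $v$ to identify a unique pair of ``marker vertices'' and an induced sub-configuration of $\Gamma_D$ whose degree sequence agrees with $\norm{\mu_0}, \dots, \norm{\mu_m}$. Then, via a sequence of flypes of the two types in Section~\ref{sec:altdiagrams} (applying Lemma~\ref{Ch4:lem:flype1} and the flype of Figure~\ref{Ch4:fig:flype2}, each chosen to preserve the marked sub-configuration in the spirit of Remark~\ref{Ch5:rem:preservesmarked}), I would put $D$ in a standard form $D^*$ where the marker vertices and the intermediate vertices lifting $\mu_0, \dots, \mu_m$ lie in a disk of $D^*$ and satisfy the combinatorial hypotheses of Proposition~\ref{Ch4:prop:tangledetection}.

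Stage three is to recognise the resulting tangle $T \subseteq D^*$ as rational of slope $(q-r)/r$: Proposition~\ref{Ch4:prop:tangledetection} computes its slope via the continued fraction $[\norm{\mu_0},\dots,\norm{\mu_m}]^{-}$, which equals $q/(q-r)$ by Lemma~\ref{Ch3:lem:contfractransform}, giving exactly the slope $(q-r)/r$ required. Replacing $T$ by a single crossing $c$ gives an alternating diagram $D'$, and the remaining task is to prove $D'$ is an unknotting-crossing diagram whose associated Goeritz lattice is a half-integer changemaker lattice. I would do this by an induction on the rank of the fractional part $L_F$: contracting the tangle $T$ down one ``level'' of the continued fraction corresponds exactly to the matrix-minor operation of Lemma~\ref{Ch5:lem:inductstep}, reducing to the case of a shorter continued fraction, with the base case $q=2$ being exactly Theorem~\ref{Ch5:thm:technical}; the signature and sign of $c$ then follow from Lemma~\ref{Ch5:lem:manymarkedcrossings} as in Theorem~\ref{Ch5:thm:markedmeansunknotting}. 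The main obstacle will be the flype-to-standard-form step in Stage~2: with $q = 2$ one only needs to control the single pair of marker vertices, but for general $q$ one must simultaneously realise the $m+1$ irreducible fractional vectors $\mu_j$ as actual sub-sums of $V_D$, and show that each successive flype to achieve this does not destroy the realisations already obtained for $\mu_0,\dots,\mu_{j-1}$. Handling this requires a careful book-keeping argument exploiting $w_0 \cdot \mu_j = 0$ to ensure the flypes are localised to the fractional tangle and commute with each other as in Remark~\ref{Ch5:rem:preservesmarked}.
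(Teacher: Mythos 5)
Your top-level strategy---the cycle $(i)\Rightarrow(ii)\Rightarrow(iii)\Rightarrow(i)$, with Corollary~\ref{Ch4:cor:CMcondit} for the first leg and Proposition~\ref{Ch4:prop:3implies1} for the last, and the fractional-tangle/flyping arc for the middle---is the same as the paper's, and the first two stages of your outline for $(ii)\Rightarrow(iii)$ track the proof of Proposition~\ref{Ch6:prop:CMidentifiestangle} accurately. But the final step is wrong. Once $D$ has been flyped to expose the fractional tangle $T$ and its slope $\frac{q-r}{r}$ has been computed via Proposition~\ref{Ch4:prop:tangledetection} and Lemma~\ref{Ch3:lem:contfractransform}, the paper does \emph{not} induct on the rank of $L_F$ or on the continued-fraction depth of $q$. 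Proposition~\ref{Ch6:prop:CMidentifiestangle} simply observes that, because $v$, $w$, $\mu_1,\dots,\mu_m$ are the only regions of $\widetilde D$ with non-zero fractional part, deleting $\mu_1,\dots,\mu_m$ from $V_{\widetilde D}$ and replacing $v,w$ by $v_I+e_1$, $w_I - e_1$ produces an explicit isomorphism from $\Lambda_{D'}$ to the half-integer changemaker lattice $\langle w_0, e_1-e_0\rangle^\bot \subseteq \Z^{t+2}$ with $c$ the marked crossing; Theorem~\ref{Ch5:thm:markedmeansunknotting}, already proved in full for half-integer changemaker lattices, then yields directly that $c$ is an unknotting crossing of the required sign relative to $\sigma(D')$.

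Your proposed recursion on $q$ via Lemma~\ref{Ch5:lem:inductstep} would not get off the ground: that lemma implements untongue/untwirl reductions \emph{inside} the half-integer setting and does not encode ``peeling off'' one term of the continued fraction for $p/q$. Nor is the relevant induction parametrised by $q$---Theorem~\ref{Ch5:thm:markedmeansunknotting} is proved by induction on $\rk(\Lambda_D)$ with base case the trefoil, not by a base case at $q=2$. As a minor point, in your argument for $(iii)\Rightarrow(i)$ you wrote the tangle slope as $r/(q-r)$; the theorem statement has $(q-r)/r$, and this is the form Proposition~\ref{Ch4:prop:3implies1} wants: it gives $p = qm+(q-r)=q(m+1)-r$, i.e.\ $p/q=n-r/q$ with $n=m+1$, so no rewriting is needed, and the reciprocal you wrote would produce $p/q = m + r/q$, which is not of the required form.
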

The convention we are using for the slope of a rational tangle in an alternating diagram is set out in Section~\ref{Ch4:sec:tanglesindiagrams}. The condition that $p>q$ in Theorem~\ref{Ch6:thm:rationalsurgery} is necessary for $(iii)$ to hold. However, when $|p/q|\leq 1$ the situation is even simpler. The following is a straightforward consequence of known bounds on $L$-space surgeries.
\begin{prop}\label{Ch6:prop:smallsurgery}
If $S^3_{p/q}(\kappa)$ is an alternating surgery for some $|p/q|< 1$, then $\kappa$ is the unknot. In particular, $\Sigma(L)$ is a lens space and $L$ is a 2-bridge link.
\end{prop}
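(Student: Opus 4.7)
The plan is to exploit that $\Sigma(L)$ is an $L$-space bounding a sharp $4$-manifold (Theorem~\ref{Ch4:thm:altboundssharp}) together with the slope bound $p/q > 2\nu^{+}(\kappa)$ from Lemma~\ref{Ch1:lem:wigiveCMlattice}.

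First I would arrange $p/q > 0$ by mirroring. Since $L$ is a non-split alternating link, $\det L > 0$, so $\Sigma(L)$ is a rational homology sphere and in particular $p \neq 0$. If $p/q < 0$, then $\Sigma(\overline{L}) = -\Sigma(L) = S^{3}_{-p/q}(\kappa')$, where $\overline{L}$ is again a non-split alternating link and $\kappa'$ denotes the mirror of $\kappa$. Thus, after replacing $(L,\kappa,p/q)$ by $(\overline{L},\kappa',-p/q)$ if necessary, I may assume $0 < p/q < 1$.

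Second, applying Theorem~\ref{Ch4:thm:altboundssharp} to the non-split alternating link $\overline{L}$ shows that $\Sigma(L) = -\Sigma(\overline{L})$ is an $L$-space bounding a sharp $4$-manifold. Since $S^{3}_{p/q}(\kappa)$ is therefore a positive-slope $L$-space surgery, $\kappa$ is an $L$-space knot, so the identification $\nu^{+}(\kappa) = g(\kappa)$ recalled in Section~\ref{intro:sec:knotfloer} applies. Lemma~\ref{Ch1:lem:wigiveCMlattice} then yields
\[ \frac{p}{q} > 2\nu^{+}(\kappa) = 2g(\kappa), \]
which combined with $p/q < 1$ forces $g(\kappa) = 0$. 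Hence $\kappa$ is the unknot.

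Finally, once $\kappa = U$, the manifold $S^{3}_{p/q}(U)$ is the lens space $L(p,q)$, so $\Sigma(L)$ is a lens space; by Schubert's classification of links with lens space branched double covers, $L$ is then a $2$-bridge link. There is no real obstacle in this argument — the substantive work has already been done in the preceding chapters — and the only mild bookkeeping is the mirroring step used to reduce to positive slope.
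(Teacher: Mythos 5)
Your argument is correct, but it takes a heavier route than the paper. The paper's proof simply notes that $\kappa$ is an $L$-space knot (since the double branched cover of a non-split alternating link is an $L$-space) and then invokes the standard slope bound $2g(\kappa)-1\leq|p/q|$ for positive $L$-space surgeries on $L$-space knots from \cite{ozsvath2011rationalsurgery} (already recalled in the $L$-space knot subsection), which immediately forces $g(\kappa)=0$ when $|p/q|<1$. You instead route through Theorem~\ref{Ch4:thm:altboundssharp} (to get a sharp bounding $4$-manifold) and Lemma~\ref{Ch1:lem:wigiveCMlattice} (to extract $p/q>2\nu^{+}(\kappa)$), then use $\nu^{+}=g$ for $L$-space knots. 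Both give the conclusion; your route actually yields the marginally stronger inequality $p/q>2g(\kappa)$, and has the aesthetic advantage of running entirely through the thesis's own machinery rather than citing the classical $L$-space surgery bound, but it is more work for the same end and does not materially simplify or strengthen the proposition. Your mirroring reduction to $p/q>0$ and the observation that $p\neq 0$ because $\det L>0$ are both things the paper leaves implicit, so making them explicit is a small improvement in rigour.
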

\begin{proof}
If $S^3_{p/q}(\kappa)$ is an alternating surgery, then $\kappa$ is an $L$-space knot \cite{ozsvath2005heegaard}. Therefore we must have have the bound \cite{ozsvath2011rationalsurgery}:
\[2g(\kappa)-1\leq |p/q|.\]
Therefore, $|p/q|<1$ implies that $\kappa$ is the unknot. The proposition follows since surgery on the unknot yields lens spaces, which are the branched double covers of 2-bridge links.
\end{proof}

\section{The fractional tangle}
The objective of this section is to show that if $D$ is a reduced alternating diagram with
\[\Lambda_D \cong \langle w_0, \dots, w_l\rangle^\bot \subseteq \mathbb{Z}^{t+s+1},\]
where $L=\langle w_0, \dots, w_l\rangle^\bot$ is a $p/q$-changemaker lattice with $p>q\geq 2$, then there is a sequence of flypes to obtain an alternating diagram which can be obtained by rational tangle replacement on an almost-alternating diagram of the unknot. We fix a choice of isomorphism,
\[\iota_D: \Lambda_D \longrightarrow L.\]
This gives a distinguished collection of vectors in $L$ given by the image of the vertices of $\Gamma_D$. We call this collection $V_D$, and, in an abuse of notation, we will fail to distinguish between a vertex of $\Gamma_D$ and the corresponding element in $V_D$.

\begin{rem}
Lemma~\ref{Ch3:lem:fracindecomp} implies that $\Lambda_D$ is indecomposable. Thus Lemma~\ref{Ch3:lem:2connectgraphlat} shows that $\Gamma_D$ is 2-connected and any $v \in V_D$ is irreducible. By Lemma~\ref{Ch3:lem:fracpartirred}, this shows for any $v\in V_D$ the fractional part $v_F\in L_F$ is also irreducible.
\end{rem}

It will be necessary for us to flype $D$ to obtain a new reduced alternating diagrams. As in the previous chapter, these flypes will be either an application of Lemma~\ref{Ch4:lem:flype1} or a flype as appearing in Figure~\ref{Ch4:fig:flype2}. In either case, if $D'$ is the diagram we obtain from such a flype, then we get a natural choice of $V_{D'}\subseteq L$ and hence an isomorphism
\[\iota_{D'}: \Lambda_{D'} \rightarrow L.\]
Whenever we flype, we will implicitly use these choices of isomorphism to speak of $V_{D'}$ without ambiguity.

\paragraph{}In order to prove Theorem~\ref{Ch6:thm:rationalsurgery}, we will study the vertices in $V_D$ which have a non-zero fractional part. This will allow us to obtain a diagram in which $L_F$ specifies a tangle. We will take $\mu_0, \dots, \mu_m$ to be the basis of $L_F$ as constructed in Section~\ref{Ch3:sec:fracparts}.
First we show that there is a sequence of flypes to a diagram in which $\mu_1,\dots , \mu_m$ are vertices.

\begin{lem}\label{Ch6:lem:flypetorational}
We may flype to obtain a diagram in which $\mu_1,\dots , \mu_m$ are vertices.
\end{lem}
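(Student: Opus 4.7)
The plan is to mirror the inductive flyping strategy of Lemma~\ref{Ch5:lem:flypetostd}: process the $\mu_i$ one at a time, applying Lemma~\ref{Ch4:lem:flype1} at each stage to produce a new reduced alternating diagram in which one further $\mu_i$ appears as a vertex of $\Gamma_D$. I would induct in decreasing $i$, assuming that $\mu_{i+1},\dots,\mu_m$ are already vertices and producing a flype which adjoins $\mu_i$ without disturbing them.

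For the inductive step, recall that $\mu_i = -e_{\beta_i} + e_{\beta_i+1} + \dots + e_{\beta_{i+1}}$ is irreducible by Lemma~\ref{Ch3:lem:generalirred}(ii), so Lemma~\ref{Ch3:lem:irreducible} writes $\mu_i = \sum_{x \in G_1} x$ for a connected subgraph $G_1 \subseteq \Gamma_D$. If $i < m$, the already-constructed vertex $\mu_{i+1}$ satisfies $\mu_{i+1}\cdot\mu_i = -1$, and since distinct vertices of the graph lattice pair non-positively, exactly one $x \in G_1$ satisfies $x \cdot \mu_{i+1} = -1$ while every other element of $G_1$ pairs trivially with $\mu_{i+1}$. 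I would then apply Lemma~\ref{Ch4:lem:flype1} to the vertex $x$ via the decomposition $x = \mu_i + (x - \mu_i)$, producing a diagram in which $\mu_i$ and $x - \mu_i$ are vertices. The base case $i = m$ is handled by instead locating some vertex $u \in V_D$ with $u \cdot \mu_m \neq 0$, which exists because $V_D$ spans $L$. Once the flype is performed, preservation of $\mu_{i+1},\dots,\mu_m$ as vertices follows because their pairings with both $\mu_i$ and $x - \mu_i$ are non-positive, so, as in Lemma~\ref{Ch5:lem:flypetostd}, the flype can be arranged to leave them untouched.

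The main obstacle is verifying the hypothesis $\mu_i \cdot (x - \mu_i) = -1$ of Lemma~\ref{Ch4:lem:flype1}, equivalently $x \cdot \mu_i = \norm{x} - 1$. In Lemma~\ref{Ch5:lem:flypetostd} this was automatic because $\norm{v_m} = 2$, but here $\norm{\mu_i} = \beta_{i+1} - \beta_i + 1$ can be larger, and irreducibility of $x$ only yields the one-sided bound $x \cdot \mu_i < \min\{\norm{x},\norm{\mu_i}\}$. I expect to pin down the precise value by combining (a) the constraint $x \cdot \mu_{i+1} = -1$, which forces the coefficient of $x$ on $e_{\beta_{i+1}}$, (b) the explicit expansion of $\mu_i$ as a signed sum of $e_j$, and (c) the 2-connectedness of $\Gamma_D$ together with the orthogonality conditions $x\cdot w_k = 0$ coming from the changemaker structure, which restrict how the rest of $G_1$ can attach to $x$. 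If a direct choice of $x$ fails to give the equality, an intermediate flype applied at a carefully chosen sub-sum of $\mu_i$ should strictly decrease $|G_1|$, so that the procedure terminates after finitely many iterations with the desired $\mu_1,\dots,\mu_m$ all realized as vertices.
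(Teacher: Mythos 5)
Your overall strategy — inductively flyping so that $\mu_c, \mu_{c+1}, \dots, \mu_m$ become vertices, taking $c$ maximal such that $\mu_c$ is not yet a vertex — is exactly the paper's. Moreover, the vertex $x \in G_1$ you single out (the unique one with $x \cdot \mu_{c+1} = -1$, once you note that $\mu_{c+1} \notin G_1$ because a vertex of $R$ pairs non-negatively with $[R]$) does in fact coincide with the vertex the paper flypes. But the gap you flag — verifying $\mu_c \cdot (x - \mu_c) = -1$ — is real, and neither your list (a)--(c) nor the appeal to ``intermediate flypes'' closes it. The ingredient you are missing is the fractional-part machinery from Section~\ref{Ch3:sec:fracparts}. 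Write $\mu_c = \sum_{y \in G_1} y$, pass to fractional parts, and observe (using that each $y_F$ is, by Lemma~\ref{Ch3:lem:fracpartirred} and Lemma~\ref{Ch3:lem:irredfracpart}, either $0$ or $\pm(\mu_{a_y} + \dots + \mu_{b_y})$) that the coefficient of $\mu_c$ in $\sum y_F = \mu_c$ is $1$, hence some $u \in G_1$ has $u_F = \mu_a + \dots + \mu_b$ with $a \le c \le b$. If $a = b$ then $u = u_I + \mu_c$ is a reducible vertex, impossible; if $b > c$ then $\mu_b$ is already a vertex but $u \cdot \mu_b = \norm{\mu_b} - 1 > 0$, also impossible. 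So $u_F = \mu_a + \dots + \mu_c$ with $a < c$, whence $(u - \mu_c) \cdot \mu_c = \mu_{c-1}\cdot\mu_c = -1$ and $u$ is your $x$. This is the clean way to pin down the pairing, and you should cite Lemma~\ref{Ch3:lem:generalirred}(ii) (not Lemma~\ref{Ch5:lem:flypetostd}'s norm-$2$ shortcut) for the irreducibility of $\mu_c$ itself.

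Two further issues. First, your base case $c = m$ is not actually handled: merely finding $u$ with $u \cdot \mu_m \neq 0$ does not give the flype hypothesis. The fractional-part argument above works uniformly for $c = m$ without any special treatment, so no separate base case is needed. Second, a small slip: the hypothesis $\mu_i \cdot (x - \mu_i) = -1$ is equivalent to $x \cdot \mu_i = \norm{\mu_i} - 1$, not $\norm{x} - 1$.
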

\begin{proof}
Let $1\leq c\leq m$ be maximal such that $\mu_c$ is not a vertex. By Lemma~\ref{Ch3:lem:fracpartirred}, $\mu_c$ is irreducible. Therefore, Lemma~\ref{Ch3:lem:irreducible} shows that there is $R\subseteq V_D$ such that $\mu_c=\sum_{x\in R} x$. In particular, this implies that there exists some $u \in V_D$ with $u_F=\mu_a+ \dots + \mu_b$, for $a\leq c \leq b$. As $\mu_c$ is assumed not to be a vertex, we have $a<b$. Therefore, we have
\[\mu_b \cdot u = \mu_b \cdot u_F = \norm{\mu_b} -1>0,\]
which implies that $\mu_b$ is not a vertex. This shows $u_F$ takes the form $u_F=\mu_a+\dots + \mu_c$, for $a<c$.

Since $(u-\mu_c)\cdot \mu_c=\mu_{c-1}\cdot \mu_c =-1$, we may apply a flype as given by Lemma~\ref{Ch4:lem:flype1}. This yields a new diagram $D'$ with $\mu_c\in V_{D'}$. Since $\mu_c \cdot \mu_k \leq 0$ for all $k\ne c$, it follows that $\mu_{c+1}, \dots, \mu_m$ are still vertices. Iterating this process shows that we may flype to obtain a diagram $\widetilde{D}$ with $\mu_1,\dots , \mu_m$ in $V_{\widetilde{D}}$.
\end{proof}
From now on we will assume that we have performed flypes as in Lemma~\ref{Ch6:lem:flypetorational}, and have a diagram $D$ with $\mu_1,\dots , \mu_m\in V_D$.
\begin{lem}\label{Ch6:lem:markersunique}
There is a unique vertex $v$ with $v\cdot e_0>0$ and this has $v_F=\mu_0$. There is a unique vertex $w$ with $w\cdot e_0<0$ and this vertex has fractional part $w_F = -(\mu_0 + \dots + \mu_m)$.
\end{lem}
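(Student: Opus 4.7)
The plan is to combine the classification of irreducible vectors in $L_F$ from Section~\ref{Ch3:sec:fracparts} with the elementary graph inequality $u\cdot u'\leq 0$ for distinct vertices $u,u'\in V_D$ in the 2-connected graph $\Gamma_D$.

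First I would record the consequence $x\cdot e_0=x_F\cdot e_0$ of the decomposition formula \eqref{Ch3:eq:splittingproduct}. By Lemma~\ref{Ch3:lem:fracpartirred} each vertex $v\in V_D$ has $v_F$ irreducible in $L_F$, and Lemma~\ref{Ch3:lem:irredfracpart} forces $v_F\in\{0\}\cup\{\pm(\mu_a+\cdots+\mu_b):0\leq a\leq b\leq m\}$. Since $\mu_0\cdot e_0=1$ while $\mu_i\cdot e_0=0$ for $i\geq 1$, this immediately gives $v\cdot e_0\in\{-1,0,1\}$, and $v\cdot e_0=\pm 1$ forces $a=0$ and the obvious sign.

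Next I would pin down the upper endpoint $b$ using that $\mu_1,\dots,\mu_m\in V_D$ by Lemma~\ref{Ch6:lem:flypetorational}. Suppose $v\cdot e_0=1$ and $v_F=\mu_0+\cdots+\mu_b$ with $b\geq 1$. Since $\{\mu_0,\dots,\mu_m\}$ is a basis of $L_F$, $v_F\neq\mu_b=(\mu_b)_F$, so $v$ and $\mu_b$ are distinct vertices. Using $(\mu_b)_I=0$, $(\mu_b)_F=\mu_b$ and $\mu_b\cdot e_0=0$, formula \eqref{Ch3:eq:splittingproduct} yields
\[v\cdot\mu_b=v_F\cdot\mu_b=(\mu_{b-1}+\mu_b)\cdot\mu_b=\norm{\mu_b}-1\geq 1,\]
where the last inequality uses $\norm{\mu_b}=\deg_{\Gamma_D}(\mu_b)\geq 2$, which holds because $L$ is indecomposable (Lemma~\ref{Ch3:lem:fracindecomp}) and therefore $\Gamma_D$ is 2-connected (Lemma~\ref{Ch3:lem:2connectgraphlat}). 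This contradicts $v\cdot\mu_b\leq 0$, forcing $b=0$ and $v_F=\mu_0$. The symmetric calculation of $w\cdot\mu_{b+1}$ when $w_F=-(\mu_0+\cdots+\mu_b)$ with $b<m$ rules that case out and gives $w_F=-(\mu_0+\cdots+\mu_m)$. For existence, one checks directly that $\mu_0-f_1\in L$: orthogonality to $w_0$ follows from $\sigma_1=1$, and $\mu_0\cdot w_k=0$ for $k\geq 1$ is a short telescoping calculation using the interlacing of the indices $\alpha_i$ and $\beta_j$. Since $(\mu_0-f_1)\cdot e_0=1$ and $V_D$ spans $L$, some vertex has non-zero $e_0$-coefficient, and $\sum_{v\in V_D}v=0$ then forces the two sets $\{v:v\cdot e_0=1\}$ and $\{w:w\cdot e_0=-1\}$ to have equal, hence non-zero, sizes.

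The main technical obstacle is uniqueness. Given distinct vertices $v_1,v_2$ with $v_i\cdot e_0=1$ and $v_{iF}=\mu_0$, I would apply Lemma~\ref{Ch3:lem:usefulbound} to the sum of vertices $X=v_1+v_2$. The relation $X\cdot w_0=0$ together with $X\cdot e_0=2$ forces some index $g$ with $X\cdot f_g\leq-1$; choose $g$ minimal, so $X\cdot f_i\geq 0$ for $i<g$. Using the changemaker condition to write $\sigma_g-1=\sum_{i\in B}\sigma_i$ with $B\subseteq\{1,\dots,g-1\}$, the vector $z=-f_g+\mu_0+\sum_{i\in B}f_i$ lies in $L$ and a direct computation, using $X\cdot\mu_0=2\norm{\mu_0}$, gives
\[(X-z)\cdot z=-X\cdot f_g+\norm{\mu_0}-1+\sum_{i\in B}(X\cdot f_i-1)\geq \norm{\mu_0}-|B|.\]
For $|B|\leq 1$ this is at least $1>0$, contradicting Lemma~\ref{Ch3:lem:usefulbound}; the remaining possibilities, which occur when $\sigma_g$ is tight or nearly tight, are handled by replacing $z$ with a test vector built from the tight standard-basis element of the changemaker lattice (as in Lemma~\ref{Ch3:lem:tightstdvecirred}) and carrying out an analogous estimate. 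Uniqueness of the negative marker $w$ then follows immediately from equality of the two marker-set sizes.
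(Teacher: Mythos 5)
Your classification of the fractional parts $v_F$ and $w_F$ is correct and is exactly the paper's argument: use Lemma~\ref{Ch3:lem:fracpartirred} and Lemma~\ref{Ch3:lem:irredfracpart} to restrict $v_F$ to $\pm(\mu_a+\cdots+\mu_b)$, read off $v\cdot e_0\in\{-1,0,1\}$, and then use $\mu_1,\dots,\mu_m\in V_D$ together with $v\cdot\mu_b\leq 0$ for distinct vertices to force $b=0$ (resp.\ $c=m$). Your existence observation is a harmless addition.

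The uniqueness step, however, has a genuine gap. You choose $g$ minimal with $X\cdot f_g\leq -1$, which only gives $X\cdot f_i\geq 0$ for $i<g$; each summand $X\cdot f_i - 1$ with $i\in B$ can therefore be $-1$, and your estimate degrades to $(X-z)\cdot z\geq\norm{\mu_0}-|B|$. Your attempt to dismiss the case $|B|\geq 2$ as ``$\sigma_g$ tight or nearly tight'' is not correct: for instance $\sigma=(1,1,1,3,\dots)$ gives $\sigma_4-1=2=\sigma_1+\sigma_2$, forcing $|B|=2$ in every admissible decomposition while $\sigma_4=3<4$ is not tight. The paper sidesteps all of this by taking $g$ minimal such that $U\cdot f_g\leq 0$ (not $\leq -1$); such a $g$ still exists because $0=U\cdot w_0 = 2+\sum_i\sigma_i(U\cdot f_i)$ forces some $U\cdot f_i<0$, and now $U\cdot f_i\geq 1$ for all $i<g$, so every summand $U\cdot f_i-1$ with $i\in A$ is $\geq 0$ and
\[
(U-z)\cdot z = -U\cdot f_g - 1 + \sum_{i\in A}(U\cdot f_i - 1) + \norm{\mu_0} \geq \norm{\mu_0}-1 \geq 1 > 0
\]
contradicts Lemma~\ref{Ch3:lem:usefulbound} with no case split whatsoever. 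Replace your threshold $\leq -1$ by $\leq 0$ and the argument closes exactly as in the paper.
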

\begin{proof}
First we will show that any vertex $v\in V_D$ with $v\cdot e_0>0$ necessarily has $v_F=\mu_0$. Since $v_F$ is irreducible, it is of the form $v_F=\mu_0+\dots + \mu_b$. As $\mu_b \cdot v >0$, we have $\mu_b \not\in V_D$. This shows that $v_F=\mu_0$. Now suppose that $w$ is any vertex with $w\cdot e_0 <0$. By irreducibility of $w_F$, it follows that $w_F= -(\mu_0 + \dots + \mu_c)$. If $c<m$, then we have $\mu_{c+1}\in V_D$ and we get a contradiction from the fact that $w\cdot \mu_{c+1}=1>0$. This proves the statement about the fractional part of $w$.

Now we show the uniqueness of $v$. Since $\sum_{x \in V_D} x\cdot e_0 =0$, this is also implies the uniqueness of $w$. Suppose there are vertices $u_1$ and $u_2$, with $(u_1)_{F}=(u_2)_{F} =\mu_0$. To show they cannot be distinct, we consider the vector $U=u_1 + u_2$, which satisfies $U_F=2\mu_0$. Now let $g$ be minimal such that $U\cdot f_g \leq 0$. By Proposition~\ref{Ch3:prop:CMcondition}, there is $A\subseteq \{1, \dots , g-1 \}$, such that $\sigma_g-1 = \sum_{i \in A}\sigma_i$. Hence we may take $z=-f_g + \mu_0 + \sum_{i \in A} f_i \in L$.
 Now consider the inequality,
 \begin{align*}
 (U-z)\cdot z &= -U\cdot f_g -1 + \sum_{i \in A}(U\cdot f_i -1) + \norm{\mu_0} \\
    &\geq -1 + \norm{\mu_0}>0.
 \end{align*}
 Since this exceeds the bound in Lemma~\ref{Ch3:lem:usefulbound}, it follows that $U$ is not the sum of distinct vertices. In particular, this implies $u_1=u_2$, which gives the required uniqueness statement.
 \end{proof}

Now let $v$ and $w$ be the vertices as determined by Lemma~\ref{Ch6:lem:markersunique}. We wish to determine the number of edges between $v$ and $w$. These edges, along with the vertices $\mu_1,\dots, \mu_m$, will provide the rational tangle we are seeking. First we need some bounds on vectors in $L$.

\begin{lem}\label{Ch6:lem:coefbound}
If $x = \sum_{u\in R}u \in L$ for some $R\subseteq V_D$, then $|x\cdot f_1|\leq 2$. Furthermore, if $x$ is irreducible with $x\cdot e_0\ne 0$, then $|x\cdot f_1|\leq 1$.
\end{lem}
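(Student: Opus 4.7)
My approach is to adapt the proofs of Lemma~\ref{Ch5:lem:coefbound}(ii) and (iii) from the half-integer case, replacing the half-integer vector $e_0+e_1$ by $\mu_0=e_0+e_1+\dots+e_{\beta_1}$ wherever the $e$-part is needed.

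For part (ii), I would assume without loss of generality that $x\cdot f_1\geq 0$ and pick $g>1$ minimal with $x\cdot f_g\leq 0$ (such a $g$ exists since $x\cdot w_0=0$ forces the $f_i$-coordinates of $x$ to eventually become non-positive). By Proposition~\ref{Ch3:prop:CMcondition}, write $\sigma_g-1=\sum_{i\in A}\sigma_i$ for some $A\subseteq\{1,\dots,g-1\}$ and set $z=-f_g+f_1+\sum_{i\in A}f_i$. Then $z\in L$: the identity $z\cdot w_0=0$ is precisely the changemaker relation, and $z\cdot w_k=0$ for $k\geq 1$ is automatic since $z$ lies in $\langle f_1,\dots,f_t\rangle$, which is orthogonal to every $w_k$ ($k\geq 1$). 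Setting $\varepsilon=z\cdot f_1\in\{1,2\}$ (with $\varepsilon=2$ iff $1\in A$) and applying Lemma~\ref{Ch3:lem:usefulbound} to the sum of vertices $x$, together with the bounds $x\cdot f_g\leq 0$ and $x\cdot f_i\geq 1$ for $1\leq i<g$, yields $0\geq(x-z)\cdot z\geq\varepsilon(x\cdot f_1-\varepsilon)-1$, so $x\cdot f_1\leq\varepsilon+\varepsilon^{-1}\leq 5/2$, i.e.\ $x\cdot f_1\leq 2$.

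For part (iii), I would first argue $|x\cdot e_0|\leq 1$. Since $x=\sum_{u\in R}u$, Lemma~\ref{Ch6:lem:markersunique} ensures that only the unique vertices $v$ and $w$ contribute nontrivially to the $e_0$-coordinate, and the identity $v_F=\mu_0$ combined with the decomposition $v=v_I+v_F-(v\cdot e_0)e_0$ pins down $v\cdot e_0=1$ (and symmetrically $w\cdot e_0=-1$). Assume without loss of generality $x\cdot e_0=1$, and let $g\geq 1$ be minimal with $x\cdot f_g\leq 0$. For $g=1$, take $z=-f_1+\mu_0$, which is the standard basis element $\nu_1$ and hence lies in $L$. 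For $g>1$, write $\sigma_g-1=\sum_{i\in A}\sigma_i$ and set $z=-f_g+\mu_0+\sum_{i\in A}f_i$ when $1\in A$, or $z=-f_g+f_1+\sum_{i\in A}f_i$ when $1\notin A$; a direct check using the changemaker relation and $\mu_0\in L_F$ shows $z\in L$ and $z\cdot f_1=1$ in every case. Irreducibility of $x$ then forces either $x=z$ (giving $x\cdot f_1=1$) or $(x-z)\cdot z\leq -1$, from which the same sort of expansion as in (ii) yields $x\cdot f_1\leq x\cdot f_g+1\leq 1$. The lower bound $x\cdot f_1\geq -1$ follows by applying the symmetric argument to the irreducible sum of vertices $-x=\sum_{u\in V_D\setminus R}u$, which has $(-x)\cdot e_0=-1$.

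The main technical obstacle will be evaluating $(x-z)\cdot z$ when $z$ contains $\mu_0$, because $x\cdot\mu_0$ is not automatically a clean multiple of $x\cdot e_0$: in the half-integer case one had $\mu_0=e_0+e_1$ and the relation $x\cdot e_0=x\cdot e_1$, giving $x\cdot\mu_0=2x\cdot e_0$, whereas in the general non-integer case $\mu_0=e_0+e_1+\dots+e_{\beta_1}$ and the orthogonality relations $x\cdot w_k=0$ provide only more intricate linear constraints on the $x\cdot e_j$. The needed bound will come from expanding $x\cdot\mu_0=\sum_{j=0}^{\beta_1}x\cdot e_j$ and controlling the signs of the extra $x\cdot e_j$ contributions by means of the structure of $L_F$ developed in Section~\ref{Ch3:sec:fracparts} (in particular the characterization of irreducibles in $L_F$), so that the analogue of the half-integer estimate $(x-z)\cdot z\geq x\cdot f_1-x\cdot f_g-2$ still goes through.
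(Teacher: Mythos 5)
Your part (ii) argument (the bound $|x\cdot f_1|\leq 2$) is essentially identical to the paper's proof, and your observation that it suffices to set $z=-f_g+f_1+\sum_{i\in A}f_i\in\langle f_1,\dots,f_t\rangle$ is correct: such a $z$ lies in $L$ automatically, and Lemma~\ref{Ch3:lem:usefulbound} gives the bound.

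For part (iii), however, there is a genuine gap, and you have correctly identified where it is without resolving it. You propose replacing $e_0+e_1$ from the half-integer argument by $\mu_0$, taking $z=-f_g+\mu_0+\sum_{i\in A}f_i$ when $1\in A$. The paper instead takes $z=-f_g+x_F+\sum_{i\in A}f_i$, where $x_F$ is the \emph{fractional part of $x$ itself}. The difference matters. By Lemma~\ref{Ch3:lem:fracpartirred} and Lemma~\ref{Ch3:lem:irredfracpart}, an irreducible $x$ with $x\cdot e_0>0$ has $x_F=\mu_0+\dots+\mu_b$ for some $b\geq 0$, and $b\geq 1$ genuinely occurs (for instance $x=v+\mu_1$ is a plausible irreducible sum of vertices). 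Using the splitting formula \eqref{Ch3:eq:splittingproduct}, one finds
\[
(x-z)\cdot z=(x_I-z_I)\cdot z_I + (x_F-z_F)\cdot z_F.
\]
With the paper's choice, $z_F=x_F$, so the second term vanishes and the first gives $(x-z)\cdot z\geq x\cdot f_1 - x\cdot f_g - 2$, whence irreducibility (either $x=z$ or $(x-z)\cdot z\leq -1$) yields $x\cdot f_1\leq 1$. With your choice, $z_F=\mu_0$, and when $b\geq 1$ we have $(x_F-\mu_0)\cdot\mu_0=x_F\cdot\mu_0-\norm{\mu_0}=-1$, since $\mu_0\cdot\mu_1=-1$. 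This extra $-1$ loosens the estimate to $(x-z)\cdot z\geq x\cdot f_1 - 3$, which only rules out $x\cdot f_1\geq 3$. Your argument therefore cannot exclude $x\cdot f_1=2$, and your concluding paragraph about ``controlling the signs of the extra $x\cdot e_j$ contributions'' is pointing at exactly this loss without supplying a fix. The fix is to build $z$ from $x_F$ rather than $\mu_0$: one checks $z=-f_g+x_F+\sum_{i\in A}f_i\in L$ using $x_F\cdot e_0=1$ and $x_F\in L_F=\langle w_1,\dots,w_l\rangle^\perp\cap\langle e_0,\dots,e_s\rangle$, and then the cross-term is zero by construction. (Your preliminary reduction $x\cdot e_0=1$ via Lemma~\ref{Ch6:lem:markersunique} is fine, though the paper gets it more directly from the irreducibility of $x_F$.)
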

\begin{proof}
We may assume $x\cdot f_1\geq 0$. Let $g>1$ be minimal such that $x\cdot f_g\leq 0$. By Proposition~\ref{Ch3:prop:CMcondition}, we may write $\sigma_g -1 = \sum_{i \in A} \sigma_i$ for some $A \subseteq \{1, \dots , g-1\}$. So we have $z=-f_g + f_1 + \sum_{i \in A} f_i\in L$. Let $z\cdot f_1 =\varepsilon$ and observe that $\varepsilon \in \{1,2\}$. By Lemma~\ref{Ch3:lem:usefulbound}, $(x-z)\cdot z\leq 0$. This gives
\begin{align*}
0\geq (x-z)\cdot z&= -x\cdot f_g -1 + \sum_{i\in A\setminus \{1\}}(x\cdot f_i -1) + \varepsilon(x\cdot f_1-\varepsilon)\\
    &\geq -1 + \varepsilon(x\cdot f_1-\varepsilon).
\end{align*}
Therefore, $x\cdot f_1\leq \varepsilon + \frac{1}{\varepsilon}\leq \frac{5}{2}$, which gives the required bound.

\paragraph{}Suppose now that $x$ is irreducible, and $x\cdot e_0 > 0$. By Lemma~\ref{Ch3:lem:irredfracpart} and Lemma~\ref{Ch3:lem:fracpartirred}, this implies that $x\cdot e_0=1$. Let $g>0$ be minimal such that $x\cdot f_g\leq 0$. If $g=1$, then let $z=-f_1 + x_F$. By irreducibility of $x$, we have
\[(x-z)\cdot z= -(x\cdot f_1+1)\leq 0,\]
which implies $-1\leq x\cdot f_1 \leq 0$, as required.
Suppose $g>1$, we may write $\sigma_g -1 = \sum_{i \in A} \sigma_i$ for some $A \subseteq \{1, \dots , g-1\}$. If $1\in A$, set $z=-f_g + x_F + \sum_{i \in A} f_i$. If $1\notin A$, set $z=-f_g + f_{1} + \sum_{i \in A} f_i$. In either eventuality, we get $z\in L$ with $z\cdot f_1=1$. By irreducibility, it follows that either $x=z$ or
\[-1\geq (x-z)\cdot z \geq x\cdot f_1 - 1-(x\cdot f_g+1)\geq x \cdot f_1 -2,\]
which implies $x \cdot f_1 = 1$. Thus we get the necessary bounds on $x\cdot f_1$ in all cases.
\end{proof}

\begin{lem}\label{Ch6:lem:markerbounds} The vertices $v$ and $w$ satisfy the inequality
$v\cdot w \leq v_F \cdot w_F + 1$.
\end{lem}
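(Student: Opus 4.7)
The plan is to translate the inequality into a statement about the integer parts using the decomposition from Section~\ref{Ch3:sec:fracparts}. By Lemma~\ref{Ch6:lem:markersunique}, $v_F = \mu_0$ and $w_F = -(\mu_0+\mu_1+\dots+\mu_m)$, so $v\cdot e_0 = \mu_0\cdot e_0 = 1$ and $w\cdot e_0 = -1$. Substituting into the splitting formula \eqref{Ch3:eq:splittingproduct} gives
\[v\cdot w - v_F\cdot w_F - 1 = v_I\cdot w_I + (v\cdot e_0)(w\cdot e_0) + 1 - 1 = v_I\cdot w_I,\]
so the inequality $v\cdot w \leq v_F\cdot w_F + 1$ is equivalent to $v_I\cdot w_I \leq 0$. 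Since $v_I, w_I \in L_I \subseteq \langle e_0, f_1,\dots,f_t\rangle$ with $v_I\cdot e_0 = 1$, $w_I\cdot e_0 = -1$, and $v_I\cdot f_k = v\cdot f_k$, $w_I\cdot f_k = w\cdot f_k$, this in turn is equivalent to the purely combinatorial bound $\sum_{k=1}^{t}(v\cdot f_k)(w\cdot f_k)\leq 1$.

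Next I would produce this combinatorial bound by applying Lemma~\ref{Ch3:lem:usefulbound} to the sum of vertices $[R] = v+w$ against the test vector $z = w + \mu_0 - f_1$. The vector $z$ lies in $L$ because $(\mu_0 - f_1)\cdot w_0 = \mu_0\cdot w_0 - \sigma_1 = 1 - 1 = 0$, while $\mu_0\in L_F$ and $w_i$ (for $i\geq 1$) has no $f_k$-components, so $(\mu_0 - f_1)\cdot w_i = 0$. Expanding $(v+w-z)\cdot z \leq 0$ using $v\cdot\mu_0 = \norm{\mu_0}$, $w\cdot\mu_0 = 1-\norm{\mu_0}$, and $\mu_0\cdot f_1 = 0$ yields the preliminary inequality
\[v\cdot w \;\leq\; 2 - \norm{\mu_0} + (v\cdot f_1) - (w\cdot f_1),\]
which differs from the target by the residual $(v\cdot f_1) - (w\cdot f_1)$.

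The main obstacle is absorbing this residual. My plan is to sharpen the bound by exploiting the irreducibility of $v$ and $w$ individually (Lemma~\ref{Ch3:lem:irreducible}), which gives $(v-z')\cdot z' \leq -1$ whenever $z'\in L\setminus\{0,v\}$, and analogously for $w$. Testing against vectors like $v - \mu_0 + f_k - \sum_{i\in A}f_i \in L$ (where $A\subseteq\{1,\dots,k-1\}$ is any subset with $\sum_{i\in A}\sigma_i = \sigma_k - 1$, which exists by the changemaker condition) and their analogues for $w$ yields pointwise constraints relating $v\cdot f_k$ (resp.\ $w\cdot f_k$) to the values of $v\cdot f_i$ (resp.\ $w\cdot f_i$) for $i<k$. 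Iterating these constraints along the changemaker sequence, with separate treatment for tight and slack coefficients analogous to Section~\ref{sec:singlemarkedc} of the half-integer case, should allow one to sum up to the required inequality $\sum_k (v\cdot f_k)(w\cdot f_k)\leq 1$.

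The most delicate step will be handling the degenerate situations where the irreducibility inequality is an equality, i.e.\ where $v$ (or $w$) coincides with one of these explicitly constructed elements of $L$. In those cases $v$ or $w$ is forced into a specific form such as $\mu_0 - f_1$ or $-\mu_0 - S + f_k + \dots$, and the desired bound can be checked by direct computation of $v\cdot w$. I would expect the bookkeeping required to glue the pointwise constraints into a global inequality to be the main technical hurdle, as the residual $(v\cdot f_1) - (w\cdot f_1)$ can a priori be as large as $2$, while we must shave it down to zero.
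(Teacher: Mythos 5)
Your translation to showing $v_I\cdot w_I\leq 0$ is correct, as is the test-vector bound from Lemma~\ref{Ch3:lem:usefulbound} with $z=w+\mu_0-f_1$: one gets $v\cdot w\leq 2-\norm{\mu_0}+v\cdot f_1-w\cdot f_1$, i.e.\ $v_I\cdot w_I\leq v\cdot f_1-w\cdot f_1$. The genuine gap is the step you call ``absorbing this residual,'' which you only sketch. The sketch you give --- iterating pointwise irreducibility constraints up the changemaker sequence with a tight/slack case split as in Chapter~\ref{chap:uk=1} --- is heavier than what is needed and, more to the point, does not identify the ingredient that actually closes the gap.

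The paper's proof is shorter and structurally different. It first shows (by writing $z=-f_1+\mu_0$ as a sum of vertices containing $v$ but not $w$, and then applying Lemma~\ref{Ch6:lem:coefbound} to the new sum of vertices $z-v+w$) that $v\cdot f_1\leq 0$ or $w\cdot f_1\geq 0$ must hold. Assuming without loss of generality $v\cdot f_1\leq 0$, either $v\cdot f_1=-1$, which by irreducibility forces $v=z$ and a direct computation finishes, or $v\cdot f_1=0$, which gives $(v-z)\cdot z=-1$ and brings Lemma~\ref{Ch3:lem:cutedge} into play: that lemma supplies a distinguished vertex $u$ with $u\cdot z=1$ and $u+w$ irreducible, and this irreducibility combined with Lemma~\ref{Ch6:lem:coefbound} forces $w\cdot f_1\geq 0$ whenever $(v-z)\cdot w=1$. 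The cut-edge lemma is precisely what removes the last unit of residual. Notice that even after combining your preliminary bound with the dichotomy you can still land on $v\cdot f_1-w\cdot f_1=1$ (e.g.\ $v\cdot f_1=0$, $w\cdot f_1=-1$), giving only $v_I\cdot w_I\leq 1$ rather than the required $\leq 0$, and nothing in your sketch explicitly accounts for this case; you would need to rediscover the cut-edge argument, or something equivalent, to finish.
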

\begin{proof}
By Lemma~\ref{Ch3:lem:generalirred}, the vector $z=-f_1 + \mu_0$ is irreducible. So, by Lemma~\ref{Ch3:lem:irreducible}, there is $R\subseteq V_D$, such that $z=\sum_{x \in R}x$. Since $z\cdot e_0=1$, it follows that $w\notin R$ and $v\in R$. Thus $z-v+w$ is also a sum of vertices. Applying Lemma~\ref{Ch6:lem:coefbound} gives
\[(z-v+w)\cdot f_1 = -1 -v\cdot f_1 + w\cdot f_1 \geq -2.\]
This implies $v\cdot f_1 \leq 0$ or $w \cdot f_1 \geq 0$. We will now prove the lemma for the case $v\cdot f_1 \leq 0$. The argument can easily be modified to treat the case $w \cdot f_1 \geq 0$.

\paragraph{}Suppose that $v\cdot f_1 \leq 0$ holds. By Lemma~\ref{Ch6:lem:coefbound}, this implies $v\cdot f_1 \in \{0,-1\}$. As before, we consider $z=-f_1 + \mu_0$. If $v\cdot f_1 = -1$, then the irreducibility of $v$ implies $v=z$ and we have
\[v\cdot w=-w\cdot f_1+w_F\cdot \mu_0\leq 1 + w_F\cdot \mu_0=1+w_F \cdot v_F,\]
which is the required bound.
If $v \cdot f_1 =0$, then $(v-z)\cdot z =-1$. Since
\[v\cdot w = (v-z)\cdot w + z\cdot w = (v-z)\cdot w - w\cdot f_1 + v_F \cdot w_F,\]
we are required to show $(v-z)\cdot w - w\cdot f_1 \leq 1$. However, we have $(v-z)\cdot w \leq 1$, by Lemma~\ref{Ch3:lem:cutedge} and $w\cdot f_1\geq -1$, by Lemma~\ref{Ch6:lem:coefbound}. Thus it suffices to show that $w\cdot (v-z)\leq 0$ or $w\cdot f_1\geq 0$. Suppose $(v-z)\cdot w = 1$. By Lemma~\ref{Ch3:lem:cutedge}, this implies the existence of a vertex $u\notin \{w,v\}$ such that $u\cdot z=1$ and $u+w$ is irreducible. The condition $u\cdot z =1$ implies that $u\notin \{\mu_1,\dots , \mu_m\}$, so such a $u$ satisfies $u_F=0$. Therefore we are required to have $u\cdot f_1=-1$. Using the irreducibility of $u+w$, Lemma~\ref{Ch6:lem:coefbound} implies
\[(u+w)\cdot f_1 = -1 + w\cdot f_1 \geq -1.\]
This implies that $w \cdot f_1 \geq 0$, which completes the proof for this case.
\paragraph{} If $w \cdot f_1 \geq 0$, then one can consider $z=f_1-(\mu_0 + \dots + \mu_m)$, which allows one to carry out an almost identical argument.
\end{proof}

The vertices $\mu_1, \dots , \mu_m$ form a path between $v$ and $w$ in $\Gamma_D$ and there are at least $|w_F \cdot v_F| -1$ edges between $v$ and $w$. Performing some flypes of the form given by Figure~\ref{Ch4:fig:flype2} if necessary, we may assume that there is a disk in the plane whose boundary intersects only the regions $v$ and $w$ and its interior contains precisely $|w_F \cdot v_F| -1$ of the crossings between $v$ and $w$, the regions $\mu_1, \dots, \mu_m$, and all crossings incident to them. In particular, this means that $v, w, \mu_1, \dots , \mu_m$ are the regions of a subtangle in $D$, and the white graph $\Gamma_D$ in the interior of the disk is as in Figure~\ref{Ch6:fig:fractangle}. We will call this tangle the {\em fractional tangle}.
\begin{figure}[h]
  \centering
  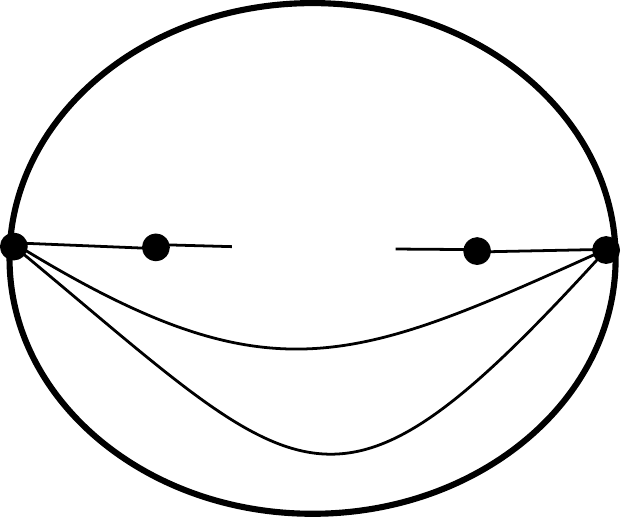
 \caption{The white graph of the fractional tangle.}
 \label{Ch6:fig:fractangle}
\end{figure}
\begin{lem}\label{Ch6:lem:fractionalslope}
The fractional tangle is a rational tangle of slope $\frac{q-r}{r}$.
\end{lem}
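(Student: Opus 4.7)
The plan is to apply Proposition~\ref{Ch4:prop:tangledetection} to the fractional tangle $T$ by reading off the combinatorial structure of its white graph $\Gamma_T$ from inner products in the changemaker lattice, and then to evaluate the resulting continued fraction using Lemma~\ref{Ch3:lem:contfractransform}. The hard part will be the edge-accounting in $\Gamma_T$; once that is done, the slope computation is essentially formal.

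First I would determine every edge of $\Gamma_T$ incident to $\mu_1, \dots, \mu_m$. For $i \geq 1$ the vector $\mu_i$ has no $f_j$ or $e_0$ component, so $\mu_i \cdot x = \mu_i \cdot x_F$ for every $x \in L$. Using $v_F = \mu_0$ and $w_F = -(\mu_0 + \mu_1 + \dots + \mu_m)$ from Lemma~\ref{Ch6:lem:markersunique}, together with $\mu_i \cdot \mu_{i\pm 1} = -1$ and $\mu_i \cdot \mu_j = 0$ for $|i-j|>1$, direct computation gives $v \cdot \mu_1 = -1$; $\mu_i \cdot \mu_{i+1} = -1$ for $1 \leq i < m$; $\mu_i \cdot w = 2 - \norm{\mu_i}$ for $1 \leq i < m$; and $\mu_m \cdot w = 1 - \norm{\mu_m}$. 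The crucial observation is that these pairings already account for the full $\norm{\mu_i}$ units of self-pairing of each $\mu_i$, so $\mu_i$ has no other neighbour in $\Gamma_D$; since the tangle disk was chosen to contain every crossing incident to each $\mu_i$, all of these edges lie in $\Gamma_T$. Together with the $|v_F \cdot w_F| - 1 = \norm{\mu_0} - 2$ edges between $v$ and $w$ placed inside the disk by construction, this exhibits $\Gamma_T$ as the graph on vertices $v_0 = v,\ v_1 = \mu_1,\ \dots,\ v_m = \mu_m,\ v_{m+1} = w$ with exactly one edge between each consecutive pair $v_i, v_{i+1}$ and every remaining edge incident to $v_{m+1} = w$, which is precisely the hypothesis of Proposition~\ref{Ch4:prop:tangledetection}.

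The degrees in $\Gamma_T$ are $b_0 = 1 + (\norm{\mu_0}-2) = \norm{\mu_0}-1$ and $b_i = \norm{\mu_i}$ for $1 \leq i \leq m$, so by Proposition~\ref{Ch4:prop:tangledetection} the tangle $T$ is rational with slope $p/q$ satisfying
\[
\frac{q}{p} \;=\; [\norm{\mu_0}-1,\ \norm{\mu_1},\ \dots,\ \norm{\mu_m}]^-.
\]
Lemma~\ref{Ch3:lem:contfractransform} gives $[\norm{\mu_0},\ \norm{\mu_1},\ \dots,\ \norm{\mu_m}]^- = q/(q-r)$; since decreasing the leading term of a negative continued fraction by $1$ decreases its value by $1$, we conclude
\[
\frac{q}{p} \;=\; \frac{q}{q-r} - 1 \;=\; \frac{r}{q-r},
\]
so the slope of $T$ is $p/q = (q-r)/r$, as claimed.
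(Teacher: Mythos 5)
Your proof is correct and follows the same route as the paper: apply Proposition~\ref{Ch4:prop:tangledetection} to read the slope off the white graph of the fractional tangle, then evaluate the resulting negative continued fraction with Lemma~\ref{Ch3:lem:contfractransform}. The only difference is that you verify the hypotheses of Proposition~\ref{Ch4:prop:tangledetection} explicitly by edge-accounting from the pairings $v\cdot\mu_1$, $\mu_i\cdot\mu_{i+1}$, $\mu_i\cdot w$ and by observing that the norm of each $\mu_i$ is thereby exhausted, whereas the paper treats this as immediate from the structure of the fractional tangle established just before the lemma; your extra checking is sound and a reasonable thing to spell out.
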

Here we are using the convention for slopes of rational tangles in alternating diagrams as described in Section~\ref{Ch4:sec:tanglesindiagrams}.
\begin{proof}
Observe that the fractional tangle satisfies the hypotheses of Proposition~\ref{Ch4:prop:tangledetection}. This shows that it is a rational tangle and its slope may be calculated by \eqref{Ch4:eq:tangleslope}. Observe that there are $\norm{v_F}-1=\norm{\mu_0}-1$ crossings incident to $v$ in the fractional tangle. Thus if the slope of the fractional tangle is $\frac{\alpha}{\beta}$, then \eqref{Ch4:eq:tangleslope} shows this is determined by the continued fraction
\begin{equation*}\label{Ch6:eq:betaalpha1}
\frac{\beta}{\alpha} =[\norm{\mu_0} -1, \norm{\mu_1}, \dots , \norm{\mu_m}]^-.
\end{equation*}
It follows from Lemma~\ref{Ch3:lem:contfractransform} that
\[\frac{\beta}{\alpha}= \frac{q}{q-r}-1=\frac{r}{q-r},\]
and hence that the slope of the fractional tangle is $\frac{\alpha}{\beta}=\frac{q-r}{r}$, as required.
\end{proof}

We are now in a position to prove the following proposition.
\begin{prop}\label{Ch6:prop:CMidentifiestangle}
Let $D$ be an alternating link diagram and suppose that $\Lambda_D$ is isomorphic to a $p/q$-changemaker lattice
\[L=\langle w_0, \dots, w_l\rangle^\bot \subseteq \Z^{t+s+1}\]
where $p/q$ can be written in the form $p/q=n-r/q$ for $p>q>r\geq 1$. Then there is a sequence of flypes to a diagram $\widetilde{D}$ which contains a rational tangle of slope $\frac{q-r}{r}$. Furthermore, if the fractional tangle is replaced by a single crossing $c$ to obtain a alternating diagram $\widetilde{D}'$, then $\Lambda_{\widetilde{D}'}$ is isomorphic to the half-integer changemaker lattice
\[\langle w_0, e_1-e_0 \rangle^\bot \subseteq \Z^{t+2}\]
 in such a way that $c$ is a marked crossing.
\end{prop}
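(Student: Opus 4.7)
The first half of the proposition largely assembles results already established in this chapter. Starting from the given diagram $D$, I would apply Lemma~\ref{Ch6:lem:flypetorational} to flype until $\mu_1, \ldots, \mu_m$ appear as vertices. Lemma~\ref{Ch6:lem:markersunique} then identifies unique marker vertices $v, w$ with $v_F = \mu_0$ and $w_F = -(\mu_0 + \cdots + \mu_m)$; Lemma~\ref{Ch6:lem:markerbounds} forces enough edges between $v$ and $w$, and the Figure~\ref{Ch4:fig:flype2}-type flypes described in the paragraph preceding Lemma~\ref{Ch6:lem:fractionalslope} isolate precisely $\norm{\mu_0} - 2$ of those edges, together with $\mu_1, \ldots, \mu_m$ and all their incident crossings, inside a single disk. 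Lemma~\ref{Ch6:lem:fractionalslope} identifies this subtangle as a rational tangle of slope $(q-r)/r$. Call the resulting diagram $\widetilde{D}$.

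For the second half, let $\widetilde{D}'$ be obtained from $\widetilde{D}$ by collapsing the fractional tangle to a single crossing $c$. Then $V_{\widetilde{D}'} = V_{\widetilde{D}} \setminus \{\mu_1, \ldots, \mu_m\}$, and $\Gamma_{\widetilde{D}'}$ differs from $\Gamma_{\widetilde{D}}$ only inside the disk, where it loses the $\norm{\mu_0} - 2$ internal $v$--$w$ edges and all edges incident to the $\mu_i$, and gains a single new $v$--$w$ edge from $c$. I would construct the candidate embedding $\iota_{\widetilde{D}'} \colon \Lambda_{\widetilde{D}'} \to \mathbb{Z}^{t+2} = \langle e_0, e_1, f_1, \ldots, f_t \rangle$ by
\[
\iota_{\widetilde{D}'}(v) = v_I + e_1, \quad \iota_{\widetilde{D}'}(w) = w_I - e_1, \quad \iota_{\widetilde{D}'}(x) = x \text{ for all other } x \in V_{\widetilde{D}'},
\]
where for such $x$ the identity $x_F = 0$ (forced by uniqueness in Lemma~\ref{Ch6:lem:markersunique}) places $x$ in $\langle e_0, f_1, \ldots, f_t \rangle \subseteq \mathbb{Z}^{t+2}$, so the inclusion makes sense.

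Three items need verifying. First, each image lies in $L' = \langle w_0, e_1 - e_0 \rangle^\bot$: this is immediate from $v_I, w_I \in L_I \subseteq \langle w_0 \rangle^\bot$ together with $v_I \cdot e_0 = 1$ and $w_I \cdot e_0 = -1$. Second, the images sum to zero, so that $\iota_{\widetilde{D}'}$ descends from the free abelian group on $V_{\widetilde{D}'}$ to the graph lattice: this follows from $\sum_{x \in V_{\widetilde{D}}} x = 0$ combined with the identity $(v_I - v) + (w_I - w) = \mu_1 + \cdots + \mu_m$ obtained directly from the splitting $x = x_I + x_F - (x\cdot e_0) e_0$. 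Third, the Gram matrix is preserved. Pairings between two non-marker vertices are unchanged, since neither image carries an $e_1$ component and no edges between such pairs are moved in passing from $\widetilde{D}$ to $\widetilde{D}'$; the cross term reduces via equation~\eqref{Ch3:eq:splittingproduct} to
\[
\iota_{\widetilde{D}'}(v) \cdot \iota_{\widetilde{D}'}(w) = v_I \cdot w_I - 1 = v \cdot w + \norm{\mu_0} - 3,
\]
which matches $-e(\iota_{\widetilde{D}'}(v), \iota_{\widetilde{D}'}(w))$ in $\Gamma_{\widetilde{D}'}$ after accounting for the tangle edges; the two self-pairings are handled analogously.

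Finally, the ranks match: the identity $m = |M| = s - l$ from Section~\ref{Ch3:sec:fracparts} gives $|V_{\widetilde{D}'}| = |V_{\widetilde{D}}| - m = t + 1$, so both $\Lambda_{\widetilde{D}'}$ and $L'$ have rank $t$. Since $\iota_{\widetilde{D}'}$ is a full-rank lattice embedding preserving the Gram matrix on a generating set, it forces $\Lambda_{\widetilde{D}'}$ and $L'$ to have the same discriminant, hence $\iota_{\widetilde{D}'}$ is an isomorphism. Under it, $\iota_{\widetilde{D}'}(v)$ and $\iota_{\widetilde{D}'}(w)$ are the only vertices with nonzero $e_0$-pairing, so they are the marker vertices of Definition~\ref{Ch5:def:markedcrossing}, and the edge $c$ between them is a marked crossing. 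The main obstacle in this plan is the bookkeeping in the Gram matrix check: tracking precisely how the removed and added tangle edges contribute to $e(v,w)$, $\deg(v)$, and $\deg(w)$ in the passage from $\Gamma_{\widetilde{D}}$ to $\Gamma_{\widetilde{D}'}$ is delicate, and the splitting formula~\eqref{Ch3:eq:splittingproduct} combined with $v_F \cdot w_F = 1 - \norm{\mu_0}$ is doing all the real work.
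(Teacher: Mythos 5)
Your plan reproduces the paper's argument faithfully: flype via Lemma~\ref{Ch6:lem:flypetorational}, locate the marker vertices by Lemma~\ref{Ch6:lem:markersunique}, isolate the fractional tangle using Lemma~\ref{Ch6:lem:markerbounds} and the Figure~\ref{Ch4:fig:flype2} flypes, identify the slope via Lemma~\ref{Ch6:lem:fractionalslope}, and then build the new embedding sending $v\mapsto v_I+e_1$, $w\mapsto w_I-e_1$. Your Gram-matrix bookkeeping is correct and in fact spells out more detail than the paper: the computation $\iota_{\widetilde{D}'}(v)\cdot\iota_{\widetilde{D}'}(w)=v_I\cdot w_I-1=v\cdot w+\norm{\mu_0}-3$, the identity $(v_I-v)+(w_I-w)=\mu_1+\cdots+\mu_m$, and the edge-count matching all check out.

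There is, however, a genuine gap in the last step. You write that ``a full-rank lattice embedding preserving the Gram matrix on a generating set\dots forces $\Lambda_{\widetilde{D}'}$ and $L'$ to have the same discriminant.'' That is false: a full-rank lattice embedding can land in a proper finite-index sublattice (compare $2\mathbb{Z}\hookrightarrow\mathbb{Z}$). Equal rank plus injectivity does not give surjectivity, and nothing in what you have written controls the index of the image. To close the gap, observe that the map you are really using is $\phi\colon L\to L'$, $\phi(x)=x_I+(x\cdot e_0)e_1$, which agrees with $\iota_{\widetilde{D}'}$ on $V_{\widetilde{D}'}$ and kills $\mu_1,\dots,\mu_m$; since $V_{\widetilde{D}}$ generates $L$, the image of $\iota_{\widetilde{D}'}$ is exactly $\phi(L)$. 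Now apply $\phi$ to a standard basis of $L$ from Section~\ref{Ch3:sec:standardbases}: $\phi(\nu_1)=-f_1+e_0+e_1$; for $\sigma_k$ not tight, $\phi(\nu_k)=-f_k+f_{k-1}+\sum_{i\in A_k}f_i$; for $\sigma_k$ tight, $\phi(\nu_k)=-f_k+\cdots+f_1+e_0+e_1$; and $\phi(\mu_i)=0$. These are precisely a standard basis for the half-integer changemaker lattice $L'$, so $\phi(L)=L'$ and the embedding is onto. (An alternative is to compute both discriminants explicitly via the determinant of the link $\widetilde{D}'$ and the fact that a $p'/q'$-changemaker lattice has discriminant $p'$, but that route detours through the topology rather than staying inside lattice theory.)
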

\begin{proof}
Let $\mu_0,\dots, \mu_m$, be the basis for the $L_F$ as constructed in Section~\ref{Ch3:sec:fracparts}. By Lemma~\ref{Ch6:lem:flypetorational} we may flype to a diagram in which $\mu_1, \dots, \mu_m$ are vertices. Lemma~\ref{Ch6:lem:markersunique} shows that any such diagram contains vertices $v$ and $w$ with $v_F=\mu_0$ and $w_F=-(\mu_0+\dots + \mu_m)$. By Lemma~\ref{Ch6:lem:markerbounds} and discussion following it we may further flype to a diagram $\widetilde{D}$ in containing a fractional tangle which is the one determined by the regions $\mu_1, \dots, \mu_m$ and $|v_F\cdot w_F|-1$ crossings between $v$ and $w$. By Lemma~\ref{Ch6:lem:fractionalslope}, the fractional tangle is a rational of the required slope. Let $\widetilde{D}'$ be the alternating diagram obtained by replacing the fractional tangle with a single crossing $c$. Since $\mu_1, \dots, \mu_m,v$ and $w$ are the only regions in $\widetilde{D}$ with non-zero fractional part, we see that $\Lambda_{\widetilde{D}'}$ admits an embedding into $\langle f_1, \dots, f_t, e_0, e_1 \rangle \subseteq \mathbb{Z}^{t+2}$, where $V_{\widetilde{D}'}$ is obtained from $V_{\widetilde{D}}$ by deleting $\mu_1, \dots, \mu_m$ and replacing $v$ and $w$ by $\tilde{v} =v_I+e_1$ and $\tilde{w}=w_I - e_1$, respectively. By considering the image of this embedding we see that $\Lambda_{\widetilde{D}'}$ is isomorphic to the $(n-\frac{1}{2})$-changemaker lattice
\[\langle w_0, e_1-e_0 \rangle^\bot \subseteq \langle f_1, \dots, f_t, e_0, e_1 \rangle = \mathbb{Z}^{t+2}.\]
Since $\tilde{v}\cdot e_0=-\tilde{w}\cdot e_0 = 1$, it is clear from the definition that $c$ is a marked crossing for this embedding.
\end{proof}
Now we give an explicit example to show the tangle replacement of Proposition~\ref{Ch6:prop:CMidentifiestangle} in action.
\begin{example}Let $L$ be the $107/5$-changemaker lattice, given by
\[L=\langle 4f_3+2f_2+f_1+e_0, e_1-e_0, e_2+e_3-e_1 \rangle^\bot \subseteq \mathbb{Z}^{7}.\]
This is isomorphic to the Goeritz form of the alternating knot $11a_{15}$. Figure~\ref{Ch6:fig:11a15example} shows an alternating diagram $D$ of $11a_{15}$ with the induced labeling on the white regions. There is a fractional tangle in $D$ and, as expected, it is of slope $2/3$. Replacing the fractional tangle by a single crossing, $c$, we obtain an alternating diagram $D'$. As shown in Figure~\ref{Ch6:fig:11a15example}, there is an embedding of the Goeritz form $\Lambda_{D'}$ into $\mathbb{Z}^5$ which shows that $\Lambda_{D'}$ is isomorphic to the $43/2$-changemaker lattice
\[L'=\langle 4f_3+2f_2+f_1+e_0, e_1-e_0\rangle^\bot \subseteq \mathbb{Z}^{5}.\]
This isomorphism makes $c$ into a marked crossing. As one would expect from Theorem~\ref{Ch5:thm:markedmeansunknotting} and the fact that $\sigma(D')=-2$, $c$ is a positive unknotting crossing.
\end{example}
\begin{figure}[p!]
  \centering
  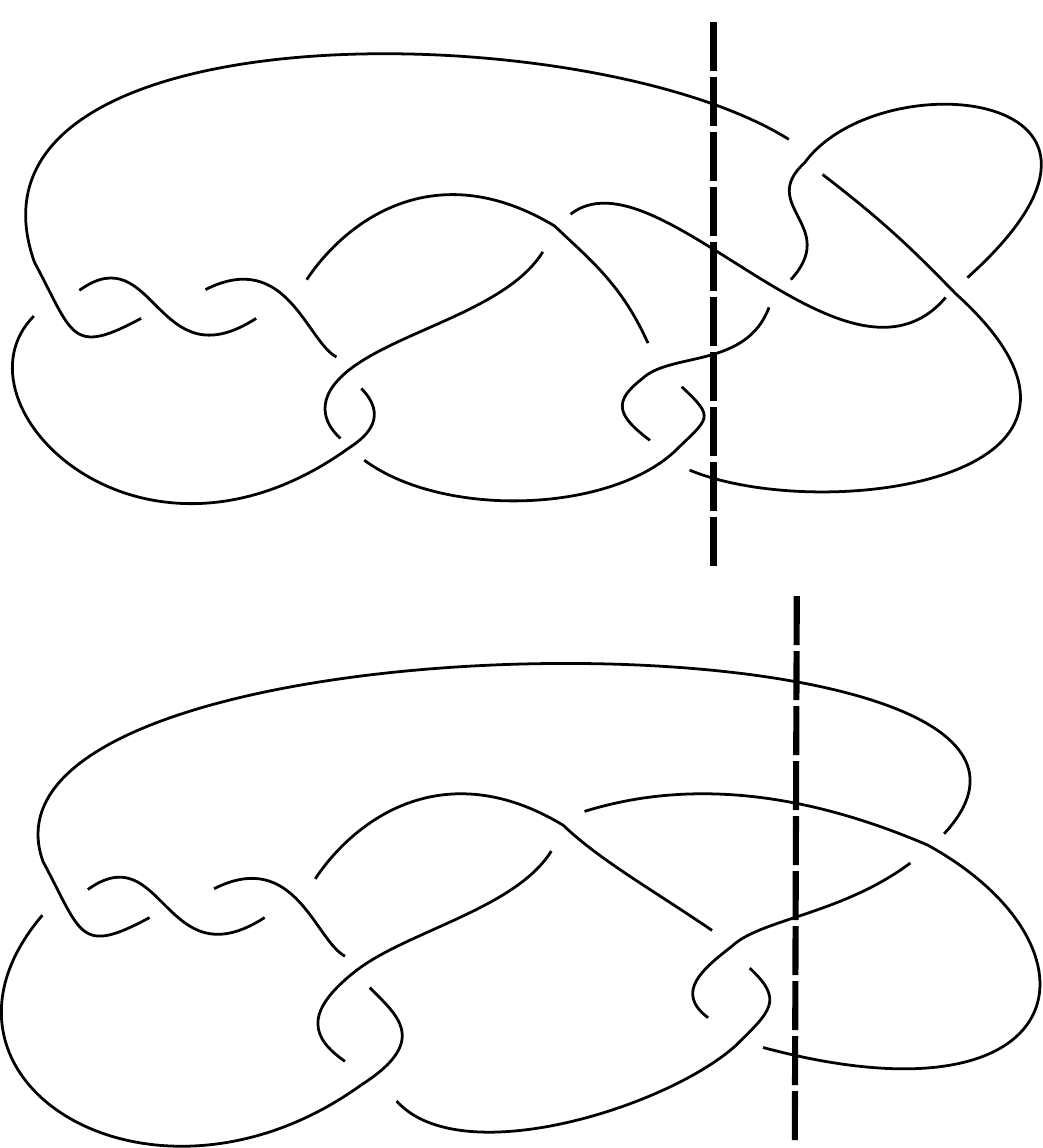
 \caption[Tangle replacement in $11a_{15}$]{A diagram of $11a_{15}$ with the structure of a $107/5$-changemaker lattice. The fractional tangle for this embedding is to the right of the dotted line. Replacing the fractional tangle with a single crossing, $c$, we obtain a diagram of the knot $9_{22}$. The embedding which makes $\Lambda_{D'}$ into a $43/2$-changemaker lattice is also illustrated.}
 \label{Ch6:fig:11a15example}
\end{figure}

This allows us to prove Theorem~\ref{Ch6:thm:rationalsurgery} and Theorem~\ref{intro:thm:nonint}.
\begin{proof}[Proof of Theorem~\ref{Ch6:thm:rationalsurgery}.]
Corollary~\ref{Ch4:cor:CMcondit} gives the implication $(i)\Rightarrow (ii)$. Applications of Proposition~\ref{Ch6:prop:CMidentifiestangle} and Theorem~\ref{Ch5:thm:markedmeansunknotting} prove $(ii) \Rightarrow (iii)$. The implication $(iii) \Rightarrow (i)$ is a consequence of the Montesinos trick as stated in Proposition~\ref{Ch4:prop:3implies1}.
\end{proof}
\begin{proof}[Proof of Theorem~\ref{intro:thm:nonint}]
After reflecting if necessary, this follows immediately from the equivalence $(i)\Leftrightarrow (iii)$ in Theorem~\ref{Ch6:thm:rationalsurgery}.
\end{proof}

\section{Knots in $\mathcal{D}$}
We show that the set $\mathcal{D}$ accounts for all non-integer alternating surgeries.
\begin{thm}\label{intro:thm:montyconverse}
If $S_{p/q}^3(\kappa)$ is an alternating surgery where $q>1$, then there is $\kappa'\in \mathcal{D}$ with
\[S_{p/q}^3(\kappa)\cong S_{p/q}^3(\kappa')
\quad \text{and}\quad
\Delta_\kappa(t)=\Delta_{\kappa'}(t).\]
\end{thm}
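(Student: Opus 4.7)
The plan is to extract a rational tangle from the changemaker embedding of $\Lambda_D$ associated with $\kappa$ via Theorem \ref{Ch6:thm:rationalsurgery}, invert the tangle replacement through the Montesinos trick to construct $\kappa'$, and compare Alexander polynomials through their torsion coefficients. After possibly passing to the mirror so $p/q>0$, I first dispose of the case $|p/q|<1$: Proposition \ref{Ch6:prop:smallsurgery} forces $\kappa$ to be the unknot, whose Alexander polynomial is trivial, so it suffices to exhibit some $\kappa'\in\mathcal{D}$ realising the lens-space surgery $S^3_{p/q}(\kappa)$, which can be arranged directly from a small almost-alternating diagram of the unknot.

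Assume then $p>q>1$, and write $p/q=n-r/q$ with $0<r<q$. Since $S^3_{-p/q}(\kappa)\cong\Sigma(L)$, the implication $(i)\Rightarrow(iii)$ of Theorem \ref{Ch6:thm:rationalsurgery} furnishes a reduced alternating diagram $D$ of $L$ containing a rational tangle $T\subseteq D$ of slope $(q-r)/r$ such that, after replacing $T$ by a single crossing $c$, the resulting alternating diagram $D'$ has $c$ as an unknotting crossing of the prescribed sign. Let $D''$ denote the almost-alternating diagram of the unknot obtained by changing $c$ and set $\kappa':=\kappa_{D''}\in\mathcal{D}$. Proposition \ref{Ch4:prop:3implies1} applied to $D$, now viewed as a rational tangle replacement performed on $D''$, yields $\Sigma(L)\cong S^3_{-p/q}(\kappa')$, so that $S^3_{p/q}(\kappa)\cong S^3_{p/q}(\kappa')$.

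For the Alexander polynomials, both $\kappa$ and $\kappa'$ are $L$-space knots admitting the same alternating surgery. Corollary \ref{Ch4:cor:CMcondit} asserts that each choice produces an embedding of $\Lambda_D$ as a $p/q$-changemaker lattice $\langle w_0,\dots,w_l\rangle^{\bot}$ in which $w_0$ determines the torsion coefficients by \eqref{Ch4:eq:torsionformula}. The key claim is that the embedding arising from $\kappa'$ coincides, up to lattice automorphism, with the one extracted from $\kappa$: indeed the fractional tangle $T$ was read off from the $\kappa$-embedding via Proposition \ref{Ch6:prop:CMidentifiestangle}, and the Montesinos construction of $\kappa'$ from $D''$ is the geometric realisation of precisely this data, so the sharp $4$-manifold $X_D$ together with the $2$-handle cobordism recording the replacement $T\leftrightarrow c$ reproduces the same vector $w_0$. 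Once this matching is verified, $t_i(\kappa)=t_i(\kappa')$ for every $i$, and since torsion coefficients together with $\Delta(1)=1$ determine the Alexander polynomial (Remark \ref{Ch1:rem:torsiondeterminespoly}), we conclude $\Delta_\kappa=\Delta_{\kappa'}$.

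The principal obstacle is exactly this compatibility of changemaker embeddings. Example \ref{Ch2:exam:differentcoef} shows that the Goeritz lattice $\Lambda_D$ can admit inequivalent changemaker structures corresponding to genuinely distinct knots with the same alternating surgery, so the coincidence of $w_0$ is not automatic from the surgery alone. The technical heart of the argument is the bookkeeping that traces the vectors $w_0,\dots,w_l$, and in particular the fractional-tangle basis $\mu_0,\dots,\mu_m$ of Section \ref{Ch3:sec:fracparts}, through the negative-definite cobordism implementing the rational tangle replacement, matching them to the corresponding standard basis vectors supplied by Proposition \ref{Ch3:prop:stdbasis} in the changemaker lattice attached to $\kappa'$.
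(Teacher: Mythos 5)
Your setup — dispose of $|p/q|<1$ via Proposition~\ref{Ch6:prop:smallsurgery}, then apply Theorem~\ref{Ch6:thm:rationalsurgery} and Proposition~\ref{Ch4:prop:3implies1} to extract $\kappa'\in\mathcal{D}$ with the right surgery — matches the paper's proof, and you have correctly identified the crux: the $p/q$-changemaker embeddings of $\Lambda_D$ furnished by Corollary~\ref{Ch4:cor:CMcondit} for $\kappa$ and for $\kappa'$ need not coincide a~priori, since the changemaker structure on a fixed lattice is not unique (Example~\ref{Ch2:exam:differentcoef}).

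But your proposed resolution of this problem does not work as stated. You claim that because the fractional tangle $T$ was extracted from the $\kappa$-embedding, the ``geometric realisation'' of $\kappa'$ reproduces the same $w_0$, and that this can be checked by ``tracing the vectors through the negative-definite cobordism implementing the tangle replacement.'' This is circular: the vector $w_0$ in the changemaker structure attached to $\kappa'$ by Corollary~\ref{Ch4:cor:CMcondit} is determined by the knot-Floer invariants $V_i(\kappa')$ of $\kappa'$ (via equation~\eqref{Ch4:eq:torsionformula}), not by any choice of diagram or cobordism. Nothing in the Montesinos construction forces $V_i(\kappa')=V_i(\kappa)$; in fact, equality of $V_i$ is essentially what you need to prove, so you cannot assume it to run the bookkeeping. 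Selecting the marked crossing from the $\kappa$-side embedding does not automatically import the $\kappa$-side $w_0$ into the $\kappa'$-side formula.

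The paper closes this gap with a rigidity argument that your proposal is missing. One replaces the marked crossing in $D'$ by a long twist of $M-1$ crossings to obtain $D_M$ with $\Sigma(D_M)\cong S^3_{-(n-1/M)}(\kappa')$. On the one hand, Corollary~\ref{Ch4:cor:CMcondit} applied to $\kappa'$ makes $\Lambda_{D_M}$ a $(n-1/M)$-changemaker lattice $\langle w_0',w_1'\rangle^\bot$ whose $w_0'$ computes $t_i(\kappa')$. On the other hand, the explicit embedding obtained by extending the $\kappa$-side embedding along the twist exhibits $\Lambda_{D_M}$ as $\langle w_0,\,-e_0+e_1+\dots+e_{M-1}\rangle^\bot$ with the \emph{original} $w_0$. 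Lemma~\ref{Ch6:lem:CMrigidity} then says that for $M\geq n+10$ a $(n-1/M)$-changemaker lattice has a unique set of changemaker coefficients, forcing $w_0$ and $w_0'$ to agree; descending back to $M=2$ (Remark~\ref{Ch1:rem:removezeroes}) gives $t_i(\kappa)=t_i(\kappa')$ as in Lemma~\ref{Ch6:lem:kappaAlexpoly}. Without this large-$M$ rigidity step, the identity $\Delta_\kappa=\Delta_{\kappa'}$ is not established.
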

\begin{rem}
Since $\kappa$ and $\kappa'$ are both $L$-space knots, the fact that they have the same Alexander polynomials shows that they have isomorphic knot Floer homology.
\end{rem}
Although the existence of $\kappa'$ follows immediately from Theorem~\ref{Ch6:thm:rationalsurgery}, the statement regarding Alexander polynomials requires further work.

\begin{example}\label{Ch6:exam:UnknotinD}
Let $D$ to be one of the unknotted clasp diagrams $\mathcal{C}_m$ as in Figure~\ref{fig:claspdiagram}. If we remove a the interior of small ball about a non-alternating crossing, then the resulting tangle is a rational tangle. This shows that the knot complement of the corresponding knot in $\mathcal{D}$ is homeomorphic to the solid torus, i.e that the corresponding knot is the unknot.
\end{example}

Let $\kappa\in \mathcal{D}$ be obtained from an almost-alternating diagram obtained by changing a marked crossing. We will show how the Alexander polynomial of $\kappa$ can be determined from the corresponding changemaker lattice. First we require a lemma.

\begin{lem}\label{Ch6:lem:CMrigidity}
Let $L=\langle w_0, w_1 \rangle^\bot \subseteq \Z^{t+M}$ and $L'=\langle w_0', w_1' \rangle^\bot \subseteq \Z^{t+M}$ be isomorphic $(n-\frac{1}{M})$-changemaker lattices. If $M\geq n+10$, then $L$ and $L'$ have the same changemaker coefficients.
\end{lem}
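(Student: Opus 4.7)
The plan is to follow the strategy of Lemma~\ref{Ch2:lem:CMuniqueness}, adapted to the non-integer setting, where the hypothesis $M \geq n+10$ forces an essentially rigid embedding of the long fractional chain. The goal is to show that, after composing $\phi \colon L \to L'$ with a suitable orthogonal automorphism of $\Z^{t+M}$ that preserves $L'$ setwise, the image of the standard basis of $L$ lands on the standard basis of $L'$; this forces $w_0$ and $w_0'$ to coincide up to sign, and hence the changemaker coefficients to agree.

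First I fix an orthonormal basis $\{e_0', e_1', \dots, e_{M-1}', f_1', \dots, f_t'\}$ of $\Z^{t+M}$ adapted to $L'$, so that $w_0' = e_0' + \sigma_1' f_1' + \dots + \sigma_t' f_t'$ and $w_1' = -e_0' + e_1' + \dots + e_{M-1}'$. A short case analysis classifies every norm-two vector in $L'$: it is either of the form $\pm(e_i' - e_j')$ for distinct $i,j \in \{1,\dots,M-1\}$, or of the form $\pm(f_a' - f_b')$ with $\sigma_a' = \sigma_b'$ (any mixed $\pm e \pm f$ is ruled out by either $v\cdot w_0' = 0$ or $v\cdot w_1' = 0$). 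Taking a standard basis $\{\nu_1,\dots,\nu_t,\mu_1,\dots,\mu_{M-2}\}$ of $L$ as in Section~\ref{Ch3:sec:standardbases}, I consider the images $\phi(\mu_j)$: these are $M-2$ norm-two vectors forming a chain with consecutive pairings $-1$ and all other pairings $0$. Since the two classes of norm-two vectors are mutually orthogonal, the chain is uniformly of one type; and because $\sum (\sigma_i')^2 = n - 1$ with $\sigma_i' \geq 1$ forces $t \leq n-1 < M-1$, there are simply not enough $f'$-type norm-two vectors to fill the chain. Hence every $\phi(\mu_j)$ is of $e'$-type, and, after relabelling the $e_i'$'s and possibly flipping one overall sign, one obtains $\phi(\mu_j) = -e_j' + e_{j+1}' = \mu_j'$ for $j = 1,\dots,M-2$.

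With the fractional chain aligned, I locate $\phi(\nu_1)$, which has norm three, pairs with $\phi(\mu_1) = \mu_1'$ giving $-1$, and is orthogonal to $\phi(\mu_j) = \mu_j'$ for $j \geq 2$. The orthogonality relations force $\phi(\nu_1) \cdot e_j'$ to be constant over $j = 2,\dots,M-1$; since $M-2 > 3$, this common value must vanish. Combined with $\phi(\nu_1) \cdot w_0' = \phi(\nu_1) \cdot w_1' = 0$ and $\norm{\phi(\nu_1)} = 3$, this pins down $\phi(\nu_1) = e_0' + e_1' - f_{k_0}'$ for some index $k_0$ with $\sigma_{k_0}' = 1$; relabelling so that $k_0 = 1$ gives $\phi(\nu_1) = \nu_1'$. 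I then proceed by induction on $k = 2,\dots,t$: at each step the conditions $\phi(\nu_k)\cdot \phi(\mu_j) = \nu_k\cdot\mu_j$, $\phi(\nu_k)\cdot \phi(\nu_i) = \nu_k\cdot\nu_i$ for $i<k$, $\phi(\nu_k)\cdot w_0' = \phi(\nu_k)\cdot w_1' = 0$, and $\norm{\phi(\nu_k)} = \norm{\nu_k}$ determine $\phi(\nu_k)$ on the $e'$-part and on the previously-pinned $f_i'$'s, and then the norm identity combined with the requirement $\sum \sigma_i' (\phi(\nu_k)\cdot f_i') \in \{0,-1\}$ (depending on whether $\sigma_k$ is tight) locates $\phi(\nu_k)$ on the remaining $f'$'s, at worst up to a permutation of $f_j'$'s with equal $\sigma_j'$. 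Reading off the coefficients of $w_0$ in the resulting basis then matches them with those of $w_0'$.

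The main obstacle is the inductive identification of the $\phi(\nu_k)$, where one must handle carefully the four sub-cases given by whether $\sigma_k$ (respectively $\sigma_k'$) is tight or slack and where repeated coefficient values introduce a permutation ambiguity among the $f_j'$'s that must be controlled. This is exactly the point where the argument mirrors, and extends, the tightest part of Lemma~\ref{Ch2:lem:CMuniqueness}. The hypothesis $M \geq n + 10$ is used both to force the $\mu'$-chain to exhaust all of $e_1',\dots,e_{M-1}'$ (via $t < M-1$) and to make the argument bounding $\phi(\nu_k)\cdot e_j'$ (for $j \geq 2$) go through, since any nonzero common value would contribute $(M-2)$ to the norm of $\phi(\nu_k)$, which is at most comparable to $n$.
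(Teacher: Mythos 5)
Your proposal is correct and follows essentially the same approach as the paper's proof: both exploit the rigidity of the long $\mu$-chain (which $M\gg n$ forces to consist of $e'$-type norm-two vectors exhausting all of $e_1',\dots,e_{M-1}'$) and then a double induction that pins down the images $\phi(\nu_k)$ one unit vector at a time, closely mirroring Lemma~\ref{Ch2:lem:CMuniqueness}. The only organizational difference is that you fix a standard basis for $L'$ at the outset and classify norm-two vectors in $L'$ to force the chain into $e'$-coordinates, whereas the paper builds fresh orthonormal vectors $e_i'$ and $f_k'$ directly out of $\phi(\mu_i)$ and $\phi(\nu_k)$ and shows $w_0'=e_0'+\sum\sigma_i f_i'$ in that basis, which sidesteps the permutation ambiguity among $f_j'$'s with equal $\sigma_j'$ that your version must control by post-composing with $L'$-preserving automorphisms.
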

\begin{proof}
Pick an orthonormal basis $f_1,\dots, f_t, e_0, \dots , e_{M-1}$ for $\Z^{t+M}$ such that
\[w_0=e_0+ \sigma_1 f_1 + \dots + \sigma_t f_t \quad\text{and}\quad w_1= -e_0 + e_1 + \dots + e_{M-1}.\]
Let $\nu_1, \dots, \nu_t, \mu_1, \dots, \mu_{M-2}$ be a standard basis for $L_M$, where $\mu_i=-e_i+e_{i+1}$ for $1\leq i \leq M-2$ and $\nu_k=-f_k + \sum_{i\in A_k} f_i + \varepsilon_k (e_0+e_1)$ for some $\varepsilon_k\in \{0,1\}$ and some $A_k \subseteq \{1, \dots, k-1\}$ such that $\sigma_k= \varepsilon_k + \sum_{i \in A_k} \sigma_i$.

\paragraph{}Suppose that we have an embedding $\phi\colon L \rightarrow \Z^{r+M}$ whose image is $L'$.
Consider first $u_i=\phi(\mu_i)$, for $1\leq i \leq M-2$. Since the $\mu_i$ satisfy
\[\mu_i \cdot \mu_j =
\begin{cases}
   2      & \text{if } i=j \\
   -1       & \text{if } |i-j|=1\\
   0        & \text{otherwise,}
\end{cases}
\]
there must be mutually orthogonal unit vectors $e_1',\dots, e_{M-1}'$ in $\Z^{r+M}$ for which $u_i=-e_i'+e_{i+1}'$. Since $w_0'\cdot u_i=w_1'\cdot u_i=0$ for all $i$, we must have
\[w_0'\cdot e_1' = \dots = w_0'\cdot e_{M-1}' \quad \text{and} \quad w_1'\cdot e_1' = \dots = w_1'\cdot e_{M-1}'.\]
It follows that $n=\norm{w_0}\geq(M-1)(w_0'\cdot e_1')^2$ and $M=\norm{w_1}\geq(M-1)(w_1'\cdot e_1')^2$. As $M > n+2$, we must have $w_0'\cdot e_1'=0$ and $|w_1'\cdot e_1'|\leq 1$. As $L'$ contains no unit vectors, we can assume that $w_1'\cdot e_1'=1$. As $\norm{w_1}=M$, there must be some other unit vector $e_0'$ such that $w_1'=-e_0'+e_1' + \dots + e_{M-1}'$. As $w_0'\cdot w_1'=-1$, we have $w_0'\cdot e_0'=1$.

Now consider $v_k=\phi(\nu_k)$ for any $1\leq k \leq t$. First observe that we have $\nu_k\cdot w_0'=0$, $\nu_k\cdot \mu_1=-\varepsilon_k$, and $\nu_k\cdot \mu_i=0$ for $i\geq 2$. Therefore, $v_k \cdot e_2' = \dots = v_k\cdot e_{M-1}'$ and $v_k\cdot e_0'=v_k\cdot e_1'=v_k\cdot e_2' + \varepsilon_k$. This gives
\[2(v_k\cdot e_2' + \varepsilon_k)^2 + (M-2)(v_k\cdot e_2')^2 \leq \norm{v_k}.\]
As $\norm{\nu_k}\leq n+1 <M-2$, it follows that
\[v_k\cdot e_1' = v_k\cdot e_0'= \varepsilon_k
\quad \text{and}\quad v_k \cdot e_2' = \dots = v_k\cdot e_{M-1}'=0\]
for all $k$.

As $\varepsilon_1=1$ and $\norm{v_1}=3$ it follows that there must be a unit vector $f_1'$ such that $v_1=-f_1'+e_0'+e_1'$. Hence we have $w_0'\cdot f_1'=\sigma_1 =1$.

We will complete the proof by inductively showing that there are orthonormal vectors $f_1', \dots, f_t'$ such that
\[v_k=-f_k' + \sum_{i\in A_k} f_i' + \varepsilon_k (e_0'+e_1'),\]
for all $k$, and
\[w_0'=e_0+ \sigma_1 f_1' + \dots + \sigma_t f_t'.\]

Now suppose that there are $f_1', \dots, f_{k-1}'$, such that
\[v_j=-f_j' + \sum_{i\in A_j} f_i' + \varepsilon_j (e_0'+e_1')
\quad \text{and} \quad
w_0'\cdot f_j=\sigma_j.\]
for all $j<k$. We have
\begin{align*}
v_k\cdot f_j'&=-v_k\cdot v_j + \sum_{i\in A_j} v_k\cdot f_i' + \varepsilon_k \varepsilon_j\\
&= -\nu_k\cdot \nu_j + \sum_{i\in A_j} \nu_k\cdot f_i + \varepsilon_k \varepsilon_j\\
&= \nu_k\cdot f_j
\end{align*}
for all $j<k$. As
\[\norm{v_k}-1=\norm{\nu_k}-1 = \sum_{i=1}^{k-1}\nu_k\cdot f_j + 2\varepsilon_k,\]
this shows that there is a unit vector $f_k'\in \Z^{t+M}$ such that
\[v_k=-f_k' + \sum_{i\in A_k} f_i' + \varepsilon_k (e_0'+e_1').\]
Since $v_k\cdot w_0'=0$, it follows that
\[w_0'\cdot f_k'=\sum_{i\in A_k} \sigma_i + \varepsilon_k=\sigma_k,\]
as required.
\end{proof}

\begin{lem}\label{Ch6:lem:kappaAlexpoly}
Let $D$ be a reduced alternating diagram with an isomorphism
\[\Lambda_D \rightarrow L=\langle w_0, w_1 \rangle^\bot \subseteq \Z^{t+2},\]
where $L$ is a $(n-\frac{1}{2})$-changemaker lattice. If $c$ is a marked crossing for this embedding and $\kappa \in \mathcal{D}$ is the knot arising from the almost-alternating diagram obtained by changing $c$, then the torsion coefficients of $\Delta_\kappa(t)$ satisfy
\[8t_{i}(\kappa) = \min_{ \substack{ |c'\cdot w_0|= n-2i \\ c' \in \Char(\Z^{t+2})}} \norm{c'} - (t+2),\]
for $0\leq i \leq n/2$ and $t_{i}(\kappa)=0$ for $i\geq n/2$.
\end{lem}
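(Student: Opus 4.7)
The plan is to relate the formula for the torsion coefficients of $\kappa$ given by Corollary~\ref{Ch4:cor:CMcondit} applied to the surgery description to the minimization over $w_0$ arising from the particular embedding in the hypothesis. The key point is that these two $w_0$ vectors have the same multiset of changemaker coefficients, so the two minimisations produce the same value.

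First, I would establish that $\Sigma(D)\cong S^3_{-(n-1/2)}(\kappa)$ for the knot $\kappa \in \mathcal{D}$ produced by the almost-alternating diagram obtained from $D$ by changing $c$. By Theorem~\ref{Ch5:thm:markedmeansunknotting}, the marked crossing $c$ is an unknotting crossing, negative when $\sigma(D) = 0$ and positive when $\sigma(D) = -2$, so the Montesinos-trick computation in Proposition~\ref{Ch4:prop:Montesinos} (together with Proposition~\ref{Ch4:prop:3implies1} in the half-integer case where the replacing tangle is a single crossing) identifies the surgery slope as $-\delta/2$ with $\delta = (-1)^{\sigma(D)/2}\det(D) = \det(\Lambda_D) = 2n - 1$. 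Next, I would apply Corollary~\ref{Ch4:cor:CMcondit} to this surgery: it yields an $(n-1/2)$-changemaker lattice structure
\[
\Lambda_D \cong L' = \langle w_0', w_1' \rangle^\bot \subseteq \Z^{t+2}
\]
such that
\[
8t_i(\kappa) = \min_{\substack{|c' \cdot w_0'| = n - 2i \\ c' \in \Char(\Z^{t+2})}} \norm{c'} - (t+2) \quad \text{for } 0 \leq i \leq n/2,
\]
and (by Remark~\ref{Ch1:rem:CMgivesallVi} combined with $\kappa$ being an $L$-space knot so $V_i=t_i$) $t_i(\kappa) = 0$ for $i \geq n/2$.

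The heart of the argument is then to show that the minimization appearing in the formula depends only on the multiset of coefficients of $w_0$, and that $w_0$ and $w_0'$ share this multiset. The first part is immediate: signed permutations of the orthonormal basis $\{e_0,e_1,f_1,\dots,f_t\}$ are automorphisms of $\Z^{t+2}$ preserving $\Char(\Z^{t+2})$ and norm, and any two vectors with the same multiset of coordinates are related by such a map. For the second part, both $w_0$ and $w_0'$ are of the form $e_0 + \sigma_1 f_1 + \dots + \sigma_t f_t$ (after choosing a suitable orthonormal basis), with $\norm{w_0} = \norm{w_0'} = n$. By Theorem~\ref{Ch1:thm:stabledependence} applied to $\overline\kappa$ with the sharp manifold $X_D$ bounding $S^3_{n-1/2}(\overline\kappa)$ from Theorem~\ref{Ch4:thm:altboundssharp}, the stable coefficients of any $(n-1/2)$-changemaker structure on $-Q_{X_D} \cong \Lambda_D$ are invariants of the knot; hence $w_0$ and $w_0'$ have the same stable coefficients $\rho_1,\dots,\rho_m$. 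The number of $1$s among the remaining coefficients is then forced to be $j = n - 1 - \sum \rho_i^2$ in both cases by the norm constraint, so the two multisets agree.

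The main obstacle I anticipate is bookkeeping the identification of the two surgery descriptions carefully, in particular verifying that Theorem~\ref{Ch1:thm:stabledependence} genuinely equates the stable coefficients of our given $w_0$ with those of the $w_0'$ produced by Corollary~\ref{Ch4:cor:CMcondit}, rather than merely equating both to some abstract invariant of $\kappa$. Once that identification is made explicit via the common underlying lattice $\Lambda_D \cong -Q_{X_D}$, the uniqueness of the full changemaker coefficient multiset (stable plus the forced number of $1$s) is immediate, and the lemma follows.
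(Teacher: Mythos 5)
Your proof has a genuine gap at the crucial step where you equate the changemaker coefficients of the given $w_0$ with those of the $w_0'$ produced by Corollary~\ref{Ch4:cor:CMcondit}. You write that ``the stable coefficients of any $(n-1/2)$-changemaker structure on $-Q_{X_D}\cong\Lambda_D$ are invariants of the knot,'' attributing this to Theorem~\ref{Ch1:thm:stabledependence}. But that theorem asserts something weaker: the changemaker structure \emph{produced by the sharp-manifold construction of Theorem~\ref{intro:thm:CM}} — i.e.\ one whose $w_0$ satisfies \eqref{Ch1:eq:w0formula2} — has stable coefficients determined by the knot's $V_i$. It does \emph{not} say that every changemaker structure on the abstract lattice has these stable coefficients. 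Indeed, Example~\ref{Ch2:exam:differentcoef} and the remark following it explicitly caution that the stable coefficients are \emph{not} invariants of the underlying lattice: the same lattice in $\Z^6$ admits changemaker structures with stable coefficients $(4)$ and with $(2,2,2,2,2)$. The lemma's hypothesis hands you an arbitrary half-integer changemaker structure $\langle w_0, w_1\rangle^\bot$ with a marked crossing; nothing in the hypothesis forces $w_0$ to satisfy \eqref{Ch1:eq:w0formula2} for $\kappa$ — that is precisely what the lemma is trying to prove — so invoking Theorem~\ref{Ch1:thm:stabledependence} is circular.

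The paper sidesteps this by forcing rigidity. It replaces the marked crossing $c$ by a long $(M-1)$-twist for $M \geq n + 99$, producing a diagram $D_M$ whose Goeritz lattice carries two $(n - \tfrac{1}{M})$-changemaker structures: one obtained by the obvious modification of the given embedding (Figure~\ref{Ch6:fig:twistintro2}), and one obtained from Corollary~\ref{Ch4:cor:CMcondit} applied to the surgery $\Sigma(D_M)\cong S^3_{-(n-1/M)}(\kappa)$. The point of taking $M$ large is Lemma~\ref{Ch6:lem:CMrigidity}: when $M \geq n+10$, any two isomorphic $(n-\tfrac{1}{M})$-changemaker lattices in $\Z^{t+M}$ necessarily have the same changemaker coefficients. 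This rigidity is what identifies the given $w_0$ with the one satisfying the torsion formula, and it is then straightforward to pass back from $\Z^{t+M}$ to $\Z^{t+2}$. Your argument, working directly at $M=2$, has no such rigidity available (the bound $M \geq n+10$ manifestly fails), which is exactly where it breaks down. To salvage the proof along your lines you would need an independent uniqueness result for half-integer changemaker structures, which the paper does not prove and which is not obviously true.
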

\begin{proof}
Let $D_M$ be the diagram obtained by replacing the marked crossing $c$ by a $(M-1)$-tangle $T$ for some $M\geq n+99$, as shown in Figure~\ref{Ch6:fig:twistintro2}.
\begin{figure}[h]
  \centering
  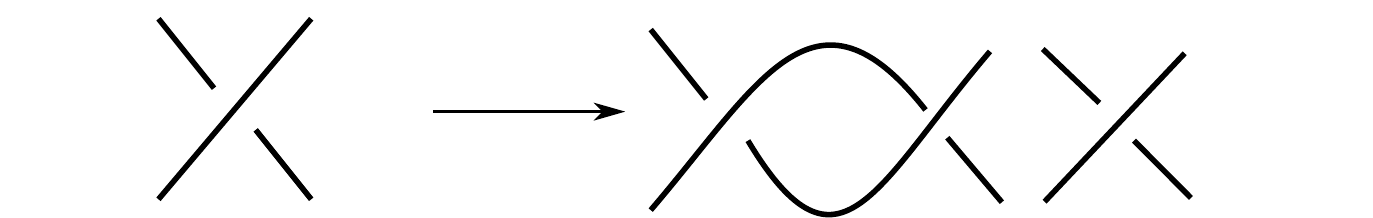
 \caption{The diagrams $D$ and $D_M$ arising in the proof of Lemma~\ref{Ch6:lem:kappaAlexpoly}.}
 \label{Ch6:fig:twistintro2}
\end{figure}
By Proposition~\ref{Ch4:prop:3implies1}, this is the link whose double branched cover arises by $-(n-\frac{1}{M})$-surgery on $\kappa$. Thus by Corollary~\ref{Ch4:cor:CMcondit}, $\Lambda_D$ is isomorphic to a $(n-\frac{1}{M})$-changemaker lattice
\[L'=\langle w_0', w_1'\rangle^\bot \subseteq \Z^{t+M},\]
where $w_0'$ computes the torsion coefficients of $\Delta_\kappa(t)$ by
\begin{equation}\label{Ch6:eq:kappatorsion}
8t_{i}(\kappa) = \min_{ \substack{ |c'\cdot w_0'|= n-2i \\ c' \in \Char(\Z^{t+M})}} \norm{c'} - (t+M),
\end{equation}
for $0\leq i \leq n/2$ and $t_i(K)=0$ for $i\geq n/2$.

However, by the embedding given in Figure~\ref{Ch6:fig:twistintro2}, we see that $\Lambda_{D_M}$ is isomorphic to the $(n-\frac{1}{M})$-changemaker lattice
\[L=\langle w_0, -e_0 + e_1 + \dots + e_{M-1} \rangle^\bot\subseteq \Z^{t+M}.\]
Thus the lattices $L$ and $L'$ are isomorphic. As we have chosen $M$ to be large, Lemma~\ref{Ch6:lem:CMrigidity} applies to show $L$ and $L'$ have the same changemaker coefficients. Consequently, for $0\leq i \leq n/2$,
\begin{equation*}
8t_{i}(\kappa) = \min_{ \substack{ |c'\cdot w_0|= n-2i \\ c' \in \Char(\Z^{t+M})}} \norm{c'} - (t+M)
=\min_{ \substack{ |c'\cdot w_0'|= n-2i \\ c' \in \Char(\Z^{r+M})}} \norm{c'} - (t+M).
\end{equation*}
It follows that
\begin{equation*}
8t_{i}(\kappa) = \min_{ \substack{ |c'\cdot w_0|= n-2i \\ c' \in \Char(\Z^{t+2})}} \norm{c'} - (t+2),
\end{equation*}
for $0\leq i \leq n/2$ (c.f Remark~\ref{Ch1:rem:removezeroes}). It also follows that $t_i(K)=0$ for $i\geq n/2$ (c.f Remark~\ref{Ch1:rem:CMgivesallVi}).
\end{proof}
We can now prove the main result of this section.
\begin{proof}[Proof of Theorem~\ref{intro:thm:montyconverse}]
Suppose that we have a knot $\kappa$ with an alternating surgery. By reflecting if necessary, suppose that $S_{-p/q}^3(\kappa)=\Sigma(L)$ for some alternating knot or link $L$ and some $p/q>0$.

If $p/q<1$, then Proposition~\ref{Ch6:prop:smallsurgery} shows that $c$ is the unknot. Since the unknot is in $\mathcal{D}$, there is nothing further to prove.

Now assume that $p/q>1$. Let $D$ be any reduced alternating diagram for $L$. Corollary~\ref{Ch4:cor:CMcondit} shows that $\Lambda_D$ is isomorphic to a $p/q$-changemaker lattice
\[\langle w_0, \dots, w_l \rangle^\bot \subseteq \Z^{t+s+1},\]
where the torsion coefficients of $\Delta_\kappa(t)$ satisfy
\begin{equation*}
8t_{i}(\kappa) = \min_{ \substack{ |c'\cdot w_0|= n-2i \\ c' \in \Char(\Z^{N})}} \norm{c'} - N,
\end{equation*}
for  $0\leq i\leq n/2$, where $n= \lceil p/q \rceil$ and $N=t+s+1$.

Proposition~\ref{Ch6:prop:CMidentifiestangle} shows that $L$ possesses an alternating diagram obtained by replacing the marked crossing in a diagram $D'$ with a rational tangle of slope $\frac{q-r}{r}$, where the marked crossing arises from an isomorphism
\[\Lambda_D \rightarrow \langle w_0, e_0-e_1 \rangle^\bot\subseteq \Z^{t+2}.\]
Theorem~\ref{Ch5:thm:markedmeansunknotting} and Proposition~\ref{Ch4:prop:3implies1} show that there is $\kappa' \in \mathcal{D}$ with
\[S_{-p/q}^3(\kappa')\cong \Sigma(L) \cong S_{-p/q}^3(\kappa).\]
By Lemma~\ref{Ch6:lem:kappaAlexpoly}, the torsion coefficients satisfy $t_i(\kappa)=t_i(\kappa')$ for all $i\geq 0$. As shown in Remark~\ref{Ch1:rem:torsiondeterminespoly}, the torsion coefficients determine the Alexander polynomial, so we have $\Delta_{\kappa'}(t)=\Delta_{\kappa}(t)$, as required.
\end{proof}

\section{Bounds on alternating surgeries}
Given a knot $\kappa\subseteq S^3$, we provide some restrictions on the surgery slopes for which $S_{p/q}^3(\kappa)$ can be an alternating surgery. As surgery on the unknot always results in a lens space, we see that every non-zero surgery on the unknot is an alternating surgery. This behaviour is not typical; all other knots admit alternating surgeries for only a restricted range of slopes.

\begin{thm}\label{intro:thm:widthbound}
Let $\kappa$ be a non-trivial knot admitting alternating surgeries. There is an integer $N$, which can be calculated from the Alexander polynomial of $\kappa$ such that if $S_{p/q}^3(\kappa)$ is an alternating surgery, then
\[N-1\leq p/q \leq N+1.\]
\end{thm}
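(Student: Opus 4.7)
After reflecting if necessary I would assume $p/q > 0$. The first move is to apply Corollary~\ref{Ch4:cor:CMcondit}: if $S^3_{-p/q}(\kappa)\cong\Sigma(L)$ for some non-split alternating link $L$ with reduced diagram $D$, then the Goeritz form $\Lambda_D$ is isomorphic to a $p/q$-changemaker lattice $\langle w_0,\dots,w_l\rangle^\bot\subseteq\Z^M$, and moreover the stable coefficients $2\leq\rho_1\leq\dots\leq\rho_m$ of this lattice are invariants of $\kappa$ (Theorem~\ref{Ch1:thm:stabledependence}). Because $\Sigma(L)$ is an $L$-space, $\kappa$ is an $L$-space knot, so its $V_i$ are exactly the torsion coefficients of $\Delta_\kappa(t)$, and the algorithm of Theorem~\ref{Ch1:thm:calculaterho} recovers the $\rho_i$ from the $V_i$. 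Thus $(\rho_1,\dots,\rho_m)$ is extractable from $\Delta_\kappa(t)$. I would then define
\[
N \;=\; \rho_1 + \sum_{i=1}^{m}\rho_i^{2},
\]
which, being a function of the stable coefficients, is computable from $\Delta_\kappa(t)$.

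\textbf{Lower bound.} Writing the full changemaker sequence as $(\underbrace{1,\dots,1}_{k},\rho_1,\dots,\rho_m)$, the identity $\|w_0\|^2=\lceil p/q\rceil=:n$ gives $n=k+\sum\rho_i^{2}$ when $p/q\in\Z$ and $n=1+k+\sum\rho_i^{2}$ otherwise. Proposition~\ref{Ch3:prop:CMcondition} forces $\rho_1\leq k+1$, i.e.\ $k\geq\rho_1-1$, so $n\geq N-1$ in the integer case and $n\geq N$ in the non-integer case. Combined with $n-1<p/q\leq n$, both cases yield $p/q\geq N-1$. This step is entirely formal and essentially mimics Step~\ref{Ch1:Step:Vivanishing} in the proof of Lemma~\ref{Ch1:lem:wigiveCMlattice}.

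\textbf{Upper bound (the main obstacle).} Establishing $p/q\leq N+1$ is the real content: the changemaker condition alone imposes no upper bound on $k$, since $(1,\dots,1,\rho_1,\dots,\rho_m)$ is a changemaker for every $k\geq\rho_1-1$. The upper bound must therefore come from the fact that $\Lambda_D$ is not merely abstractly a changemaker lattice but is the graph lattice of a reduced alternating diagram. My plan is to extract it from the diagrammatic description of Chapters~5--6: by Theorem~\ref{Ch6:thm:rationalsurgery} (and its half-integer companion Theorem~\ref{Ch5:thm:technical}), $D$ is obtained by rational tangle replacement from an almost-alternating diagram $D'$ of the unknot, and the knot $\kappa'\in\mathcal{D}$ coming from $D'$ satisfies $\Delta_{\kappa'}=\Delta_{\kappa}$ (Theorem~\ref{intro:thm:montyconverse}). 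Writing $p=qm+r$ with $0\leq r<q$ as in Proposition~\ref{Ch4:prop:3implies1}, so that $p/q=m+r/q\in[m,m+1)$, the task reduces to proving that the integral offset $m=\lfloor p/q\rfloor$ is forced into the pair $\{N-1,N\}$. I would argue this by combining two ingredients: first, Lemma~\ref{Ch3:lem:reduciblebound} and the indecomposability of $\Lambda_D$ (which follows from 2-connectedness of $\Gamma_D$, Lemma~\ref{Ch3:lem:2connectgraphlat}), to constrain how the non-stable block of $k$ ones can sit inside the changemaker; second, the classification (Theorem~\ref{intro:thm:tsukamoto}) of reduced almost-alternating unknot diagrams, which bounds the amount of ``twisting'' available to produce distinct $m$ for a fixed Alexander polynomial. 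The hardest step is converting the lattice-theoretic rigidity of the stable coefficients into the tight combinatorial bound $k\leq\rho_1+1$; once that is in hand, the upper bound $p/q<m+1\leq N+1$, with the boundary case $p/q=N+1$ occurring only for integer slopes, follows as in the lower bound.
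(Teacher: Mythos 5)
Your setup, choice of $N=\rho_1+\sum_{i=1}^m\rho_i^2$, and lower-bound argument all match the paper. But the upper bound is left as a declared gap: you correctly isolate the target inequality $r\leq\rho_1+1$ (where $r$ is the number of unit changemaker coefficients, corresponding to your $k$), and you correctly observe that pure changemaker combinatorics cannot produce it, but you then gesture at Tsukamoto's theorem and ``bounding the amount of twisting'' without saying how those would actually yield a numerical bound. That is not a proof, and the route you suggest is both indirect and not what the paper does.

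The paper's upper-bound argument is more elementary and entirely lattice-theoretic. First, it reduces to an \emph{integer} changemaker realization: starting from the $p/q$-changemaker isomorphism $\Lambda_{D_{p/q}}\cong L_{p/q}$, Proposition~\ref{Ch6:prop:CMidentifiestangle} passes to a half-integer changemaker diagram, and resolving the marked crossing (Figure~\ref{Ch6:fig:resolveup}) produces an alternating diagram $D$ with $\Lambda_D\cong L_n$ where $L_n$ is the $n$-changemaker lattice with the same stable coefficients and $n=\lceil p/q\rceil$. Then, assuming $r\geq\rho_1+1$: Lemma~\ref{Ch3:lem:reduciblebound} gives indecomposability of $L_n$ (so $\Gamma_D$ is 2-connected), which licenses flypes via Lemma~\ref{Ch4:lem:flype1} to put all the path vectors $\nu_2,\dots,\nu_{m-1}$ in $V_D$. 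The decisive step is then to look at the irreducible vector $z=-f_{r+1}+f_1+\dots+f_{\rho_1}\in L_n$: since $\nu_{\rho_1+1}=f_{\rho_1}-f_{\rho_1+1}$ is a degree-2 vertex, exactly two vertices pair non-trivially with it, namely $\nu_{\rho_1}$ and (if $r>\rho_1+1$) $\nu_{\rho_1+2}$; both pair to zero with $z$, contradicting $z\cdot\nu_{\rho_1+1}=1>0$ and irreducibility of $z$ as a vertex sum. This degree-2-vertex argument is the missing idea; your plan of routing through Theorem~\ref{intro:thm:tsukamoto} does not obviously convert into this quantitative bound and, in the paper's logical order, Tsukamoto's theorem is proved afterwards using the same machinery. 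To fix the proposal you would need to supply the passage to the integer changemaker lattice and the irreducibility argument at $\nu_{\rho_1+1}$, or find a genuinely independent route to $r\leq\rho_1+1$.
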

Since it can be shown that the $N$ in Theorem~\ref{intro:thm:widthbound} satisfies $|N|\leq 4g(\kappa)+2$, we obtain a generalization of a bound on lens space surgeries originally due to Rasmussen \cite{Rasmussen04Goda}.
\begin{cor}\label{intro:cor:Rasmusbound}
Let $\kappa$ be a non-trivial knot. If $S_{p/q}^3(\kappa)$ is an alternating surgery, then
\[|p/q| \leq 4g(\kappa)+3.\]
\end{cor}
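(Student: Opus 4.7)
The plan is to deduce the corollary as a short combination of Theorem~\ref{intro:thm:widthbound} with an explicit arithmetic bound on the integer $N$ appearing there. Since $\kappa$ is non-trivial and $S^3_{p/q}(\kappa)$ is an alternating surgery, Theorem~\ref{intro:thm:widthbound} supplies an integer $N$, depending only on the Alexander polynomial of $\kappa$, for which $N-1\leq p/q\leq N+1$; applying the same theorem to the mirror of $\kappa$ (whose alternating surgeries have slopes $-p/q$) yields a symmetric bound. Hence once we know that $|N|\leq 4g(\kappa)+2$ we immediately conclude
\[
|p/q|\leq |N|+1\leq 4g(\kappa)+3,
\]
which is the assertion of Corollary~\ref{intro:cor:Rasmusbound}. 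So the only genuine content to prove is the inequality $|N|\leq 4g(\kappa)+2$.

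For this I would invoke the fact that any alternating surgery is an $L$-space surgery, so $\kappa$ is an $L$-space knot and hence $\nu^{+}(\kappa)=g(\kappa)$. By Corollary~\ref{Ch4:cor:CMcondit} and Theorem~\ref{Ch1:thm:stabledependence}, the stable coefficients $2\leq\rho_1\leq\dots\leq\rho_m$ attached to any alternating-surgery changemaker lattice of $\kappa$ are invariants of $\kappa$, and Lemma~\ref{Ch1:lem:calcTi} gives
\[
2g(\kappa)=\sum_{i=1}^{m}\rho_i(\rho_i-1).
\]
Because each $\rho_i\geq 2$, we have $\rho_i\leq \rho_i(\rho_i-1)$, so $\sum_{i}\rho_i\leq 2g(\kappa)$. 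Thus the quantity $2g(\kappa)+\sum_{i}\rho_i$, which is the natural candidate for $N$ (arising from $a_0-S$ together with the contribution of the stable part to the norm of $w_0$ in the changemaker lattice, cf.\ Step~\ref{Ch1:Step:Vivanishing} of Lemma~\ref{Ch1:lem:wigiveCMlattice}), is bounded above by $4g(\kappa)$; allowing the $+2$ slack for the fractional part and the $\nu^{+}$-rounding in the relation $2\nu^{+}(\kappa)=a_0-S$ gives the desired $|N|\leq 4g(\kappa)+2$.

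The hard part, of course, is proving Theorem~\ref{intro:thm:widthbound} itself --- once we have the existence and invariance of $N$, the corollary is almost formal. In writing up the argument I would make the identification of $N$ completely explicit (in terms of the invariants $\rho_1,\dots,\rho_m$ extracted from $\Delta_{\kappa}(t)$ via Remark~\ref{Ch1:rem:torsiondeterminespoly} and Theorem~\ref{Ch1:thm:calculaterho}) so that the numerical estimate $\sum_{i}\rho_i\leq\sum_{i}\rho_i(\rho_i-1)$ can be applied transparently; all other steps amount to quoting the results of Chapter~\ref{chap:sharpman}.
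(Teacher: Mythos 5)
Your overall strategy is the same one the paper uses: invoke Theorem~\ref{intro:thm:widthbound}, identify $N$ in terms of the stable coefficients $\rho_1\leq\dots\leq\rho_m$, and then run an arithmetic estimate using the genus formula $2g(\kappa)=\sum_{i=1}^m\rho_i(\rho_i-1)$. But there is a genuine gap in the way you pin down $N$ and in the ``$+2$ slack'' argument.

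The paper's proof of Theorem~\ref{intro:thm:widthbound} sets
\[
N=\rho_1+\sum_{i=1}^m\rho_i^2,
\]
whereas your ``natural candidate'' is $2g(\kappa)+\sum_i\rho_i$, which by the genus formula is exactly $\sum_i\rho_i^2$ --- you are missing the leading $\rho_1$. That $\rho_1$ is not a rounding or fractional-part artifact: it appears because Proposition~\ref{Ch3:prop:CMcondition} forces the changemaker tuple to contain at least $\rho_1-1$ coefficients equal to $1$, which is what produces the lower bound $\lfloor p/q\rfloor \geq N-1$. Since $\rho_1$ is unbounded, you cannot account for it with a fixed ``$+2$ slack''. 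Your inequality $\sum_i\rho_i\leq 2g(\kappa)$ only gives $\sum_i\rho_i^2\leq 4g(\kappa)$; it does not by itself give $N\leq 4g(\kappa)+2$. What saves the bound (and what the paper does) is the additional observation $\rho_1^2+\rho_1\leq 2\rho_1(\rho_1-1)+2$ (equivalently $(\rho_1-1)(\rho_1-2)\geq 0$), used together with $\rho_i^2\leq 2\rho_i(\rho_i-1)$ for $i\geq 2$; these give
\[
p/q\leq N+1=\rho_1^2+\rho_1+1+\sum_{i=2}^m\rho_i^2\leq 2\sum_{i=1}^m\rho_i(\rho_i-1)+3=4g(\kappa)+3.
\]
So the conclusion is correct and your route is the intended one, but the step where you convert ``$\sum_i\rho_i\leq 2g$'' into ``$|N|\leq 4g+2$'' does not follow without first writing $N$ correctly and then estimating the extra $\rho_1$ against $(\rho_1-1)(\rho_1-2)\geq 0$.
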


The work of Greene provides a lower bound on alternating surgery slopes in terms of the genus \cite[Theorem~1.1]{greene2010space}.
\begin{thm}[Greene]
If $S_{p/q}^3(\kappa)$ is an alternating surgery and $\kappa$ is non-trivial, then
\[|p/q|\geq 2g(\kappa)+ \frac{1+\sqrt{24g(\kappa)+1}}{2}\]
\end{thm}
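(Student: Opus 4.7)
The plan is to extract the bound directly from the structure of the changemaker lattice associated with an alternating surgery. Suppose $S^3_{p/q}(\kappa)$ is an alternating surgery; after reflecting $\kappa$ if necessary we may assume $p/q>0$, so there is an alternating link $L$ with $\Sigma(L)\cong S^3_{-p/q}(\kappa)$. By Corollary~\ref{Ch4:cor:CMcondit}, for a reduced alternating diagram of $L$ the Goeritz lattice is isomorphic to a $p/q$-changemaker lattice with changemaker coefficients $1=\sigma_1\leq\cdots\leq\sigma_t$. Let $\rho_1\leq\cdots\leq\rho_m$ be the stable coefficients and let $k\geq 1$ be the number of $\sigma_i$ equal to $1$, so $\sigma_{k+j}=\rho_j$ and $t=k+m$. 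The same corollary yields $2g(\kappa)=\sum_{j=1}^m \rho_j(\rho_j-1)$, so the non-triviality of $\kappa$ guarantees $m\geq 1$.

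Next I would translate the surgery slope into information about these coefficients. From Definition~\ref{Intro:def:CMlattice} we have $\lceil p/q\rceil=\norm{w_0}$, and this norm equals $k+\sum\rho_j^2$ when $p/q\in\Z$ and $1+k+\sum\rho_j^2$ otherwise; in either case the strict/weak inequality $p/q\geq k+\sum\rho_j^2$ holds. Using $\sum\rho_j^2=2g(\kappa)+\sum\rho_j$, and writing $S=\sum\rho_j$ and $A=k+S$, this rearranges to
\[
p/q-2g(\kappa)\;\geq\;A.
\]
The bound in the statement is equivalent to $A(A-1)\geq 6g(\kappa)$, so the main task reduces to proving this inequality from the changemaker condition, and this is the crux of the argument.

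To establish $A(A-1)\geq 6g(\kappa)$, I would use Proposition~\ref{Ch3:prop:CMcondition} applied to the $\sigma_i$: since $\sigma_1=\cdots=\sigma_k=1$, each $\rho_j=\sigma_{k+j}$ satisfies $\rho_j\leq 1+k+\sum_{i<j}\rho_i$, i.e.\ $\rho_j-1\leq k+S_{j-1}$ where $S_{j-1}=\sum_{i<j}\rho_i$. Summing $\rho_j(\rho_j-1)$ then yields
\[
2g(\kappa)\;\leq\;kS+\sum_{j=1}^m \rho_j S_{j-1}.
\]
Now $\sum_j \rho_j S_{j-1}=\tfrac{1}{2}\bigl(S^2-\sum\rho_j^2\bigr)=\tfrac{1}{2}(S^2-2g(\kappa)-S)$, and substituting gives $6g(\kappa)\leq S(2k+S-1)$. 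A direct calculation shows
\[
A(A-1)-S(2k+S-1)=(k+S)(k+S-1)-S(2k+S-1)=k(k-1)\geq 0,
\]
using $k\geq 1$. Combining these yields $A(A-1)\geq 6g(\kappa)$, hence $A\geq\tfrac{1}{2}(1+\sqrt{24g(\kappa)+1})$, and therefore
\[
p/q\;\geq\;2g(\kappa)+\tfrac{1}{2}\bigl(1+\sqrt{24g(\kappa)+1}\bigr),
\]
which is the claimed lower bound on $|p/q|$.

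The hard part is recognising the correct algebraic reformulation: the bound looks transcendental but is equivalent to the clean integer inequality $A(A-1)\geq 6g(\kappa)$, and then choosing to bound $\rho_j-1$ (rather than $\rho_j$) by $k+S_{j-1}$ so that the identity $\sum \rho_j S_{j-1}=\tfrac{1}{2}(S^2-\sum\rho_j^2)$ eliminates $\sum\rho_j^2$ in favour of $g(\kappa)$ and $S$. Once this manipulation is in place, the final inequality $(k+S)(k+S-1)-S(2k+S-1)=k(k-1)$ closes the argument without further hypotheses, and the condition $k\geq 1$ comes for free from $\sigma_1=1$.
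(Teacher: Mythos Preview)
Your proof is correct and follows essentially the same approach as the paper: both extract the changemaker coefficients via Corollary~\ref{Ch4:cor:CMcondit}, use the genus formula $2g(\kappa)=\sum\sigma_i(\sigma_i-1)$, and exploit the changemaker inequality $\sigma_i\leq 1+\sum_{j<i}\sigma_j$ to bound $|p/q|$ from below. The paper packages the computation slightly differently --- it expands $(1+\sum\sigma_i)^2\geq 1+3\sum\sigma_i^2=3n+1$ directly (applying the changemaker inequality uniformly to all $\sigma_i$) and then inverts $2g\leq n+1-\sqrt{3n+1}$ --- whereas you split off the unit coefficients and reformulate the target as $A(A-1)\geq 6g$; but since $(1+A)^2\geq 3n+1$ with $n=2g+A$ is algebraically equivalent to $A(A-1)\geq 6g$, the two arguments are the same in substance.
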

\begin{proof}
Let $S_{p/q}^3(\kappa)$ be an alternating surgery on a non-trivial knot. By reflecting if necessary, we may assume that $p/q>1$. Corollary~\ref{Ch4:cor:CMcondit} shows that there is a collection of changemaker coefficients $\sigma_1\leq  \dots \leq  \sigma_t$ such that
\[2g(\kappa)=\sum_{i=1}^t \sigma_i (\sigma_i -1)
\quad \text{and}\quad
n=\lfloor \frac{p}{q} \rfloor =\sum_{i=1}^t \sigma_i^2.
\]
By Proposition~\ref{Ch3:prop:CMcondition}, $\sigma_i$ satisfies
\[ \sigma_i \leq 1+ \sigma_1 + \dots + \sigma_{i-1},\]
for all $i$, so we see that
\begin{align*}
(1+\sigma_1+ \dots + \sigma_t)^2 &= 1+ \sum_{i=1}^t \sigma_i^2 + 2\sigma_i(1+ \sigma_1 + \dots + \sigma_{i-1})\\
&\geq 1 + 3\sum_{i=1}^t \sigma_i^2=3n+1.
\end{align*}
This shows that the genus satisfies,
\[2g(\kappa)\leq n+1- \sqrt{3n+1}.\]
When $g(\kappa)\geq 1$, this implies that
\[\frac{p}{q}\geq n \geq 2g(\kappa)+ \frac{1+\sqrt{24g(\kappa)+1}}{2},\]
as required.
\end{proof}
\begin{rem}
This shows that the bound in Proposition~\ref{Ch6:prop:smallsurgery} can improved: if $S_{p/q}^3(\kappa)$ is an alternating surgery and $|p/q|<5$, then $\kappa$ is the unknot.
\end{rem}
It is also possible to say something about the knots which admit alternating surgeries in the full range allowed by Theorem~\ref{intro:thm:widthbound}.
\begin{thm}\label{ch6:thm:fullrange}
Suppose that $K$ is a nontrivial knot admitting alternating surgeries $S_{r}^3(K)$ for each of the slopes $r\in \{p'/q',p/q,N\}$, where $N$ is the integer appearing in Theorem~\ref{intro:thm:widthbound}. If $p/q$ and $p'/q'$ satisfy
\[N-1\leq p'/q'<N<p/q< N+1,\]
then $S_{N}^3(K)$ is a reducible surgery and $K$ is a torus knot or a cable.
\end{thm}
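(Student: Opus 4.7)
My plan is first to translate the hypothesis into data about three changemaker lattices attached to $K$, use this data to produce an orthogonal direct sum decomposition of the $N$-changemaker lattice, and then conclude reducibility of $S_N^3(K)$ and apply the cabling conjecture.

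By Corollary~\ref{Ch4:cor:CMcondit}, for each slope $r \in \{p'/q', N, p/q\}$ the Goeritz lattice of a reduced alternating diagram $D_r$ of the alternating link $L_r$ (with $\Sigma(L_r) \cong S_r^3(K)$) is isomorphic to an $r$-changemaker lattice $\Lambda^{(r)}$, and by Theorem~\ref{Ch1:thm:stabledependence} the stable coefficients $2 \leq \rho_1 \leq \dots \leq \rho_s$ do not depend on $r$. Writing $\Sigma = \sum_j \rho_j^2$ and recalling $\norm{w_0} = \lceil r \rceil$ (or $=r$ when the slope is integral), a count shows that $\Lambda^{(p'/q')}$ has $N-1-\Sigma$ changemaker coefficients equal to $1$, whereas $\Lambda^{(N)}$ and $\Lambda^{(p/q)}$ each have $N-\Sigma$ such coefficients. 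The changemaker inequality (Proposition~\ref{Ch3:prop:CMcondition}) applied to $\Lambda^{(p'/q')}$ forces $\rho_1 \leq N-\Sigma$; combined with the definition of $N$ in Theorem~\ref{intro:thm:widthbound} (so that the lower endpoint $N-1$ of the interval is realised) this must be an equality $\rho_1 = N-\Sigma$. In the integer $N$-lattice the first stable coefficient therefore sits at position $j_* = \rho_1 + 1$ with $\sigma_{j_*} = j_* - 1$, so the necessary condition of Lemma~\ref{Ch3:lem:reduciblebound} for decomposability is met.

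To upgrade this into an honest orthogonal splitting of $\Lambda^{(N)}$, I would exploit the fractional-part structure of the non-integer companion $\Lambda^{(p/q)}$ developed in Section~\ref{Ch3:sec:fracparts} and Chapter~6. The extra vectors $w_1, \dots, w_l$ of $\Lambda^{(p/q)}$ satisfy Conditions II and III of Definition~\ref{Intro:def:CMlattice}, and translating them through the common stable-coefficient data yields divisibility constraints of the form $(\rho_1 + 1) \mid \rho_j$ for $j \geq 2$. Granted these, the vector $v = e_1 + \dots + e_{\rho_1} - e_{\rho_1 + 1}$ lies in $\Lambda^{(N)}$ (since $v \cdot w_0 = 0$), and every $u \in \Lambda^{(N)}$ then satisfies $u \cdot v \equiv 0 \pmod{\norm{v}}$ with $\norm{v} = \rho_1 + 1$, producing the orthogonal splitting $\Lambda^{(N)} \cong \langle v \rangle \oplus \bigl(\langle v \rangle^\perp \cap \Lambda^{(N)}\bigr)$. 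Decomposability of $\Lambda^{(N)}$ means, by Lemma~\ref{Ch3:lem:2connectgraphlat}, that the white graph $\Gamma_{D_N}$ fails to be 2-connected; after the flypes of Section~\ref{sec:altdiagrams} this exhibits $D_N$ as a non-trivial connect sum $D_1 \# D_2$ of reduced alternating diagrams, so $L_N \simeq L_1' \# L_2'$ and $S_N^3(K) \cong \Sigma(L_1') \# \Sigma(L_2')$ is a non-trivial connect sum of 3-manifolds (each $\det(L_i') > 1$, so each factor has non-trivial first homology). Hence $S_N^3(K)$ is reducible. Since alternating surgeries are L-space surgeries, $K$ is an L-space knot admitting a reducible surgery, and the cabling conjecture for L-space knots --- which in this setting follows from Gabai's solid torus theorem together with the Heegaard Floer obstructions to reducible L-space surgeries --- forces $K$ to be a torus knot or a cable.

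The main obstacle I anticipate is the middle step, upgrading the necessary condition $\sigma_{j_*} = j_* - 1$ to a genuine direct sum decomposition of $\Lambda^{(N)}$. Since the converse of Lemma~\ref{Ch3:lem:reduciblebound} is false in general, the divisibilities $(\rho_1 + 1) \mid \rho_j$ really do have to be extracted from the extra structure provided by the companion alternating surgeries; teasing out exactly how the vectors $w_1, \dots, w_l$ of $\Lambda^{(p/q)}$ (and possibly of $\Lambda^{(p'/q')}$, when it too is non-integer) enforce these divisibilities is where the delicate technical work will sit, and all the remaining steps are either routine lattice bookkeeping or appeals to established machinery.
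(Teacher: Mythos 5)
Your bookkeeping is correct and matches the paper: all three changemaker lattices share the same stable coefficients $\rho_1\leq\dots\leq\rho_s$ by Theorem~\ref{Ch1:thm:stabledependence}, the $p'/q'$-lattice has $N-1-\Sigma$ changemaker coefficients equal to one while the $N$- and $p/q$-lattices each have $N-\Sigma=\rho_1$ such coefficients (since $N=\rho_1+\Sigma$), and this forces the first stable coefficient to sit exactly at the threshold $\sigma_{m}=m-1$ of Lemma~\ref{Ch3:lem:reduciblebound}. The problem is the middle step, which you yourself flagged as the obstacle: the divisibility $(\rho_1+1)\mid\rho_j$ does \emph{not} follow from the hypotheses. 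Take stable coefficients $(\rho_1,\rho_2)=(2,4)$, so $\Sigma=20$, $N=22$, and
\[L_N=\langle f_1+f_2+2f_3+4f_4\rangle^\perp\subseteq\Z^4.\]
Both $(1,1,2,4)$ and $(1,2,4)$ satisfy the changemaker condition, so this configuration is fully compatible with alternating surgeries at $N-1$, $N$, and some $p/q\in(N,N+1)$; yet $\rho_1+1=3\nmid 4=\rho_2$. Your candidate vector $v=f_1+f_2-f_3$ does lie in $L_N$, but the standard basis vector $\nu_4=-f_4+f_1+f_2+f_3$ satisfies $\nu_4\cdot v=1$, which is not a multiple of $\norm{v}=3$, so $\langle v\rangle$ is not an orthogonal summand. ($L_N$ is in fact decomposable — it splits as $\langle f_1-f_2\rangle\oplus\langle \nu_3,\nu_4\rangle$ — just not along the axis your argument predicts, and nothing in your outline produces this other splitting.)

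The paper sidesteps the algebraic decomposition question entirely and argues geometrically. From the alternating surgery at $p/q\in(N,N+1)$, Proposition~\ref{Ch6:prop:CMidentifiestangle} yields a reduced alternating diagram $D_{N+\frac{1}{2}}$ with $\Lambda_{D_{N+\frac{1}{2}}}\cong L_{N+\frac{1}{2}}=\langle w_0,e_1-e_0\rangle^\perp$. The existence of the $p'/q'$-surgery forces the coefficient tuple with one fewer unit coefficient to remain a changemaker tuple, and that forces $L_{N+\frac{1}{2}}$ to be \emph{slack}. Now Lemma~\ref{Ch5:lem:slackvtight} kicks in: slackness means the two marker vertices form a cut set of $\Gamma_{D_{N+\frac{1}{2}}}$, so resolving the (unique, by Lemmas~\ref{Ch5:lem:markedcexist} and~\ref{Ch5:lem:manymarkedcrossings}) marked crossing produces an alternating diagram $D_N$ whose white graph is not 2-connected, i.e.\ $D_N$ is not prime. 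Since $\Lambda_{D_N}\cong L_N$, Lemma~\ref{Ch3:lem:2connectgraphlat} converts this into decomposability of $L_N$, and then back into non-primeness of \emph{any} reduced alternating diagram of the link branched over by $S^3_N(K)$, whence $S^3_N(K)$ is reducible. Your final step is right in spirit but the paper is more specific: it uses $N>2g(\kappa)-1$ to invoke Hoffman and Matignon--Sayari directly, rather than a general L-space cabling argument.
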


\subsection{Proving Theorem~\ref{intro:thm:widthbound} and Theorem~\ref{ch6:thm:fullrange}}
In this section we prove Theorem~\ref{intro:thm:widthbound}, Corollary~\ref{intro:cor:Rasmusbound} and Theorem~\ref{ch6:thm:fullrange}.
\begin{proof}[Proof of Theorem~\ref{intro:thm:widthbound} and Corollary~\ref{intro:cor:Rasmusbound}]
Let $\kappa$ be a nontrivial knot with an alternating surgery. Since a non-trivial knot cannot admit both positive and negative $L$-space surgeries, we may assume that all alternating surgeries on $\kappa$ are negative. If $S_{-p/q}^3(\kappa)\cong \Sigma(L)$ is an alternating surgery for some $p/q>0$, then Corollary~\ref{Ch4:cor:CMcondit} shows that there is a $p/q$-changemaker lattice $L_{p/q}$ such that any reduced alternating diagram $D'$ of $L$ satisfies $\Lambda_{D'}\cong L_{p/q}$. Let $(\sigma_1, \dots, \sigma_t)$ be the changemaker coefficients for $L_{p/q}$ and $2\leq \rho_1 \leq \dots \leq \rho_m$ be the corresponding stable coefficients. We will take
\[N=\rho_1 +\sum_{i=1}^m \rho_i^2.\]
Proposition ~\ref{Ch3:prop:CMcondition} shows that at least $\rho_1-1$ of the $\sigma_i$ are equal to one. Therefore, we have
\begin{align*}
\lfloor p/q \rfloor &= \sum_{i=1}^t \sigma_i^2 \\
&=\sum_{i=1}^m \rho_i^2 + |\{i \,|\, \sigma_i=1\}|\\
&\geq \sum_{i=1}^m \rho_i^2 + \rho_1-1=N-1.
\end{align*}
This proves the lower bound.

Let $n=\lceil p/q \rceil$ and $L_n$ be the $n$-changemaker lattice with the same stable coefficients as $L_{p/q}$.
\begin{claim}
There is an alternating diagram $D$, with $\Lambda_D\cong L_n$.
\end{claim}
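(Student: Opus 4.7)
The plan is to build $D$ by modifying the alternating diagram $D'$ of $L$ via a tangle replacement. By Proposition~\ref{Ch6:prop:CMidentifiestangle}, after a suitable sequence of flypes we may replace $D'$ by a reduced alternating diagram $\widetilde D$ that contains the fractional tangle of slope $(q-r)/r$ sitting in a disk bounded by the marker regions $v,w$, with internal regions $\mu_1,\ldots,\mu_m$. Recall that $L_n$ differs from $L_{p/q}$ only in its fractional part: the changemaker coefficients of $L_n$ are obtained from those of $L_{p/q}$ by adjoining a single extra coefficient equal to $1$ (this accounts for the $e_0$-summand in $w_0$, so that $\norm{w_0}$ goes from $n-1$ to $n$). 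In particular, $L_n$ is nothing other than the abstract integer changemaker whose standard-basis description (as in Section~\ref{Ch3:sec:standardbases}) matches that of the integral part of $L_{p/q}$.

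With this in mind, I would construct $D$ from $\widetilde D$ by removing the fractional tangle and replacing it with an integer-slope rational tangle of the appropriate sign so that the result remains alternating. Concretely, one can first use Proposition~\ref{Ch6:prop:CMidentifiestangle} itself to collapse the fractional tangle to a single marked crossing, producing an alternating diagram $\widetilde D'$ with $\Lambda_{\widetilde D'}\cong \langle w_0,\ e_1-e_0\rangle^\perp\subseteq \mathbb Z^{t+2}$; then modify $\widetilde D'$ locally near the marked crossing (for example by inserting an extra bigon of the correct sign and performing a Reidemeister~II/flype cleanup) to kill the role of the vector $e_1-e_0$ and realise the Goeritz lattice as the $n$-changemaker $\langle w_0\rangle^\perp\subseteq \mathbb Z^{t+1}$. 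The diagram $D$ obtained this way is alternating by inspection.

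The computation of $\Lambda_D$ is the step to be carried out carefully. Having chosen a specific tangle replacement, I would enumerate the vertices of the new white graph $\Gamma_D$, write down the resulting embedding into $\mathbb Z^{t+1}$ (keeping the $f_i$-coordinates inherited from $\widetilde D$ and using a single new coordinate $e_0$ for the added bigon region), and then check, just as in the proof of Lemma~\ref{Ch1:lem:QXisom} and Lemma~\ref{Ch3:lem:halfintidentification}, that the image is precisely $\langle e_0+\sigma_1 f_1+\dots+\sigma_t f_t\rangle^\perp$. Uniqueness of $p/q$-changemaker lattices up to stable coefficients (Theorem~\ref{Ch1:thm:stabledependence}) then identifies this with $L_n$.

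The main obstacle is isolating the correct local tangle replacement near the marker regions and verifying that the induced modification of the Goeritz lattice is exactly the passage from $\langle w_0,\ e_1-e_0\rangle^\perp$ to $\langle w_0\rangle^\perp$, rather than producing an unwanted summand or destroying an irreducibility condition. Once that local computation is established, the rest is bookkeeping with the standard basis constructed in Section~\ref{Ch3:sec:standardbases}.
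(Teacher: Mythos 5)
Your overall plan agrees with the paper's: apply Proposition~\ref{Ch6:prop:CMidentifiestangle} to collapse the fractional tangle to a single marked crossing, producing a diagram $D''$ with $\Lambda_{D''}\cong\langle w_0,\ e_1-e_0\rangle^\perp\subseteq\Z^{t+2}$, and then perform a local modification near that marked crossing. Your description of the target lattice $L_n$ (same stable coefficients, one extra changemaker coefficient equal to $1$ absorbing the $e_0$-coordinate) is also essentially right.

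However, the local modification you propose is not the right one. Inserting a bigon and then cancelling it by a Reidemeister~II move is an isotopy, and so is a flype; any composition of these preserves the link type and hence its determinant. But $L_{n-1/2}$ has discriminant $2n-1$ while $L_n$ has discriminant $n$, so any construction that produces $D$ from $D''$ by isotopy alone is doomed. What is actually done in the paper is to \emph{resolve} the marked crossing --- smooth it so that the two white marker regions $v$ and $w$ stay separate, i.e.\ replace the $\pm1$-tangle by the $R(1/0)$-tangle (not an integer-slope tangle). This deletes one crossing and hence one edge of the white graph; the resulting diagram is visibly alternating, and dropping the basis vector $e_1$ while rewriting the marker vertices $v'+e_0+e_1\mapsto v'+e_0$, $w'-e_0-e_1\mapsto w'-e_0$ exhibits $\Lambda_D$ as $\langle w_0\rangle^\perp\subseteq\Z^{t+1}=L_n$ directly. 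Once the resolution is correctly identified, no appeal to Theorem~\ref{Ch1:thm:stabledependence}, Lemma~\ref{Ch1:lem:QXisom}, or Lemma~\ref{Ch3:lem:halfintidentification} is needed: the isomorphism is read off from the explicit embedding, which is why the paper devotes only one sentence to it. So the gap in your proposal is the choice of the local operation; replace the bigon/RII idea with crossing resolution and the rest of your outline goes through.
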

\begin{proof}
If $p/q\not\in \Z$, then Proposition~\ref{Ch6:prop:CMidentifiestangle} shows that there is a diagram $D''$ such that $\Lambda_{D''}\cong L_{n-\frac{1}{2}}$, where $L_{n-\frac{1}{2}}$ is the $(n-\frac{1}{2})$-changemaker lattice with the same stable coefficients as $L_{p/q}$.
Let $V_{D''}\subseteq L_{n-\frac{1}{2}}$ be the corresponding set of vertices. If we take
\[L_{n-\frac{1}{2}}= \langle w_0, e_1- e_0 \rangle^\bot \subseteq \Z^{t+2},\]
then there are marker vertices of the form $v'+e_0+e_1$ and $w'-e_0-e_1$ in $V_{D''}$ where $v'\cdot e_0=w'\cdot e_0=0$. By Lemma~\ref{Ch5:lem:markedcexist}, this gives us a marked crossing in $D''$.
If we resolve this marked crossing as in Figure~\ref{Ch6:fig:resolveup}, then we obtain a diagram $D$ such that $\Lambda_D \cong L_n$, where
\[L_n=\langle w_0\rangle^\bot \subseteq \Z^{t+1}.\]
\begin{figure}[h]
  \centering
  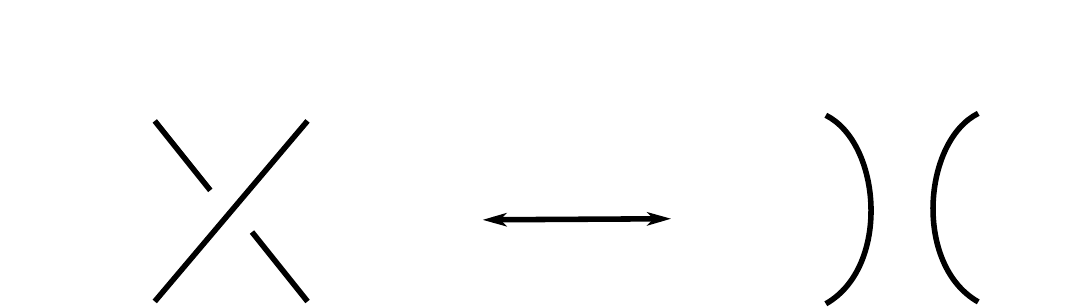
 \caption{The diagrams $D''$ and $D$ arising in the proof of Theorem~\ref{intro:thm:widthbound}.}
 \label{Ch6:fig:resolveup}
\end{figure}
The set of vertices $V_D\subseteq L_n$ for this isomorphism are obtained by replacing $v'+e_0+e_1$ and $w'-e_0-e_1$ in $V_{D''}$ with $v'+e_0$ and $w'-e_0$, respectively.
\end{proof}

Take $D$ to be a diagram such that $\Lambda_D \cong L_n$. Write $L_n$ in the form
\[L_n=\langle f_1 + \dots + f_{r}+ \rho_1 f_{r+1} + \dots + \rho_m f_{m+r} \rangle^\bot\subseteq \Z^{r+m},\]
where $n=r+ \sum_{i=1}^m \rho_i^2$. We will complete the proof of Theorem~\ref{intro:thm:widthbound} by showing that $r\leq \rho_1+1$.

Let $V_D$ be the set of vertices arising from the isomorphism $\Lambda_D\cong L_n$. Note that for $2\leq i\leq r$, we have $\nu_i=f_{i-1}-f_i\in L_n$. Assume that $r\geq \rho_1+1$.
\begin{claim}
We may assume $\{\nu_1, \dots, \nu_{m-1}\}\subseteq V_D$.
\end{claim}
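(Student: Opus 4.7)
The plan is to mimic the flyping arguments used in Lemma~\ref{Ch5:lem:flypetostd} and Lemma~\ref{Ch6:lem:flypetorational}, which showed that after suitable flypes the elements of a standard basis or the basis of the fractional part may be realized as vertices. Here the vectors $\nu_i=f_{i-1}-f_i$ (for the indices under consideration) are of the form treated in Lemma~\ref{Ch3:lem:generalirred}, so each $\nu_i$ is irreducible in $L_n$. Hence by Lemma~\ref{Ch3:lem:irreducible} each $\nu_i$ can be written as $\sum_{u\in R_i}u$ for some subset $R_i\subseteq V_D$ inducing a connected subgraph of $\Gamma_D$.

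I will proceed by a downward induction, choosing the largest index $j$ for which $\nu_j\notin V_D$ and showing that a single flype, coming from Lemma~\ref{Ch4:lem:flype1}, replaces the current diagram by one in which $\nu_j$ is a vertex and none of the $\nu_k$ with $k>j$ that were already vertices are disturbed. To produce such a flype, I will locate a vertex $x\in R_j\subseteq V_D$ that pairs negatively with a neighbouring $\nu_{j\pm 1}$ which is already in $V_D$; since $\nu_j\cdot\nu_{j\pm 1}=-1$ and the $\nu_{j\pm 1}$ has norm $2$, irreducibility of $\nu_j$ forces $(x-\nu_j)\cdot \nu_j=-1$, so Lemma~\ref{Ch4:lem:flype1} applies. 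Because $\nu_j\cdot\nu_k\leq 0$ and $(x-\nu_j)\cdot \nu_k\leq 0$ for $k\ne j\pm1$, the vertices $\nu_{k}$ that were already present survive the flype, which is exactly the feature that powered the analogous inductive steps in Lemma~\ref{Ch6:lem:flypetorational}.

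The main obstacle will be checking the boundary cases of the induction, in particular that the flypes near the ``marker'' end of the path formed by the $\nu_i$'s do not inadvertently eliminate a vertex we rely on elsewhere (for instance a vertex meeting $f_{m+r}$ with a non-trivial stable coefficient contribution). I expect this to be handled by the same support bounds used in the proof of Lemma~\ref{Ch6:lem:coefbound}, which restrict how any sum of vertices can pair with the $f_i$ of multiplicity $1$ in the changemaker vector; these bounds prevent any obstruction to repeated flyping. A finite decreasing sequence of flypes then terminates in a diagram in which all the required $\nu_i$ are vertices, establishing the claim and allowing the rest of the argument for Theorem~\ref{intro:thm:widthbound} to proceed by extracting the bound $r\leq\rho_1+1$ from the local structure around this path of vertices.
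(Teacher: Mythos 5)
Your downward induction is a valid variant of the argument; the direction of the induction does not matter here, and the core mechanism (realizing a missing $\nu_j$ as a sum of vertices via Lemma~\ref{Ch3:lem:irreducible}, locating a vertex $u$ in its support with $(u-\nu_j)\cdot\nu_j=-1$, and flyping via Lemma~\ref{Ch4:lem:flype1} while checking that the $\nu_k$ on the other side are not the merged vertex) is exactly what the paper does, running the induction from the small end instead. You would also need the boundary case with no neighbouring $\nu_{j+1}$ in $V_D$ (for you, $j=r$; for the paper, $k=2$), where one instead directly extracts $u$ from the support $R_j$ using $\norm{\nu_j}=2$; this is not spelled out in your sketch but is a routine fix.

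The genuine gap is that you never establish that $\Gamma_D$ is 2-connected, and hence that the vertices of $V_D$ are irreducible, which is a standing hypothesis of Lemma~\ref{Ch4:lem:flype1} (and of Lemma~\ref{Ch3:lem:cutedge}, on which it rests). In the non-integer settings you cite as templates (Lemma~\ref{Ch5:lem:flypetostd}, Lemma~\ref{Ch6:lem:flypetorational}) this is automatic, because a non-integer changemaker lattice is always indecomposable by Lemma~\ref{Ch3:lem:fracindecomp}. Here, however, $L_n$ is an \emph{integer} changemaker lattice, and these can be decomposable (Example~\ref{Ch3:exa:integerdecomp}), so 2-connectedness is not free. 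The paper obtains it precisely from the standing assumption $r\geq \rho_1+1$ via Lemma~\ref{Ch3:lem:reduciblebound} (which forces $\rho_1=r$ when $L_n$ decomposes) and Lemma~\ref{Ch3:lem:2connectgraphlat}; without invoking that, none of the flypes in your sketch are licensed. Relatedly, the worry at the end of your proposal about a vertex ``meeting $f_{m+r}$'' being disturbed, and the appeal to Lemma~\ref{Ch6:lem:coefbound}, is misdirected: the claim only asks that the path $\nu_2,\dots,\nu_r$ be realized by vertices, and the preservation check needed at each flype is exactly the one you already identified (that the three vertices $u$, $u_1$, $u_2$ being replaced are not among the $\nu_k$ already in $V_D$), which follows from the sign conditions $\nu_k\cdot\nu_j\leq 0$ and $\nu_k\cdot(u-\nu_j)\leq 0$; no additional support bound is required.
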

\begin{proof}
By Lemma~\ref{Ch3:lem:reduciblebound}, $r\geq \rho_1+1$ shows that $L_n$ is indecomposable. By Lemma~\ref{Ch3:lem:2connectgraphlat}, this means $\Gamma_D$ is 2-connected and every vertex in $V_D$ is irreducible. This means we can apply Lemma~\ref{Ch4:lem:flype1} to flype $D$.

Let $k\leq r$, be minimal such that $\nu_k$ is not in $V_D$. Suppose first that $k=2$. Since $\nu_2$ is irreducible, Lemma~\ref{Ch3:lem:irreducible} implies that it can be written as a sum of elements of $V_D$. Hence, there is a vertex $u\in V_D$ with $u\cdot \nu_2>0$. As $u$ is irreducible and not equal to $\nu_2$, we have $(u-\nu_2)\cdot \nu_2=u\cdot \nu_2 -2 = -1$. Therefore via Lemma~\ref{Ch4:lem:flype1} we can flype to obtain $D'$ with $\nu_2\in V_{D'}$.

Now we suppose that $k>2$. Since $\nu_k$ is irreducible, it can be written as a sum of elements of $V_D$. Since $\nu_{k-1}$ is a vertex of $B$ and $\nu_k\cdot \nu_{k-1}=-1$, there is $u\in V_D$ with $u\cdot \nu_k = -u\cdot \nu_{k-1}=1$. This must satisfy $(u-\nu_k)\cdot \nu_k= -1$. By Lemma~\ref{Ch4:lem:flype1}, we can flype to obtain a diagram $D'$ with $\nu_k\in V_{D'}$. Moreover, since $(u-\nu_k)\cdot \nu_j \leq 0$ and $\nu_k\cdot \nu_j \leq 0$ for all $2\leq j<k$, we see that $\nu_2, \dots, \nu_{k-1}$ are in $V_{D'}$. Thus proceeding inductively, we see that we can assume that $D$ is chosen such that $\nu_2, \dots, \nu_{m-1}$ are all in $V_D$.
\end{proof}

Now consider the vector $z=-f_{r+1} + f_1 + \dots + f_{\rho_1}\in L_n$. This is irreducible so it can written as a sum $z=\sum_{x\in R}x$ for some subset $R\subseteq V_D$. As $z\cdot \nu_{\rho_1 +1}=1$, we must have  $\nu_{\rho_1 +1}\in R$ and there must be a vertex $v\in V_D$ with $v\cdot z \leq -1$ and $v\cdot \nu_{\rho_1 +1}=-1$. As $\norm{\nu_{\rho_1 +1}}=2$, there are precisely two vertices that pair non-trivially with $\nu_{\rho_1 +1}$. One of these is $\nu_{\rho_1}$ which has $\nu_{\rho_1}\cdot z=0$. If $r>\rho_1+1$, then we can take $\nu_{\rho_1 +2}\in L_n$. As this pairs non-trivially with $\nu_{\rho_1 +1}$ and also satisfies $\nu_{\rho_1+2}\cdot z=0$, it follows that $r= \rho_1+1$. This shows $p/q\leq n\leq N+1$, completing the proof of Theorem~\ref{intro:thm:widthbound}.

Now we deduce Corollary~\ref{intro:cor:Rasmusbound}. As  $\rho_i \geq 2 $ for all $i$ we have
\[\rho_i^2 \leq 2 \rho_i (\rho_i -1)
\quad\text{and}\quad
\rho_1^2 + \rho_1 \leq 2\rho_1 (\rho_1 -1) +2.\]
Combining these in equalities with $g(\kappa)=\sum_{i=1}^m \rho_i(\rho_i-1)$, we have
\begin{align*}
p/q \leq N+1 &=\rho_1^2 + \rho_1 +1+\sum_{i=2}^m \rho_i^2 \\
    &\leq 2\sum_{i=1}^m \rho_i(\rho_i-1) +3 \\
    &=4g(\kappa) + 3,
\end{align*}
as required.
\end{proof}

\begin{proof}[Proof of Theorem~\ref{ch6:thm:fullrange}]
Suppose that $S_{-p/q}^3(\kappa)$ is an alternating surgery for some $N<p/q<N+1$. There is an alternating diagram $D_{p/q}$ and a $p/q$-changemaker lattice
\[L_{p/q}=\langle w_0,\dots, w_l \rangle^\bot\subseteq \Z^{t+s+1},\]
such that $\Lambda_{D_{p/q}}\cong L_{p/q}$. Write $w_0$ in the form
\[w_0=e_0 + f_1 + \dots + f_{m-1} + \sigma_m f_m + \dots + \sigma_t f_t,\]
where $2\leq \sigma_m \leq \dots \leq \sigma_t$ are the stable coefficients.

By Proposition~\ref{Ch6:prop:CMidentifiestangle}, there is an alternating diagram $D_{N+\frac{1}{2}}$, such that
\[\Lambda_{D_{N+\frac{1}{2}}}\cong L_{N+\frac{1}{2}}=\langle w_0, e_1- e_0 \rangle^\bot \subset \Z^{t+2}.\]
If $S_{-p'/q'}^3(\kappa)$ is an alternating surgery for some $N-1 \leq p'/q' <N$, then
\[\langle f_2+ \dots + f_{m-1} + \sigma_m f_m + \dots + \sigma_t f_t \rangle^\bot \subseteq \Z^{t-1}\]
must also be a changemaker lattice. In particular is shows that $L_{N+\frac{1}{2}}$ is slack.

As the set of stable coefficients is non-trivial, $D_{N+\frac{1}{2}}$ is not a clasp diagram. Therefore Lemma~\ref{Ch5:lem:markedcexist} and Lemma~\ref{Ch5:lem:manymarkedcrossings} show that $D_{N+\frac{1}{2}}$ contains a unique marked crossing $c$. Let $v$ and $w$, be the marker vertices for this marked crossing. As $L_{N+\frac{1}{2}}$ is slack, Lemma~\ref{Ch5:lem:slackvtight} shows that we can assume that $v$ and $w$ form a cut set in $\Gamma_{D_{N+\frac{1}{2}}}$. Thus if $D_N$ is the diagram obtained by resolving $c$ so that the regions corresponding to $v$ and $w$ merge, then we can assume that $\Gamma_{D_N}$ is not 2-connected and, in particular, that $D_N$ is not prime. However $\Lambda_{D_N}$ is isomorphic to the $N$-changemaker lattice
\[L_N=\langle f_1+ \dots + f_{m-1} + \sigma_m f_m + \dots + \sigma_t f_t \rangle^\bot \subseteq \Z^{t}.\]
By Lemma~\ref{Ch3:lem:2connectgraphlat}, this shows that $L_N$ is decomposable. However, if $S_N(\kappa)^3 \cong \Sigma(L)$ is an alternating surgery, then for any reduced alternating diagram $D$ of $L$ satisfies $\Lambda_D\cong L_N$. This shows that any such $D$ is not prime. Therefore, $S_N^3(\kappa)$ is reducible.

Since $N>2g(\kappa)-1$, it follows from the work of Hoffman and Matignon-Sayari on the cabling conjecture that $\kappa$ is either a cable or a torus knot \cite{hoffman98reducing, Matignon03longitudinal}.
\end{proof}

\section{Tsukamoto's theorem}
Finally, we complete our proof of Tsukamoto's theorem on the almost-alternating diagrams of the unknot. \cite[Corollary 1.1]{tsukamoto2009almost}.
\begin{thm}[Tsukamoto]\label{intro:thm:tsukamoto}
Any reduced almost-alternating diagram of the unknot can be obtained from $\mathcal{C}_m$, for some non-zero integer $m$, by a sequence of flypes, tongue moves and twirl moves. For each $m$, $\mathcal{C}_m$ is the almost-alternating diagram shown in Figure~\ref{fig:claspdiagram}.
\end{thm}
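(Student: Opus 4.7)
The plan is to reduce Tsukamoto's theorem directly to Theorem~\ref{Ch5:thm:markedmeansunknotting}, applied to the alternating diagram obtained from $\widetilde{D}$ by changing its unique non-alternating crossing. First I would let $\widetilde{D}$ be a reduced almost-alternating diagram of the unknot with non-alternating crossing $c$, and let $D$ be the alternating diagram obtained by changing $c$, representing a knot $K$. Since $\widetilde{D}$ is reduced and has at least one crossing, $D$ is itself a reduced alternating diagram of a non-trivial knot, and $c$ is an unknotting crossing of $D$.

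The next step is to produce a half-integer changemaker embedding of $\Lambda_D$ for which $c$ is a marked crossing. Applying the Montesinos trick at $c$ (Proposition~\ref{Ch4:prop:Montesinos}) produces a knot $\kappa \subseteq S^3$ with $\Sigma(K) \cong S^3_{-\delta/2}(\kappa)$, where $\delta = (-1)^{\sigma(K)/2}\det(K)$. Corollary~\ref{Ch4:cor:CMcondit}, applied to this half-integer surgery, yields an isomorphism $\iota_D \colon \Lambda_D \to L$ onto a half-integer changemaker lattice $L = \langle w_0, e_1-e_0\rangle^\bot \subseteq \mathbb{Z}^{t+2}$. The crucial point is that this particular isomorphism, coming from the closed 4-manifold $Z = W(K) \cup_{\Sigma(K)} -X_D$ in which the 2-handle $h_1$ of $W(K)$ is the one attached along the lift of the meridian of $c$, sends the two white regions of $D$ incident to $c$ to vectors $v, w \in L$ with $v \cdot e_0 = 1$ and $w \cdot e_0 = -1$. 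Once this is established, $c$ is a marked crossing for $\iota_D$ in the sense of Definition~\ref{Ch5:def:markedcrossing}, and Theorem~\ref{Ch5:thm:markedmeansunknotting} immediately supplies a sequence of flypes, untongue and untwirl moves reducing $\widetilde{D}$ to some $\mathcal{C}_m$. Reversing this sequence yields the claimed sequence from $\mathcal{C}_m$ to $\widetilde{D}$ using flypes, tongue and twirl moves.

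The main obstacle will be the bookkeeping step identifying $c$ with a marked crossing of $\iota_D$. This is essentially a geometric computation: one must trace, through the definitions of $W(K)$ and of the embedding $H_2(W(K)) \hookrightarrow H_2(Z) \cong \mathbb{Z}^{t+2}$ produced by Donaldson's diagonalization, how the 2-handle of $W(K)$ coming from the Montesinos construction at $c$ intersects the vectors coming from the white regions of $D$. The two regions of $D$ adjacent to $c$ pair non-trivially with $c$ in $\Gamma_D$, and the linking relations in $Z$ force their images under $\iota_D$ to carry the non-trivial $e_0$-coefficients that characterise marker vertices. Once this identification is in hand, every other step of the argument is a direct citation of Proposition~\ref{Ch4:prop:Montesinos}, Corollary~\ref{Ch4:cor:CMcondit} and Theorem~\ref{Ch5:thm:markedmeansunknotting}.
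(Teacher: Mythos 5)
Your reduction to showing that $c$ is a marked crossing for \emph{some} half-integer changemaker isomorphism $\iota_D$ is the right framing, and from there Theorem~\ref{Ch5:thm:markedmeansunknotting} does finish the argument. But the step you set aside as ``bookkeeping'' is exactly where the work is, and the route you propose for it cannot succeed. The isomorphism $\iota_D$ supplied by Corollary~\ref{Ch4:cor:CMcondit} rests on Donaldson's Diagonalization Theorem applied to $Z = W \cup_Y -X_D$, and Donaldson only guarantees the abstract \emph{existence} of an orthonormal basis for $H_2(Z)$; the basis vectors $e_0$, $e_1$, $f_i$ are not realized by specific surfaces whose intersection numbers with the lifted white regions you could trace through the geometry. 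The only intersection datum involving the $2$-handle attached over $c$ that is geometrically accessible is $\iota_D(v)\cdot w_1$, and this vanishes identically for every vertex $v$ because $\iota_D(v)\in L=\langle w_0,w_1\rangle^\bot$; it carries no information about which vertex has a nonzero $e_0$-coefficient. Moreover, the changemaker structure on a given lattice is in general not unique even up to lattice automorphism (Example~\ref{Ch2:exam:differentcoef}), so the position of $e_0$ is genuinely not a canonical feature of the geometry, and no ``linking computation'' can pin it down a priori.

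The paper supplies the missing step as Lemma~\ref{Ch6:lem:everymarked}, by a trick of a completely different nature. One replaces $c$ by a twist of $M-1$ crossings, with $M$ larger than the crossing number of $D$, producing an alternating diagram $D_M$ whose double branched cover is a $-(n-\frac{M-1}{M})$-surgery on the same knot. Theorem~\ref{Ch6:thm:rationalsurgery} then gives a changemaker embedding of $\Lambda_{D_M}$ whose fractional part is spanned by $\mu_0=e_0+\dots+e_{M-1}$ of norm $M$, so any vertex with nonzero fractional part has norm at least $M$, i.e.\ is a region of $D_M$ incident to at least $M$ crossings. Since $M$ exceeds the crossing number of $D$, only the two regions abutting the inserted twist can carry the fractional part, which forces the $e_0$-coefficient onto exactly those regions; collapsing the twist back down then yields a half-integer changemaker embedding of $\Lambda_D$ for which $c$ is the marked crossing. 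You need an argument of this type, not a geometric intersection count against the Donaldson basis, before the rest of your outline can go through.
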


The proof is given by combining Theorem~\ref{Ch5:thm:markedmeansunknotting} with the following lemma.
\begin{lem}\label{Ch6:lem:everymarked}
Let $D$ be a reduced alternating diagram with an unknotting crossing $c$ such that either $c$ is positive and $\sigma(D)=-2$ or $c$ is negative and $\sigma(D)=0$, then there is an isomorphism from $\Lambda_D$ to a half-integer changemaker lattice for which $c$ is the marked crossing.
\end{lem}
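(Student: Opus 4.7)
The plan is to reduce the lemma to Proposition~\ref{Ch6:prop:CMidentifiestangle} by introducing a large twist region at $c$. After reflecting $D$ if necessary, we may assume $c$ is a negative crossing in $D$ with $\sigma(D)=0$. Let $\tilde D'$ be the almost-alternating diagram of the unknot obtained from $D$ by changing $c$, let $\kappa\in \mathcal{D}$ be the corresponding knot, and set $d=\det(D)$. Proposition~\ref{Ch4:prop:Montesinos} then gives $\Sigma(D)\cong S^3_{-d/2}(\kappa)$.

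For each large integer $M$, form a reduced alternating diagram $D_M$ from $D$ by replacing the single crossing $c$ with a twist region of $M+1$ alternating negative crossings. Viewing $D_M$ as obtained from $\tilde D'$ by rational tangle replacement of the non-alternating crossing with a twist tangle of slope $M$, Proposition~\ref{Ch4:prop:3implies1} yields $\Sigma(D_M)\cong S^3_{-p/q}(\kappa)$ with $q=M+1$ and $p/q=(d+1)/2-1/q$. In particular $p/q$ is a non-integer surgery slope with $p>q>1$, so Corollary~\ref{Ch4:cor:CMcondit} identifies $\Lambda_{D_M}$ with a $p/q$-changemaker lattice. Proposition~\ref{Ch6:prop:CMidentifiestangle} then supplies a sequence of flypes to a diagram $\tilde D_M$ containing a rational tangle $T$ of slope $(q-1)/1=M$, such that replacing $T$ by a single crossing $\tilde c$ yields an alternating diagram whose Goeritz lattice embeds as a half-integer changemaker lattice with $\tilde c$ marked.

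The decisive step is to identify $T$ with the twist region we introduced at $c$. Any rational tangle of slope $M$ in an alternating diagram consists of at least $M$ alternating crossings strung along a twist; for $M$ strictly larger than the crossing number of $D$, the only part of $D_M$ that can support such a configuration is the introduced twist region. It follows that the flypes of Proposition~\ref{Ch6:prop:CMidentifiestangle} can be arranged to preserve this twist, and that $T$ coincides (up to flypes internal to the twist) with the region we introduced. Consequently, collapsing $T$ back to a single crossing recovers an alternating diagram flype-equivalent to $D$, with the collapsed crossing $\tilde c$ corresponding to $c$. Transporting the resulting half-integer changemaker isomorphism back along these flypes produces the desired isomorphism $\Lambda_D\cong L$ under which $c$ is the marked crossing.

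The main obstacle is the rigidity argument identifying $T$ with the introduced twist. This rests on a crossing-count argument combined with careful tracking of how the flypes of Proposition~\ref{Ch6:prop:CMidentifiestangle} act on $D_M$: one must rule out the possibility that $T$ consists partly of crossings from the introduced twist and partly of crossings borrowed from elsewhere in $D$, and also that the flypes of Proposition~\ref{Ch6:prop:CMidentifiestangle} can be chosen to fix $c$ setwise. A variant of the uniqueness argument in Lemma~\ref{Ch6:lem:CMrigidity} should suffice here, since for $M$ sufficiently large the long twist dominates the changemaker structure and pins down the location of the fractional tangle in the lattice.
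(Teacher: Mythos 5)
Your proof shares the paper's starting idea --- replace $c$ by a long twist to produce a non-integer changemaker lattice --- but then takes a detour through Proposition~\ref{Ch6:prop:CMidentifiestangle} that introduces a genuine gap. Proposition~\ref{Ch6:prop:CMidentifiestangle} produces a sequence of flypes to a diagram containing a fractional tangle, and you then need to show both that the fractional tangle is the introduced twist and that the flypes can be chosen to fix the original crossing $c$ setwise. You acknowledge this is ``the main obstacle'' and defer it to ``a variant of the uniqueness argument in Lemma~\ref{Ch6:lem:CMrigidity},'' but that lemma concerns uniqueness of changemaker \emph{coefficients}, not the geometric location of the fractional tangle in the diagram, and it does not obviously transfer to the flype-tracking problem. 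Ruling out that the produced tangle borrows crossings from elsewhere in $D_M$, or that the flypes of Proposition~\ref{Ch6:prop:CMidentifiestangle} move $c$, is exactly the nontrivial content missing from your write-up.

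The paper avoids this entirely and never invokes Proposition~\ref{Ch6:prop:CMidentifiestangle} in this proof. Instead, after applying Corollary~\ref{Ch4:cor:CMcondit} to $D_M$ to get an isomorphism $\Lambda_{D_M}\cong L$, it observes that the continued fraction $p/M=[n,2,\dots,2]^-$ forces the fractional part $L_F$ to be spanned by the single vector $\mu_0=e_0+\dots+e_{M-1}$, of norm $M$. Hence any element of $L$ with non-zero fractional part has norm at least $M$. In a graph lattice the norm of a vertex equals its degree, i.e.\ the number of crossings bordering the corresponding region of the diagram, and for $M$ exceeding the crossing number of $D$ the only regions of $D_M$ of degree $\geq M$ are the two flanking the introduced twist. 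This pins down $v=v'+\mu_0$ and $w=w'-\mu_0$ directly, with no flypes and no ambiguity about the location of the fractional data. Replacing $v$ and $w$ by $v'+e_0+e_1$ and $w'-e_0-e_1$ then produces the half-integer changemaker embedding of $\Lambda_D$ with $c$ marked. So the moral: work with the changemaker isomorphism on $\Lambda_{D_M}$ itself, rather than first flyping to a normal form; the norm bound from the fractional part is strong enough to locate the marker regions without any diagram manipulation.

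As a secondary remark: your opening reflection (``we may assume $c$ is negative with $\sigma(D)=0$'') is not quite right --- reflecting a diagram with a positive crossing $c$ and $\sigma(D)=-2$ yields a negative crossing with $\sigma=+2$, not $0$ --- but this is inessential, since the construction of $D_M$ and the norm argument go through in both cases without reflecting.
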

\begin{proof}
Construct a new alternating diagram $D_M$ by replacing $c$ by a $\frac{1}{M-1}$-tangle $T$ for some $M$ greater than the number of crossings in $D$. As shown in Figure~\ref{Ch6:fig:twistintro}, this means replacing $c$ by a twist containing $M-1$ crossings.
\begin{figure}[h]
  \centering
  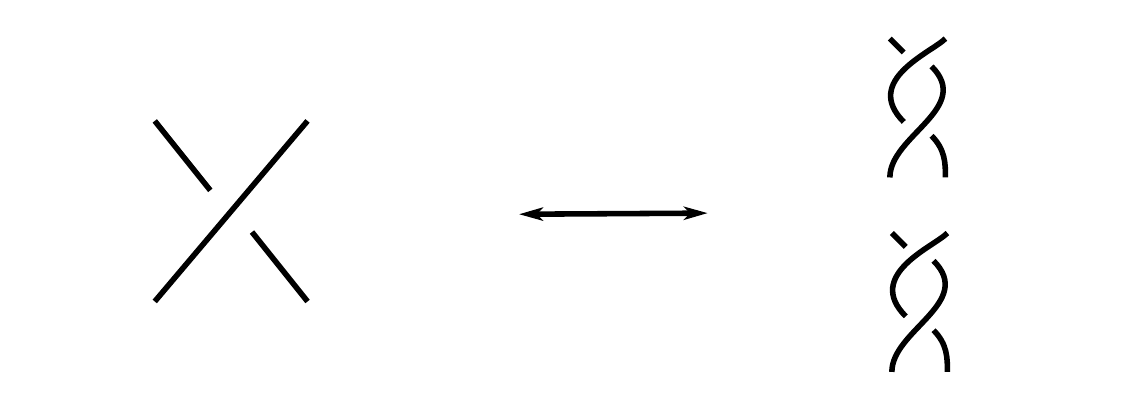
 \caption{The diagrams $D$ and $D_M$ arising in the proof of Lemma~\ref{Ch6:lem:everymarked}.}
 \label{Ch6:fig:twistintro}
\end{figure}
By Proposition~\ref{Ch4:prop:3implies1}, this means that $\Sigma(D_M)$ arises by $-p/M$-surgery where $p/M=n- \frac{M-1}{M}$ for some $n>0$. By Theorem~\ref{Ch6:thm:rationalsurgery}, we know that $\Lambda_{D_M}$ is isomorphic to a $p/M$-changemaker lattice $L$, and we get a corresponding labeling of regions of $D_M$ by elements of $L$. The continued fraction for $n- \frac{M-1}{M}$ is $[n,\underbrace{2, \dots, 2}_{M-1}]^-$. So $L$ takes the form
\[L=\langle e_0+ \sigma_1 f_1 + \dots + \sigma_t f_t, e_1-e_0, \dots, e_{M-1} - e_{M-2} \rangle^\bot \subseteq \Z^{t+M}.\]
As the fractional part $L_F$ is spanned by $\mu_0=e_0+ \dots +e_{M-1}$, any vector in $x\in L$ with non-zero fractional part satisfies $\norm{x}\geq M$. Since the image of $\Lambda_{D'}$ must span $L$, there are regions $v$ and $w$ with non-zero fractional part and these must be adjacent to at least $M$ crossings. However, as $M$ exceeds the crossing number of $D$, we see that the only regions of $D_M$ which adjacent to more than $M$ crossings are the regions adjacent to the tangle $T$ that we introduced. Therefore the regions adjacent to $T$ are a labeled by $v=v'+\mu_0$ and $w=w'-\mu_0$, for some $v'$ and $w'$ contained in $\langle f_1,\dots, f_t \rangle$ and $\mu_0$ does not appear in any other region of $D_M$. By replacing $v$ and $w$ by $v'+e_0+e_1$ and $w'-e_0-e_1$ respectively, we obtain an embedding of $\Lambda_D$ into $\Z^{t+2}$ whose image is the half-integer changemaker lattice
\[L'=\langle e_0+ \sigma_1 f_1 + \dots + \sigma_t f_t, e_1-e_0 \rangle^\bot.\]
It is clear that $c$ is the marked crossing for this embedding.
\end{proof}

\begin{proof}[Proof of Theorem~\ref{intro:thm:tsukamoto}]
Up to reflection, Lemma~\ref{Ch6:lem:everymarked} shows that every almost-alternating diagram of the unknot can be obtained by changing a marked crossing in an alternating diagram. However, Theorem~\ref{Ch5:thm:markedmeansunknotting} shows that any diagram obtained by changing a marked crossing can be obtained from $\mathcal{C}_m$, for some non-zero integer $m$, by a sequence of flypes, tongue moves and twirl moves.
\end{proof}

\bibliographystyle{alpha}
\bibliography{thesis}	

\begin{thebibliography}{CGLS87}

\bibitem[Ago00]{Agol00BoundsI}
Ian Agol.
\newblock Bounds on exceptional {D}ehn filling.
\newblock {\em Geom. Topol.}, 4:431--449, 2000.

\bibitem[Ber94]{bernhard1994unknotting}
James~A. Bernhard.
\newblock Unknotting numbers and minimal knot diagrams.
\newblock {\em J. Knot Theory Ramifications}, 3(1):1--5, 1994.

\bibitem[Ble84]{bleiler1984note}
Steven~A. Bleiler.
\newblock A note on unknotting number.
\newblock {\em Math. Proc. Cambridge Philos. Soc.}, 96(3):469--471, 1984.

\bibitem[Bon58]{Paddington}
Michael Bond.
\newblock {\em A {B}ear {C}alled {P}addington}.
\newblock Collins, first edition, 1958.

\bibitem[Bro61]{brown1961note}
J.~L. Brown, Jr.
\newblock Note on complete sequences of integers.
\newblock {\em Amer. Math. Monthly}, 68:557--560, 1961.

\bibitem[BZ03]{BurdeZieschang}
Gerhard Burde and Heiner Zieschang.
\newblock {\em Knots}, volume~5 of {\em de Gruyter Studies in Mathematics}.
\newblock Walter de Gruyter \& Co., Berlin, second edition, 2003.

\bibitem[CGLS87]{cglscyclic}
Marc Culler, C.~McA. Gordon, J.~Luecke, and Peter~B. Shalen.
\newblock Dehn surgery on knots.
\newblock {\em Ann. of Math.}, 125(2):237--300, 1987.

\bibitem[CL86]{cochran1986unknotting}
T.~D. Cochran and W.~B.~R. Lickorish.
\newblock Unknotting information from {$4$}-manifolds.
\newblock {\em Trans. Amer. Math. Soc.}, 297(1):125--142, 1986.

\bibitem[CM01]{Cao01cusped}
Chun Cao and G.~Robert Meyerhoff.
\newblock The orientable cusped hyperbolic {$3$}-manifolds of minimum volume.
\newblock {\em Invent. Math.}, 146(3):451--478, 2001.

\bibitem[Con70]{conway69algebraic}
J.~H. Conway.
\newblock An enumeration of knots and links, and some of their algebraic
  properties.
\newblock In {\em Computational {P}roblems in {A}bstract {A}lgebra ({P}roc.
  {C}onf., {O}xford, 1967)}, pages 329--358. Pergamon, Oxford, 1970.

\bibitem[Cro59]{crowell1959nonalternating}
Richard~H. Crowell.
\newblock Nonalternating links.
\newblock {\em Illinois J. Math.}, 3:101--120, 1959.

\bibitem[Don87]{donaldson87orientation}
S.~K. Donaldson.
\newblock The orientation of {Y}ang-{M}ills moduli spaces and {$4$}-manifold
  topology.
\newblock {\em J. Differential Geom.}, 26(3):397--428, 1987.

\bibitem[Gab89]{gabai89solidtori}
David Gabai.
\newblock Surgery on knots in solid tori.
\newblock {\em Topology}, 28(1):1--6, 1989.

\bibitem[Gib15]{gibbons2013deficiency}
Julian Gibbons.
\newblock Deficiency symmetries of surgeries in ${S}^3$.
\newblock {\em International Mathematics Research Notices},
  2015(22):12126--12151, 2015.

\bibitem[GL78]{gordon1978signature}
C.~McA. Gordon and R.~A. Litherland.
\newblock On the signature of a link.
\newblock {\em Invent. Math.}, 47(1):53--69, 1978.

\bibitem[GL06]{gordon2006knots}
C.~McA. Gordon and John Luecke.
\newblock Knots with unknotting number 1 and essential {C}onway spheres.
\newblock {\em Algebr. Geom. Topol.}, 6:2051--2116, 2006.

\bibitem[Gor09]{Gordon09dehnsurgery}
Cameron Gordon.
\newblock Dehn surgery and 3-manifolds.
\newblock In {\em Low dimensional topology}, volume~15 of {\em IAS/Park City
  Math. Ser.}, pages 21--71. Amer. Math. Soc., Providence, RI, 2009.

\bibitem[Gre13]{GreeneLRP}
Joshua Greene.
\newblock The lens space realization problem.
\newblock {\em Ann. of Math. (2)}, 177(2):449--511, 2013.

\bibitem[Gre14]{Greene3Braid}
Joshua~Evan Greene.
\newblock Donaldson's theorem, {H}eegaard {F}loer homology, and knots with
  unknotting number one.
\newblock {\em Adv. Math.}, 255:672--705, 2014.

\bibitem[Gre15]{greene2010space}
Joshua~Evan Greene.
\newblock L-space surgeries, genus bounds, and the cabling conjecture.
\newblock {\em J. Differential Geom.}, 100(3):491--506, 2015.

\bibitem[Hof98]{hoffman98reducing}
James~A. Hoffman.
\newblock There are no strict great {$x$}-cycles after a reducing or {$P^2$}
  surgery on a knot.
\newblock {\em J. Knot Theory Ramifications}, 7(5):549--569, 1998.

\bibitem[Jab98]{jablan1998unknotting}
Slavik~V. Jablan.
\newblock Unknotting number and {$\infty$}-unknotting number of a knot.
\newblock {\em Filomat}, (12, part 1):113--120, 1998.

\bibitem[Kau87]{kauffman87state}
Louis~H. Kauffman.
\newblock State models and the {J}ones polynomial.
\newblock {\em Topology}, 26(3):395--407, 1987.

\bibitem[KM86]{Unknottwobridge}
Taizo Kanenobu and Hitoshi Murakami.
\newblock Two-bridge knots with unknotting number one.
\newblock {\em Proc. Amer. Math. Soc.}, 98(3):499--502, 1986.

\bibitem[KM93]{kronheimer93gauge}
P.~B. Kronheimer and T.~S. Mrowka.
\newblock Gauge theory for embedded surfaces. {I}.
\newblock {\em Topology}, 32(4):773--826, 1993.

\bibitem[Koh91]{kohn1991two}
Peter Kohn.
\newblock Two-bridge links with unlinking number one.
\newblock {\em Proc. Amer. Math. Soc.}, 113(4):1135--1147, 1991.

\bibitem[Lac03]{Lackenby03Exceptional}
Marc Lackenby.
\newblock Exceptional surgery curves in triangulated 3-manifolds.
\newblock {\em Pacific J. Math.}, 210(1):101--163, 2003.

\bibitem[Lic62]{Lickorish62dehnsurgery}
W.~B.~R. Lickorish.
\newblock A representation of orientable combinatorial {$3$}-manifolds.
\newblock {\em Ann. of Math. (2)}, 76:531--540, 1962.

\bibitem[Lic97]{lickorish1997introduction}
W.B.~Raymond Lickorish.
\newblock {\em An introduction to knot theory}.
\newblock Springer, 1997.

\bibitem[McC13]{mccoy2013alternating}
Duncan McCoy.
\newblock Alternating knots with unknotting number one.
\newblock {\em arXiv:1312.1278}, 2013.

\bibitem[McC14a]{mccoy2014bounds}
Duncan McCoy.
\newblock Bounds on surgeries branching over alternating knots.
\newblock {\em arXiv:1412.0906}, 2014.

\bibitem[McC14b]{mccoy2014sharp}
Duncan McCoy.
\newblock Surgeries, sharp 4-manifolds and the {A}lexander polynomial.
\newblock {\em arXiv:1412.0572}, 2014.

\bibitem[McC15]{mccoy2014noninteger}
Duncan McCoy.
\newblock Non-integer surgery and branched double covers of alternating knots.
\newblock {\em J. Lond. Math. Soc. (2)}, 92(2):311--337, 2015.

\bibitem[Mon73]{montesinos1973variedades}
Jos\'{e}~M. Montesinos.
\newblock Variedades de seifert que son recubricadores ciclicos rami cados de
  dos hojas.
\newblock {\em Boletino Soc. Mat. Mexicana}, 18:1--32, 1973.

\bibitem[Mos71]{Moser71elementary}
Louise Moser.
\newblock Elementary surgery along a torus knot.
\newblock {\em Pacific J. Math.}, 38:737--745, 1971.

\bibitem[MS03]{Matignon03longitudinal}
Daniel Matignon and Nabil Sayari.
\newblock Longitudinal slope and {D}ehn fillings.
\newblock {\em Hiroshima Math. J.}, 33(1):127--136, 2003.

\bibitem[MT93]{Menasco93classification}
William Menasco and Morwen Thistlethwaite.
\newblock The classification of alternating links.
\newblock {\em Ann. of Math. (2)}, 138(1):113--171, 1993.

\bibitem[Mur65]{murasugi1965certain}
Kunio Murasugi.
\newblock On a certain numerical invariant of link types.
\newblock {\em Trans. Amer. Math. Soc.}, 117:387--422, 1965.

\bibitem[Mur86]{murasugi86jones}
Kunio Murasugi.
\newblock Jones polynomials of alternating links.
\newblock {\em Trans. Amer. Math. Soc.}, 295(1):147--174, 1986.

\bibitem[NW15]{ni2010cosmetic}
Yi~Ni and Zhongtao Wu.
\newblock Cosmetic surgeries on knots in {$S^3$}.
\newblock {\em J. Reine Angew. Math.}, 706:1--17, 2015.

\bibitem[NZ14]{Ni14characterizing}
Yi~Ni and Xingru Zhang.
\newblock Characterizing slopes for torus knots.
\newblock {\em Algebr. Geom. Topol.}, 14(3):1249--1274, 2014.

\bibitem[OS03a]{Ozsvath03Absolutely}
Peter Ozsv{\'a}th and Zolt{\'a}n Szab{\'o}.
\newblock Absolutely graded {F}loer homologies and intersection forms for
  four-manifolds with boundary.
\newblock {\em Adv. Math.}, 173(2):179--261, 2003.

\bibitem[OS03b]{Ozsvath03plumbed}
Peter Ozsv{\'a}th and Zolt{\'a}n Szab{\'o}.
\newblock On the {F}loer homology of plumbed three-manifolds.
\newblock {\em Geom. Topol.}, 7:185--224 (electronic), 2003.

\bibitem[OS04a]{Ozsvath04genusbounds}
Peter Ozsv{\'a}th and Zolt{\'a}n Szab{\'o}.
\newblock Holomorphic disks and genus bounds.
\newblock {\em Geom. Topol.}, 8:311--334, 2004.

\bibitem[OS04b]{Ozsvath04knotinvariants}
Peter Ozsv{\'a}th and Zolt{\'a}n Szab{\'o}.
\newblock Holomorphic disks and knot invariants.
\newblock {\em Adv. Math.}, 186(1):58--116, 2004.

\bibitem[OS04c]{Ozsvath04applications}
Peter Ozsv{\'a}th and Zolt{\'a}n Szab{\'o}.
\newblock Holomorphic disks and three-manifold invariants: properties and
  applications.
\newblock {\em Ann. of Math. (2)}, 159(3):1159--1245, 2004.

\bibitem[OS04d]{Ozsvath04threemanifoldinvariants}
Peter Ozsv{\'a}th and Zolt{\'a}n Szab{\'o}.
\newblock Holomorphic disks and topological invariants for closed
  three-manifolds.
\newblock {\em Ann. of Math. (2)}, 159(3):1027--1158, 2004.

\bibitem[OS05a]{ozsvath2005knots}
Peter Ozsv{\'a}th and Zolt{\'a}n Szab{\'o}.
\newblock Knots with unknotting number one and {H}eegaard {F}loer homology.
\newblock {\em Topology}, 44(4):705--745, 2005.

\bibitem[OS05b]{Ozsvath05Lensspace}
Peter Ozsv{\'a}th and Zolt{\'a}n Szab{\'o}.
\newblock On knot {F}loer homology and lens space surgeries.
\newblock {\em Topology}, 44(6):1281--1300, 2005.

\bibitem[OS05c]{ozsvath2005heegaard}
Peter Ozsv{\'a}th and Zolt{\'a}n Szab{\'o}.
\newblock On the {H}eegaard {F}loer homology of branched double-covers.
\newblock {\em Adv. Math.}, 194(1):1--33, 2005.

\bibitem[OS08]{Ozsvath08integersurgery}
Peter~S. Ozsv{\'a}th and Zolt{\'a}n Szab{\'o}.
\newblock Knot {F}loer homology and integer surgeries.
\newblock {\em Algebr. Geom. Topol.}, 8(1):101--153, 2008.

\bibitem[OS11]{ozsvath2011rationalsurgery}
Peter~S. Ozsv{\'a}th and Zolt{\'a}n Szab{\'o}.
\newblock Knot {F}loer homology and rational surgeries.
\newblock {\em Algebr. Geom. Topol.}, 11(1):1--68, 2011.

\bibitem[OS12]{Owens12negdef}
Brendan Owens and Sa{\v{s}}o Strle.
\newblock Dehn surgeries and negative-definite four-manifolds.
\newblock {\em Selecta Math. (N.S.)}, 18(4):839--854, 2012.

\bibitem[Owe08]{owens2008unknotting}
Brendan Owens.
\newblock Unknotting information from {H}eegaard {F}loer homology.
\newblock {\em Adv. Math.}, 217(5):2353--2376, 2008.

\bibitem[Ras03]{Rasmussen03Thesis}
Jacob Rasmussen.
\newblock {\em Floer homology and knot complements}.
\newblock PhD thesis, Harvard University, 2003.

\bibitem[Ras04]{Rasmussen04Goda}
Jacob Rasmussen.
\newblock Lens space surgeries and a conjecture of {G}oda and {T}eragaito.
\newblock {\em Geom. Topol.}, 8:1013--1031, 2004.

\bibitem[Ras10]{rasmussen10khovanov}
Jacob Rasmussen.
\newblock Khovanov homology and the slice genus.
\newblock {\em Invent. Math.}, 182(2):419--447, 2010.

\bibitem[Sto01]{stoimenow2001examples}
A.~Stoimenow.
\newblock Some examples related to 4-genera, unknotting numbers and knot
  polynomials.
\newblock {\em J. London Math. Soc. (2)}, 63(2):487--500, 2001.

\bibitem[Tai77]{tait1876knots}
P.~G. Tait.
\newblock On knots.
\newblock {\em Trans. Roy. Soc. Edinburgh}, 28:145--190, 1877.

\bibitem[Thi88]{thistlethwaite88alternating}
Morwen~B. Thistlethwaite.
\newblock Kauffman's polynomial and alternating links.
\newblock {\em Topology}, 27(3):311--318, 1988.

\bibitem[Thu82]{Thurston823DKleinanGroups}
William~P. Thurston.
\newblock Three-dimensional manifolds, {K}leinian groups and hyperbolic
  geometry.
\newblock {\em Bull. Amer. Math. Soc. (N.S.)}, 6(3):357--381, 1982.

\bibitem[Tro62]{trotter1962homology}
H.~F. Trotter.
\newblock Homology of group systems with applications to knot theory.
\newblock {\em Ann. of Math. (2)}, 76:464--498, 1962.

\bibitem[Tsu09]{tsukamoto2009almost}
Tatsuya Tsukamoto.
\newblock The almost alternating diagrams of the trivial knot.
\newblock {\em J. Topol.}, 2(1):77--104, 2009.

\bibitem[Wal60]{wallace60modifications}
Andrew~H. Wallace.
\newblock Modifications and cobounding manifolds.
\newblock {\em Canad. J. Math.}, 12:503--528, 1960.

\bibitem[Wat10]{Watson10Remark}
Liam Watson.
\newblock A remark on {K}hovanov homology and two-fold branched covers.
\newblock {\em Pacific J. Math.}, 245(2):373--380, 2010.

\end{thebibliography}
\end{document}